\DeclareRobustCommand{\coprod}{\mathop{\text{\fakecoprod}}}
\newcommand{\fakecoprod}{%
  \sbox0{$\prod$}%
  \smash{\raisebox{\dimexpr.9625\depth-\dp0}{\scalebox{1}[-1]{$\prod$}}}%
  \vphantom{$\prod$}%
}
\numberwithin{equation}{section}
\theoremstyle{plain}
\newtheorem{theorem}{Theorem}
\numberwithin{theorem}{section}
\newtheorem{lemma}[theorem]{Lemma}
\newtheorem{proposition}[theorem]{Proposition}
\newtheorem{corollary}[theorem]{Corollary}
\theoremstyle{definition}
\newtheorem{definition}[theorem]{Definition}
\newtheorem{example}[theorem]{Example}
\newtheorem{remark}[theorem]{Remark}
\tikzstyle{dot}=[circle, draw=black, fill=black!25, inner sep=.4ex]
\newif\ifvflip\pgfkeys{/tikz/vflip/.is if=vflip}
\newif\ifhflip\pgfkeys{/tikz/hflip/.is if=hflip}
\newif\ifhvflip\pgfkeys{/tikz/hvflip/.is if=hvflip}
\newlength\morphismheight
\newlength\wedgewidth
\tikzset{width/.initial=1mm}
\tikzstyle{morphism}=[font=\small,morphismshape]
\newcommand{\tinymult}[1][dot]{
\smash{\raisebox{-2pt}{\hspace{-5pt}\ensuremath{\begin{pic}[scale=0.4,yscale=-1]
    \node (0) at (0,0) {};
    \node[#1, inner sep=1.5pt] (1) at (0,0.55) {};
    \node (2) at (-0.5,1) {};
    \node (3) at (0.5,1) {};
    \draw (0.center) to (1.center);
    \draw (1.center) to [out=left, in=down, out looseness=1.5] (2.center);
    \draw (1.center) to [out=right, in=down, out looseness=1.5] (3.center);
    \node[#1, inner sep=1.5pt] (1) at (0,0.55) {};
\end{pic}
}\hspace{-3pt}}}}
\newcommand{\tinyunit}[1][dot]{
\smash{\raisebox{1pt}{\hspace{-3pt}\ensuremath{\begin{pic}[scale=0.4,yscale=-1]
    \node (0) at (0,0) {};
    \node[#1, inner sep=1.5pt] (1) at (0,0.55) {};
    \draw (0.center) to (1.north);
\end{pic}
}\hspace{-1pt}}}}
\newcommand{\tinycomult}[1][dot]{
\smash{\raisebox{-2pt}{\hspace{-5pt}\ensuremath{\begin{pic}[scale=0.4,yscale=1]
    \node (0) at (0,0) {};
    \node[#1, inner sep=1.5pt] (1) at (0,0.55) {};
    \node (2) at (-0.5,1) {};
    \node (3) at (0.5,1) {};
    \draw (0.center) to (1.center);
    \draw (1.center) to [out=left, in=down, out looseness=1.5] (2.center);
    \draw (1.center) to [out=right, in=down, out looseness=1.5] (3.center);
    \node[#1, inner sep=1.5pt] (1) at (0,0.55) {};
\end{pic}
}\hspace{-3pt}}}}
\newcommand{\tinycounit}[1][dot]{
\smash{\raisebox{-3pt}{\hspace{-3pt}\ensuremath{\begin{pic}[scale=0.4,yscale=1]
    \node (0) at (0,0) {};
    \node[#1, inner sep=1.5pt] (1) at (0,0.55) {};
    \draw (0.center) to (1.south);
\end{pic}
}\hspace{-1pt}}}}
\newcommand\sxto[1]{\mathbin{\smash{
\begin{tikzpicture}[baseline={([yshift=-1pt]
current bounding box.south)}]
    \node (A) at (0,0) [inner xsep=0pt, inner ysep=1pt, minimum width=0.15cm] {\ensuremath{\scriptstyle #1}};
    \draw [->, line width=0.4pt, line cap=round]
        ([xshift=-2.5pt] A.south west)
        to ([xshift=3pt] A.south east);
\end{tikzpicture}}}}
\newcommand\sxfrom[1]{\mathbin{\smash{
\begin{tikzpicture}[baseline={([yshift=-1pt]
current bounding box.south)}]
    \node (A) at (0,0) [inner xsep=0pt, inner ysep=1pt, minimum width=0.15cm] {\ensuremath{\scriptstyle #1}};
    \draw [<-, line width=0.4pt, line cap=round]
        ([xshift=-2.5pt] A.south west)
        to ([xshift=3pt] A.south east);
\end{tikzpicture}}}}
\newenvironment{pic}[1][]
{\begin{aligned}\begin{tikzpicture}[font=\tiny,#1]}
{\end{tikzpicture}\end{aligned}}
\tikzset{dagger/.style={
    decoration={markings, mark=at position 0.5 with {
        \draw [-] ++ (0,-.75mm) -- (0,.75mm);}
    }, postaction={decorate},
}}
\NewDocumentCommand{\xrightarrows}{ O{}O{} }{%
\mathrel{%
\vcenter{\hbox{%
\begin{tikzpicture}
  \node[minimum width=1cm,minimum height=1ex,anchor=south,align=center] (a){\text{\vphantom{hg}#1}\\[0.5ex] \vphantom{hg}#2};
  \draw[<-] ([yshift=0.35ex]a.west) -- ([yshift=0.35ex]a.east);
  \draw[->] ([yshift=-0.35ex]a.west) -- ([yshift=-0.35ex]a.east);
\end{tikzpicture}
}}%
}%
}
\newcommand{\cat}[1]{\ensuremath{\mathbf{#1}}}
\newcommand{\op}{\ensuremath{{}^{\mathrm{op}}}}
\newcommand{\id}[1][]{\ensuremath{\mathrm{id}_{#1}}}
\newcommand{\norm}[1]{\ensuremath{\left\Vert {#1}\right\Vert}}
\newcommand{\abs}[1]{\ensuremath{\left\vert {#1}\right\vert}}
\newcommand{\sem}[1]{\ensuremath{\llbracket {#1}\rrbracket}}
\DeclareMathOperator{\ev}{ev}
\DeclareMathOperator{\st}{st}
\DeclareMathOperator{\dst}{dst}
\DeclareMathOperator{\Tr}{Tr}
\DeclareMathOperator{\Ima}{Im}
\DeclareMathOperator{\FEM}{FEM}
\DeclareMathOperator{\EM}{EM}
\DeclareMathOperator{\DFMonad}{DFMonad}
\DeclareMathOperator{\arr}{arr}
\DeclareMathOperator{\first}{first}
\DeclareMathOperator{\second}{second}
\DeclareMathOperator{\inv}{inv}
\DeclareMathOperator{\inl}{inl}
\DeclareMathOperator{\inr}{inr}
\DeclareMathOperator{\fix}{fix}
\DeclareMathOperator{\dom}{dom}
\DeclareMathOperator{\cod}{cod}
\DeclareMathOperator{\Inv}{Inv}
\DeclareMathOperator{\sym}{Sym}
\DeclareMathOperator{\dcolim}{dcolim} 
\DeclareMathOperator{\dlim}{dlim} 
\DeclareMathOperator{\colim}{colim} 
\DeclareMathOperator{\coker}{coker}
\DeclareMathOperator{\im}{im}
\DeclareMathOperator{\assocrplus}{assocr^\oplus}
\DeclareMathOperator{\assoclplus}{assocl^\oplus}
\DeclareMathOperator{\assocrtimes}{assocr^\otimes}
\DeclareMathOperator{\assocltimes}{assocl^\otimes}
\DeclareMathOperator{\swapplus}{swap^\oplus}
\DeclareMathOperator{\swaptimes}{swap^\otimes}
\DeclareMathOperator{\unitlplus}{unitl^\oplus}
\DeclareMathOperator{\unitrplus}{unitr^\oplus}
\DeclareMathOperator{\unitltimes}{unitl^\otimes}
\DeclareMathOperator{\unitrtimes}{unitr^\otimes}
\DeclareMathOperator{\distr}{distr}
\DeclareMathOperator{\factor}{factor}
\DeclareMathOperator{\absorb}{absorb}
\DeclareMathOperator{\unabsorb}{unabsorb}
\DeclareMathOperator{\fold}{fold}
\DeclareMathOperator{\unfold}{unfold}
\newcommand{\prof}{\ensuremath{[\cat{C}\op\times\cat{C},\cat{Set}]}}
\newcommand{\Set}{\ensuremath{\mathbf{Set}}}
\newcommand{\acmp}{\ensuremath{\mathrel{>\!\!>\!\!>}}}
\newcommand{\afanout}{\ensuremath{\mathrel{\&\!\!\&\!\!\&}}}
\newcommand{\scolon}{\ensuremath{\mathrel{;}}}
\newcommand{\PInj}{\cat{PInj}}
\newcommand{\Pfn}{\cat{Pfn}}
\newcommand{\Kl}{\ensuremath{\mathcal{K}\!\ell}}
\newcommand{\ie}{\text{i.e.}\xspace}
\newcommand{\eg}{\text{e.g.}\xspace}
\newcommand{\cut}[1]{}
\newcommand{\N}{\mathbb{N}}
\newcommand{\C}{\mathbb{C}} 
\newcommand{\Ban}{\cat{Ban_{\leq 1}}}
\newcommand{\Hilb}{\cat{Hilb_{\leq 1}}}
\newcommand{\inprod}[2]{\ensuremath{\langle #1 \,,\, #2 \rangle}}
\title{The Way of the Dagger}
\author{Martti Karvonen}
\abstract{
A dagger category is a category equipped with a functorial way of reversing morphisms, \ie a contravariant involutive identity-on-objects endofunctor. Dagger categories with additional structure have been studied under different names in categorical quantum mechanics, algebraic field theory and homological algebra, amongst others. In this thesis we study the dagger in its own right and show how basic category theory adapts to dagger categories.

We develop a notion of a dagger limit that we show is suitable in the following ways: it subsumes special cases known from the literature; dagger limits are unique up to unitary isomorphism; a wide class of dagger limits can be built from a small selection of them; dagger limits of a fixed shape can be phrased as dagger adjoints to a diagonal functor; dagger limits can be built from ordinary limits in the presence of polar decomposition; dagger limits commute with dagger colimits in many cases.

Using cofree dagger categories, the theory of dagger limits can be leveraged to provide an enrichment-free understanding of limit-colimit coincidences in ordinary category theory. We formalize the concept of an ambilimit, and show that it captures known cases. As a special case, we show how to define biproducts up to isomorphism in an arbitrary category without assuming any enrichment. Moreover, the limit-colimit coincidence from domain theory can be generalized to the unenriched setting and we show that, under suitable assumptions, a wide class of endofunctors has canonical fixed points.

The theory of monads on dagger categories works best when all structure respects the dagger: the monad and adjunctions should preserve the dagger, and the monad and its algebras should satisfy the so-called Frobenius law. Then any monad resolves as an adjunction, with extremal solutions given by the categories of Kleisli and Frobenius-Eilenberg-Moore algebras, which again have a dagger. 

We use dagger categories to study reversible computing. Specifically, we model reversible effects by adapting Hughes' arrows to dagger arrows and inverse arrows. This captures several fundamental reversible effects, including serialization and mutable store computations. Whereas arrows are monoids in the category of profunctors, dagger arrows are involutive monoids in the category of profunctors, and inverse arrows satisfy certain additional properties. These semantics inform the design of functional reversible programs supporting side-effects.

}
\begin{document}

\begin{preliminary}

\maketitle


\begin{layman}

This thesis is primarily about the mathematics of so-called dagger categories. For now, think of a category as a collection of systems $A,B,C,\dots$ and processes between them. Processes can be done after each other, and for any system $A$ there should be a trivial process $\id[A]$ that amounts to doing nothing. In a dagger category there is an additional piece of structure -- any process $f\colon A\to B$ has a  counterpart process $f^\dag\colon B\to A$ going in the opposite direction. One might think of $f^\dag$ as some kind of a reversal of the process $f$, and this motivates the rules this reversal is supposed to obey, namely 
	\begin{itemize}
		\item reversing the trivial process of any system $A$ results in the trivial process, \ie $\id[A]^\dag=\id[A]$
		\item reversing a process twice results in the original process, \ie $f^{\dag\dag}=f$
		\item if one reverses the process ``$g$ after $f$'', one might as well have reversed each process individually and done them after each other in the opposite order. More succinctly, $(g\circ f)^\dag=f^\dag\circ g^\dag$
	\end{itemize}
Any mathematical structure obeying these rules is a dagger category. For a concrete example, let us consider the alphabet.  Letters can be written after one another, resulting in strings such as ``asdf'' and ``cat''. Strings can be composed by writing them one after the other, so ``world'' composed after ``hello'' becomes `helloworld''. The empty string is a neutral element with respect to composition, \ie appending or prepending it to a string doesn't change the string in question. What makes this into a dagger category is the fact that strings can be reversed. Reversing the empty string results in an empty string. Reversing a string twice results in the original string. Reversing a composite string is the same thing as composing the reversals in the opposite order. Hence the rules above are indeed obeyed by strings of letters. 

Dagger categories arise independently both in physics and computation, and also at their intersection in quantum computing. However, so far a systematic study of dagger categories has been missing, and this thesis fills the gap. There are various mathematical questions and notions people study in the context of ordinary categories, such as (co)limits, which consider well-behaved ways of building new systems from old ones, or monads and arrows, which can be used to enlarge the collection of processes available. This thesis shows how to understand such topics in the context of dagger categories.

\end{layman}


\begin{acknowledgements}

First and foremost I need to thank my first supervisor Chris Heunen for his guidance throughout the years. Besides helping in choosing a topic, nudging me in the right direction when I got stuck, giving advice on literature to read and advising with numerous smaller issues that come about in academic life, he has helped me to grow into a researcher who can hopefully make all these decisions alone. Without him you would be reading an empty thesis\footnote{A variant of~\cite{upper:writersblock}}. 

I must also thank my second supervisor Tom Leinster. His intuition, whether pertaining to topics described in this thesis or to mathematics in general, has not only been helpful but also a delight to observe. I hope that I have been able to absorb even a fraction of his way of thinking. 

I am very thankful for my examiners Peter Selinger and David Jordan. The time they invested into reading this thesis with care resulted in insightful comments and feedback that has greatly improved this thesis.

I wish to thank Gordon Plotkin for very helpful comments during my yearly reviews and on an earlier draft. It is truly a pleasure to see how a computer scientist thinks about category theory. Similarly, I must thank Robin Kaarsgaard not only for his helpful comments but also for the work we have done together -- hopefully this was only a start. To the extent that the thesis falls under theoretical computer science, it is because of Gordon and Robin.

I am indebted for Jamie Vicary both for coining the name ``way of the dagger'' to describe the general philosophy when working with dagger categories, and for letting me use it as a title. More widely, I am thankful for the whole community centred around the conference Quantum Physics and Logic -- it is a great research community to be a part of, and a great source of friends and collaborators.

Before moving to friends and family, I wish to thank various organizations that have helped me. I wish to thank the Osk. Huttunen foundation for financially supporting me during my PhD. The money that V\"ais\"al\"a foundation kindly let me use as a travel grant was very useful for letting me become a part of an international scientific community and I learned a lot on the way. COST Action IC1405 made possible a trip to Copenhagen, without which this thesis would be shorter. My host institute LFCS was not only a great academic community, but also helped me with a funding gap at the start and some travel money during the later years. 

Life in Edinburgh would have been much duller without Toms, Kima and their friends. Climbing with Orfeas has provided me with a much needed counterbalance to research. My summer breaks in Finland were mostly timed so that I could visit KKT, so I have to thank everyone involved for that particular getaway and the support therein. I must also thank Tuomo, Sakari and IHRA more generally for their sustained friendship. 

I am grateful for Pentti, Pirjo and Pekka, not only for the usual things loving families provide, but also for being crucial for my intellectual development during my formative years and supportive once I got interested in mathematics. All of my siblings, half or otherwise, have been crucial to me growing up to be the person I am.

There is no way words can express the gratitude I have for Esma, so I will not even try.

\end{acknowledgements}

\standarddeclaration


\tableofcontents


\end{preliminary}


%


\chapter{Introduction}

\section{Why dagger categories}

In fundamental physical theories, the time evolution of a system is in principle reversible. That is, if the clock was run backwards\footnote{and charge conjugation \& parity transformation are applied as well, see the CPT theorem.}, everything would still comply with the laws of physics. But what is the nature of this ``time-reversal'' itself? In a sense, this thesis exists in order to study such reversal operations in an abstract setting. 

Such reversals occur not only in physics but also in computation. In a reversible computer, each computation step is logically reversible by design. There are many more phenomena that are \emph{weakly reversible}, in the sense that any process $f\colon A\to B$ is associated with a corresponding counterpart process $f^\dag\colon B\to A$ going the other way. For instance, in a nondeterministic computer program, for any state of the system there are in principle several possible states to enter next. Such a computer program can naturally be ran in the other direction: given a state of system, just go to one of the possible preceding states. However, doing a computation step back and forth in this manner need not result in nothing happening, hence ``weak'' reversibility.

One way of studying a concept is to study it together with its environment. Hence physicists study time-evolution in the context of the theory they are working on and say a computer scientist might study reversible computing with all the tools needed for the task. Another, complementary way of understanding a concept is to try to isolate it and abstract away the unnecessary details, studying fundamental properties of the concept on its own. This latter route is the one taken in this thesis, at least for the most part.

First, we abstract away from a particular theoretical context by using the language of category theory. Then ``weak reversibility'' is modelled by a dagger category: a way of associating, to every morphism $f\colon A\to B$ a morphism $f^\dag\colon B\to A$ going in the opposite direction. If one thinks of the category in question as modelling physical processes, then the dagger of $f$ might correspond to running the evolution described by $f$ in reverse. Similarly, if the category is modelling reversible computation, a dagger of a computation $f$ would then correspond to running the computation backwards. However, one should not always think of $f^\dag$ as something that undoes $f$, but rather as a canonical counterpart of $f$. For instance, the binary relation ``$x$ is the parent of $y$'' has a dagger ``$y$ is the offspring of $x$'', but the composite relation ``$x$ is the parent of an offspring of $y$'' is not the identity relation.

Dagger categories have been studied in several schools of thought as discussed in the following section. Usually, the dagger plays a minor part, being merely one of several interlocking pieces of structure assumed. We deviate from tradition by studying the dagger in isolation, and thus studying piece-by-piece, what ``cooperation with the dagger'' means. 

From the point of view of a mathematician, the need to study dagger categories is obvious -- if the theory of dagger categories has not been worked out, how sure can we be that we have the correct viewpoint when studying categories where the dagger is merely one of several parts of the structure? Maybe there is an underlying unity behind various kinds of dagger categories which can only be found by going through the foundational topics of category theory with a dagger at hand.






\section{Historical Background}

The tradition from which I enter dagger categories is called categorical quantum mechanics (see \eg \cite{abramskycoecke:cqm,abramskycoecke:categorical} for the founding papers and \cite{coeckekissinger:picturing} or the upcoming \cite{heunenvicary:cqm} for a textbook), 
and indeed it is within this context that the name ``dagger categories'' came up~\cite{selinger:completelypositive}. In this tradition, one usually uses categories that are either dagger compact or at least dagger monoidal and are often enriched in commutative monoids \ie sums of morphisms make sense. Such categories are then used to illuminate the structure of quantum computing protocols and results used in quantum computing -- it turns out that a surprising amount of quantum computing is not about finite-dimensional complex linear algebra but works in any dagger compact category.

However, this is not the first school of thought to use categories with a dagger. In fact, categories of relations~\cite{puppe:abelsche,puppe:korrespondenzen} and categories with involution~\cite{burgin:gamma,burgin:involution} were already studied in the 60s in the context of homological algebra~\cite{maclane:additiverelations}. This resulted in the notion of a regular category~\cite{grillet:regularcategories,vanosdol:regularcats,barr:exact}, which has sparked considerable amounts of attention (\eg\cite{kawahara:relations,kawahara:matrix} and~\cite{day:localisation,chikhladze:barrsembeddingthm} in the enriched setting). In this setting the hom-sets are partially ordered~\cite{lambek:diagram} like in the standard category of sets and binary relations. See~\eg\cite{freyd:catsandalligators,butz:regular} or~\cite[Chapter 2]{borceux:vol2} for a modern treatment of the topic.

Another use of dagger categories, perhaps closer to categorical quantum mechanics, comes from the study of operator algebras~\cite{ghezlimaroberts:wstarcategories,henry:completecstarcats}, unitary group representations~\cite{doplicher:newduality} and algebraic quantum field theory~\cite{halvorsonmuger:algebraicqft}. In these settings, one typically assumes the sets of morphisms to be at least complex vector spaces, or maybe even Banach spaces.

Finally, dagger categories come up in the study of reversible computing~\cite{glueckkaarsgaard:rfcl,bowman:dagger,jamessabry:infeff,carette:computingwithsemirings}. Here one often uses inverse categories~\cite{kastl:inversecategories}, which are closely related to inverse semigroups~\cite{lawson:inversesemigroups}. Once again, one often wants the categories in question to have considerable additional structure, \eg DCPO-enrichment~\cite{kaarsgaardaxelsengluck:joininversecategories} or various kinds of monoidal products~\cite{giles:thesis}.

In each of these schools of thought, the dagger is merely a small piece of the structure that is assumed, but dagger categories have not been studied on their own right -- it is almost as if mathematicians had studied particular kinds of enriched categories without ever developing ordinary unenriched category theory. This thesis is an attempt to rectify this omission. The main philosophy when working with dagger categories, called ``the way of the dagger'' by Jamie Vicary, is that all structure in sight should cooperate with the dagger. What this means for basic notions such as (monoidal) products and isomorphisms has been known for a while, but the dagger counterparts for many standard categorical notions have either been unstudied or only understood in special cases. We fix this by discussing the dagger versions of the topics one would expect to meet in a typical introductory textbook on category theory: equivalences, adjunctions, monads, limits and the like.

Given the power of (enriched) category theory to generalize and unify, one would expect this to fall under already existing theory and to be trivial, or at the very least trivially trivial, to borrow a phrase from Freyd. However, this turns out not to be the case: dagger categories are not enriched categories in any obvious and interesting sense. Similarly, so-called formal category theory~\cite{gray:formalcategorytheory,street1972formal}, \ie working in a 2-category, possibly with additional structure such as Yoneda structures~\cite{street:fibrations,streetwalters:yoneda} or proarrow equipments~\cite{wood:prowarrowsI,wood:proarrowsII} fails to apply for two reasons: 
	\begin{enumerate}[(i)]
		\item The 2-category \cat{DagCat} of dagger categories, dagger functors and natural transformations is not just a 2-category, it is in fact a \emph{dagger 2-category}, see Definition~\ref{def:dagtwocat} below. This means that the problem is merely shifted a level up: to understand dagger category theory formally, one needs to understand dagger 2-categories.
		\item Moreover, even when studying dagger categories, one often wants to also use non-dagger categories. For instance, dagger equalizers are an important kind of a dagger limit, and require stepping outside of \cat{DagCat}.
	\end{enumerate}

The main issue, directly resulting in (i) above, is that unlike in ordinary category theory, isomorphic objects are not always similar enough. In a dagger category, the correct notion of an isomorphism is given by a unitary $f$ satisfying $f^{-1}=f^\dag$. This issue, perhaps pathological for the ordinary category theorist\footnote{People even wonder whether dagger categories are ``evil'' in a technical sense. See \eg the MO-discussion~\cite{mo:dagcatsevil} or the nLab-entry~\cite[section on $\dag$-categories]{nlab:evil}}, is at the heart of why dagger category theory differs from ordinary category theory. Indeed, homotopy type theory treats dagger categories on a different footing than ordinary categories~\cite[9.7]{hott}.

What is the upshot of trying to unify such a disparate collection of categorical structures under dagger category theory? Does this unification not resemble the ``mustard-watch'' of Girard~\cite{ringard:mustardwatches}? First and foremost, most of the basic notions such as limits and monads turn out to sit naturally at the level of unenriched dagger categories -- when one moves to enriched dagger categories new weights are of course available, but having a clear understanding of the unenriched notion is essential. Understanding clearly the basic categorical notions at the level of dagger categories illuminates what is to be added when one works with dagger categories with additional structure. 

Moreover, the results and viewpoints developed can be applied to various domains at hand. In particular, the last two chapters of the thesis discuss topics of interest in ordinary theoretical computer science in the context of dagger categories. Perhaps surprisingly, dagger category theory provides applications also to ordinary category theory: various limit-colimit coincidences can be understood in a new light once one understands dagger limits clearly. A more detailed description of the results is provided in the next section.

\section{Outline, results and content available elsewhere}

\begin{description}
	\item[Chapter \ref{chp:background}] develops the background, defining the concepts needed throughout the rest. Apart from a few minor observations, it contains no original results.
	\item[Chapter \ref{chp:equivalences}] discusses the concepts of equivalences and adjunctions between dagger categories. The main results about equivalences limit the extent to which dagger categories fail to satisfy the ``principle of equivalence''\cite{mo:dagcatsevil,nlab:evil}.
	\item[Chapter \ref{chp:limits}] is a core theoretical chapter of the thesis. We develop a notion of limit for dagger categories, that we show is suitable in the following ways: it subsumes special cases known from the literature; dagger limits are unique up to unitary isomorphism; a wide class of dagger limits can be built from a small selection of them; dagger limits of a fixed shape can be phrased as dagger adjoints to a diagonal functor; dagger limits can be built from ordinary limits in the presence of polar decomposition; dagger limits commute with dagger colimits in many cases. The chapter is based on

		\bibentry{heunenkarvonen:daggerlimits} 

	which is under peer review.
	\item[Chapter \ref{chp:ambilims}] applies the viewpoint of the previous chapter to ordinary category theory, providing a way of understanding various cases of limits coinciding with colimits without requiring enrichment. In particular, biproducts can be defined without assuming the existence of zero morphisms. This is studied in Section~\ref{sec:biprods} which is based on

		\bibentry{karvonen:biproducts}

	which is under peer review. Moreover, the limit-colimit coincidence from domain theory can also be understood without enrichment, as shown in Section~\ref{sec:limcolimcoincidence}. We study this unenriched notion further in Section~\ref{sec:unenrichedfixedpoints}, showing that, under suitable assumptions, polynomial functors over a rig category have canonical fixed points. This provides a starting point for interpreting recursive data types in them, and we show how to interpret a language in such categories. We also discuss a dagger variant of the theory.
	\item[Chapter \ref{chp:monads}] is another core theoretical chapter, discussing the notion of a monad on a dagger category. If the monad and its algebras satisfy the so-called Frobenius law, everything works as expected: dagger adjunctions induce monads and every monad can be resolved as a dagger adjunction, with extremal solutions given by the categories of Kleisli and Frobenius-Eilenberg-Moore algebras. We characterize the Frobenius law as a coherence property between dagger and closure, and characterize strong monads as being induced by Frobenius monoids. Finally, we discuss limits in categories of algebras and prove a monadicity theorem. Apart from the last two topics, which are new, the chapter is based on the conference proceedings paper

		\bibentry{heunenkarvonen:reversiblemonads}

		and the extended journal version
		
		\bibentry{heunenkarvonen:daggermonads}

	\item[Chapter \ref{chp:arrows}] discusses arrows, a generalization of monads used to model side-effects in functional programming. We adapt the theory for dagger and inverse categories and discuss several examples, and characterize arrows categorically. This chapter is based on the conference paper 

		\bibentry{heunenkarvonenkaarsgaard:reversiblearrows}
	
	This same paper has been used by the coauthor Robin Kaarsgaard in his thesis~\cite[Section C.2]{robin:thesis}.

\end{description}

\section{Prerequisites}

To understand this thesis, an understanding of basic category theory ((monoidal) categories, functors, natural transformations, limits and colimits and adjunctions) is essential and can be obtained from any standard textbook~\cite{maclane:categories,awodey:categorytheory,borceux:vol1,borceux:vol2}. To appreciate section~\ref{sec:formal}, one needs to know 2-category theory at the level of formal theory of monads~\cite{street1972formal}. 

Outside of ordinary category theory, our main sources of intuition come from quantum mechanics, quantum computing and reversible computing. However, the reader does not strictly speaking need to be familiar with any of these fields. On occasion some basic knowledge of Hilbert spaces, say at the level of early chapters of one of~\cite{buschetal:quantummeasurement,heinosaariziman:quantum,prugovecki:quantum} will be useful. Similarly, in chapter~\ref{chp:ambilims} some rudimentary facts about Banach spaces are used. Early chapters of any book on functional analysis should suffice for this. As an example we suggest the textbook~\cite{helemskii:functionalanalysis} for the categorically minded reader.

\section{On notation}

We use a bolded font for naming categories. For example, the category of sets is denoted by \cat{Set} and a generic category by \cat{C}\index[symb]{$\cat{C},\cat{D},\dots$,  (dagger) categories}. Objects in a category are usually written with upper case letters $A,B,C,\dots$\index[symb]{$A,B,C,\dots$, objects of a category} or $X,Y,Z,\dots$\index[symb]{$X,Y,Z,\dots$, objects of a category}, and morphisms are usually written by lower case letters $f,g,h,\dots$\index[symb]{$f,g,h,\dots$, morphisms}. Functors are denoted $F,G,H\dots$\index[symb]{$F,G,H,\dots$, functors} and natural transformations are denoted by Greek letters. The main exceptions to this are cones and cocones of diagrams $D\colon\cat{J}\to\cat{C}$: we often denote a cone $\Delta L\to D$ by a generic morphism $l_A\colon L\to D(A)$\index[symb]{$l_A\colon L\to D(A)$, one leg of a cone $\Delta L\to D$ or a generic morphism denoting the whole cone} instead of naming the cone itself as $l$ or using the more cumbersome (but formal) notation $(l_A)_{A\in\cat{J}}$ for indexed families. Whether $l_A$ refers to the whole cone or a particular leg of it should be clear from context. The notation $[\cat{C},\cat{D}]$ refers to the category of dagger functors $\cat{C}\to\cat{D}$, which is itself a dagger category -- ordinary functor categories are referred to using $\cat{Cat}(\cat{C},\cat{D}$

The set of natural numbers includes $0$ and is denoted by $\N$. The one-object category induced by the monoid $(\N,+)$ is also denoted by $\N$,\index[symb]{$\N$, natural numbers, often as a one-object category} whereas when we view the ordered set $(\N,\leq)$ as a category we denote it by $\omega$\index[symb]{$\omega$, the category induced by the order on $\N$}. Other notational conventions are either explained as they appear or clear given the context. We refer a reader confused by notation to the~\hyperref[index:symb]{Index of symbols}. If instead of notation the reader is confused by a word, we suggest starting from the~\hyperref[index:word]{Index of concepts}.




\chapter{Background}\label{chp:background}

\section{Dagger categories}\label{sec:dagcats}

This section sets the scene by discussing dagger categories themselves. Dagger categories can behave rather differently than ordinary categories, see \eg~\cite[9.7]{hott}. We start by establishing terminology and setting conventions.

\begin{definition}\index[word]{dagger category},\index[symb]{$f^\dag$, the dagger of $f$}
  A \emph{dagger} is a contravariant involutive identity-on-objects functor. Explicitly a dagger is a functor $\dag\colon\cat{C}\op\to\cat{C}$ satisfying $A^\dag=A$ on objects and $f^{\dag\dag}=f$ on morphisms. A \emph{dagger category} is a category equipped with a dagger. 
\end{definition}

\begin{example}
  Examples of dagger categories abound. We first give some concrete examples, and will later add more abstract constructions.
  \begin{itemize}
    \item 
    Any monoid $M$ equipped with an involutive homomorphism $f\colon M\op\to M$ may be regarded as a one-object dagger category with $x^\dag=f(x)$. For example: the complex numbers with conjugation, either under addition or multiplication. Or: the algebra of (complex) $n$-by-$n$ matrices with conjugate transpose.

    \item \index[symb]{\cat{Hilb}, Hilbert spaces and bounded linear maps}
    The category \cat{Hilb} of (complex) Hilbert spaces and bounded linear maps is a dagger category, taking the dagger of $f \colon A \to B$ to be its adjoint, \ie the unique morphism satisfying $\inprod{f(a)}{b} = \inprod{a}{f^\dag(b)}$ for all $a \in A$ and $b \in B$. The full subcategory of finite-dimensional Hilbert spaces \cat{FHilb} is a dagger category too. \index[symb]{\cat{FHilb}, f.d. Hilbert spaces and linear maps}

    \item  \cat{Hilb} and \cat{FHilb} have wide subcategories \Hilb\ \index[symb]{\Hilb, Hilbert spaces and non-expansive maps}and \cat{FHilb_{\leq 1}} \index[symb]{\cat{FHilb_{\leq 1}}, f.d. Hilbert spaces and non-expansive maps}respectively, where one restricts morphisms to non-expansive linear maps, \ie maps $f$ satisfying $\|fx\|\leq \|x\|$ for all $x$. The dagger is inherited from \cat{Hilb}.

    \item \index[symb]{\cat{DStoch}, doubly stochastic matrices}
    The category \cat{DStoch} has finite sets as objects. A morphism $A \to B$ is a matrix $f \colon A \times B \to [0,1]$ that is doubly stochastic, \ie satisfies $\sum_{a \in A} f(a,b)=1$ for all $b \in B$ and $\sum_{b \in B} f(a,b)=1$ for all $a \in A$. This becomes a dagger category with $f^\dag(b,a) = f(a,b)$.

    \item 
    In the category \cat{Rel}\index[symb]{\cat{Rel}, sets and relations} with sets as objects and relations $R \subseteq A \times B$ as morphisms $A \to B$, composition is given by $S \circ R = \{ (a,c) \mid \exists b \in B \colon (a,b) \in R, (b,c) \in S \}$. This becomes a dagger category with $R^\dag=\{(b,a)\mid (a,b)\in R\}$. 
    The full subcategory $\cat{FinRel}$\index[symb]{\cat{FinRel}, finite sets and relations} of finite sets is also a dagger category.

    \item \cat{Rel} and \cat{FinRel} have wide subcategories \PInj\index[symb]{\PInj, sets and partial injections} and \cat{FPInj}\index[symb]{\cat{FPinj}, finite sets and partial injections} respectively, where one restricts morphisms to partial injections. The dagger is inherited from \cat{Rel}. 

    \item 
    If $\cat{C}$ is a category with pullbacks, the category $\cat{Span}(\cat{C})$\index[symb]{$\cat{Span}(\cat{C})$, spans of \cat{C}} of spans is defined as follows. Objects are the same as those of $\cat{C}$. A morphism $A \to B$ in $\cat{Span}(\cat{C})$ is a span $A \leftarrow S \to B$ of morphisms in $\cat{C}$, where two spans $A \leftarrow S \to B$ and $A \leftarrow S' \to B$ are identified when there is an isomorphism $S \simeq S'$ making both triangles commute. Composition is given by pullback. The category $\cat{Span}(\cat{C})$ has a dagger, given by $(A \leftarrow S \to B)^\dag = (B \leftarrow S \to A)$.

    \item Any groupoid has a canonical dagger with $f^\dag = f^{-1}$. Of course, a given groupoid might have other daggers as well. For example, the core of \cat{Hilb} inherits a dagger from \cat{Hilb} that is different from the canonical one it has as a groupoid.
  \end{itemize}
\end{example}

Dagger categories are \emph{self-dual} in a strong sense. Consequently dagger categories behave differently than categories on a fundamental level. Intuitively speaking, categories are built from objects $\bullet$ and morphisms $\bullet \rightarrow \bullet$ whereas dagger categories are built from $\bullet$ and $\bullet\leftrightarrows\bullet$. 
 The result is a theory that is essentially directionless. The guiding principle when working with dagger categories, also known as `the way of the dagger', is that all structure in sight should cooperate with the dagger. This amounts to defining dagger versions of various ordinary notions in category theory. For basic notions this is clear or at least has been already achieved, whereas for more complicated notions it takes some effort to ensure that the dagger notions are truly well-behaved. A lot of the terminology for the dagger counterparts of ordinary notions comes from the category \cat{Hilb}.

\begin{definition} 
  A morphism $f \colon A \to B$ in a dagger category is:
  \begin{itemize}
  	\item \emph{unitary}\index[word]{unitary morphism} (or a \emph{dagger isomorphism}) if $f^\dag f = \id[A]$ and $ff^\dag=\id[B]$;

  	\item an \emph{isometry}\index[word]{isometry} (or a \emph{dagger monomorphism})\index[word]{dagger monomorphism} if $f^\dag f = \id[A]$;

  	\item a \emph{coisometry}\index[word]{cosiometry} (or a \emph{dagger epimorphism})\index[word]{dagger epimorphism}) if $ff^\dag = \id[B]$;

  	\item a \emph{partial isometry}\index[word]{partial isometry} if $f=ff^\dag f$;
  \end{itemize}
  Moreover, an endomorphism $f\colon A\to A$ is
  \begin{itemize}
  	\item \emph{self-adjoint}\index[word]{self-adjoint morphism} if  $f=f^\dag$;

  	\item a \emph{projection}\index[word]{projection} (or a \emph{dagger idempotent})\index[word]{dagger idempotent}) if $f=f^\dag=ff$;

  	\item \emph{positive}\index[word]{positive morphism} if $f=g^\dag g$ for some $g \colon A \to B$. 
  \end{itemize}\index[word]{dagger subobject}
  A \emph{dagger subobject} is a subobject that can be represented by a dagger monomorphism.
  A dagger category is \emph{unitary}\index[word]{dagger category!unitary} when objects that are isomorphic are also unitarily isomorphic.
\end{definition}

A morphism $f$ is dagger monic iff $f^\dag$ is dagger epic. The other concepts introduced above are self-adjoint, \ie a morphism $f$ is unitary/a partial isometry/self-adjoint/a projection/positive iff $f^\dag$ is.
Notice that dagger monomorphisms are split monomorphisms, and dagger epimorphisms are split epimorphisms.
A morphism is dagger monic (epic) if and only if it is both a partial isometry and monic (epic).
In diagrams, we will depict 
partial isometries as \index[symb]{$\begin{tikzpicture} \draw[dagger,->] (0,0) to (1,0); \end{tikzpicture}$, partial isometry}$\begin{tikzpicture} \draw[dagger,->] (0,0) to (1,0); \end{tikzpicture}$, 
dagger monomorphisms as \index[symb]{$\begin{tikzpicture} \draw[dagger,>->] (0,0) to (1,0); \end{tikzpicture}$, dagger mono}$\begin{tikzpicture} \draw[dagger,>->] (0,0) to (1,0); \end{tikzpicture}$, 
dagger epimorphisms as \index[symb]{$\begin{tikzpicture} \draw[dagger,->>] (0,0) to (1,0); \end{tikzpicture}$, dagger epi}$\begin{tikzpicture} \draw[dagger,->>] (0,0) to (1,0); \end{tikzpicture}$, 
and dagger isomorphisms as \index[symb]{$\begin{tikzpicture} \draw[dagger,>->>] (0,0) to (1,0); \end{tikzpicture}$, unitary}$\begin{tikzpicture} \draw[dagger,>->>] (0,0) to (1,0); \end{tikzpicture}$, 
If $f$ is a partial isometry, then both $f^\dag f$ and $ff^\dag$ are projections.

The definition of a dagger subobject might seem odd: two monomorphisms $m:M\hookrightarrow A$ and $n\colon N\hookrightarrow A$ are considered to represent the same subobject when there is an isomorphism $f\colon M\to N$ such that $m=nf$, whereas one might expect that dagger monics $m$ and $n$ represent the same \emph{dagger} subobject if $f$ can be chosen to be unitary. Hence there seems to be a possibility that two dagger monics representing the same subobject might nevertheless represent different dagger subobjects. However, this can not happen: if $m,n$ are dagger monic and $f$ is an isomorphism satisfying $m=nf$, then $f$ is in fact unitary. This is because $m=nf$ implies  $n^\dag m=n^\dag n f=f$ and $mf^{-1}=n$ implies $f^{-1}=m^\dag n$ so that $f^{-1}=f^\dag$.

\begin{example}
  An \emph{inverse category}\index[word]{inverse category}\index[symb]{\PInj, sets and partial injections} is a dagger category in which every morphism is a partial isometry, and positive morphisms commute~\cite{kastl:inversecategories}: $ff^\dag f=f$ and $f^\dag f g^\dag g = g^\dag g f^\dag f$ for all morphisms $f \colon A \to B$ and $g \colon A \to C$. A prototypical inverse category is \cat{PInj}, the category of sets and partial injections, which is a subcategory of $\cat{Rel}$. 	
\end{example}

\begin{remark}\label{rem:pisetc} 
  Partial isometries need not be closed under composition~\cite[5.4]{heunen:ltwo}. The same holds for self-adjoint morphisms, projections and positive morphisms. 
  However, given partial isometries $p\colon A\to B$ and $q\colon B\to C$, their composition $qp$ is a partial isometry whenever the projections $q^\dag q$ and $pp^\dag$ commute. Similarly, a morphism $f$ factors through a partial isometry $p$ with the same codomain if and only if $pp^\dag f=f$.
\end{remark}

\begin{definition}\index[word]{dagger functor}
  A \emph{dagger functor} is a functor $F\colon\cat{C}\to \cat{D}$ between dagger categories satisfying $F(f^\dag)=F(f)^\dag$. Denote the category of small dagger categories and dagger functors by \cat{DagCat}. 
\end{definition}

For a dagger functor $F$ we can and will write $Ff^\dag$ with no ambiguity. There is no need to go further and define `dagger natural transformations': if $\sigma\colon F\to G$ is a natural transformation between dagger functors, then taking daggers componentwise defines a natural transformation $\sigma^\dag\colon G\to F$\footnote{This no longer holds for natural transformations between arbitrary functors into a dagger category, though, see Definition~\ref{def:adjointability}}.

\begin{example}\label{ex:functorcat}
  If $\cat{C}$ and $\cat{D}$ are dagger categories, then the category $[\cat{C},\cat{D}]$\index[symb]{$[\cat{C},\cat{D}]$, dagger functors $\cat{C}\to\cat{D}$} of dagger functors $\cat{C} \to \cat{D}$ and natural transformation is again a dagger category.
  In particular, taking $\cat{C}$ to be a group $G$ and $\cat{D}=\cat{Hilb}$, this shows that the category of unitary representations of $G$ and intertwiners is a dagger category.
\end{example}

Example~\ref{ex:functorcat} in fact makes \cat{DagCat} into a \emph{dagger 2-category}, as in the following definition. Notice that products of (ordinary) categories actually provide the category $\cat{DagCat}$ with products, so that the following definition makes sense.

\begin{definition}\label{def:dagtwocat}
  A \emph{dagger 2-category}\index[word]{dagger 2-category} is a category enriched in \cat{DagCat}, and a \emph{dagger 2-functor} is a $\cat{DagCat}$-enriched functor.
\end{definition}

Concretely, a dagger 2-category is a 2-category where the 2-cells have a dagger that cooperates with vertical and horizontal composition, in the sense that the equations
    \begin{align*}
      (\sigma^{\dag})^\dag&=\sigma \\
      (\tau\circ\sigma)^\dagger&=\sigma^\dagger\circ\tau^\dagger \\
      (\sigma*\tau)^\dag&=(\sigma^\dag) * (\tau^\dag)
    \end{align*}
hold,where $\circ$ denotes vertical composition and $*$ denotes horizontal composition.
Dagger 2-functors are 2-functors that preserve this dagger. Strictly speaking, this defines \emph{strict} dagger 2-categories. Defining dagger bicategories is straightforward but not necessary for our purposes.


The forgetful functor $\cat{DagCat}\to\cat{Cat}$, has both adjoints. We recall the definitions of free and cofree dagger categories~\cite[3.1.17,3.1.19]{heunen:thesis}, which will be used in Examples~\ref{ex:daggershaped} and~\ref{ex:cofree} and in the proof of Theorem~\ref{thm:ambilimitsunique}.

\begin{proposition}\label{prop:free}\index[word]{dagger category!free}\index[symb]{\cat{ZigZag}(\cat{C}), the free dagger category on \cat{C}}
  The forgetful functor $\cat{DagCat} \to \cat{Cat}$ has a left adjoint \cat{ZigZag(-)}. The objects of \cat{Zigzag}(\cat{C}) are the same as in \cat{C}, and a morphism $A \to B$ is an alternating sequence of morphisms $A \rightarrow C_1 \leftarrow \cdots \rightarrow C_n \leftarrow B$ from $\cat{C}$, subject to the identifications $\big(A \sxto{f} C \sxfrom{\id} C \sxto{g} D \sxfrom{h} B \big) = \big( A \sxto{g \circ f} D \sxfrom{h} B \big)$ and $\big(A \sxto{f} C \sxfrom{g} D \sxto{\id} D \sxfrom{h} B\big) = \big( A \sxto{f} C \sxfrom{g \circ h} B\big )$; composition is given by juxtaposition. The category $\cat{Zigzag}(\cat{C})$ has a dagger $(f_1,\ldots,f_n)^\dag = (f_n,\ldots,f_1)$.
\end{proposition}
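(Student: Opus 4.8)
The plan is to establish the adjunction through the universal-arrow criterion, rather than by building the functor $\cat{ZigZag}(-)$ and its naturality by hand. Writing $U\colon\cat{DagCat}\to\cat{Cat}$ for the forgetful functor, I will show that for each category $\cat{C}$ there is a functor $\eta_\cat{C}\colon\cat{C}\to U(\cat{ZigZag}(\cat{C}))$ that is universal from $\cat{C}$ to $U$: every functor $F\colon\cat{C}\to U(\cat{D})$ into (the underlying category of) a dagger category factors as $U(\bar F)\circ\eta_\cat{C}$ for a unique dagger functor $\bar F\colon\cat{ZigZag}(\cat{C})\to\cat{D}$. Existence of the left adjoint, together with naturality of the adjunction bijection, then follows formally, so no separate naturality computation is needed. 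Concretely the work splits into: (1) checking $\cat{ZigZag}(\cat{C})$ is a well-defined dagger category, (2) defining $\eta_\cat{C}$, and (3) verifying the universal property.

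For step (1) I would first check that the equivalence relation generated by the two displayed identifications is a congruence for juxtaposition: each identification is a local rewrite on a contiguous block, so performing it inside a longer juxtaposed sequence produces another instance of the same scheme, and juxtaposition therefore descends to equivalence classes. Associativity is immediate since juxtaposition of sequences is strictly associative, and the identity on $A$ is the class of $A\sxto{\id}A\sxfrom{\id}A$; that this is a two-sided unit follows by a single application of each identification, absorbing a trailing or leading pair of identities. For the dagger, reversal sends an instance of the first identification to an instance of the second and conversely — reversing $A\sxto{f}C\sxfrom{\id}C\sxto{g}D\sxfrom{h}B$ gives $B\sxto{h}D\sxfrom{g}C\sxto{\id}C\sxfrom{f}A$, an instance of the second scheme — so reversal is well-defined on classes and is visibly contravariant, involutive and identity-on-objects, hence a dagger.

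For steps (2) and (3), let $\eta_\cat{C}$ send $f\colon A\to B$ to the span $A\sxto{f}B\sxfrom{\id}B$; functoriality is one instance of the first identification, and $\eta_\cat{C}(\id)$ is the identity zigzag above. Given $F\colon\cat{C}\to U(\cat{D})$, define $\bar F$ on a representative by sending each forward arrow $f$ to $F(f)$ and each backward arrow $g$ to $F(g)^\dag$, and composing in $\cat{D}$. Well-definedness on classes is the crux: checking the first identification reduces, using $F(\id)^\dag=\id$, to $F(g)\circ F(f)=F(g\circ f)$, and checking the second reduces, using $F(\id)=\id$, to $F(h)^\dag\circ F(g)^\dag=F(g\circ h)^\dag$, both being functoriality of $F$ combined with the law $(xy)^\dag=y^\dag x^\dag$ in $\cat{D}$. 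The same laws show $\bar F$ preserves the dagger, and $U(\bar F)\circ\eta_\cat{C}=F$ since $\bar F(A\sxto{f}B\sxfrom{\id}B)=F(\id)^\dag\circ F(f)=F(f)$. Uniqueness holds because every zigzag is a juxtaposition of the classes $\eta_\cat{C}(f)$ and their daggers, so any dagger functor agreeing with $F$ along $\eta_\cat{C}$ is forced to coincide with $\bar F$ by preservation of composition and of the dagger.

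The main obstacle is not a single deep step but the combinatorial bookkeeping of step (1): one must verify that the two identifications, stated only for configurations with an interior identity, genuinely suffice both to make juxtaposition well-defined on classes and to supply units, and that the alternating forward/backward parity of zigzags is maintained throughout. Once the congruence property is secured, the universal property of step (3) is a short calculation driven entirely by functoriality of $F$ and the contravariance of the dagger in $\cat{D}$.
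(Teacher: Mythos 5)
Your proposal is correct and complete; the paper itself gives no proof of this proposition, deferring instead to the cited reference~\cite[3.1.17]{heunen:thesis}, and your universal-arrow argument is the standard one that reference follows. The key verifications — that the two identification schemes are swapped by reversal (so the dagger descends to equivalence classes), that trailing and leading identity pairs are absorbed to give units, and that every zigzag decomposes as a juxtaposition of $\eta_{\cat{C}}(f)$'s and their daggers (forcing uniqueness of $\bar F$) — are all present and check out.
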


This adjunction induces a monad on \cat{Cat}, and it is easy to see that \cat{DagCat} is the category of algebras for this monad.

\begin{proposition}\label{prop:cofree}\index[word]{dagger category!cofree}\index[symb]{\cat{C_\leftrightarrows}, the cofree dagger category on \cat{C}}
  The forgetful functor $\cat{DagCat} \to \cat{Cat}$ has a right adjoint $(-)_\leftrightarrows$,
  which sends a category $\cat{C}$ to the full subcategory of $\cat{C}\op \times \cat{C}$ with objects of the form $(A,A)$, 
  and sends a functor $F$ to the restriction of $F\op \times F$. The dagger on $\cat{C_\leftrightarrows}$ is given by $(f,g)^\dag=(g,f)$.
\end{proposition}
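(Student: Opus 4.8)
The plan is to exhibit, for a dagger category $\cat{D}$ and an ordinary category $\cat{C}$, a bijection
\[
\cat{Cat}(U\cat{D},\cat{C}) \;\cong\; \cat{DagCat}(\cat{D},\cat{C}_\leftrightarrows),
\]
natural in both variables, where $U\colon\cat{DagCat}\to\cat{Cat}$ is the forgetful functor. Before constructing it I would first check that $\cat{C}_\leftrightarrows$ is genuinely a dagger category and that $(-)_\leftrightarrows$ is genuinely a functor into $\cat{DagCat}$. The cleanest way to see the dagger axioms is to observe that $(f,g)\mapsto(g,f)$ is the restriction, to the diagonal objects $(A,A)$, of the canonical contravariant involution on $\cat{C}\op\times\cat{C}$ that swaps the two factors (using $(\cat{C}\op\times\cat{C})\op = \cat{C}\times\cat{C}\op$). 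Being assembled from canonical (iso)functors, it is automatically identity-on-objects, involutive and contravariant, so $(f,g)^{\dag\dag}=(f,g)$ and $((f',g')\circ(f,g))^\dag=(f,g)^\dag\circ(f',g')^\dag$ hold by bookkeeping. For functoriality of $(-)_\leftrightarrows$, note $F\op\times F$ sends $(A,A)$ to $(FA,FA)$ and hence restricts to a functor $\cat{C}_\leftrightarrows\to\cat{D}_\leftrightarrows$; since $(F\op\times F)(f,g)=(Ff,Fg)$ commutes with the swap, the restriction is a dagger functor, and functoriality in $F$ is inherited from that of $(-)\op\times(-)$.

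Next I would set up the two directions of the bijection. Given a dagger functor $G\colon\cat{D}\to\cat{C}_\leftrightarrows$, post-compose with the second projection $\pi_2\colon\cat{C}_\leftrightarrows\to\cat{C}$, sending $(A,A)\mapsto A$ and $(f,g)\mapsto g$, which is an ordinary functor; this yields $\pi_2\circ UG\colon\cat{D}\to\cat{C}$. Conversely, given an ordinary functor $F\colon\cat{D}\to\cat{C}$, define $\hat F\colon\cat{D}\to\cat{C}_\leftrightarrows$ by $\hat F(A)=(FA,FA)$ and $\hat F(f)=(Ff^\dag,Ff)$ for $f\colon A\to B$. The first component $Ff^\dag\colon FB\to FA$ is exactly what is forced by requiring $\hat F$ to preserve the dagger, since $\hat F(f^\dag)=(Ff^{\dag\dag},Ff^\dag)=(Ff,Ff^\dag)=(\hat F f)^\dag$ using $f^{\dag\dag}=f$. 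Functoriality of $\hat F$ follows from that of $F$ together with $\id^\dag=\id$ and $(g\circ f)^\dag=f^\dag\circ g^\dag$.

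It then remains to check that these assignments are mutually inverse and natural. Starting from $F$, the composite $\pi_2\circ U\hat F$ sends $f\mapsto(Ff^\dag,Ff)\mapsto Ff$, recovering $F$. Conversely, starting from a dagger functor $G$, write $G(A)=(Z_A,Z_A)$ (objects of $\cat{C}_\leftrightarrows$ are diagonal) and $G(f)=(u,v)$; then $\pi_2 G(f)=v$, while the dagger condition $G(f^\dag)=(Gf)^\dag=(v,u)$ together with $\pi_2 G(f^\dag)=u$ forces $u=\pi_2 G(f^\dag)$. Thus $G$ is completely determined by $\pi_2\circ UG$, and $G=\widehat{\pi_2\circ UG}$. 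Naturality in $\cat{C}$ and $\cat{D}$ is a routine diagram chase. Equivalently, one may package this as the universal property of the counit $\epsilon_\cat{C}=\pi_2\colon U(\cat{C}_\leftrightarrows)\to\cat{C}$.

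I expect the only real obstacle to be bookkeeping with variance: keeping straight that a morphism $(A,A)\to(B,B)$ in $\cat{C}_\leftrightarrows$ consists of a morphism $B\to A$ (the $\cat{C}\op$-component) together with a morphism $A\to B$ (the $\cat{C}$-component), and that composition in the first slot is computed in $\cat{C}\op$. Once this is pinned down, the conceptual content is simply that the dagger condition rigidly recovers the first component of a dagger functor into $\cat{C}_\leftrightarrows$ from its second; this is precisely what makes $(\hat F,\;\pi_2\circ U(-))$ a mutually inverse pair, and hence what makes $(-)_\leftrightarrows$ right adjoint to $U$.
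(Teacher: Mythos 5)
Your proof is correct and is essentially the standard argument (the paper itself defers to Heunen's thesis for this): you verify the dagger on $\cat{C}_\leftrightarrows$, exhibit the counit $\pi_2$, and show that the dagger condition forces the first component of a dagger functor into $\cat{C}_\leftrightarrows$ to be $\pi_2 G(f^\dag)$, so that $G\mapsto\pi_2\circ UG$ and $F\mapsto\hat F$ are mutually inverse. The variance bookkeeping you flag is indeed the only delicate point, and you handle it correctly.
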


The first adjunction induces a monad and the second a comonad on \cat{Cat}. Hence one might wonder what are the algebras of the monad and coalgebras of the comonad. Both of these categories in fact coincide with \cat{DagCat}. Recall that a functor $U\colon\cat{D}\to\cat{C}$ is monadic if there is a monad $T$ on \cat{D} and an equivalence of categories $\cat{D}\to{C}^T$ such that $U$ is isomorphic to the composite $\cat{D}\to{C}^T\to\cat{C}$.  Beck's monadicity theorem~\cite{beck:triples} characterizes monadic functors.

\begin{theorem}\label{thm:dagcattocatis(co)monadic}
  The forgetful functor $U\colon\cat{DagCat}\to\cat{Cat}$ is both monadic and comonadic.
\end{theorem}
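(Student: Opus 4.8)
The plan is to apply Beck's monadicity theorem~\cite{beck:triples} together with its dual, using the two adjunctions already in hand: $\cat{ZigZag}(-)\dashv U$ from Proposition~\ref{prop:free} for monadicity, and $U\dashv(-)_\leftrightarrows$ from Proposition~\ref{prop:cofree} for comonadicity. Since the adjoints are already provided, for monadicity it remains only to check that $U$ creates coequalizers of $U$-split pairs, and dually for comonadicity that $U$ creates equalizers of $U$-split pairs. Both reduce to a single observation: a dagger is a functor $\dag\colon\cat{D}\op\to\cat{D}$, and the statement that $F\colon\cat{C}\to\cat{D}$ is a dagger functor is \emph{exactly} the equation $F\circ\dag=\dag\circ F\op$, so that the dagger slides past any dagger functor. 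This is what lets the dagger descend along (co)equalizers.

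For monadicity, suppose $F,G\colon\cat{C}\rightrightarrows\cat{D}$ are dagger functors whose images $UF,UG$ admit a split coequalizer $q\colon\cat{D}\to Q$ in $\cat{Cat}$. I would transport the dagger to $Q$ as follows. Since $(-)\op$ is an automorphism of $\cat{Cat}$, the functor $q\op$ is the coequalizer of $F\op,G\op$; using $F\circ\dag=\dag\circ F\op$ (and the same for $G$) together with $qF=qG$, one checks that $q\circ\dag\colon\cat{D}\op\to Q$ coequalizes $F\op$ and $G\op$, so it factors uniquely as $\dag_Q\circ q\op$ for a unique $\dag_Q\colon Q\op\to Q$. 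This factorization equation says precisely that $q$ is a dagger functor for $\dag_Q$. Because a split coequalizer is absolute, $q$ is surjective on objects and on morphisms (every $u$ in $Q$ equals $q(su)$ for the section $s$), and from this surjectivity together with $q$ being a dagger functor one deduces that $\dag_Q$ is identity-on-objects and involutive, \ie a genuine dagger. A routine check then shows the lifted $q$ is a coequalizer in $\cat{DagCat}$: any coequalizing dagger functor induces a mediating functor in $\cat{Cat}$ that is automatically a dagger functor, again by surjectivity of $q$. It is preserved by $U$ by construction, and the dagger on $Q$ is forced, so the lift is unique. This is exactly creation of coequalizers of $U$-split pairs.

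The comonadic half is formally dual. Given dagger functors $F,G\colon\cat{C}\rightrightarrows\cat{D}$ whose images admit a split equalizer $e\colon E\to\cat{C}$, the functor $\dag\circ e\op\colon E\op\to\cat{C}$ satisfies $F\circ(\dag\circ e\op)=G\circ(\dag\circ e\op)$, using $Fe=Ge$ and again sliding the dagger past $F$ and $G$; hence it factors uniquely through the equalizer $e$ as $e\circ\dag_E$, which makes $e$ a dagger functor. Now $e$ is a split monomorphism, so it is faithful and injective on objects, and these properties force $\dag_E$ to be identity-on-objects and involutive. The same faithfulness shows the lifted $e$ is an equalizer in $\cat{DagCat}$ preserved by $U$, so $U$ creates equalizers of $U$-split pairs and the dual of Beck's theorem applies.

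The main obstacle is precisely the transport of the dagger and the verification that the transported operation is a genuine dagger (involutive and identity-on-objects) rather than merely a contravariant functor: the universal property only hands us a factorization, not an involution. This is where the \emph{absoluteness} of split (co)equalizers is essential, since it is exactly surjectivity of $q$ on morphisms (respectively faithfulness of $e$) that upgrades the bare contravariant functor into an honest involution. Everything else---existence of the adjoints, preservation by $U$, uniqueness of the lift, and the universal properties inside $\cat{DagCat}$---then follows routinely.
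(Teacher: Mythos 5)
Your proof is correct, but it takes a genuinely different route from the one in the paper. You invoke Beck's theorem in its sharp form (creation of coequalizers of $U$-split pairs, and dually), and you do the real work directly: transporting the dagger along the universal property of the split (co)equalizer, using that $(-)\op$ is an automorphism of $\cat{Cat}$ and that the splitting makes $q$ surjective on objects and morphisms (resp.\ $e$ injective and faithful), which is exactly what upgrades the induced contravariant functor to an honest involutive identity-on-objects dagger. The paper instead uses only the cruder sufficient condition (left adjoint, reflects isomorphisms, has and preserves \emph{all} (co)equalizers): it proves comonadicity first by exhibiting every equalizer in $\cat{DagCat}$ concretely --- the agreement subcategory of two dagger functors is closed under the dagger, so it inherits one trivially --- and then gets monadicity by a bootstrap: $\cat{DagCat}$ is a category of coalgebras over $\cat{Cat}$, hence has all colimits that $\cat{Cat}$ has, and $U$ preserves coequalizers simply because it is a left adjoint. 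The trade-off is clear. The paper's route never has to touch coequalizers in $\cat{Cat}$ (which are notoriously awkward to describe), but it is asymmetric and leans on the abstract fact about categories of (co)algebras inheriting (co)limits. Your route is symmetric and self-contained, subsumes the ``reflects isomorphisms'' check inside creation, and correctly identifies where absoluteness of the split pair is indispensable --- the universal property alone only yields a factorization $\dag_Q\circ q\op = q\circ\dag$, and it is the section $s$ with $qs=\id$ that forces $\dag_Q$ to be identity-on-objects and involutive and forces the mediating functors to preserve daggers. Both arguments are sound; yours is arguably closer in spirit to the direct algebra/coalgebra verification that the paper sketches in items (i)--(v) before the displayed proof and then deliberately avoids.
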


One could show this directly as follows:
  \begin{enumerate}[(i)]
    \item  By the universal property of \cat{ZigZag(C)}, any dagger on \cat{C} induces a map $a\colon \cat{ZigZag(C)}\to\cat{C}$ such that $a\circ\eta\colon \cat{C}\to \cat{ZigZag(C)}\to\cat{C}$ equals the identity on \cat{C}. The map $a$ can be shown to make \cat{C} into an algebra for the monad, and dagger functors induce algebra homomorphisms. 
    \item Conversely, an algebra structure $a\colon \cat{ZigZag(C)}\to\cat{C}$ induces a dagger on \cat{C}: the dagger on \cat{C} can be defined as the composite $a\circ\dag\circ\eta$, where the $\dag$ in the middle refers to the dagger on \cat{ZigZag(C)}. Algebra homomorphisms induce dagger functors. 
    \item Similarly, by the universal property of \cat{C_\leftrightarrows} any dagger on \cat{C} gives rise to a map $a\colon\cat{C}\to \cat{C_\leftrightarrows}$ such that $\epsilon\circ a=\id[C]$, where $\epsilon$ is the counit. This gives a coalgebra structure on \cat{C} and dagger functors are homomorphisms of the corresponding coalgebras.
    \item If $a\colon\cat{C}\to \cat{C_\leftrightarrows}$ is a coalgebra sructure on \cat{C}, then the composite map $\epsilon\circ\dag\circ a$ is a dagger on \cat{C}, where the $\dag$ in the middle refers to the dagger on dagger on \cat{C_\leftrightarrows}. Homomorphisms of coalgebras are dagger functors between the corresponding dagger categories.
    \item Constructions (i) and (ii)  ((iii) and (iv)) are mutually inverse.
  \end{enumerate}

However, we avoid some of the tedious calculations involved by using Beck's monadicity theorem instead. In fact, the following sufficient condition is enough for us:
\begin{theorem}\label{thm:sufficientformonadicity} A functor $U\colon\cat{D}\to\cat{C}$ is monadic if
  \begin{enumerate}[(i)]
      \item $U$ has a left adjoint
      \item $U$ reflects isomorphisms
      \item $D$ has and $U$ preserves coequalizers.
  \end{enumerate}
\end{theorem}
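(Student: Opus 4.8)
The plan is to deduce the statement directly from the full form of Beck's monadicity theorem~\cite{beck:triples}, which has already been referenced above. In the form I would use it, Beck's theorem says that $U\colon\cat{D}\to\cat{C}$ is monadic precisely when (a) $U$ has a left adjoint, (b) $U$ reflects isomorphisms, and (c) every parallel pair in $\cat{D}$ whose image under $U$ carries a split coequalizer has a coequalizer in $\cat{D}$ that is preserved by $U$. Thus the whole task reduces to checking that the three hypotheses (i)--(iii) imply these three clauses.

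First I would observe that (a) and (b) are literally conditions (i) and (ii), so there is nothing to verify for them. For clause (c), I would take a parallel pair $f,g\colon A\to B$ in $\cat{D}$ whose image $Uf,Ug$ admits a split coequalizer in $\cat{C}$, and then simply discard this extra data: by (iii) the pair $f,g$ already has a coequalizer in $\cat{D}$, and again by (iii) this coequalizer is preserved by $U$. That is exactly clause (c), so Beck's theorem applies and $U$ is monadic.

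The point of the reduction is that the hypotheses collected in (iii) are deliberately stronger than what Beck requires: demanding all coequalizers together with their preservation trivially subsumes the special case of $U$-split pairs, so the only genuine content lives in Beck's theorem itself. I expect no real obstacle here; the single thing to be careful about is to quote Beck's theorem in the version whose coequalizer clause concerns $U$-split pairs, so that the stronger hypothesis (iii) visibly implies it. This crude, easier-to-apply form is exactly what is convenient for the application to $U\colon\cat{DagCat}\to\cat{Cat}$ in Theorem~\ref{thm:dagcattocatis(co)monadic}, where exhibiting all coequalizers is less tedious than tracking the split-coequalizer bookkeeping by hand.
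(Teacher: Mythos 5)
Your proposal is correct and follows exactly the paper's argument: the paper likewise deduces the statement from the full monadicity theorem by noting that (i) and (ii) are two of its hypotheses verbatim and that (iii) is stronger than the clause about $U$-split coequalizer pairs. Nothing is missing.
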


The usual monadicity theorem gives necessary and sufficient conditions for a functor to be monadic (see \eg~\cite[Theorem 4.4.4.]{borceux:vol2}). To deduce Theorem~\ref{thm:sufficientformonadicity} from it, one just notes that conditions (i) and (ii) are as in the usual statement and condition (iii) is stronger than the usual requirement that concerns so-called $U$-split coequalizers.

\begin{proof}[Proof of Theorem~\ref{thm:dagcattocatis(co)monadic}]
  To see that $U$ is comonadic, note first that it has a right adjoint by Proposition~\ref{prop:cofree}. Moreover, $U$ clearly reflects isomorphisms: if $F\colon \cat{C}\to \cat{D}$ is a dagger functor that is an isomorphism of the underlying categories, then $F^{-1}$ is also a dagger functor. Hence to use (the dual of) Theorem~\ref{thm:sufficientformonadicity}, it remains to show that \cat{DagCat} has all equalizers and $U$ preserves them. If $F,G\colon \cat{C}\to\cat{D}$ are parallel dagger functors, then the equalizer $E$ of $UF,UG$ is the subcategory of $\cat{C}$ containing those objects $A$ and those morphisms $f$ satisfying $FA=GA$ and $Ff=Gf$. Since $F$ and $G$ are dagger functors, if $F(f)=G(f)$, then also $F(f^\dag)=G(f^\dag)$ holds. Hence the subcategory $E$ is closed under the dagger of \cat{C}, and this induces a dagger on $E$. It is trivial to show that this gives an equalizer in \cat{DagCat}, and $U$ preserves it since it is a right adjoint by Proposition~\ref{prop:free}.

  Next we prove that $U$ is monadic. $U$ has a left adjoint by Proposition~\ref{prop:free} and we know that $U$ reflects isomorphisms. Hence it remains to show that \cat{DagCat} has coequalizers and that $U$ preserves them. Preservation is clear since $U$ is a left adjoint. The existence of coequalizers could be shown directly by \eg using the concrete description of them in \cat{Cat} from~\cite{bednarczyk1999generalized}, but follows readily from $U$ being comonadic: categories of algebras are known to have any limits present in the underlying category~\cite[Proposition 4.4.1]{borceux:vol2} so by duality \cat{DagCat} has all colimits that \cat{Cat} has, and hence in particular it has coequalizers.
\end{proof}

\begin{corollary} \cat{DagCat} is complete and cocomplete.
\end{corollary}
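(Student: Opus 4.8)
The plan is to derive this immediately from Theorem~\ref{thm:dagcattocatis(co)monadic} together with the completeness and cocompleteness of the base category. The category \cat{Cat} of small categories is well known to be both complete and cocomplete, so it suffices to transport limits and colimits across the forgetful functor $U\colon\cat{DagCat}\to\cat{Cat}$, using the two halves of the (co)monadicity result for the two halves of the claim.

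First I would treat completeness via the monadic half. Since $U$ is monadic, \cat{DagCat} is equivalent to the category of algebras for the induced monad on \cat{Cat}. Categories of algebras inherit all limits present in the underlying category~\cite[Proposition 4.4.1]{borceux:vol2}, and this limit is computed by the forgetful functor. As \cat{Cat} is complete, it follows that \cat{DagCat} has all small limits. (This inheritance was already invoked in the proof of Theorem~\ref{thm:dagcattocatis(co)monadic} to produce equalizers.)

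Next, for cocompleteness, I would dualize using the comonadic half. Since $U$ is comonadic, \cat{DagCat} is equivalent to the category of coalgebras for the induced comonad on \cat{Cat}. By the formal dual of the statement just used, categories of coalgebras inherit all colimits present in the underlying category; as \cat{Cat} is cocomplete, \cat{DagCat} therefore has all small colimits. Combining the two observations gives the result.

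There is essentially no obstacle here beyond invoking the correct inheritance results, since the substantive work was already carried out in establishing Theorem~\ref{thm:dagcattocatis(co)monadic}. The only point that requires mild care is to pair the monadicity half with limits and the comonadicity half with colimits, rather than attempting to obtain both from a single adjoint; neither adjoint alone would suffice, because a monadic functor need not create colimits nor a comonadic functor limits.
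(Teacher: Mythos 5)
Your proposal is correct and matches the paper's own argument exactly: the paper likewise derives limits from monadicity of $U$ via the fact that categories of algebras inherit limits from the underlying category, and colimits dually from comonadicity, then concludes from \cat{Cat} being complete and cocomplete. No differences worth noting.
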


\begin{proof} Categories of algebras have all limits present in the underlying category~\cite[Proposition 4.4.1]{borceux:vol2}, so monadicity of $U\colon\cat{DagCat}\to\cat{Cat}$ implies that \cat{DagCat} has all limits that \cat{Cat} has. Since $U$ is also comonadic, \cat{DagCat} has all colimits that \cat{Cat} has. Hence the result follows from \cat{Cat} being complete and cocomplete.
\end{proof}

Neither of the adjoints of $U$ can be defined to operate on 2-cells, and indeed the forgetful 2-functor $\colon\cat{DagCat}\to\cat{Cat}$ has no 2-adjoints. To see this, note that a right 2-adjoint $R$ to $U$ would, depending on the definition, result in a natural family of isomorphisms/equivalences $\cat{DagCat}(C,RD)\cong\cat{Cat}(UC,D)$ --  but the left hand side is always a dagger category, whereas the right hand side might fail to be self-dual: for an easy example, pick $C=\bullet$ and $D=\bullet\to\bullet$. The non-existence of a left 2-adjoint is proved similarly.

\begin{definition}\label{def:adjointability}\index[word]{adjointable natural transformation}
  Consider a natural transformation $\sigma\colon F\Rightarrow G$ where $F,G$ are arbitrary functors with codomain a dagger category \cat{C}. Taking daggers componentwise generally does not yield a natural transformation $G\Rightarrow F$. When it does, we call $\sigma$ \emph{adjointable}. For small \cat{J}, we denote the dagger category of functors $\cat{J}\to\cat{C}$ and adjointable natural transformations by $[\cat{J},\cat{C}]_\dag$\index[symb]{$[\cat{J},\cat{C}]_\dag$ functors and \emph{adjointable} transformations}.
\end{definition}

A natural transformation $\sigma\colon F\Rightarrow G$ is adjointable if and only if $\sigma$ defines a natural transformation $\dag\circ F\Rightarrow \dag\circ G$

\begin{lemma}\label{lem:daggersubfunctorsaredagger} 
  If $G$ is a dagger functor and $\sigma\colon F\Rightarrow G$ is a natural transformation that is pointwise dagger monic, then $F$ is a dagger functor as well.
\end{lemma}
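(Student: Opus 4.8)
The plan is to verify the defining equation $F(f^\dag)=F(f)^\dag$ for every morphism $f\colon A\to B$ by showing that both sides reduce to one and the same composite built from the components of $\sigma$. Note first that the hypothesis ``$G$ is a dagger functor'' presupposes that the common domain $\cat{J}$ of $F$ and $G$ is itself a dagger category, so that $f^\dag\colon B\to A$ is available and the statement typechecks. The only facts I will use are the isometry equations $\sigma_A^\dag\sigma_A=\id$ coming from pointwise dagger monicity, the dagger functoriality of $G$, and naturality of $\sigma$.

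First I would record the two relevant naturality squares. Naturality of $\sigma$ at $f$ gives $\sigma_B\circ F(f)=G(f)\circ\sigma_A$, while naturality at $f^\dag$ combined with $G(f^\dag)=G(f)^\dag$ (this is where dagger functoriality of $G$ enters) gives $\sigma_A\circ F(f^\dag)=G(f)^\dag\circ\sigma_B$. Precomposing this last equation with $\sigma_A^\dag$ and using $\sigma_A^\dag\sigma_A=\id$ isolates $F(f^\dag)=\sigma_A^\dag\circ G(f)^\dag\circ\sigma_B$.

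Next I would treat $F(f)^\dag$ symmetrically. Taking daggers of the first naturality square yields $F(f)^\dag\circ\sigma_B^\dag=\sigma_A^\dag\circ G(f)^\dag$; postcomposing with $\sigma_B$ and using $\sigma_B^\dag\sigma_B=\id$ gives $F(f)^\dag=\sigma_A^\dag\circ G(f)^\dag\circ\sigma_B$. Comparing the two boxed expressions shows $F(f^\dag)=F(f)^\dag$, which is precisely dagger functoriality of $F$.

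I do not expect any real obstacle here: the argument is a short diagram chase whose only inputs are the two naturality squares, one application of the dagger functoriality of $G$, and the isometry equation $\sigma^\dag\sigma=\id$ used once at each of $A$ and $B$. In particular one never needs to invoke that a dagger monic is monic and cancel it — the two isometry equations already force both $F(f^\dag)$ and $F(f)^\dag$ to equal $\sigma_A^\dag\circ G(f)^\dag\circ\sigma_B$. The only subtlety worth flagging is the implicit requirement that $\cat{J}$ be a dagger category, without which neither side of the desired identity would be defined.
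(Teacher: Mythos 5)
Your argument is correct and is essentially the paper's own proof: the paper writes the naturality square as a commuting diagram $F(f)=\sigma_B^\dag\circ G(f)\circ\sigma_A$, then takes its dagger and substitutes $f\mapsto f^\dag$ to get the same two expressions you derive, identifying them via $G(f^\dag)=G(f)^\dag$. (Only a cosmetic remark: you have "pre-" and "post-composing" swapped relative to the usual convention, but the intended manipulations and resulting formulas are unambiguous and correct.)
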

\begin{proof}
  As $\sigma$ is pointwise dagger monic and natural, the following diagram commutes for any $f$:
  \[\begin{tikzpicture}
   	\matrix (m) [matrix of math nodes,row sep=2em,column sep=4em,minimum width=2em]
   	{
    	FA & FB & \\
    	GA & GB \\};
   	\path[->]
   	(m-1-1) edge[dagger,>->] node [left] {$\sigma$} (m-2-1)
      	     edge node [above] {$F(f)$} (m-1-2)
   	(m-2-1) edge node [below] {$G(f)$} (m-2-2)
   	(m-2-2) edge[dagger,->>] node [right] {$\sigma^\dag$} (m-1-2);
  \end{tikzpicture}\]
  Taking the dagger of this diagram, and replacing $f$ with $f^\dag$, respectively, results in two commuting diagrams:
  \[\begin{tikzpicture}
  	\matrix (m) [matrix of math nodes,row sep=2em,column sep=4em,minimum width=2em]
   	{
    	FB & FA & \\
    	GB & GA \\};
   	\path[->]
   	(m-1-1) edge[dagger,>->] node [left] {$\sigma$} (m-2-1)
           edge node [above] {$F(f)^\dag$} (m-1-2)
   	(m-2-1) edge node [below] {$G(f)^\dag$} (m-2-2)
   	(m-2-2) edge[dagger,->>] node [right] {$\sigma^\dag$} (m-1-2);
  	\end{tikzpicture}
  	\quad
   	\begin{tikzpicture}
  	\matrix (m) [matrix of math nodes,row sep=2em,column sep=4em,minimum width=2em]
  	{
   	FB & FA & \\
   	GB & GA \\};
  	\path[->]
  	(m-1-1) edge[dagger,>->] node [left] {$\sigma$} (m-2-1)
     	     edge node [above] {$F(f^\dag)$} (m-1-2)
  	(m-2-1) edge node [below] {$G(f^\dag)$} (m-2-2)
  	(m-2-2) edge[dagger,->>] node [right] {$\sigma^\dag$} (m-1-2);
  \end{tikzpicture}\]
  Because $G$ is a dagger functor, the bottom arrows in both diagrams are equal. Therefore top arrows are equal too, making $F$ a dagger functor.
\end{proof}

We end with a useful folklore result\footnote{Peter Selinger mentions this fact at~\cite{nlab:evil}}: 
\begin{lemma}\label{lem:induced daggers}
  Let $F\colon\cat{C}\to\cat{D}$ be a full and faithful functor. Then any dagger on \cat{D} induces a unique dagger on \cat{C} such that $F$ is a dagger functor.
\end{lemma}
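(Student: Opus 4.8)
The plan is to define the dagger on $\cat{C}$ by transporting the dagger of $\cat{D}$ across $F$, using fullness for existence and faithfulness both for uniqueness and for verifying the axioms. Concretely, given a morphism $f\colon A\to B$ in $\cat{C}$, consider $F(f)^\dag\colon FB\to FA$ in $\cat{D}$. Since $F$ is full, this morphism lies in the image of the hom-map $\cat{C}(B,A)\to\cat{D}(FB,FA)$, so there is some $g\colon B\to A$ in $\cat{C}$ with $F(g)=F(f)^\dag$; since $F$ is faithful, such a $g$ is unique. I would then define $f^\dag:=g$, so that $F(f^\dag)=F(f)^\dag$ holds by construction. As the dagger on $\cat{D}$ fixes objects and $F$ preserves objects, this assignment is the identity on objects, and $F$ becomes a dagger functor the moment the assignment is shown to be a dagger.

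Next I would check the three dagger axioms, each of which reduces to an identity that visibly holds after applying $F$ and hence holds in $\cat{C}$ by faithfulness. For involutivity, $F(f^{\dag\dag})=F(f^\dag)^\dag=(F(f)^\dag)^\dag=F(f)$, using that $\dag$ on $\cat{D}$ is involutive, whence $f^{\dag\dag}=f$. For contravariant functoriality, $F((g\circ f)^\dag)=F(g\circ f)^\dag=(F(g)\circ F(f))^\dag=F(f)^\dag\circ F(g)^\dag=F(f^\dag\circ g^\dag)$, so $(g\circ f)^\dag=f^\dag\circ g^\dag$; likewise $F(\id[A]^\dag)=F(\id[A])^\dag=\id[FA]^\dag=\id[FA]=F(\id[A])$ forces $\id[A]^\dag=\id[A]$. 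Thus $\cat{C}$ carries a dagger for which $F$ is a dagger functor.

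Finally, uniqueness is immediate: if $\ddagger$ is any other dagger on $\cat{C}$ for which $F$ is a dagger functor, then $F(f^\ddagger)=F(f)^\dag=F(f^\dag)$ for every morphism $f$, so $f^\ddagger=f^\dag$ by faithfulness. Hence the induced dagger is the unique one making $F$ a dagger functor.

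Since every verification collapses to faithfulness once $F$ is applied, there is no substantial obstacle here; the only points requiring care are invoking fullness precisely to produce the preimage of $F(f)^\dag$ (existence), and faithfulness everywhere else (uniqueness and the axiom checks). The argument is entirely formal and does not use any special properties of the dagger beyond its being an identity-on-objects involutive contravariant functor.
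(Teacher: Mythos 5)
Your proposal is correct and follows exactly the paper's argument: define $f^\dag$ as the unique preimage under $F$ of $F(f)^\dag$ (fullness for existence, faithfulness for uniqueness), and verify the dagger axioms and the uniqueness clause by applying $F$ and invoking faithfulness. The paper's proof merely compresses the axiom checks into ``it is easy to check''; you have supplied those details, but the route is the same.
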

\begin{proof}
    Let $\dag$ be a dagger on \cat{D}. Given a morphism $f\colon A\to B$ in \cat{C}, we define its dagger to be the unique map $B\to A$ that $F$ maps to $F(f)^\dag$. It is easy to check that this assignment gives a dagger on \cat{C} and as $F$ is faithful the uniqueness condition is satisfied.
\end{proof}

\section{Graphical calculus}\label{sec:pictures}

Many proofs in chapter~\ref{chp:monads} are most easily presented in graphical form. This section briefly overviews the graphical calculus that governs monoidal (dagger) categories, such as the category $[\cat C, \cat C]$ where our monads will live. For more information, see~\cite{selinger:graphicallanguages}.

\begin{definition}\index[word]{dagger category!(symmetric) monoidal}
  A \emph{(symmetric) monoidal dagger category} is a dagger category that is also a (symmetric) monoidal category, satisfying $(f\otimes g)^\dag=f^\dag\otimes g^\dag$ for all morphisms $f$ and $g$, whose coherence maps $\lambda \colon I \otimes A \to A$, $\rho \colon A \otimes I \to A$, and $\alpha \colon (A \otimes B) \otimes C \to A \otimes (B \otimes C)$ (and $\sigma \colon A \otimes B \to B \otimes A$) are unitary. 
\end{definition}

\begin{example}\index[symb]{\cat{Rel}, sets and relations}\index[symb]{\cat{Hilb}, Hilbert spaces and bounded linear maps}\index[symb]{\cat{FHilb}, f.d. Hilbert spaces and linear maps}\index[symb]{$[\cat{C},\cat{D}]$, dagger functors $\cat{C}\to\cat{D}$}
  Many monoidal structures on dagger categories make them monoidal dagger categories:
  \begin{itemize}
	\item The dagger category $\cat{Rel}$ is a monoidal dagger category under cartesian product.
	\item The dagger category $\cat{(F)Hilb}$ is a monoidal dagger category under tensor product.
	\item For any dagger category \cat{C}, the dagger category $[\cat C, \cat C]$ of dagger functors $\cat C \to \cat C$ is a monoidal dagger category under composition of functors.
	\item Any monoidal groupoid is a monoidal dagger category under $f^\dag = f^{-1}$.
  \end{itemize}
\end{example}

There is a sound and complete graphical calculus for such categories, that represents a morphism $f \colon A \to B$ as 
$\setlength\morphismheight{3mm}\begin{pic}
  \node[morphism,font=\tiny] (f) at (0,0) {$f$};
  \draw (f.south) to +(0,-.1);
  \draw (f.north) to +(0,.1);
\end{pic}$,\index[symb]{$\setlength\morphismheight{3mm}\begin{pic}
  \node[morphism,font=\tiny] (f) at (0,0) {$f$};
  \draw (f.south) to +(0,-.1);
  \draw (f.north) to +(0,.1);
\end{pic}$ a morphism $f$ in the graphical calculus}
and composition, tensor product, and dagger as follows.\index[word]{graphical calculus}
\[
  \begin{pic}
    \node[morphism] (f) {$g \circ f$};
    \draw (f.south) to +(0,-.65) node[right] {$A$};
    \draw (f.north) to +(0,.65) node[right] {$C$};
  \end{pic}
  = 
  \begin{pic}
    \node[morphism] (g) at (0,.75) {$g\vphantom{f}$};
    \node[morphism] (f) at (0,0) {$f$};
    \draw (f.south) to +(0,-.3) node[right] {$A$};
    \draw (g.south) to node[right] {$B$} (f.north);
    \draw (g.north) to +(0,.3) node[right] {$C$};
  \end{pic}
  \qquad\qquad
  \begin{pic}
    \node[morphism] (f) {$f \otimes g$};
    \draw (f.south) to +(0,-.65) node[right] {$A \otimes C$};
    \draw (f.north) to +(0,.65) node[right] {$B \otimes D$};
  \end{pic}
  = 
  \begin{pic}
    \node[morphism] (f) at (-.4,0) {$f$};
    \node[morphism] (g) at (.4,0) {$g\vphantom{f}$};
    \draw (f.south) to +(0,-.65) node[right] {$A$};
    \draw (f.north) to +(0,.65) node[right] {$B$};
    \draw (g.south) to +(0,-.65) node[right] {$C$};
    \draw (g.north) to +(0,.65) node[right] {$D$};
  \end{pic}
  \qquad\qquad
  \begin{pic}
    \node[morphism] (f) {$f^\dag$};
    \draw (f.south) to +(0,-.65) node[right] {$B$};
    \draw (f.north) to +(0,.65) node[right] {$A$};
  \end{pic}
  =
  \begin{pic}
    \node[morphism,hflip] (f) {$f$};
    \draw (f.south) to +(0,-.65) node[right] {$B$};
    \draw (f.north) to +(0,.65) node[right] {$A$};
  \end{pic}
\]

Using these building blocks one can then draw more complicated diagrams such as 
  \[
    \begin{pic}
    \node[morphism] (h) at (0,1) {$\quad h\quad $};
    \node[morphism] (f) at (-.4,0) {$f$};
    \node[morphism] (g) at (.4,0) {$g\vphantom{f}$};
    \draw (f.south) to +(0,-.65) node[right] {$A$};
    \draw (f.north) to node[right] {$B$} (h.south west);
    \draw (g.south) to +(0,-.65) node[right] {$C$};
    \draw (g.north) to node[right] {$D$} (h.south east);
    \draw (h.north) to +(0,.65)node[right] {$E$}; 
  \end{pic}
  \]
denoting a morphism $A\otimes B\to E$. Tensor product, composition and the dagger extend to larger diagrams straightforwardly: tensor product of diagrams is depicted by juxtaposition, composition by wiring the outputs of one diagram to the inputs of the other\footnote{for this to be defined the outputs must match the inputs}, and the dagger of a diagram amounts to flipping the diagram upside down.

Distinguished morphisms are often depicted with special diagrams instead of generic boxes. For example, 
the identity $A \to A$ and the swap map of symmetric monoidal dagger categories are drawn as:
\[\begin{pic}
    \draw (0,0) node[right] {$A$} to (0,1) node[right] {$A$};
  \end{pic}
  \qquad\qquad\qquad
  \begin{pic}
    \draw (0,0) node[left] {$A$} to[out=80,in=-100] (1,1) node[right] {$A$};
    \draw (1,0) node[right] {$B$} to[out=100,in=-80] (0,1) node[left] {$B$};
  \end{pic}
\] 
whereas the (identity on) the monoidal unit object $I$ is drawn as the empty picture: 
\[ 
  \;
\]
  The following definition gives another example: the unit and multiplication of a monoid get a special diagram.

\begin{definition}\index[word]{monoid}\index[symb]{$\tinymult$, multiplication of a monoid}
  A \emph{monoid} in a monoidal category is an object $A$ with morphisms $\tinymult \colon A \otimes A \to A$ and $\tinyunit \colon I \to A$, satisfying the following equations.
  \[
    \begin{pic}[scale=.4]
      \node[dot] (t) at (0,1) {};
      \node[dot] (b) at (1,0) {};
      \draw (t) to +(0,1);
      \draw (t) to[out=0,in=90] (b);
      \draw (t) to[out=180,in=90] (-1,0) to (-1,-1);
      \draw (b) to[out=180,in=90] (0,-1);
      \draw (b) to[out=0,in=90] (2,-1);
    \end{pic}
    =
    \begin{pic}[yscale=.4,xscale=-.4]
      \node[dot] (t) at (0,1) {};
      \node[dot] (b) at (1,0) {};
      \draw (t) to +(0,1);
      \draw (t) to[out=0,in=90] (b);
      \draw (t) to[out=180,in=90] (-1,0) to (-1,-1);
      \draw (b) to[out=180,in=90] (0,-1);
      \draw (b) to[out=0,in=90] (2,-1);
    \end{pic}
  \qquad\qquad
  \begin{pic}[scale=.4]
    \node[dot] (d) {};
    \draw (d) to +(0,1);
    \draw (d) to[out=0,in=90] +(1,-1) to +(0,-1);
    \draw (d) to[out=180,in=90] +(-1,-1) node[dot] {};
  \end{pic}
  =
  \begin{pic}[scale=.4]
    \draw (0,0) to (0,3);
  \end{pic}
  =
  \begin{pic}[yscale=.4,xscale=-.4]
    \node[dot] (d) {};
    \draw (d) to +(0,1);
    \draw (d) to[out=0,in=90] +(1,-1) to +(0,-1);
    \draw (d) to[out=180,in=90] +(-1,-1) node[dot] {};
  \end{pic}
  \]
  A \emph{comonoid}\index[word]{comonoid}\index[symb]{$\tinycomult$, comultiplication of a comonoid}
   in a monoidal category is a monoid in the opposite category; an object $A$ with morphisms $\tinycomult \colon A \to A \otimes A$ and $\tinycounit \colon A \to I$ satisfying the duals of the above equations.
  A monoid in a symmetric monoidal category is \emph{commutative} if it satisfies:
  \[\begin{pic}[scale=.4]
      \node[dot] (d) at (0,0) {};
      \draw (d.north) to +(0,1);
      \draw (d.west) to[out=180,in=90] +(-.75,-1) to +(0,-1);
      \draw (d.east) to[out=0,in=90] +(.75,-1) to +(0,-1);
    \end{pic}
    \quad = \quad
    \begin{pic}[scale=.4]
      \node[dot] (d) at (0,0) {};
      \draw (d.north) to +(0,1);
      \draw (d.west) to[out=180,in=90] +(-.75,-.5) to[out=-90,in=90] +(2,-1.5);
      \draw (d.east) to[out=0,in=90] +(.75,-.5) to[out=-90,in=90] +(-2,-1.5);
    \end{pic}
  \]
  A monoid in a monoidal dagger category is a \emph{dagger Frobenius monoid}\index[word]{monoid!dagger Frobenius} if it satisfies the following \emph{Frobenius law}.
  \begin{equation}\index[word]{Frobenius law!of a monoid}\label{eq:frobeniuslaw}
  \begin{pic}
   \draw (0,0) to (0,1) to[out=90,in=180] (.5,1.5) to (.5,2);
   \draw (.5,1.5) to[out=0,in=90] (1,1) to[out=-90,in=180] (1.5,.5) to (1.5,0);
   \draw (1.5,.5) to[out=0,in=-90] (2,1) to (2,2);
   \node[dot] at (.5,1.5) {};
   \node[dot] at (1.5,.5) {};
  \end{pic}
  \quad = \quad
  \begin{pic}[xscale=-1]
   \draw (0,0) to (0,1) to[out=90,in=180] (.5,1.5) to (.5,2);
   \draw (.5,1.5) to[out=0,in=90] (1,1) to[out=-90,in=180] (1.5,.5) to (1.5,0);
   \draw (1.5,.5) to[out=0,in=-90] (2,1) to (2,2);
   \node[dot] at (.5,1.5) {};
   \node[dot] at (1.5,.5) {};
  \end{pic}
 \end{equation}
\end{definition}

The Frobenius law might look mysterious, but will turn out to be precisely the right property to make monads respect daggers. 
Section~\ref{sec:coherence} below will formally justify it as a coherence property between closure and the dagger. 
For now we illustrate that the Frobenius law corresponds to natural mathematical structures in example categories.

\begin{example}\label{ex:groupoidFrob} 
  See~\cite{heunencontrerascattaneo:groupoids,heunentull:groupoids,vicary:quantumalgebras} for more information on the following examples.
  \begin{itemize} 
    \item Let $\cat{G}$ be a small groupoid, and $G$ its set of objects. The assignments 
    \begin{align*}
      \{*\}&\mapsto \{\id[A]\mid A\in G\} 
      &
      (f,g)&\mapsto 
      \begin{cases} 
        \{f\circ g\} &\text{ if }f\circ g\text{ is defined} \\ 
        \emptyset &\text{ otherwise} 
      \end{cases} 
      \\
    \intertext{define a dagger Frobenius monoid in $\cat{Rel}$ on the set of morphisms of $\cat{G}$.
    Conversely, any dagger Frobenius monoid in $\cat{Rel}$ is of this form.
    \item Let $\cat{G}$ be a finite groupoid, and $G$ its set of objects. The assignments}
      1 &\mapsto \sum_{A\in G}\id[A] 
      &
      f\otimes g & \mapsto 
      \begin{cases} 
        f\circ g &\text{ if }f\circ g\text{ is defined} \\ 
        0 &\text{ otherwise} 
      \end{cases} 
    \end{align*}
    define a dagger Frobenius monoid in \cat{(F)Hilb} on the Hilbert space of which the morphisms of $\cat{G}$ form an orthonormal basis.
    Conversely, any dagger Frobenius monoid in $\cat{(F)Hilb}$ is of this form.
  \end{itemize}
\end{example}

The following lemma exemplifies graphical reasoning. Recall that a (co)monoid homomorphism is a morphism $f$ between (co)monoids satisfying $f \circ \tinyunit = \tinyunit$ and $f \circ \tinymult = \tinymult \circ (f \otimes f)$ ($\tinycounit \circ f = \tinycounit$ and $\tinycomult \circ f = (f \otimes f) \circ \tinycomult$).

\begin{lemma}\label{lem:strictmorphismsareiso}
  A monoid homomorphism between dagger Frobenius monoids in a monoidal dagger category, that is also a comonoid homomorphism, is an isomorphism.
\end{lemma}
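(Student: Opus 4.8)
The plan is to produce an explicit two-sided inverse for $f$, exploiting the self-duality that the Frobenius law confers on every dagger Frobenius monoid. Write $m=\tinymult$ and $u=\tinyunit$ for the monoid structure on each object, so that the induced comonoid structure is $m^\dag=\tinycomult$ and $u^\dag=\tinycounit$. Then the two hypotheses on $f\colon A\to B$ unfold as $f\circ m_A=m_B\circ(f\otimes f)$ and $f\circ u_A=u_B$ (monoid homomorphism), together with $m_B^\dag\circ f=(f\otimes f)\circ m_A^\dag$ and $u_B^\dag\circ f=u_A^\dag$ (comonoid homomorphism). First I would record that each dagger Frobenius monoid $A$ is self-dual, with cup $d_A=m_A^\dag\circ u_A\colon I\to A\otimes A$ and cap $e_A=u_A^\dag\circ m_A\colon A\otimes A\to I$; the Frobenius law~\eqref{eq:frobeniuslaw} together with the (co)unit laws gives the snake equations $(\id[A]\otimes e_A)\circ(d_A\otimes\id[A])=\id[A]=(e_A\otimes\id[A])\circ(\id[A]\otimes d_A)$. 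This is the only step that genuinely consumes the Frobenius law; everything afterwards is manipulation of the homomorphism equations.

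Next I would take the candidate inverse to be the transpose of $f$ with respect to these dualities, namely $g=(\id[A]\otimes e_B)\circ(\id[A]\otimes f\otimes\id[B])\circ(d_A\otimes\id[B])\colon B\to A$, and verify the two triangle identities. For $g\circ f=\id[A]$ the key observation is that the homomorphism laws collapse a cap of $B$ into a cap of $A$: indeed $e_B\circ(f\otimes f)=u_B^\dag\circ m_B\circ(f\otimes f)=u_B^\dag\circ f\circ m_A=u_A^\dag\circ m_A=e_A$, using first the monoid law and then the comonoid law. Sliding $f$ through, the composite $g\circ f$ reduces to $(\id[A]\otimes e_A)\circ(d_A\otimes\id[A])$, which is $\id[A]$ by the snake equation.

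Dually, for $f\circ g=\id[B]$ I would use that the homomorphism laws turn the cup of $A$ into a cup of $B$: here $(f\otimes f)\circ d_A=(f\otimes f)\circ m_A^\dag\circ u_A=m_B^\dag\circ f\circ u_A=m_B^\dag\circ u_B=d_B$, using first the comonoid law and then the monoid law. After pushing $f$ past the cap, the composite $f\circ g$ reduces to $(\id[B]\otimes e_B)\circ(d_B\otimes\id[B])=\id[B]$, again by the snake equation. Thus $g$ is a genuine two-sided inverse and $f$ is an isomorphism.

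The main obstacle is bookkeeping rather than conceptual depth: one must first pin down the snake equations for the Frobenius self-duality (where the Frobenius law does its work), and then slide $f$ past the bent wires without mismatching tensor factors, being careful to invoke the monoid law precisely where two outputs of $f$ meet a multiplication and the comonoid law precisely where a comultiplication feeds two copies of $f$. Carrying out both triangle identities in the graphical calculus, and obtaining the second from the first by flipping diagrams upside down, keeps these cancellations transparent and is how I would present the final proof.
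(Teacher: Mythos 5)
Your proposal is correct and matches the paper's own proof essentially step for step: the paper also takes the candidate inverse to be the transpose of $f$ along the Frobenius self-duality $d=\tinycomult\circ\tinyunit$, $e=\tinycounit\circ\tinymult$, slides $f$ through using the monoid/comonoid homomorphism laws, and closes with the snake identity obtained from the Frobenius law and the unit law. The only differences are presentational (you write the diagrams algebraically and use the mirror-image transpose, which is immaterial since you verify both triangle identities directly).
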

\begin{proof}
  Construct an inverse to $A \sxto{f} B$ as follows:
  \[\begin{pic}[xscale=.75,yscale=.5]
      \node (f) [morphism] at (0,0) {$f$};
      \draw (f.north)
      to [out=up, in=right] +(-0.5,0.5) node (d) [dot] {}
      to [out=left, in=up] +(-0.5,-0.5)
      to [out=down, in=up] +(0,-1.8) node [below] {$B$};
      \draw (d.north) to +(0,0.4) node [dot] {};
      \draw (f.south)
      to [out=down, in=left] +(0.5,-0.5) node (d) [dot] {}
      to [out=right, in=down] +(0.5,0.5)
      to [out=up, in=down] +(0,1.8) node [above] {$A$};
      \draw (d.south) to +(0,-0.4) node [dot] {};
    \end{pic}
  \]
  The composite with $f$ gives the identity in one direction:
  \[
    \begin{pic}[xscale=.75,yscale=.5]
      \node (f) [morphism] at (0,0) {$f$};
      \draw (f.north)
      to [out=up, in=right] +(-0.5,0.5) node (d) [dot] {}
      to [out=left, in=up] +(-0.5,-0.5)
      to [out=down, in=up] +(0,-1.8) node [below] {$B$};
      \draw (d.north) to +(0,0.5) node [dot] {};
      \draw (f.south)
      to [out=down, in=left] +(0.5,-0.5) node (d) [dot] {}
      to [out=right, in=down] +(0.5,0.5)
      to [out=down, in=down] +(0,0.8)
      node (f2) [morphism, anchor=south, width=0.2cm]
      {$f$};
      \draw (d.south) to +(0,-0.5) node [dot] {};
      \draw (f2.north) to +(0,0.5) node [above] {$B$};
    \end{pic}
    =
    \begin{pic}[xscale=.75,yscale=.5]
      \draw (0,0)
      to [out=up, in=right] +(-0.5,0.5) node (d) [dot] {}
      to [out=left, in=up] +(-0.5,-0.5)
      to [out=down, in=up] +(0,-1.8) node [below] {$B$};
      \draw (d.north) to +(0,0.5) node [dot] {};
      \node (f) [morphism] at (0.5,-1.3) {$f$};
      \draw (0,0)
      to [out=down, in=left] +(0.5,-0.5) node (e) [dot] {}
      to [out=right, in=down] +(0.5,0.5)
      to [out=up, in=down] +(0,1.3) node [above] {$B$};
      \draw (f.north) to (e.south);
      \draw (f.south) to +(0,-0.4) node [dot] {};
    \end{pic}
    =
    \begin{pic}[xscale=.75,yscale=.5]
      \draw (0,0)
      to [out=up, in=right] +(-0.5,0.5) node (d) [dot] {}
      to [out=left, in=up] +(-0.5,-0.5)
      to [out=down, in=up] +(0,-1.5) node [below] {$B$};
      \draw (d.north) to +(0,0.5) node [dot] {};
      \draw (0.5,-0.5) to +(0,-0.5) node [dot] {};
      \draw (0,0)
      to [out=down, in=left] +(0.5,-0.5) node (d) [dot] {}
      to [out=right, in=down] +(0.5,0.5)
      to [out=up, in=down] +(0,1.6) node [above] {$B$};
    \end{pic}
    =
    \begin{pic}[yscale=.5]
      \draw (0,0) node [below] {$B$} to +(0,3.1) node [above] {$B$};
    \end{pic}
  \]
  The third equality uses the Frobenius law~\eqref{eq:frobeniuslaw} and the unit law.
  The other composite is the identity by a similar argument.
\end{proof}

\chapter{Equivalences and adjunctions}\label{chp:equivalences}

\section{Dagger equivalences}

Given an equivalence of categories $F\colon \cat{C}\rightleftarrows\cat{D}\colon G$ and a dagger on \cat{C}, one would like to be able to make \cat{D} into a dagger category in such a manner that $F$ and $G$ would be an equivalence of dagger categories. Unfortunately, this isn't possible in general: while Lemma~\ref{lem:induced daggers} gives a dagger on \cat{D} so that $G$ is a dagger functor, $F$ can fail to be a dagger functor. Selinger~\cite{selinger:evil} gave the following argument to this effect. Consider the forgetful functor $F\colon\cat{FHilb}\to\cat{FVect}$. Since $F$ is full, faithful, and essentially surjective on objects, it is part of an equivalence of categories. Equip the same vector space $V$ with two different inner products, and consider the map $f$ between them defined by $v\mapsto v$. Then $f$ is not unitary, but $F(f)=\id[V]$ is unitary with respect to any dagger structure on \cat{FVect}. Thus there is no dagger on \cat{FVect} such that $F$ would be a dagger functor.

While this argument shows that arbitrary equivalences cannot be lifted to equivalences in the 2-category \cat{DagCat}, one can argue that this endeavour is misguided in the first place: if one takes ``the way of the dagger'' seriously, we see that \cat{DagCat} isn't just a 2-category, but it is a $\emph{dagger}$ 2-category, so the notion of equivalence should be different -- one should require the relevant natural isomorphisms to be unitary as in the definition below.  

\begin{definition}\index[word]{dagger equivalence} 
	A dagger equivalence in a dagger 2-category consists of 1-cells 
    \[F\colon \cat{C}\rightleftarrows\cat{D}\colon G\]
   and \emph{unitary} isomorphisms $\eta\colon \id[\cat{C}]\to GF$ and $\epsilon\colon FG\to \id[\cat{D}]$, \ie $\eta$ and $\epsilon$ are invertible 2-cells satisfying $\eta^\dag=\eta^{-1}$ and $\epsilon^\dag=\epsilon^{-1}$.

	Specifically, a dagger equivalence between dagger categories \cat{C} and \cat{D} consists of dagger functors $F\colon \cat{C}\rightleftarrows\cat{D}\colon G$ and unitary natural transformations $\eta\colon \id[\cat{C}]\to GF$ and $\epsilon\colon FG\to \id[\cat{D}]$.
\end{definition}

Now, if $(\cat{C},\dag)$ is a dagger category and $F\colon \cat{C}\rightleftarrows\cat{D}\colon G$, $\eta\colon \id[\cat{C}]\to GF$, $\epsilon\colon FG\to \id[\cat{D}]$ is an arbitrary equivalence in \cat{Cat}, one would not expect it to lift to a dagger equivalence -- at the very least, $\eta$ and $G\epsilon$ should be unitary. Failing to lift to a equivalence in the underlying 2-category of \cat{DagCat}, while a common occurrence, is less interesting as that is not the right notion of equivalence for dagger categories. However, these two necessary conditions turns out to be sufficient. Before the proof, we give the two lemmas.

	\begin{lemma}\label{lem:unitarilyisomorphicisdagger}
		If $F,G\colon\cat{C}\to\cat{D}$ are functors that are unitarily isomorphic, then $F$ preserves the dagger iff $G$ does.
	\end{lemma}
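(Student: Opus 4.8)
The plan is to reduce this immediately to Lemma~\ref{lem:daggersubfunctorsaredagger}. By symmetry of the statement it suffices to prove one implication, say that if $G$ preserves the dagger then so does $F$. Let $\sigma\colon F\Rightarrow G$ be a natural isomorphism all of whose components $\sigma_A\colon FA\to GA$ are unitary. A unitary morphism in particular satisfies $\sigma_A^\dag\sigma_A=\id[FA]$, so each $\sigma_A$ is dagger monic. Thus $\sigma$ is a pointwise dagger monic natural transformation into the dagger functor $G$, and Lemma~\ref{lem:daggersubfunctorsaredagger} applies verbatim to conclude that $F$ is a dagger functor.

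For the converse implication I would run the same argument through the inverse transformation: the componentwise inverse $\sigma^{-1}\colon G\Rightarrow F$ is again a natural isomorphism, and since the inverse of a unitary is unitary (indeed $\sigma_A^{-1}=\sigma_A^\dag$), it too is pointwise dagger monic. Hence if $F$ preserves the dagger, Lemma~\ref{lem:daggersubfunctorsaredagger} --- now with the roles of $F$ and $G$ interchanged --- shows that $G$ preserves it.

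If one prefers a self-contained computation rather than invoking the earlier lemma, the content is the following identity. Naturality together with unitarity give, for every $f\colon A\to B$,
\[ F(f)=\sigma_B^\dag\,G(f)\,\sigma_A. \]
Taking daggers and using $\sigma_A^{\dag\dag}=\sigma_A$ yields $F(f)^\dag=\sigma_A^\dag\,G(f)^\dag\,\sigma_B$, whereas applying the displayed formula to $f^\dag\colon B\to A$ and using $G(f^\dag)=G(f)^\dag$ gives $F(f^\dag)=\sigma_A^\dag\,G(f^\dag)\,\sigma_B=\sigma_A^\dag\,G(f)^\dag\,\sigma_B$. Comparing the two expressions shows $F(f^\dag)=F(f)^\dag$, as required.

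There is no genuine obstacle here: the only ingredients are that a unitary is dagger monic, that the inverse of a unitary is unitary, and naturality, while the symmetry of the statement removes the need to argue the two directions separately. The one point worth stating carefully is that ``unitarily isomorphic'' is to be read as the existence of a natural isomorphism whose components are unitary, so that Lemma~\ref{lem:daggersubfunctorsaredagger} is genuinely applicable.
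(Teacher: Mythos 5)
Your proof is correct. The self-contained computation you give at the end is essentially the paper's own argument: the paper writes down exactly the two commuting squares obtained by (a) taking the dagger of the naturality square and (b) instantiating naturality at $f^\dag$, and compares their top edges; your displayed identities are the algebraic form of those diagrams, just run in the opposite direction (which is immaterial by symmetry). Your first argument, however, is a genuine streamlining the paper does not make: the paper proves this lemma from scratch even though its proof of Lemma~\ref{lem:daggersubfunctorsaredagger} is nearly verbatim the same two-diagram argument, whereas you observe that a unitary is in particular dagger monic and so both directions follow at once from that earlier lemma applied to $\sigma$ and to $\sigma^{-1}=\sigma^\dag$. That reduction buys economy and makes the logical dependence explicit; the paper's repetition buys nothing except self-containedness. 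Both are sound, and your reading of ``unitarily isomorphic'' as the existence of a natural isomorphism with unitary components matches the paper's usage.
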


\begin{proof}
	Assume that $F$ preserves the dagger and let $\sigma\colon F\to G$ be a unitary isomorphism. As $\sigma$ is unitary, the following diagrams commute:
	  \[
    	\begin{tikzpicture}
     	\matrix (m) [matrix of math nodes,row sep=2em,column sep=4em,minimum width=2em]
     	{
      	GB & GA & \\
      	FB & FA \\};
     	\path[->]
     	(m-1-1) edge node [left] {$\sigma^\dag$} (m-2-1)
             edge node [above] {$(Gf)^\dag$} (m-1-2)
     	(m-2-1) edge node [below] {$(Ff)^\dag$} (m-2-2)
     	(m-2-2) edge node [right] {$\sigma$} (m-1-2);
    	\end{tikzpicture}
  	\quad
    	\begin{tikzpicture}
     	\matrix (m) [matrix of math nodes,row sep=2em,column sep=4em,minimum width=2em]
     	{
      	GB & GA & \\
      	FB & FA \\};
     	\path[->]
     	(m-1-1) edge node [left] {$\sigma^\dag$} (m-2-1)
        	     edge node [above] {$G(f^\dag)$} (m-1-2)
     	(m-2-1) edge node [below] {$F(f^\dag)$} (m-2-2)
     	(m-2-2) edge node [right] {$\sigma$} (m-1-2);
    	\end{tikzpicture}
  	\]
  As $F$ is a dagger functor, the bottom paths in both diagrams are equal, and thus the top paths are as wells, showing that $G$ is a dagger functor. The other direction follows by symmetry.
\end{proof}

	\begin{theorem}\label{thm:liftdagequiv}
		Let $(\cat{C},\dag)$ be a dagger category and $F\colon \cat{C}\rightleftarrows\cat{D}\colon G$, $\eta\colon \id[\cat{C}]\to GF$, $\epsilon\colon FG\to \id[\cat{D}]$ be an equivalence in \cat{Cat} such that $\eta$ and $G\epsilon$ are unitary. Then there is a unique dagger on \cat{D} such that $F,G,\eta,\epsilon$ forms a dagger equivalence.
	\end{theorem}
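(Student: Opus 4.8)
The plan is to build the dagger on $\cat{D}$ by transporting the one on $\cat{C}$ along $G$, and then to check that the two remaining conditions — $F$ a dagger functor and $\epsilon$ unitary — fall out of the faithfulness of $G$. Since $F,G$ form an equivalence, $G\colon\cat{D}\to\cat{C}$ is full and faithful, and $\cat{C}$ already carries a dagger. I would therefore invoke Lemma~\ref{lem:induced daggers} with $G$ in the role of the full and faithful functor to obtain a unique dagger on $\cat{D}$ making $G$ into a dagger functor; concretely, $g^\dag$ is the unique morphism that $G$ sends to $(Gg)^\dag$. This choice is in fact forced: any dagger on $\cat{D}$ turning $F,G,\eta,\epsilon$ into a dagger equivalence must in particular make $G$ a dagger functor, so the uniqueness clause of Lemma~\ref{lem:induced daggers} pins it down. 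This settles the uniqueness part of the statement immediately, and it remains only to verify that this dagger does the job.

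For existence I would first verify that $F$ is a dagger functor. The transformation $\eta\colon\id[\cat{C}]\to GF$ is a unitary isomorphism by hypothesis, and $\id[\cat{C}]$ trivially preserves the dagger, so Lemma~\ref{lem:unitarilyisomorphicisdagger} shows that $GF$ preserves the dagger. Now for any $f\colon A\to B$ in $\cat{C}$, faithfulness of $G$ reduces the desired equality $F(f^\dag)=(Ff)^\dag$ to the equality of their images under $G$; but $G\big((Ff)^\dag\big)=(GFf)^\dag=GF(f^\dag)=G\big(F(f^\dag)\big)$, where I use that both $G$ and $GF$ are dagger functors. Hence $F$ is a dagger functor.

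Finally, $\epsilon$ is already invertible, being part of the equivalence, so I only need to check $\epsilon^\dag=\epsilon^{-1}$. Working componentwise, the hypothesis that $G\epsilon$ is unitary gives $(G\epsilon_X)^\dag=(G\epsilon_X)^{-1}$ for each object $X$; since $G$ is a dagger functor and preserves inverses, this reads $G(\epsilon_X^\dag)=G(\epsilon_X^{-1})$, and faithfulness of $G$ yields $\epsilon_X^\dag=\epsilon_X^{-1}$. Thus $\epsilon$ is unitary, and together with the hypothesis that $\eta$ is unitary this makes $F,G,\eta,\epsilon$ a dagger equivalence.

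There is no deep obstacle once the two background lemmas are in hand; the real content is organizational. The point requiring the most care is the direction of Lemma~\ref{lem:induced daggers}: it produces a dagger on the \emph{domain} of a full and faithful functor, which is precisely why $G\colon\cat{D}\to\cat{C}$, and not $F$, is the correct input. After that, the whole argument is driven by repeatedly using that $G$ is faithful to reflect equations from $\cat{C}$ back to $\cat{D}$, so I expect the only risk to be notational bookkeeping rather than any genuine mathematical difficulty.
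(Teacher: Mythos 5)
Your proof is correct and follows the same overall skeleton as the paper's: transport the dagger from $\cat{C}$ to $\cat{D}$ along the full and faithful functor $G$ (which also gives uniqueness), observe that $\epsilon$ is unitary because $G\epsilon$ is and $G$ is a faithful dagger functor, and then check that $F$ preserves the dagger. The one place you diverge is in that last check: the paper deduces that $FG$ and $GF$ both preserve the dagger via Lemma~\ref{lem:unitarilyisomorphicisdagger}, notes that $\epsilon F\colon FGF\to F$ is unitary, and then computes $FGF(f^\dag)=(FGFf)^\dag$, transferring the property from $FGF$ to $F$ by another application of that lemma. You instead only need $GF$ to be a dagger functor and then reflect the equation $F(f^\dag)=(Ff)^\dag$ along the faithful functor $G$ via $G((Ff)^\dag)=(GFf)^\dag=GF(f^\dag)=G(F(f^\dag))$. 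Your version is slightly more economical — it avoids invoking unitarity of $\epsilon F$ and the dagger-functoriality of $FG$ — and both are perfectly valid; the computation at the heart of each is essentially the same identity read through a different lens.
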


\begin{proof}
		There is a unique dagger on $D$ such that $G$ preserves the dagger. For this dagger, $\epsilon$ is clearly unitary. It remains to show that $F$ preserves the dagger. By Lemma~\ref{lem:unitarilyisomorphicisdagger} both $FG$ and $GF$ preserve the dagger. Furthermore $\epsilon F\colon FGF\to F$ is unitary, so that it suffices to prove that $FGF$ preserves the dagger. We calculate: 

  		\begin{equation*}
  			FGF(f^\dag)=F((GFf)^\dag)=FG((Ff)^\dag)=(FGFf)^\dag
  		\end{equation*}
\end{proof} 

For adjoint equivalences the situation is even cleaner since the condition that $G\epsilon$ is unitary is redundant: in other words exactly one half of the equivalence needs to cooperate with the dagger. 

\begin{corollary}\label{cor:liftdagequiv}
    Let $(\cat{C},\dag)$ be a dagger category and $F\colon \cat{C}\rightleftarrows\cat{D}\colon G$, $\eta\colon \id[\cat{C}]\to GF$, $\epsilon\colon FG\to \id[\cat{D}]$ be an adjoint equivalence in \cat{Cat} such that $\eta$ is unitary. Then there is a unique dagger on \cat{D} such that $(F,G,\eta,\epsilon)$ forms a dagger equivalence.
\end{corollary}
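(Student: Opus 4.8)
The plan is to deduce this from Theorem~\ref{thm:liftdagequiv} by showing that its additional hypothesis, that $G\epsilon$ be unitary, is automatic once we assume that $\eta$ is unitary and that the equivalence is \emph{adjoint}. Both unitarity conditions concern only morphisms of \cat{C}, where the dagger is already fixed, so I can verify them before any dagger on \cat{D} has been constructed, and then feed the verified data into the previous theorem.

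The crucial observation uses the triangle identities of the adjunction $F\dashv G$. The identity $(G\epsilon)\circ(\eta G)=\id[G]$ gives, componentwise at an object $D$ of \cat{D}, that $G(\epsilon_D)\circ\eta_{GD}=\id[GD]$. Since the equivalence is adjoint, $\eta$ is a natural isomorphism, so $\eta_{GD}$ is invertible and hence $(G\epsilon)_D=\eta_{GD}^{-1}$. Now $\eta$ being unitary means that every component $\eta_C$ is unitary in \cat{C}, so in particular each $\eta_{GD}$ is. Because the inverse of a unitary is again unitary (if $u\colon A\to B$ satisfies $u^\dag u=\id[A]$ and $uu^\dag=\id[B]$, then $u^{-1}=u^\dag$, which is visibly unitary), each $(G\epsilon)_D$ is unitary, and therefore $G\epsilon$ is a unitary natural transformation.

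With $\eta$ and $G\epsilon$ both unitary, Theorem~\ref{thm:liftdagequiv} applies verbatim: an adjoint equivalence is in particular an equivalence in \cat{Cat}, so the theorem produces the unique dagger on \cat{D} making $(F,G,\eta,\epsilon)$ a dagger equivalence, and the uniqueness is inherited directly from it. I expect no real obstacle here; the only point that needs care is selecting the triangle identity whiskered by $G$ on the correct side, so that $G\epsilon$ is exhibited as the inverse of a whiskering of $\eta$ rather than of $\epsilon$ itself. The companion identity $(\epsilon F)\circ(F\eta)=\id[F]$ would instead control $\epsilon F$, which is not the transformation whose unitarity we are after.
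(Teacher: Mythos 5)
Your argument is correct and is essentially the paper's own proof: both use the triangle identity $G\epsilon\circ\eta G=\id$ to exhibit $G\epsilon$ as the inverse of the unitary $\eta G$, hence itself unitary, and then invoke Theorem~\ref{thm:liftdagequiv}. Your extra remark about choosing the correct triangle identity is a sensible point of care but introduces nothing beyond the paper's reasoning.
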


\begin{proof} By Theorem~\ref{thm:liftdagequiv} it suffices to show that $G\epsilon$ is unitary. Since the equivalence in question is an adjoint equivalence we have $G\epsilon\circ \eta G=\id$ whence $G\epsilon$ is the inverse of a unitary isomorphism and hence unitary itself.
\end{proof}

As with standard equivalences, one can only work with one half of a dagger equivalence. This is made by precise by the following Lemma which we cite from~\cite{vicary2011completeness}.

  \begin{lemma}[Vicary]\label{lem:halfequiv}
  If $F\colon \cat{C}\to\cat{D}$ is a dagger functor that is full, faithful and unitarily essentially surjective, then it forms part of a dagger equivalence.
  \end{lemma}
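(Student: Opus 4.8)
The plan is to imitate the usual axiom-of-choice construction of a pseudo-inverse to a full, faithful, essentially surjective functor, but to make every chosen witness \emph{unitary} so that the resulting structure cooperates with the dagger. Concretely, since $F$ is unitarily essentially surjective, for each object $D$ of $\cat{D}$ I would choose an object $GD$ of $\cat{C}$ together with a unitary $\epsilon_D\colon F(GD)\to D$. This fixes the action of $G$ on objects and supplies the components of $\epsilon$.

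Next I would define $G$ on morphisms. Given $g\colon D\to D'$, the composite $\epsilon_{D'}^{-1}\circ g\circ\epsilon_D\colon F(GD)\to F(GD')$ has, by fullness and faithfulness of $F$, a unique preimage $Gg\colon GD\to GD'$ with $F(Gg)=\epsilon_{D'}^{-1}\circ g\circ\epsilon_D$. Functoriality of $G$ and naturality of $\epsilon$ follow from this defining equation by the usual argument: apply $F$ and cancel by faithfulness. Each $\epsilon_D$ is unitary by construction, so $\epsilon$ is a unitary natural isomorphism.

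The dagger-specific content is to check that $G$ is a dagger functor and that the unit can be taken unitary. For $G$, I would compute $F\bigl((Gg)^\dag\bigr)=F(Gg)^\dag=(\epsilon_{D'}^{-1}\circ g\circ \epsilon_D)^\dag=\epsilon_D^\dag\circ g^\dag\circ(\epsilon_{D'}^{-1})^\dag$, using that $F$ is a dagger functor; since the $\epsilon_D$ are unitary this equals $\epsilon_D^{-1}\circ g^\dag\circ \epsilon_{D'}$, which is exactly $F\bigl(G(g^\dag)\bigr)$ by the definition of $G$ applied to $g^\dag\colon D'\to D$. Faithfulness then yields $(Gg)^\dag=G(g^\dag)$. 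For the unit, I would define $\eta_C\colon C\to GFC$ as the unique morphism with $F(\eta_C)=\epsilon_{FC}^{-1}$, checking naturality once more by applying $F$ and cancelling. Since $\epsilon_{FC}$ is unitary, $F(\eta_C)$ is unitary, and here I would invoke the fact that a full and faithful dagger functor reflects unitaries: from $F(\eta_C^\dag\eta_C)=F(\eta_C)^\dag F(\eta_C)=\id$ and faithfulness one gets $\eta_C^\dag\eta_C=\id$, and dually $\eta_C\eta_C^\dag=\id$, so $\eta_C$ is unitary (in particular an isomorphism, as full faithful functors reflect those). With $F,G$ dagger functors and $\eta,\epsilon$ unitary natural isomorphisms, the data form a dagger equivalence.

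I expect the only real obstacle to be the bookkeeping around the daggers of the chosen isomorphisms — repeatedly rewriting $\epsilon_D^\dag=\epsilon_D^{-1}$ and $(\epsilon_D^{-1})^\dag=\epsilon_D$ — together with the observation that full faithful dagger functors reflect unitaries, which is what upgrades ``$F(\eta_C)$ unitary'' to ``$\eta_C$ unitary''. No triangle identities are needed, since the definition of dagger equivalence in use requires only unitary natural isomorphisms.
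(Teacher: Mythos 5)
Your proof is correct: the construction (choose a unitary $\epsilon_D\colon F(GD)\to D$ for each object, define $G$ on morphisms by fullness/faithfulness, check $G$ preserves daggers and that the unit reflected through $F$ is unitary) is exactly the standard argument, and it matches what the cited result does — the paper itself only defers to \cite[Lemma 5.1]{vicary2011completeness} rather than spelling it out. Your observation that no triangle identities are required is consistent with the paper's definition of dagger equivalence, which asks only for unitary natural isomorphisms $\eta$ and $\epsilon$.
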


  \begin{proof} See \cite[Lemma 5.1]{vicary2011completeness}.
  \end{proof}

  \begin{corollary}
  If $F\colon\cat{C}\to (\cat{D},\dag)$ is full, faithful and unitarily essentially surjective, then there is a unique dagger on $\cat{C}$ such that $F$ forms part of a dagger equivalence.
  \end{corollary}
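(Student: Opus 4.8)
The plan is to assemble the corollary directly from the two preceding lemmas. First, since $F$ is full and faithful, Lemma~\ref{lem:induced daggers} supplies a unique dagger on $\cat{C}$ for which $F$ becomes a dagger functor; call it $\dag$. With this dagger in place, $F$ is now a dagger functor between dagger categories that is moreover full, faithful and unitarily essentially surjective, so Lemma~\ref{lem:halfequiv} applies verbatim and exhibits $F$ as one half of a dagger equivalence. This settles existence.

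For uniqueness I would argue as follows. Suppose $\ddagger$ is any dagger on $\cat{C}$ making $F$ part of a dagger equivalence. By the definition of a dagger equivalence between dagger categories, its constituent $1$-cells are required to be dagger functors, so in particular $F$ is a dagger functor with respect to $\ddagger$. But then $\ddagger$ satisfies exactly the defining property of the dagger produced by Lemma~\ref{lem:induced daggers}, namely that $F$ preserve it; the uniqueness clause of that lemma therefore forces $\ddagger = \dag$. Hence the dagger witnessing $F$ as half of a dagger equivalence is unique.

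I do not expect any genuine obstacle here, as the statement is a straightforward combination of Lemma~\ref{lem:induced daggers} (for the induced dagger together with its uniqueness) and Lemma~\ref{lem:halfequiv} (for upgrading a full, faithful, unitarily essentially surjective dagger functor to half of a dagger equivalence). The only point requiring a moment's care is recognising that ``$F$ forms part of a dagger equivalence'' already entails ``$F$ is a dagger functor'', since this is precisely what allows the uniqueness clause of Lemma~\ref{lem:induced daggers} to be invoked.
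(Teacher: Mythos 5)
Your proposal is correct and follows exactly the paper's route: the paper's own proof is a one-line appeal to Lemma~\ref{lem:induced daggers} (for the unique induced dagger) combined with Lemma~\ref{lem:halfequiv} (to upgrade $F$ to half of a dagger equivalence). Your additional spelling-out of the uniqueness argument --- that any dagger witnessing the equivalence must make $F$ a dagger functor and is therefore pinned down by the uniqueness clause of Lemma~\ref{lem:induced daggers} --- is a correct elaboration of what the paper leaves implicit.
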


  \begin{proof}
  Follows immediately from Lemmas~\eqref{lem:induced daggers} and \eqref{lem:halfequiv}.
  \end{proof}

A way to construct the other half $G\colon \cat{FVect}\to \cat{FHilb}$ of the equivalence is to choose an inner product for each vector space. For such $G$, $FG=\id[\cat{FVect}]$, and thus $G\epsilon$ is unitary. As there is no way to make this into a dagger equivalence, the Theorem~\ref{thm:liftdagequiv} implies that for such $G$ the isomorphism $\eta\colon\id[\cat{FHilb}]\to GF$ is never unitary at every component.  

The problem comes from the forgetful functor, but we can replace it with a naturally isomorphic one to get a dagger equivalence. Thus, if one cares about functors only up to natural isomorphism, there is nothing wrong with the equivalence $\cat{FHilb}\to\cat{FVect}$. Before proving this, we first state some useful definitions and facts about them. 

	\begin{definition}\index[word]{dagger category!dagger skeletal}\index[word]{dagger skeleton}
	Write $\cong_\dag$ for the equivalence relation on objects that is generated by unitaries. 
	A dagger category is \emph{dagger skeletal} if each $\cong_\dag$-equivalence class is a singleton. A \emph{dagger skeleton} of a dagger category \cat{C} is a dagger skeletal subcategory that is dagger equivalent to \cat{C}.
	\end{definition}

Assuming enough choice it is not difficult to prove the following. 

	\begin{theorem}
	Every dagger category has a dagger skeleton, which is also unique up to dagger isomorphism. 
	Two dagger categories are dagger equivalent iff they have dagger isomorphic skeletons.
	\end{theorem}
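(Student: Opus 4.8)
The plan is to lean on two facts already in hand: that a full and faithful functor transports a dagger (Lemma~\ref{lem:induced daggers}), and that a dagger functor which is full, faithful and unitarily essentially surjective is half of a dagger equivalence (Lemma~\ref{lem:halfequiv}). A preliminary observation I would record is that $\cong_\dag$ is just the relation ``there exists a unitary $A\to B$'': since the identity is unitary, the dagger of a unitary is its inverse, and unitaries compose, this relation is already reflexive, symmetric and transitive, so the word ``generated'' adds nothing.

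For \emph{existence}, I would use choice to pick one object from each $\cong_\dag$-class and let $\cat{S}$ be the full subcategory of $\cat{C}$ on these representatives. The inclusion $\iota\colon\cat{S}\hookrightarrow\cat{C}$ is full and faithful, so by Lemma~\ref{lem:induced daggers} it carries a unique dagger making $\iota$ a dagger functor; concretely this is the restriction of the dagger of $\cat{C}$, which stays inside $\cat{S}$ because $\cat{S}$ is full and objects are fixed by $\dag$. This $\cat{S}$ is dagger skeletal, since a unitary between representatives in $\cat{S}$ is by fullness a unitary in $\cat{C}$, so distinct representatives cannot be $\cong_\dag$-related. Finally $\iota$ is unitarily essentially surjective, every object of $\cat{C}$ being unitarily isomorphic to its chosen representative, so Lemma~\ref{lem:halfequiv} makes $\iota$ part of a dagger equivalence, exhibiting $\cat{S}$ as a dagger skeleton.

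The heart of the argument, which I expect to be the main obstacle, is the claim that \emph{a dagger equivalence between dagger skeletal categories is a dagger isomorphism}. Given such an equivalence $F\colon\cat{S}\rightleftarrows\cat{S}'\colon G$ with unitary $\eta\colon\id[\cat{S}]\to GF$ and $\epsilon\colon FG\to\id[\cat{S}']$, each component $\eta_A\colon A\to GFA$ is unitary, so $A\cong_\dag GFA$ and hence $A=GFA$ by dagger skeletality; symmetrically $FGA'=A'$. Thus $F$ is a bijection on objects with inverse $G$. Being part of an equivalence, $F$ is also full and faithful, hence bijective on every hom-set, so $F$ is an isomorphism of the underlying categories; and a dagger functor that is an isomorphism of underlying categories has a dagger-functor inverse (as noted in the proof of Theorem~\ref{thm:dagcattocatis(co)monadic}), so $F$ is a dagger isomorphism. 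The essential input here is that unitarity of $\eta,\epsilon$ combined with dagger skeletality upgrades ``identity up to unitary'' to ``identity on objects'', after which fullness and faithfulness promote the equivalence to an honest isomorphism.

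The remaining claims are then bookkeeping. Dagger equivalences compose and invert, because whiskering unitaries by dagger functors and taking daggers of unitaries again yields unitaries; hence any two dagger skeletons of $\cat{C}$, each being dagger equivalent to $\cat{C}$, are dagger equivalent to one another, and being dagger skeletal they are dagger isomorphic by the crux, giving uniqueness. For the biconditional: if $\cat{C}$ and $\cat{D}$ have dagger isomorphic skeletons, chaining the two skeletal dagger equivalences with that isomorphism shows $\cat{C}$ and $\cat{D}$ are dagger equivalent; conversely, if $\cat{C}$ and $\cat{D}$ are dagger equivalent, then each skeleton is dagger equivalent to its own category and therefore to the other skeleton, so the two skeletal categories are dagger isomorphic by the crux.
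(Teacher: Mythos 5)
Your proof is correct. The paper in fact offers no proof of this theorem at all (it only remarks that ``assuming enough choice it is not difficult to prove''), so there is nothing to compare against; your argument --- choice of representatives plus Lemma~\ref{lem:induced daggers} and Lemma~\ref{lem:halfequiv} for existence, and the observation that unitarity of the unit and counit together with dagger skeletality forces a dagger equivalence of skeletal categories to be an identity-on-objects bijection, hence a dagger isomorphism --- is exactly the intended one, and all the steps check out.
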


	\begin{theorem}
	Let $(\cat{C},\dag)$ be a dagger category and $F\colon \cat{C}\rightleftarrows\cat{D}\colon G$ an equivalence in \cat{Cat} such that $A\cong_\dag GF(A)$  for every $A$. Then one can replace G and F with naturally isomorphic functors and lift the resulting equivalence into a dagger equivalence.
	\end{theorem}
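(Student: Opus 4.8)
The plan is to transport the dagger from $\cat{C}$ to $\cat{D}$ along $G$ rather than along $F$, and then invoke Vicary's Lemma~\ref{lem:halfequiv}. The hypothesis $A\cong_\dag GF(A)$ will be used for exactly one thing: to verify that $G$ is unitarily essentially surjective.

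First I would observe that, being one half of an equivalence, $G\colon\cat{D}\to\cat{C}$ is full and faithful. Hence Lemma~\ref{lem:induced daggers} (applied with $G$ in the role of the full faithful functor, so that the pre-existing dagger on \cat{C} is transported) equips \cat{D} with a unique dagger for which $G$ becomes a dagger functor; concretely, $g^\dag$ is the unique morphism that $G$ sends to $G(g)^\dag$. From now on I regard \cat{D} as a dagger category in this way. Note that $F$ itself is \emph{not} assumed to respect this dagger — this is the whole point of the $\cat{FHilb}\to\cat{FVect}$ phenomenon discussed above — which is precisely why $F$ will have to be replaced.

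Next I would check that $G$ is unitarily essentially surjective. Given $A\in\cat{C}$, take $X=F(A)\in\cat{D}$; then $G(X)=GF(A)$, and the hypothesis supplies a unitary $A\to GF(A)$, so $A\cong_\dag G(X)$. Thus $G$ is a dagger functor that is full, faithful and unitarily essentially surjective, and Lemma~\ref{lem:halfequiv} shows it forms part of a dagger equivalence $G\colon\cat{D}\rightleftarrows\cat{C}\colon H$, with unitary unit $\id[\cat{D}]\to HG$ and counit $GH\to\id[\cat{C}]$.

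It remains to recognise $H$ as a replacement for $F$. Both $F$ and $H$ are quasi-inverses of $G$ in \cat{Cat}, so they are naturally isomorphic: $H\cong HGF\cong F$, where the first isomorphism uses $GF\cong\id[\cat{C}]$ and the second uses $HG\cong\id[\cat{D}]$. Hence setting $G'=G$ and $F'=H$ replaces $F,G$ by naturally isomorphic functors which, together with the unitary transformations furnished by Lemma~\ref{lem:halfequiv}, constitute a dagger equivalence, as required. The only genuine subtlety is the one already flagged: the original $F$ need not be a dagger functor for the induced dagger, so it cannot simply be retained; its quasi-inverse partner $H$ produced by Vicary's lemma is a dagger functor, and $H\cong F$ closes the gap. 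In particular no preliminary improvement of $\eta,\epsilon$ to an adjoint equivalence is needed, since Vicary's lemma delivers the requisite unitary transformations directly.
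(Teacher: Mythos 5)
Your proof is correct, but it takes a genuinely different route from the one in the thesis. You transport the dagger from $\cat{C}$ to $\cat{D}$ along the full and faithful $G$ via Lemma~\ref{lem:induced daggers}, observe that the hypothesis $A\cong_\dag GF(A)$ is exactly unitary essential surjectivity of $G$ (witnessed by $F(A)$), and then let Vicary's Lemma~\ref{lem:halfequiv} manufacture a dagger quasi-inverse $H$ with unitary unit and counit; the identification $H\cong HGF\cong F$ correctly closes the argument. In effect you are applying the Corollary that immediately follows Lemma~\ref{lem:halfequiv} to $G$ rather than to $F$, which is the right functor to favour since only $G$ can be made to preserve the induced dagger. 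The thesis instead first reduces to the case where $\cat{C}$ is dagger skeletal, then hand-builds a replacement $G'$ with $G'F=\id[\cat{C}]$ and a counit satisfying $G'\epsilon=\id$, and feeds this into Theorem~\ref{thm:liftdagequiv}. What your route buys is brevity and a clean conceptual reading (the hypothesis \emph{is} unitary essential surjectivity of $G$), at the price of leaning on Lemma~\ref{lem:halfequiv}, whose proof is only cited from Vicary's paper; the thesis's route is longer but stays within machinery proved in full there and exhibits the replacement functors and their unit/counit explicitly. Both arguments are valid and yield the stated conclusion.
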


	\begin{proof}
	We begin by reducing to the case in which \cat{C} is dagger skeletal. Let \cat{C'} be the dagger skeleton of \cat{C} and let $I\colon \cat{C'}\rightleftarrows\cat{C}\colon J$ be a dagger equivalence. Now  $FI\colon \cat{C'}\rightleftarrows\cat{D}\colon GJ$ is an equivalence satisfying the assumption. If one can replace $FI$ and $JG$ with isomorphic functors $F'$ and $G'$ lifting to a dagger equivalence, then $F'J\colon\cat{C} \rightleftarrows \cat{D}\colon IG'$ is also a dagger equivalence, and furthermore $F'J\cong FIJ\cong F$ and $IG'\cong IJG\cong G$, proving the claim.
	
	Thus we may assume that \cat{C} is dagger skeletal. As $F$ defines $G$ up to natural isomorphism, it suffices to produce a new functor $G'$ so that the unit and counit satisfy the requirements of Theorem~\ref{thm:liftdagequiv}. As \cat{C} is dagger skeletal, $GFA=A$ for each $A$. 
	Define $G'(A)=G(A)$ for objects. Now, choose for each $A$ an isomorphism  $\epsilon_A\colon FG'(A)\to A$ so that $\epsilon_{FG'(A)}\colon FG'FG'(A)=FG'(A)\to FG'(A)$ equals $\id$ for each $A$. Define $G'(f)$ to be the unique map $G'(A)\to G'(B)$ that $F$ maps to $\epsilon^{-1}_B \circ f\circ\epsilon_A\colon FG'(A)\to A\to B\to FG'(B)$. It is easy to check that $G'$ is functorial, $G'F=\id[\cat{C}]$ and that $\epsilon\colon FG'A\to A$ is a natural transformation satisfying $G'\epsilon=\id$. Thus by Theorem~\ref{thm:liftdagequiv} there is a dagger on $\cat{D}$ such that $F$ and $G'$ form a dagger equivalence.
	\end{proof}


	\begin{corollary}
	Let $(\cat{C},\dag)$ be a unitary dagger category and $F\colon \cat{C}\rightleftarrows\cat{D}\colon G$ an equivalence in \cat{Cat}. Then one can replace G and F with naturally isomorphic functors and lift the resulting equivalence into a dagger equivalence.
	\end{corollary}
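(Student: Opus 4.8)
The plan is to deduce this immediately from the preceding Theorem by verifying that its single hypothesis---namely that $A \cong_\dag GF(A)$ for every object $A$---is automatically satisfied once $\cat{C}$ is assumed unitary. Thus there is essentially nothing to prove beyond unwinding the definitions and invoking the Theorem.

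First I would extract from the equivalence $F\colon \cat{C}\rightleftarrows\cat{D}\colon G$ its unit $\eta\colon \id[\cat{C}]\to GF$, which is a natural isomorphism. In particular each component $\eta_A\colon A\to GF(A)$ is an isomorphism in $\cat{C}$, so $A$ and $GF(A)$ are isomorphic objects. Since $\cat{C}$ is unitary, isomorphic objects are unitarily isomorphic, so there is a unitary $A\to GF(A)$. A unitary between two objects witnesses $A\cong_\dag GF(A)$, because $\cong_\dag$ is by definition the equivalence relation generated by unitaries. Hence $A\cong_\dag GF(A)$ for every $A$, which is precisely the hypothesis of the preceding Theorem. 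Applying that Theorem then yields the conclusion: $F$ and $G$ can be replaced by naturally isomorphic functors lifting to a dagger equivalence.

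There is no genuine obstacle here; the only point worth noting is that the definition of a unitary dagger category is tailored exactly so that an ordinary equivalence automatically meets the $\cong_\dag$ condition. The substantive work has already been carried out in the preceding Theorem, and this corollary merely records the clean special case in which no separate verification of the $\cong_\dag$ hypothesis is required.
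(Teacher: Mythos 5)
Your proof is correct and matches the paper's intent exactly: the corollary is stated without proof precisely because, as you observe, the unit $\eta_A\colon A\to GF(A)$ exhibits an isomorphism $A\cong GF(A)$, and unitarity of $\cat{C}$ upgrades this to $A\cong_\dag GF(A)$, so the preceding theorem applies directly. Nothing further is needed.
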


Note that the requirement $A\cong_\dag GF(A)$ for every A is strictly weaker than the existence of a unitary natural transformation $\id\to GF$, as witnessed by the forgetful functor  $\cat{FHilb}\to \cat{FVect}$. Naively, one might think that even this assumption can be dropped: $A\cong_\dag B$ implies $A\cong B$, so that by modifying $G$ and $F$ up to isomorphism, it might be possible to tweak any equivalence to one satisfying this condition. However, this can fail in situations when $A\cong B$ doesn't imply $A\cong_\dag B$. For example, let \cat{C} be the category with two uniquely isomorphic objects, and consider the dagger category \cat{ZigZag(C)} from Proposition~\ref{prop:free}. 
The two objects of this category are isomorphic, but not unitarily isomorphic. Thus \cat{ZigZag(C)} is equivalent in \cat{Cat} to some one-object category \cat{D}, but there is no dagger equivalence between \cat{ZigZag(C)} and a dagger category with a single object.

\section{Dagger adjunctions}\label{sec:adjunctions}

This section considers adjunctions that respect daggers.

\begin{definition}\index[word]{dagger adjunction}
  A \emph{dagger adjunction} is an adjunction between dagger categories where both functors are dagger functors.
\end{definition}

Note that the previous definition did not need to specify left and right adjoints, because the dagger makes the adjunction go both ways. If $F\colon \cat C \to \cat D$ and $G \colon \cat D \to \cat C$ are dagger adjoints, say $F \dashv G$ with natural bijection $\theta \colon \cat{D}(FA,B) \to \cat{C}(A,GB)$, then $f \mapsto \theta(f^\dag)^\dag$ is a natural bijection $\cat{D}(A,FB) \to \cat{C}(GA,B)$, 
whence also $G \dashv F$. Hence if $F$ and $G$ form a dagger adjunction, we will call $G$ the dagger adjoint of $F$ and vice versa\index[word]{dagger adjoint}.

For example, a dagger category $\cat{C}$ has a zero object if and only if the unique dagger functor $\cat{C}\to\cat{1}$ has a dagger adjoint. Here, a \emph{zero object} is one that is both initial and terminal, and hence induces zero maps between any two objects. This is the nullary version of the following example: a product $\smash{A \stackrel{p_A}{\longleftarrow} A \times B \stackrel{p_B}{\longrightarrow} B}$ is a \emph{dagger biproduct} when $p_A \circ p_A^\dag = \id$, $p_A \circ p_B^\dag = 0$, $p_B \circ p_A^\dag = 0$, $p_B \circ p_B^\dag = \id$.

\begin{example}\label{ex:dagbiprods}
  A dagger category $\cat{C}$ with zero object has binary dagger biproducts if and only if the diagonal functor $\cat{C} \to \cat{C} \times \cat{C}$ has a dagger adjoint $(-)\oplus (-)$ with dagger epic counit $(p_A,p_B)\colon (A\oplus B,A\oplus B)\to (A,B)$, \ie $(p_A \circ p_A^\dag,p_B\circ p_B^\dag)=(\id[A],\id[B])$ for $A,B \in \cat{C}$.
\end{example}
\begin{proof}
  The implication from left to right is routine. For the other direction, a right adjoint to the diagonal is well-known to fix binary products~\cite[V.5]{maclane:categories}. 
  If it additionally preserves daggers the product is also a coproduct, so it remains to check that the required equations governing $p_A$ and $p_B$ are satisfied. By naturality, the diagram
  \[\begin{tikzpicture}[xscale=3.5,yscale=1.5]
    \node (tl) at (0,1) {$A$};
    \node (t) at (1,1) {$A \oplus B$};
    \node (tr) at (2,1) {$B$};
    \node (bl) at (0,0) {$A$};
    \node (b) at (1,0) {$A \oplus B$};
    \node (br) at (2,0) {$B$};
    \draw[->] (tl) to node[above] {$p_A^\dag$} (t);
    \draw[->] (t) to node[above] {$p_B$} (tr);
    \draw[->] (bl) to node[below] {$p_A^\dag$} (b);
    \draw[->] (b) to node[below] {$p_B$} (br);
    \draw[->] (tl) to node[left] {$\id$} (bl);
    \draw[->] (tr) to node[right] {$0$} (br);
    \draw[->] (t) to node[right] {$\id\oplus 0$} (b);
  \end{tikzpicture}\]
  commutes, so that $p_B \circ p_A^\dag = 0$. By symmetry $p_A \circ p_B^\dag = 0$, and the remaining equations hold by assumption.
\end{proof}

Here is a more involved example of a dagger adjunction.

\begin{example}\label{ex:imdagadj}
  The monoids $(\mathbb{N},+)$ and $(\mathbb{Z},+)$ become one-object dagger categories under the trivial dagger $k\mapsto k$. The inclusion $\mathbb{N}\hookrightarrow \mathbb{Z}$ is a dagger functor. It induces a dagger functor $F\colon [\mathbb{Z},\cat{FHilb}]\to [\mathbb{N},\cat{FHilb}]$, which has a dagger adjoint $G$. 
\end{example}
\begin{proof}
  An object of $[\mathbb{Z},\cat{FHilb}]$ is a self-adjoint isomorphism $T\colon A\to A$ on a finite-dimensional Hilbert space $A$, whereas an object of $[\mathbb{N},\cat{FHilb}]$ is a just a self-adjoint morphism $T \colon A \to A$ in $\cat{FHilb}$.
  To define $G$ on objects, notice that a self-adjoint morphism $T \colon A \to A$ restricts to a self-adjoint surjection from $\ker(T)^\perp = \overline{\Ima T}$ to itself, and by finite-dimensionality of $A$ hence to a self-adjoint isomorphism $G(T)$ on $\Ima T$.

  On a morphism $f \colon T \to S$ in $[\mathbb{N},\cat{FHilb}]$, define $Gf$ to be the restriction of $f$ to $\Ima T$. To see this is well-defined, \ie the right diagram below commutes if the left one does,
  \[
    \begin{aligned}\begin{tikzpicture}
     \matrix (m) [matrix of math nodes,row sep=2em,column sep=4em,minimum width=2em]
     {A & B \\
      A & B \\};
     \path[->]
     (m-1-1) edge node [left] {$T$} (m-2-1)
             edge node [above] {$f$} (m-1-2)
     (m-2-1) edge node [below] {$f$} (m-2-2)
     (m-1-2) edge node [right] {$S$} (m-2-2);
    \end{tikzpicture}\end{aligned}
    \qquad \implies \qquad
    \begin{aligned}\begin{tikzpicture}
     \matrix (m) [matrix of math nodes,row sep=2em,column sep=4em,minimum width=2em]
     {\Ima  T & \Ima S \\
      \Ima T & \Ima S \\};
     \path[->]
     (m-1-1) edge node [left] {$T$} (m-2-1)
             edge node [above] {$G(f)$} (m-1-2)
     (m-2-1) edge node [below] {$G(f)$} (m-2-2)
     (m-1-2) edge node [right] {$S$} (m-2-2);
    \end{tikzpicture}\end{aligned}
  \] 
  observe that if $b\in\Ima T$ then $b=T(a)$ for some $b\in H$, so that $f(b)=fT(a)=Sf(a)$, and hence $f(b)\in \Ima S$. 
  This definition of $G$ is easily seen to be dagger functorial.

  To prove that $F$ and $G$ are dagger adjoint, it suffices to define a natural transformation $\eta \colon \id \to F \circ G$, because $G \circ F$ is just the identity. Define $\eta_T$ to be the projection $A \to \Ima T$, which is a well-defined morphism in $[\mathbb{N},\cat{FHilb}]$:
  \[
    \begin{tikzpicture}
     \matrix (m) [matrix of math nodes,row sep=2em,column sep=4em,minimum width=2em]
     {A & \Ima T \\
      A & \Ima T\rlap{.} \\};
     \path[->]
     (m-1-1) edge node [left] {$T$} (m-2-1)
             edge node [above] {$\eta_T$} (m-1-2)
     (m-2-1) edge node [below] {$\eta_T$} (m-2-2)
     (m-1-2) edge node [right] {$T$} (m-2-2);
     \end{tikzpicture}
  \] 
  Naturality of $\eta$ boils down to commutativity of 
  \[
  \begin{tikzpicture}
     \matrix (m) [matrix of math nodes,row sep=2em,column sep=4em,minimum width=2em]
     {
      A & \Ima T & \\
      B & \Ima S \\};
     \path[->]
     (m-1-1) edge node [left] {$f$} (m-2-1)
             edge node [above] {$\eta_T$} (m-1-2)
     (m-2-1) edge node [below] {$\eta_S$} (m-2-2)
     (m-1-2) edge node [right] {$f$} (m-2-2);
   \end{tikzpicture}
   \] 
   which is easy to verify.
\end{proof}

There are variations on the previous example. For example, $(n,m)\mapsto (m,n)$ induces daggers on $\mathbb{N}\times\mathbb{N}$ and $\mathbb{Z}\times\mathbb{Z}$. A dagger functor $\mathbb{N}\times\mathbb{N}\to \cat{FHilb}$ corresponds to a choice of a normal map, which again restricts to a normal isomorphism on its image. This defines a dagger adjoint to the inclusion $[\mathbb{N}\times\mathbb{N},\cat{FHilb}]\to [\mathbb{Z}\times\mathbb{Z},\cat{FHilb}]$. 

Recall that $F \colon \cat{C} \to \cat{D}$ is a \emph{Frobenius functor} when it has a left adjoint $G$ that is simultaneously right adjoint. This is also called an \emph{ambidextrous adjunction}~\cite{lauda:ambidextrous}.

\begin{proposition} 
  If $F$ is a Frobenius functor with adjoint $G$, then $F_\leftrightarrows$ and $G_\leftrightarrows$ as in Proposition~\ref{prop:cofree} are dagger adjoint.
\end{proposition}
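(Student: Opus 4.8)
The plan is to exhibit an adjunction $F_\leftrightarrows \dashv G_\leftrightarrows$ between dagger functors; by the discussion following the definition of a dagger adjunction in Section~\ref{sec:adjunctions}, this is exactly a dagger adjunction and automatically runs both ways. That $F_\leftrightarrows$ and $G_\leftrightarrows$ are dagger functors is immediate from Proposition~\ref{prop:cofree}: the assignment $(-)_\leftrightarrows$ already lands in $\cat{DagCat}$, and explicitly $F_\leftrightarrows\big((f,g)^\dag\big) = F_\leftrightarrows(g,f) = (Fg, Ff) = (Ff, Fg)^\dag = \big(F_\leftrightarrows(f,g)\big)^\dag$. So the real content is producing the adjunction.

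The key observation is that, although $(-)_\leftrightarrows$ is \emph{not} a $2$-functor (as noted before Definition~\ref{def:adjointability}, so we cannot simply transport the adjunction $F \dashv G$ through it), we can build the required adjunction by hand from the ambidextrous structure. Being a Frobenius functor, $F$ comes with a single $G$ satisfying both $F \dashv G$ and $G \dashv F$. The second of these is equivalent, by passing to opposite categories, to $F\op \dashv G\op$ as functors $\cat{C}\op \to \cat{D}\op$ and back. First I would combine the two adjunctions $F\op \dashv G\op$ and $F \dashv G$ coordinatewise into a single product adjunction
\[
  F\op \times F \;\dashv\; G\op \times G \colon \cat{D}\op \times \cat{D} \to \cat{C}\op \times \cat{C}.
\]
The $\op$ in the first coordinate is precisely what forces the reverse adjunction $F\op \dashv G\op$, so the ambidexterity of $F$ is exactly what makes this product adjunction exist.

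Next I would restrict this adjunction along the full inclusions $\cat{C}_\leftrightarrows \hookrightarrow \cat{C}\op \times \cat{C}$ and $\cat{D}_\leftrightarrows \hookrightarrow \cat{D}\op \times \cat{D}$. Both $F\op \times F$ and $G\op \times G$ send diagonal objects to diagonal objects, namely $(A,A) \mapsto (FA, FA)$ and $(B,B) \mapsto (GB, GB)$, so by Proposition~\ref{prop:cofree} their restrictions are exactly $F_\leftrightarrows$ and $G_\leftrightarrows$. Since the inclusions are full and faithful, the adjunction bijection restricts: for $X \in \cat{C}_\leftrightarrows$ and $Y \in \cat{D}_\leftrightarrows$,
\[
  \cat{D}_\leftrightarrows(F_\leftrightarrows X, Y) \cong (\cat{D}\op \times \cat{D})\big((F\op \times F)X, Y\big) \cong (\cat{C}\op \times \cat{C})\big(X, (G\op \times G)Y\big) \cong \cat{C}_\leftrightarrows(X, G_\leftrightarrows Y),
\]
naturally in $X$ and $Y$, where the outer isomorphisms are fullness and faithfulness of the inclusions and the middle one is the product adjunction. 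This yields $F_\leftrightarrows \dashv G_\leftrightarrows$, and together with the first paragraph completes the proof.

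I expect the only genuinely delicate point to be the interaction with the opposite-category coordinate: one must be careful that it is $G \dashv F$ (and not $F \dashv G$) that converts, via $(-)\op$, into the first-coordinate adjunction $F\op \dashv G\op$, so that the ambidextrous assumption is used exactly where needed. Everything else is bookkeeping with full subcategories, for which fullness and faithfulness of the diagonal inclusions does all the work.
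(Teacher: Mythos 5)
Your proof is correct and is essentially the paper's argument: the paper simply writes out the hom-set chain $\cat{C}_\leftrightarrows((A,A),G_\leftrightarrows(B,B)) = \cat{C}(GB,A)\times\cat{C}(A,GB) \cong \cat{D}(B,FA)\times\cat{D}(FA,B) = \cat{D}_\leftrightarrows(F_\leftrightarrows(A,A),(B,B))$ directly from $G\dashv F\dashv G$, which is exactly the middle isomorphism of your chain once your product adjunction is unpacked coordinatewise. Your packaging via the product adjunction $F\op\times F\dashv G\op\times G$ restricted along the full inclusions is just a more structured way of saying the same thing, and your bookkeeping about which half of the ambijunction feeds the $\op$ coordinate is right.
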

\begin{proof}
  If $F \colon \cat C \to \cat D$ there is a natural bijection
  \begin{align*}
    \cat{C}_\leftrightarrows\big( (A,A) , G_\leftrightarrows(B,B) \big)
    & = \cat{C}(GB,A) \times \cat{C}(A,GB) \\
    & \cong \cat{D}(B,FA) \times \cat{C}(FA,B) \\
    & = \cat{D}_\leftrightarrows\big( F_\leftrightarrows(A,A), (B,B) \big)
  \end{align*}
  because $G \dashv F \dashv G$.
\end{proof}

Since ordinary adjoints are defined up to unique isomorphism, so are dagger adjoints. One might hope for more, \ie that for dagger adjoints the unique isomorphism is always unitary. However, this turns out to be false, essentially because universal arrows are not unique up to unitary ismorphism. Recall that if $F$ and $F'$ are left adjoints of $G$ with units $\eta$ and $\eta'$ respectively, the unique isomorphism $\sigma\colon F\to F'$ compatible with the adjunction is the one making the triangle
  \[\begin{tikzpicture}
    \matrix (m) [matrix of math nodes,row sep=2em,column sep=4em,minimum width=2em]
    {
     A & GFA \\
      & GF'A \\};
    \path[->]
    (m-1-1) edge node [below] {$\eta_A'$} (m-2-2)
           edge node [above] {$\eta_A$} (m-1-2)
    (m-1-2) edge node [right] {$G(\sigma_A)$} (m-2-2);
  \end{tikzpicture}\]

Now it is easy to produce a dagger adjunction where $\sigma$ is not unitary: set $F=G=\id[\cat{Hilb}]$, and $\eta=\id$ and $\eta'=2\id$, then there is a unique natural isomorphism $\sigma\colon\id[\cat{Hilb}]\to\id[\cat{Hilb}]$ making the triangle 
    \[\begin{tikzpicture}
    \matrix (m) [matrix of math nodes,row sep=2em,column sep=4em,minimum width=2em]
    {
     H & H \\
     & H \\};
    \path[->]
    (m-1-1) edge node [below] {$2\id$} (m-2-2)
           edge node [above] {$\id$} (m-1-2)
    (m-1-2) edge node [right] {$\sigma_H$} (m-2-2);
  \end{tikzpicture}\]
commute, but this isomorphism is not unitary. Note that for dagger equivalences similar issues cannot arise: if $\eta$ and $\eta'$ are unitary, then so is $G(\sigma)$ and hence $\sigma$.

\chapter{Limits}\label{chp:limits}



\section{Introduction}
This chapter studies limits in dagger categories. If $l_A \colon L \to D(A)$ is a limit for a diagram $D\colon\cat{J}\to\cat{C}$, then $l_A^\dag \colon D(A) \to L$ is a colimit for $\dag\circ D\colon\cat{J}\op\to\cat{C}$, so $L$ has two universal properties; they should be compatible with each other. Moreover, a \emph{dagger limit} should be unique not just up to mere isomorphism but up to unitary isomorphism.

In Section~\ref{sec:daglims} we define the notion of dagger limit that subsumes all known examples. Section~\ref{sec:unitarity} shows how dagger limits are unique up to unitary isomorphism. 
Section~\ref{sec:completeness} deals with completeness. If a dagger category has `too many' dagger limits, it degenerates (showcasing how dagger category theory can be quite different than ordinary category theory). A more useful notion of `dagger completeness' is defined, and shown to be equivalent to having dagger equalizers, dagger products, and dagger intersections. 
Section~\ref{sec:globaldaglims} formulates dagger limits in terms of an adjoint to a diagonal functor, and Section~\ref{sec:daft} attempts a dagger version of an adjoint functor theorem. Section~\ref{sec:polar} makes precise the idea that polar decomposition turns ordinary limits into dagger limits. Finally, Section~\ref{sec:commutativity} proves that dagger limits commute with dagger colimits in a wide range of situations.

To end this introduction, let us discuss earlier attempts at defining dagger limits~\cite{vicary2011completeness}. That work defines a notion of a dagger limit for diagrams $D\colon\cat{J}\to\cat{C}$ where \cat{J} has finitely many objects and \cat{C} is a dagger category enriched in commutative monoids. We dispense with both requirements. Proposition~\ref{prop:limswithsums} shows that when these requirements are satisfied the two notions agree for a wide class of diagrams. However, they do not always agree, as discussed in Example~\ref{ex:pullback}.

\section{Dagger limits}\label{sec:daglims}

Before we define dagger limits formally, let us look at few examples to motivate the definition. If \cat{1} is the terminal category, then a functor $D\colon \cat{1}\to \cat{Hilb}$ is given by a choice of an object $H$. Now, a limit of $D$ is is given by a an object $H'$ and an isomorphism $f\colon H'\to H$. However, for a limit of $D$ to qualify as the dagger limit of $D$, one would expect the isomorphism $f$ to be unitary. However, for some diagrams it seems as if one needs to make choices: a limit of $2\times -\colon \mathbb{C}\leftrightarrows\mathbb{C}\colon-/2$ is given by an isomorphism to one (and hence to both) copies of $\mathbb{C}$. For a dagger limit of the same diagram, we can require one of the isomorphisms to be unitary but not both -- which copy of $\mathbb{C}$ one should prefer? 

We will discuss more instructive examples below, but already it seems like dagger limits have something to do with ``normalization'', and one can conceivably choose to normalize at different locations. To keep track of these choices, we build them in the definition below, which is the basic object of study in this chapter. The rest of this section illustrates it.

\begin{definition}\label{def:daglim}\index[word]{dagger limit}
  Let \cat{C} be a dagger category and \cat{J} a category. A class $\Omega$ of objects of \cat{J} is \emph{weakly initial} if for every object $B$ of \cat{J} there is a morphism $f\colon A\to B$ with $A\in\Omega$, \ie if every object of \cat{J} can be reached from $\Omega$. 
  Let $D\colon\cat{J}\to\cat{C}$ be a diagram and let $\Omega\subseteq \cat{J}$ be weakly initial. \emph{A dagger limit of $(D,\Omega)$} is a limit $L$ of $D$ whose cone $l_A \colon L \to D(A)$\index[symb]{$l_A\colon L\to D(A)$, one leg of a cone $\Delta L\to D$ or a generic morphism denoting the whole cone} satisfies the following two properties:
  \begin{description}
    \item[normalization] $l_A$ is a partial isometry for every $A \in \Omega$;
    \item[independence] the projections on $L$ induced by these partial isometries commute, \ie $l_A^\dag l_A l_B^\dag l_B=l_B^\dag l_Bl_A^\dag l_A$ for all $A,B\in \Omega$.
  \end{description}
  A dagger limit of $D$ is a dagger limit of $(D,\Omega)$, for some weakly initial $\Omega$.
  If $L$ is a dagger limit of $(D,\Omega)$, we will also write $L=\dlim^\Omega D$\index[symb]{$\dlim^\Omega D$, the dagger limit of $(D,\Omega)$}. For a fixed $\cat{J}$ and $\Omega$, if $(D,\Omega)$ has a dagger limit in \cat{C} for every $D$, we will say that \cat{C} has all $(\cat{J},\Omega)$-shaped limits.
\end{definition}

Note that if $L$ is a dagger limit of $(D,\Omega)$ and $\Psi\subset\Omega$ is weakly initial, $L$ is also a dagger limit of $(D,\Psi)$. Moreover, if $L$ is a dagger limit of $(D,\Psi)$ and $(D,\Omega)$ also has a dagger limit, Theorem~\ref{thm:daglimsunique} will imply that $L$ is also a dagger limit of $(D,\Omega)$.

\begin{example}\label{ex:concretedaggerlimits}
  Definition~\ref{def:daglim} subsumes various concrete dagger limits from the literature:
  \begin{itemize}
  	\item A terminal object is a limit of the unique functor $\emptyset\to\cat{C}$. As the empty category has no objects, being a dagger limit of $\emptyset\to\cat{C}$ says nothing more than being terminal. In a dagger category, any terminal object is automatically a zero object.

  	\item A \emph{dagger product}\index[word]{dagger product}\index[word]{dagger biproduct} of objects $A$ and $B$ in a dagger category with a zero object is traditionally defined~\cite{selinger:completelypositive} to be a product $A \times B$ with projections $p_A \colon A \times B \to A$ and $p_B \colon A \times B \to B$ satisfying $p_A p_A^\dag = \id[A]$, $p_B p_B^\dag = \id[B]$, $p_A p_B^\dag = 0$, and $p_B p_A^\dag = 0$.
  	This is precisely a dagger limit of $(D,\Omega)$, where $\cat{J}$ is the discrete category on two objects that $D$ sends to $A$ and $B$, and $\Omega$ necessarily consists of both objects of $\cat{J}$; see also Example~\ref{ex:biproducts} below.

  	\item A \emph{dagger equalizer}\index[word]{dagger equalizer} of morphisms $f,g \colon A \to B$ in a dagger category is traditionally defined~\cite{vicary2011completeness} to be an equalizer $e \colon E \to A$ that is dagger monic.
  	This is precisely a dagger limit, where $\cat{J} = \bullet \rightrightarrows \bullet$ which $D$ sends to $f$ and $g$, and $\Omega$ consists of only the first object, which gets sent to $A$. 

  	This example justifies why Definition~\ref{def:daglim} cannot require $\Omega$ to be all of $\cat{J}$ in general, otherwise there would be many pairs $f,g$ that have a dagger equalizer in the traditional sense but not in the sense of Definition~\ref{def:daglim}.

  	\item A \emph{dagger kernel}\index[word]{dagger kernel} of a morphism $f \colon A \to B$ in a dagger category with a zero object is traditionally defined~\cite{heunenjacobs:daggerkernels} to be a kernel $k \colon K \to A$ that is dagger monic. As a special case of a dagger equalizer it is a dagger limit.

  	\item A \emph{dagger intersection}\index[word]{dagger intersection} of dagger monomorphisms $f_i \colon A_i \to B$ in a dagger category is traditionally defined~\cite{vicary2011completeness} to be a (wide) pullback $P$ such that each leg $p_i \colon P \to A_i$ of the cone is dagger monic.
  	This is precisely a dagger limit, where $\Omega$ consists of all the objects of $\cat{J}$ getting mapped to $A_i$. Since pullback of monics are monic, each $p_i$ is not only a partial isometry but also a monomorphism, and hence a dagger monomorphism.

    \item If $p\colon A\to A$ is a projection, a \emph{dagger splitting of $p$}\index[word]{projection!splitting of} is a dagger monic $i\colon I\to A$ such that $p=ii^\dag$~\cite{selinger2008idempotents}. A dagger splitting of $p$ can be seen as the dagger limit of the diagram generated by $p$. 
    More precisely, we can take $\Omega=\{A\}$, by definition $i$ is a partial isometry, and if $l \colon L \to A$ is another limit, then $m=i^\dag l$ is the unique map satisfying $l=im$.
    Conversely, suppose that $l \colon L \to A$ is a dagger limit. Then $l$ is a partial isometry, and so the cone $l$ factors through itself via both $l^\dag l$ and $\id[L]$; but since mediating maps are unique these must be equal, and so $l$ is dagger monic. Similarly, because $p$ is idempotent, $p$ gives a cone, which factors through $l$. This implies $l l^\dag p=p$. Taking daggers we see that $p=pll^\dag=ll^\dag$ since $pl=l$. 
    We say that \cat{C} has dagger splittings of projections if every projection has a dagger splitting. 
  \end{itemize}
\end{example}

\begin{example}\label{ex:daggershaped}\index[word]{dagger limit!dagger shaped}
  Let \cat{J} be a dagger category, and $D\colon \cat{J}\to\cat{C}$ a dagger functor. Any leg $l_A \colon L \to D(A)$ of a cone is a partial isometry if and only if any other leg $l_B \colon L \to D(B)$ in the same connected component is. To see this, fix a morphism $f \colon A \to B$ in $\cat{J}$, and assume that $l_{A}$ is a partial isometry. 
  \begin{align*} 
    l_Bl_B^\dag l_B
    &=D(f) l_A l_A^\dag  D(f)^\dag D(f)l_A && \text{ as }L\text{ is a cone}\\
    &=D(f) l_A l_A^\dag  D(f^\dag f)l_A && \text{ as }D\text{ is a dagger functor}\\
    &=D(f) l_A l_A^\dag l_A && \text{ as }L\text{ is a cone}\\
    &=D(f) l_A && \text{ as }l_A\text{ is a partial isometry} \\
    &=l_B  && \text{ as }L\text{ is a cone}
  \end{align*}
  Similarly $l_A^\dag l_A=l_B^\dag l_B$ for any objects $A$ and $B$ in the same connected component of \cat{J}. These two facts imply that whenever $D$ is a dagger functor, the choice of the parameter $\Omega$ doesn't matter when speaking about dagger limits, as the resulting equations are equivalent. Hence whenever $D$ is a dagger functor we omit $\Omega$, and call a dagger limit of $D$ a \emph{dagger-shaped limit}. In particular, whenever every dagger functor $\cat{J}\to\cat{C}$ has a dagger limit, we will say that \cat{C} has \cat{J}-shaped limits. If \cat{C} has \cat{J}-shaped limits for every small dagger category \cat{J}, we will say that \cat{C} has dagger-shaped limits. If we wish to say something about dagger limits of  functors $\cat{J}\to\cat{C}$ that don't necessarily preserve the dagger, we will make it clear by not omitting $\Omega$.
  \begin{itemize}
    \item Any discrete category has a unique dagger, which is always preserved by maps into dagger categories. Thus dagger products can be seen as dagger-shaped limits. 

    \item Any dagger splitting of a projection is a dagger-shaped limit, as in Example~\ref{ex:concretedaggerlimits}.

    \item We say that a dagger category has \emph{dagger split infima of projections} if, whenever $\mathcal{P}$ is a family of projections on a single object $A$, it has an infimum admitting a dagger splitting, \ie a dagger subobject $K\rightarrowtail A$ such that the induced projection is the infimum of $\mathcal{P}$. Limits of projections can be defined as dagger-shaped limits: consider the monoid  freely generated by a set of idempotents; the dagger on the monoid fixes those idempotents, and reverses words in them. However, we prefer to think of them instead in terms of the partial order on projections. It is not hard to show that the dagger intersection of a family of dagger monics $m_i\colon A_i\to A$ coincides with the dagger limit of the projections $m_im_i^\dag\colon A\to A_i\to A$.

    \item We say that a dagger category has \emph{dagger stabilizers} when it has all $\cat{ZigZag(E)}$-shaped limits, where \cat{E} is the equalizer shape and $\cat{ZigZag}(E)$ is the free dagger category on \cat{E} from Proposition~\ref{prop:free}. Concretely, a dagger functor with domain $\cat{ZigZag(E)}$ is uniquely determined by where it sends $E$, \ie by a choice of a parallel morphisms $f,g\colon A\rightrightarrows B$ in the target category \cat{C}. A cone for such a functor consists of an object $X$ with maps $p_A\colon X\to A$ and $p_B$ satisfying $fp_A=gp_A=p_B$ \emph{and} $f^\dag p_B=g^\dag p_B=p_A$. A dagger stabilizer of $f$ and $g$ is then a terminal such cone that also satisfies normalization and independence. Hence the dagger stabilizer of $f$ and $g$ is \emph{not} in general a dagger equalizer. For example, the (dagger) kernel of a linear map $f\colon A\to B$ in \cat{FHilb} can be computed as the equalizer of $f$ and $0$, whereas the dagger stabilizer of $f$ and $0$ is always $0$.
  \end{itemize}
\end{example}

\begin{example}\label{ex:symmetrictensor}\index[word]{symmetrized tensor power}\index[word]{antisymmetrized tensor power}\index[symb]{$\sym^n X$ the $n$-th symmetric tensor power of $X$}
    One can also understand (anti)symmetrized tensor powers as dagger limits in a monoidal dagger category.  The \emph{$n$-th symmetric tensor power of $X$}, denoted by $\sym^n X$, if it exists, is the dagger limit of the functor $D_X\colon S_n\to \cat{C}$, where $S_n$ is the symmetric group on $n$ elements, $D_X$ sends the unique object to $X^{\otimes n}$ and a permutation on $n$ elements to the corresponding permutation on $X^{\otimes n}$ built from the symmetry. Symmetric tensor powers are in fact dagger-shaped limits.

    If $f\colon X^{\otimes n}\to X^{\otimes n}$ is a symmetry, \ie self-adjoint and unitary, then one can define the $f$-symmetrized tensor power as the dagger limit of the following diagram. Define \cat{J} by setting its objects to correspond elements of the group $S_n$. For any $g\in S_n\setminus\{1\}$, \cat{J} has two morphisms $1\to g$ and no other non-identity morphisms exist. Define $D\colon\cat{J}\to \cat{C}$ by setting $D(-)=X^{\otimes n}$ on objects, and let one of the two morphisms $1\to g$ be mapped to the permutation $X^{\otimes n}\to X^{\otimes n}$ corresponding to $g$ and the other one mapped to $f^{\text{sgn}(g)}$ where $\text{sgn}(g)\in{0,1}$ is the sign of the permutation $g$. Then the $f$-symmetrized tensor product, denoted by $\sym^n_f X$, is the dagger limit of $(D,\cat{J})$. When $\cat{C}=\cat{FHilb}$ and $f=-1$, this corresponds to antisymmetrized tensor powers, and when $f=1$ one obtains an equivalent description of symmetric tensor powers. More generally, one might let $f$ vary with $g$, obtaining a tensor power where some transpositions are symmetric and some are asymmetric. Note that each morphism in the image of $D$ is unitary.

\end{example}

Recall that a dagger category is \emph{connected} if every hom-set is inhabited.\index[word]{dagger category!connected} 
\begin{proposition}\label{prop:variouscatshavedagshapedlims}\index[symb]{\cat{Rel}, sets and relations}\index[symb]{\cat{FinRel}, finite sets and relations},\index[symb]{\PInj, sets and partial injections}\index[symb]{$\cat{Span}(\cat{FinSet})$, spans of finite sets}
  The dagger categories $\cat{Rel}$, $\cat{FinRel}$, $\cat{PInj}$, and $\cat{Span(FinSet)}$ have \cat{J}-shaped limits for any small connected dagger category \cat{J}.
\end{proposition}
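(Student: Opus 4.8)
The plan is to treat \cat{Rel} as the primary case and then to transfer the construction to the other three categories, exploiting the fact that in all of them an isomorphism is automatically unitary (an iso in \cat{Rel} is the graph of a bijection, and likewise for the others). Consequently \emph{normalization} and \emph{independence} are invariant under replacing the apex of a cone by a unitarily isomorphic object, and since any two limits of the same diagram differ by a necessarily unitary iso, it suffices to exhibit \emph{one} ordinary limit cone and check the two conditions on it. Since $D$ is a dagger functor, Example~\ref{ex:daggershaped} lets me ignore the parameter $\Omega$; and since \cat{J} is connected in the strong sense that every hom-set is inhabited, any single object $A_0$ is weakly initial, so I may work relative to the set $S := D(A_0)$. (If \cat{J} is empty the \cat{J}-shaped limit is the terminal object, which in each case is the zero object $\emptyset$ and is trivially a dagger limit, so I assume \cat{J} nonempty.)

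First I would characterize cones: a family is a cone over $D$ iff it is determined by a single relation $m \colon X \to S$ subject to $D(f)m = D(f')m$ for all parallel $f,f' \colon A_0 \to A$ and all $A$, the remaining legs being recovered as $m_A = D(f)m$ for any $f\colon A_0\to A$, consistently because of these equations and connectedness. I would then build the universal such cone explicitly. Let $\approx$ be the partial equivalence relation generated by the endorelations $D(f)^\dag D(f')$ (for $f,f'\colon A_0\to A$), restricted to the subset $S_0\subseteq S$ of elements lying in the domain of every $D(f)$ with $f\colon A_0\to A$; take $L$ to be the set of $\approx$-classes inside $S_0$, and define the leg $l_A\colon L\to D(A)$ to send a class $B$ to the block $\{z : (x,z)\in D(f),\ x\in B\}\subseteq D(A)$. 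Each $l_A$ is then a difunctional relation, hence a partial isometry, so normalization holds; and, provided distinct classes map to disjoint blocks, each induced projection $l_A^\dag l_A$ is a partial identity on $L$, and partial identities always commute, so \emph{independence} holds automatically.

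The bulk of the work, and the main obstacle, is the universal property together with the well-definedness checks underpinning it: that $l_A(B)$ is genuinely independent of the chosen $f\colon A_0\to A$, that the blocks $l_A(B)$ for distinct surviving classes are disjoint (needed for independence), and that every cone $m\colon X\to S$ factors through $l_{A_0}$ by a \emph{unique} mediating relation $u\colon X\to L$. Existence of $u$ should amount to showing the constraints force $m$ to be constant on $\approx$-classes and to avoid the discarded elements of $S\setminus S_0$, while uniqueness is where the rigidity of the block decomposition, and hence the strong connectedness of \cat{J}, is essential. I expect the delicate points to be the precise survival condition cutting $S$ down to $S_0$ and the disjointness of blocks, both of which should follow by composing the defining relations along morphisms supplied by connectedness.

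Finally I would transfer the result. For \cat{FinRel} the apex $L$ is a quotient of a subset of the finite set $S$, hence finite, so the construction stays inside \cat{FinRel}. For \cat{PInj} the situation is even simpler: it is an inverse category, so every morphism is a partial isometry and all projections $g^\dag g$ commute; thus normalization and independence are automatic, and it suffices to check that the construction above restricts to partial injections (the legs $l_A$ are partial injections whenever the $D(f)$ are), i.e.\ that \cat{PInj} has the relevant ordinary connected limits. For \cat{Span(FinSet)} I would run the analogous construction with the quotient replaced by the corresponding colimit of finite sets and the legs presented as spans; finiteness of $L$ is again automatic since it is bounded by $S = D(A_0)$, and the fact that iso $=$ unitary guarantees the dagger conditions transfer exactly as before.
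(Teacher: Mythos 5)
There is a genuine gap in your central construction: the survival condition cutting $S$ down to $S_0$ is too weak, and with it your legs fail to form a cone at all. Take $\cat{J}$ to be the one-object dagger category given by the free involutive monoid on a generator $a$, and let $D$ send $a$ to the relation $R=\{(x,y),(y,x),(x,z)\}$ on $S=D(A_0)=\{x,y,z\}$. Every composite of $R$ and $R^\dag$ applied to $x$ or to $y$ is nonempty, so $S_0=\{x,y\}$; and since $(x,y)\in R=D(\id)^\dag D(a)$, your relation $\approx$ identifies $x$ with $y$, so $L$ consists of the single class $B=\{x,y\}$. But $D(a)(B)=\{x,y,z\}\neq B$, so the well-definedness check you flagged fails and $l_{A_0}$ is not a cone; in fact the genuine limit of this diagram in $\cat{Rel}$ is $\emptyset$, because the only $T\subseteq S$ with $RT=T$ and $R^\dag T=T$ is the empty set. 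The point is that lying in the domain of every $D(f)$ out of $A_0$ does not stop the orbit of $x$ from reaching an element (here $z\in D(a)x$) that is killed by some morphism. The correct survival condition must be imposed on the whole connected component of $x$ in the graph on $\coprod_{A}D(A)$ whose edges come from all the $D(f)$: a component must be discarded as soon as \emph{any} of its vertices, in any $D(A)$, fails to lie in the domain of some $D(g)$. This is exactly the ``no $D$-endpoints'' condition of the paper's proof, and it is the key idea your outline is missing; once it is in place, your identification of classes with component-slices and your normalization/independence arguments do go through.

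The transfer to the remaining categories also does not work as described. The inclusion $\cat{PInj}\hookrightarrow\cat{Rel}$ does not preserve these limits and the $\cat{Rel}$-apex does not restrict: if, say, a group acts by permutations, the $\cat{Rel}$-legs relate a whole orbit to each of its elements and are not partial injections, whereas the limit in $\cat{PInj}$ is the set of compatible families $(x_A)_{A\in\cat{J}}$ with $D(f)x_A=x_B$ --- a genuinely different object needing its own construction (you are right that normalization and independence are then automatic in an inverse category). Likewise $\cat{Span(FinSet)}$ requires a separate, $\cat{PInj}$-style construction, as in the paper; the observation that isomorphisms are unitary is internal to a single category and provides no transfer principle between different ones. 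Only $\cat{FinRel}$ genuinely reduces to the $\cat{Rel}$ case, via the finiteness of $L$ that you correctly note.
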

\begin{proof} 
  Let $D\colon \cat{J}\to \cat{Rel}$ be a dagger functor; we will construct a dagger limit. Write $G$ for the (undirected multi-)graph with vertices $V=\coprod_{A\in\cat{J}} D(A)$ and edges $E=\coprod_{f\in\cat{J}}D(f)$. Call a vertex $a\in D(A)\subseteq V$ a \emph{$D$-endpoint} if $D(f)a=\emptyset$ for some $f\colon A\to B$ in \cat{J}. Set
	\[
    L=\{X\in \mathcal{P}(V)\mid X\text{ is a path component of }\mathcal{G}\text{ with no }D\text{-endpoints}\}\text,
  \]
	and define $l_A\colon L\to D(A)$ by $l_A(X)=X\cap D(A)$. We will prove that this is a dagger limit of $D$, starting with normalization and independence. First we show that $l_A(X)\neq \emptyset$ for any $A\in\cat{J}$ and $X\in L$. Pick an element $b\in X$, say $b\in D(B)$, and choose some $f\colon B\to A$. Since $x$ is not a $D$-endpoint, $D(f)x$ is nonempty and contained in $X\cap A=l_A(X)$. Because path components of a graph are disjoint, $l_A^\dag l_A (X)=X$ for all $X$. Hence $l_A^\dag l_A=\id[L]$ for all $A$, establishing normalization and independence.

	Next we verify that $l_A$ forms a cone. Path components are closed under taking neighbours, so $D(f)l_A(X)\subseteq D_B(X)$. To see the other inclusion, let $b\in D(B)(X)=X\cap B$. Again $b$ isn't a $D$-endpoint, so $D(f)^\dag (b)=D(f^\dag) (b)\subseteq L_A(X)$ is nonempty. Any element of $D(f)^\dag (b)$ is related to $b$ by $D(f)$. Hence $b\in D(f)l_A(X)$, so $D(f)l_A=l_B$ as desired.

	Finally, we verify that the cone $l_A$ is limiting. Let $R_A\colon Y\to D(A)$ be any cone. If $x\in D(A)$ is a $D$-endpoint, say $D(f)x=\emptyset$, then 
  \[
    x\notin D(f^\dag)D(f)D(A)=D(f^\dag f)D(A)\supseteq D(f^\dag f)R_A(y)=R_A(y)\text.
  \]
  Hence no $R_A(y)$ contains $D$-endpoints. Moreover, if $D(f)R_A(y)=R_B(y)$ for all $f$, then the set $\coprod_{A\in\cat{J}}(R_A(y))\subseteq V$ is closed under taking neighbours in $G$. As it contains no $D$-endpoints, it is a union of a set of connected components of $\mathcal{G}$ without $D$-endpoints. Mapping $y$ to this set of connected components, \ie\ to a subset of $L$, defines the unique relation $R\colon Y\to L$ satisfying $R_A=l_A\circ R$ for all $A\in\cat{J}$.

  The same construction works for \cat{FinRel}:  one merely needs to check that $L$ is finite whenever each $D(A)$ is. In fact $L$ is finite if at least one $D(A)$ is, since $l_A^\dag l_A=\id[L]$ implies that the function $L\to \mathcal{P}(D(A))$ corresponding to $l_A$ is injective.

  Now consider a dagger functor $D\colon\cat{J}\to\cat{PInj}$ and set
  \begin{align*}
    L&=\{(x_A)_{A\in\cat{J}}\in \prod_{A\in J}D(A)\mid D(f)x_A=x_B\text{ for every }f\colon A\to B\}\text, \\
    l_A((x_A)_{A\in\cat{J}})&=x_A\text.
  \end{align*}
  It is easy to verify that this forms a dagger limit. 

  Dagger limits of a dagger functor $D\colon\cat{J}\to\cat{Span(FinSet)}$ resemble the case of \cat{PInj} more than \cat{(Fin)Rel}.  Think of (the isomorphism class of) a span $A\leftarrow \bullet\to B$ of finite sets as a matrix $R\colon A\times B\to\mathbb{N}$ with natural number entries, so that a morphism $f\colon A\to B$ in \cat{J} maps to $D(f)\colon D(A)\times D(B)\to\mathbb{N}$. Set 
    \begin{align*}
    L&=\{(x_A)_{A\in\cat{J}}\in \prod_{A\in J}D(A)\mid D(f)(x_A,z)=\delta_{z,x_B}
      \text{ for every }f\colon A\to B\}\text, \\
    l_A((x_A)_{A\in\cat{J}},z)&=\delta_{z,x_A}\text.
  \end{align*}
  It is easy to see $l_A$ forms a cone. To see that it is limiting, let $R_A\colon Y\to D(A)$ be any cone, and pick $y\in Y$.  Consider $x_A\in R_A$ such that $R_A(y,x_A)=n\neq 0$. We will show that if $B\in \cat{J}$ and $f\colon A\to B$, then $D(f)(x_A,z)=\delta_{z,x_B}$ for some unique $x_B$, so that $x_A$ extends to a unique family $(x_A)_{A\in\cat{J}}\in L$. Consider an arbitrary $f\colon A\to B$. Now $D(f^\dag f) R_A=R_A$, so there has to be some $x_B\in B$ with $D(f)(x_A,x_B)>0.$ If there were several such $x_B$ or if $D(f)(x_A,x_B)>1$, then $D(f^\dag f)R_A(y,x_A)>n$, which is a contradiction. Hence $x_A$ extends uniquely to a family $(x_A)_{A\in\cat{J}}\in L$. Moreover, if $R_B=D(f)R_A$, then $R_B(y,x_B)=n$ as well. Hence we can define $R\colon Y\to L$ by setting $R(y,(x_A)_{A\in\cat{J}})=R_A(y,x_A)$. Now $R$ satisfies $l_BR=R_B$ for each $B$ and it is clearly unique as such.
\end{proof}

Note that this theorem fails for \cat{Span(Set)}, since idempotents do not always split. For instance, the idempotent $1\leftarrow \mathbb{N}\to 1$ does not admit a splitting. We leave open the question of characterizing exactly which categories of spans or relations admit connected dagger-shaped limits.

The following example illustrates the name `independence axiom' in Definition~\ref{def:daglim}.

\begin{example} 
  When working in \cat{FHilb}, consider $\mathbb{C}^2$ as the sum of two non-orthogonal lines, \eg the ones spanned by $\lvert 0 \rangle$ and $\lvert + \rangle$. Projections to these two lines will give rise to two maps $p_1,p_2\colon \mathbb{C}^2\to \mathbb{C}$ making $(\mathbb{C}^2,p_1,p_2)$ into a categorical product. Moreover, $p_1$ and $p_2$ are partial isometries so that the normalization axiom is satisfied. However, the independence axiom fails, and indeed, $(\mathbb{C}^2,p_1,p_2)$ fails to be a dagger product. In other words, the limit structure $(\mathbb{C}^2,p_1,p_2)$ and the colimit structure $(\mathbb{C}^2,p_1^\dag,p_2^\dag)$ are not compatible.
\end{example}

\begin{example}\label{ex:cofree}\index[word]{dagger category!cofree}\index[symb]{\cat{C_\leftrightarrows}, the cofree dagger category on \cat{C}}
  Sometimes in ordinary category theory an object is both a limit and a colimit ``in a compatible way'' to a pair of related diagrams in \cat{C}. Usually this is formulated in terms of a canonical morphism from the colimit to the limit being an isomorphism, but in some cases one can instead formulate them as dagger limits in $\cat{C_\leftrightarrows}$, the cofree dagger category from proposition \ref{prop:cofree}. Moreover, if \cat{C} has zero morphisms\footnote{This is so that one can give every cone a trivial cocone structure and vice versa.}, then dagger limits in $\cat{C_\leftrightarrows}$  give rise to such ``ambilimits '' in \cat{C}. 
  \begin{itemize}
    \item If \cat{C} has zero morphisms, then a biproduct in \cat{C} is the same thing as a dagger product in \cat{C_\leftrightarrows}. 

    \item Idempotents in \cat{C} split if and only if (dagger) projections in $\cat{C_\leftrightarrows}$ have dagger splittings.

    \item A more interesting example comes from domain theory, where there is an important limit-colimit coincidence. Regard the partially ordered set $(\mathbb{N},\leq)$ of natural numbers as a category $\omega$, and let $D\colon \omega\to \cat{DCPO}$ be a chain of embeddings, \ie each map in $\omega$ is mapped to an embedding by $D$. Then the embeddings define unique projections, resulting in a chain of projections $D^*\colon\omega\op\to \cat{DCPO}$. A fundamental fact in domain theory~\cite[3.3.2]{abramsky1994domain} is that the colimit of $D$ coincides with the limit of $D^*$. One can go through the construction and show that this ``ambilimit'' can equivalently be described as the dagger limit of $(D,D^*,\omega)$, where $(D^*,D)\colon\omega\op\to\cat{DCPO}_\leftrightarrows$.
  \end{itemize}
This viewpoint is developed in more detail in chapter~\ref{chp:ambilims}.
\end{example}

\begin{example}
  In an inverse category the normalization and independence axioms of Definition~\ref{def:daglim} are automatically satisfied, and hence dagger limits are simply limits.
\end{example}

\begin{example} 
  Fix $\cat{J}$ and a weakly initial $\Omega$. If $\cat{C}$ has $(\cat{J},\Omega)$-shaped limits, so does $[\cat{D},\cat{C}]$.
  Let $D \colon \cat{J} \to [\cat{D},\cat{C}]$ be a diagram.
  For each $X \in \cat{D}$, there is a dagger limit $L(X) = \dlim^\Omega D(-)(X)$ with cone $l_A^X \colon L(X) \to D(A)(X)$. 
  For $f \colon X \to Y$ in $\cat{D}$, there is a cone $D(A)(f) \circ l_A^X \colon L(X) \to D(A)(Y)$, and hence a unique map $L(f) \colon L(X) \to L(Y)$ satisfying $l^Y_A \circ L(f) = D(A)(f) \circ l^X_A$. 
  The resulting functor $L \colon \cat{D} \to \cat{C}$ is a limit of $D$, with cone $l_A^X \colon L(X) \to D(A)(X)$.
  This limit is in fact a dagger limit $(D,\Omega)$, because the normalization and independence axioms hold for each component $l_A^X$, and the dagger in $[\cat{D},\cat{C}]$ is computed componentwise.
\end{example}

We end this section by recording how dagger functors interact with dagger limits.
Any dagger functor preserves dagger limits as soon as it preserves limits.
The same holds for reflection and creation of (dagger) limits when the functor is faithful.

\begin{lemma}
  Let $\cat{F} \colon \cat{C} \to \cat{D}$ be a dagger functor. If $F$ preserves limits of type $\cat{J}$, then it preserves $(\cat{J},\Omega)$-shaped dagger limits. If $F$ reflects (creates) limits of type \cat{J}, then it reflects (creates) $(\cat{J},\Omega)$-shaped dagger limits.
\end{lemma}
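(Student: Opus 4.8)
The plan is to observe that both axioms defining a dagger limit---normalization and independence---are just equations between composites of the cone legs $l_A$ and their daggers $l_A^\dag$. Explicitly, normalization asserts $l_A l_A^\dag l_A = l_A$ for each $A\in\Omega$, and independence asserts $l_A^\dag l_A l_B^\dag l_B = l_B^\dag l_B l_A^\dag l_A$ for all $A,B\in\Omega$. Since $F$ is a dagger functor, it satisfies $F(l_A^\dag)=(Fl_A)^\dag$ and preserves composition, so it carries each such equation in $\cat{C}$ to the corresponding equation for the image cone $Fl_A$ in $\cat{D}$; conversely, if $F$ is faithful it reflects equalities of parallel morphisms and so pulls these equations back. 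The weakly initial class $\Omega\subseteq\cat{J}$ is untouched throughout, since $D$ and $FD$ share the domain $\cat{J}$, so the same $\Omega$ witnesses the dagger limit on both sides. All three clauses then reduce to combining this transport of the two axioms with the corresponding behaviour of ordinary limits.

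First I would handle preservation. Assuming $(L,l_A)$ is a dagger limit of $(D,\Omega)$, the hypothesis that $F$ preserves limits of type $\cat{J}$ makes $(FL, Fl_A)$ a limit of $FD$. Applying $F$ to the normalization equation gives $Fl_A = F(l_A l_A^\dag l_A) = (Fl_A)(Fl_A)^\dag(Fl_A)$, so each leg $Fl_A$ with $A\in\Omega$ is again a partial isometry; applying $F$ to the independence equation likewise yields the commuting-projections identity for the legs $Fl_A$. Hence $(FL, Fl_A)$ is a dagger limit of $(FD,\Omega)$, with no faithfulness needed.

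For reflection and creation I would additionally assume $F$ faithful, as the surrounding discussion requires. For reflection, starting from a cone $(L,l_A)$ over $D$ whose image $(FL, Fl_A)$ is a dagger limit of $(FD,\Omega)$, reflection of ordinary limits makes $(L,l_A)$ a limit of $D$, and faithfulness turns the normalization and independence equations for $Fl_A$---which read $F(l_A l_A^\dag l_A)=F(l_A)$ and $F(l_A^\dag l_A l_B^\dag l_B)=F(l_B^\dag l_B l_A^\dag l_A)$---back into the same equations for $l_A$; thus $(L,l_A)$ is a dagger limit of $(D,\Omega)$. For creation, given a dagger limit $(M, m_A)$ of $(FD,\Omega)$, creation of ordinary limits supplies the unique cone $(L,l_A)$ over $D$ with $F(L,l_A)=(M, m_A)$ and makes it a limit of $D$; since $m_A = Fl_A$ already satisfies normalization and independence, faithfulness transfers these to $l_A$, so the created limit is a dagger limit.

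The only real subtlety---and the single place where anything beyond bookkeeping is used---is the reflection of the two axioms, which genuinely needs faithfulness: a dagger functor transports the defining equations forward automatically, but to recover them in $\cat{C}$ from their images one must reflect equality of the parallel composites, which is exactly what faithfulness provides. Everything else is the standard interaction of ordinary (co)limits with functors that preserve, reflect, or create them.
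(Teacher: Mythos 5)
Your proof is correct and takes essentially the same approach as the paper, whose entire argument is the one-line observation that dagger functors preserve partial isometries and commutativity of projections while faithful dagger functors also reflect them. You have simply spelled out that observation in full, including the (correct) point that faithfulness is what is needed for the reflection and creation clauses.
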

\begin{proof}
  All dagger functors preserve partial isometries and commutativity of projections, and faitful dagger functors also reflect these. 
\end{proof}



\section{Uniqueness up to unitary isomorphism}\label{sec:unitarity}



\begin{theorem}\label{thm:daglimsunique}
  Let $D\colon\cat{J}\to \cat{C}$ be a diagram and $\Omega\subseteq\cat{J}$ be weakly initial. Let $L$ be a dagger limit of $(D,\Omega)$ and let $M$ be a limit of $D$. The canonical isomorphism of cones $L\to M$ is unitary iff $M$ is a dagger limit of $(D,\Omega)$. In particular, the dagger limit of $(D,\Omega)$ is defined up to unitary isomorphism.
\end{theorem}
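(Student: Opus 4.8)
The plan is to prove the two directions of the \emph{iff} separately and then read off the ``in particular'' clause. Throughout, write $\phi\colon L\to M$ for the canonical isomorphism of cones, so $m_A\circ\phi=l_A$ (equivalently $m_A=l_A\phi^{-1}$), and set $p_A=l_A^\dag l_A$ and $q_A=m_A^\dag m_A$ for $A\in\Omega$; these are the projections appearing in the independence axiom. The forward direction is a direct calculation: assuming $\phi$ unitary we have $m_A=l_A\phi^\dag$, so $m_Am_A^\dag m_A=l_A(\phi^\dag\phi)l_A^\dag l_A\phi^\dag=l_Al_A^\dag l_A\phi^\dag=m_A$, giving normalization, while $q_A=\phi p_A\phi^\dag$ gives $q_Aq_B=\phi p_Ap_B\phi^\dag=\phi p_Bp_A\phi^\dag=q_Bq_A$ from independence of $L$. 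The ``in particular'' clause is then immediate: if $L$ and $M$ are both dagger limits of $(D,\Omega)$, the backward direction forces the comparison $\phi$ to be unitary, so any two dagger limits are unitarily isomorphic.

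For the backward direction the goal is to show $\phi$ is unitary; since $\phi$ is already an isomorphism it suffices to show it is an isometry, i.e. $\phi^\dag\phi=\id[L]$. Equivalently I will show that the automorphism $U=\phi^{-1}(\phi^{-1})^\dag=(\phi^\dag\phi)^{-1}$ of $L$ equals $\id[L]$. First I translate the two axioms for $M$ into statements about $U$ and the $p_A$. Using $m_A=l_A\phi^{-1}$ and $q_A=(\phi^{-1})^\dag p_A\phi^{-1}$, normalization of $M$ (that $m_A$ is a partial isometry) unwinds, after cancelling $\phi^{-1}$, to $l_AUp_A=l_A$, and left-multiplying by $l_A^\dag$ gives $p_AUp_A=p_A$; independence of $M$ ($q_Aq_B=q_Bq_A$) unwinds, after cancelling the isomorphisms $\phi^{\pm1}$, to $p_AUp_B=p_BUp_A$.

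The crux is a purely algebraic consequence of these two relations together with independence of $L$ (which gives $p_Ap_B=p_Bp_A$): left-multiplying $p_AUp_B=p_BUp_A$ by $p_A$ and using commutativity and $p_AUp_A=p_A$ yields the key identity
\[
p_AUp_B=p_Ap_BUp_A=p_Bp_AUp_A=p_Bp_A=p_Ap_B.
\]
With this in hand, for all $A,B\in\Omega$ we obtain $l_AUl_B^\dag=l_Ap_AUp_Bl_B^\dag=l_A(p_Ap_B)l_B^\dag=l_Al_B^\dag$. Now I invoke the two universal properties. Since $L$ is at once the limit of $D$ and, via the dagger, the colimit of $\dag\circ D$, the family $\{l_A^\dag\}_{A\in\Omega}$ is jointly epic — weak initiality of $\Omega$ lets every colimit leg $l_C^\dag$ factor as $l_A^\dag D(f)^\dag$ — so $l_AUl_B^\dag=l_Al_B^\dag$ for all $B$ upgrades to $l_AU=l_A$ for every $A\in\Omega$, hence (propagating along a morphism $A\to C$ out of $\Omega$) for every $A\in\cat{J}$. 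Uniqueness of the mediating map into the limit $L$ then forces $U=\id[L]$, so $\phi^\dag\phi=\id[L]$ and $\phi$ is unitary.

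The main obstacle is exactly the crux identity $p_AUp_B=p_Ap_B$: the ``diagonal'' information $p_AUp_A=p_A$ coming from normalization is not enough on its own (a positive $U$ can restrict to the identity on each $\operatorname{range}(p_A)$ without being the identity), and what rescues the argument is that independence must be assumed for the \emph{second} limit $M$ as well, producing the symmetry $p_AUp_B=p_BUp_A$ that annihilates the off-diagonal part. A secondary subtlety is that, with no enrichment available (no zero morphisms, no sums), one cannot argue by block-decomposing $U$; the passage from these projection-level identities to $U=\id[L]$ must instead be routed entirely through joint epic-ness of the colimit cocone and uniqueness of limit mediating maps.
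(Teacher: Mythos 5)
Your proof is correct, and it takes a noticeably more direct route than the paper's. The paper factors the hard direction through two pieces of machinery: Theorem~\ref{thm:equivalenttounitary}, which reduces unitarity of the comparison map to the identity $m_Bm_A^\dag=l_Bl_A^\dag$, and Lemma~\ref{lem:connectingmaps}, a general statement about adjointable natural transformations $\sigma\colon D\Rightarrow E$ between two diagrams with dagger limits, which is then specialized to $\sigma=\id[D]$. You instead conjugate everything onto $L$ and study the single positive automorphism $U=(\phi^\dag\phi)^{-1}$; the three relations $p_AUp_A=p_A$ (normalization of both limits), $p_AUp_B=p_BUp_A$ (independence of $M$), and $p_Ap_B=p_Bp_A$ (independence of $L$) combine in three lines to give $p_AUp_B=p_Ap_B$, hence $l_AUl_B^\dag=l_Al_B^\dag$ --- which is exactly the paper's intermediate identity in disguise, since $l_AUl_B^\dag=m_Am_B^\dag$. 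Your finish (joint epicness of $\{l_B^\dag\}_{B\in\Omega}$ via weak initiality, then uniqueness of cone endomorphisms of $L$) plays the role of the implication (iii)$\Rightarrow$(i) of Theorem~\ref{thm:equivalenttounitary}, executed on $L$ rather than on $M$. What you lose is reusability: the paper deliberately proves Lemma~\ref{lem:connectingmaps} for arbitrary adjointable $\sigma$ because that generality is needed again in Sections~\ref{sec:globaldaglims} and~\ref{sec:commutativity}, and your $U$-trick has no analogue when $D\neq E$ and the induced map of limits is not invertible. What you gain is a self-contained, shorter proof of the uniqueness theorem that makes transparent exactly which hypothesis does what --- in particular, your closing observation that independence of $M$ is precisely what supplies the symmetry $p_AUp_B=p_BUp_A$ needed to kill the off-diagonal part of $U$ is a genuinely clarifying remark that the paper's diagram chase obscures.
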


\begin{example}\label{ex:biproducts}
  Let \cat{J} be a discrete category of arbitrary cardinality. As \cat{J} has only one weakly initial class (the one consisting of all objects), the dagger limit of any diagram $D\colon \cat{J}\to\cat{C}$ is unique up to unitary isomorphism. These are exactly the dagger products. However, note that Definition~\ref{def:daglim} does not require enrichment in commutative monoids nor the equation
  \begin{equation}\label{eq:biproduct}
    \id[A\oplus B]=l_A^\dag l_A+l_B^\dag l_B
  \end{equation} 
  (and thus works for infinite \cat{J} as well). Moreover, it doesn't require \cat{C} to have a zero object or zero morphisms in order to be defined up to unitary iso. On the other hand, if \cat{C} has zero morphisms, it is not hard to show directly that a dagger product in the sense of Definition~\ref{def:daglim} satisfies the traditional equations involving zero. Moreover, if \cat{C} is enriched in commutative monoids, equation~\eqref{eq:biproduct} also follows. These facts are proven for more general \cat{J} in Propositions~\ref{prop:limswithzeros} and~\ref{prop:limswithsums}.

  In fact, from Definition~\ref{def:daglim} one can glean a definition of (ordinary) biproduct of $A$ and $B$ that is unique up to isomorphism in an ordinary category \cat{C}, but doesn't require the existence of zero morphisms, and so generalizes the usual definition. For example, in \cat{Set} the biproduct $\emptyset\oplus\emptyset$ exists and is the empty set. Slightly more interestingly, if \cat{C} has all binary biproducts (in the traditional sense) and \cat{D} is any non-empty category, then $\cat{C}\sqcup \cat{D}$ has binary biproducts of pairs of objects from \cat{C} (in the generalized sense), but doesn't have zero morphisms. Of course, if a category has all binary biproducts in this generalized sense, one can show that it also has zero morphisms, so this definition is more general only in categories with some but not all biproducts. 
  For more details, see Section~\ref{sec:biprods}.
\end{example}

\begin{example}
  Let \cat{J} be the indiscrete category on $n$ objects. When considering functors $\cat{J}\to\cat{C}$ that don't preserve the dagger on $\cat{J}$, the parameter $\Omega$ matters.
  For example, take $n=2$ in the diagram $D \colon \cat{J} \to \cat{Hilb}$ defined by $D(1)=D(2)=\mathbb{C}$ where $D(1 \to 2)$ multiplies by $2$ but $D(2 \to 1)$ divides by $2$.
  Now $\Omega$ cannot be all of $\{1,2\}$, because no limiting cone can consist of partial isometries.
  If $\Omega = \{1\}$, there is a dagger limit $L=\mathbb{C}$ with $l_1=1$ and $l_2=2$.
  If $\Omega = \{2\}$, there is a dagger limit $L=\mathbb{C}$ with $l_1=\tfrac{1}{2}$ and $l_2=1$.
  These two dagger limits are clearly not unitarily isomorphic.

  The same can happen with \emph{chains}, where the preorder of integers is regarded as a category $\cat{J}$.
  Consider the diagram $D \colon \cat{J} \to \cat{Hilb}$ defined by $D(n) = \mathbb{C}$, and $D(n \to n+1)$ is multiplication by $-1^{n}$. 
  Now $\Omega$ can either consist of even numbers or the odd numbers. Hence $D$ has two dagger limits and they are not unitarily isomorphic.

  One might hope to get rid of this dependence on $\Omega$ by strengthening the definition to select exactly one of a diagram's several dagger limits. However, this is impossible in general. Write $\cat{J}(X)$ for the indiscrete category on a nonempty set $X\subset \mathbb{R}\setminus\{0\}$ of objects. Define $D(X)\colon\cat{J}(X)\to\cat{FHilb}$ by mapping the unique arrow $x\to y$ to the morphism $\mathbb{C}\to\mathbb{C}$ that multiplies by $\tfrac{x}{y}$. A choice of a dagger limit for each $D(X)$ amounts to a choice function on $\mathbb{R}\setminus\{0\}$. Thus there is no way to strengthen Definition~\ref{def:daglim} to make dagger limits unique in a way that doesn't depend on a choice of a weakly initial class.
\end{example}

\begin{example}
  In the domain theory part of Example~\ref{ex:cofree}, in fact $\Omega=\cat{J}$. Hence the bilimit is unique up to unique unitary isomorphism in \cat{DCPO_\leftrightarrows}. In  \cat{DCPO}, this means that any isomorphism of the limit half of such bilimits, is also an isomorphism of the colimit half.
\end{example}

\begin{example} 
  Consider dagger equalizers of $f,g\colon A\to B$ in the sense of Definition~\ref{def:daglim}. Any weakly initial class must contain $A$, and thus the dagger equalizer is unique up to unitary iso if it exists. Moreover, it is readily seen to coincide with the traditional definition. For if $e\colon E\to A$ is the dagger equalizer in the above sense, then $e$ is monic and a partial isometry, and thus dagger monic, so that $e$ is a dagger equalizer in the traditional sense.
\end{example}

\begin{theorem}\label{thm:equivalenttounitary} 
  Let $l_A \colon L \to D(A)$ and $m_A \colon M \to D(A)$ be limits of the same diagram $D \colon \cat{J} \to \cat{C}$ where \cat{C} is a dagger category, and let $f\colon L\to M$ be the unique isomorphism of limits. Then the following conditions are equivalent:
  \begin{enumerate}[(i)]
    \item $f$ is unitary;
    \item $f$ is also a morphism of colimits;
  	\item the following diagram commutes for any $A$ and $B$ in $\cat{J}$:
    \[\begin{tikzpicture}
     \matrix (m) [matrix of math nodes,row sep=2em,column sep=4em,minimum width=2em]
     {
      D(A) & L \\
      M & D(B) \\};
     \path[->]
     (m-1-1) edge node [left] {$m_A^\dag$} (m-2-1)
            edge node [above] {$l_A^\dag$} (m-1-2)
     (m-1-2) edge node [right] {$l_B$} (m-2-2)
     (m-2-1) edge node [below] {$m_B$} (m-2-2);
    \end{tikzpicture}\]
  \end{enumerate}
\end{theorem}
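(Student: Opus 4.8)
The plan is to work entirely from the single defining relation $m_A \circ f = l_A$ for all $A$ (which is exactly what it means for $f$ to be the mediating isomorphism of limit cones), together with its dagger $f^\dag \circ m_A^\dag = l_A^\dag$. Two structural facts will be used throughout: the limit legs $\{l_A\}$ and $\{m_A\}$ are jointly monic, while their daggers $\{l_A^\dag\}$ and $\{m_A^\dag\}$ are the legs of the colimit cocones obtained by applying $\dag$, and so are jointly epic. With this in hand, I first unfold the two conditions: ``$f$ is a morphism of colimits'' in (ii) means $f \circ l_A^\dag = m_A^\dag$ for all $A$, and the commuting square in (iii) reads $l_B \circ l_A^\dag = m_B \circ m_A^\dag$ for all $A,B$.

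First I would dispatch (i) $\Leftrightarrow$ (ii). If $f$ is unitary then $f^{-1}=f^\dag$, so rearranging the defining relation to $m_A = l_A \circ f^\dag$ and taking daggers gives $m_A^\dag = f \circ l_A^\dag$, which is (ii). Conversely, assuming $f \circ l_A^\dag = m_A^\dag$, I take daggers to get $l_A \circ f^\dag = m_A$, substitute into $m_A \circ f = l_A$ to obtain $l_A \circ f^\dag f = l_A$ for every $A$, and invoke joint monicity of $\{l_A\}$ to force $f^\dag f = \id[L]$; the symmetric computation with $l$ and $m$ interchanged yields $f f^\dag = \id[M]$, so $f$ is unitary.

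Next, (i) $\Rightarrow$ (iii) is a one-line substitution: from $m_A = l_A f^\dag$ and $m_A^\dag = f l_A^\dag$ one computes $m_B m_A^\dag = l_B\, f^\dag f\, l_A^\dag = l_B l_A^\dag$, using $f^\dag f = \id[L]$.

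The real content is (iii) $\Rightarrow$ (i), and this is where I expect the main obstacle. The idea is to rewrite the hypothesis in terms of the positive isomorphism $h = (f^\dag f)^{-1}$ on $L$. Using $m_A = l_A f^{-1}$ and its dagger $m_A^\dag = (f^\dag)^{-1} l_A^\dag$, condition (iii) becomes $l_B\, h\, l_A^\dag = l_B l_A^\dag$ for all $A,B$. Now I fix $B$ and cancel on the right against the jointly epic family $\{l_A^\dag\}$ to conclude $l_B \circ h = l_B$ for every $B$; then joint monicity of the limit legs $\{l_B\}$ forces $h = \id[L]$, that is $f^\dag f = \id[L]$. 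Since $f$ is already invertible, this makes $f^{-1}=f^\dag$, so $f$ is unitary. The subtlety worth flagging is that this step genuinely uses \emph{both} halves of the self-duality---joint epicity of the colimit legs to strip off the $l_A^\dag$, and joint monicity of the limit legs to strip off the $l_B$---so it is the interplay of the two universal properties, rather than either alone, that closes the argument.
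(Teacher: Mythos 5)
Your proof is correct and uses essentially the same machinery as the paper's: the single relation $m_A\circ f=l_A$, joint monicity of the limit legs, and joint epicity of the daggered (colimit) legs. The only cosmetic differences are that you prove the implications in a different order and, for (iii)$\Rightarrow$(i), cancel on the $L$-side via $h=(f^\dag f)^{-1}$ where the paper cancels against $m_B$ and $m_A^\dag$ to get $ff^\dag=\id[M]$; both close the argument the same way.
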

\begin{proof}
    $(i)\Rightarrow (ii)$ By definition $f^{-1}$ is the unique map $M\to L$ that is compatible with the limit structure, whereas $f^{\dag}$ is the unique map $M\to L$ that is compatible with the \emph{colimit} structure. As $f$ is unitary, these coincide, whence $f^{-1}$ is simultaneously a map of limits and colimits. Therefore so too is $f$.

    $(ii)\Rightarrow (iii)$  By (ii), both of the triangles in the following diagram commute.
    \[\begin{tikzpicture}
         \matrix (m) [matrix of math nodes,row sep=2em,column sep=4em,minimum width=2em]
         {
          D(A) & L \\
          M & D(B) \\};
         \path[->]
         (m-1-1) edge node [left] {$m_A^\dag$} (m-2-1)
                edge node [above] {$l_A^\dag$} (m-1-2)
         (m-1-2) edge node [right] {$l_B$} (m-2-2)
                 edge node [above] {$f$} (m-2-1)
         (m-2-1) edge node [below] {$m_B$} (m-2-2);
    \end{tikzpicture}\]

    $(iii)\Rightarrow (i)$ To prove that $f\colon L\to M$ is unitary, it suffices to establish $f\circ f^\dag=\id[M]$. By the two universal properties of $M$, we may further reduce to pre- and postcomposing with structure maps to and from the diagram $D$. The following diagram commutes.
        \[
        \begin{tikzpicture}
         \matrix (m) [matrix of math nodes,row sep=2em,column sep=4em,minimum width=2em]
         {
          M & L & M \\
          D(A) & M & D(B) \\};
         \path[->]
         (m-1-1) edge node [above] {$f^\dag$} (m-1-2)
         (m-1-2) edge node [above] {$f$} (m-1-3)
                edge node [left=1mm] {$l_B$} (m-2-3)
         (m-1-3) edge node [right] {$m_B$} (m-2-3)
         (m-2-1) edge node [below] {$m^\dag_A$} (m-2-2)
                edge node [left] {$m^\dag_A$} (m-1-1)
                edge node [right=3mm] {$l^\dag_A$} (m-1-2)
         (m-2-2) edge node [below] {$m_B$} (m-2-3);
        \end{tikzpicture}
        \]
    Hence $f$ is unitary.
\end{proof}

The previous theorem highlights why one would want limits in dagger categories to be defined up to unitary isomorphism: if $(L,\{l_A \}_{A\in \cat{J}})$ is a limit of $D$, then $(L,\{l_A^\dag \}_{A\in \cat{J}})$ is a colimit of $\dag\circ D$, and being defined up to unitary iso ensures that the limit and the colimit structures are compatible with each other.

We now set out to prove Theorem~\ref{thm:daglimsunique}. The following lemma will be crucial.

\begin{lemma}\label{lem:connectingmaps}\index[word]{adjointable natural transformation} 
  Let $l_A \colon L \to D(A)$ and $m_A \colon M \to E(A)$ be dagger limits of $(D,\Omega)$ and $(E,\Omega)$, respectively, where $D,E\colon \cat{J}\rightrightarrows \cat{C}$. Let $\sigma\colon D\to E$ be an adjointable natural transformation. The following diagram commutes for every $A$ and $B$ in $\cat{J}$.
  \begin{equation}\label{diag:connectingmaps}\begin{tikzpicture}
    \matrix (m) [matrix of math nodes,row sep=2em,column sep=4em,minimum width=2em]
    {
     D(A) & L & D(B) \\
     E(A) & M & E(B) \\};
    \path[->]
    (m-1-1) edge node [left] {$\sigma_A$} (m-2-1)
           edge node [above] {$l_A^\dag$} (m-1-2)
    (m-1-2) edge node [above] {$l_B$} (m-1-3)
    (m-1-3) edge node [right] {$\sigma_B$} (m-2-3)
    (m-2-1) edge node [below] {$m_A^\dag$} (m-2-2)
    (m-2-2) edge node [below] {$m_B$} (m-2-3);
  \end{tikzpicture}\end{equation}
\end{lemma}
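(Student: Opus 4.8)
The plan is to pin down the two composites by means of two mediating maps built from the two universal properties of the dagger limits, and to reduce the whole square to a single identity on the legs indexed by $\Omega$.

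First I would reduce to the case $A,B\in\Omega$. Since $\Omega$ is weakly initial, choose $g\colon A'\to A$ and $h\colon B'\to B$ with $A',B'\in\Omega$. As $l$ and $m$ are cones, $l_A=D(g)l_{A'}$, $l_B=D(h)l_{B'}$, $m_A=E(g)m_{A'}$, $m_B=E(h)m_{B'}$; naturality of $\sigma$ rewrites $\sigma_B D(h)=E(h)\sigma_{B'}$, and naturality of $\sigma^\dag$ (available because $\sigma$ is adjointable) rewrites $E(g)^\dag\sigma_A=\sigma_{A'}D(g)^\dag$. Substituting these conjugates the square for $(A,B)$ by $E(h)$ on the left and $D(g)^\dag$ on the right, so it follows from the square for $(A',B')$. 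Hence it suffices to treat $A,B\in\Omega$.

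Next I would construct the mediating maps. The family $\sigma_C l_C\colon L\to E(C)$ is a cone on $E$, so the limit $M$ gives a unique $u\colon L\to M$ with $m_C u=\sigma_C l_C$; dually, using naturality of $\sigma^\dag$ and the limit $L$, the family $\sigma_C^\dag m_C\colon M\to D(C)$ is a cone on $D$, giving $u'\colon M\to L$ with $l_C u'=\sigma_C^\dag m_C$. Taking daggers, $u'^\dag l_C^\dag=m_C^\dag\sigma_C$. The left leg of the target square is then $\sigma_B l_B l_A^\dag=m_B u\,l_A^\dag$ and the right leg is $m_B m_A^\dag\sigma_A=m_B u'^\dag l_A^\dag$, so the statement is equivalent to the single identity $u\,l_A^\dag=m_A^\dag\sigma_A$ for every $A\in\Omega$ (the legs $\{l_A\mid A\in\Omega\}$ are jointly monic and their daggers jointly epic, by weak initiality, so this captures the square for all $B$). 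To prove this identity I would observe that $m_A^\dag\sigma_A$ is fixed by the projection $P_A:=m_A^\dag m_A$, that $m_A$ is left-cancellable on morphisms fixed by $P_A$ (if $m_Ax=m_Ay$ with $P_Ax=x$, $P_Ay=y$ then $x=P_Ax=m_A^\dag m_Ax=m_A^\dag m_Ay=y$), and that the two candidates agree after post-composition with $m_A$: indeed $m_A u\,l_A^\dag=\sigma_A l_A l_A^\dag$, while $m_A m_A^\dag\sigma_A=\sigma_A l_A l_A^\dag$ as well. This last equation, the diagonal case $A=B$, follows from normalization alone: from $m_A m_A^\dag\sigma_A l_A=m_A m_A^\dag m_A u=m_A u=\sigma_A l_A$ we get $m_A m_A^\dag(\sigma_A l_A l_A^\dag)=\sigma_A l_A l_A^\dag$, and from $l_A l_A^\dag\sigma_A^\dag m_A=l_A u'=\sigma_A^\dag m_A$ we get, after a dagger, $m_A m_A^\dag(\sigma_A l_A l_A^\dag)=m_A m_A^\dag\sigma_A$; comparing the two yields $\sigma_A l_A l_A^\dag=m_A m_A^\dag\sigma_A$.

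The remaining and main obstacle is to show that $u\,l_A^\dag$ is also fixed by $P_A$, i.e.\ $m_A^\dag m_A\,u\,l_A^\dag=u\,l_A^\dag$; combined with the cancellation and the post-composition agreement above this gives $u\,l_A^\dag=m_A^\dag\sigma_A$ and hence the lemma. This is exactly the point where the independence axiom is indispensable: without commutativity of the projections the mediating map $u$ need not send the $A$-supported part of $L$ into the $A$-supported part of $M$, as the non-orthogonal categorical product in $\cat{FHilb}$ shows. I expect to establish it by proving that $u$ intertwines the two commuting families of projections $l_C^\dag l_C$ on $L$ and $m_C^\dag m_C$ on $M$, propagating the diagonal identity across indices using precisely that the projections on each side commute, and then reading off the support statement at the index $A$. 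Verifying this intertwining, using the commuting-projection (independence) identities together with the partial-isometry (normalization) identities, is the crux of the argument, whereas everything else is formal manipulation of the two universal properties.
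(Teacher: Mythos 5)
Your setup is sound and matches the paper's opening moves: the reduction to $A,B\in\Omega$ via weak initiality and adjointability is exactly the paper's first step, your mediating map $u$ is the paper's map $f$ of limit cones, and your ``diagonal case'' $\sigma_A l_A l_A^\dag = m_A m_A^\dag \sigma_A$ is precisely the paper's equation~\eqref{eq:proj lemma}, proved by the same two-sided partial-isometry argument. All of that is correct.

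The problem is that the proof stops exactly where the real work begins. Your remaining obligation, $m_A^\dag m_A\, u\, l_A^\dag = u\, l_A^\dag$, is not a smaller subproblem: given what you have already established, it is \emph{equivalent} to the target identity $u\,l_A^\dag = m_A^\dag\sigma_A$, which is in turn the content of Corollary~\ref{cor:mapoflimsismapofcolims} --- a statement the paper \emph{derives from} this lemma. Likewise the ``intertwining'' $m_A^\dag m_A u = u\, l_A^\dag l_A$ that you propose to verify is, modulo the diagonal identity and normalization, again equivalent to the same claim, so announcing that you ``expect to establish it'' is a restatement rather than a reduction, and you give no actual argument. The paper closes this gap with a genuinely nontrivial chain: a large diagram whose key region invokes independence (commutativity of $m_A^\dag m_A$ with $m_B^\dag m_B$) to prove $m_{A,B}\sigma_A = m_{A,B}m_{B,A}\sigma_B l_{A,B}$ where $l_{A,B}=l_Bl_A^\dag$ and $m_{A,B}=m_Bm_A^\dag$; the two dagger/exchange variants $\sigma_A l_{B,A}=m_{B,A}\sigma_B l_{A,B}l_{B,A}$ and $\sigma_B l_{A,B}=m_{A,B}\sigma_A l_{B,A}l_{A,B}$; and a final computation combining all three using the fact that $l_{A,B}$ is itself a partial isometry (which again needs independence, via Remark~\ref{rem:pisetc}). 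None of this off-diagonal, cross-index machinery appears in your proposal, so as written the proof is incomplete at its crux.
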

\begin{proof}
  First we show that it is enough to check that~\eqref{diag:connectingmaps} commutes whenever $A,B\in\Omega$. So assume that it does and let $X,Y\in\cat{J}$ be arbitrary. By weak initiality of $\Omega$ we can find maps $f\colon A\to X$ and $g\colon B\to Y$ for some $A,B\in\Omega$. Now we prove the claim for $X$ and $Y$ assuming the claim for $A$ and $B$ by showing that the diagram 
    \[\begin{tikzpicture}
    \matrix (m) [matrix of math nodes,row sep=2em,column sep=4em,minimum width=2em]
    {
     D(X) & E(X) &  \\
    &  D(A) & E(A) & M \\
    & D(B) & E(B)\\
    L & &D(Y) & E(Y)\\};
    \path[->]
    (m-1-1) edge node [below] {$Df^\dag\ \ $} (m-2-2)
           edge node [above] {$\sigma_X$} (m-1-2)
           edge node [left] {$l_X^\dag$} (m-4-1)
    (m-1-2) edge node [above] {\quad$Ef^\dag$} (m-2-3)
             edge[out=0,in=135] node [above] {$m_X^\dag$} (m-2-4)
    (m-2-2) edge node [below] {$\sigma_A$} (m-2-3)
            edge node [above] {$l_A^\dag\ \ $} (m-4-1)
    (m-2-3) edge node [above] {$m_A^\dag$} (m-2-4)
    (m-2-4) edge node [right] {$m_Y$} (m-4-4)
            edge node [below] {$\ \ m_B$} (m-3-3)
    (m-3-2) edge node [above] {$\sigma_B$} (m-3-3)
            edge node [below] {$Dg$} (m-4-3) 
    (m-3-3) edge node [above] {$Eg$} (m-4-4)
    (m-4-1) edge node [below] {$l_B$} (m-3-2)
            edge node [below] {$l_Y$} (m-4-3)
    (m-4-3) edge node [below] {$\sigma_Y$} (m-4-4);
  \end{tikzpicture}\]
  commutes. The parallelogram on the bottom (top) commutes because $\sigma$ is natural (and adjointable). The triangles on the left of these parallelograms commute because $L$ is a cone and the triangles on the right commute because $M$ is. Finally, the remaining shape is just diagram~\eqref{diag:connectingmaps} with $A,B\in\Omega$.

  For the rest of the proof, write $l_{A,B}:=l_Bl_A^\dag\colon D(A)\to L\to D(B)$ and similarly $m_{A,B}:= m_B m_A^\dag$. Note that $l_{B,A}^\dag=l_{A,B}$ and $m_{B,A}^\dag=m_{A,B}$. Moreover, $l_{A,B}$ and $m_{A,B}$ are partial isometries whenever $A,B\in\Omega$ by Remark~\ref{rem:pisetc}. Our goal is to show that
  \begin{equation}\label{eq:connectingmaps}m_{A,B}\sigma_A=\sigma_B l_{A,B}\end{equation}
  for all $A$ and $B$ in $\cat{J}$. 

  Let $f\colon L\to M$ be the unique map making this square commute for all $A$ in $\cat{J}$:
  \[\begin{tikzpicture}
         \matrix (m) [matrix of math nodes,row sep=2em,column sep=4em,minimum width=2em]
         {
          L & D(A) \\
          M & E(A) \\};
         \path[->]
         (m-1-1) edge[dashed] node [left] {$f$} (m-2-1)
                edge node [above] {$l_A$} (m-1-2)
         (m-1-2) edge node [right] {$\sigma_A$} (m-2-2)
         (m-2-1) edge node [below] {$m_A$} (m-2-2);
  \end{tikzpicture}\]
  If $A \in \Omega$, then $m_A$ is a partial isometry, so the following diagram commutes:
  \[\begin{tikzpicture}[yscale=2.5,xscale=3]
    \node (a1) at (0,1) {$D(A)$};
    \node (a2) at (1,1) {$L$};
    \node (a3) at (2,1) {$D(A)$};
    \node (a4) at (3,1) {$E(A)$};
    \node (b1) at (1,0) {$D(A)$};
    \node (b2) at (2,.5) {$M$};
    \node (b3) at (2,0) {$E(A)$};
    \node (b4) at (3,0) {$M$};
    \draw[dagger,->] (a1) to node[above]{$l_A^\dag$} (a2);
    \draw[dagger,->] (a2) to node[left]{$l_A$} (b1);
    \draw[->] (a2) to node[below]{$f$} (b2);
    \draw[dagger,->] (a2) to node[above]{$l_A$} (a3);
    \draw[->] (a3) to node[above]{$\sigma_A$} (a4);
    \draw[dagger,->] (b2) to node[below=1mm]{$m_A$} (a4);
    \draw[dagger,->] (b2) to node[right]{$m_A$} (b3);
    \draw[dagger,->] (b3) to node[below]{$m_A^\dag$} (b4);
    \draw[dagger,->] (b4) to node[right]{$m_A$} (a4);
    \draw[->] (b1) to node[below]{$\sigma_A$} (b3);
  \end{tikzpicture}\]
  This shows that 
  $
    \sigma_Al_{A,A}=m_{A,A}\sigma_Al_{A,A}\text.
  $
  Repeating the argument with $\sigma$ replaced by $\sigma^\dag$ gives 
  $
    \sigma_A^\dag m_{A,A}=l_{A,A}\sigma_A^\dag m_{A,A}\text.
  $
  Combining the first equation with the dagger of the second shows that for every $A \in \Omega$:
  \begin{equation}\label{eq:proj lemma} 
    m_{A,A}\sigma_A=\sigma_A l_{A,A}\text.
  \end{equation}  

  Next, we check that the following diagram commutes:
  \[\begin{tikzpicture}[yscale=1.75,xscale=3]
    \node (a1) at (0,4) {$D(A)$};
    \node (a2) at (1,4) {$E(A)$};
    \node (a3) at (2,4) {$M$};
    \node (a5) at (4,4) {$E(B)$};
    \node (b2) at (1.5,3.5) {$M$};
    \node (b3) at (2,3) {$E(A)$};
    \node (b4) at (3,3.5) {$E(B)$};
    \node (b5) at (4,3) {$M$};
    \node (c1) at (0,2) {$L$};
    \node (c2) at (1,2.5) {$D(A)$};
    \node (c3) at (2,2) {$M$};
    \node (c5) at (4,2) {$E(A)$};
    \node (d1) at (0,1) {$D(B)$};
    \node (d3) at (2,1) {$E(B)$};
    \node (d5) at (4,1) {$M$};
    \draw[->] (a1) to node[above] {$\sigma_A$} (a2);
    \draw[dagger,->] (a2) to node[above] {$m_A^\dag$} (a3);
    \draw[dagger,->] (a3) to node[above] {$m_B$} (a5);
    \draw[dagger,->] (a1) to node[left] {$l_A^\dag$} (c1);
    \draw[dagger,->] (a2) to node[below left=-1mm] {$m_A^\dag$} (b2);
    \draw[dagger,->] (b2) to node[below left=-1mm] {$m_A$} (b3);
    \draw[dagger,->] (b3) to node[right] {$m_A^\dag$} (a3);
    \draw[dagger,->] (a3) to node[below] {$m_B$} (b4);
    \draw[dagger,->] (b4) to node[below]{$m_B^\dag$} (b5);
    \draw[dagger,->] (b5) to node[right]{$m_B$} (a5);
    \draw[dagger,->] (c1) to node[above]{$l_A$} (c2);
    \draw[->] (c2) to node[below]{$\sigma_A$} (b3);
    \draw[dagger,->] (c3) to node[right]{$m_A$} (b3);
    \draw[dagger,->] (c5) to node[right]{$m_A^\dag$} (b5);
    \draw[dagger,->] (c1) to node[left]{$l_B$} (d1);
    \draw[->] (c1) to node[below]{$f$} (c3);
    \draw[dagger,->] (c3) to node[right]{$m_B$} (d3);
    \draw[->] (d1) to node[below]{$\sigma_B$} (d3);
    \draw[dagger,->] (d3) to node[below]{$m_B^\dag$} (d5);
    \draw[dagger,->] (d5) to node[right]{$m_A$} (c5);
    \draw[gray,draw=none] (a1) to node{(iii)} (c2);
    \draw[gray,draw=none] (b2) to node{(i)} (a3);
    \draw[gray,draw=none] (b4) to node{(ii)} (a5);
    \draw[gray,draw=none] (d3) to node{(vi)} (b5);
    \draw[gray,draw=none] (c2) to node{(iv)} (c3);
    \draw[gray,draw=none] (c1) to node{(v)} (d3);
  \end{tikzpicture}\]
  Region (i) commutes since $m_A$ is a partial isometry and region (ii) since $m_B$ is. Region (iii) is equation~\eqref{eq:proj lemma}. Regions (iv) and (v) commute by definition of  $f$.  Finally, the commutativity of projections $m_A^\dag m_A$ and $m_B^\dag m_B$ shows that region (vi) commutes. 

  Commutativity of the outermost rectangle says that
  \begin{equation}\label{eq:lAB1}
        m_{A,B}\sigma_A=m_{A,B}m_{B,A}\sigma_B l_{A,B}
  \end{equation}
  for every $A,B\in\Omega$.   

  Exchanging $M$ and $L$ and replacing $\sigma$ with $\sigma^\dag$ in the above diagram now gives
  $
        l_{A,B}\sigma_A^\dag=l_{A,B}l_{B,A}\sigma_B^\dag m_{A,B}
  $, and applying the dagger on both sides shows
  \begin{equation}\label{eq:lAB2}
        \sigma_A l_{B,A}=m_{B,A}\sigma_B l_{A,B}l_{B,A} 
  \end{equation}
  for every $A,B\in\Omega$. Exchanging $A$ and $B$ gives 
  \begin{equation}\label{eq:lAB3}
        \sigma_B l_{A,B}=m_{A,B}\sigma_A l_{B,A}l_{A,B} 
  \end{equation}
  for every $A,B\in\Omega$. Finally, combine these equations:
    \begin{align*}
    m_{A,B}\sigma_A&=m_{A,B}m_{B,A}\sigma_B l_{A,B}&&\text{ by }\eqref{eq:lAB1} \\ 
    &=m_{A,B}m_{B,A}\sigma_B l_{A,B}l_{B,A} l_{A,B} &&\text{ since }l_{A,B}\text{ is a partial isometry}\\ 
    &=m_{A,B}\sigma_A l_{B,A}l_{A,B}&&\text{ by }\eqref{eq:lAB2} \\ 
    &=\sigma_B l_{A,B} &&\text{ by }\eqref{eq:lAB3}
    \end{align*}
  This completes the proof.
\end{proof}

\begin{corollary}\label{cor:mapoflimsismapofcolims} 
  In the situation of Lemma~\ref{lem:connectingmaps}, the unique morphism $f \colon L\to M$ satisfying
  \[\begin{tikzpicture}
         \matrix (m) [matrix of math nodes,row sep=2em,column sep=4em,minimum width=2em]
         {
         L & M \\
          D(A) & E(A) \\};
         \path[->]
         (m-1-1) edge node [left] {$l_A$} (m-2-1)
                edge[dashed] node [above] {$f$} (m-1-2)
         (m-1-2) edge node [right] {$m_A$} (m-2-2)
         (m-2-1) edge node [below] {$\sigma_A$} (m-2-2);       
  \end{tikzpicture}\]
  also makes the following diagram commute.
  \begin{equation}\label{diag:colims}
  \begin{aligned}\begin{tikzpicture}
         \matrix (m) [matrix of math nodes,row sep=2em,column sep=4em,minimum width=2em]
         {
         L& M \\
          D(A) & E(A) \\};
         \path[->]
         (m-1-1) edge node [above] {$f$} (m-1-2)
         (m-2-2) edge node [right] {$m_A^\dag $} (m-1-2)
         (m-2-1) edge node [below] {$\sigma_A$} (m-2-2)
                 edge node [left] {$l_A^\dag$} (m-1-1);  
  \end{tikzpicture}\end{aligned}\end{equation}
  In other words, the map of limits $L \to M$ induced by $\sigma$ coincides with the map of colimits $L \to M$ induced by $\sigma\colon \dag\circ D\to\dag \circ E$.
\end{corollary}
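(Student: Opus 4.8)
The plan is to deduce everything directly from Lemma~\ref{lem:connectingmaps}, since that lemma already packages the real work. Recall that $f\colon L\to M$ is defined by the universal property of $M$ as a limit: it is the unique morphism satisfying $m_A\circ f=\sigma_A\circ l_A$ for every $A$. The goal is then to show that this same $f$ satisfies $f\circ l_A^\dag=m_A^\dag\circ\sigma_A$, which is exactly the commutativity of diagram~\eqref{diag:colims}.

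First I would record why this suffices for the final sentence. Since $\sigma$ is adjointable, it also defines a natural transformation $\dag\circ D\Rightarrow\dag\circ E$ (as noted after Definition~\ref{def:adjointability}), so it induces a map of colimits $L\to M$, namely the unique morphism $g$ satisfying $g\circ l_A^\dag=m_A^\dag\circ\sigma_A$ for all $A$. Hence establishing $f\circ l_A^\dag=m_A^\dag\circ\sigma_A$ shows $f=g$, identifying the map of limits with the map of colimits.

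To prove the displayed equation, I would observe that both $f\circ l_A^\dag$ and $m_A^\dag\circ\sigma_A$ are morphisms $D(A)\to M$, so by the universal property of the limit $M$ it is enough to check that they agree after postcomposition with each leg $m_B$. Postcomposing the first with $m_B$ and using the defining equation $m_B\circ f=\sigma_B\circ l_B$ gives $\sigma_B\circ l_B\circ l_A^\dag=\sigma_B l_{A,B}$, in the notation $l_{A,B}=l_Bl_A^\dag$ of Lemma~\ref{lem:connectingmaps}. Postcomposing the second with $m_B$ gives $m_B\circ m_A^\dag\circ\sigma_A=m_{A,B}\sigma_A$. But equation~\eqref{eq:connectingmaps} of Lemma~\ref{lem:connectingmaps} asserts precisely $m_{A,B}\sigma_A=\sigma_B l_{A,B}$, so the two composites coincide for every $B$; since $M$ is a limit, $f\circ l_A^\dag=m_A^\dag\circ\sigma_A$ follows.

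There is no genuine obstacle here: all the difficulty was absorbed into Lemma~\ref{lem:connectingmaps}, and what remains is a routine reduction via the universal property of $M$ together with a single invocation of that lemma. The one point requiring care is the bookkeeping tying ``map of colimits'' to the adjointability hypothesis on $\sigma$, which is what guarantees that $\sigma$ is a genuine natural transformation of the dagger-composed diagrams in the first place.
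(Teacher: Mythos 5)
Your proof is correct and follows essentially the same route as the paper: postcompose both sides with an arbitrary leg $m_B$, use the defining property $m_B\circ f=\sigma_B\circ l_B$ on one side and equation~\eqref{eq:connectingmaps} of Lemma~\ref{lem:connectingmaps} on the other, and conclude by joint monicity of the limit cone on $M$. The extra remark tying adjointability of $\sigma$ to the existence of the induced map of colimits is a correct and welcome clarification, but the core argument is the one the paper gives.
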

\begin{proof} 
  To show that $f$ makes diagram~\eqref{diag:colims} commute, it suffices to postcompose with an arbitrary $m_B$ and show that the following diagram commutes.
        \[
        \begin{tikzpicture}
         \matrix (m) [matrix of math nodes,row sep=2em,column sep=4em,minimum width=2em]
         {
          D(A) & E(A) & M \\
          L & D(B) & E(B )\\ 
           & M \\};
         \path[->]
         (m-1-1) edge node [above] {$\sigma_A$} (m-1-2)
                 edge node [left] {$l_A^\dag$} (m-2-1)
         (m-1-2) edge node [above] {$m_A^\dag$} (m-1-3)
         (m-3-2) edge node [below] {$m_B$} (m-2-3)
         (m-1-3) edge node [right] {$m_B$} (m-2-3)
         (m-2-1) edge node [above] {$l_B$} (m-2-2)
                edge node [below] {$f$} (m-3-2)
          (m-2-2) edge node [above] {$\sigma_B$} (m-2-3);
        \end{tikzpicture}
        \]
  The bottom part commutes by definition of $f$, and the top rectangle by Lemma~\ref{lem:connectingmaps}.
\end{proof}

We can now prove Theorem~\ref{thm:daglimsunique}.

\begin{proof}[Proof of Theorem~\ref{thm:daglimsunique}] If the map of cones $L\to M$ is unitary, it is straightforward to verify that $M$ is a dagger limit of $(D,\Omega)$. For the converse, by Theorem~\ref{thm:equivalenttounitary} it suffices to prove that the diagram
    \[\begin{tikzpicture}
     \matrix (m) [matrix of math nodes,row sep=2em,column sep=4em,minimum width=2em]
     {
      D(A) & L \\
      M & D(B) \\};
     \path[->]
     (m-1-1) edge node [left] {$m_A^\dag$} (m-2-1)
            edge node [above] {$l_A^\dag$} (m-1-2)
     (m-1-2) edge node [right] {$l_B$} (m-2-2)
     (m-2-1) edge node [below] {$m_B$} (m-2-2);
    \end{tikzpicture}\]
commutes for any $A$ and $B$ in $\cat{J}$. This follows from Lemma~\ref{lem:connectingmaps} by choosing $\sigma=\id[D]$, which is clearly adjointable.
\end{proof}

Note that the preceding proof only used Lemma~\ref{lem:connectingmaps} in the special case of $\sigma=\id$. This special case is not sufficient for the sequel and indeed the general versions of Lemma~\ref{lem:connectingmaps} and of Corollary~\ref{cor:mapoflimsismapofcolims} are used later on, \eg in Sections~\ref{sec:globaldaglims} and~\ref{sec:commutativity}.

\section{Completeness}\label{sec:completeness}

Perhaps the most obvious definitions of dagger completeness would be ``every small diagram has a dagger limit for some/all possible weakly initial $\Omega$''. However, here dagger category theory deviates from ordinary category theory. Such a definition would be too strong to allow interesting models, as the following theorems show.

\begin{theorem}\label{thm:finitelydagcompleteimpliesindiscrete} 
  If a dagger category has dagger equalizers, dagger pullbacks and finite dagger products, then it must be indiscrete.
\end{theorem}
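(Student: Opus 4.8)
The plan is to show that these three kinds of dagger limits force every object to be a zero object, so that each hom-set is the singleton containing only the zero morphism; this is precisely indiscreteness. First I would extract a zero object: the nullary dagger product is a terminal object, and in a dagger category a terminal object is automatically a zero object, so zero morphisms exist and every hom-set is inhabited. It then remains to prove that each hom-set has \emph{at most} one element. Because $\cat{C}$ now has a zero object, this is equivalent to showing $\id[A]=0_A$ for every object $A$, i.e. that every $A$ is itself a zero object: from $\id[A]=0_A$ one gets $f=f\circ\id[A]=f\circ 0_A=0$ for every $f\colon A\to B$, so every hom-set collapses to $\{0\}$.

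Fix $A$ and form the binary dagger product $A\oplus A$, with projections satisfying $p_ip_j^\dag=\delta_{ij}\id[A]$, together with the diagonal $\Delta=\langle\id[A],\id[A]\rangle$ supplied by the product's universal property, so that $p_i\Delta=\id[A]$ and dually $\Delta^\dag p_i^\dag=\id[A]$. The key normalization step uses a dagger pullback: the ordinary pullback of the cospan $A\xrightarrow{\Delta}A\oplus A\xleftarrow{\id}A\oplus A$ is $A$ itself, with legs $\id[A]$ and $\Delta$. Its dagger pullback exists by hypothesis, and its two legs are partial isometries; the comparison leg to $A$ is simultaneously an isomorphism and a partial isometry, hence unitary, so transporting along it makes $\Delta$ a partial isometry. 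Since $\Delta$ is split monic (as $p_1\Delta=\id[A]$), it is monic, and a monic partial isometry is dagger monic; therefore $\Delta^\dag\Delta=\id[A]$. Consequently $P:=\Delta\Delta^\dag$ is a projection on $A\oplus A$ whose four ``matrix entries'' satisfy $p_iPp_j^\dag=(p_i\Delta)(\Delta^\dag p_j^\dag)=\id[A]$ for all $i,j$. This is exactly the kind of dagger pullback that $\cat{FHilb}$ lacks --- there $\Delta^\dag\Delta=2\,\id[A]$ --- which is how $\cat{FHilb}$ escapes degeneracy.

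From here I would extract the collapse $\id[A]=0_A$ from the tension between idempotency of $P$ and its all-ones entry pattern: morally, squaring an all-ones $2\times 2$ pattern doubles it, so $P=P^2$ should force $\id[A]+\id[A]=\id[A]$ and hence $\id[A]=0_A$. Carrying this out makes every object a zero object and hence $\cat{C}$ indiscrete, with the various choices being harmless by uniqueness of dagger limits up to unitary isomorphism (Theorem~\ref{thm:daglimsunique}) and the rigidity of dagger subobjects (two dagger-monic representatives of one subobject differ by a unitary).

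The main obstacle is precisely this last collapse. The naive computation of $P=P^2$ secretly invokes the biproduct sum $\id[A\oplus A]=p_1^\dag p_1+p_2^\dag p_2$, i.e. commutative-monoid enrichment, which Definition~\ref{def:daglim} deliberately does not assume. That the weaker data is genuinely insufficient is shown by $\cat{Rel}$: there $\Delta^\dag\Delta=\id[A]$ already holds (addition is idempotent, so $1+1=1$) and yet $\cat{Rel}$ is very far from indiscrete. Hence $\cat{Rel}$ must fail one of the three hypotheses, and the collapse has to exploit a dagger pullback or dagger equalizer that $\cat{Rel}$ does not possess in order to supply the missing cancellation that rules out idempotent-scalar behaviour. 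Identifying that diagram and deriving $\id[A]=0_A$ from its universal property --- rather than from an illicit appeal to enrichment --- is the technical heart of the argument.
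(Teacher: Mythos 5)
Your skeleton is right and closely parallels the paper's: extract a zero object from the nullary product, use the dagger pullback along an identity to force a partial isometry (your use of the cospan $A\xrightarrow{\Delta}A\oplus A\xleftarrow{\id}A\oplus A$ is exactly the paper's Lemma~\ref{lem:dagpbs->everythingispi} specialized to $\Delta$), and then try to collapse $\id[A]$ additively. You have also correctly diagnosed that $\cat{Rel}$, with its idempotent addition, is the example the final step must exclude. But you stop precisely at the step that proves the theorem, and you stop for a reason that is a misunderstanding rather than a real obstruction.

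The appeal to the sum $\id[A\oplus A]=p_1^\dag p_1+p_2^\dag p_2$ is not illicit. It is true that Definition~\ref{def:daglim} does not \emph{assume} enrichment, but the \emph{hypothesis} of the theorem --- that finite dagger products exist --- \emph{implies} it: dagger products are in particular biproducts, and a category with finite biproducts carries a unique commutative-monoid enrichment together with the full matrix calculus (this is the paper's Lemma~\ref{lem:matrixcalculus}, a classical fact). The second missing ingredient, which is what kills the $\cat{Rel}$-style escape, is that dagger equalizers force this derived addition to be \emph{cancellative} (also Lemma~\ref{lem:matrixcalculus}, quoting Vicary); note that $\cat{Rel}$ has dagger products but not dagger equalizers, which is why it survives. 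With these two facts your own computation closes immediately: $\Delta$ being a partial isometry gives $\Delta^\dag\Delta=\id[A]$, while the matrix calculus gives $\Delta^\dag\Delta=\id[A]+\id[A]$, so cancellativity yields $\id[A]=0$ and every hom-set collapses. The paper runs the same argument slightly more directly, applying the partial-isometry lemma to $\langle\id,f\rangle\colon A\to A\oplus B$ for an arbitrary $f$ and cancelling in the resulting equations, which avoids the detour through "every object is a zero object"; but that is a cosmetic difference. The substantive issue is that your proposal leaves the cancellation step unproved when it is in fact a known consequence of the hypotheses, so as written the proof is incomplete.
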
 

\begin{theorem}\label{thm:dagcompleteimpliesindiscrete} 
  If a dagger category has dagger equalizers and infinite dagger products, then it must be indiscrete.
\end{theorem}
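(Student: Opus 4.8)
The plan is to show that the two hypotheses force every hom-set to be a singleton. First I would pin down the conclusion. Since an infinite dagger product includes the empty one, the category has a terminal object, which in any dagger category is automatically a zero object (Example~\ref{ex:concretedaggerlimits}); hence there are zero morphisms. Moreover a binary dagger product of $A$ and $B$ is obtained as the infinite dagger product of the family $A,B,0,0,\dots$ padded with zero objects: the legs into the zero factors are zero maps, which are trivially partial isometries whose induced projections vanish, so normalization and independence hold automatically and the universal property is unchanged. Thus the category has all finite dagger biproducts, hence a canonical enrichment in commutative monoids with $\oplus$ as biproduct. In this language ``indiscrete'' is equivalent to $\id[A]=0$ for every $A$: if all identities are zero then every $f\colon A\to B$ equals $f\circ\id[A]=f\circ 0=0$, so the unique morphism $A\to B$ is the zero morphism. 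The whole theorem therefore reduces to proving $\id[X]=0$ for an arbitrary object $X$.

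Next I would run an Eilenberg--swindle argument, which is where the \emph{infinite} products enter (in the finite Theorem~\ref{thm:finitelydagcompleteimpliesindiscrete} this step is unavailable and dagger pullbacks do the work instead). Fix $X$ and form the countable dagger biproduct $B=\bigoplus_{n\in\N}X$ with projections $p_n\colon B\to X$ and injections $i_n=p_n^\dag$. Because the dagger carries the limiting cone of a product to a colimiting cocone, $B$ is simultaneously the product and the coproduct of the copies of $X$, so there are a diagonal $\Delta\colon X\to B$ (with $p_n\Delta=\id[X]$) and a codiagonal $\nabla=\Delta^\dag\colon B\to X$ (with $\nabla i_n=\id[X]$). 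Let $s\colon B\to B$ be the shift defined by the product universal property via $p_1 s=0$ and $p_n s=p_{n-1}$ for $n\ge 2$; a direct check gives $s^\dag s=\id[B]$ (so $s$ is an isometry) together with $ss^\dag+i_1p_1=\id[B]$. Chasing the two universal properties yields $\nabla s=\nabla$ and $\Delta=s\Delta+i_1$, whence
\[
  \omega:=\Delta^\dag\Delta=\nabla\Delta=\nabla s\Delta+\nabla i_1=\nabla\Delta+\id[X]=\omega+\id[X],
\]
with $\omega$ positive.

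The hard part is the final step: converting the absorption identity $\omega=\omega+\id[X]$ into $\id[X]=0$. This cannot follow from the commutative-monoid structure alone, since $\cat{Rel}$ has all infinite dagger products and satisfies $\omega=\omega+\id$ with $\omega\neq 0$, yet is not indiscrete. The dagger equalizers must therefore be used essentially -- and indeed $\cat{Rel}$ fails to have all dagger equalizers, so it is not a counterexample. The mechanism I would pursue is to let dagger equalizers supply the missing ``positive-definiteness''. Concretely, $\id[B]=ss^\dag+i_1p_1$ decomposes $\id[B]$ into two orthogonal projections; the dagger equalizer of the projection $ss^\dag$ with $\id[B]$ is its dagger splitting (Example~\ref{ex:concretedaggerlimits}), and the isometry $s$ identifies this splitting with a unitary copy of $B$ inside $B$, so that $B\cong B\oplus X$ unitarily, the complementary summand $i_1p_1$ having ``size'' exactly $\id[X]$. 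The goal is then to use dagger equalizers to force this complementary projection to vanish -- equivalently, to show the induced enrichment is cancellative, so that $\omega+\id[X]=\omega+0$ yields $\id[X]=0$.

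Making that cancellation rigorous is the main obstacle, and it is the one genuinely dagger-theoretic ingredient of the argument: the task is to import the Hilbert-space reasoning ``$\langle x,\omega x\rangle=\langle x,\omega x\rangle+\langle x,x\rangle$ forces $x=0$'' into the purely categorical setting using only the universal property of dagger equalizers. Once this is established for an arbitrary $X$, one concludes $\id[X]=0$ for every object, and by the reduction in the first paragraph the category is indiscrete.
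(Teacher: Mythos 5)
Your overall architecture is exactly the paper's: pad to get finite dagger biproducts and hence a canonical commutative-monoid enrichment, use the infinite dagger product to run an Eilenberg swindle producing an equation $\omega = \omega + \id[X]$ (the paper does this for an arbitrary $f$ rather than $\id[X]$, re-bracketing the infinite sum $f+f+\cdots$ instead of using a shift isometry, but that is cosmetic), and then cancel. Your derivation of $\omega=\omega+\id[X]$ via the shift $s$ is correct, and you are right that $\cat{Rel}$ shows the swindle alone proves nothing and that dagger equalizers must enter precisely at the cancellation step.

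The gap is that you never establish cancellativity, and you say so yourself. This is not a minor loose end: it is the only point in the whole argument where the dagger-equalizer hypothesis is used, so as written the proof has no content beyond what already holds in $\cat{Rel}$. In the paper this step is the second half of Lemma~\ref{lem:matrixcalculus} (``if it furthermore has dagger equalizers, then addition is cancellative''), which is quoted from \cite[2.6]{vicary2011completeness}. Moreover, the mechanism you sketch for closing the gap does not lead anywhere: you propose to use the dagger splitting of $ss^\dag$ to exhibit $B \cong B \oplus X$ and then ``force the complementary projection $i_1p_1$ to vanish''. But $i_1p_1 = 0$ is equivalent to $\id[X]=0$ (compose with $p_1$ on the left and use $p_1i_1=\id$), so this is a restatement of the goal, not a reduction of it; there is no object-level cancellation of $B$ available, and nothing about dagger splittings makes a nonzero projection vanish. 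The actual proof of cancellativity is a separate matrix-calculus argument with dagger equalizers of explicit maps between finite biproducts, and it is independent of the swindle. Until that lemma is either proved or invoked, the proof is incomplete.
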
 

In proving these degeneration theorems, it is useful to isolate some lemmas that derive key properties implied by having dagger equalizers, dagger pullbacks and finite dagger products.

\begin{lemma}\label{lem:dagpbs->everythingispi} 
  The dagger pullback of a morphism $f\colon A\to B$ along $\id[B]$ exists if and only if $f$ is a partial isometry. In particular, if a dagger category has all dagger pullbacks, then every morphism is a partial isometry. 
\end{lemma}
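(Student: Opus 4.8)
The plan is to pin down the dagger pullback of $f\colon A\to B$ along $\id[B]$ explicitly and then read off the partial-isometry condition. Such a pullback is the dagger limit of the cospan diagram $D$ sending $1\to 2\leftarrow 3$ to $A\xrightarrow{f}B\xleftarrow{\id[B]}B$, with weakly initial class $\Omega=\{1,3\}$ given by the two sources (as for dagger intersections in Example~\ref{ex:concretedaggerlimits}). A cone over $D$ is determined by a single leg $l_1\colon L\to A$, with $l_2=l_3=fl_1$ forced, and the ordinary pullback is $(A,\id[A],f)$ since pulling back along $\id[B]$ changes nothing. Consequently any limiting cone $(L,l_1,l_3)$ admits a unique comparison isomorphism of cones to $(A,\id[A],f)$; as the leg over $1$ of the latter is $\id[A]$, this comparison is exactly $l_1$, so $l_1$ is always an isomorphism.

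For the reverse implication, when $f$ is a partial isometry I would simply verify that $(A,\id[A],f)$ already is a dagger pullback. Indeed $l_1=\id[A]$ is a partial isometry and $l_3=f$ is one by hypothesis, so normalization holds; and independence holds trivially because $l_1^\dag l_1=\id[A]$ commutes with $l_3^\dag l_3=f^\dag f$.

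For the forward implication, suppose a dagger pullback $(L,l_1,l_3)$ exists. By the observation above $l_1$ is an isomorphism, and normalization at $1\in\Omega$ makes it a partial isometry; an invertible partial isometry satisfies $l_1^\dag l_1=l_1l_1^\dag=\id$, so $l_1$ is unitary. Normalization at $3\in\Omega$ makes $l_3=fl_1$ a partial isometry, and unitarity of $l_1$ lets me recover $f=l_3l_1^\dag$. A direct computation using $l_1^\dag l_1=\id$ together with $l_3l_3^\dag l_3=l_3$ then gives $ff^\dag f=f$, so $f$ is a partial isometry.

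The ``in particular'' clause follows at once: if every dagger pullback exists, then in particular the dagger pullback of each $f$ along the relevant identity exists, forcing every morphism to be a partial isometry. The only point needing care is the upgrade from ``$l_1$ is a partial isometry'' to ``$l_1$ is unitary'', which hinges on $l_1$ being invertible; once that is in hand, the remaining identities are routine manipulations of the partial-isometry equation.
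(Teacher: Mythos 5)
Your proposal is correct and follows essentially the same route as the paper: both directions hinge on observing that $(A,\id[A],f)$ is the ordinary pullback, so the comparison isomorphism is forced to coincide with the projection leg onto $A$, which by normalization is an invertible partial isometry and hence unitary, whence $f$ is a partial isometry (the paper phrases the last step as "a unitary composed with a partial isometry is a partial isometry" where you compute $ff^\dag f=f$ directly, but these are the same calculation). The easy direction — checking that $(A,\id[A],f)$ satisfies normalization and independence when $f$ is a partial isometry — also matches the paper, which merely asserts it.
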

\begin{proof} 
  If $f$ is a partial isometry, then $A$ is a dagger pullback of $f$ along $\id[B]$, with cone $\id[A]$ and $f$.
  For the converse, let $P$ be a dagger pullback of $f$ along $\id[B]$, with cone $p_A \colon P \to A$ and $p_B \colon P \to B$. Since $A$ is also a pullback, there exists a unique isomorphism $g\colon A\to P$ making the diagram 
  \[\begin{tikzpicture}
         \matrix (m) [matrix of math nodes,row sep=2em,column sep=4em,minimum width=2em]
         {
          A \\
          &P & B \\
          &A & B \\};
         \path[->]
         (m-1-1) edge[out=-90,in=180] node [left] {$\id$} (m-3-2)
                 edge node [right=2mm] {$g$} (m-2-2)
                 edge[dagger,out=0] node [above] {$f$} (m-2-3)
         (m-2-2) edge[dagger] node [left] {$p_A$} (m-3-2)
                edge[dagger] node [above] {$p_B$} (m-2-3)
         (m-2-3) edge node [right] {$\id$} (m-3-3)
         (m-3-2) edge[dagger] node [below] {$f$} (m-3-3);
  \end{tikzpicture}\]
  commute. Since $g$ is an isomorphism, this implies that $p_A=g^{-1}$. Because $p_A$ is a partial isometry, both $p_A$ and $g$ are therefore unitary. Now $f=p_Bg$ means that $f$ factors as the composite of a unitary morphism and a partial isometry and hence is a partial isometry itself.
\end{proof}

It follows from the previous lemma that in a dagger category with dagger pullbacks, every subobject is a dagger subobject since every monomorphism is dagger monic.

Recall that a monoid $(M,+,0)$ is \emph{cancellative} when $x+z=y+z$ implies $y=z$.

\begin{lemma}\label{lem:matrixcalculus}
  If a dagger category has finite dagger products, then it is uniquely enriched in commutative monoids and admits a matrix calculus. If it furthermore has dagger equalizers, then addition is cancellative.
\end{lemma}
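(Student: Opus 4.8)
The plan is to split the statement into its two halves: the commutative-monoid enrichment together with the matrix calculus, which flow from finite dagger products alone, and cancellativity, which is where the dagger equalizers must be used.

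\textbf{From dagger products to biproducts.} First I would record what finite dagger products supply. The nullary dagger product is a terminal object, which in a dagger category is automatically a zero object (Example~\ref{ex:concretedaggerlimits}), so \cat{C} has a zero object and hence zero morphisms. For each pair $A,B$ there is a dagger product $A\oplus B$ with projections satisfying $p_Ap_A^\dag=\id[A]$, $p_Bp_B^\dag=\id[B]$, $p_Ap_B^\dag=0$, $p_Bp_A^\dag=0$ (Example~\ref{ex:dagbiprods}). The key observation is that such a dagger product is automatically a biproduct: since $\dag\colon\cat{C}\op\to\cat{C}$ is an identity-on-objects isomorphism of categories, it carries the product cone $(A\oplus B,p_A,p_B)$ to a coproduct cocone with injections $i_A=p_A^\dag$ and $i_B=p_B^\dag$, and the dagger-product equations say exactly that $p_j\circ i_k$ is $\id$ when $j=k$ and $0$ otherwise. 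Thus \cat{C} has finite biproducts in the enrichment-free sense.

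\textbf{Enrichment and matrix calculus.} Given biproducts I would define addition of parallel maps $f,g\colon A\to B$ by $f+g=\nabla_B\circ\langle f,g\rangle$, where $\langle f,g\rangle$ is the product pairing and $\nabla_B=\Delta_B^\dag\colon B\oplus B\to B$ is the codiagonal; note that $\nabla_B=\Delta_B^\dag$ precisely because $i_k=p_k^\dag$. The zero morphism is the unit. The standard semiadditive computations, carried out using the biproduct relations and the identity $i_Ap_A+i_Bp_B=\id[A\oplus B]$, show that each hom-set becomes a commutative monoid and that composition is bilinear; a morphism $\bigoplus_i A_i\to\bigoplus_j B_j$ is then determined by the matrix $(p_j\circ f\circ i_i)$, with composition given by matrix multiplication. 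For uniqueness I would note that any commutative-monoid enrichment for which these products are biproducts satisfies $i_Ap_A+i_Bp_B=\id$, whence bilinearity forces $f+g=\nabla_B\langle f,g\rangle$; since the right-hand side is built only from the (co)product structure, the enrichment is unique.

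\textbf{Cancellativity, the crux.} This is exactly where dagger equalizers, rather than mere biproducts, must be consumed: \cat{Rel} has dagger biproducts yet its hom-monoids (union) are idempotent and hence wildly non-cancellative, so \cat{Rel} cannot admit dagger equalizers, and any correct argument must fail for it. I would first reduce cancellativity to a monicity statement: from $f+h=g+h$ one gets $\phi\langle f,h\rangle=\phi\langle g,h\rangle$ for the shear $\phi=\left(\begin{smallmatrix}\id[B]&\id[B]\\0&\id[B]\end{smallmatrix}\right)\colon B\oplus B\to B\oplus B$ sending $\langle x,y\rangle$ to $\langle x+y,y\rangle$, so that $f+h=g+h\Rightarrow f=g$ is equivalent to $\phi$ being monic. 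The plan is then to establish this monicity using the normalization axiom of dagger equalizers (recall that dagger monics are split, hence monic), by exhibiting the comparison of $\langle f,h\rangle$ and $\langle g,h\rangle$ through a dagger equalizer whose condition $e^\dag e=\id$ forces the cancellation. I expect this to be the main obstacle, for two reasons. The reduction is tight rather than strict: $\phi$ is \emph{not} itself dagger monic (its self-composite $\phi^\dag\phi$ is not the identity), so the useful equalizer has to be chosen indirectly. And the hypothesis genuinely cannot be weakened to ordinary equalizers, since \cat{CMon} has all equalizers yet non-cancellative hom-monoids; hence the argument must extract cancellation from the normalization $e^\dag e=\id$ specifically, and pinning down the precise pair of morphisms whose dagger equalizer delivers this, while ruling out the idempotent-addition behaviour of \cat{Rel}, is the delicate step.
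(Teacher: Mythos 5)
Your first half is fine: the observation that the nullary dagger product is a zero object, that dagger products are biproducts via $i_A=p_A^\dag$, and that biproducts force a unique commutative-monoid enrichment with $f+g=\nabla_B\circ\langle f,g\rangle$ and a matrix calculus is exactly the standard argument. The paper in fact dispenses with this half by citation (``biproducts make a category uniquely semiadditive''), so your sketch is, if anything, more explicit than the paper's, and it is correct.

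The cancellativity half, however, contains a genuine gap. The reduction of $f+h=g+h\Rightarrow f=g$ to monicity of the shear $\phi=\left(\begin{smallmatrix}\id&\id\\0&\id\end{smallmatrix}\right)$ is only a restatement of the goal ($\phi\langle f,h\rangle=\langle f+h,h\rangle$, so the two assertions are tautologically equivalent); it consumes none of the dagger-equalizer hypothesis. Everything after that is an announcement of a plan --- ``exhibiting the comparison \ldots through a dagger equalizer whose condition $e^\dag e=\id$ forces the cancellation'' --- with the admission that ``pinning down the precise pair of morphisms whose dagger equalizer delivers this \ldots is the delicate step.'' That delicate step \emph{is} the lemma: cancellativity is precisely the clause that requires dagger equalizers, and no candidate parallel pair is ever produced, let alone an argument that its dagger equalizer yields $f=g$. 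Your sanity checks (that the argument must fail in $\cat{Rel}$ and must use normalization rather than mere equalizers, witness $\cat{CMon}$) are correct and show you understand where the difficulty lies, but they do not substitute for the construction. For the record, the paper itself does not carry out this step either; it defers it to Vicary's Lemma 2.6 in the cited completeness paper, which is where the actual dagger-equalizer manipulation lives. As written, your proposal proves the first sentence of the lemma and leaves the second unproven.
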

\begin{proof} 
  Dagger products are in particular biproducts, and it is well known that biproducts make a category uniquely semiadditive; see \eg~\cite[18.4]{mitchell}, \cite[2.4]{vicary2011completeness}, or~\cite[1.1]{heunen:semimodules}.
  For the claim that dagger equalizers make addition cancellative, we refer to~\cite[2.6]{vicary2011completeness}.
\end{proof}

We can now prove the theorems stated in the beginning of this section.

\begin{proof}[Proof of Theorem~\ref{thm:finitelydagcompleteimpliesindiscrete}] 
  Let $f \colon A \to B$; we will prove that $f=0_{A,B}$. By Lemma~\ref{lem:dagpbs->everythingispi}, the tuple $\langle \id, f\rangle \colon A\to A\oplus B$ is a partial isometry. Expanding this fact using the matrix calculus results in the equations $\id+f^\dag f=\id$ and $f=f+f$. Applying cancellativity of addition to the latter equation now gives $f=0$.
\end{proof}

\begin{proof}[Proof of Theorem~\ref{thm:dagcompleteimpliesindiscrete}] 
  Let $f \colon A \to B$; we will prove that $f=0_{A,B}$. 
  Observe that in Lemma~\ref{lem:matrixcalculus}, infinite dagger products induce the ability to add infinitely many parallel morphisms in the same way as binary dagger products enable binary addition: if $f_i\colon A\to B$ is an $I$-indexed family of morphisms and \cat{C} has $\abs{I}$-ary dagger products, the sum $\sum f_i$ can be defined as the composite $\Delta^\dag_B(\bigoplus_i f_i)\Delta_A\colon A\to\bigoplus_i A\to \bigoplus_i B\to B$, where $\Delta_A\colon A\to\bigoplus_{i} A$ is the diagonal map. This implies that the category is enriched in $\Sigma$-monoids~\cite{haghverdiscott:goi}. 
  Hence the following variant of the Eilenberg swindle makes sense.
  \begin{align*} 
    0+(f+f+\cdots)
    &=f+f+\cdots \\
	&=f+(f+f+\cdots)
  \end{align*}
  It now follows from cancellativity (of binary addition) that $f=0$.
\end{proof}

Note that both theorems rest on an interplay between various dagger limits. Indeed, none of the dagger limits in question are problematic on their own: \cat{Hilb} has dagger equalizers and finite dagger products, \cat{Rel} has arbitrary dagger products, and \cat{PInj} has dagger pullbacks. 

Given Theorems~\ref{thm:finitelydagcompleteimpliesindiscrete} and~\ref{thm:dagcompleteimpliesindiscrete}, dagger completeness is not an interesting concept. However, below we define a (finite) compleness condition for dagger categories, and show that it is equivalent to the existence of dagger products, dagger equalizers, and dagger intersections.

\begin{definition}\label{def:based}\index[word]{dagger limit!(finitely) based}
  A class $\Omega$ of objects of a category $\cat{J}$ is called a \emph{basis} when every object $B$ allows a unique $A \in \Omega$ making $\cat{J}(A,B)$ non-empty.
  The category $\cat{J}$ is called \emph{based} when there exists a basis, and \emph{finitely based} when there exists a finite basis.
  We say a dagger category \cat{C} has \emph{(finitely) based dagger limits}, or that it is \emph{(finitely) based dagger complete} if for every category \cat{J} with a (finite) basis $\Omega$, and any diagram $D\colon\cat{J}\to\cat{C}$ the dagger limit of $(D,\Omega)$ exists.
\end{definition} 


\begin{example}
  The shape $\bullet \rightrightarrows \bullet$, giving rise to equalizers, is finitely based. Any (finite) discrete category, the shape giving rise to (finite) products, is (finitely) based. Any  indiscrete category is also finitely based. A category is (finitely) based if and only if it is the (finite) disjoint union of categories, each having a weakly initial object. In particular, any dagger category is based and is finitely based iff it has finitely many connected components.
\end{example}

Since both dagger equalizers and arbitrary dagger products exist in a based dagger complete category, all such categories are indiscrete by Theorem~\ref{thm:dagcompleteimpliesindiscrete} and hence the notion is uninteresting. However, being finitely (based) complete need not trivialize the category as seen below in Corollary~\ref{cor:hilbiscomplete}.

\begin{example}
  Finitely based diagrams need not be finite. For example, given any family $f_i\colon A\to B$ of parallel arrows, their joint dagger equalizer is the dagger limit of a finitely based diagram. Similarly, a dagger intersection is the dagger limit of a diagram of dagger monomorphisms $m_i \colon A_i \to B$, \ie a wide (dagger) pullback. Whenever there are at least two monomorphisms, this diagram is not a based category. However, we may also obtain the dagger intersection as the dagger limit of the diagram consisting of $m_im_i^\dag \colon B \to B$, \ie as the limit of the induced projections on $B$. Then we may always take $\Omega$ to be a singleton. Hence being finitely based dagger complete is a stronger requirement than being finitely complete.
\end{example}

\begin{theorem} \label{thm:finitecompleteness}
  A dagger category is finitely based dagger complete if and only if it has dagger equalizers, dagger intersections and finite dagger products.
\end{theorem}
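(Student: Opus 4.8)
The plan is to prove both implications, with the right-to-left direction carrying essentially all of the content.

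\textbf{Forward direction.} Suppose $\cat{C}$ is finitely based dagger complete. The three required classes of dagger limits are obtained by exhibiting suitable finitely based shapes, exactly as in the examples following Definition~\ref{def:based}. The shape $\bullet\rightrightarrows\bullet$ is finitely based with $\Omega$ the first object, so dagger equalizers exist; every finite discrete category is finitely based with $\Omega$ all of its objects, so finite dagger products exist (including the empty one, which by Example~\ref{ex:concretedaggerlimits} gives a zero object). For dagger intersections, recall from Example~\ref{ex:daggershaped} that the dagger intersection of dagger monics $m_i\colon A_i\to B$ coincides with the dagger limit of the induced projections $m_im_i^\dag\colon B\to B$; this diagram has a one-object shape, which is finitely based with $\Omega$ a singleton, so dagger intersections exist.

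\textbf{Backward direction: reduction to a singleton basis.} Assume dagger equalizers, dagger intersections, and finite dagger products. Fix a finitely based $\cat{J}$ with finite basis $\Omega=\{A_1,\dots,A_n\}$ and a diagram $D\colon\cat{J}\to\cat{C}$, and write $\cat{J}=\coprod_{i=1}^n\cat{J}_i$ for its decomposition into components, so each $A_i$ is weakly initial in $\cat{J}_i$. I would first solve each component, producing a dagger limit $L_i$ of $(D|_{\cat{J}_i},\{A_i\})$, and then assemble them via the finite dagger product $L=\bigoplus_i L_i$, with cone leg at $X\in\cat{J}_i$ given by $(l_i)_X\circ p_i$, where $p_i\colon L\to L_i$ is the product projection. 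That $L$ is an ordinary limit is standard, since a limit over a disjoint union is the product of the componentwise limits. Normalization at $A_i$ holds because $p_i$ is a coisometry ($p_ip_i^\dag=\id$) while $(l_i)_{A_i}$ is a partial isometry, so their composite is a partial isometry by Remark~\ref{rem:pisetc}; independence holds because the matrix calculus of Lemma~\ref{lem:matrixcalculus} gives $p_ip_j^\dag=0$ for $i\ne j$, making both relevant composites of projections vanish.

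\textbf{Backward direction: the connected case.} This is the heart of the argument, and the point where having only \emph{finite} products forces a different construction from the classical product-and-equalizer one. Fix a component with weakly initial object, abbreviated $A_0$. Since $A_0$ is weakly initial, any cone is determined by its leg at $A_0$, and that leg is monic (if $l_{A_0}u=l_{A_0}v$ then every leg agrees, so $u=v$); hence the limit will be realized as a subobject of $D(A_0)$. I would build it as follows: for every parallel pair $f,g\colon A_0\to B$ in the component, form the dagger equalizer $e_{f,g}\colon E_{f,g}\rightarrowtail D(A_0)$, and let $m\colon M\rightarrowtail D(A_0)$ be the dagger intersection of all these dagger monics. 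Setting $l_B:=D(f)\circ m$ for any $f\colon A_0\to B$ is well defined, because $m$ factors through each $E_{f,g}$, and yields a cone; it is limiting because any competing cone with apex $Y$ has its leg $r_{A_0}$ factoring through every $E_{f,g}$, hence through $M$, uniquely as $m$ is monic. Thus $M$ is the limit, realized as the joint equalizer of the parallel pairs out of $A_0$ computed as a dagger intersection, so no infinite products are needed.

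\textbf{Normalization and the main obstacle.} The delicate remaining point is that $m=l_{A_0}$ is a \emph{partial isometry}, so that $M$ is genuinely the dagger limit $\dlim^{\{A_0\}}D$ (independence being vacuous for a singleton). Here I would use that each $e_{f,g}$ is a dagger monomorphism, so $e_{f,g}^\dag e_{f,g}=\id$; this projection commutes with everything, whence by Remark~\ref{rem:pisetc} the common composite $m=e_{f,g}\circ p_{f,g}$ (equal for all pairs, with $p_{f,g}$ the dagger-monic leg of the intersection) is a partial isometry, and being monic it is dagger monic. Combining this connected case with the finite-product assembly of the previous step then yields the dagger limit of $(D,\Omega)$, finishing the proof. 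I expect this connected case to be the main obstacle: the key insight is that, in the absence of arbitrary products, the dagger limit must be recognized as a dagger subobject of $D(A_0)$ and produced by intersecting dagger equalizers, and one must check carefully that the resulting leg is a partial isometry rather than merely a monomorphism.
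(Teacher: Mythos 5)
Your proposal is correct and follows essentially the same route as the paper: the converse is proved by forming, for each basis object, the dagger equalizers of all parallel pairs out of it, taking their dagger intersection, and then taking the finite dagger product of the results, with normalization following because the intersection leg is dagger monic and the product projection a partial isometry, and independence from the orthogonality of the product projections. Your explicit split into a per-component step followed by a product assembly is only a cosmetic repackaging of the paper's single construction.
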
 
\begin{proof}
  One direction is obvious, because the shapes of dagger equalizers, dagger intersections, and finite dagger products are all  finitely based. For the converse, assume that a dagger category $\cat{C}$ has dagger equalizers, dagger intersections and finite dagger products. Let $\cat{J}$ have a finite basis $\Omega$. For a given diagram $D\colon\cat{J}\to\cat{C}$, we will construct a dagger limit of $(D,\Omega)$. Fix $A\in\Omega$. For every parallel pair $f,g\colon A\to B$ of morphisms in \cat{J}, pick a dagger equalizer $e_{f,g}\colon E_{f,g}\to D(A)$ of $D(f)$ and $D(g)$. Pick a dagger intersection $m_A\colon L_A\to D(A)$ of all $e_{f,g}$. Pick a dagger product $L$ of $L_A$ for all $A \in \Omega$, and write $p_A \colon L \to L_A$ for the product cone. Given a morphism  $f\colon A\to B$ in $\cat{J}$ with $A\in \Omega$, define $l_A$ as the composite $D(f) m_Ap_A\colon L\to L_A\to D(A)\to D(B)$. Since $\Omega$ is a basis, the choice of $A$ is forced on us. By construction $l_A$ is independent of the choice of $f\colon A\to B$. It is easy to see that $l_A \colon L \to D(A)$ is a limiting cone, so it remains to check the normalization and independence axioms for $\Omega$. Given $A\in\Omega$, 
  \begin{align*}
    l_Al_A^\dag l_A&=m_Ap_A p_A^\dag m_A^\dag m_Ap_A \\
    &= m_Ap_A p_A^\dag p_A &&\text{ since }m_A\text{ is an isometry}\\
    &= m_Ap_A &&\text{ since }p_A\text{ is a partial isometry}\\
     &=l_A\text.
  \end{align*}
  Moreover, since $p_A \colon L \to D(A)$ forms a product, $l_A^\dag l_A l_B^\dag l_B=0_L=l_B^\dag l_Bl_A^\dag l_A$ if $A,B\in \Omega$.
\end{proof} 

\begin{corollary}\label{cor:hilbiscomplete}\index[symb]{\cat{Hilb}, Hilbert spaces and bounded linear maps}\index[symb]{\cat{FHilb}, f.d. Hilbert spaces and linear maps} The categories \cat{FHilb} and \cat{Hilb} are finitely based dagger complete.
\end{corollary}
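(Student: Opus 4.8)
The plan is to invoke Theorem~\ref{thm:finitecompleteness}, which reduces finite based dagger completeness to three separate conditions: the existence of dagger equalizers, dagger intersections, and finite dagger products. So I would verify each of these in turn for \cat{FHilb} and \cat{Hilb}, using only standard facts about Hilbert spaces. For finite dagger products I would take orthogonal direct sums: the zero space is a zero object, and for Hilbert spaces $A$ and $B$ the direct sum $A\oplus B$ with its orthogonal projections $p_A,p_B$ satisfies $p_Ap_A^\dag=\id$, $p_Bp_B^\dag=\id$ and $p_Ap_B^\dag=0=p_Bp_A^\dag$, which is exactly a dagger product.

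For dagger equalizers of a parallel pair $f,g\colon A\to B$, I would take $E=\ker(f-g)$ with its inclusion $e\colon E\to A$. Since $f-g$ is bounded, $E$ is a closed subspace and hence a Hilbert space, and the inclusion preserves the inner product, so $e$ is an isometry, \ie dagger monic; as it is the ordinary equalizer, it is a dagger equalizer. For dagger intersections of a family of dagger monics $m_i\colon A_i\to B$, each $m_i$ is an isometry onto a closed subspace $\Ima(m_i)$ of $B$, so I would take $P=\bigcap_i \Ima(m_i)$ with its inclusion into $B$ and with legs $p_i\colon P\to A_i$ obtained by restricting $m_i^\dag$. Each $p_i$ is then an isometry and $m_ip_i$ is the inclusion $P\hookrightarrow B$, which exhibits $P$ as the wide pullback whose legs are dagger monic, \ie as a dagger intersection.

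These constructions are routine, and there is no genuine obstacle; the only points requiring care arise in the infinite-dimensional case \cat{Hilb}, where one must check that each candidate object is genuinely a Hilbert space, \ie that the relevant subspaces are closed. This holds because kernels of bounded operators are closed and arbitrary intersections of closed subspaces are closed. Note in particular that a dagger intersection may range over infinitely many $m_i$ even though \cat{J} carries only a finite basis, but closedness is insensitive to the cardinality of the family. Crucially, only finite dagger products are needed, so the category is not forced to admit infinite dagger products and thereby avoids the degeneration of Theorem~\ref{thm:dagcompleteimpliesindiscrete}.
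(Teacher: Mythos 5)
Your proposal is correct and follows essentially the same route as the paper: both reduce the claim to Theorem~\ref{thm:finitecompleteness} and then verify dagger equalizers, finite dagger products, and dagger intersections directly, with the key observation in the intersection case being that dagger monics correspond to closed subspaces and arbitrary intersections of closed subspaces remain closed. Your version merely spells out the equalizer and product constructions that the paper takes as known.
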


\begin{corollary} Both categories have dagger equalizers and dagger products. To see that dagger intersections exist, note that a dagger monic corresponds to an inclusion of a closed subspace, and hence the intersection can be computed as the set-theoretic intersection of the corresponding closed subspaces, which is still closed. Hence the result follows from Theorem~\ref{thm:finitecompleteness}.
\end{corollary}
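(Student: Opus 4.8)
The plan is to invoke Theorem~\ref{thm:finitecompleteness}, which reduces finitely based dagger completeness to the existence of three specific dagger limits: dagger equalizers, dagger intersections, and finite dagger products. So I would only need to exhibit each of these in \cat{FHilb} and \cat{Hilb}. The constructions are the same in both categories, and finite-dimensionality will merely make certain closedness conditions automatic in \cat{FHilb}.

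For finite dagger products I would take the orthogonal direct sum $A\oplus B$ with its coordinate projections $p_A,p_B$; these are coisometries with $p_Ap_A^\dag=\id[A]$, $p_Bp_B^\dag=\id[B]$ and $p_Ap_B^\dag=0=p_Bp_A^\dag$, which is exactly the dagger product of Example~\ref{ex:concretedaggerlimits} (the nullary case being the zero space). For the dagger equalizer of $f,g\colon A\to B$ I would take $E=\ker(f-g)$, a closed subspace of $A$ since $f-g$ is bounded; with the inherited inner product its inclusion $e\colon E\to A$ is an isometry and is the ordinary equalizer, hence a dagger equalizer. For dagger intersections of dagger monics $m_i\colon A_i\to B$, I would use that each $m_i$ is an isometry, so each image $\Ima m_i$ is a closed subspace of $B$; the set-theoretic intersection $\bigcap_i\Ima m_i$ is again a closed subspace, and its inclusion into $B$ corestricts along the $m_i$ to the cone legs. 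Since pullbacks of monics are monic, each leg is a monic partial isometry and hence dagger monic, as Example~\ref{ex:concretedaggerlimits} requires.

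The only genuinely load-bearing point, and the one separating the two cases, is closedness: in \cat{Hilb} I must appeal to the standard facts that kernels of bounded operators are closed and that intersections of closed subspaces are closed, so that the resulting objects are honest Hilbert spaces; in \cat{FHilb} every subspace is closed and these checks are vacuous. No step is hard — the content is entirely in recognising that each ordinary limit here is automatically built from (co)isometries — so once the three ingredients are assembled, Theorem~\ref{thm:finitecompleteness} delivers the conclusion.
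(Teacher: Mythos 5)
Your proposal is correct and follows exactly the same route as the paper: reduce to Theorem~\ref{thm:finitecompleteness} and exhibit dagger equalizers, finite dagger products, and dagger intersections, with the intersection case handled by identifying dagger monics with inclusions of closed subspaces and using that an intersection of closed subspaces is closed. The paper merely states the equalizer and product cases as known, whereas you spell out the standard constructions ($\ker(f-g)$ and the orthogonal direct sum), which is a harmless elaboration rather than a different argument.
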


The following theorem characterizes dagger categories having dagger-shaped limits similarly.

\begin{theorem}\label{thm:dagshapedcompleteness}
  Let $\kappa$ be a cardinal number. A dagger category has dagger-shaped limits of shapes with at most $\kappa$ many connected components if and only if it has dagger split infima of projections, dagger stabilizers, and dagger products of at most $\kappa$ many objects. 
\end{theorem}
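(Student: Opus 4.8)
The plan is to follow the template of Theorem~\ref{thm:finitecompleteness}, but with dagger equalizers replaced by dagger stabilizers, dagger intersections replaced by dagger split infima of projections, and finite products replaced by $\kappa$-ary ones. One implication is immediate once we observe that each of the three listed classes is itself a dagger-shaped limit of a shape with at most $\kappa$ connected components: dagger products of at most $\kappa$ objects are limits over discrete dagger categories on at most $\kappa$ objects; dagger stabilizers are by definition $\cat{ZigZag(E)}$-shaped limits, and $\cat{ZigZag(E)}$ is connected; and dagger split infima of projections are dagger-shaped limits over the connected one-object dagger categories of Example~\ref{ex:daggershaped}. Hence having all dagger-shaped limits of shapes with at most $\kappa$ components yields all three.

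For the converse, let $D\colon\cat{J}\to\cat{C}$ be a dagger functor whose domain has components $\{\cat{J}_i\}_{i\in I}$ with $|I|\le\kappa$. The heart of the argument is the connected case, so first suppose $\cat{J}$ is connected and fix an object $A_0$. Since $\cat{J}$ is a dagger category every zigzag can be straightened using daggers, so $\{A_0\}$ is weakly initial; and as $D$ is a dagger functor the choice of $\Omega$ is immaterial by Example~\ref{ex:daggershaped}. For each object $B$ and each parallel pair $g_1,g_2\colon A_0\to B$ (diagonal pairs $g_1=g_2$ included) I would form the dagger stabilizer $s_{g_1,g_2}\colon S_{g_1,g_2}\to D(A_0)$ of $D(g_1),D(g_2)$. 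Each such leg is a partial isometry by normalization and is monic, because a cone for the $\cat{ZigZag(E)}$-diagram is determined by its $D(A_0)$-component; hence each $s_{g_1,g_2}$ is dagger monic and $P_{g_1,g_2}:=s_{g_1,g_2}s_{g_1,g_2}^\dag$ is a projection. I then take the dagger split infimum $m\colon K\to D(A_0)$ of the family $\{P_{g_1,g_2}\}$ and define a cone by $l_{A_0}=m$ and $l_B=D(g)\circ m$ for any $g\colon A_0\to B$; this is independent of the chosen $g$ because $m$ factors through each $s_{g,g'}$ by Remark~\ref{rem:pisetc}.

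To see $K$ is the dagger limit I would show that any cone $(r_A)$ over $D$ factors uniquely through it. The key point is that every such cone satisfies $D(g)^\dag D(g)r_{A_0}=D(g^\dag g)r_{A_0}=r_{A_0}$, since $g^\dag g$ is a loop at $A_0$ whose two cone legs both equal $r_{A_0}$; combined with $D(g_1)r_{A_0}=D(g_2)r_{A_0}$ this exhibits $r_{A_0}$ as a cone for each stabilizer diagram, so $r_{A_0}$ factors through every $s_{g_1,g_2}$ and is thus fixed by every $P_{g_1,g_2}$, hence by the infimum $mm^\dag$. By Remark~\ref{rem:pisetc} it then factors uniquely through the dagger monic $m$, giving the unique mediating map; normalization holds as $l_{A_0}=m$ is a partial isometry, and independence is vacuous for a singleton $\Omega$. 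Finally I assemble the components: set $L=\bigoplus_i\dlim D_i$ for $D_i:=D|_{\cat{J}_i}$, which exists since $|I|\le\kappa$ (and the empty product provides a zero object, hence zero morphisms), with projections $\pi_i$, and put $l_A=l^i_A\circ\pi_i$ for $A\in\cat{J}_i$. Remark~\ref{rem:pisetc} shows each $l_A$ is a partial isometry, and the biproduct relations $\pi_i\pi_j^\dag=\delta_{ij}$ from Proposition~\ref{prop:limswithzeros} give that the induced projections commute across components, so $L$ satisfies normalization and independence; that $L$ is the underlying limit is the standard fact that a limit over a disjoint union of shapes is the product of the limits over the pieces.

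The step I expect to be the main obstacle is precisely the connected case: verifying that the dagger split infimum of the stabilizer projections has the correct universal property against \emph{arbitrary} cones, not just cones whose legs are partial isometries. The resolution hinges on the observation that in a dagger shape the isometry condition $D(g)^\dag D(g)r_{A_0}=r_{A_0}$ is forced on every cone by the presence of $g^\dag$, which is exactly why dagger stabilizers — rather than dagger equalizers, which need not exist under these hypotheses — are the right building blocks and why the meet of their projections recovers the ordinary limit.
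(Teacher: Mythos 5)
Your proposal is correct and follows essentially the same route as the paper's proof: reduce to connected components via a $\kappa$-ary dagger product, and in the connected case take the dagger split infimum of the projections induced by the dagger stabilizers of all parallel pairs out of a fixed object, using the fact that any cone is forced to be a cone for each stabilizer diagram because $D$ preserves daggers. The extra verifications you supply (dagger monicity of the stabilizer legs, the partial-isometry and independence checks for the assembled product) are sound and only make explicit what the paper leaves implicit.
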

\begin{proof}
  One implication is trivial. For the other, we first reduce to the case of a single connected component. Given $D\colon\cat{J}\to\cat{C}$, with connected components $\cat{J}_i$, assume that we can use dagger limits of projections and dagger stabilizers to build a dagger limit $L_i$ of each restriction $D_i$. Then we can define the dagger limit of $D$ by taking the dagger product of all the $L_i$. For $L$ is a limit of $D$, as each $L_i$ is a limit of $D_i$. Moreover, since $L_i\to D_i(A)$ is a partial isometry, $L_i\to D_i(A)\to L_i$ is the identity. Therefore each $L\to L_i\to D(A)$ is a partial isometry. If $A$ and $B$ are in the same connected component of $\cat{J}$, then $l_A^\dag l_A=l_B^\dag l_B$. If $A$ and $B$ are in different connected components, then $l_Bl_A=0_{DA,DB}$. Hence the independence equations are satisfied, and $L$ indeed is the dagger limit of $D$. 

  Now focus on the case when $\cat{J}$ is connected. Pick any object $A$ of \cat{J}. For every $f,g\colon A\to B$, take the dagger stabilizer $E_{f,g}\to D(A)$ of $D(f)$ and $D(g)$, and let $p_{f,g}$ be the induced projection $D(A)\to E_{f,g}\to D(A)$ on $D(A)$. Let $\mathcal{P}$ be the set of all such projections, and let $l\colon L\to D(A)$ be the limit. For each $B$ in \cat{J}, choose a morphism $f\colon A\to B$, and define $l_B \colon L\to D(B)$ as $D(f)\circ l$. By construction, $l_B$ is independent of the choice of $f$. We claim that this makes $L$ into a dagger limit of $D$. First, to see it is a cone, take an arbitrary $g\colon B\to C$. Then $D(g)l_B=D(g)D(f)l_A=D(gf)l_A=l_C$, as desired. Because $L\to  D(A)$ is a partial isometry, so is each $L\to D(B)$, as in Example~\ref{ex:daggershaped}. Moreover, $l_A^\dag l_A=l_B^\dag l_B$ when $A,B\in\cat{J}$. Finally, to show that $L$ is indeed a limit, take an arbitrary cone $m_B \colon M \to D(B)$. As it is a cone, it is uniquely determined by $m_A$. But $m_A$ must factor through each stabilizer, so $m_A$ is also a cone for $\mathcal{P}$. The universal property of $L$ ensures that $m_A$ factors through $l_A$. 
\end{proof}

\begin{corollary} 
  The dagger category \cat{Rel} has all dagger-shaped limits, and the dagger categories \cat{FinRel} and $\cat{Span}(\cat{FinSet})$ have all dagger-shaped limits with finitely many connected components.
\end{corollary}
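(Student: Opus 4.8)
The plan is to derive the corollary from Theorem~\ref{thm:dagshapedcompleteness}, verifying its three hypotheses for each of the three categories with the aid of Proposition~\ref{prop:variouscatshavedagshapedlims}. The key observation is that two of the three ingredients required by Theorem~\ref{thm:dagshapedcompleteness} are themselves \emph{connected} dagger-shaped limits, and hence are already provided by Proposition~\ref{prop:variouscatshavedagshapedlims}; only the dagger products must be supplied by hand.

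First I would check connectedness of the two relevant shapes. A dagger splitting of an infimum of a family of projections is, as in Example~\ref{ex:daggershaped}, a dagger limit over the one-object dagger category given by the monoid freely generated by the corresponding idempotents; being a monoid, this shape is connected. A dagger stabilizer is a dagger limit over $\cat{ZigZag}(E)$, which is connected because the equalizer shape $E=\bullet\rightrightarrows\bullet$ is connected and freely adjoining zigzags preserves this. Since Proposition~\ref{prop:variouscatshavedagshapedlims} supplies \cat{J}-shaped limits for \emph{every} small connected dagger category \cat{J}, it follows immediately that \cat{Rel}, \cat{FinRel} and $\cat{Span}(\cat{FinSet})$ all have dagger split infima of projections and dagger stabilizers.

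It then remains only to supply dagger products of the appropriate cardinality, since a discrete shape on two or more objects is not connected and so falls outside the scope of Proposition~\ref{prop:variouscatshavedagshapedlims}. In each of these categories the dagger product is computed from a disjoint union of the underlying sets. In \cat{Rel} such disjoint unions exist for families of arbitrary cardinality, so all three hypotheses of Theorem~\ref{thm:dagshapedcompleteness} hold for every cardinal $\kappa$, yielding all dagger-shaped limits. In \cat{FinRel} and $\cat{Span}(\cat{FinSet})$, a disjoint union of finitely many finite sets is again finite, so only \emph{finite} dagger products exist; applying Theorem~\ref{thm:dagshapedcompleteness} with $\kappa$ finite then yields exactly the dagger-shaped limits whose shape has finitely many connected components.

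Once Proposition~\ref{prop:variouscatshavedagshapedlims} and Theorem~\ref{thm:dagshapedcompleteness} are in hand, the argument is essentially bookkeeping; the only point demanding genuine care is confirming that the shapes underlying dagger split infima of projections and dagger stabilizers are indeed connected, so that Proposition~\ref{prop:variouscatshavedagshapedlims} applies and we need not re-establish the existence of these limits directly in each category.
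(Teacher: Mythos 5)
Your proof is correct and follows exactly the route the paper takes: observe that \cat{Rel} has all small dagger products while \cat{FinRel} and $\cat{Span}(\cat{FinSet})$ have finite ones, then combine Proposition~\ref{prop:variouscatshavedagshapedlims} with Theorem~\ref{thm:dagshapedcompleteness}. The paper leaves implicit the point you rightly make explicit -- that the shapes for dagger split infima of projections and for dagger stabilizers are connected, so Proposition~\ref{prop:variouscatshavedagshapedlims} covers them -- but this is the same argument, just spelled out.
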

\begin{proof} 
  Observe that \cat{Rel} has all (small) dagger products, and that similarly \cat{FinRel} and $\cat{Span}(\cat{FinSet})$ have finite dagger products. Combine Proposition~\ref{prop:variouscatshavedagshapedlims} and Theorem~\ref{thm:dagshapedcompleteness}.
\end{proof}

We end this section by comparing our notion of finite dagger completeness to that of~\cite{vicary2011completeness}. Thus we must restrict to categories enriched in commutative monoids. This simplifies things, as in the following lemma. 

\begin{proposition}\label{prop:limswithzeros} 
  Let $\Omega$ be a basis for \cat{J} and $D\colon\cat{J}\to\cat{C}$ a diagram in a category with zero morphisms. A limit $l_A \colon L \to D(A)$ of $D$ is a dagger limit of $(D,\Omega)$ if and only if:
    \begin{itemize}
      \item $l_A$ is a partial isometry whenever $A\in\Omega$; and
      \item $l_Bl_A^\dag=0_{A,B}$ whenever $A,B\in\Omega$ and $A\neq B$.
    \end{itemize}
\end{proposition}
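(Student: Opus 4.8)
The plan is to notice that the normalization clause is verbatim the first axiom of Definition~\ref{def:daglim}, so the entire content of the statement is that, \emph{given} normalization, the independence axiom is equivalent to the orthogonality clause $l_Bl_A^\dag=0_{A,B}$ for distinct $A,B\in\Omega$. I would prove the two implications separately, and I expect only the direction ``dagger limit $\Rightarrow$ orthogonality'' to require any real idea.

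The reverse implication is a short calculation. Assume normalization and orthogonality. For distinct $A,B\in\Omega$ the orthogonality clause is symmetric, so both $l_Bl_A^\dag=0$ and its dagger $l_Al_B^\dag=0$ hold; then $l_A^\dag l_A l_B^\dag l_B=l_A^\dag(l_Al_B^\dag)l_B=0$ and symmetrically $l_B^\dag l_B l_A^\dag l_A=0$, so the induced projections commute because both composites vanish (the case $A=B$ being trivial). Hence $L$ satisfies independence and is a dagger limit.

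For the converse I would exploit that $\Omega$ is a \emph{basis} in the sense of Definition~\ref{def:based}, not merely weakly initial: distinct elements of $\Omega$ lie in distinct connected components of $\cat{J}$. Fix distinct $A,B\in\Omega$ and let $K$ be the connected component containing $A$, so $B\notin K$. Using the zero morphisms I define $\tau\colon D\Rightarrow D$ by $\tau_C=\id[D(C)]$ for $C\in K$ and $\tau_C=0_{D(C),D(C)}$ otherwise. This is natural because no morphism of $\cat{J}$ crosses between $K$ and its complement, and it is adjointable because every $\tau_C$ is self-adjoint (identities and zero morphisms are fixed by the dagger). I would then apply Lemma~\ref{lem:connectingmaps} with $E=D$ and $M=L$ carrying the same limiting cone; this is precisely where the hypothesis that $L$ is a dagger limit is genuinely used, since that lemma rests on the independence axiom. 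Its commuting square specialises to $\tau_B\circ l_B\circ l_A^\dag=l_B\circ l_A^\dag\circ\tau_A$, and since $\tau_A=\id$ (as $A\in K$) while $\tau_B=0$ (as $B\notin K$), this reads $0=l_Bl_A^\dag$, which is exactly the orthogonality clause.

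The main obstacle is thus the converse, and more precisely the need to upgrade independence from ``the induced projections commute'' to ``they annihilate one another'' across components. The device that achieves this is the adjointable component-indicator transformation $\tau$, whose existence relies on both the zero morphisms and the basis condition (so that $\tau_A$ and $\tau_B$ land at opposite extremes). Once $\tau$ is in hand, Lemma~\ref{lem:connectingmaps} does all the work, and I would expect no further computation beyond verifying that $\tau$ is a well-defined adjointable natural transformation.
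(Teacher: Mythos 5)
Your proposal is correct and matches the paper's proof essentially step for step: the easy direction is the same two-line computation, and the converse uses the same adjointable ``component indicator'' natural transformation (the paper writes it as $\sigma_C=\id$ when $\cat{J}(A,C)\neq\emptyset$ and $0$ otherwise, which for a basis is exactly your indicator of the connected component of $A$) fed into Lemma~\ref{lem:connectingmaps} with $E=D$ and $M=L$. The only cosmetic difference is your phrasing of the indicator via connected components rather than via non-emptiness of $\cat{J}(A,-)$.
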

\begin{proof} 
  Any limit satisfying the above conditions also satisfies normalization. For independence, the second condition gives $l^\dag_A l_A l_B^\dag l_B=0_{L,L}=l^\dag_Bl_Bl^\dag_Al_A$. 

  Conversely, given a dagger limit $l_A \colon L \to D(A)$ of $(D,\Omega)$, we wish to show that $l_Bl_A^\dag=0_{A,B}$ whenever $A,B\in\Omega$ are distinct. Fix distinct $A,B\in\Omega$ and define a natural transformation $\sigma\colon D\Rightarrow D$ by
  \[
    \sigma_C = \begin{cases} \id[D(C)] & \text{if }\cat{J}(A,C)\neq \emptyset \\
             0_{D(C),D(C)} &\text{otherwise.} \end{cases}
  \]
  By construction $\sigma$ is adjointable. Hence Lemma~\ref{lem:connectingmaps} guarantees $l_Bl_A^\dag=l_Bl_A^\dag\id=l_Bl_A^\dag\sigma_A=\sigma_B l_Bl_A^\dag=0l_Bl_A^\dag=0$.
\end{proof}

\begin{proposition}\label{prop:limswithsums} 
  Let $\Omega$ be a finite basis for \cat{J} and $D\colon\cat{J}\to\cat{C}$ a diagram in a dagger category $\cat{C}$ enriched in commutative monoids. A limit $l_A \colon L \to D(A)$ is a dagger limit of $(D,\Omega)$ if and only if $\id[L]=\sum_{A\in \Omega} l_A^\dag l_A$.
\end{proposition}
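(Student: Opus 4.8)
The plan is to prove both implications through the characterisation already available in this setting. Since a category enriched in commutative monoids has zero morphisms, Proposition~\ref{prop:limswithzeros} applies: a limit $l_A\colon L\to D(A)$ is a dagger limit of $(D,\Omega)$ if and only if (i) $l_A$ is a partial isometry for every $A\in\Omega$, and (ii) $l_Bl_A^\dag=0$ for distinct $A,B\in\Omega$. So I would reduce the whole statement to showing that the single equation $\id[L]=\sum_{A\in\Omega}l_A^\dag l_A$ is equivalent to the conjunction of (i) and (ii). Throughout I would use that a finite basis $\Omega$ makes $\cat{J}$ a finite disjoint union of full subcategories $\cat{J}_A$ ($A\in\Omega$), where $\cat{J}_A$ collects the objects reachable from $A$; in particular, for basis objects one has $A\in\cat{J}_B$ only when $A=B$, and there are no morphisms between distinct components.

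\textbf{Forward direction.} This is routine. Assuming (i) and (ii) (together with the dagger of (ii), namely $l_Al_B^\dag=0$), I would compute for each $A\in\Omega$ that $l_A\bigl(\sum_{B}l_B^\dag l_B\bigr)=\sum_B l_Al_B^\dag l_B=l_Al_A^\dag l_A=l_A$, since the off-diagonal terms vanish and the diagonal term collapses by the partial-isometry law. Because $\Omega$ is weakly initial, every leg $l_C$ factors through some $l_A$ with $A\in\Omega$, so two maps into $L$ that agree after composition with all $l_A$ ($A\in\Omega$) are equal; hence $\sum_B l_B^\dag l_B=\id[L]$.

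\textbf{Backward direction.} This is where the work lies, and here I would introduce an auxiliary family of idempotents coming purely from the limit. For each $A\in\Omega$, the family given by $\mu^A_C=l_C$ for $C\in\cat{J}_A$ and $\mu^A_C=0$ for $C\notin\cat{J}_A$ is a cone over $D$ (this uses that $\cat{J}$ is a disjoint union, so every morphism stays within a single component), inducing a unique $\pi_A\colon L\to L$ with $l_C\pi_A=\mu^A_C$. Writing $q_A=l_A^\dag l_A$, the defining property gives $l_A\pi_A=l_A$ and $l_A\pi_B=0$ for $B\neq A$, whence $q_A\pi_A=q_A$ and $q_A\pi_B=0$ for $B\neq A$. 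The crucial step is then to feed in the hypothesis: $\pi_A=\bigl(\sum_{C}q_C\bigr)\pi_A=\sum_C q_C\pi_A=q_A$, so that $q_A=\pi_A$ is a \emph{self-adjoint} idempotent. From $q_A=\pi_A$ I would read off both conditions: normalization follows from $l_Al_A^\dag l_A=l_A\pi_A=l_A$, and orthogonality from $l_Al_B^\dag=l_A\pi_B^\dag l_B^\dag=l_A\pi_B l_B^\dag=0$ for $A\neq B$ (using $l_B=l_B\pi_B$, self-adjointness $\pi_B=\pi_B^\dag$, and $l_A\pi_B=0$); its dagger is $l_Bl_A^\dag=0$. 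Proposition~\ref{prop:limswithzeros} then yields the dagger limit.

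\textbf{Main obstacle.} The heart of the proof is the identification $\pi_A=q_A$. The idempotents $\pi_A$ are forced by the limit and are automatically orthogonal and sum to $\id[L]$, but there is no reason for them to respect the dagger; self-adjointness is exactly what is missing. The content of the hypothesis $\id[L]=\sum_A l_A^\dag l_A$ is precisely that it pins the cone-theoretic decomposition of the identity by the $\pi_A$ to the dagger-theoretic one by the $q_A$. I would also keep in mind that commutative-monoid enrichment provides no subtraction, so every manipulation must avoid cancellation; the argument above only ever adds and composes, so it stays within the available structure.
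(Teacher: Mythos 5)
Your proof is correct and follows essentially the same route as the paper: your idempotent $\pi_A$ is exactly the endomorphism $f$ of $L$ that the paper induces from the natural transformation $\sigma_C=\id$ on the component of $A$ and $0$ elsewhere, and your key computation $\pi_A=\sum_C q_C\pi_A=q_A$ is the paper's $f=\sum_B l_B^\dag l_B f=l_A^\dag l_A$. The only cosmetic difference is that you build $\pi_A$ directly as a cone morphism and extract orthogonality via self-adjointness of $\pi_B$, whereas the paper routes through Proposition~\ref{prop:limswithzeros} and the adjointability of $\sigma$; both readings are sound.
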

\begin{proof}
  Assume first that $L$ is a dagger limit of $(D,\Omega)$. Since \cat{C} is enriched in commutative monoids, the composition $\cat{C}(A,B) \otimes \cat{C}(B,C) \to \cat{C}(A,C)$ is bilinear by definition of tensor product of commutative monoids, and so the unit morphisms of addition are zero morphisms.
  Therefore $l_Bl_A^\dag=0$ by Proposition~\ref{prop:limswithzeros} when $A,B\in\Omega$ are distinct. Hence 
  \[
    l_B\sum_{A\in \Omega} l_A^\dag l_A=l_Bl_B^\dag l_B+ l_B\sum_{B\neq A\in\Omega} l_Al_A  
    =l_B+0=l_B
  \]
  for every $B\in \Omega$, which implies $\id[L]=\sum_{A\in \Omega} l_A^\dag l_A$.

  For the converse, assume that $L$ is a limit of $D$ satisfying $\id[L]=\sum_{A\in \Omega} l_A^\dag l_A$. Fix $A\in\Omega$ and define $\sigma\colon D\Rightarrow D$ as in the proof of Proposition~\ref{prop:limswithzeros}.
  Now $\sigma$ induces a unique morphism $f\colon L\to L$ making the following square commute for any $C$ in $\cat{J}$:
  \[\begin{tikzpicture}
    \matrix (m) [matrix of math nodes,row sep=2em,column sep=4em,minimum width=2em]
    {
    L & L \\
     D(C) & D(C) \\};
    \path[->]
    (m-1-1) edge node [left] {$l_C$} (m-2-1)
           edge node [above] {$f$} (m-1-2)
    (m-1-2) edge node [right] {$l_C$} (m-2-2)
    (m-2-1) edge node [below] {$\sigma_C$} (m-2-2);       
  \end{tikzpicture}\]
  Now
  $
    f
    = \id[L]f 
    = (\sum_{B\in \Omega} l_B^\dag l_B)f 
    = \sum_{B\in \Omega} l_B^\dag l_B f 
    = \sum_{B\in\Omega} l_B^\dag \sigma_B l_B 
    =l_A^\dag l_A +0=l_A^\dag l_A
  $.
  It follows that 
  \begin{equation}\label{eq:limswithsums}
    l_Bl_A^\dag l_A=0 
  \end{equation}
  for distinct $A,B\in\Omega$, so that 
  $
    l_A
    =l_A\id[L] 
    =l_A\sum_{A\in \Omega} l_A^\dag l_A
    =l_A l_A^\dag l_A+0
    =l_A l_A^\dag l_A
  $
  and each $l_A$ is a partial isometry. Finally~\eqref{eq:limswithsums} implies $l_Bl_A^\dag=0_{A,B}$ for distinct $A,B$, making $L$ into a dagger limit of $(D,\Omega)$ by Proposition~\ref{prop:limswithzeros}.
\end{proof}

Thus whenever \cat{J} has a finite basis and \cat{C} is appropriately enriched, Definition~\ref{def:based} coincides with the notion of completeness in~\cite{vicary2011completeness}. But our notion is not more general: when the diagram does not admit a finite basis, the two notions are different. For instance, \cat{FHilb} does not have all dagger pullbacks, because not all morphisms are partial isomorphisms. But for every diagram $A\to C\leftarrow B$ there exists a pullback $A\leftarrow L\to B$ with $\id[L]=l_A^\dag l_A+l_B^\dag l_B$. Conversely, the following example below exhibits a dagger pullback in \cat{Rel} that does not satisfy such a summation.

\begin{example}\label{ex:pullback} 
  Consider the objects $A=\{1,2\}$ and $B=\{1\}$ in $\cat{Rel}$, and morphisms $R=A \times A \colon A \to A$ and $S=B \times A \colon B \to A$.
  Then the object $A$ with the cone morphisms $l_A=\id[A]\colon A\to A$ and $l_B=A\times B\colon A\to B$ form a pullback of $R$ and $S$. Now both $l_A$ and $l_B$ are partial isometries, and $l_A^\dag l_A l_B^\dag l_B = l_B^\dag l_B l_A^\dag l_A$, so this is a dagger pullback.
  But $l_A^\dag l_A+l_B^\dag l_B=A\times A\neq\id[A]$.
\end{example}

\section{Global dagger limits}\label{sec:globaldaglims}

If limits happen to exist for all diagrams of a fixed shape, it is well known that they can also be formulated as an adjoint to the constant functor. This section explores this phenomenon of global limits in the dagger setting.

More precisely, fix a shape \cat{J} and a weakly initial $\Omega$. Assume that a dagger category \cat{C} has $(\cat{J},\Omega)$-shaped dagger limits. Then there is an induced right adjoint $L$ to the diagonal functor $\Delta\colon \cat{C}\to \cat{Cat}(\cat{J},\cat{C})$. 
It restricts to a dagger functor $\hat{L}\colon[\cat{J},\cat{C}]_\dag\to\cat{C}$: $L(\sigma^\dag) = L(\sigma)^\dag$ for any $D,E\colon\cat{J}\to\cat{C}$ and adjointable $\sigma \colon D \Rightarrow E$. This follows from Corollary~\ref{cor:mapoflimsismapofcolims}, since $L(\sigma^\dag)$ is the map of limits induced by $\sigma^\dag\colon E\to D$, and  $L(\sigma)^\dag$ is the map of colimits induced by $\sigma^\dag\colon \dag \circ E\to \dag \circ D$. 
Normalization and independence endow the adjunction $\Delta \dashv L$ with further properties. 

\begin{description}
  \item[Normalization] 
  Consider the counit $\varepsilon\colon \Delta L\to \id[{\cat{Cat}(\cat{J},\cat{C})}]$. By construction, for any diagram $D\colon\cat{J}\to\cat{C}$, each $(\varepsilon_D)_A$ is a partial isometry whenever $A\in\Omega$. ``The counit is a partial isometry when restricted to $\Omega$''. 

  \item[Independence]
  Write $d_A$ for the component of $\varepsilon_D\colon \Delta L(D)\to D$ at $A$, and $e_A$ for the component of $\varepsilon_E$ at $A$. For a fixed $A\in \Omega$, define a natural transformation $\rho_A\colon \hat{L}\to\hat{L}$ by setting $(\rho_A)_D=d_A^\dag d_A\colon L(D)\to D(A)\to L(D)$. To see that this forms a natural transformation $\hat{L}\Rightarrow \hat{L}$, let $\sigma\colon D\Rightarrow E$ be adjointable and consider the following diagram.
	\[\begin{tikzpicture}
	         \matrix (m) [matrix of math nodes,row sep=2em,column sep=4em,minimum width=2em]
	         {
	          L(D) & D(A) & L(D) \\
	          L(E) & E(A) & L(E) \\};
	         \path[->]
	         (m-1-1) edge node [above] {$d_B$} (m-1-2)
	                 edge node [left] {$L(\sigma)$} (m-2-1)
	         (m-1-2) edge node [above] {$d_A^\dag$} (m-1-3)
	                edge node [left] {$\sigma_A$} (m-2-2)
	         (m-1-3) edge node [right] {$L(\sigma)$} (m-2-3)
	         (m-2-1) edge node [below] {$e_A$} (m-2-2)
	          (m-2-2) edge node [below] {$e_A^\dag$} (m-2-3);
	\end{tikzpicture}\]
The left square commutes by definition of $L(\sigma)$, and the right one by definition of $L(\sigma^\dag)^\dag=L(\sigma)$, so that the rectangle expressing naturality of $\rho_A$ commutes. Clearly, $\rho_A\rho_B=\rho_B\rho_A$ whenever $A,B\in\Omega$.
\end{description}

These properties in fact characterize dagger categories $\cat{C}$ having all $(\cat{J},\Omega)$-shaped limits.

\begin{theorem}\label{thm:globallimits}\index[symb]{$[\cat{J},\cat{C}]_\dag$ functors and \emph{adjointable} transformations}
  A dagger category \cat{C} has all $(\cat{J},\Omega)$-shaped limits if and only if the diagonal functor $\Delta\colon \cat{C}\to \cat{Cat}(\cat{J},\cat{C})$ has a right adjoint $L$ such that:
    \begin{itemize}
        \item the counit is a partial isometry when restricted to $\Omega$;
        \item $L$ restricts to a dagger functor $\hat{L}\colon[\cat{J},\cat{C}]_\dag\to\cat{C}$;
        \item the family $\rho(A)_D=d_A^\dag d_A$ is natural $\hat{L}\to \hat{L}$ for any $A\in\Omega$;
        \item if $A,B \in \Omega$, then $\rho_A\rho_B=\rho_B\rho_A$.
    \end{itemize}
\end{theorem}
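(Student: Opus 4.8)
The plan is to prove the two implications separately. The forward direction will mostly assemble the observations already recorded in the Normalization and Independence discussions directly above the theorem, while the backward direction will consist of unpacking the four global conditions into the pointwise normalization and independence axioms of Definition~\ref{def:daglim}.

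For the forward implication I would argue as follows. Assuming $\cat{C}$ has all $(\cat{J},\Omega)$-shaped limits, every such dagger limit is in particular an ordinary limit, so $\cat{C}$ has all limits of shape $\cat{J}$ and hence $\Delta$ acquires a right adjoint $L$ with counit $\varepsilon$; write $d_A:=(\varepsilon_D)_A$. Then the four listed properties are exactly the facts verified above: normalization of the dagger limit makes each $d_A$ (for $A\in\Omega$) a partial isometry; Corollary~\ref{cor:mapoflimsismapofcolims} identifies the map of limits induced by an adjointable $\sigma$ with the corresponding map of colimits, which is precisely the statement $L(\sigma^\dag)=L(\sigma)^\dag$, so $L$ restricts to a dagger functor $\hat{L}$; the naturality of each $\rho_A$ follows from the two defining squares for $L(\sigma)$ and for $L(\sigma)=L(\sigma^\dag)^\dag$; and independence gives $\rho_A\rho_B=\rho_B\rho_A$ for $A,B\in\Omega$.

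For the backward implication I would start from a right adjoint $L$ satisfying the four conditions. Since $L$ is right adjoint to the diagonal, each diagram $D$ has $L(D)$ as an ordinary limit, with limiting cone legs $d_A=(\varepsilon_D)_A$, so only normalization and independence remain to be checked for $\Omega$. Normalization is the first condition read pointwise. For independence I would evaluate the commutativity $\rho_A\rho_B=\rho_B\rho_A$ at the object $D$: using $(\rho_A)_D=d_A^\dag d_A$ and $(\rho_B)_D=d_B^\dag d_B$, this becomes $d_A^\dag d_A d_B^\dag d_B=d_B^\dag d_B d_A^\dag d_A$, which is exactly the independence axiom. Hence $L(D)$ is a dagger limit of $(D,\Omega)$, and as $D$ was arbitrary, $\cat{C}$ has all $(\cat{J},\Omega)$-shaped limits.

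I expect no genuine obstacle in this assembly: the substantive work has already been done in Corollary~\ref{cor:mapoflimsismapofcolims}, on which the dagger-functoriality of $L$ depends, and the remaining task is the routine translation between global statements about $L$ and $\rho_A$ and the pointwise axioms. It is worth noting that the backward direction uses only the first and fourth conditions; the middle two are included because they are genuine consequences in the forward direction, so that the characterization faithfully records all the structure actually present.
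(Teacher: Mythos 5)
Your proof is correct and follows essentially the same route as the paper: the forward direction packages the Normalization and Independence observations (including the appeal to Corollary~\ref{cor:mapoflimsismapofcolims} for dagger-functoriality of $\hat{L}$), and the backward direction reads the first and fourth conditions pointwise to recover normalization and independence for each $LD$. Your closing remark that only the first and fourth conditions are used in the converse also matches the paper's proof, which likewise leaves open whether the fourth condition is redundant in general.
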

\begin{proof}
  We already proved the implication from left to right above. The other implication is straightforward: it is well-known that if $L$ is the right adjoint to the diagonal then $LD$ is a limit of $D$ for each $D$. Hence we only need to check normalization and independence on $\Omega$. The counit being a partial isometry when restricted to $\Omega$ means that for each $A\in\Omega$ the structure map $LD\to DA$ of the limit is a partial isometry, giving us normalization. The condition $\rho_A\rho_B=\rho_B\rho_A$ for $A,B\in\Omega$ then amounts to independence.
\end{proof}

We leave open the question whether the fourth condition of Theorem~\ref{thm:globallimits} is necessary in general.

Further restricting the shape \cat{J} yields a cleaner special cases of the previous theorem.

\begin{theorem} 
  Let $\Omega$ be a basis for \cat{J}. A dagger category \cat{C} has all $(\cat{J},\Omega)$-shaped limits if and only if the diagonal functor $\Delta\colon \cat{C}\to \cat{Cat}(\cat{J},\cat{C})$ has a right adjoint $L$ such that:
    \begin{itemize}
        \item the counit is a partial isometry when restricted to $\Omega$;
        \item $L$ restricts to a dagger functor $\hat{L}\colon[\cat{J},\cat{C}]_\dag\to\cat{C}$.
    \end{itemize}
\end{theorem}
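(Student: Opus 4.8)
The plan is to deduce this from the general characterisation in Theorem~\ref{thm:globallimits}. The forward implication is immediate: if \cat{C} has all $(\cat{J},\Omega)$-shaped limits, then Theorem~\ref{thm:globallimits} already supplies a right adjoint $L$ to $\Delta$ whose counit is a partial isometry on $\Omega$ and which restricts to a dagger functor on $[\cat{J},\cat{C}]_\dag$, and these are exactly the two listed conditions. For the converse I would start from a right adjoint $L$ satisfying the two conditions and show that the remaining two hypotheses of Theorem~\ref{thm:globallimits} (naturality of each $\rho(A)$, and the commutativity $\rho_A\rho_B=\rho_B\rho_A$) come for free once $\Omega$ is a basis, so that Theorem~\ref{thm:globallimits} applies and each $L(D)$ is $\dlim^\Omega D$.

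The naturality of $\rho(A)$ follows from the dagger-functor condition alone, with no use of the basis. Writing $d_C,e_C$ for the components at $C$ of the counits at $D$ and $E$, the map $L(\sigma^\dag)$ induced by an adjointable $\sigma\colon D\Rightarrow E$ satisfies $d_A L(\sigma^\dag)=\sigma_A^\dag e_A$ by the universal property of $L$; taking daggers and using $L(\sigma^\dag)^\dag=L(\sigma)$ (the dagger-functor condition) gives $L(\sigma)d_A^\dag=e_A^\dag\sigma_A$. Combining this with the defining relation $e_A L(\sigma)=\sigma_A d_A$ yields $L(\sigma)\,d_A^\dag d_A=e_A^\dag\sigma_A d_A=e_A^\dag e_A\,L(\sigma)$, which is precisely naturality of $\rho(A)$. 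This is the unenriched shadow of Corollary~\ref{cor:mapoflimsismapofcolims}, reproved without presupposing dagger limits.

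The real work, and the only place the basis enters, is the commutativity $\rho_A\rho_B=\rho_B\rho_A$, i.e. that the projections $d_A^\dag d_A$ and $d_B^\dag d_B$ on $L(D)$ commute for $A,B\in\Omega$. By Definition~\ref{def:based} a based category is the disjoint union of its connected components, each containing exactly one object of $\Omega$, so for distinct $A,B\in\Omega$ the objects $A$ and $B$ lie in different components. I would then feed into the relation $L(\sigma)d_A^\dag=e_A^\dag\sigma_A$ an adjointable endotransformation $\sigma\colon D\Rightarrow D$ that is the identity off the component of $A$; evaluating $d_A L(\sigma) d_B^\dag$ in two ways (using $d_A L(\sigma)=\sigma_A d_A$ and $L(\sigma)d_B^\dag=d_B^\dag$) gives $\sigma_A\,(d_A d_B^\dag)=d_A d_B^\dag$, and the symmetric choice gives $(d_A d_B^\dag)\,\sigma_B^\dag=d_A d_B^\dag$. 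Thus the ``off-diagonal'' map $d_A d_B^\dag$ is fixed by all component-supported endotransformations.

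The hard part will be converting this fixed-point property into the commutation $d_A^\dag d_A\,d_B^\dag d_B=d_B^\dag d_B\,d_A^\dag d_A$. When \cat{C} has zero morphisms it is instantaneous: the zero endotransformation on the component of $A$ is admissible, so $d_A d_B^\dag=0$, whence both composites vanish. In general I would instead exhibit $L(D)$, with its legs grouped by connected component, as an $\Omega$-indexed product — the legs $d_A$ ($A\in\Omega$) being jointly monic and every other leg factoring through its component's leg by weak initiality — and use the dagger-functor condition to see that this is a dagger product, in which the coordinate projections $d_A^\dag d_A$ automatically commute. This verifies the hypotheses of Theorem~\ref{thm:globallimits}; together with normalization (the partial-isometry condition on the counit) it shows each $L(D)$ is a dagger limit of $(D,\Omega)$, completing the converse.
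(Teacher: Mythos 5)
Your forward direction and your derivation of naturality of $\rho(A)$ from the dagger-functor condition are both correct, and your fixed-point observation $\sigma_A\,(d_A d_B^\dag)=d_A d_B^\dag$ for component-supported adjointable endotransformations is exactly the right lever. But the final step — the only place the argument actually has to close — has a genuine gap. Your proposed route for the general case is to exhibit $L(D)$ as an $\Omega$-indexed \emph{dagger} product of the component-limits and conclude that the projections $d_A^\dag d_A$ "automatically commute." That is circular: for a discrete shape, independence \emph{is} the commutation of these projections (Definition~\ref{def:daglim}), so asserting that the product is a dagger product presupposes the very commutativity you are trying to prove; and the traditional characterisation via $p_A p_B^\dag=0$ requires zero morphisms, which is precisely what you set the "general case" up to avoid. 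Moreover, your fixed-point property alone is too weak — for an arbitrary diagram $D$ the only component-supported adjointable endotransformation might be the identity, in which case it says nothing.

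The missing idea, which is the heart of the paper's proof, is that the two hypotheses \emph{force} $\cat{C}$ to have zero morphisms as soon as $\Omega$ has two distinct objects, so the "instantaneous" case you dismiss as special is in fact the only case. Concretely, the paper fixes distinct $A,B\in\Omega$ and, for each pair of objects $X,Y$ of $\cat{C}$, builds an auxiliary diagram $D_{X,Y}\colon\cat{J}\to\cat{C}$ that is constant at $X$ on the component of $A$, constant at $Y$ on the component of $B$, and constant at some fixed $X_0$ elsewhere; it then \emph{defines} $0_{X,Y}$ as the composite $X=D_{X,Y}(A)\to L(D_{X,Y})\to D_{X,Y}(B)=Y$. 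Because these diagrams are constant on components, \emph{every} morphism $f\colon Y\to Z$ induces an adjointable transformation $D_{X,Y}\Rightarrow D_{X,Z}$, and your relation $L(\sigma)d_A^\dag=e_A^\dag\sigma_A$ applied to it yields $f\circ 0_{X,Y}=0_{X,Z}$; taking daggers gives absorption on the other side, so these really are zero morphisms. Only then does one return to an arbitrary $D$ and use the (now available) zero endotransformation on the component of $A$ to conclude $d_B d_A^\dag=0$, whence independence. Your proof needs this construction (or an equivalent manufacture of zero morphisms from the hypotheses) to go through.
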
 
\begin{proof} 
    The implication from left to right follows from the previous theorem.
    For the other direction, the conditions imply that a diagram $D$ has a limit given by $d_A \colon L(D) \to D(A)$, and furthermore $d_A$ is a partial isometry if $A\in \Omega$. It remains to verify that $d_A^\dag d_A$ commutes with $d_B^\dag d_B$ when $A,B\in\Omega$. We may assume that $\Omega$ has at least two distinct objects. We will show that \cat{C} has zero morphisms and $d_Bd_A^\dag=0_{D(A),D(B)}$ whenever $A,B\in\Omega$ are distinct. 

    Fix distinct $A,B\in\Omega$ and $X_0\in\cat{C}$. For objects $X,Y$ of $\cat{C}$, define $D_{X,Y}\colon\cat{J}\to\cat{C}$ by 
        \begin{equation*}
            D_{X,Y}(C)= \begin{cases} X & \text{if }\cat{J} (A,C)\neq \emptyset\text, \\
                                       Y &  \text{if }\cat{J} (B,C)\neq \emptyset\text, \\
                                       X_0 & \text{otherwise,}
                          \end{cases}
        \end{equation*}
    and mapping morphisms to \id[X], \id[Y] or \id[X_0] as appropriate. Define $0_{X,Y}$ as the composite $X=D_{X,Y}(A)\to L(D_{X,Y})\to D_{X,Y}(B)=Y$. Let $f\colon Y\to Z$ be arbitrary. This induces a natural transformation $\sigma\colon D_{X,Y}\Rightarrow D_{X,Z}$ such that $\sigma_C$ is $f$ if $\cat{J} (B,C)\neq \emptyset$ and $\sigma_C=\id$ otherwise. It is clearly adjointable, so the following diagram commutes.
    \[\begin{tikzpicture}
         \matrix (m) [matrix of math nodes,row sep=2em,column sep=4em,minimum width=2em]
         {
          D_{X,Y}(A)=X & LD_{X,Y} & D_{X,Y}(B)=Y \\
         D_{X,Z}(A)=X & LD_{X,X} & D_{X,Z}(B)=Z \\};
         \path[->]
         (m-1-1) edge node [left] {$\sigma_A=\id$} (m-2-1)
                edge node [above] {$$} (m-1-2)
         (m-1-2) edge node [above] {$$} (m-1-3)
                edge node [right] {$L\sigma=(L\sigma^\dag)^\dag$} (m-2-2)
         (m-1-3) edge node [right] {$\sigma_B=f$} (m-2-3)
         (m-2-1) edge node [below] {$$} (m-2-2)
         (m-2-2) edge node [below] {$$} (m-2-3);
    \end{tikzpicture}\]
    Hence $f\circ 0_{X,Y}=0_{X,Z}$. Taking daggers shows that $0_{Y,X}f^\dag=0_{Z,Y}$ for all $X,Y$ and $f\colon Y\to Z$. Thus \cat{C} indeed has zero morphisms.

    Finally, given an arbitrary $D\colon\cat{J}\to\cat{D}$, we will show that $d_{A,B}=0_{D(A),D(B)}$ for distinct $A,B$. Define $\sigma\colon D\Rightarrow D$ by 
    \begin{equation*}
            \sigma_C = \begin{cases} 0_{D(C),D(C)} & \text{if }\cat{J} (A,C)\neq \emptyset\text, \\
                                       \id & \text{otherwise.}
                          \end{cases}
    \end{equation*}
    As $\sigma$ is adjointable, the following diagram commutes.
    \[\begin{tikzpicture}
         \matrix (m) [matrix of math nodes,row sep=2em,column sep=6em,minimum width=2em]
         {
          D(A) & LD & D(B) \\
         D(A)=X & LD & D(B) \\};
         \path[->]
         (m-1-1) edge node [left] {$\sigma_A=0$} (m-2-1)
                edge node [above] {$d_A^\dag $} (m-1-2)
         (m-1-2) edge node [above] {$d_B$} (m-1-3)
                edge node [right] {$L\sigma=(L(\sigma)^\dag)^\dag$} (m-2-2)
         (m-1-3) edge node [right] {$\sigma_B=\id $} (m-2-3)
         (m-2-1) edge node [below] {$d_A^\dag$} (m-2-2)
         (m-2-2) edge node [below] {$d_B$} (m-2-3);
    \end{tikzpicture}\]
    Thus $d_{A,B}=0_{D(A),D(B)}$.
\end{proof}

\begin{theorem}\index[symb]{$[\cat{C},\cat{D}]$, dagger functors $\cat{C}\to\cat{D}$}
  Let \cat{J} be a dagger category. A dagger category \cat{C} has a dagger limit for every dagger functor $\cat{J}\to \cat{C}$ if and only if the diagonal functor $\Delta\colon \cat{C}\to \cat{Dagcat}(\cat{J},\cat{C})$ has a dagger adjoint such that the counit is a partial isometry.
\end{theorem}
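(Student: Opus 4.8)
The plan is to lift, to the level of the whole dagger functor category, the global-limit characterisations established earlier in this section, exploiting the fact that when $\cat{J}$ is a dagger category \emph{every} natural transformation between dagger functors $\cat{J}\to\cat{C}$ is adjointable, so that $\cat{Dagcat}(\cat{J},\cat{C})$ plays the role that $[\cat{J},\cat{C}]_\dag$ played in the previous theorems. First I would record that $\Delta\colon\cat{C}\to\cat{Dagcat}(\cat{J},\cat{C})$ is itself a dagger functor: each constant functor $\Delta C$ is a dagger functor, and $(\Delta h)^\dag=\Delta(h^\dag)$ because the dagger on the functor category is computed componentwise. So the two-sided notion of a dagger adjoint makes sense here.

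For the forward implication, assume \cat{C} has dagger-shaped limits and set $L(D)=\dlim D$ for each dagger functor $D$. As a dagger limit is in particular a limit, $L$ is the usual right adjoint to $\Delta$, acting on a natural transformation $\sigma$ by the induced map of limits. By Corollary~\ref{cor:mapoflimsismapofcolims}, applied to the (automatically adjointable) $\sigma$, the map of limits induced by $\sigma$ agrees with the map of colimits induced by $\sigma\colon\dag\circ D\to\dag\circ E$; applying this to $\sigma^\dag$ yields $L(\sigma^\dag)=L(\sigma)^\dag$, so $L$ is a dagger functor and $\Delta\dashv L$ is a dagger adjunction. Finally, the components of the counit $\varepsilon_D$ are precisely the legs $l_A\colon L(D)\to D(A)$ of the limiting cone, which are partial isometries by normalization; since the dagger of $\cat{Dagcat}(\cat{J},\cat{C})$ is componentwise, $\varepsilon_D$ is a partial isometry, as required.

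For the converse, suppose $\Delta$ has a dagger adjoint $L$ whose counit $\varepsilon$ is a partial isometry. For any dagger functor $D$, the object $L(D)$ is a limit with cone $d_A=(\varepsilon_D)_A$, and the componentwise computation of dagger and composition shows each $d_A$ is a partial isometry, giving normalization. It remains to prove independence, that is, that $d_A^\dag d_A$ commutes with $d_B^\dag d_B$; by Example~\ref{ex:daggershaped} the weakly initial class is irrelevant for a dagger functor, so I would check this for $\Omega$ a set of representatives of the connected components of \cat{J}. When $A$ and $B$ lie in the same component, $d_A^\dag d_A=d_B^\dag d_B$ by Example~\ref{ex:daggershaped}, so the projections coincide and commute.

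The main work, and the expected obstacle, is the case of $A$ and $B$ in distinct components, which I would treat by the zero-morphism argument appearing in the proof of the preceding theorem. The key point is that, $L$ being a genuine dagger functor, one has $L(\sigma)=(L(\sigma^\dag))^\dag$ for every $\sigma$; combined with the defining property of the induced map of limits, this makes the relevant naturality squares commute \emph{without} presupposing independence, so no circularity arises. Concretely, fixing $A,B$ in distinct components and an auxiliary object $X_0$, the diagram $D_{X,Y}$ that is constant $X$ on the component of $A$, constant $Y$ on that of $B$, and constant $X_0$ elsewhere is a dagger functor; the composite $d_B d_A^\dag$ defines a morphism $0_{X,Y}\colon X\to Y$, and running the transformation induced by an arbitrary $f\colon Y\to Z$ through these squares gives $f\circ 0_{X,Y}=0_{X,Z}$ and, dually, $0_{Y,X}\circ f^\dag=0_{Z,Y}$, so that \cat{C} acquires zero morphisms. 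A second instance of the same argument, applied to the self-transformation of $D$ that is zero on the component of $A$ and the identity elsewhere, then yields $d_B d_A^\dag=0$ for the diagram at hand. Taking daggers gives $d_A d_B^\dag=0$ as well, whence $d_A^\dag d_A\,d_B^\dag d_B=0=d_B^\dag d_B\,d_A^\dag d_A$, which establishes independence and completes the proof.
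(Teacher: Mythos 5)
Your proposal is correct and follows essentially the same route as the paper: normalization from the counit being a partial isometry, independence within a connected component via Example~\ref{ex:daggershaped}, and the zero-morphism argument (imported from the preceding theorem's proof) to get $d_Bd_A^\dag=0$ across distinct components. The paper simply compresses the cross-component case into ``as in the previous proof,'' whereas you spell out the adaptation — including the useful observation that $L(\sigma)=(L(\sigma^\dag))^\dag$ makes the relevant squares commute without presupposing independence.
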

\begin{proof} 
  The implication from left to right is straightforward once one remembers from Example~\ref{ex:daggershaped} that for any dagger limit $(L,\{l_A\}_{A\in\cat{J}})$ of a dagger-shaped diagram, every $l_A$ is a partial isometry.

  For the other direction, the dagger adjoint to the diagonal clearly gives a limit for each dagger functor $\cat{J}\to \cat{C}$. It remains to verify that they are all dagger limits. The counit being a partial isometry implies the normalization condition, so it suffices to check independence. If \cat{J} is connected, this is trivial. If \cat{J} is not connected, then, as in the previous proof, \cat{C} has zero morphisms and for any diagram $D$ we have $d_{A,B}=0_{D(A),D(B)}$ whenever $A$ and $B$ are in different components of \cat{J}.
\end{proof}

\section{Dagger adjoint functors}\label{sec:daft}

When dealing with dagger limits and dagger functors preserving them, the obvious question arises when there exists a dagger adjoint. That is, is there a dagger version of the adjoint functor theorem? 

Simply replacing limits with dagger limits in any standard proof of the adjoint functor theorem~\cite{mitchell} doesn't quite get there. If $\cat{C}$ has all dagger products and dagger equalizers, it must be indiscrete by Theorem~\ref{thm:dagcompleteimpliesindiscrete}. Hence any continuous functor $\cat{C} \to \cat{D}$ satisfying the solution set condition vacuously has an adjoint. A more interesting dagger adjoint functor theorem must therefore work with a finitely based complete category $\cat{C}$ and solution sets of a finite character.

There is a further obstacle. Ordinarily, the adjoint functor theorem shows that its assumptions imply a universal arrow $\eta_A \colon A \to GF(A)$ for each object $A$, so that the desired adjoint is given by $A \mapsto F(A)$. This will not do for dagger categories, as the resulting functor $F$ need not preserve the dagger. This is essentially because universal arrows need not be defined up to unitary isomorphism.
For an example, consider the identity functor $G \colon \cat{FHilb}\to\cat{FHilb}$, and define $\eta_A\colon A\to A$ to be multiplication by $1+\dim A$. Then each $\eta_A$ is an universal arrow for $G$, defining an adjoint $F\colon \cat{FHilb}\to\cat{FHilb}$ that sends $f\colon A\to B$ to $f(1+\dim B)/(1+\dim A)$, which is not a dagger functor. Of course, this example evaporates by choosing the `correct' universal arrows. But there are more involved examples of dagger functors admitting adjoints but no dagger adjoints. 

The moral is that a dagger adjoint to $G\colon \cat{C}\to\cat{D}$ requires more than $G$-universal arrows $A\to GF(A)$ for each object. The universal arrows must fit together, in the sense that they form an adjointable natural transformation. 
Unfortunately, we do not know of conditions on solution sets guaranteeing this. 
As a first step towards a dagger adjoint functor theorem proper we provide the following theorem.

\begin{theorem}\label{thm:adjunction}
  Suppose a dagger category \cat{C} has, and a dagger functor $G \colon \cat{C} \to \cat{D}$ preserves, dagger intersections and dagger equalizers. Then $G$ has a dagger adjoint if and only if there is a dagger functor $H\colon \cat{D}\to\cat{C}$ and a natural transformation $\tau \colon \id[\cat{D}]\to GH$ such that each component of $\tau$ is weakly G-universal.
\end{theorem}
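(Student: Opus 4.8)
The plan is as follows. The forward direction is immediate: if $G$ has a dagger adjoint $F$, take $H=F$ and let $\tau$ be the unit $\id[\cat{D}]\to GF$ of the adjunction $F\dashv G$. Every component of a unit is a $G$-universal arrow, hence in particular weakly $G$-universal, and $F$ is a dagger functor by hypothesis, so this witnesses the right-hand side.

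For the converse I would cut the weakly universal $\tau_D$ down to a genuinely universal arrow using the available dagger limits, in the style of the general adjoint functor theorem. Fix an object $D$ and consider the monoid $\mathrm{End}_D=\{e\colon HD\to HD\mid Ge\circ\tau_D=\tau_D\}$. Form the dagger equalizer of each pair $(e,\id[HD])$ and take the dagger intersection of all of them; since $\cat{C}$ has dagger equalizers and dagger intersections this yields a dagger monomorphism $i_D\colon FD\rightarrowtail HD$, the joint dagger equalizer of $\mathrm{End}_D$. As $G$ preserves dagger equalizers and dagger intersections, $Gi_D$ is the corresponding joint dagger equalizer in $\cat{D}$, and since $Ge\circ\tau_D=\tau_D$ for every $e$, the arrow $\tau_D$ factors uniquely as $\tau_D=Gi_D\circ\eta_D$ for some $\eta_D\colon D\to GFD$.

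Next I would show that each $\eta_D$ is $G$-universal. Existence of factorizations is inherited from weak universality of $\tau_D$ by postcomposing with $i_D$. For uniqueness, given $h_1,h_2\colon FD\to C$ with $Gh_1\circ\eta_D=Gh_2\circ\eta_D$, take the dagger equalizer $w\colon W\rightarrowtail FD$ of $h_1,h_2$; then $\eta_D$ factors through $Gw$ as $\eta_D=Gw\circ k$, and weak universality of $\tau_D$ applied to $k$ produces $s\colon HD\to W$ with $Gs\circ\tau_D=k$. The composite $e:=i_D\circ w\circ s$ then lies in $\mathrm{End}_D$, so $e\circ i_D=i_D$; cancelling the monomorphism $i_D$ shows $w\circ(s\circ i_D)=\id[FD]$, so the dagger monic $w$ is split epic, hence unitary, and therefore $h_1=h_2$. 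Thus the $\eta_D$ assemble into an ordinary adjunction $F\dashv G$ with unit $\eta$.

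The only remaining, and genuinely dagger-theoretic, point is that $F$ is a dagger functor, and this is the main obstacle: it is exactly the requirement flagged above that the universal arrows fit into an adjointable natural transformation. Here the hypotheses pay off twice. First, because $H$ is a dagger functor and $\tau\colon\id[\cat{D}]\to GH$ is natural, $\tau$ is automatically adjointable: daggering its naturality square and using $GH(u^\dag)=(GHu)^\dag$ exhibits $\tau^\dag$ as natural. Second, uniqueness in the universal property forces the explicit formula $Fu=i_{D'}^\dag\circ Hu\circ i_D$, since the right-hand side satisfies $G(i_{D'}^\dag\, Hu\, i_D)\circ\eta_D=\eta_{D'}\circ u$ by $Gi_D\circ\eta_D=\tau_D$ and naturality of $\tau$. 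Given this formula, dagger-preservation of $F$ is equivalent to the identity $(GFu)^\dag\circ\eta_{D'}=\eta_D\circ u^\dag$, which I would verify by computing $(Fu)^\dag=i_D^\dag\circ H(u^\dag)\circ i_{D'}$, applying $Gi_{D'}\circ\eta_{D'}=\tau_{D'}$, and then using naturality of $\tau$ at $u^\dag$. Since this identity is precisely the adjointability of $\eta$, and $F,G$ are then both dagger functors with $F\dashv G$, the adjunction is a dagger adjunction. The delicate part throughout is keeping the cut-down arrows $\eta_D$ compatible with the dagger; using the joint dagger equalizer rather than an arbitrary equalizer, together with the adjointability of $\tau$ inherited from $H$ being a dagger functor, is exactly what makes this go through.
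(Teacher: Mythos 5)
Your proof is correct and follows the same overall strategy as the paper's: use the preserved dagger intersections and dagger equalizers to carve a genuine $G$-universal arrow out of the weakly universal $\tau_D$, and then read off dagger-functoriality of $F$ from its dagger-monic comparison with $H$. The one real difference is in how the subobject $FD\rightarrowtail HD$ is produced and how uniqueness of factorizations is verified. The paper takes the dagger intersection of \emph{all} dagger monomorphisms $m\colon M\to HD$ through which $\tau_D$ factors, so that $FD$ is by construction the smallest such subobject; uniqueness then follows because the equalizer of two competing factorizations would give a yet smaller subobject, forcing that equalizer to be unitary. You instead take the joint dagger equalizer of the monoid $\mathrm{End}_D=\{e\mid Ge\circ\tau_D=\tau_D\}$, and prove uniqueness by building a section of the equalizer $w$ of $h_1,h_2$ from weak universality and observing that $i_D\circ w\circ s\in\mathrm{End}_D$ must fix $i_D$. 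Both are standard variants of the solution-set trimming argument and both use exactly the hypotheses available; your version has the small bonus that the explicit formula $Fu=i_{D'}^\dag\circ Hu\circ i_D$ makes $F(u^\dag)=(Fu)^\dag$ immediate, whereas the paper routes this through Lemma~\ref{lem:daggersubfunctorsaredagger} applied to the pointwise dagger monic $\sigma\colon F\Rightarrow H$ --- which is really the same computation in disguise. No gaps.
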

\begin{proof} 
  One implication is trivial. For the other, define $F(A)$ to be a dagger intersection of all dagger monomorphisms $m\colon M\to H(A)$ for which $\tau_A$ factorizes through $G(m)$. As $G$ preserves dagger intersections, $\tau_A$ factorizes via $GF(A)\to GH(A)$, say $\tau_A=G(\sigma_A)\eta_A$. It suffices to prove that (i) $\eta_A$ is $G$-universal, so that $A\mapsto FA$ extends uniquely to a functor, and then that (ii) $F$ is a dagger functor. 

  First of all, $\eta_A$ is weakly $G$-universal since $\tau_{A}$ is so: given $f\colon A\to G(X)$, pick $h$ such that $f=G(h)\tau_A=G(h\sigma_A)\eta_A$. Moreover, if $h$ and $h'$ satisfied $G(h)\eta_A=G(h')\eta_A$, consider the equalizer $e\colon E\to F(A)$ of $h$ and $h'$. By assumption, $\eta_A$ factors through $G(e)$ and hence $\tau_A$ factors through $G(\sigma_A e)$, so that $\sigma_A e$ is already in the dagger intersection defining $F(A)$. In other words, $e$ is unitary, and hence $h=h'$. Thus $\eta_A$ is $G$-universal, and we can extend $A\mapsto F(A)$ to a functor by defining $F(f)$ as the unique map making the following square commute.
  \[\begin{tikzpicture}
    \matrix (m) [matrix of math nodes,row sep=2em,column sep=4em,minimum width=2em]
    { A & GFA \\
      B & GFB \\};
    \path[->]
    (m-1-1) edge node [left] {$f$} (m-2-1)
            edge node [above] {$\eta_A$} (m-1-2)
    (m-1-2) edge node [right] {$GF(f)$} (m-2-2)
    (m-2-1) edge node [below] {$\eta_B$} (m-2-2);	
  \end{tikzpicture}\]

  Next we show that $F$ is a dagger functor. By Lemma~\ref{lem:daggersubfunctorsaredagger} it suffices to show that $\sigma$ is a natural transformation $F\to H$. By naturality of $\tau$, the top part of
  \[\begin{tikzpicture}
    \matrix (m) [matrix of math nodes,row sep=2em,column sep=4em,minimum width=2em]
    { GFA &&& GHA \\
      A & B & GFB & GHB \\
      & GFA \\};
    \path[->]
    (m-1-1) edge node [above] {$G(\sigma_A)$} (m-1-4)
    (m-1-4) edge node [right] {$GH(f)$} (m-2-4)
    (m-2-1) edge node [above] {$f$} (m-2-2)
     		edge node [left] {$\eta_A$} (m-1-1)
         	edge node [below] {$\eta_A$} (m-3-2)
    (m-3-2) edge node [below] {$\quad GFf$} (m-2-3)
    (m-2-3) edge node [above] {$G(\sigma_B)$} (m-2-4)
    (m-2-2) edge node [above] {$\eta_B$} (m-2-3);
  \end{tikzpicture}\]
  commutes, whereas the bottom part commutes by naturality of $\eta$. As $\eta_A$ is $G$-universal, we conclude that the square 
  \[\begin{tikzpicture}
    \matrix (m) [matrix of math nodes,row sep=2em,column sep=4em,minimum width=2em]
    { FA & HA \\
	  FB & HB \\};
    \path[->]
    (m-1-1) edge node [left] {$F(f)$} (m-2-1)
            edge node [above] {$\sigma_A$} (m-1-2)
    (m-1-2) edge node [right] {$H(f)$} (m-2-2)
    (m-2-1) edge node [below] {$\sigma_B$} (m-2-2);	
  \end{tikzpicture}\]
  commutes, making $\sigma \colon F \Rightarrow H$ natural.
\end{proof}

\section{Polar decomposition}\label{sec:polar}

Polar decomposition, as standardly understood, provides a way to factor any bounded linear map between Hilbert spaces into a partial isometry and a positive morphism~\cite[Chapter 16]{halmos:problembook}. As dagger limits are defined in terms of partial isometries, one might hope that polar decomposition connects dagger limits and ordinary limits. This section explores this connection. We start by defining polar decomposition abstractly, and prove that this modified property holds in the category of Hilbert spaces. Recall that a bimorphism in a category is a morphism that is both monic and epic.

\begin{definition}\label{def:polar}\index[word]{polar decomposition}
  Let $f\colon A\to B$ be a morphism in a dagger category. A \emph{polar decomposition} of $f$ consists of two factorizations of $f$ as $f=pi=jp$,
  \[\begin{tikzpicture}
     \matrix (m) [matrix of math nodes,row sep=2em,column sep=4em,minimum width=2em]
     {
      A & A \\
      B & B \\};
     \path[->]
     (m-1-1) edge[dagger] node [left] {$p$} (m-2-1)
            edge node [above] {$i$} (m-1-2)
            edge node [right=3mm] {$f$} (m-2-2)
     (m-1-2) edge[dagger] node [right] {$p$} (m-2-2)
     (m-2-1) edge node [below] {$j$} (m-2-2);
  \end{tikzpicture}\]
  where $p$ is a partial isometry and $i$ and $j$ are self-adjoint bimorphisms. 
  A category \emph{admits polar decomposition} when every morphism has a polar decomposition.
\end{definition}

Note that if $f$ is monic then so is $p$: indeed, if $pg=ph$ then $fg=jpg=jph=fh$, whence $g=h$. Hence if $f$ is monic then $p$ is dagger monic since it is both monic and a partial isometry.

Unlike usual expositions of polar decomposition, we require $i$ and $j$ to be bimorphisms. On the other hand, we don't require them to be positive since mere self-adjointness suffices for our purposes -- most notably Theorems~\ref{thm:polarlimitsaredaggerlimits} and~\ref{thm:daglimsofisomorphicdiagrams}. However, for other purposes this definition might need to be modified and hence it should be only seen as a starting point for an abstract notion of polar decomposition.

We begin by proving that \cat{Hilb} admits polar decomposition in the above sense. Given that our notion of polar decomposition is slightly different from the usual one (merely, a single factorization as $f=pi$ with $p$ a partial isometry and $i$ positive), there is some work to do. First, recall that if $f$ is a bounded linear map, i.e. a morphism of \cat{Hilb}, then $\abs{f}$ is the (unique) positive square root of $f^\dag f$. 


\begin{lemma}\label{lem:absolutevaluecommutes} For any morphism $f$ in \cat{Hilb} we have $\abs{f^\dag}f=f\abs{f}$.
\end{lemma}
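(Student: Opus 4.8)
The plan is to reduce the claim to a purely algebraic identity relating powers of $f^\dag f$ and $ff^\dag$, and then to promote that identity to the square roots via the continuous functional calculus.

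First I would establish, by induction on $n$, the identity
\[
  f\,(f^\dag f)^n = (ff^\dag)^n\,f \qquad (n \geq 0).
\]
The case $n=0$ is trivial, and the inductive step is just reassociation:
\[
  f\,(f^\dag f)^{n+1} = \bigl(f\,(f^\dag f)^n\bigr)f^\dag f = (ff^\dag)^n f f^\dag f = (ff^\dag)^{n+1} f.
\]
By linearity this immediately gives $f\,p(f^\dag f) = p(ff^\dag)\,f$ for every polynomial $p$ with complex coefficients.

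Next I would recall that $f^\dag f$ and $ff^\dag$ are positive self-adjoint operators with $\norm{f^\dag f} = \norm{ff^\dag} = \norm{f}^2$, so their spectra both lie in the compact interval $[0,\norm{f}^2]$. Since $\abs{f}$ and $\abs{f^\dag}$ are by definition the positive square roots of $f^\dag f$ and $ff^\dag$ respectively, they are obtained by applying the function $t\mapsto\sqrt{t}$ to these operators through the continuous functional calculus. By the Weierstrass approximation theorem there is a sequence of polynomials $p_k$ converging uniformly to $\sqrt{\cdot}$ on $[0,\norm{f}^2]$, and continuity of the functional calculus then yields $p_k(f^\dag f) \to \abs{f}$ and $p_k(ff^\dag) \to \abs{f^\dag}$ in operator norm.

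Finally, I would pass to the limit in the polynomial identity $f\,p_k(f^\dag f) = p_k(ff^\dag)\,f$. As left- and right-multiplication by $f$ are norm-continuous, the two sides converge to $f\abs{f}$ and $\abs{f^\dag}f$ respectively, giving the desired equation $\abs{f^\dag}f = f\abs{f}$. The only genuinely nontrivial ingredient is the functional-calculus step, \ie justifying that uniform polynomial approximation of $\sqrt{\cdot}$ on the joint spectral interval transfers to norm-convergence of the operators $p_k(f^\dag f)$ and $p_k(ff^\dag)$; this is the standard spectral-theoretic fact for bounded self-adjoint operators, and is the one place where the otherwise elementary argument leans on analysis specific to \cat{Hilb}.
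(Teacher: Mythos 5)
Your proof is correct. The underlying idea is the same as the paper's -- approximate the square root by polynomials in $f^\dag f$ (respectively $ff^\dag$), observe that such polynomials are intertwined by $f$, and pass to a limit -- but the execution is genuinely different. The paper does not prove the general identity $f\,p(f^\dag f)=p(ff^\dag)\,f$; instead it first rescales to reduce to the non-expansive case $\norm{f}\leq 1$, and then uses the specific recursion $p_{n+1}=p_n-(p-p_n^2)/2$ (whose \emph{strong} limit is the positive square root of a positive non-expansive operator), verifying the intertwining relation $r_nf=fq_n$ for those particular iterates by induction and concluding via strong-operator continuity of composition with a fixed bounded map. Your route via the Weierstrass theorem and the continuous functional calculus buys you norm convergence and dispenses with the rescaling step, at the cost of invoking the spectral calculus as a black box; the paper's route is more self-contained, needing only the concrete construction of the square root and the (slightly more delicate) fact that pre- and post-composition with a non-expansive map is strongly continuous. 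Both are complete proofs; yours is arguably the cleaner one for a reader comfortable with the functional calculus.
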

\begin{proof} First we show that it suffices to prove the claim for non-expansive $f$, \ie  we may assume that $\norm{f}\leq 1$. Assuming the claim holds for non-expansive maps, take $f$ with $\norm{f}>1$. Define $g:=f/\norm{f}$. Now $\abs{g}=\abs{f}/\norm{f}$ since positive square roots are unique and both sides square to $f^\dag f/\norm{f}^2$. Hence $\abs{g^\dag}g=g\abs{g}$ amounts to saying that 
    \[\abs{f^\dag}f/\norm{f}^2=f\abs{f}/\norm{f}^2\]
  so that multiplying by $\norm{f}^2$ gives the result for $f$. Hence we may assume that $f$ is non-expansive.

  By (the proof of) \cite[Theorem 23.2]{bachman2012functional}, the square root of a positive non-expansive operator $p$ is the strong limit of the sequence $(p_n)_{n\in\N}$, where $p_0=0$ and $p_{n+1}:=p_n-(p-p_n^2)/2$. Applying this result to $f^\dag f$ we see that $\abs{f}$ is the strong limit of the sequence $(q_n)_{n\in\N}$ where $q_0=0$ and $q_{n+1}:=q_n-(f^\dag f-q_n^2)/2$. Similarly, $\abs{f^\dag}$ is the strong limit of the sequence $(r_n)_{n\in\N}$ defined by $r_0=0$ and $r_{n+1}:=r_n-(ff^\dag-r_n^2)/2$. Since precomposing (and postcomposing) with a non-expansive map is continuous (in the strong operator topology), to prove that $\abs{f^\dag}f=f\abs{f}$ it suffices to prove that $r_nf=fq_n$, which we do by induction. For $n=0$, both sides evaluate to $0$. Assuming the claim holds for $n$ we see that it holds for $n+1$, since
  \begin{align*}
    r_{n+1} f&=(r_n-(ff^\dag-r_n^2)/2)f \\
            &=fq_n-(ff^\dag f-fq_n^2)/2\text{ by the induction hypothesis} \\
            &=f(q_n-(f^\dag f-q_n^2)/2)=fq_{n+1}
  \end{align*}
  completing the proof.
\end{proof}

\begin{theorem}
  The category $\cat{Hilb}$ admits polar decomposition.	
\end{theorem}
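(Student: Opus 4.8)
The plan is to build on the classical polar decomposition, correcting it in two respects demanded by Definition~\ref{def:polar}: the self-adjoint factors must be bimorphisms (not merely positive), and both one-sided factorizations $f=pi$ and $f=jp$ must use the \emph{same} partial isometry $p$.

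First I would recall that for a morphism $f\colon A\to B$ in $\cat{Hilb}$ the operators $\abs{f}=(f^\dag f)^{1/2}$ and $\abs{f^\dag}=(ff^\dag)^{1/2}$ are positive, with $\ker\abs{f}=\ker f$ and $\overline{\Ima\abs{f}}=(\ker f)^\perp$. I construct the partial isometry $p\colon A\to B$ by the usual recipe: on $\Ima\abs{f}$ set $p(\abs{f}x)=fx$, which is well defined and isometric because $\norm{\abs{f}x}^2=\inprod{x}{f^\dag f x}=\norm{fx}^2$; this extends by continuity to an isometry on $(\ker f)^\perp$, and I set $p=0$ on $\ker f$. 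The result is a partial isometry satisfying $p\abs{f}=f$, vanishing on $\ker f$, and with range contained in $\overline{\Ima f}$.

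Next comes the second one-sided factorization $\abs{f^\dag}p=f$, and this is precisely where Lemma~\ref{lem:absolutevaluecommutes} is needed. From that lemma, $(\abs{f^\dag}p-f)\abs{f}=\abs{f^\dag}(p\abs{f})-f\abs{f}=\abs{f^\dag}f-f\abs{f}=0$, so $\abs{f^\dag}p-f$ vanishes on $\Ima\abs{f}$ and hence, by continuity, on its closure $(\ker f)^\perp$; on $\ker f$ both $p$ and $f$ vanish, so $\abs{f^\dag}p=f$ everywhere.

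Finally I would promote the positive factors to bimorphisms without disturbing the factorizations. Let $P_0$ be the orthogonal projection onto $\ker f$ and $Q_0$ the projection onto $\ker f^\dag=(\Ima f)^\perp$, and set $i=\abs{f}+P_0$ and $j=\abs{f^\dag}+Q_0$. Each is self-adjoint and injective---if $(\abs{f}+P_0)x=0$, splitting $x$ along $\ker f\oplus(\ker f)^\perp$ forces both summands to vanish since $\abs{f}$ is injective on $(\ker f)^\perp$---and a self-adjoint injective operator automatically has dense range, so $i$ and $j$ are bimorphisms. Because $p$ kills $\ker f$ and lands in $\overline{\Ima f}$ we have $pP_0=0$ and $Q_0p=0$, whence $pi=p\abs{f}=f$ and $jp=\abs{f^\dag}p=f$, giving the required decomposition. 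The main obstacle is exactly this bimorphism condition: the naive decomposition supplies only the positive operators $\abs{f},\abs{f^\dag}$, which fail to be monic precisely when $f$ has nontrivial kernel or non-dense range, and the point of the kernel/cokernel correction above is to repair this while Lemma~\ref{lem:absolutevaluecommutes} keeps both factorizations anchored to a single $p$.
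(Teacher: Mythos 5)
Your proof is correct, and it reorganizes the paper's argument in a way worth noting. Both proofs share the same skeleton: start from the classical factorization $f=p\abs{f}$ with $\ker p=\ker f$, and repair the positive factors by adding the projections onto $\ker f$ and $\ker f^\dag$ to get injective (hence, being self-adjoint, also epic) bimorphisms $i$ and $j$ — your $i=\abs{f}+P_0$, $j=\abs{f^\dag}+Q_0$ match the paper's construction exactly. The difference is in how the two one-sided factorizations are tied to a \emph{single} partial isometry. The paper runs the classical construction a second time on $f^\dag$ to obtain $f^\dag=q\abs{f^\dag}$ and then proves $q^\dag=p$ by an inner-product computation $\inprod{px}{y}=\inprod{x}{qy}$, split over the orthogonal decompositions of domain and codomain, with Lemma~\ref{lem:absolutevaluecommutes} entering at the last step. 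You bypass $q$ entirely: from $\abs{f^\dag}f=f\abs{f}$ you get $(\abs{f^\dag}p-f)\abs{f}=0$, so the bounded operator $\abs{f^\dag}p-f$ vanishes on the dense subspace $\Ima\abs{f}$ of $(\ker f)^\perp$ and trivially on $\ker f$, giving $\abs{f^\dag}p=f$ directly. This is shorter and avoids the case analysis; what the paper's route buys in exchange is an explicit identification of the canonical partial isometry attached to $f^\dag$ with $p^\dag$, which is a slightly stronger statement about the symmetry of the construction, though it is not needed for the theorem as stated. Both arguments lean on Lemma~\ref{lem:absolutevaluecommutes} as the essential analytic input, so neither is more elementary than the other.
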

\begin{proof}
  We modify the standard construction of a polar decomposition~\cite[Problem 134]{halmos:problembook} to satisfy Definition~\ref{def:polar}: the standard construction gives a factorization of $f\colon H\to K$ into $f=p\abs{f}$, where $p$ is a partial isometry satisfying $\ker p=\ker f$. This in fact fixes $p$ uniquely: $H$ decomposes into a direct sum $H\cong \ker f\oplus\ker f^\perp$ and since (i) $\ker f=\ker\abs{f}=\ker p$  and (ii) $\ker \abs{f}^\perp=\overline{\im \abs{f}}$ (see \eg\cite[Lemma 2.1]{buschetal:quantummeasurement}), $H$ in fact decomposes as $H\cong \overline{\im \abs{f}}\oplus \ker p$. The action of $p$ on its kernel is clear and on its orthocomplement it has to be given by continuous extension of $\abs{f}x\mapsto fx$ for $p\abs{f}=f$ to hold. Now, let $r$ be the projection onto $\ker p=\ker\abs{f}$, and set $i:=\abs{f}+r$. On $\ker \abs{f}$ $i$ acts as the identity and on its orthocompletent it acts as $\abs{f}$. Hence $i$ is positive. By construction, $\ker i=0$ so that $i$ is monic. Being self-adjoint it is also epic. Moreover, since $\ker p=\ker\abs{f}$ the factorization $f=pi$ follows from $f=p\abs{f}$.

  Similarly $f^\dag$ factors as $f^\dag=q\abs{f^\dag}$ where $q$ is defined similarly, and this factorization can be modified to obtain $f^\dag=q j$, where $j$ is a self-adjoint (in fact positive) bimorphism. Taking daggers, we have $f=jq$. 

  Hence it remains to show that $q^\dag=p$. Since the dagger in \cat{Hilb} is given by adjoints of bounded linear maps, this boils down to showing that
    \begin{equation}\label{eq:inprod}\inprod{px}{y}=\inprod{x}{qy}\end{equation}
  holds for all $x\in H$ and $y\in K$. We can now use our orthogonal decompositions $H\cong \overline{\im \abs{f}}\oplus \ker p$ and $K\cong\overline{\im \abs{f^\dag}}\oplus \ker q $ and consider separately the cases where $x$ and $y$ are in each of the summands. If $x\in \ker p$ then the left hand side of \eqref{eq:inprod} equals zero. But then $x$ is ortohogonal to $\im {f}=\im f=\im q$ so the right hand side is zero as well. Similarly, both sides equal zero when $y\in\ker q$. Thus we're left to consider the case when $x\in \overline{\im \abs{f}}$ and $y\in \overline{\im \abs{f^\dag}}$, and by continuity we may further assume $x\in \im \abs{f}$ and $y\in\im\abs{f^\dag}$. Pick $z\in H$ and $w\in K$ with $\abs{f}z=x$ and $\abs{f^\dag} w=y$. Now $px=p\abs{f}z=fz$ and $qy=q\abs{f^\dag}w=f^\dag w$, so \eqref{eq:inprod} boils down to whether 
    \[\inprod{fz}{\abs{f^\dag}w}=\inprod{\abs{f}z}{f^\dag w}\]
  Now the left hand side equals $\inprod{\abs{f^\dag}fz}{w}$  and the right hand side equals $\inprod{f\abs{f}z}{w}$ so they are equal by Lemma~\ref{lem:absolutevaluecommutes}, completing the proof.
\end{proof}

In \cat{Hilb}, we can not guarantee that $i$ and $j$ are isomorphisms in general. A good example is when $f\colon \ell^2(\N)\to\ell^2(\N)$ is defined on the $n$-th basis element $e_n$ by $f(e_n)=e_n/(n+1)$. Then the factorization above gives $\id$ as the partial isometry and $f$ as the positive bimorphism. However, $f$ does not have an inverse in \cat{Hilb} -- indeed, the ``inverse'' defined by $e_n\mapsto (n+1)e_n$ is not bounded.


Other dagger categories admitting polar decomposition include inverse categories, such as $\cat{PInj}$, and any groupoid, in which every morphism itself is already a partial isometry.


One might think that polar decomposition is an orthogonal factorization system, 
but there are several differences. First, the composition of partial isometries need not be a partial isometry, and the composition of self-adjoint bimorphisms need not be self-adjoint. Second, an isomorphism need not be a partial isometry nor self-adjoint. Third, $p$, $i$, and $j$ are not required to be unique to $f$. Fourth, the factorization $f=pi$ respects the dagger: even though one may also factor $f^\dag=qj$ and hence $f=j^\dag q$, we are additionally requiring that $p=q$.

Recall that a dagger category is unitary whenever two objects being isomorphic implies that they are also unitarily isomorphic.

\begin{proposition} 
  Dagger categories that have polar decomposition are unitary. 
\end{proposition}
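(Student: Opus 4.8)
The plan is to unpack what it means for the category to be unitary and produce the required unitary directly from a polar decomposition. Recall that the category being unitary means that whenever $A$ and $B$ are isomorphic they are also unitarily isomorphic. So I would start with an arbitrary isomorphism $f\colon A\to B$ and aim to exhibit a unitary $A\to B$. The natural candidate is the partial isometry appearing in a polar decomposition of $f$: choose factorizations $f=pi=jp$ as in Definition~\ref{def:polar}, with $p\colon A\to B$ a partial isometry and $i\colon A\to A$, $j\colon B\to B$ self-adjoint bimorphisms.

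The core of the argument is to show that this $p$ is in fact unitary, and here the two factorizations sharing the same $p$ is exactly what does the work. Since $f$ is an isomorphism it is in particular monic, and the remark immediately following Definition~\ref{def:polar} already records that $p$ is then monic (if $pg=ph$ then $fg=jpg=jph=fh$, using $f=jp$, so $g=h$). I would then run the dual argument on the other factorization: since $f$ is also epic, $p$ is epic, because from $gp=hp$ one gets $gf=gpi=hpi=hf$ using $f=pi$, whence $g=h$. Thus $p$ is a partial isometry that is simultaneously monic and epic.

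To finish, I would invoke the elementary fact recorded in the background (Section~\ref{sec:dagcats}) that a morphism is dagger monic (resp.\ dagger epic) precisely when it is both a partial isometry and monic (resp.\ epic). Being monic, $p$ is therefore an isometry, $p^\dag p=\id[A]$; being epic, it is a coisometry, $pp^\dag=\id[B]$. Hence $p$ is unitary, and it witnesses $A\cong_\dag B$, establishing unitarity of the category.

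I do not expect a genuine obstacle here: the entire content is the observation that the monicity of $f$ and the epicity of $f$ can be transferred to the single partial isometry $p$ by using the two different factorizations $f=jp$ and $f=pi$ respectively, after which a partial isometry that is both monic and epic is forced to be unitary. The only point requiring a moment's care is checking the direction of composition in each factorization so that the monic argument and the epic argument really are dual to one another; everything else is immediate from the definitions and the cited background facts.
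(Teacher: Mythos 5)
Your proof is correct and follows essentially the same route as the paper: take a polar decomposition $f=pi=jp$ of the given isomorphism and show the partial isometry $p$ is unitary using the two factorizations. The only cosmetic difference is that the paper notes $p$ has a right inverse (from $pi=f$) and a left inverse (from $jp=f$), hence is an isomorphism and therefore unitary, whereas you show $p$ is a bimorphism and invoke the background fact that a partial isometry which is monic and epic is dagger monic and dagger epic; both finishes are valid.
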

\begin{proof}
  Factor an isomorphism $f\colon A\to B$ as $f=pi=jp$ with $p$ a partial isometry and $i,j$ self-adjoint bimorphisms.  Now $pi=f$ implies that $p$ has a right inverse and $jp=f$ that it has a left inverse. Hence $p$ must be an isomorphism. Also being a partial isometry, $p \colon A \to B$ is therefore unitary.
\end{proof}



The theme of the rest of this section will be that dagger limits may be viewed as the partial isometry part of a polar decomposition of ordinary limits. 

\begin{example}\label{ex:polarlimitsaredaggerlimits}
  The theme of the rest of this section will be that dagger limits may be viewed as the partial isometry part of a polar decomposition of ordinary limits. Here are some examples to warm up to this theme.
  \begin{itemize}
    \item  	
    Let $e \colon E \to A$ be an equalizer of morphisms $f,g \colon A \to B$ in a dagger category. If $e=pi=jp$ is a polar decomposition, then $p \colon A \to B$ is a dagger equalizer of $f$ and $g$.
    Indeed, since $i$ is a bimorphism we see that $fp=gp$, so that $p$ factors through $e$ as $p=ek$ for some $k\colon A\to A$. Precomposing with $i$ we see that $e=eki$, whence $ki=\id$. Since $i$ is a bimorphism and has a left inverse, it is an isomorphism and $k=i^{-1}$. Hence $p=ei^{-1}$ is an equalizer and hence monic, and by definition $p$ is a partial isometry and hence dagger monic.%

   \item 
    A cone over a family of dagger monomorphisms $f_i \colon A_i \to B$ consists formally of a map $l_i\colon A\to A_i$ for each $i$ and of a map $f\colon A\to B$. However, the whole cone is determined by the map $f$: this is because $f_il_i=f$ implies $l_i=f_i^\dag f_i l_i=f_i^\dag f$ by dagger monicness of $f_i$. A map $f\colon A\to B$ defines a cone in this manner iff $f_i f_i^\dag f=f$ for each $i$. Now, if $f$ defines a limiting cone and $f=pi=jp$ is a polar decomposition, then $p \colon A \to B$ is a dagger intersection of $\{f_i\}$: since $i$ is a bimorphism we see that $p$ defines a cone for $f_i$ and hence factors through $f$. As in the equalizer case, this implies that $i$ is an isomorphism whence  $p=fi^{-1}$ is a pullback of monics and hence monic. Since $p$ is a partial isometry by definition it is also dagger monic.%

     \item 
     Let a projection $e=e^\dag=e^2 \colon A \to A$ be split by $f \colon B \to A$ and $g \colon A \to B$, so $e=fg$ and $gf=\id[B]$. If $f=pi=jp$ is a polar decomposition, then $p \colon B \to A$ is a dagger splitting of $e$: indeed $f$ is the equalizer of $\id$ and $e$ and hence so is $p$ by the above, so that $p$ is monic and hence dagger monic. It remains to check that $e=pp^\dag$. Now $ep=p$ so that $pp^\dag=epp^\dag$ as well. Applying the dagger to both sides of this results in $pp^\dag=pp^\dag e$.  On the other hand, $e=e^2=fge=pige=pp^\dag pige=pp^\dag fge=pp^\dag e^2=pp^\dag e$. Hence $e=pp^\dag e=pp^\dag$.
  \end{itemize}
\end{example}%

These examples are no accident, and the theorems proven below will bear out this theme. Before studying polar decompositions of more general diagrams we need the following lemma.

\begin{lemma}\label{lem:PiOfPolarCommutes}
  Let $f\colon A\to B$  be a morphism in a dagger category with polar decomposition $f=pi=jp$, and let $g\colon A\to A$ be self-adjoint. If $g$ commutes with $f^\dag f$, then it also commutes with $p^\dag p$.
\end{lemma}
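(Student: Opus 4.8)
The plan is to express everything in terms of the projection $e := p^\dag p$ and to use that $i$ is a bimorphism to cancel it at the end. Since $p$ is a partial isometry, $e = p^\dag p$ is a projection ($e^\dag = e$ and $e^2 = p^\dag(pp^\dag p) = p^\dag p = e$), and the goal is exactly to show that $g$ commutes with $e$. First I would extract two preliminary identities. Taking daggers of $f = pi = jp$ and using $i^\dag = i$, $j^\dag = j$ gives $i p^\dag = f^\dag = p^\dag j$; feeding this into $ie = i(p^\dag p) = (ip^\dag)p = (p^\dag j)p = p^\dag(pi) = (p^\dag p)i = ei$ shows that $e$ and $i$ commute. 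Consequently $f^\dag f = (pi)^\dag(pi) = i\,p^\dag p\,i = iei = i^2 e = e i^2$, and from $e i^2 = i^2 e$ one checks that $e$ acts as a two-sided unit on $T := f^\dag f$, that is $eT = Te = T$.

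The core of the argument is then a cancellation. Multiplying the hypothesis $gT = Tg$ on the left by $e$ and using $eT = T$ together with the hypothesis gives $egT = eTg = Tg = gT$, i.e. $egT = gT$; substituting $T = e i^2$ this reads $(ege)\,i^2 = (ge)\,i^2$. Symmetrically, multiplying $gT = Tg$ on the right by $e$ and using $Te = T$ gives $gT = gTe = Tge$, hence $Tg = Tge$, which with $T = i^2 e$ becomes $i^2\,(eg) = i^2\,(ege)$.

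Now I would invoke the bimorphism hypothesis on $i$: being both monic and epic, its square $i^2$ is monic and epic as well, so it may be cancelled on either side. Cancelling $i^2$ on the right in the first equation yields $ege = ge$, and cancelling $i^2$ on the left in the second yields $eg = ege$. Combining the two gives $ge = ege = eg$, which is precisely the commutation of $g$ with $e = p^\dag p$.

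The step I expect to require the right idea, rather than routine bookkeeping, is this final cancellation. In \cat{Hilb} one would argue that the support projection of $f^\dag f$ lies in the bicommutant of $f^\dag f$ and so commutes with everything that $f^\dag f$ commutes with, but such spectral reasoning is unavailable in a bare dagger category carrying no additive or analytic structure. The point is that the bimorphism requirement on $i$ and $j$ in Definition~\ref{def:polar} is exactly what supplies the needed cancellation and thereby replaces the spectral argument; in particular no enrichment, no zero morphisms, and — somewhat surprisingly — not even the self-adjointness of $g$ is actually used.
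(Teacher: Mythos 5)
Your proof is correct. Its first half is essentially the paper's argument: both hinge on the identity $f^\dag f = p^\dag p\, i^2$ (the paper reads this off a commuting diagram built from $f^\dag = p^\dag j$ and $jp = pi$; you obtain the same thing, plus the explicit commutation $p^\dag p\, i = i\, p^\dag p$, from $ip^\dag = f^\dag = p^\dag j$), and both then cancel $i^2$ on the right, using that $i$ is epic, to land on $p^\dag p\, g\, p^\dag p = g\, p^\dag p$. Where you genuinely diverge is the finish. The paper observes that the left-hand side of that equation is self-adjoint --- this is the one place the hypothesis $g = g^\dag$ is used --- so the right-hand side $g\, p^\dag p$ is self-adjoint and therefore equals $p^\dag p\, g$. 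You instead run the mirror-image computation: multiply $g f^\dag f = f^\dag f g$ on the right by $p^\dag p$, rewrite with $f^\dag f = i^2\, p^\dag p$, and cancel $i^2$ on the \emph{left} using that $i$ is monic, obtaining $p^\dag p\, g = p^\dag p\, g\, p^\dag p$ as well. The payoff of your route is a slightly stronger lemma: self-adjointness of $g$ is never invoked, so the conclusion holds for any $g$ commuting with $f^\dag f$. (In the paper's applications, Theorems~\ref{thm:polarlimitsaredaggerlimits} and~\ref{thm:daglimsofisomorphicdiagrams}, $g$ is always a positive morphism of the form $l^\dag l$, so nothing is gained there, but your observation is accurate.) Your closing remark is also on point: the bimorphism condition on $i$ in Definition~\ref{def:polar} is precisely the cancellation device that substitutes for the spectral reasoning one would use in \cat{Hilb}.
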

\begin{proof} 
  Observe that $f^\dag f = p^\dag p i i$.
  \[\begin{tikzpicture}
  	\matrix (m) [matrix of math nodes,row sep=2em,column sep=4em,minimum width=2em]
  	{
  	 A & B & A \\
  	 A & B \\
  	 A \\};
  	\path[->]
  	(m-1-1) edge node [left] {$i$} (m-2-1)
  	       edge node [above] {$f$} (m-1-2)
  	(m-1-2) edge node [above] {$f^\dag$} (m-1-3)
  	        edge node [left] {$j$} (m-2-2)
  	(m-2-2) edge[dagger] node [below] {$p^\dag$} (m-1-3)
  	(m-2-1) edge[dagger] node [above] {$p$} (m-1-2)
  	         edge node [below] {$f$} (m-2-2)
  	         edge node [left] {$i$} (m-3-1)
  	(m-3-1) edge[dagger] node [below] {$p$} (m-2-2);
  \end{tikzpicture}\]
  It follows that 
  \[
    p^\dag p g p^\dag p i i
    = p^\dag p g f^\dag f 
    =p^\dag p f^\dag f g 
    =f^\dag f g 
    =g f^\dag f 
    =g p^\dag p i i\text.
  \]   
  Because $i$ is a bimorphism, $p^\dag p g p^\dag p =g p^\dag p$.
  Since $g$ is self-adjoint, the left hand side of this equation is self-adjoint. Therefore also the right-hand side is self-adjoint. Thus $p^\dag p g  =g p^\dag p$.
\end{proof}

The next theorem roughly shows that ``polar decomposition turns based limits into dagger limits''.

\begin{theorem}\label{thm:polarlimitsaredaggerlimits}
  Let $\Omega$ be a basis of \cat{J}. Assume that $D\colon \cat{J}\to\cat{C}$ has a limit $l_A \colon L \to D(A)$ satisfying $l_A^\dag l_A l_B^\dag l_B=l_B^\dag l_Bl_A^\dag l_A$ for all $A,B\in \Omega$. If \cat{C} admits polar decomposition, $(D,\Omega)$ has a dagger limit.
\end{theorem}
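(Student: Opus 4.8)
The plan is to normalise the given limit leg-by-leg, applying polar decomposition to the legs into the basis objects, and then to recover universality by exhibiting an explicit isomorphism between the normalised cone and the original limit. First I would fix, for every $A\in\Omega$, a polar decomposition $l_A=p_Ai_A=j_Ap_A$, so that $p_A\colon L\to D(A)$ is a partial isometry while $i_A\colon L\to L$ and $j_A\colon D(A)\to D(A)$ are self-adjoint bimorphisms. Since $\Omega$ is a basis, every object $X$ of $\cat{J}$ is reached from a unique $A\in\Omega$, so I can define a candidate cone by $m_X=D(f)p_A$ for a chosen $f\colon A\to X$. This is well defined because $i_A$ is epic: if $D(f)l_A=D(g)l_A$ then $D(f)p_Ai_A=D(g)p_Ai_A$, whence $D(f)p_A=D(g)p_A$; the cone equations for $m$ then follow exactly as in the connected computation of Example~\ref{ex:daggershaped}. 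By construction $m_A=p_A$ for $A\in\Omega$, so normalization is immediate.

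For independence I would invoke Lemma~\ref{lem:PiOfPolarCommutes} twice. Writing $P_A=l_A^\dag l_A$ and $Q_A=p_A^\dag p_A$, the hypothesis says the $P_A$ pairwise commute. Applying the lemma to $l_A$ with the self-adjoint morphism $g=P_B$, which commutes with $l_A^\dag l_A=P_A$, shows that $P_B$ commutes with $Q_A$; applying it again to $l_B$ with $g=Q_A$, now known to commute with $P_B=l_B^\dag l_B$, shows that $Q_A$ commutes with $Q_B$. Since the $Q_A$ are precisely the projections $m_A^\dag m_A$, this establishes the independence axiom.

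The step I expect to be the main obstacle is universality, because when $\cat{J}$ is disconnected the separate reparametrizations $i_A$ need not assemble into a single isomorphism of $L$, so I cannot simply precompose the limit with one map. Instead I would argue directly that the canonical comparison $k\colon L\to L$ determined by $l_Xk=m_X$ is invertible. It is monic: the family $\{l_A\}_{A\in\Omega}$ is jointly monic (every object is reached from $\Omega$, and $L$ is a limit), and $l_A=j_Ap_A$ forces $\{p_A\}_{A\in\Omega}$ to be jointly monic too, so $ka=kb$ gives $m_Xa=m_Xb$, hence $p_Aa=p_Ab$ and $a=b$. To build an inverse, I note that $\{l_Ai_A\}_{A\in\Omega}$ is again a cone, since well-definedness and the cone equations only involve morphisms inside a single component, where right-multiplication by $i_A$ is harmless. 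Thus there is a unique $t\colon L\to L$ with $l_At=l_Ai_A$ for all $A\in\Omega$.

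Finally I would verify $t$ is inverse to $k$. From $j_A(p_At)=l_At=l_Ai_A=j_Ap_Ai_A=j_Al_A$ and the fact that $j_A$ is monic, cancelling $j_A$ yields $p_At=l_A$, hence $m_Xt=l_X$ for every object $X$. Therefore $l_X(kt)=m_Xt=l_X$, so $kt=\id[L]$ by uniqueness of maps into a limit, and then $ktk=k=k\,\id[L]$ together with monicity of $k$ gives $tk=\id[L]$. Thus $k$ is an isomorphism, and $(L,m)$, obtained from the limit cone $(L,l)$ by transporting along $k$, is itself a limit; combined with normalization and independence it is the required dagger limit of $(D,\Omega)$.
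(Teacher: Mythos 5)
Your argument is correct and follows essentially the same route as the paper's proof: polar-decompose the legs at the basis objects, extend to a cone using that $i_A$ is epic, obtain independence by two applications of Lemma~\ref{lem:PiOfPolarCommutes}, and invert the comparison map via the cone $l_Ai_A=j_Al_A$. The only cosmetic difference is that you close the loop with joint monicity of $\{p_A\}_{A\in\Omega}$ where the paper instead observes that $gf$ is a cone endomorphism of the limit; both are immediate.
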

\begin{proof} 
  Pick a polar decomposition $l_A = p_Ai_A=j_Ap_A$ for each $A \in \Omega$. For $B\in \cat{J}\setminus{\Omega}$, set $p_B=D(f)p_A$, where $A$ is the unique object in $\Omega$ with $\cat{J}(A,B)\neq\emptyset$. If $f,g\colon A\to B$, then 
  \[
    D(f)p_Ai_A=D(f)l_A=D(g) l_A=D(g)p_Ai_A
  \]
  and hence $D(f)p_A=D(g)p_A$. So $p_B$ is independent of the choice of $f\colon A\to B$, and $p_A \colon L \to D(A)$ forms a cone. By construction, each $p_A$ is a partial isometry whenever $A\in \Omega$. Moreover, by assumption and Lemma~\ref{lem:PiOfPolarCommutes}, $p_A^\dag p_A$ commutes with $l_B^\dag l_B$ when $A,B \in \Omega$. Then, by another application of the lemma $p_A^\dag p_A$ commutes with $p_B^\dag p_B$. 

  It remains to show that $p_A \colon L \to D(A)$ forms a limiting cone. We will establish this by proving that the unique map $f \colon L \to L$ of cones from $p_A$ to $l_A$ is an isomorphism. Thus we need to find a map $g \colon L \to L$ from $l_A$ to $p_A$ that is the inverse of $f$. That $g$ is a map of cones means that the triangle 
  \[\begin{tikzpicture}
    \matrix (m) [matrix of math nodes,row sep=2em,column sep=4em,minimum width=2em]
    {
     L & D(A) \\
     L  \\};
    \path[->]
    (m-1-1) edge node [left] {$g$} (m-2-1)
           edge node [above] {$l_A$} (m-1-2)
    (m-2-1) edge[dagger] node [below] {$p_A$} (m-1-2);
  \end{tikzpicture}\]
  commutes for each $A$ in $\cat{J}$, or equivalently for each $A\in \Omega$. Postcomposing with the bimorphisms $j_A$ we see that this is equivalent to finding $g\colon L\to L$ such that 
  \[\begin{tikzpicture}
    \matrix (m) [matrix of math nodes,row sep=2em,column sep=4em,minimum width=2em]
    {
     L& D(A) \\
     L & D(A) \\};
    \path[->]
    (m-1-1) edge node [left] {$g$} (m-2-1)
           edge node [above] {$l_A$} (m-1-2)
    (m-1-2) edge node [right] {$j_A$} (m-2-2)
    (m-2-1) edge node [below] {$l_A$} (m-2-2);       
  \end{tikzpicture}\]
  for each $A\in \Omega$. As $l_A \colon L \to D(A)$ is a limit cone, the existence of such a $g$ follows as soon as $j_A l_A \colon L \to D(A)$ with $A \in \Omega$ generates a cone. But
  \[
    j_A l_A=j_Ap_Ai_A= l_Ai_A
  \]
  and $l_Ai_A \colon L \to D(A)$ with $A \in \Omega$ obviously generates a cone. Thus we have found a cone map $g \colon L \to L$ from $l_A$ to $p_A$. It suffices to show that it is the inverse of $f$. On the one hand $fg=\id[L]$ by the universal property of the cone $l_A$. On the other hand, $p_A g f = p_A$ 
  for each $A\in \Omega$, and by postcomposing with $j_A$ we see that $gf$ is also a cone map from $l_A$ to $l_A$, and thus equal to the identity.
\end{proof}

In particular, if \cat{C} admits polar decomposition and $A_1$ and $A_2$ have a product $(A_1\times A_2,p_1,p_2)$ satisfying $p_1^\dag p_1 p_2^\dag p_2=p_2^\dag p_2 p_1^\dag p_1$ then the dagger product of $A_1$ and $A_2$ exists as well. Using polar decomposition and splittings of projections, one can also construct the dagger product of $A$ and $B$ from their biproduct, without any further conditions required from the biproduct. 

Recall that, in a category with a zero object (or more generally, zero morphisms), a biproduct of $A_1$ and $A_2$ consists of $(A_1\oplus A_2,p_1,p_2,i_1,i_2)$ where $(A_1\oplus A_2,p_1,p_1)$ is a product of $A_1$ and $A_2$, $(A_1\oplus A_2,i_1,i_2)$ is their coproduct, and moreover
\begin{align*}
  p_1i_1&=\id[A_1] \quad &p_2i_2=\id[2] \\
  p_2i_1&=0_{A_1,A_2} \quad &p_1i_2=0_{A_2,A_1}
\end{align*}
or more succinctly, 
  \[p_ni_k=\delta_{k,n}:=\begin{cases}\id[A_n]\text{ if }n=k \\
                                       0_{A_k,A_n}\text{ otherwise.} \end{cases}\]

In Section~\ref{sec:biprods} we will see how to generalize this definition, allowing us to work without assuming zero morphisms. The theorem below remains true for such generalized biproducts, as explained in Remark~\ref{rem:polarofbiprod}.

\begin{theorem}\label{thm:polarofbiprod} Let \cat{C} be a dagger category that admits polar decomposition and has dagger splittings of projections. If two objects $A_1$ and $A_2$ of \cat{C} have a biproduct, they have a dagger product as well.
\end{theorem}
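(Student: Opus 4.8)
The plan is to reduce to Theorem~\ref{thm:polarlimitsaredaggerlimits}. Having a biproduct of $A_1$ and $A_2$ equips $\cat{C}$ with zero morphisms and exhibits $(A_1\oplus A_2,p_1,p_2)$ as an ordinary product; the relevant shape is the discrete category on two objects, whose only weakly initial class $\Omega$ necessarily contains both objects, so that a dagger limit here is exactly a dagger product in the sense of Definition~\ref{def:daglim}. By Theorem~\ref{thm:polarlimitsaredaggerlimits} it would suffice to produce \emph{some} product of $A_1$ and $A_2$ whose legs satisfy the independence condition $p_1^\dag p_1\, p_2^\dag p_2 = p_2^\dag p_2\, p_1^\dag p_1$, since polar decomposition then upgrades it to a dagger product. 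The obstacle is that the biproduct's own projections need not satisfy this: in $\cat{Hilb}$ a skew (non-orthogonal) biproduct already yields non-commuting $p_j^\dag p_j$. Thus the hypothesis of Theorem~\ref{thm:polarlimitsaredaggerlimits} is not automatic, and this is precisely the point at which the extra assumption of dagger splittings of projections must be used.

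The strategy for manufacturing commuting (indeed orthogonal) legs is a Gram--Schmidt procedure carried out with polar decomposition and splittings in place of subtraction. First I would orthogonalize the first factor: polar-decompose the injection $i_1\colon A_1\to A_1\oplus A_2$ as $i_1=u_1 c_1=d_1 u_1$. Since $i_1$ is split monic, the remark following Definition~\ref{def:polar} shows its partial-isometry part $u_1$ is dagger monic, hence an isometry, with image projection $e_1=u_1 u_1^\dag$ realized as a dagger splitting. This replaces the skew subobject $\im(i_1)$ by a genuine dagger subobject unitarily isomorphic to $A_1$. Treating $i_2$ in the same way gives an isometry $u_2$, but its image need not be orthogonal to that of $u_1$, so a further correction is required.

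The heart of the argument, and the step I expect to be the main obstacle, is producing from $A_2$ an isometry orthogonal to $u_1$ whose image together with $\im(u_1)$ exhausts $A_1\oplus A_2$. Morally one projects $i_2$ onto the orthogonal complement of $e_1$ and polar-decomposes, but forming $\id-e_1$ needs additive inverses, which a bare biproduct does not provide; the role of dagger splittings of projections is exactly to realize the relevant orthocomplement as a dagger subobject without subtraction. Once orthogonal isometries $u_1,u_2$ that are jointly epic are in hand, I would set $P=A_1\oplus A_2$ and $q_j=u_j^\dag$ and verify directly that $(P,q_1,q_2)$ is a product with $q_j q_k^\dag=\delta_{jk}$, using the completeness relation $u_1 u_1^\dag+u_2 u_2^\dag=\id$ to build and to uniquely characterize mediating maps. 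Equivalently, the orthogonalized legs have commuting induced projections, so one may instead invoke Theorem~\ref{thm:polarlimitsaredaggerlimits} to finish. Either way, the delicate point is the complementation/orthogonalization, which must be performed purely through polar decomposition and splittings rather than through any ambient additive structure.
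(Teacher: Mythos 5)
Your setup is right: the biproduct's own projections need not satisfy independence, so Theorem~\ref{thm:polarlimitsaredaggerlimits} does not apply off the shelf, and some orthogonalization is needed. But your plan for that orthogonalization has a genuine gap at exactly the point you flag as the obstacle. Dagger splittings of projections split a projection you \emph{already have} into $ii^\dag$; they do not manufacture orthocomplements. To ``project $i_2$ onto the orthogonal complement of $e_1$'' you would first need the projection $\id-e_1$, and to state the completeness relation $u_1u_1^\dag+u_2u_2^\dag=\id$ you need a sum of parallel morphisms; neither is available in a bare dagger category (indeed a biproduct in the sense of Definition~\ref{def:newbiprods} only guarantees zero morphisms, not \cat{CMon}-enrichment). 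So the Gram--Schmidt step cannot be carried out with the stated hypotheses, and no construction is offered in its place.

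The paper avoids complementation entirely by a different choice of what to polar-decompose: not $i_1$ and $i_2$, but $p_1$ and $i_2$, writing $p_1=pi=jp$ and $i_2=kr=rl$ and setting $d_1:=p$, $d_2:=r^\dag$. The biproduct equation $p_1i_2=0$ then reads $j\,d_1d_2^\dag\,l=0$, and since $j,l$ are bimorphisms this forces $d_1d_2^\dag=0$; together with $d_1,d_2$ being coisometries (as $p_1$ and $i_2^\dag$ are epic) one gets $d_nd_k^\dag=\delta_{n,k}$ with no subtraction anywhere. The orthogonality is thus extracted from the biproduct equations themselves rather than imposed afterwards. The splitting hypothesis is then used for a different purpose than the one you assign it: the cone $(A_1\oplus A_2,d_1,d_2)$ need not be a product, and the paper repairs this by showing $h:=fgg^\dag f^\dag$ is a projection (where $f,g$ are the canonical comparison maps with $gf=\id$), dagger-splitting it through some $e\colon P\to A_1\oplus A_2$, and checking that $q_i:=d_ie$ gives a genuine dagger product on $P$. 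If you want to salvage your outline, the fix is to replace the Gram--Schmidt step by this observation about $p_1$ and $i_2$, and to redirect the splitting hypothesis to the projection $h$.
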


\begin{proof}
  Given two objects $A_1$ and $A_2$, take their biproduct $(A_1\oplus A_2,p_1,p_2,i_1,i_2)$.
  Assume first that we've produced a cone $q_i\colon P\to A_i$ such that $q_nq^\dag_k=\delta_{n,k}$.  As $(A_1\oplus A_2, i_1,i_2)$ is a coproduct and $(A_1\oplus A_2,p_1,p_2)$ is a product we can find unique maps $f\colon A_1\oplus A_2\to P$ and $g\colon P\to A_1\oplus A_2$ making the diagram 
  \[\begin{tikzpicture}
     \matrix (m) [matrix of math nodes,row sep=2em,column sep=4em,minimum width=2em]
     {
      A_k & & A_n \\
      A_1\oplus A_2 & P & A_1\oplus A_2 \\};
     \path[->]
     (m-1-1) edge node [left] {$i_k$} (m-2-1)
             edge node [above] {$q_k^\dag$} (m-2-2)
            edge node [above] {$\delta_{n,k}$} (m-1-3)
      (m-2-1) edge node [below] {$f$} (m-2-2) 
      (m-2-2) edge node [above] {$q_n$} (m-1-3)
              edge node [below] {$g$} (m-2-3)
      (m-2-3) edge node [right] {$p_n$} (m-1-3);
  \end{tikzpicture}\]
commute. By the universal properties of $A_1\oplus A_2$ this implies that $gf=\id$. Now, $f$ is a map of cocones $(A_1\oplus A_2,i_1,i_2)\to (P,q_1^\dag,q_2^\dag)$. We will prove that it is also a map of cones $(A_1\oplus A_2,p_1,p_2)\to (P,q_1,q_2)$. Consider the diagram 
        \[\begin{tikzpicture}
     \matrix (m) [matrix of math nodes,row sep=2em,column sep=4em,minimum width=2em]
     {
      A_1\oplus A_2 &  & P \\
      A_k &  A_1\oplus A_2  & A_n \\};
     \path[->]
     (m-2-1) edge node [left] {$i_k$} (m-1-1)
            edge node [below] {$i_k$} (m-2-2)
            edge node [above] {$q_k^\dag$} (m-1-3)
     (m-1-1) edge node [above] {$f$} (m-1-3)
     (m-1-3) edge node [right] {$q_n$} (m-2-3)
     (m-2-2) edge node [below] {$p_n$} (m-2-3);
    \end{tikzpicture}\]
The bottom part commutes since both paths equal $\delta_{n,k}$ and the top part commutes by definition of $f$. Hence the rectangle commutes, and since $i_1$ and $i_2$ are jointly epic this shows that $f$ is a map of cones. Now, $g^\dag$ is a map of cocones $(A_1,A_2,p_1^\dag,p_2^\dag)\to (P,q_1^\dag,q_2)$ so the same argument shows that $g^\dag$ is also a map of cones. Taking the dagger again, this means that $g$ is not only a map of cones $(P,q_1,q_2)\to (A_1\oplus A_2,p_1,p_2)$ but also a map of cocones $(P,q_1^\dag,q_2^\dag)\to (A_1\oplus A_2,i_1,i_2)$. These observations are true for any cone $P$ satisfying $q_nq^\dag_k=\delta_{n,k}$. As any such cone satisfies normalization and independence, it is sufficient to find a cone $P$ for which we can prove that $fg=\id[P]$ since then $(P,q_1,q_2)$ will also be a product. Finding such a $P$ is what we'll do in the remainder.

  Factorize $p_1$ as $p_1=pi=jp$ and $i_2$ as $i_2=kr=rl$. Set $d_1:=p$ and $d_2:=r^\dag$. We claim that
    \[d_nd_k^\dag=\delta_{n,k}\]
  Indeed, since $p_1$ and $i_2^\dag$ are epimorphisms the maps $d_1$ and $d_2$ are dagger epic. Moreover, $0=p_2i_1=j d_1d_2^\dag l=j0l$ whence $d_1d_2^\dag=0$ and thus $d_2d_1^\dag=0$. Hence $(A_1\oplus A_2,d_1,d_2)$ is a cone satisfying (i), so as remarked at the end of the first half of the proof, we get maps 
    \[f\colon (A_1\oplus A_2,p_1,p_2,i_1,i_2)\leftrightarrows (A_1\oplus A_2,d_1,d_2,d_1^\dag,d_2^\dag)\colon g\] 
  that are maps of cones and cocones. Taking daggers, we have maps 
    \[g^\dag\colon (A_1\oplus A_2,i_1^\dag,i_2^\dag,p_1^\dag,p_2^\dag)\leftrightarrows (A_1,A_2,d_1,d_2,d_1^\dag,d_2^\dag)\colon f^\dag\]
  that are also compatible with the (co)cone structures. Moreover, since $(A_1\oplus A_2,p_1,p_2,i_1,i_2)$ and $(A_1\oplus A_2,i_1^\dag,i_2^\dag,p_1^\dag,p_2^\dag)$ are biproducts, there is exactly one (co)cone map in both directions, and these are isomorphisms. Since 
    \[f^\dag f\colon (A_1\oplus A_2,p_1,p_2,i_1,i_2)\leftrightarrows(A_1\oplus A_2,i_1^\dag,i_2^\dag,p_1^\dag,p_2^\dag) \colon gg^\dag\]
  are (co)cone maps, these maps have to be inverses to each other. 
  Hence $h:=fg g^\dag f^\dag$ is a self-adjoint cone map $(A_1\oplus A_2,d_1,d_2)\to (A_1\oplus A_2,d_1,d_2)$ satisfying
    \[h^2=(fg g^\dag f^\dag)(fg g^\dag f^\dag)=f(g g^\dag f^\dag f)g g^\dag f^\dag=f g g^\dag f^\dag=h\]
  Thus $h$ is a projection; let $e\colon P\to (A_1\oplus A_2)$ split it and set $q_i:=d_ie$. Now  \[q_nq_k^\dag=d_n ee^\dag d_k^\dag=d_n h d_k^\dag=d_nd_k=\delta_{n,k}\]
  so we have canonical maps 
      \[\tilde{f}\colon (A_1\oplus A_2,p_1,p_2,i_1,i_2)\leftrightarrows (P,q_1,q_2,q_1^\dag,q_2^\dag)\colon \tilde{g}\] 
  and $\tilde{g}\circ\tilde{f}=\id[A_1\oplus A_2]$ holds automatically, whereas whether $\tilde{f}\circ \tilde{g}=\id[P]$ is at stake. By construction $e$ is a map of cones and hence $e^\dag$ is a map of cocones. Hence the bottom path in region (i) of
    \[\begin{tikzpicture}
     \matrix (m) [matrix of math nodes,row sep=2em,column sep=3.5em,minimum width=2em]
     {
      P & (A_1\oplus A_2,d_1,d_2) & (A_1\oplus A_2,i_1^\dag,i_2^\dag) & (A_1\oplus A_2,p_1,p_2) \\
       &  P  & (A_1\oplus A_2,d_1,d_2) \\
       && P \\};
     \path[->]
     (m-1-1) edge node [above] {$e$} (m-1-2)
             edge node [below] {$\id$} (m-2-2)
             edge[out=45,in=135,looseness=.5]  node [above] {$\tilde{g}$} (m-1-4)
     (m-1-2) edge node [above] {$f^\dag$} (m-1-3)
             edge node [left] {$e^\dag$} (m-2-2)
             edge node [above] {$h$} (m-2-3)
     (m-1-3) edge node [above] {$gg^\dag$} (m-1-4)
     (m-1-4) edge[out=-112.5,in=0,looseness=.5]  node[below] {$\tilde{f}$} (m-3-3) 
             edge node [above] {$f$} (m-2-3)
     (m-2-2) edge node [below] {$e$} (m-2-3)
             edge node [below] {$\id$} (m-3-3)
      (m-2-3) edge node [right] {$e^\dag$} (m-3-3);
      \node[gray] at (-0.6,2.5) {(i)};
      \node[gray] at (3.4,-0.15) {(ii)};
      \node[gray] at (1.1,.65) {(iii)};
    \end{tikzpicture}\]
 is a map of cones and thus equal to $\tilde{g}$, establishing commutativity of region (i). Similarly, $e^\dag f$ is a composite of two maps of cocones and hence equal to $\tilde{f}$, so that region (ii) commutes. Region (iii) commutes by definition of $h$ and the remaining triangles on the left commute by definition of $e$. Hence the whole diagram commutes, showing that $\tilde{f}\circ \tilde{g}=\id[P]$, as desired.
\end{proof}

\begin{corollary} 
  A dagger category admitting polar decomposition has finitely based dagger limits if and only if its underlying category has equalizers, intersections and 
  either (i) finite biproducts or (ii) finite products $(A_1\dots A_,p_1,\dots p_n)$ satisfying $p^\dag_i p_ip^\dag_j p_j=p^\dag_j p_jp^\dag_i p_i$. 
\end{corollary}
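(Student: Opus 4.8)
The plan is to reduce everything to Theorem~\ref{thm:finitecompleteness}, which states that a dagger category is finitely based dagger complete precisely when it has dagger equalizers, dagger intersections, and finite dagger products. So it suffices to translate these three kinds of dagger limit into the corresponding ordinary limits, invoking the standing hypothesis that $\cat{C}$ admits polar decomposition.

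For the forward direction I would not even need polar decomposition. Suppose $\cat{C}$ has finitely based dagger limits. By Theorem~\ref{thm:finitecompleteness} it then has dagger equalizers, dagger intersections, and finite dagger products. A dagger equalizer is in particular an equalizer and a dagger intersection in particular an intersection, so the underlying category has both. A finite dagger product is in particular a finite product, and by Lemma~\ref{lem:matrixcalculus} even a finite biproduct, giving condition~(i). Moreover a dagger product satisfies $p_ip_i^\dag=\id$ and $p_ip_j^\dag=0$ for $i\neq j$, whence $p_i^\dag p_ip_j^\dag p_j=0=p_j^\dag p_jp_i^\dag p_i$, so condition~(ii) holds as well.

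For the converse, assume $\cat{C}$ admits polar decomposition and its underlying category has equalizers, intersections, and at least one of~(i),~(ii). I would produce the three dagger limits required by Theorem~\ref{thm:finitecompleteness}. Dagger equalizers and dagger intersections come directly from polar-decomposing the corresponding ordinary limits, as in the first two bullets of Example~\ref{ex:polarlimitsaredaggerlimits} (equivalently, as instances of Theorem~\ref{thm:polarlimitsaredaggerlimits}). For finite dagger products I split into cases. Under~(ii), a finite product whose projections satisfy the stated commutation relations is exactly a limit of a discrete diagram $D\colon\cat{J}\to\cat{C}$ with basis $\Omega=\cat{J}$ meeting the hypothesis of Theorem~\ref{thm:polarlimitsaredaggerlimits}, which therefore delivers the dagger product. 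Under~(i), I first note that the presence of equalizers makes idempotents split (a splitting of an idempotent $e$ is an equalizer of $e$ and $\id$), so by the third bullet of Example~\ref{ex:polarlimitsaredaggerlimits} polar decomposition turns these splittings into dagger splittings of projections; Theorem~\ref{thm:polarofbiprod} then converts each binary biproduct into a binary dagger product. The nullary dagger product is handled by the terminal (equivalently zero) object, which is automatically a dagger limit of the empty diagram, and finite dagger products follow by induction on the number of factors. In either case Theorem~\ref{thm:finitecompleteness} applies.

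The main obstacle is the finite-product step under hypothesis~(i): Theorem~\ref{thm:polarofbiprod} is stated only for binary biproducts and presupposes dagger splittings of projections. The real work is therefore (a) extracting dagger splittings of projections from the existence of equalizers together with polar decomposition, and (b) bootstrapping from the binary case to arbitrary finite (and nullary) dagger products. Both are routine but must be spelled out; everything else is bookkeeping through Theorems~\ref{thm:finitecompleteness}, \ref{thm:polarlimitsaredaggerlimits}, and~\ref{thm:polarofbiprod}.
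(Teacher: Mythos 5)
Your proof is correct and follows essentially the same route as the paper, which simply cites Theorem~\ref{thm:finitecompleteness} together with Example~\ref{ex:polarlimitsaredaggerlimits} and Theorem~\ref{thm:polarofbiprod} for case~(i), and Theorems~\ref{thm:finitecompleteness} and~\ref{thm:polarlimitsaredaggerlimits} for case~(ii). You have correctly identified the one point the paper leaves implicit, namely that the dagger splittings of projections required by Theorem~\ref{thm:polarofbiprod} come from splitting idempotents via equalizers and then applying the third bullet of Example~\ref{ex:polarlimitsaredaggerlimits}.
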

\begin{proof}
  Case (i) follows from Example~\ref{ex:polarlimitsaredaggerlimits} and Theorems~\ref{thm:finitecompleteness} and \ref{thm:polarofbiprod}, whereas case (ii) follows directly from Theorems~\ref{thm:finitecompleteness} and~\ref{thm:polarlimitsaredaggerlimits}.
\end{proof}

What made the previous results work for based diagrams is that we did not need to worry about the path taken to an object. However, if $A,B\in\Omega$ are distinct and admit maps $f\colon A\to C\leftarrow B\colon g$, the equation $D(f)l_A=l_C=D(g)l_B$ need not imply that $D(f)p_A=D(g)p_B$. Hence forgetting about the bimorphisms is not possible for arbitrary diagrams, and to make it work one has to change the diagram so that joint polar decomposition becomes available. The following theorem makes precise this idea that ``polar decomposition turns limits into dagger limits of isomorphic diagrams''. Recall that a category is balanced if all bimorphisms in it are isomorphisms.

\begin{theorem}\label{thm:daglimsofisomorphicdiagrams} 
  Consider a diagram  $D\colon \cat{J}\to\cat{C}$ in a balanced dagger category $\cat{C}$ that admits polar decomposition.
  Suppose that the diagram has a limit $l_A \colon L \to D(A)$ such that $l_A^\dag l_A l_B^\dag l_B=l_B^\dag l_Bl_A^\dag l_A$ for all $A$ and $B$ in some weakly initial $\Omega$.
  There is a diagram $E$ naturally isomorphic to $D$ such that $(E,\Omega)$ has a dagger limit.
\end{theorem}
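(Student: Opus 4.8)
The plan is to follow the strategy of Theorem~\ref{thm:polarlimitsaredaggerlimits}, but with one essential modification. There, $\Omega$ was a \emph{basis}, so the partial-isometry part of a single leg could be transported along the (essentially unique) morphisms out of $\Omega$ to produce a cone. Here $\Omega$ is only weakly initial, so an object may be reached by several morphisms and the partial-isometry parts need not agree along different paths. Rather than fight this, I would alter the \emph{diagram} itself: conjugating $D$ by the self-adjoint parts of the polar decompositions absorbs exactly this discrepancy, at the cost of replacing $D$ by an isomorphic diagram $E$.

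Concretely, first choose for every object $A$ of $\cat{J}$ a polar decomposition of the limit leg, $l_A = p_A i_A = j_A p_A$, with $p_A\colon L\to D(A)$ a partial isometry and $i_A\colon L\to L$, $j_A\colon D(A)\to D(A)$ self-adjoint bimorphisms. Since $\cat{C}$ is balanced, each $j_A$ is in fact an isomorphism; this is the sole place the balancedness hypothesis enters, and it is precisely what makes the construction available. I would then define $E$ by $E(A)=D(A)$ on objects and $E(f)=j_B^{-1}\circ D(f)\circ j_A$ for $f\colon A\to B$. Routine checks show that $E$ is a functor and that $\sigma_A := j_A$ assembles into a natural isomorphism $\sigma\colon E\Rightarrow D$ (naturality reduces to $j_B\circ j_B^{-1}\circ D(f)\circ j_A = D(f)\circ j_A$).

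Next I would verify that $p_A\colon L\to E(A)$ is the desired dagger limit of $(E,\Omega)$. It is a cone for $E$, since for $f\colon A\to B$ we have $E(f)p_A = j_B^{-1}D(f)j_Ap_A = j_B^{-1}D(f)l_A = j_B^{-1}l_B = p_B$, using that $l_A$ is a cone for $D$. Because $\sigma$ is a natural isomorphism and $\sigma_A p_A = j_A p_A = l_A$ is a limit cone for $D$, transporting along $\sigma$ shows that $p_A$ is a limit cone for $E$. Normalization is immediate, as $p_A$ is a partial isometry for $A\in\Omega$ by construction.

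For independence I would invoke Lemma~\ref{lem:PiOfPolarCommutes} twice, just as in the proof of Theorem~\ref{thm:polarlimitsaredaggerlimits}. From the hypothesis that $l_A^\dag l_A$ commutes with $l_B^\dag l_B$ for $A,B\in\Omega$, the lemma (applied to the self-adjoint endomorphism $l_B^\dag l_B$ of $L$ and the polar decomposition of $l_A$) gives that $l_B^\dag l_B$ commutes with $p_A^\dag p_A$; applying it once more (to the self-adjoint $p_A^\dag p_A$ and the polar decomposition of $l_B$) yields that $p_A^\dag p_A$ commutes with $p_B^\dag p_B$. Hence $(E,\Omega)$ satisfies both normalization and independence, so $p_A$ is a dagger limit of $(E,\Omega)$, completing the proof. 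The only genuine subtlety is the coherence problem over a non-based shape described above, and it is resolved entirely by the conjugation; everything else is bookkeeping already carried out for the based case.
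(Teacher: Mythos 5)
Your proposal is correct and follows essentially the same route as the paper's proof: polar-decompose every leg, use balancedness to make the self-adjoint bimorphisms $j_A$ invertible, conjugate the diagram by them to get $E$ with cone $p_A$, and obtain independence by two applications of Lemma~\ref{lem:PiOfPolarCommutes}. No gaps.
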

\begin{proof}
  Pick a polar decomposition $l_A=p_Ai_A=j_Ap_A$ for each $A \in \cat{J}$. As \cat{C} is balanced, each $i_A$ and $j_A$ is an isomorphism, so we can define a new diagram $E\colon \cat{J}\to\cat{C}$ by $E(A)=D(A)$ on objects, and by $E(f)=j_B^{-1}D(f)j_A$ on morphisms $f\colon A\to B$. By construction $j_A \colon E(A) \to D(A)$ forms a natural isomorphism $E\Rightarrow D$. Moreover, since $l_A=j_Ap_A$, in fact $L$ is a limit of $E$ with a limiting cone $p_A \colon L \to E(A)$ of partial isometries.
  It remains to check independence. This is done as in the proof of Theorem~\ref{thm:polarlimitsaredaggerlimits}. By assumption and Lemma \ref{lem:PiOfPolarCommutes}, $p_A^\dag p_A$ commutes with $l_B^\dag l_B$ when $A,B \in \Omega$. Then, by another application of the lemma $p_A^\dag p_A$ commutes with $p_B^\dag p_B$. 
\end{proof}

\section{Commutativity of limits and colimits}\label{sec:commutativity}

In this section we investigate to what extent dagger limits commute with dagger (co)limits.


Let us start by looking at whether dagger limits commute with dagger limits.
Assume that \cat{C} has all $(\cat{J},\Omega)$-shaped dagger limits and all $(\cat{K},\Psi)$-shaped dagger limits. Then, a bifunctor $D\colon\cat{J}\times\cat{K}\to \cat{C}$ induces functors $\cat{J} \to \cat{C}$ and $\cat{K} \to \cat{C}$ defined by $j\mapsto \dlim^\Psi_k D(j,k)$ and $k\mapsto \dlim^\Omega_j D(j,k)$. Since limits commute with limits, there exists a canonical isomorphism $\dlim^\Omega_j \dlim^\Psi_k D(j,k) \simeq \dlim^\Psi_k\dlim^\Omega_j D(j,k)$ between the two limits of $D$. In keeping with dagger category theory, we would like this canonical isomorphism to be unitary. Moreover, we would like both sides to be dagger limits of $(D,\Omega\times\Psi)$. We now prove that this holds whenever \cat{J} and \cat{K} are based and \cat{C} has zero morphisms. Later we will relax this to arbitrary \cat{J} and \cat{K} under a technical condition on the bifunctor $D$, that we conjecture is in fact not necessary.

\begin{theorem}\label{thm:limscommutewithlims} 
  Let $\Omega$ and $\Psi$ be bases for \cat{J} and \cat{K}. Assume that \cat{C} has zero morphisms, all $(\cat{J},\Omega)$-shaped dagger limits, all $(\cat{K},\Psi)$-shaped dagger limits, and let $D\colon \cat{J}\times\cat{K}\to\cat{C}$ be a bifunctor. Then $\dlim^\Psi_k\dlim^\Omega_j D(j,k)$ and $\dlim^\Omega_j\dlim^\Psi_k D(j,k)$ are both dagger limits of $(D,\Omega\times\Psi)$ and hence unitarily isomorphic.
\end{theorem}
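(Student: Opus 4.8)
The plan is to reduce the statement to a single normalization-plus-orthogonality check via Proposition~\ref{prop:limswithzeros}, and to extract the essential commutativity from Lemma~\ref{lem:connectingmaps}. First note that $\Omega\times\Psi$ is again a basis for $\cat{J}\times\cat{K}$: each object $(B,B')$ is reached from a unique $A\in\Omega$ and a unique $A'\in\Psi$, hence from the unique $(A,A')\in\Omega\times\Psi$. By the symmetry of the statement in $(\cat{J},\Omega)$ and $(\cat{K},\Psi)$ it suffices to prove that $L=\dlim^\Omega_j\dlim^\Psi_k D(j,k)$ is a dagger limit of $(D,\Omega\times\Psi)$; the other iterated limit is then a dagger limit of the same pair, and Theorem~\ref{thm:daglimsunique} forces the canonical comparison to be unitary. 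Writing $l_j\colon L\to M(j)$ for the outer cone, where $M(j)=\dlim^\Psi_k D(j,k)$ with inner cone $m^j_k$, the composite $\lambda_{(j,k)}=m^j_k l_j$ is an ordinary limit of $D$ because limits commute with limits. Since \cat{C} has zero morphisms and $\Omega\times\Psi$ is a basis, Proposition~\ref{prop:limswithzeros} reduces the problem to showing (a) $\lambda_{(j,k)}$ is a partial isometry for $(j,k)\in\Omega\times\Psi$, and (b) $\lambda_{(j',k')}\lambda_{(j,k)}^\dag=0$ for distinct pairs in $\Omega\times\Psi$.

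The crux, and the step I expect to be the main obstacle, is the commutation of the projections $l_jl_j^\dag$ and $m^{j\dag}_k m^j_k$ on $M(j)$ for $j\in\Omega$, $k\in\Psi$. I would prove this by exploiting that $\Psi$ is a basis: then $\cat{K}$ is the disjoint union of the connected subcategories $\cat{K}_k$ of objects reachable from $k\in\Psi$, so the inner dagger limit decomposes as a dagger product $M(j)=\prod_{k\in\Psi}M_k(j)$ of the connected-component limits $M_k(j)$, the leg $m^j_k$ factoring as the product projection $\pi_k$ followed by the leg of $M_k(j)$ to its weakly initial object $k$. That component leg is a partial isometry by normalization, and is monic (a limiting cone over a connected based shape is determined by its value at the weakly initial object and so is jointly monic with it), hence dagger monic; therefore $m^{j\dag}_km^j_k=\pi_k^\dag\pi_k$, the projection onto the $k$-th factor. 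The transition maps $M(f)$ are block diagonal for this product, being computed componentwise as $M(f)=\prod_k M_k(f)$, so the coordinate projection $\theta^k$ with $\theta^k_j=\pi_k^\dag\pi_k$ is a natural transformation $M\Rightarrow M$, and being pointwise self-adjoint it is adjointable. Applying Lemma~\ref{lem:connectingmaps} to the outer dagger limit $L$ with $\sigma=\theta^k$ and $A=B=j$ yields exactly $l_jl_j^\dag\,\theta^k_j=\theta^k_j\,l_jl_j^\dag$, i.e. $l_jl_j^\dag$ commutes with $m^{j\dag}_km^j_k$.

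Granting this commutation, the remaining steps are short. For (a), Remark~\ref{rem:pisetc} says a composite of two partial isometries is a partial isometry once the relevant projections commute, which is precisely the commutation just established, so $\lambda_{(j,k)}=m^j_kl_j$ is a partial isometry. For (b), when $j\neq j'$ the outer orthogonality $l_{j'}l_j^\dag=0$ (Proposition~\ref{prop:limswithzeros} applied to the outer limit $(M,\Omega)$) gives $\lambda_{(j',k')}\lambda_{(j,k)}^\dag=m^{j'}_{k'}l_{j'}l_j^\dag m^{j\dag}_k=0$; when $j=j'$ but $k\neq k'$, I use $m^j_{k'}=m^j_{k'}m^{j\dag}_{k'}m^j_{k'}$ to insert the projection $m^{j\dag}_{k'}m^j_{k'}$, slide it past $l_jl_j^\dag$ by the commutation, and collapse it against $m^{j\dag}_k$ using the inner orthogonality $m^j_{k'}m^{j\dag}_k=0$ (Proposition~\ref{prop:limswithzeros} applied to the inner limit $(D(j,-),\Psi)$), obtaining $0$. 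Thus $L$ satisfies the hypotheses of Proposition~\ref{prop:limswithzeros} and is a dagger limit of $(D,\Omega\times\Psi)$; the symmetric argument together with Theorem~\ref{thm:daglimsunique} then completes the proof.
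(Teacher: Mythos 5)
Your endgame is correct: reducing to Proposition~\ref{prop:limswithzeros}, getting normalization from Remark~\ref{rem:pisetc} once the two projections on $M(j)=\dlim^\Psi_k D(j,k)$ commute, and the two-case orthogonality computation all check out. The step that does not hold up as written is the dagger-product decomposition $M(j)=\prod_{k\in\Psi}M_k(j)$. The hypotheses give you $(\cat{K},\Psi)$-shaped dagger limits, not $(\cat{K}_k,\{k\})$-shaped ones, and since $\cat{C}$ is only assumed to have zero morphisms --- no zero object, no splitting of projections --- there is no way to manufacture the connected-component limits $M_k(j)$ as objects of $\cat{C}$. (Compare Theorem~\ref{thm:finitecompleteness}, which builds a based dagger limit \emph{from} component limits whose existence is assumed separately, not the other way around.) Fortunately the decomposition is only used to conclude that $\theta^k_j=m^{j\dag}_km^j_k$ is a projection and that $j\mapsto\theta^k_j$ is natural, and both facts are available directly. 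The first holds because $m^j_k$ is a partial isometry. For the second, the legs $(m^{j'}_h)_{h\in\Psi}$ are jointly monic, so it suffices to postcompose the naturality square for $f\colon j\to j'$ with each $m^{j'}_h$: for $h=k$ both composites reduce to $D(f,k)\,m^j_k$ using $m_k m_k^\dag m_k=m_k$ on either side, and for $h\neq k$ both vanish by the orthogonality $m_h m_k^\dag=0$ of Proposition~\ref{prop:limswithzeros} applied to the inner limits. With that repair, adjointability of $\theta^k$ follows from pointwise self-adjointness plus naturality, and your application of Lemma~\ref{lem:connectingmaps} with $A=B=j$ delivers exactly the commutation you need.

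Your route also genuinely differs from the paper's proof of this theorem. The paper obtains the key commutation by taking the composite projection on the middle object, extending it to a cone over the outer diagram using zero morphisms on the unreachable components, factoring that cone through the iterated limit, and reading off self-adjointness; Remark~\ref{rem:pisetc} then finishes normalization. You instead package the inner projections into an adjointable natural endotransformation of the outer diagram and invoke Lemma~\ref{lem:connectingmaps} --- which is essentially the paper's proof of the non-based Theorem~\ref{thm:limscommutewithlims2} transplanted to the based setting, with the orthogonality relations of Proposition~\ref{prop:limswithzeros} standing in for adjointability of the bifunctor $D$. Once the naturality is justified as above, your argument is arguably tidier, and it makes transparent why the based case needs no adjointability hypothesis: the zero morphisms supply for free the naturality that adjointability supplies in general.
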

\begin{proof} 
  By symmetry, it suffices to prove the claim for $\dlim^\Psi_k\dlim^\Omega_j D(j,k)$. Since ordinary limits commute with limits, $\dlim^\Psi_k\dlim^\Omega_j D(j,k)$ is a limit of $D$. Hence we need only check normalization and independence for $(j,k)\in\Omega\times\Psi$. In the presence of zero morphisms, we can use the simpler description from Proposition~\ref{prop:limswithzeros}.

  For normalization, we start by showing that, for fixed $(j,k)\in\Omega\times \Psi$, the morphism $p_k$ defined as the composition of canonical morphisms
  \[
    \dlim^\Omega_j D(j,k)\to\dlim^\Psi_k\dlim^\Omega_j D(j,k)\to\dlim^\Omega_j D(j,k)\to D(j,k)\to \dlim^\Omega_j D(j,k)
  \]
  factors through the canonical morphism $\dlim^\Psi_k\dlim^\Omega_j D(j,k)\to \dlim^\Omega_j D(j,k)$. This is done by extending it to a cone on $\dlim^\Omega_j D(j,k)$ for the functor $\dlim^\Omega_j D(j,-)\colon\cat{K}\to\cat{C}$. For $h$ in $\cat{K}$, define $p_h\colon \dlim^\Omega_j D(j,k)\to \dlim^\Omega_j D(j,h)$ by 
  \[
    p_h = \begin{cases}
      \dlim^\Omega_j D(j,f)p_k & \text{if }\cat{K}(k,h)\neq\emptyset\text, \\
      0 & \text{otherwise.}
    \end{cases}
  \]
  To see that this defines a cone, it suffices to check that $p_h$ is independent of the choice of $f\colon k\to h$. By the universal property of $\dlim^\Omega_j D(j,h)$, we may postcompose with a projection to an arbitrary $D(i,h)$ with $i\in\Omega$ and show that the resulting morphism does not depend on the choice of $f$. This splits into two cases depending on whether $i\neq j$ or not. If $i\neq j$, then the end result is always zero, since the following diagram commutes for every $f$:
  \[\begin{tikzpicture}
    \matrix (m) [matrix of math nodes,row sep=2em,column sep=6em,minimum width=2em]
    {
     &\dlim^\Omega_j D(j,k)& \dlim^\Omega_j D(j,h) \\
    D(j,k) & D(i,k) & D(i,h) \\
    & &  D(i,h) \\};
    \path[->]
    (m-1-2) edge node [left] {$$} (m-2-2)
           edge node [above] {$\dlim^\Omega_j D(j,f)$} (m-1-3)
    (m-1-3) edge node [right] {$$} (m-2-3)
    (m-2-2) edge node [below] {$D(i,f)$} (m-2-3)
    (m-2-3) edge node [right] {$\id$} (m-3-3)
    (m-2-1) edge[out=90,in=180,looseness=.5] node [above] {$$} (m-1-2)
            edge node [above] {$0$} (m-2-2)
            edge[out=-90,in=180,looseness=.4] node [below] {$0$} (m-3-3);       
  \end{tikzpicture}\]
  In case $i=j$, we will prove that the following diagram commutes:
  \[\begin{tikzpicture}
    \matrix (m) [matrix of math nodes,row sep=2em,column sep=2em,minimum width=2em]
    {
      \dlim^\Omega_j D(j,k) & \dlim^\Psi_k \dlim^\Omega_j D(j,k) & \dlim^\Omega_j D(j,k) & D(j,k)\\
     && D(j,k) & \dlim^\Omega_j D(j,k) \\
     & \dlim^\Omega_j D(j,h) &D(j,h)&  \dlim^\Omega_j D(j,h)\\};
    \path[->]
    (m-1-1) edge node [above] {$$} (m-1-2)
    (m-1-2) edge node [above] {$$} (m-1-3)
    (m-1-3) edge node [above] {$$} (m-1-4)
            edge[out=-160,in=90] node [left=2mm] {$\dlim^\Omega_j D(j,f)$} (m-3-2)
            edge node [left] {$$} (m-2-3)
    (m-1-4) edge node [above] {$$} (m-2-4)
    (m-3-2) edge node [below] {$$} (m-3-3)
    (m-2-3) edge node [right] {$D(j,f)$} (m-3-3)
    (m-2-4) edge node [below] {$$} (m-2-3)
             edge node [right] {$\dlim^\Omega_j D(j,f)$} (m-3-4)
    (m-3-4) edge node [below] {$$} (m-3-3);
  \end{tikzpicture}\]
  The top right square commutes because $j\in\Omega$, and the rest of the diagram commutes by definition of $ \dlim^\Omega_j D(j,f)$. The path along the top is $p_h$ followed by a projection to $D(j,h)$, whereas the other path is independent of the choice of $f\colon k\to h$ since $\dlim^\Psi_k \dlim^\Omega_j D(j,k)$ is a cone for $ \dlim^\Omega_j D(j,-)$. Thus \[p_h \colon \dlim^\Omega_j D(j,k) \to \dlim^\Omega_j D(j,h)\] forms a cone. Hence it factors through $\dlim^\Psi_k\dlim^\Omega_j D(j,k)$. In particular, $p_k$ factors through $\dlim^\Psi_k\dlim^\Omega_j D(j,k)\to \dlim^\Omega_j D(j,k)$. By Remark~\ref{rem:pisetc} this implies that the following diagram of canonical morphisms commutes:
    \[\begin{tikzpicture}[font=\footnotesize]
       \matrix (m) [matrix of math nodes,row sep=2em,column sep=1.25em,minimum width=2em]
       {
        \dlim^\Omega_j D(j,k) & \dlim^\Psi_k\dlim^\Omega_j D(j,k) & \dlim^\Omega_j D(j,k) & D(j,k) & \dlim^\Omega_j D(j,k)\\
        \dlim^\Psi_k\dlim^\Omega_j D(j,k)& \dlim^\Omega_j D(j,k) & D(j,k) & \dlim^\Omega_j D(j,k) & \dlim^\Psi_k\dlim^\Omega_j D(j,k) \\};
       \path[->]
       (m-1-1) edge node [above] {$$} (m-1-2)
               edge node [left] {$$} (m-2-1)
       (m-1-2) edge node [above] {$$} (m-1-3)
       (m-1-3) edge node [above] {$$} (m-1-4)
       (m-1-4) edge node [above] {$$} (m-1-5)
       (m-2-1) edge node [below] {$$} (m-2-2)
       (m-2-2) edge node [below] {$$} (m-2-3)
       (m-2-3) edge node [below] {$$} (m-2-4)
      (m-2-4) edge node [below] {$$} (m-2-5)
      (m-2-5) edge node [right] {$$} (m-1-5);
    \end{tikzpicture}\]
    As the path along the bottom is self-adjoint, so is the top path. So the projections \[\dlim^\Omega_j D(j,k)\to \dlim^\Psi_k\dlim^\Omega_j D(j,k)\to \dlim^\Omega_j D(j,k)\] and \[\dlim^\Omega_j D(j,k)\to D(j,k)\to \dlim^\Omega_j D(j,k)\] commute. Finally, Remark~\ref{rem:pisetc} guarantees that the composition of canonical morphisms $\dlim^\Psi_k\dlim^\Omega_j D(j,k)\to \dlim^\Omega_j D(j,k)\to D(j,k)$ is a partial isometry. 

    For independence, it suffices to show for $(j,k),(i,h)\in\Omega\times\Psi$ that the composition $D(j,k)\to \dlim^\Omega_j D(j,k)\to \dlim^\Psi_k\dlim^\Omega_j D(j,k)\to \dlim^\Omega_j D(j,h)\to D(i,h)$ of canonical morphisms is zero when $(j,k)\neq (i,h)$. If $k\neq h$ this follows from the fact that $\dlim^\Omega_j D(j,k)\to \dlim^\Psi_k\dlim^\Omega_j D(j,k)\to \dlim^\Omega_j D(j,h)$ is zero. Hence we may assume that $h=k$ and consider the case $i\neq j$. As above, the projections  \[\dlim^\Omega_j D(j,k)\to \dlim^\Psi_k\dlim^\Omega_j D(j,k)\to \dlim^\Omega_j D(j,k)\] and \[\dlim^\Omega_j D(j,k)\to D(j,k)\to \dlim^\Omega_j D(j,k)\] commute. Hence the following diagram of canonical morphisms commutes:
    \[\begin{tikzpicture}
      \matrix (m) [matrix of math nodes,row sep=2em,column sep=3em,minimum width=2em]
      {
       D(j,k)& \dlim^\Omega_j D(j,k) & \dlim^\Psi_k\dlim^\Omega_j D(j,k) & \dlim^\Omega_j D(j,k) \\
       \dlim^\Omega_j D(j,k) & D(j,k)& D(j,k)& D(i,k) \\
       \dlim^\Psi_k\dlim^\Omega_j D(j,k) & \dlim^\Omega_j D(j,k) \\};
      \path[->]
      (m-1-1) edge node [left] {$$} (m-2-1)
             edge node [above] {$$} (m-1-2)
      (m-1-2) edge node [above] {$$} (m-1-3)
      (m-1-3) edge node [above] {$$} (m-1-4)
      (m-1-4) edge node [right] {$$} (m-2-4)
      (m-2-1) edge node [below] {$$} (m-2-2)
              edge node [left] {$$} (m-3-1)
      (m-2-2) edge node [right] {$$} (m-1-2)
      (m-2-3) edge node [below] {$0$} (m-2-4)
              edge node [above] {$$} (m-1-4)
      (m-3-1) edge node [below] {$$} (m-3-2)
      (m-3-2) edge node [below] {$$} (m-2-3);        
    \end{tikzpicture}\]
    This concludes the proof.
\end{proof}

Next, consider whether dagger limits commute with dagger colimits.
If \cat{C} has all $(\cat{J},\Omega)$-shaped dagger limits and all $(\cat{K},\Psi)$-shaped dagger \emph{co}limits (\ie $(\cat{K}\op,\Psi)$-shaped dagger limits), it is natural to ask when the canonical morphism 
\[\dcolim^\Psi_k\dlim^\Omega_j D(j,k)\to\dlim^\Omega_j\dcolim^\Psi_k D(j,k)\]
 is unitary. This canonical morphism $\tau$ and morphisms $\alpha_k$ are defined by making the following diagram commute for each $k$: 
\[\begin{tikzpicture}
  \matrix (m) [matrix of math nodes,row sep=2em,column sep=4em,minimum width=2em]
  {
   \dcolim^\Psi_k\dlim^\Omega_j D(j,k) &  \dlim^\Omega_j D(j,k)  & D(j,k) \\
   & \dlim^\Omega_j\dcolim^\Psi_k D(j,k)  & \dcolim^\Psi_k D(j,k) \\};
  \path[->]
  (m-1-1) edge[dashed] node [below] {$\tau$} (m-2-2)
  (m-1-2) edge node [above] {$$} (m-1-1)
  (m-1-2) edge node [above] {$$} (m-1-3)
  (m-1-3) edge node [right] {$$} (m-2-3)
  (m-1-2) edge[dashed] node [right] {$\alpha_k$} (m-2-2)
  (m-2-2) edge node [below] {$$} (m-2-3);
\end{tikzpicture}\]
A priori one might hope $\tau$ to be unitary very generally. After all, $(\cat{K},\Psi)$-shaped dagger colimits are just $(\cat{K}\op,\Psi)$-shaped dagger limits, so one might expect that commutativity of limits with colimits boils down to commutativity of limits with limits. To be slightly more precise, given $D\colon \cat{J}\times\cat{K}\to\cat{C}$, one would like to define $\hat{D}\colon \cat{J}\times\cat{K}\op\to\cat{C}$ by ``applying the dagger to the second variable'' and then calculating as follows:
\begin{align*}
  &\dcolim^\Psi_k\dlim^\Omega_j D(j,k)=\dlim^\Psi_k\dlim^\Omega_j \hat{D}(j,k) \\
  &\simeq_\dag \dlim^\Omega_j\dlim^\Psi_k \hat{D}(j,k)=\dlim^\Omega_j\dcolim^\Psi_k D(j,k)
\end{align*}
This, however, is a trap: $\hat{D}$ is not guaranteed to be a bifunctor. Indeed, the formula 
$
  \hat{D}(f,g)=D(f,\id)D(g,\id)^\dag
$
defines a bifunctor $\cat{J}\times\cat{K}\op\to\cat{C}$ if and only if every morphism $(f,g) \colon (j,k)\to (i,h)$ in $\cat{J}\times\cat{K}$ makes the following diagram commute:
\[\begin{tikzpicture}
  \matrix (m) [matrix of math nodes,row sep=2em,column sep=7em]
  {
   D(j,h)& D(i,k) \\
   D(j,k) & D(i,k) \\};
  \path[->]
  (m-1-1) edge node [left] {$D(\id,g)^\dag$} (m-2-1)
         edge node [above] {$D(f,\id)$} (m-1-2)
  (m-1-2) edge node [right] {$D(\id,g)^\dag$} (m-2-2)
  (m-2-1) edge node [below] {$D(f,\id) $} (m-2-2);       
\end{tikzpicture}\]

\begin{definition}
  A bifunctor $D \colon \cat{J} \times \cat{K} \to \cat{C}$ into a dagger category $\cat{C}$ is \emph{adjointable}\index[word]{adjointable bifunctor} when $D(-,g) \colon D(-,k) \Rightarrow D(-,h)$ is an adjointable natural transformation for each $g \colon k \to h$ in $\cat{K}$.
  Equivalently, $D$ is adjointable when $D(f,-)$ is an adjointable natural transformation for each morphism $f$ in $\cat{J}$.
\end{definition}

Let us temporarily go back to considering whether dagger limits commute with dagger limits. The extra condition of adjointability of $D$ lets us prove that this is true for arbitrary shapes $\cat{J}$ and $\cat{K}$. We conjecture that the extra condition is in fact not needed.

\begin{theorem}\label{thm:limscommutewithlims2} 
  Assume that \cat{C} has all $(\cat{J},\Omega)$-shaped dagger limits, all $(\cat{K},\Psi)$-shaped dagger limits, and let $D\colon \cat{J}\times\cat{K}\to\cat{C}$ be an adjointable bifunctor. Then $\dlim^\Psi_k\dlim^\Omega_j D(j,k)$ and $\dlim^\Omega_j\dlim^\Psi_k D(j,k)$ are both dagger limits of $(D,\Omega\times\Psi)$ and hence unitarily isomorphic.
\end{theorem}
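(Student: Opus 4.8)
The plan is to show that $M := \dlim^{\Psi}_k \dlim^{\Omega}_j D(j,k)$ is itself a dagger limit of $(D,\Omega\times\Psi)$; by symmetry the same argument will apply to $\dlim^{\Omega}_j\dlim^{\Psi}_k D(j,k)$, and then Theorem~\ref{thm:daglimsunique} forces the two to be unitarily isomorphic. Write $L(k)=\dlim^{\Omega}_j D(j,k)$ with limiting cone $l^k_j\colon L(k)\to D(j,k)$, write $m_k\colon M\to L(k)$ for the cone of the outer dagger limit, and set $n_{j,k}=l^k_j\circ m_k$ for the induced cone on $D$. Since ordinary limits commute with ordinary limits, $M$ with cone $n_{j,k}$ is already a limit of $D$, so the only remaining work is to verify normalization and independence on $\Omega\times\Psi$.

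The central device is the functor $L\colon\cat{K}\to\cat{C}$, $k\mapsto L(k)$. First I would observe that adjointability of $D$ is exactly what makes each $D(-,g)\colon D(-,k)\Rightarrow D(-,h)$ adjointable, so by Corollary~\ref{cor:mapoflimsismapofcolims} the induced map $L(g)$ is simultaneously a map of limits and of colimits, i.e.\ $l^h_j\circ L(g)=D(j,g)\circ l^k_j$ and $L(g)\circ (l^k_j)^\dag=(l^h_j)^\dag\circ D(j,g)$. These two identities show that for each fixed $j\in\Omega$ the family $\rho^k_j:=(l^k_j)^\dag l^k_j$ is a natural transformation $\rho_j\colon L\Rightarrow L$; being pointwise self-adjoint it is adjointable. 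Applying Lemma~\ref{lem:connectingmaps} to $\rho_j$ (with both dagger limits equal to $M$) yields $\rho^h_j\, m_h\, m_k^\dag=m_h\, m_k^\dag\,\rho^k_j$ for all $k,h$, and in particular, taking $h=k$, that $\rho^k_j$ commutes with the projection $m_k m_k^\dag$.

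Normalization is then immediate from Remark~\ref{rem:pisetc}: $n_{j,k}=l^k_j\circ m_k$ is a composite of the partial isometries $m_k$ (as $k\in\Psi$) and $l^k_j$ (as $j\in\Omega$), and the projections $(l^k_j)^\dag l^k_j=\rho^k_j$ and $m_k m_k^\dag$ commute, so $n_{j,k}$ is a partial isometry. For independence I would write $R_{j,k}:=n_{j,k}^\dag n_{j,k}=m_k^\dag\rho^k_j m_k$ and aim to show these projections commute. When $k=h$ this follows directly from independence of $L(k)$ (the $\rho^k_j$ commute among themselves) together with the commutation just obtained. The hard part is the cross term $k\ne h$. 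Here I would set $\pi_k=\rho^k_j\rho^k_i$, $\Pi_k=m_k^\dag\pi_k m_k$ and $P_k=m_k^\dag m_k$; the intertwining relations $m_k m_h^\dag\,\rho^h_j=\rho^k_j\,m_k m_h^\dag$ (the dagger of the connecting identity, and similarly for $i$) collapse the products to $R_{j,k}R_{i,h}=\Pi_k P_h=P_k\Pi_h$ and $R_{i,h}R_{j,k}=\Pi_h P_k$.

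The remaining obstacle is to see that $\Pi_k P_h$ is self-adjoint, which is where independence of the outer limit $M$ finally enters. Using $\Pi_h=P_h\Pi_h$ (because $\Pi_h\le P_h$), the commutation $P_kP_h=P_hP_k$ coming from independence of $M$, and the identity $P_k\Pi_h=\Pi_k P_h$, I would rewrite $P_k\Pi_h=P_h\Pi_k P_h$, which is manifestly self-adjoint. Hence $R_{j,k}R_{i,h}=P_k\Pi_h$ equals its own adjoint $\Pi_h P_k=R_{i,h}R_{j,k}$, giving independence. I expect this self-adjointness trick, rather than any attempt to prove the cross-term projections orthogonal (which would need zero morphisms, as in Theorem~\ref{thm:limscommutewithlims}), to be the crux of the argument.
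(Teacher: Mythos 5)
Your proof is correct and follows essentially the same route as the paper: both reduce to one iterated limit by symmetry, observe that $\sigma_k=(l^k_j)^\dag l^k_j$ is an adjointable natural endotransformation of $\dlim^\Omega_j D(j,-)$ because $D$ is adjointable, feed it to Lemma~\ref{lem:connectingmaps} to get the intertwining relation, and deduce normalization from Remark~\ref{rem:pisetc}. Your independence step is just a compact algebraic rendering (via the projections $P_k$, $\Pi_k$ and the self-adjointness trick) of the large commutative diagram in the paper's Figure~\ref{fig:independence}, which rests on exactly the same ingredients.
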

\begin{proof} 
  By symmetry, it suffices to prove the claim for $\dlim^\Psi_k\dlim^\Omega_j D(j,k)$. Since ordinary limits commute with limits, $\dlim^\Psi_k\dlim^\Omega_j D(j,k)$ is the limit of $D$. Hence we only need to check normalization and independence for $(j,k)\in\Omega\times\Psi$. Consider the following diagram of canonical morphisms for some morphism $f \colon k \to h$ in $\cat{K}$:
  \[\begin{tikzpicture}
    \matrix (m) [matrix of math nodes,row sep=1em,column sep=6em,minimum width=2em]
    {
    \dlim^\Omega_j D(j,k)& \dlim^\Omega_j D(j,h) \\
     D(j,k) & D(j,h) \\
     \dlim^\Omega_j D(j,k)& \dlim^\Omega_j D(j,h)\\};
    \path[->]
    (m-1-1) edge node [left] {$$} (m-2-1)
           edge node [above] {$\dlim^\Omega_j D(j,f)$} (m-1-2)
    (m-1-2) edge node [right] {$$} (m-2-2)
    (m-2-1) edge node [below] {$D(\id,f)$} (m-2-2)
            edge node [left] {$$} (m-3-1)
     (m-2-2) edge node [right] {$$} (m-3-2)
     (m-3-1) edge node [below] {$\dlim^\Omega_j D(j,f)$} (m-3-2);       
  \end{tikzpicture}\]
  The top square commutes by definition of $\dlim^\Omega_j D(j,f)$. Since $D(-,f)$ is adjointable, the bottom square commutes too by Corollary~\ref{cor:mapoflimsismapofcolims}. Hence the whole diagram commutes, and the family $\sigma_k\colon \dlim^\Omega_j D(j,k)\to D(j,k)\to \dlim^\Omega_j D(j,k)$ is natural in $k$. Since $\sigma\colon \dlim^\Omega_j D(j,-)\to \dlim^\Omega_j D(j,-)$ is natural and pointwise self-adjoint, it is an adjointable natural transformation. Lemma~\ref{lem:connectingmaps} now makes the following diagram of canonical morphisms commute for each $j \in \Omega$ and $h,k \in \Psi$:
  \begin{equation*}\label{diag:commutinglimswithlims}\tag{$\star\star$}
    \begin{aligned}\begin{tikzpicture}
         \matrix (m) [matrix of math nodes,row sep=2em,column sep=1.5em,minimum width=2em]
         {
          \dlim^\Omega_j D(j,k) & D(j,k) & \dlim^\Omega_j D(j,k) & \dlim^\Psi_k\dlim^\Omega_j D(j,k)\\
          \dlim^\Psi_k\dlim^\Omega_j D(j,k)& \dlim^\Omega_j D(j,h) & D(j,h) & \dlim^\Omega_j D(j,h) \\};
         \path[->]
         (m-1-1) edge node [above] {$$} (m-1-2)
                 edge node [left] {$$} (m-2-1)
         (m-1-2) edge node [above] {$$} (m-1-3)
         (m-1-3) edge node [above] {$$} (m-1-4)
         (m-1-4) edge node [above] {$$} (m-2-4)
         (m-2-1) edge node [below] {$$} (m-2-2)
         (m-2-2) edge node [below] {$$} (m-2-3)
         (m-2-3) edge node [below] {$$} (m-2-4);
    \end{tikzpicture}\end{aligned}
  \end{equation*}
  Choosing $k=h$ shows that the projections $\dlim^\Omega_j D(j,k)\to D(j,k)\to \dlim^\Omega_j D(j,k)$ and $\dlim^\Omega_j D(j,k)\to \dlim^\Psi_k\dlim^\Omega_j D(j,k)\to \dlim^\Omega_j D(j,k)$ commute. Remark~\ref{rem:pisetc} then guarantees that the composite $\dlim^\Psi_k\dlim^\Omega_j D(j,k)\to \dlim^\Omega_j D(j,k)\to D(j,k)$ of canonical morphisms is a partial isometry for each $(j,k)\in\Omega\times\Psi$, establishing normalization.

  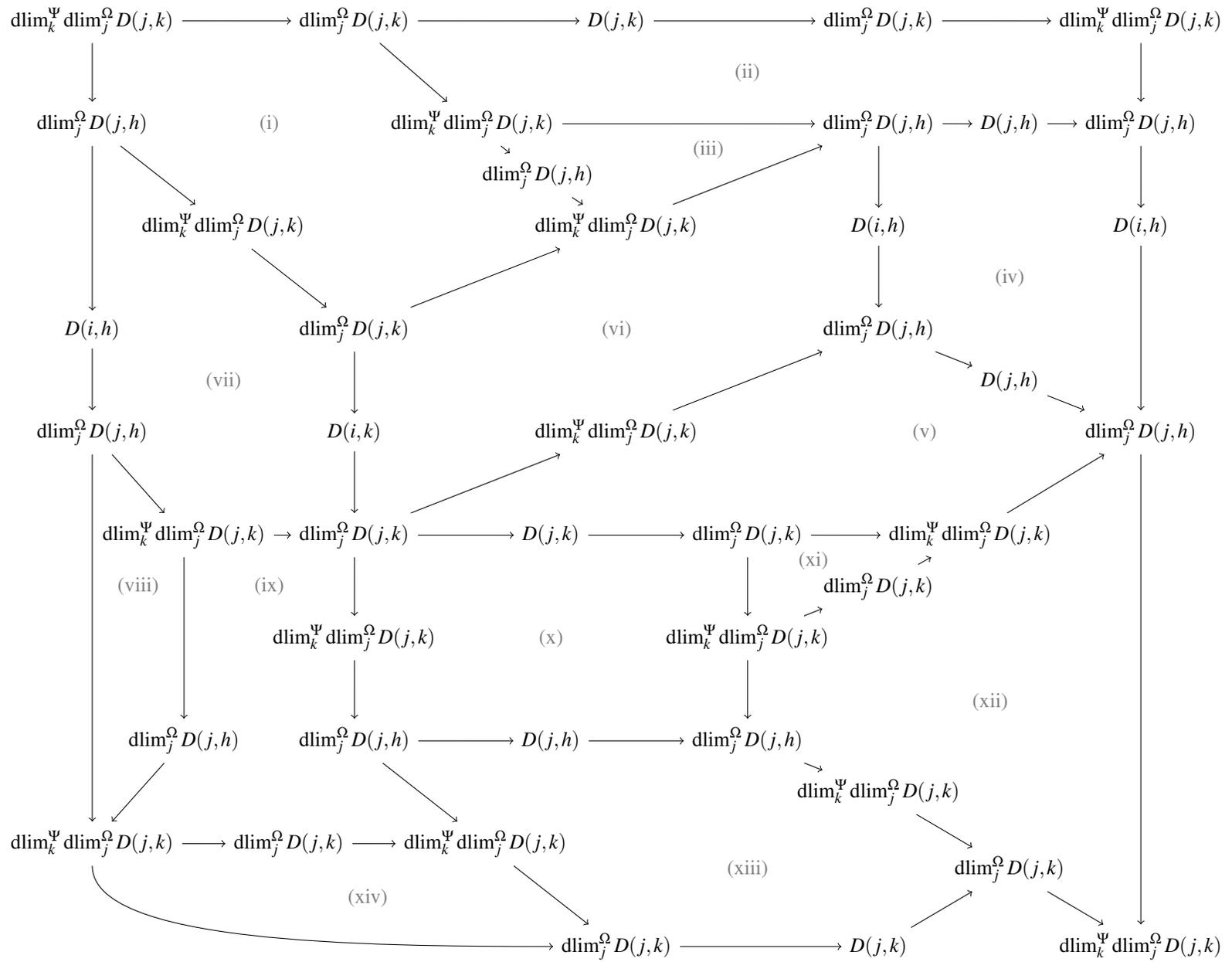
\begin{figure}\centering\begin{sideways}
      \begin{tikzpicture}[xscale=4.5, yscale=1.75, font=\footnotesize]
        \node (a1) at (1,9) {$\dlim^\Psi_k \dlim^\Omega_j D(j,k)$};
        \node (a2) at (2,9) {$\dlim^\Omega_j D(j,k)$};
        \node (a3) at (3,9) {$D(j,k)$};
        \node (a4) at (4,9) {$\dlim^\Omega_j D(j,k)$};
        \node (a5) at (5,9) {$\dlim^\Psi_k \dlim^\Omega_j D(j,k)$};
        \node (b1) at (1,8) {$\dlim^\Omega_j D(j,h)$};
        \node (b2) at (2.45,8) {$\dlim^\Psi_k \dlim^\Omega_j D(j,k)$};
        \node (b3) at (4,8) {$\dlim^\Omega_j D(j,h)$};
        \node (b4) at (4.5,8) {$D(j,h)$};
        \node (b5) at (5,8) {$\dlim^\Omega_j D(j,h)$};
        \node (c1) at (1.5,7) {$\dlim^\Psi_k \dlim^\Omega_j D(j,k)$};
        \node (c2) at (2.7,7.5) {$\dlim^\Omega_j D(j,h)$};
        \node (c3) at (3,7) {$\dlim^\Psi_k \dlim^\Omega_j D(j,k)$};
        \node (c4) at (4,7) {$D(i,h)$};
        \node (c5) at (5,7) {$D(i,h)$};
        \node (d1) at (1,6) {$D(i,h)$};
        \node (d2) at (2,6) {$\dlim^\Omega_j D(j,k)$};
        \node (d3) at (3,5) {$\dlim^\Psi_k \dlim^\Omega_j D(j,k)$};
        \node (d4) at (4,6) {$\dlim^\Omega_j D(j,h)$};
        \node (e1) at (1,5) {$\dlim^\Omega_j D(j,h)$};
        \node (e2) at (2,5) {$D(i,k)$};
        \node (e3) at (2.75,4) {$D(j,k)$};
        \node (e4) at (4.5,5.5) {$D(j,h)$};
        \node (e5) at (5,5) {$\dlim^\Omega_j D(j,h)$};
        \node (f2) at (2,4) {$\dlim^\Omega_j D(j,k)$};
        \node (f3) at (3.5,4) {$\dlim^\Omega_j D(j,k)$};
        \node (f4) at (4.35,4) {$\dlim^\Psi_k\dlim^\Omega_j D(j,k)$};
        \node (g1) at (1.35,4) {$\dlim^\Psi_k \dlim^\Omega_j D(j,k)$};
        \node (g2) at (2,3) {$\dlim^\Psi_k \dlim^\Omega_j D(j,k)$};
        \node (g3) at (3.5,3) {$\dlim^\Psi_k \dlim^\Omega_j D(j,k)$};
        \node (g4) at (4,3.5) {$\dlim^\Omega_j D(j,k)$};
        \node (h1) at (1.35,2) {$\dlim^\Omega_j D(j,h)$};
        \node (h2) at (2,2) {$\dlim^\Omega_j D(j,h)$};
        \node (h3) at (3.5,2) {$\dlim^\Omega_j D(j,h)$};
        \node (h4) at (4,1.5) {$\dlim^\Psi_k \dlim^\Omega_j D(j,k)$};
        \node (i1) at (1,1) {$\dlim^\Psi_k \dlim^\Omega_j D(j,k)$};
        \node (i2) at (2.5,1) {$\dlim^\Psi_k \dlim^\Omega_j D(j,k)$};
        \node (i3) at (2.75,2) {$D(j,h)$};
        \node (i4) at (4.5,0.75) {$\dlim^\Omega_j D(j,k)$};
        \node (j2) at (1.75,1) {$\dlim^\Omega_j D(j,k)$};
        \node (j3) at (3,0) {$\dlim^\Omega_j D(j,k)$};
        \node (j4) at (4,0) {$D(j,k)$};
        \node (j5) at (5,0) {$\dlim^\Psi_k \dlim^\Omega_j D(j,k)$};
        \draw[->] (a1) to (a2);
        \draw[->] (a2) to (a3);
        \draw[->] (a3) to (a4);
        \draw[->] (a4) to (a5);
        \draw[->] (a1) to (b1);
        \draw[->] (a2) to (b2);
        \draw[->] (a5) to (b5);
        \draw[->] (b1) to (c1);
        \draw[->] (b2) to (c2);
        \draw[->] (b2) to (b3);
        \draw[->] (b3) to (b4);
        \draw[->] (b3) to (c4);
        \draw[->] (b4) to (b5);
        \draw[->] (b5) to (c5);
        \draw[->] (b1) to (d1);
        \draw[->] (c1) to (d2);
        \draw[->] (c2) to (c3);
        \draw[->] (c3) to (b3);
        \draw[->] (c4) to (d4);
        \draw[->] (c5) to (e5);
        \draw[->] (d1) to (e1);
        \draw[->] (d2) to (e2);
        \draw[->] (d2) to (c3);
        \draw[->] (d3) to (d4);
        \draw[->] (d4) to (e4);
        \draw[->] (e4) to (e5);
        \draw[->] (e1) to (g1);
        \draw[->] (e2) to (f2);
        \draw[->] (e3) to (f3);
        \draw[->] (e1) to (i1);
        \draw[->] (f2) to (d3);
        \draw[->] (f2) to (e3);
        \draw[->] (f2) to (g2);
        \draw[->] (f3) to (f4);
        \draw[->] (f3) to (g3);
        \draw[->] (f4) to (e5);
        \draw[->] (e5) to (j5);
        \draw[->] (g1) to (f2);
        \draw[->] (g3) to (g4);
        \draw[->] (g4) to (f4);
        \draw[->] (g1) to (h1);
        \draw[->] (g2) to (h2);
        \draw[->] (g3) to (h3);
        \draw[->] (h3) to (h4);
        \draw[->] (h1) to (i1);
        \draw[->] (h2) to (i2);
        \draw[->] (h2) to (i3);
        \draw[->] (i3) to (h3);
        \draw[->] (h4) to (i4);
        \draw[->] (i1) to (j2);
        \draw[->] (j2) to (i2);
        \draw[->] (i2) to (j3);
        \draw[->] (j3) to (j4);
        \draw[->] (j4) to (i4);
        \draw[->] (i4) to (j5);
        \draw[->] (i1) to[out=-90,in=180] (j3);
        \draw[gray,draw=none] (b1) to node{(i)} (b2);
        \draw[gray,draw=none] (a2) to node{(ii)} (b5);
        \draw[gray,draw=none] (c2) to node{(iii)} (b3);
        \draw[gray,draw=none] (b3) to node{(iv)} (e5);
        \draw[gray,draw=none] (d4) to node{(v)} (f4);
        \draw[gray,draw=none] (c3) to node{(vi)} (d3);
        \draw[gray,draw=none] (d1) to node{(vii)} (e2);
        \draw[gray,draw=none] (e1) to node{(viii)} (h1);
        \draw[gray,draw=none] (g1) to node{(ix)} (g2);
        \draw[gray,draw=none] (g2) to node{(x)} (g3);
        \draw[gray,draw=none] (f3) to node{(xi)} (g4);
        \draw[gray,draw=none] (f4) to node{(xii)} (i4);
        \draw[gray,draw=none] (h4) to node{(xiii)} (j3);
        \draw[gray,draw=none] (j3) to node{(xiv)} (i1);
      \end{tikzpicture}\end{sideways}
    \caption{Proof of independence in Theorem~\ref{thm:limscommutewithlims2}.}
    \label{fig:independence}
  \end{figure}

  For independence, pick $(j,k)$ and $(i,h)$ in  $\Omega\times\Psi$. We will show that the diagram in Figure~\ref{fig:independence} commutes. The fact that $k,h\in\Psi$ and $\dlim^\Psi_k\dlim^\Omega_j D(j,k)$ are normalized and independent at $\Psi$ ensures the commutativity of regions (i), (iii), (viii). (xi), (xi),(xii) and (xiv). Similarly, $i,j\in\Omega$ implies the commutativity of region (iv). The remaining regions, namely (ii), (v), (vi), (vii), (x) and (xiii), are all instances of diagram \eqref{diag:commutinglimswithlims}.
\end{proof}

We return to considering whether dagger limits commute with dagger colimits.
If $D$ is not adjointable, the colimit of limits need not be (unitarily) isomorphic to the limit of colimits.

\begin{example}\label{ex:limsneednotcommutewithcolims} 
  Dagger kernels need not commute with dagger cokernels in \cat{FHilb} if the bifunctor $D$ is not adjointable. For a counterexample, let $\cat{J}$ and $\cat{K}$ both be the shape $f,g \colon A \rightrightarrows B$ giving rise to equalizers.
  If $D\colon\cat{J}\times \cat{K}\to \cat{FHilb}$ maps each $D(f,-)$ and $D(-,f)$ to zero, then fixing the rest of $D$ corresponds to a choice of a commuting square in \cat{FHilb}. Let $D$ be thus defined by the square:
  \[\begin{tikzpicture}
    \matrix (m) [matrix of math nodes,row sep=2em,column sep=4em,minimum width=2em]
    {
     0 &\mathbb{C} &  \\
     \mathbb{C} &  \mathbb{C} \\};
    \path[->]
    (m-1-1) edge node [left] {$0$} (m-2-1)
           edge node [above] {$0$} (m-1-2)
    (m-1-2) edge node [right] {$\id $} (m-2-2)
    (m-2-1) edge node [below] {$\id $} (m-2-2);       
  \end{tikzpicture}\]
  Taking daggers of the horizontal arrows gives a square that does not commute, so $D$ is not adjointable. 
  Now, on the one hand, first taking cokernels horizontally and then taking kernels vertically gives $\mathbb{C}$: 
  \[\begin{tikzpicture}
    \matrix (m) [matrix of math nodes,row sep=2em,column sep=4em,minimum width=2em]
    { & & \ker 0=\mathbb{C} \\
     0 &\mathbb{C} & \coker 0=\mathbb{C} \\
     \mathbb{C} &  \mathbb{C} &\coker \id[\mathbb{C}]= 0\\};
    \path[->]
    (m-1-3) edge node [right] {$\id$} (m-2-3)
    (m-2-1) edge node [left] {$0$} (m-3-1)
           edge node [above] {$0$} (m-2-2)
    (m-2-2) edge node [right] {$\id $} (m-3-2)
            edge node [above] {$\id $} (m-2-3)
    (m-2-3) edge node [right] {$0$} (m-3-3)
    (m-3-1) edge node [below] {$\id $} (m-3-2)
    (m-3-2) edge node [below] {$0$} (m-3-3);       
  \end{tikzpicture}\]
  On the other hand, first taking kernels vertically and then taking cokernels horizontally gives $0$:
  \[\begin{tikzpicture}
         \matrix (m) [matrix of math nodes,row sep=2em,column sep=4em,minimum width=2em]
         {\ker 0=0 & \ker \id[\mathbb{C}]=0 &\coker 0=0 \\
          0 &\mathbb{C} \\
          \mathbb{C} &  \mathbb{C} \\};
         \path[->]
         (m-1-1) edge node [left] {$0$} (m-2-1)
                 edge node [above] {$0$} (m-1-2)
         (m-1-2) edge node [right] {$0$} (m-2-2)
                 edge node [above] {$0$} (m-1-3)
         (m-2-1) edge node [left] {$0$} (m-3-1)
                edge node [above] {$0$} (m-2-2)
         (m-2-2) edge node [right] {$\id $} (m-3-2)
         (m-3-1) edge node [below] {$\id $} (m-3-2);       
  \end{tikzpicture}\]
  Thus $\dlim_j\dcolim_k D(j,k)= \mathbb{C}$ is not isomorphic to $\dcolim_k \dlim_j D(j,k) = 0$, let alone unitarily so.
\end{example}

However, $D$ not being adjointable is the only obstruction to dagger limits commuting with dagger colimits.

\begin{theorem}\label{thm:limsandcolimscommute} 
  If \cat{C} has all $(\cat{J},\Omega)$-shaped dagger limits, all $(\cat{K},\Psi)$-shaped dagger colimits, and $D\colon \cat{J}\times\cat{K}\to\cat{C}$ is an adjointable bifunctor, then the canonical morphism 
  $\dcolim^\Psi_k\dlim^\Omega_j D(j,k)\to\dlim^\Omega_j\dcolim^\Psi_k D(j,k)$ is unitary.
\end{theorem}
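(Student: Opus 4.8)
The plan is to reduce the statement to the limit-limit commutativity already proved in Theorem~\ref{thm:limscommutewithlims2}. Since $D$ is adjointable, the assignment $\hat{D}(j,k)=D(j,k)$ on objects and $\hat{D}(f,g)=D(f,\id)\circ D(\id,g)^\dag$ on morphisms is a genuine bifunctor $\cat{J}\times\cat{K}\op\to\cat{C}$: the square displayed just before the definition of adjointable bifunctor is exactly the coherence needed for functoriality, and it commutes precisely because $D$ is adjointable. Moreover $\hat{D}$ is itself adjointable, since its transition maps in the $\cat{K}\op$-direction are $\hat{D}(\id,g)=D(\id,g)^\dag$, the componentwise daggers of the adjointable transformations $D(-,g)$, and the dagger of an adjointable transformation is again adjointable. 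Hence Theorem~\ref{thm:limscommutewithlims2} applies to $\hat{D}$.

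First I would identify the two iterated expressions in the statement with iterated dagger limits of $\hat{D}$. On the one hand $\hat{D}(-,k)=D(-,k)$ as diagrams $\cat{J}\to\cat{C}$, so $\dlim^\Omega_j\hat{D}(j,k)=\dlim^\Omega_j D(j,k)$ on the nose, with the same cone into $D(j,k)$. On the other hand $\hat{D}(j,-)$ is the dagger transpose of $D(j,-)$, so $\dlim^\Psi_k\hat{D}(j,k)$ is precisely the dagger colimit $\dcolim^\Psi_k D(j,k)$. The one genuinely functorial point is that the diagram $k\mapsto\dlim^\Omega_j\hat{D}(j,k)$ over $\cat{K}\op$ is the dagger transpose of $k\mapsto\dlim^\Omega_j D(j,k)$ over $\cat{K}$: by Corollary~\ref{cor:mapoflimsismapofcolims}, the map of limits induced by the adjointable $D(-,g)$ has dagger equal to the map of limits induced by $D(-,g)^\dag=\hat{D}(-,g)$. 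Consequently $\dlim^\Psi_k\dlim^\Omega_j\hat{D}(j,k)=\dcolim^\Psi_k\dlim^\Omega_j D(j,k)$ and $\dlim^\Omega_j\dlim^\Psi_k\hat{D}(j,k)=\dlim^\Omega_j\dcolim^\Psi_k D(j,k)$, including their canonical cones.

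By Theorem~\ref{thm:limscommutewithlims2} both $\dlim^\Psi_k\dlim^\Omega_j\hat{D}(j,k)$ and $\dlim^\Omega_j\dlim^\Psi_k\hat{D}(j,k)$ are dagger limits of $(\hat{D},\Omega\times\Psi)$, so by Theorem~\ref{thm:daglimsunique} the canonical isomorphism $c$ of their limit cones is unitary. It therefore remains to check that $c$ coincides, under the identifications above, with the morphism $\tau$ of the statement. This is the step I expect to be the main obstacle: $\tau$ is built from the universal properties of an ordinary colimit-of-limits, whereas $c$ is built from iterated limits of $\hat{D}$, and a priori $\tau$ is not even known to be invertible. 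The clean route is to verify that $c$ satisfies the defining equations of $\tau$ and the $\alpha_k$, and then invoke uniqueness. Concretely, precomposing $c$ with the $k$-th colimit coprojection into $\dcolim^\Psi_k\dlim^\Omega_j D(j,k)$ and chasing the identifications of the previous paragraph, one sees that the restriction of $c$ to $\dlim^\Omega_j D(j,k)$ agrees with the comparison $\alpha_k\colon\dlim^\Omega_j D(j,k)\to\dlim^\Omega_j\dcolim^\Psi_k D(j,k)$. Since $\tau$ is the unique map satisfying this for every $k$, we conclude $\tau=c$, so $\tau$ is unitary. In particular $\tau$ is an isomorphism, which, as Example~\ref{ex:limsneednotcommutewithcolims} shows, genuinely uses adjointability of $D$.
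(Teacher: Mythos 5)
Your proposal is correct and follows essentially the same route as the paper: form the adjointable bifunctor $\hat{D}(f,g)=D(f,\id)\circ D(\id,g)^\dag$, apply Theorem~\ref{thm:limscommutewithlims2} to obtain a unitary comparison between the two iterated dagger limits of $\hat{D}$, and then identify that unitary with the canonical morphism $\tau$ by checking it satisfies the defining diagram of $\tau$ and the $\alpha_k$. The paper carries out your final "chase the identifications" step explicitly by showing the limit-cone legs $\dlim^\Omega_j D(j,k)\to D(j,k)$ form an adjointable natural transformation (via Corollary~\ref{cor:mapoflimsismapofcolims}) so that Lemma~\ref{lem:connectingmaps} applies, which is exactly the ingredient you invoke.
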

\begin{proof} 
  Because $D$ is adjointable, $\hat{D}(f,g)=D(f,\id)D(g,\id)^\dag$ defines an adjointable bifunctor $\hat{D} \colon \cat{J}\times\cat{K}\op\to\cat{C}$. It follows from Theorem~\ref{thm:limscommutewithlims2} that
  \begin{align*}
    \dcolim^\Psi_k\dlim^\Omega_j D(j,k)=\dlim^\Psi_k\dlim^\Omega_j \hat{D}(j,k) \\
    \simeq_\dag \dlim^\Omega_j\dlim^\Psi_k \hat{D}(j,k)=\dlim^\Omega_j\dcolim^\Psi_k D(j,k)
  \end{align*}
  Thus there exists a unitary morphism $u$ making the following diagram commute:
  \[\begin{tikzpicture}
     \matrix (m) [matrix of math nodes,row sep=2em,column sep=4em,minimum width=2em]
     {
      \dcolim^\Psi_k\dlim^\Omega_j D(j,k) &&  \dlim^\Omega_j D(j,k) \\
      \dlim^\Omega_j\dcolim^\Psi_k D(j,k) & \dcolim^\Psi_k D(j,k) & \dcolim^\Psi_k D(j,k) \\};
     \path[->]
     (m-1-1) edge[dashed] node [left] {$u$} (m-2-1)
            edge node [above] {$$} (m-1-3)
     (m-1-3) edge node [above] {$$} (m-2-3)
     (m-2-1) edge node [below] {$$} (m-2-2)
     (m-2-2) edge node [below] {$$} (m-2-3);
  \end{tikzpicture}\]
  We will show that $u$ satisfies the commuting diagram that defines the canonical morphism $\tau \colon \dcolim^\Psi_k\dlim^\Omega_j D(j,k)\to\dlim^\Omega_j\dcolim^\Psi_k D(j,k)$, whence $u=\tau$. Postcomposing with the canonical morphism $\dlim^\Omega_j\dcolim^\Psi_k D(j,k)\to \dcolim^\Psi_k D(j,k)\to D(j,k)$, this follows from commutativity of the following diagram:
  \[\begin{tikzpicture}
    \matrix (m) [matrix of math nodes,row sep=2.5em,column sep=4em,minimum width=2em]
    {
     \dlim^\Omega_j\dcolim^\Psi_k D(j,k) && \dcolim^\Psi_k D(j,k) \\
     \dcolim^\Psi_k\dlim^\Omega_j D(j,k) & \dlim^\Omega_j D(j,k) & D(j,k)\\
     \dlim^\Omega_j D(j,k) &D(j,k) & \dcolim^\Psi_k D(j,k) \\
     & \dlim^\Omega_j\dcolim^\Psi_k D(j,k) \\};
    \path[->]
    (m-1-1) edge node [above] {$$} (m-1-3)
    (m-1-3) edge node [right] {$$} (m-2-3)
     (m-2-1) edge node [left] {$u$} (m-1-1)
            edge node [below] {$$} (m-2-2)
     (m-2-2) edge node [below] {$$} (m-2-3)
     (m-3-1) edge node [above] {$$} (m-3-2)
           edge node [left] {$$} (m-2-1)
           edge node [below] {$\alpha_k$} (m-4-2)
      (m-3-2) edge node [above] {$$} (m-3-3)
      (m-3-3) edge node [right] {$$} (m-2-3)
      (m-4-2) edge node [below] {$$} (m-3-3);
  \end{tikzpicture}\]
  After all, the top part and bottom parts commute by definition of $u$ and $\alpha_k$. The commutativity of the middle rectangle is guaranteed by~Lemma \ref{lem:connectingmaps} once we show that the limiting cone morphisms $\dlim^\Omega_j D(j,k)\to D(j,k)$ are the components of an adjointable natural transformation $\dlim^\Omega_j D(j,-)\Rightarrow D(j,-)$. The naturality square commutes by definition of $\dlim^\Omega_j D(j,f)$:
  \[\begin{tikzpicture}
    \matrix (m) [matrix of math nodes,row sep=2em,column sep=6em,minimum width=2em]
    {
    \dlim^\Omega_j D(j,k)& \dlim^\Omega_j D(j,h) \\
     D(j,k) & D(j,h) \\};
    \path[->]
    (m-1-1) edge node [left] {$$} (m-2-1)
           edge node [above] {$\dlim^\Omega_j D(j,f)$} (m-1-2)
    (m-1-2) edge node [right] {$$} (m-2-2)
    (m-2-1) edge node [below] {$D(\id,f)$} (m-2-2);       
  \end{tikzpicture}\]
  Adjointability follows from Corollary~\ref{cor:mapoflimsismapofcolims} since $D(-,f)$ is adjointable:
  \[\begin{tikzpicture}
     \matrix (m) [matrix of math nodes,row sep=2em,column sep=6em,minimum width=2em]
     {
     \dlim^\Omega_j D(j,k)& \dlim^\Omega_j D(j,h) \\
      D(j,k) & D(j,h) \\};
     \path[->]
     (m-1-1) edge node [above] {$\dlim^\Omega_j D(j,f)$} (m-1-2)
     (m-2-2) edge node [right] {$$} (m-1-2)
     (m-2-1) edge node [below] {$D(\id,f)$} (m-2-2)
             edge node [left] {$$} (m-1-1);       
  \end{tikzpicture}\]
  This concludes the proof.
\end{proof}

\chapter{Ordinary limit-colimit coincidences via dagger limits}\label{chp:ambilims}





\section{Ambilimits}

Sometimes in ordinary (=non-dagger) category theory certain limits coincide with certain colimits, biproducts in abelian categories being a paradigmatic example. The usual way of understanding this is via absolute (co)limits, \ie those that are preserved by every (enriched) functor. For instance, biproducts can be defined purely equationally in terms of $+$ and $0$, and hence are preserved by any functor preserving $+$ and $0$, or in other words, by any functor respecting the enrichment in commutative monoids. In fact, in the enriched setting one can show that limits coinciding with colimits is equivalent to absolutenes~\cite{street:absolutecolims,garner:absolutecolims}.

Here we would like to offer an alternative perspective, not requiring enrichment. For instance, it turns out to be perfectly feasible to axiomatize biproducts without requiring \emph{any} enrichment, and arguably this answers the question ``What does it mean to be a product and a coproduct in a compatible way?''. From this point of view, it is then a theorem that the more general notion of biproduct coincides with the \cat{CMon}-enriched notion whenever the category in question is suitably enriched. 

Instead of talking at length of ``limit-colimit coincidences'' or risking confusion with bicategory theory by using the word ``bilimit'', we will talk about ambilimits. The idea is to capture an object being simultaneously a limit and a colimit in a compatible manner. 

\begin{definition}\label{def:ambilimit}\index[word]{ambilimit} Let $D_1\colon\cat{J}\to\cat{C}$ and $D_2\colon\cat{J}\op\to\cat{C}$ be a pair of diagrams satisfying $D_1(A)=D_2(A)$ for all $A\in\cat{J}$. Then an \emph{ambilimit} of $(D_1,D_2)$ is a tuple $(L,(p_A)_{A\in \cat{J}},(i_A)_{A\in \cat{J}})$ such that $(L,(p_A)_{A\in \cat{J}})$ is a limit of $D_1$, $(L,(i_A)_{A\in \cat{J}})$ is a colimit of $D_2$ and moreover the following equations hold: 
  \begin{description}
    \item[normalization] $p_A$ and $i_A$ are regular inverses to each other for every $A\in\cat{J}$, \ie $i_Ap_Ai_A=i_A$ and $p_Ai_Ap_A=p_A$;
    \item[independence] the idempotents on $L$ induced by these commute, \ie $ i_A p_A i_B p_B=i_B p_B i_A p_A$ for all $A,B\in \cat{J}$.
  \end{description}
\end{definition}

In concrete examples of ambilimits, one often just specifies one $D_i$, the other one being derivable from it, see \eg the limit-colimit coincidence in domain theory below. Before justifying the definition by the examples it captures, let us first satisfy a theoretical concern of canonicity: whenever one has two ambilimits of the same diagrams, a priori there are two isomorphisms between the ambilimits -- one induced by the limit structure and one by the colimit structure. It turns out that these two isomorphisms always coincide.

\begin{theorem}\label{thm:ambilimitsunique} If $(L,(p_A)_{A\in \cat{J}},(i_A)_{A\in \cat{J}})$ and $(M,(q_A)_{A\in \cat{J}},(j_A)_{A\in \cat{J}})$ are two ambilimits of $(D_1,D_2)$, there is a unique isomorphism $M\to L$ that is both a morphism of cones and cocones.
\end{theorem}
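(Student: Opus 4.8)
The plan is to realize an ambilimit of $(D_1,D_2)$ as an ordinary dagger limit in the cofree dagger category $\cat{C_\leftrightarrows}$ of Proposition~\ref{prop:cofree}, and then read off the statement from the uniqueness-up-to-unitary result, Theorem~\ref{thm:daglimsunique}. First I would package the data into a single functor: since $D_1(A)=D_2(A)$ for every object, the pair $(D_2,D_1)$ is a functor $\cat{J}\to\cat{C}\op\times\cat{C}$ whose image on objects consists of diagonal pairs $(D_1 A,D_1 A)$, so it lands in the full subcategory $\cat{C_\leftrightarrows}$; call it $D\colon\cat{J}\to\cat{C_\leftrightarrows}$. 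I would take $\Omega=\cat{J}$, which is trivially weakly initial.

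The core step is to check that ambilimits of $(D_1,D_2)$ are \emph{exactly} dagger limits $\dlim^{\cat{J}}D$. Unwinding the product structure, a cone for $D$ with apex $(N,N)$ is precisely a cone $\{\alpha_A\}$ for $D_1$ together with a cocone $\{\beta_A\}$ for $D_2$ sharing the apex $N$, with the leg at $A$ being $(\beta_A,\alpha_A)$; the mediating map into $(L,L)$ is $(s,t)$ where $t$ is the mediator of cones and $s$ that of cocones. Thus the ambilimit datum $(L,(p_A),(i_A))$ furnishes a limiting cone for $D$ with legs $m_A=(i_A,p_A)$, and likewise for $M$. Taking $m_A^\dag=(p_A,i_A)$ and computing with the composition of $\cat{C_\leftrightarrows}$ (which reverses order in the first, $\cat{C}\op$-slot), one finds $m_A m_A^\dag m_A=(i_Ap_Ai_A,\,p_Ai_Ap_A)$, so that $m_A$ is a partial isometry iff $i_Ap_Ai_A=i_A$ and $p_Ai_Ap_A=p_A$, i.e.\ the \emph{normalization} axiom of Definition~\ref{def:ambilimit}; and $m_A^\dag m_A=(i_Ap_A,i_Ap_A)$, whence the induced projections commute iff the idempotents $i_Ap_A$ and $i_Bp_B$ commute in $\cat{C}$, i.e.\ the \emph{independence} axiom. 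Hence $(L,L)$ and $(M,M)$ are both dagger limits of $(D,\cat{J})$.

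Now Theorem~\ref{thm:daglimsunique} applies: the canonical cone isomorphism $u\colon(M,M)\to(L,L)$ is unitary. Writing $u=(u_1,u_2)$ with $u_1\colon L\to M$ and $u_2\colon M\to L$ in $\cat{C}$, being a morphism of cones in $\cat{C_\leftrightarrows}$ unwinds to $p_Au_2=q_A$ (so $u_2$ is the canonical morphism of cones $M\to L$) and $u_1 i_A=j_A$ (so $u_1$ is the canonical morphism of cocones $L\to M$, hence $u_1^{-1}$ one $M\to L$, as $\phi j_A=i_A$ for $\phi=u_1^{-1}$). The condition $u^\dag=u^{-1}$, computed componentwise, collapses to $u_2=u_1^{-1}$. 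Therefore the morphism of cones $u_2$ coincides with the morphism of cocones $u_1^{-1}$, yielding a single isomorphism $M\to L$ that is simultaneously a morphism of cones and of cocones; its uniqueness is immediate, since a morphism of cones between two limits of $D_1$ is already unique by the universal property.

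The only real obstacle is the bookkeeping in the middle paragraph: one must track the order-reversal in the $\cat{C}\op$-component carefully so that ``partial isometry'' and ``commuting projections'' in $\cat{C_\leftrightarrows}$ line up on the nose with ``regular inverse'' and ``commuting idempotents'' in Definition~\ref{def:ambilimit}, and similarly so that unitarity of $u$ reduces to $u_2=u_1^{-1}$. Once these identifications are made, everything else is a formal consequence of Theorem~\ref{thm:daglimsunique}.
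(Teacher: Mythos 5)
Your proposal is correct and follows exactly the paper's own route: realize the ambilimit as a dagger limit of $(D_1,D_2)\colon\cat{J}\to\cat{C}_\leftrightarrows$ with $\Omega=\cat{J}$, invoke Theorem~\ref{thm:daglimsunique} to get a unitary cone isomorphism, and unpack unitarity as the coincidence of the cone and cocone mediators. The paper states this in three sentences and leaves the bookkeeping implicit; your middle paragraph just carries out that unpacking explicitly (and correctly).
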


\begin{proof}\index[word]{dagger category!cofree}\index[symb]{\cat{C_\leftrightarrows}, the cofree dagger category on \cat{C}}
	The axioms of ambilimits imply that $L$ and $M$ are dagger limits of $(D_1,D_2,\cat{J})$, where $(D_1,D_2)\colon\cat{J}\to\cat{C}_\leftrightarrows$ and $\cat{C}_\leftrightarrows$ is the cofree dagger category on \cat{C} from Proposition~\ref{prop:cofree}. Hence there is a unitary isomorphism $M\to L$ in $\cat{C}_\leftrightarrows$ by Theorem~\ref{thm:daglimsunique}. Unpacking the meaning of this back to \cat{C} we conclude the existence and uniqueness of the desired isomorphism $M\to L$.
\end{proof}

Of course, the preceding proof could be unwound to provide a ``dagger-free'' proof of the theorem above. It is unclear if doing so would give additional insight or if a different, direct proof exists. Similarly, one could have worked with the more general notion of ``ambilimit of $(D_1,D_2,\Omega)$'' where $\Omega\subset\cat{J}$ is weakly initial. However, apart from dagger equalizers and other dagger limits, we do not know of interesting examples that would warrant the added generality, so we assume $\Omega=\cat{J}$ throughout for simplicity and hence suppress it from the notation. We list some examples below, and the rest of the chapter develops two additional examples in more detail.

\begin{example} 
  \begin{itemize}
    \item A zero object in \cat{C} can be characterized as an ambilimit of the unique functor $\emptyset\to \cat{C}_\leftrightarrows$.
    \item Splittings of idempotents are ambilimits: let $p=p^2$ be an idempotent in \cat{C} and let $M$ be the one-object category generated by a single non-identity arrow $e$ satisfying $e=e^2$. Then $e\mapsto p$ induces a functor $M\to\cat{C}_\leftrightarrows$ and a splitting of $p$ is exactly the same thing as an ambilimit of this functor.
    \item If \cat{G} is a group, considered as a one-object category, then a functor $F\colon\cat{G}\to\cat{C}$ induces a functor $F'\colon\cat{G}\to\cat{C}\op$ by $F'(g)=F(g^{-1})$, \ie $F'=F\circ (-)^{-1}$, where $(-)^1$ is the canonical dagger on \cat{G}. Now an ambilimit $(F,F')$ corresponds to a split subobject of $F(*)$ that is simultanously $G$-invariant and $G$-coinvariant. These exist for finite-dimensional $k$-representations of finite group \cat{G} when $k$ is characteristic zero, and they exist for any \cat{G} in \cat{Rel}, \cat{Span(FinSet)} and \cat{PInj} by Proposition~\ref{prop:variouscatshavedagshapedlims}.
    \item Both of the previous examples are special cases of the following: let \cat{M} be an involutive monoid, \ie a one object dagger category. Then any functor $\cat{M}\to \cat{C}$ factors uniquely via a dagger functor $\cat{M}\to \cat{C}_\leftrightarrows$, and one may consider ambilimits of this functor. Again, by Proposition~\ref{prop:variouscatshavedagshapedlims} \cat{Rel}, \cat{Span(FinSet)} and \cat{PInj} have such ambilimits.
  \end{itemize}
\end{example}

\section{Biproducts without zero morphisms}\label{sec:biprods}\index[word]{biproduct}

Given two objects $A$ and $B$ living in some category \cat{C}, their biproduct -- according to a standard definition \cite{maclane:categories} -- consists of an object $A\oplus B$ together with maps 

\begin{equation*} A\xrightarrows[$p_A$][$i_A$] A\oplus B\xrightarrows[$i_B$][$p_B$] B \end{equation*}
such that 

\begin{align}
  p_Ai_A&=\id[A] \quad &p_Bi_B=\id[B] \label{def:biprodeqs}\\
  p_Bi_A&=0_{A,B} \quad &p_Ai_B=0_{B,A} \label{def:biprodswithZeros}
\end{align}
and
\begin{equation}\label{def:biprodswithsums}
  \id[A\oplus B]=i_Ap_A+i_Bp_B\text.
\end{equation}

For us to be able to make sense of the equations, we must assume that \cat{C} is enriched in commutative monoids. One can get a slightly more general definition that only requires zero morphisms but no addition -- that is, enrichment in pointed sets -- by replacing the last equation with the condition that  $(A\oplus B,p_A,p_B)$ is a product of $A$ and $B$ and that $(A\oplus B,i_A,i_B)$ is their coproduct.
This definition is not vastly more general, for one can show that if \cat{C} has all binary biproducts in this latter sense, then \cat{C} is uniquely enriched in commutative monoids and the biproducts in \cat{C} are biproducts in the earlier sense as well. 

But what if we do not assume that \cat{C} is enriched over pointed sets? One alternative would be to postulate, just for the objects $A$ and $B$ in question, maps 
$0_{A,B} \colon A \to B$ and $0_{B,A} \colon B \to A$
that act like zero maps in that it does not matter what you pre- and postcompose them with, but this is not very satisfactory -- it is as if one tried to generalize the first definition by assuming that just the particular homset $\hom(A\oplus B,A\oplus B)$ happens to be a commutative monoid. 

However, by Theorem~\ref{thm:ambilimitsunique}, one can get a well-behaved notion of a biproduct in any category \cat{C}, with no assumptions about enrichment, by replacing the equations referring to zero with the single equation
\begin{equation}\label{eq:projsCommute}
i_Ap_Ai_Bp_B=i_Bp_Bi_Ap_A\text,
\end{equation}
which states that the two canonical idempotents on $A\oplus B$ commute with one another. In this section we show that the notion agrees with the other definitions whenever \cat{C} is appropriately enriched. We also show how to characterize them in terms of ambidextrous adjunctions and discuss some examples recognized by the more general definition.

We start by spelling out the new, enrichment-free definition of a biproduct.

\begin{definition}\label{def:newbiprods} A biproduct of $A$ and $B$ in \cat{C} is a tuple $(A\oplus B,p_A,p_B,i_A,i_B)$ such that $(A\oplus B,p_A,p_B)$ is a product of $A$ and $B$, $(A\oplus B,i_A,i_B)$ is their coproduct, and the following equations hold:
\begin{align*}
p_Ai_A&=\id[A] \\
p_Bi_B&=\id[B] \\
i_Ap_Ai_Bp_B&=i_Bp_Bi_Ap_A
\end{align*}
\end{definition}

Even though the definition above does not refer to zero morphisms, the map $p_Bi_A\colon A\to A\oplus B\to B$ behaves a lot like one. 

\begin{definition} A morphism $a:A\to B$ is constant if $af=ag$ for all $f,g\colon C\to A$. Coconstant morphisms are  defined dually and a morphism is called a zero morphism if it is both constant and coconstant. A category has zero morphisms if for every pair of objects $A$ and $B$ there is a zero morphism $A\to B$. 
\end{definition}

\begin{remark}\label{remark:absorbing} If there is a zero morphisms $A\to B$ and a morphism $B\to A$, then the zero morphism $A\to B$ is unique. Furthermore, a category has zero morphisms iff for every $A$ and $B$ there is a morphism $0_{A,B}$ such that for every $g\colon B\to D$ and $f\colon C\to A$ we have $g0_{A,B}f=0_{C,D}$, and this is equivalent to the category being enriched in pointed sets. The collection of all zero morphisms in a category forms a partial zero structure in the sense of \cite{goswami2017structure}.
\end{remark}

\begin{proof}
  Assume first that $0$ and $0'$ are zero morphisms $A\to B$ and let $f\colon B\to A$ be an arbitrary morphism. Since $0$ and $0'$ are zero morphisms, we see that $0=0'f0=0'$, as desired. The other claims follow readily. 
\end{proof}

\begin{lemma}\label{lem:absorbing}
Let $(A\oplus B,p_A,p_B,i_A,i_B)$ be the biproduct of $A$ and $B$. Then $p_Bi_A$ is a zero morphism.
\end{lemma}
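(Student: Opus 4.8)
The plan is to unwind the definition of "zero morphism" and reduce everything to a single absorption statement. Write $\gamma=p_Bi_A\colon A\to B$ and $\beta=p_Ai_B\colon B\to A$ for the two off-diagonal maps, and $e_A=i_Ap_A$, $e_B=i_Bp_B$ for the idempotents they induce on $A\oplus B$. Definition~\ref{def:newbiprods} is manifestly self-dual (a biproduct of $A,B$ in $\cat C$ is a biproduct in $\cat C\op$, with the roles of the $p$'s and $i$'s exchanged) and symmetric under swapping $A$ with $B$; under these two operations the properties \emph{constant} and \emph{coconstant} are interchanged, as are $\gamma$ and $\beta$. Hence it suffices to prove the single statement that $\gamma=p_Bi_A$ is \emph{constant}, the remaining three cases following formally.

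Before attacking constancy I would record the equational consequences of the independence axiom. From $e_Ae_B=e_Be_A$, that is $i_Ap_Ai_Bp_B=i_Bp_Bi_Ap_A$, postcomposing with $p_B$ and precomposing with $i_A$ gives $\gamma\beta\gamma=\gamma$ (so $\beta\gamma$ is idempotent on $A$), while precomposing with $i_A$ alone gives $i_B\gamma=i_A\beta\gamma$. Setting $e:=e_Ae_B=e_Be_A$, these identities yield $e\,i_A=e_A(e_Bi_A)=e_Ai_B\gamma=i_A\beta\gamma=i_B\gamma=e_Bi_A$, and since $p_Be_Bi_A=\gamma$ one gets $\gamma=p_B\,e\,i_A$. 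Consequently, if $e$ is a \emph{constant} endomorphism of $A\oplus B$, then for any $f,g\colon C\to A$ we have $\gamma f=p_Be(i_Af)=p_Be(i_Ag)=\gamma g$, so $\gamma$ is constant. Thus I would aim to show that the commuting idempotent $e=e_Ae_B$ is a zero endomorphism of $A\oplus B$; intuitively $e$ projects onto the overlap of the two summands, and independence is exactly the hypothesis forcing this overlap to be degenerate. To collapse $e$ I would use the product and coproduct universal properties in tandem, exploiting that the injections $i_A,i_B$ are jointly epic and the projections $p_A,p_B$ jointly monic, feeding the identities above into these two uniqueness principles against arbitrary cones and cocones.

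The main obstacle is precisely this final step: passing from the \emph{equational} identities among the fixed structure maps to the \emph{global} absorption property, namely that $\gamma f$ is independent of an arbitrary test map $f\colon C\to A$. The identities alone cannot deliver this, since they only relate composites built from $p_A,p_B,i_A,i_B$, whereas what links two unrelated inputs $f,g$ can only come from a universal property with arbitrary test object; and the hypothesis carrying the weight is independence, without which the off-diagonals genuinely need not vanish, as the non-orthogonal decomposition of $\mathbb C^2$ in \cat{Hilb} shows. A more structural alternative worth trying is to observe, as in the first bullet of Example~\ref{ex:cofree}, that a biproduct in the sense of Definition~\ref{def:newbiprods} is exactly a dagger product of $(A,A)$ and $(B,B)$ in the cofree dagger category $\cat C_\leftrightarrows$ of Proposition~\ref{prop:cofree}, and to transport the normalization and independence data through Theorem~\ref{thm:ambilimitsunique}. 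Even then, however, the concrete claim that $p_Bi_A$ is a zero morphism reduces back to the same absorption core, so I expect the honest proof to carry out the universal-property collapse of $e$ directly.
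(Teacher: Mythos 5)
Your reductions are sound as far as they go: the self-duality and $A\leftrightarrow B$ symmetry correctly cut the four cases down to one, and the identities $\gamma\beta\gamma=\gamma$, $i_B\gamma=i_A\beta\gamma$ and $\gamma=p_B\,e\,i_A$ with $e=e_Ae_B$ all follow correctly from independence and the unit equations. But the proposal stops exactly where the content of the lemma begins. You reduce everything to the claim that the idempotent $e$ is a zero endomorphism, announce that you ``would use the product and coproduct universal properties in tandem'', and then explicitly concede that the equational identities among the fixed structure maps cannot by themselves deliver the absorption property. That concession is accurate, and nothing in the proposal fills the gap: no cone or cocone involving an arbitrary test object is ever constructed, so no statement quantifying over arbitrary test morphisms is ever established. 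As written this is a correct setup plus an identified obstruction, not a proof.

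The missing device in the paper's proof is small and concrete. To show $p_Bi_A$ is coconstant against $f,g\colon B\to C$, form the cotuple $h=[\,g\,p_Bi_A,\ f\,]\colon A\oplus B\to C$ out of the two arbitrary test maps, so that $hi_A=g\,p_Bi_A$ and $hi_B=f$. Independence together with $p_Ai_A=\id$ gives $i_Ap_Ai_Bp_Bi_A=i_Bp_Bi_Ap_Ai_A=i_Bp_Bi_A$; applying $h$ to the left-hand side yields $g\,(\gamma\beta\gamma)=g\gamma$ (using your own identity $\gamma\beta\gamma=\gamma$), while applying $h$ to the right-hand side yields $f\gamma$. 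Hence $f\,p_Bi_A=g\,p_Bi_A$. The point is that the coproduct's universal property is what injects the two unrelated morphisms $f$ and $g$ into a single composite governed by the independence axiom --- precisely the passage from ``equational'' to ``global'' that you flagged as the obstacle. You were one cotuple away from closing the argument; your alternative route through $\cat{C}_\leftrightarrows$ and Theorem~\ref{thm:ambilimitsunique} does not help here since, as you note yourself, it presupposes the very absorption statement being proved.
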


\begin{proof}
As the definition of biproducts is self-dual, it suffices to prove that $p_Bi_A$ is coconstant. First we observe that the diagram 
  \[
  \begin{tikzpicture} 
     \matrix (m) [matrix of math nodes,row sep=3em,column sep=4em,minimum width=2em]
     {
     A  &A & A\oplus B & B & C \\
        &  &           &  A\oplus B  & B\\
      A\oplus B & B & A\oplus B &A & A\oplus B\\
       & A & A\oplus B & B\\};
     \path[->]
     (m-3-1) edge node [left] {$p_A$} (m-1-2)
            edge node [below] {$p_B$} (m-3-2)
            edge node [below] {$p_A$} (m-4-2)
     (m-4-2) edge node [below] {$i_A$} (m-4-3)
     (m-4-3) edge node [below] {$p_B$} (m-4-4)
     (m-4-4) edge node [below] {$i_B$} (m-3-5)
     (m-1-1) edge node [above] {$\id$} (m-1-2)
            edge node [left] {$i_A$} (m-3-1)
     (m-3-2) edge node [below] {$i_B$} (m-3-3)
     (m-3-3) edge node [below] {$p_A$} (m-3-4)
     (m-3-4) edge node [below] {$i_A$} (m-3-5)
            edge node [right] {$i_A$} (m-2-4)
     (m-1-4) edge node [left] {$i_B$} (m-2-4)
     (m-2-4) edge node [below] {$h$} (m-1-5)
     (m-1-2) edge node [above] {$i_A$} (m-1-3)
     (m-1-3) edge node [above] {$p_B$} (m-1-4)
     (m-1-4) edge node [above] {$f$} (m-1-5)
     (m-3-5) edge node [right] {$p_B$} (m-2-5)
     (m-2-5) edge node [right] {$g$} (m-1-5); 
  \end{tikzpicture}
  \]
commutes, where $h$ is the cotuple $[g p_Bi_A,f]$. Now the top path in the outernmost shape is $fp_Bi_A$ and by the biproduct equations the bottom path is equal to $gp_Bi_A$.
\end{proof}

\begin{remark}\label{rem:polarofbiprod}
  We can now see that Theorem~\ref{thm:polarofbiprod} is still true even if the word biproduct in it is interpreted in the sense of Definition~\ref{def:newbiprods}. This is because the only zero morphisms needed for the proof are between $A$ and $B$, and hence the proof is valid verbatim.
\end{remark}

\begin{corollary}\label{thm:zeros} If \cat{C} has all binary biproducts, then it has zero morphisms.
\end{corollary}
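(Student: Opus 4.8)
The plan is to show that for any two objects $A$ and $B$, the biproduct $(A\oplus B, p_A, p_B, i_A, i_B)$ guaranteed by hypothesis supplies a zero morphism $A \to B$, and that these fit together across all pairs of objects to form a genuine system of zero morphisms. By Lemma~\ref{lem:absorbing}, the composite $p_B i_A \colon A \to A \oplus B \to B$ is already a zero morphism, so a candidate zero map exists between every pair of objects. Thus the only thing left to verify is the coherence condition spelled out in Remark~\ref{remark:absorbing}: a category has zero morphisms if and only if for every $A$ and $B$ there is a morphism $0_{A,B}$ absorbing composition on both sides, i.e.\ $g\, 0_{A,B}\, f = 0_{C,D}$ for all $f \colon C \to A$ and $g \colon B \to D$.

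First I would set $0_{A,B} := p_B i_A$ for each pair $A, B$, using the chosen binary biproduct. By Lemma~\ref{lem:absorbing} each such $0_{A,B}$ is simultaneously constant and coconstant. The key step is then to check the absorption property: given $f \colon C \to A$ and $g \colon B \to D$, I want $g\, (p_B i_A)\, f = 0_{C,D}$. Since $p_B i_A$ is constant, precomposing with $f$ gives $p_B i_A f = p_B i_A f'$ for any two maps into $A$, and in particular $p_B i_A f$ only depends on the codomain structure; dually, since it is coconstant, postcomposing with $g$ absorbs $g$. Combining these, $g\, p_B i_A\, f$ is a morphism $C \to D$ that is both constant and coconstant, hence a zero morphism $C \to D$. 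By the uniqueness part of Remark~\ref{remark:absorbing} (a zero morphism $C \to D$ is unique as soon as there is any morphism $D \to C$, and here one exists, e.g.\ $0_{D,C}$), this forces $g\, 0_{A,B}\, f = 0_{C,D}$.

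The cleanest route is simply to invoke Remark~\ref{remark:absorbing} directly: it states that the existence of such absorbing morphisms $0_{A,B}$ for all $A,B$ is \emph{equivalent} to the category having zero morphisms. Therefore, once I exhibit the family $0_{A,B} = p_B i_A$ and confirm the absorption identity, the corollary follows immediately. So the proof reduces to the short verification that $p_B i_A$ absorbs composition, which is exactly what Lemma~\ref{lem:absorbing} (constancy and coconstancy) delivers.

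The main obstacle I anticipate is not any deep calculation but rather making sure the zero morphisms defined pairwise via distinct biproducts are mutually compatible: one must confirm that $p_B i_A$ genuinely behaves as a two-sided absorbing element with respect to \emph{arbitrary} pre- and postcomposition, not merely with respect to the biproduct's own structure maps. This is precisely where the combination of constancy (for postcomposition absorbing arbitrary $f$) and coconstancy (for precomposition absorbing arbitrary $g$) from Lemma~\ref{lem:absorbing} is essential, together with the uniqueness observation of Remark~\ref{remark:absorbing} to pin down that the resulting constant-and-coconstant map is the canonical zero $0_{C,D}$. Once that compatibility is in hand, the statement that \cat{C} has zero morphisms is immediate.
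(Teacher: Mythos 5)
Your proof is correct and follows the paper's own route exactly: the paper's proof is literally ``Combine Lemma~\ref{lem:absorbing} and Remark~\ref{remark:absorbing}'', and you have simply unpacked what that combination means, namely that $p_B i_A$ supplies a constant-and-coconstant morphism for every pair of objects and that the uniqueness clause of the remark forces the absorption identity $g\,0_{A,B}\,f = 0_{C,D}$. Nothing is missing.
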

\begin{proof} Combine Lemma~\ref{lem:absorbing} and Remark~\ref{remark:absorbing}.
\end{proof}

Given Lemma~\ref{lem:absorbing}, it is easy to check that whenever \cat{C} has zero morphisms: equation~\eqref{eq:projsCommute} is equivalent to equation~\eqref{def:biprodswithZeros}. Given a biproduct in the sense of Definition~\ref{def:biprodswithZeros}, it is a biproduct in the sense of Definition~\ref{def:newbiprods}, since $i_Ap_Ai_Bp_B=0=i_Bp_Bi_Ap_A$. Conversely, let $(A\oplus B,p_A,p_B,i_A,i_B)$ be a biproduct in the sense of Definition~\ref{def:newbiprods} in a category with zero morphisms. Now by Lemma~\ref{lem:absorbing} and Remark~\ref{remark:absorbing} we have $p_Bi_A=0_{A,B}$, as desired, and similarly $p_Ai_B=0_{B,A}$. If \cat{C} is enriched in commutative monoids, then our definition is equivalent to~\eqref{def:biprodswithsums} just because the other definition in terms of \eqref{def:biprodswithZeros} and universal properties is.

Besides being equivalent to the usual definitions when \cat{C} is appropriately enriched, it behaves well even when \cat{C} is not assumed to be enriched. 

Using Lemma~\ref{lem:absorbing}, one can then proceed to check that biproducts in our sense work just like one would expect. For example, Definition~\ref{def:newbiprods} and other results of this section generalize from the binary case to the biproduct of an arbitrary-sized collection of objects, and one can easily show that if $(A\oplus B)$ and $(A\oplus B)\oplus C$ exist, then $(A\oplus B)\oplus C$ satisfies the axioms for the ternary biproduct of $A,B,C$. Similarly, using Lemma \ref{lem:absorbing} one can show that for $f\colon A\to C$ and $g\colon B\to D$ we have $f+g=f\times g$ whenever the biproducts $A\oplus B$ and $C\oplus D$ exist. 

Recall that an ambiadjoint to a functor $F\colon \cat{C}\to\cat{D}$ is a functor $D\colon \cat{D}\to\cat{C}$ that is simultaneously both left and right adjoint to $F$.

\begin{theorem} \cat{C} has biproducts iff the diagonal $\Delta\colon\cat{C}\to \cat{C}\times \cat C$ has an ambiadjoint $(-)\oplus (-)$ such that the unit $(i_A,i_B)\colon (A,B)\to (A\oplus B,A\oplus B)$ of the adjunction $(-)\oplus (-)\dashv \Delta$, is a section of the counit $(p_A,p_B)\colon (A\oplus B,A\oplus B)\to (A,B)$ of the adjunction $\Delta\dashv (-)\oplus (-)$,\ie $(p_A \circ i_A,p_B\circ i_B)=(\id[A],\id[B])$ for $A,B \in \cat{C}$.
\end{theorem}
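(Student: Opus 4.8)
The plan is to recognise that the two descriptions package the same data, and then to isolate the single genuine point of content in each direction. Recall (as already used in Example~\ref{ex:dagbiprods}) that a right adjoint to $\Delta\colon\cat{C}\to\cat{C}\times\cat{C}$ is exactly a choice of binary products, with the counit supplying the projections, and dually a left adjoint is a choice of binary coproducts, with the unit supplying the coprojections. Thus an ambiadjoint, which I abbreviate $F:=(-)\oplus(-)$, is precisely a functor assigning to each pair $(A,B)$ an object $A\oplus B$ that is \emph{simultaneously} a product of $A,B$ (projections $p_A,p_B$ given by the counit of $\Delta\dashv F$) and a coproduct (coprojections $i_A,i_B$ given by the unit of $F\dashv\Delta$); and the requirement that the unit be a section of the counit is literally $p_Ai_A=\id[A]$ and $p_Bi_B=\id[B]$. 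So, measured against Definition~\ref{def:newbiprods}, the forward direction must additionally produce the independence equation~\eqref{eq:projsCommute}, while the backward direction must additionally check that a single functor can carry both adjunctions.

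For the backward direction, which is the substantive one, I would manufacture zero morphisms from the bare ambiadjunction and then read off independence. Write $p_A,p_B,i_A,i_B$ for the structure maps of $A\oplus B$, and similarly $p_B^{C,B},i_C^{C,B}$ for those of $C\oplus B$. Applying naturality of the counit of $\Delta\dashv F$ and of the unit of $F\dashv\Delta$ to the morphism $(h,\id[B])\colon(C,B)\to(A,B)$, and evaluating the composite $p_B\circ F(h,\id[B])\circ i_C^{C,B}$ in two ways, yields the identity $p_B i_A h = p_B^{C,B} i_C^{C,B}$; since the right-hand side does not mention $h$, the map $p_Bi_A$ is coconstant. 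The dual computation with $(\id[A],k)\colon(A,B)\to(A,D)$, evaluating $p_D^{A,D}\circ F(\id[A],k)\circ i_A$, gives $k\,p_Bi_A = p_D^{A,D} i_A^{A,D}$, so $p_Bi_A$ is constant; hence $p_Bi_A$ is a zero morphism, and symmetrically so is $p_Ai_B$. In particular \cat{C} has zero morphisms, and by the absorption property (Remark~\ref{remark:absorbing}) both $i_Ap_Ai_Bp_B=i_A(p_Ai_B)p_B$ and $i_Bp_Bi_Ap_A=i_B(p_Bi_A)p_A$ are zero morphisms $A\oplus B\to A\oplus B$, hence equal by uniqueness of zero morphisms. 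This is exactly the independence equation~\eqref{eq:projsCommute}, so $A\oplus B$ satisfies Definition~\ref{def:newbiprods}.

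For the forward direction I would start from a Definition~\ref{def:newbiprods} biproduct of every pair, invoke Corollary~\ref{thm:zeros} to obtain zero morphisms and hence $p_Bi_A=0$, $p_Ai_B=0$ (via Lemma~\ref{lem:absorbing} and Remark~\ref{remark:absorbing}), and define the candidate functor on morphisms by the product formula $F(f,g)=\langle fp_A,gp_B\rangle$. The only thing to verify is that this coincides with the coproduct formula $[i_{A'}f,i_{B'}g]$: since $A'\oplus B'$ is a product and $A\oplus B$ is a coproduct, it suffices to compare the four composites $p_{X'}\circ(-)\circ i_Y$, each of which collapses, using only the equations $p_?i_?=\id$ or $=0$, to $f$, $g$, or a zero morphism on both sides. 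With the two functor actions reconciled, the product structure gives $\Delta\dashv F$ and the coproduct structure gives $F\dashv\Delta$, so $F$ is an ambiadjoint, and the section condition is precisely $p_Ai_A=\id[A]$, $p_Bi_B=\id[B]$.

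The main obstacle is the backward direction: everything hinges on extracting zero morphisms from an ambiadjunction that mentions none, and the delicate part is arranging the two naturality squares so that a single composite $p_Bi_Ah$ (respectively $k\,p_Bi_A$) is forced to be independent of $h$ (respectively $k$). Once constancy and coconstancy of $p_Bi_A$ and $p_Ai_B$ are established, both the existence of zero morphisms and the independence equation are routine bookkeeping.
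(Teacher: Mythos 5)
Your proof is correct and follows essentially the same route as the paper: the backward direction extracts zero morphisms by running naturality of the unit and counit against morphisms of the form $(h,\id)$ and $(\id,k)$ (the paper packages this as a single naturality square for $\id\oplus f$), and then independence follows from uniqueness of zero morphisms, while the forward direction is the routine reconciliation of the product and coproduct formulas for the functor. One small slip: relative to the paper's Definition of constant/coconstant, your two computations establish constancy and coconstancy respectively in the opposite order to how you label them, but since you prove both properties the conclusion that $p_Bi_A$ is a zero morphism is unaffected.
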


\begin{proof}
  The implication from left to right is routine. For the other direction, a right adjoint to the diagonal is well-known to fix binary products, and dually, a left adjoint fixes binary coproducts. Thus it remains to check that the required equations governing $p_A,p_B,i_A$ and $i_B$ are satisfied. By naturality, the diagram
  \[\begin{tikzpicture}[xscale=3.5,yscale=1.5]
    \node (tl) at (0,1) {$A$};
    \node (t) at (1,1) {$A \oplus B$};
    \node (tr) at (2,1) {$B$};
    \node (bl) at (0,0) {$A$};
    \node (b) at (1,0) {$A \oplus C$};
    \node (br) at (2,0) {$C$};
    \draw[->] (tl) to node[above] {$i_A$} (t);
    \draw[->] (t) to node[above] {$p_B$} (tr);
    \draw[->] (bl) to node[below] {$i_A$} (b);
    \draw[->] (b) to node[below] {$p_C$} (br);
    \draw[->] (tl) to node[left] {$\id$} (bl);
    \draw[->] (tr) to node[right] {$f$} (br);
    \draw[->] (t) to node[right] {$\id\oplus f$} (b);
  \end{tikzpicture}\]
  commutes for any $f$. Thus $i_Ap_B$ is coconstant and by duality it is constant, so it is zero. Hence \cat{C} has zero morphisms and $i_Ap_B=0$. Thus $i_Ap_Ai_Bp_B=0=i_Bp_Bi_Ap_A$. As $(p_A \circ i_A,p_B\circ i_B)=(\id[A],\id[B])$ by assumption, this concludes the proof.
\end{proof}

Given Theorem~\ref{thm:zeros}, genuinely new examples recognized under this must be in categories that have neither all binary biproducts nor zero morphisms. 

\begin{itemize}
  \item In \cat{Set} the biproduct $\emptyset\oplus\emptyset$ exists and is the empty set.
  \item Let \cat{C} be any category with biproducts, and let \cat{D} be any non-empty category. Then in the coproduct category $\cat{C}\sqcup \cat{D}$, the biproduct $A\oplus B$ exists whenever $A,B\in\cat{C}$. More concretely, in $\cat{Ab}\sqcup \cat{Set}$ the binary biproduct of any two abelian groups exists and is computed just as in \cat{Ab}, even though $\cat{Ab}\sqcup \cat{Set}$ lacks zero morphisms.
  \item In any preorder $A\oplus B$ exists if and only if $A\cong B$.
  \item Commutative inverse semigroups are sets equipped with a binary operation that is commutative and associative and in which for every element $x$ there is an unique $y$ such that $xyx=x$ and $yxy=x$~\cite{lawson:inversesemigroups}. The obvious choice of morphism is a function that preserves the binary operation. Not every such semigroup has a neutral element, so we call a homomorphism $f\colon S\to T$ unital if it preserves neutral elements. Let \cat{C} be the category of commutative inverse semigroups and unital homomorphisms. Then $S\oplus T$ exists if and only if $S$ and $T$ both have a neutral element, in which case $S\oplus T$ is constructed just as in \cat{Ab}.
  \item A function $f\colon (X,d_X)\to (Y,d_Y)$ between metric spaces is non-expansive if $d_X(x,y)\geq d_Y(f(x),f(y))$ for all $x,y\in X$. It is contractive if there is some $c\in [0,1)$ such that $c d_X(x,y)\geq d_Y(f(x),f(y))$ for all $x,y\in X$. Let \cat{Met} be the category of metric spaces and non-expansive maps and let \cat{Con} be the category of contractive endomorphisms. To be more precise, let $\mathbb{N}$ denote the monoid of natural numbers. Then we define \cat{Con} as the full subcategory of $[\mathbb{N},\cat{Met}]$ with objects given by contractive endomorphisms. In \cat{Con}, the terminal object is $! \colon \{*\}\to \{*\}$, and for any $s$ in \cat{Con}, the biproduct $s\oplus !$ exists if and only if $s$ has a (necessarily unique) fixed point. To see this, assume first that the biproduct $s\oplus !$ exists. Then in particular there is a morphism $!\to s$, whence $s$ has a fixed point. Conversely, if $s$ has a fixed point, $s$ itself can be given the structure of a biproduct $s\oplus !$ in a relatively obvious manner.
\end{itemize}

\section{Limit-colimit coincidence from domain theory}\label{sec:limcolimcoincidence}


In this section we will show how to understand the limit-colimit coincidence from domain theory (originally remarked in~\cite{scott:continuouslattices} and further generalized in~\cite{smythplotkin:datatypes}) as an ambilimit. Before doing so, we briefly recall the setup of~\cite{smythplotkin:datatypes}.

\begin{definition}\index[word]{\cat{O}-category} An \cat{O}-category is a category in which (i) every homset is a partial order that has least upper bounds of $\omega$-chains\index[symb]{$\omega$, the category induced by the order on $\N$|(} and (ii) composition is $\omega$-continuous.
\end{definition}

\cat{O}-categories were originally introduced in~\cite{wand:fixedpoints} and are enriched categories in the sense of~\cite{kelly:enriched}: the enrichment is over the category \cat{O} where objects are posets with l.u.b.s of $\omega$-chains and morphisms are $\omega$-continuous functions. Since we do not need the full generality of enriched category theory, we work with the concrete definition above. 

\begin{definition} A pair $(f\colon A\to B,g\colon B\to A)$ of morphisms in an \cat{O}-category is an \emph{embedding-projection pair}\index[word]{embedding-projection pair} if  
  \[gf=\id[A] \text{ and } fg\leq \id[B],\]
in which case we call $f$ an embedding and $g$ a projection. 
\end{definition}

It turns out that either half of such a pair determines the other: hence for an \cat{O}-category \cat{C} one can define $\cat{C}^E$, the category of embeddings in \cat{C}, as the wide subcategory of \cat{C} with embeddings as its morphisms. The category of projections $\cat{C}^P$ is defined similarly. In general neither of these is an \cat{O}-category. Since one half of a projection-embedding pair determines the other, there are in fact contravariant identity-on-objects isomorphisms $(-)^P\colon\cat{C}^{E}\leftrightarrows \cat{C}^P\colon (-)^E$ taking one half of a projection-embedding pair to the other one.

\begin{definition}  Let \cat{C} be an \cat{O}-category and $D\colon \omega\to \cat{C}^E$ be a diagram. A cocone  $(L,(e_n)_{n\in \omega})$ for $\omega\xrightarrow{D} \cat{C}^E\hookrightarrow \cat{C}$ is an \emph{\cat{O}-colimit}\index[word]{\cat{O}-colimit} if each $e_n$ is an embedding, and $(e_n\circ e_n^P)_{n\in\omega}$ is an increasing sequence having \id[L] as its least upper bound. 
\end{definition}

\begin{theorem}\label{thm:limitcolimitcoincidence} Let \cat{C} be an \cat{O}-category and $D\colon \omega\to \cat{C}^E$ be a diagram. The following conditions are equivalent for any cocone $(L,(e_n)_{n\in \omega})$  for $\omega\xrightarrow{D} \cat{C}^E\hookrightarrow \cat{C}$:
  \begin{enumerate}[(i)]
    \item $L$ is a colimit in \cat{C}
    \item $L$ is an \cat{O}-colimit in \cat{C}
    \item $(L,(e_n)^P)$ is a limit in \cat{C}
    \item $(L,(e_n)^P)$ is an \cat{O}-limit in \cat{C}
  \end{enumerate}
\end{theorem}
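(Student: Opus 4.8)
# Proof Proposal for Theorem~\ref{thm:limitcolimitcoincidence}

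The plan is to establish a cycle of implications $(i)\Rightarrow(ii)\Rightarrow(iii)\Rightarrow(iv)\Rightarrow(i)$, exploiting the duality between embeddings and projections so that roughly half the work can be done ``for free'' by a symmetric argument. The central technical device I would use is the explicit formula for colimits of embeddings in an \cat{O}-category: if $(L,(e_n)_{n\in\omega})$ is a cocone, then the candidate mediating maps are built from suprema of composites $e_n\circ e_n^P$. I would first record the basic algebra of embedding-projection pairs, namely that $e_m^P e_n = e_{m,n}$ (the appropriate connecting map) when $m\leq n$, and that the idempotents $e_n e_n^P$ form an increasing $\omega$-chain; these facts follow directly from $gf=\id$ and $fg\leq\id$ together with functoriality of $D$.

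First I would prove $(i)\Rightarrow(ii)$. Given that $L$ is an ordinary colimit, I would construct a candidate supremum $\bigvee_n e_n e_n^P$ using $\omega$-continuity of composition, show it is idempotent and a cocone endomorphism, and then use the universal property of the colimit to identify it with $\id[L]$. The key observation is that each $e_n e_n^P\leq e_{n+1}e_{n+1}^P$ so the sequence is genuinely increasing, and that the supremum commutes appropriately with each leg $e_m$. The step $(ii)\Rightarrow(iii)$ is the heart of the argument: assuming the \cat{O}-colimit condition $\bigvee_n e_n e_n^P=\id[L]$, I would verify directly that $(L,(e_n^P)_{n\in\omega})$ is a limiting cone for the diagram of projections. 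Given any competing cone $(m_n\colon X\to D(n))_n$, the mediating map $X\to L$ should be defined as $\bigvee_n e_n m_n$, with $\omega$-continuity guaranteeing this supremum exists; uniqueness and the cone equations are then checked by precomposing with $e_k^P$ and using the identity $e_k^P e_n m_n = $ (connecting map)$\circ m_n = m_k$ for $n\geq k$, collapsing the supremum.

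By the contravariant isomorphism $(-)^P\colon\cat{C}^E\rightleftarrows\cat{C}^P\colon(-)^E$ from the excerpt, the implications $(iii)\Rightarrow(iv)$ and $(iv)\Rightarrow(i)$ are precisely the duals of $(i)\Rightarrow(ii)$ and $(ii)\Rightarrow(iii)$ applied to the opposite \cat{O}-category with embeddings and projections interchanged; I would phrase this duality carefully once and then invoke it rather than repeating the computations. The main obstacle I anticipate is the interaction between the order-theoretic suprema and the purely categorical universal properties in the $(ii)\Rightarrow(iii)$ step: one must be scrupulous that the supremum defining the mediating map actually exists (requiring the $\omega$-continuity axiom and the fact that the relevant sequence is increasing, which itself needs the $fg\leq\id$ inequality), and that it is the \emph{unique} mediating map and not merely \emph{a} mediating map. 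Establishing increasingness of the sequence $(e_n m_n)_n$ from the cone condition on $(m_n)$ is the delicate point, since it relies on the embedding-projection inequalities rather than equalities and does not dualize trivially.
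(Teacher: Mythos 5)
Your proposal is correct and is essentially the standard argument: the paper does not write out a proof at all but defers to \cite[Theorem 2]{smythplotkin:datatypes}, whose proof is precisely the computation you sketch (the increasing chain $e_n\circ e_n^P$ with least upper bound $\id[L]$, mediating morphisms built as suprema of $e_n\circ m_n$ resp.\ $c_n\circ e_n^P$, and the $\cat{C}\op$ duality for the remaining implications). One small simplification you could note is that, unwinding the dual definition of an \cat{O}-limit, conditions (ii) and (iv) are literally the same condition on the family $(e_n\circ e_n^P)_{n\in\omega}$, so half of your four-step cycle is automatic once the duality between $\cat{C}^E$ and $\cat{C}^P$ is in place.
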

\begin{proof} See the proof of ~\cite[Theorem 2]{smythplotkin:datatypes}. Note that the relevant part of the theorem states a slightly weaker result, saying that $D$ has a colimit iff it has an \cat{O}-colimit, but the proof shows in Proposition D that in fact that any colimit is necessarily an \cat{O}-colimit.
\end{proof}

Instead of \cat{O}-(co)limits one could just work with the unenriched notion provided by Definition~\ref{def:ambilimit}. First we note that the independence axiom is superfluous for ambilimits of chains. For that purpose, we fix some notation. For a chain $(D_1,D_2)\colon \omega\op\to \cat{C}_\leftrightarrows$, we write $D(n)$ for $D_1(n)=D_2(n)$ and $q_n$ for $D_1(n+1\to n)$ and $e_n$ for $D_2(n\to n+1)$. Note that the chain is uniquely defined by the objects $D(n)$ and the morphisms $q_n$ and $e_n$. More generally, we will write $q_{m,n}$ for $D_1(m\to n)$ and $e_{n,m}$ for $D_2(n\to m)$.

\begin{proposition}\label{prop:chainsdontneedindependence} Let $(D_1,D_2)\colon \omega\op\to \cat{C}_\leftrightarrows$ be a chain and $(L,(p_n,i_n))$ be a cone for it. If $L$ satisfies normalization, then it satisfies independence.
\end{proposition}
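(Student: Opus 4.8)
The plan is to unfold the cone condition in $\cat{C}_\leftrightarrows$ into ordinary equations in $\cat{C}$, rewrite normalization as a pair of absorption identities, and then settle independence by two short computations. First I would record what it means for $(L,(p_n,i_n))$ to be a cone. A morphism $(L,L)\to(D(n),D(n))$ in $\cat{C}_\leftrightarrows$ is a pair consisting of $p_n\colon L\to D(n)$ and $i_n\colon D(n)\to L$, and composition is performed slotwise (one slot in $\cat{C}\op$, one in $\cat{C}$). Applying the cone condition to the unique morphism $m\to n$ of $\omega\op$ for $n\le m$ therefore yields the two familiar equations
\[ p_n = q_{m,n}\,p_m, \qquad i_n = i_m\,e_{n,m}, \]
with $q_{m,n}$ and $e_{n,m}$ as in the notation fixed before the statement. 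These just say that the $p_n$ assemble into a cone for $D_1$ and the $i_n$ into a cocone for $D_2$.

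Next I would abbreviate $r_m := p_m i_m\colon D(m)\to D(m)$ and $\pi_n := i_n p_n\colon L\to L$, the idempotent whose commutation we must establish. The content of normalization, namely $p_m i_m p_m = p_m$ and $i_m p_m i_m = i_m$, is exactly the pair of absorption laws $r_m p_m = p_m$ and $i_m r_m = i_m$. These are precisely strong enough to delete a stray factor $r_m$ whenever it is flanked by $p_m$ on its right or by $i_m$ on its left, which is all that the calculation will need.

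With this in hand, I would fix $n\le m$ (the case $m\le n$ being symmetric and $m=n$ trivial) and compute both composites. On one side, $p_n i_m = q_{m,n}p_m i_m = q_{m,n} r_m$, so that $\pi_n\pi_m = i_n q_{m,n} r_m p_m = i_n q_{m,n} p_m = i_n p_n = \pi_n$, using $r_m p_m = p_m$ and then the cone equation. On the other side, $p_m i_n = p_m i_m e_{n,m} = r_m e_{n,m}$, so that $\pi_m\pi_n = i_m r_m e_{n,m} p_n = i_m e_{n,m} p_n = i_n p_n = \pi_n$, using $i_m r_m = i_m$ and then the cocone equation. Hence $\pi_n\pi_m = \pi_n = \pi_m\pi_n$ (indeed $\pi_n\pi_m = \pi_{\min(m,n)}$, the nested-idempotent pattern familiar from embedding--projection pairs), which is the independence axiom of Definition~\ref{def:ambilimit}.

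The main obstacle to flag is a red herring that must be avoided rather than a genuine difficulty: in the domain-theoretic prototype the pairs satisfy $p_n i_n = \id$ (that is, $e^P_n e_n = \id$), and it is tempting to assume this. But a cone meeting only the regular-inverse form of normalization need not have $r_m = \id$, and the argument above never uses such an identity --- the weaker absorption laws $r_m p_m = p_m$ and $i_m r_m = i_m$ already suffice. The only other point demanding care is the variance bookkeeping in $\cat{C}_\leftrightarrows$ when extracting the cone equations, since that is the sole place the $\cat{C}\op\times\cat{C}$ structure intervenes.
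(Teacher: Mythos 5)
Your proof is correct and follows essentially the same route as the paper: both arguments show the stronger statement that for $n\le m$ each of $i_np_ni_mp_m$ and $i_mp_mi_np_n$ equals $i_np_n$, using the cone/cocone equations to rewrite $p_ni_m$ and $p_mi_n$ and then absorbing the idempotent $p_mi_m$ via normalization. Your abbreviations $r_m$ and $\pi_n$ and the remark that $p_ni_n=\id$ is not needed are harmless refinements of the same computation.
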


\begin{proof} 
  We wish to prove that \[i_np_ni_mp_m=i_mp_mi_np_n\] follows from the assumptions. We will prove a stronger claim, namely that whenever $n<m$, both sides of this equation are equal to $i_np_n$. For the left hand side this is done as follows:
    \begin{align*}
        i_np_ni_mp_m&=i_n q_{m,n}p_mi_mp_m \text{ because $L$ is a cone} \\
                    &=i_n q_{m,n}p_m \text{ by normalization} \\
                    &=i_np_n \text{ because $L$ is a cone.}
    \end{align*}
  The right hand side is similar: 
    \begin{align*}
        i_mp_mi_np_n&=i_mp_mi_me_{n,m}p_n \text{ because $L$ is a cone} \\
                    &=i_m e_{m,n}p_n \text{ by normalization} \\
                    &=i_np_n \text{ because $L$ is a cone.}
    \end{align*}
\end{proof}

\begin{corollary} Given the assumptions of Theorem~\ref{thm:limitcolimitcoincidence}, the equivalent conditions (i)-(iv) are equivalent to $(L,(e_n^P,e_n))$ being an ambilimit of $(D^P,D)\colon \omega\op\to \cat{C}_\leftrightarrows$.
\end{corollary}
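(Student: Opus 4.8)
The goal is to show that the four equivalent conditions of Theorem~\ref{thm:limitcolimitcoincidence} are further equivalent to $(L,(e_n^P,e_n))$ being an ambilimit of $(D^P,D)\colon\omega\op\to\cat{C}_\leftrightarrows$. The plan is to unfold the definition of ambilimit (Definition~\ref{def:ambilimit}) for the pair $(D^P,D)$ and match each constituent piece against the conditions already established. By Proposition~\ref{prop:chainsdontneedindependence}, the independence axiom is automatic for cones over chains once normalization holds, so the independence clause of the ambilimit definition can be discharged for free and attention restricted to: (a) $(L,(e_n^P))$ being a limit of $D^P$, (b) $(L,(e_n))$ being a colimit of $D$, and (c) the normalization equations $e_n^P e_n e_n^P = e_n^P$ and $e_n e_n^P e_n = e_n$.

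First I would observe that normalization is immediate from the embedding-projection structure: since each $(e_n,e_n^P)$ is an embedding-projection pair, we have $e_n^P e_n = \id[D(n)]$, which gives both $e_n e_n^P e_n = e_n$ and $e_n^P e_n e_n^P = e_n^P$ at once. Thus the normalization clause holds unconditionally for this particular cone, and independence then follows from Proposition~\ref{prop:chainsdontneedindependence}. This means being an ambilimit of $(D^P,D)$ reduces exactly to the conjunction of conditions (i) and (iii) of Theorem~\ref{thm:limitcolimitcoincidence} — namely that $(L,(e_n))$ is a colimit in $\cat{C}$ and $(L,(e_n^P))$ is a limit in $\cat{C}$.

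The remaining work is then purely bookkeeping against Theorem~\ref{thm:limitcolimitcoincidence}. I would argue as follows. If the ambilimit condition holds, then in particular (i) and (iii) both hold, so the equivalent conditions (i)--(iv) hold. Conversely, if any (hence all) of (i)--(iv) hold, then both (i) and (iii) hold, and together with the normalization and independence clauses just verified, the tuple $(L,(e_n^P,e_n))$ satisfies every requirement of Definition~\ref{def:ambilimit} and so is an ambilimit of $(D^P,D)$. Here one must be mildly careful that a cone for $(D^P,D)$ in $\cat{C}_\leftrightarrows$ unpacks precisely into a compatible pair: a cone $(L,(p_n))$ for $D^P$ together with a cocone $(L,(i_n))$ for $D$, which is exactly the data $(e_n^P,e_n)$ supplies.

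The only genuine subtlety, and the step I expect to require the most care, is confirming that the limit and colimit structures in the cofree dagger category $\cat{C}_\leftrightarrows$ correspond correctly to limits of $D^P$ and colimits of $D$ in $\cat{C}$ — that is, checking that the formal definition of ambilimit in $\cat{C}$ (Definition~\ref{def:ambilimit}, stated for $D_1\colon\cat{J}\to\cat{C}$ and $D_2\colon\cat{J}\op\to\cat{C}$) is being applied with the correct variances, since here $\cat{J}=\omega$ and we are considering $(D^P,D)\colon\omega\op\to\cat{C}_\leftrightarrows$. Matching the direction conventions so that $D^P$ plays the role of the limit diagram and $D$ the colimit diagram is the place where an off-by-a-duality slip could occur, but once the conventions are pinned down the equivalence is a direct consequence of Theorem~\ref{thm:limitcolimitcoincidence} and Proposition~\ref{prop:chainsdontneedindependence}.
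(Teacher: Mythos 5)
Your argument is correct and is exactly the intended one: the paper states this corollary without proof, treating it as immediate from combining Theorem~\ref{thm:limitcolimitcoincidence} with Proposition~\ref{prop:chainsdontneedindependence}, and you supply precisely the missing bookkeeping — normalization is free from $e_n^P e_n=\id$, independence then comes from the proposition, and the remaining content collapses to conditions (i) and (iii), which the theorem already identifies. Your flagged concern about variances is also resolved correctly: with $\cat{J}=\omega\op$, the diagram $D^P$ plays the role of $D_1$ (limit) and $D$ that of $D_2$ (colimit), as you say.
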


Such ambilimits of chains might exist in categories that are not \cat{O}-categories. For instance, they are known to exist in categories enriched over complete metric spaces~(\cite{america:metric,adamek:banach,birkedal:metric}). The next section studies another case not falling under either of these.

 \section{Fixed points}\label{sec:unenrichedfixedpoints}

The main use of Theorem~\ref{thm:limitcolimitcoincidence} is as a building block for models of algebraic datatypes. Algebraic datatypes are usually modelled as initial algebras for certain endofunctors~\cite{lehmannsmyth:datatypes}. Using Ad\'amek's Theorem~\cite{adamek1979least}, one can show that every functor preserving \cat{O}-colimits has an initial algebra. Moreover, \cat{O}-colimits are absolute in the sense that any locally continuous functor preserves them. Hence initial algebras are numerous in \cat{O}-categories. An added benefit is that the initial algebras thus built turn out to be canonical fixed points, \ie also terminal coalgebras. This enables one to both program with these datatypes (via the initial algebra structure) and to reason about them (via the terminal coalgebra structure). 

In our case we are working without enrichment, which confers a possible benefit: now one can hope to find suitable models for programming semantics in categories that are not suitably enriched. However, this comes at a price: it is less clear that many functors preserve such ambilimits and hence it is unclear if one can build canonical fixed points for a wide class of functors. In this section we show that, under suitable assumptions, rig-polynomial functors have canonical fixed points. We interleave the discussion with a dagger analogue of the same story. The goal of the dagger version is to develop a theory suitable for languages taking semantics in dagger categories. For instance, the language defined in~\cite{valironetal:quantumcontrol} takes its semantics in \Hilb, the category of Hilbert spaces and non-expansive linear maps. Moreover, the semantics implicitly use various initial algebras. 

We start by covering the usual theory very briefly.

\begin{definition}\index[word]{$F$-algebra} Let $F$ be an endofunctor on a category \cat{C}. An $F$-algebra is a map $a\colon F(A)\to A$, and a morphism of $F$-algebras $a\to b$ is a map $f\colon A\to B$ in \cat{C} making the square 
    \[\begin{tikzpicture}
         \matrix (m) [matrix of math nodes,row sep=2em,column sep=4em,minimum width=2em]
         {
          F(A) & F(B) \\
          A & B \\};
         \path[->]
         (m-1-1) edge node [left] {$a$} (m-2-1)
                edge node [above] {$Ff$} (m-1-2)
         (m-1-2) edge node [right] {$b$} (m-2-2)
         (m-2-1) edge node [below] {$f$} (m-2-2);
  \end{tikzpicture}\]
commute. This results in a category of $F$-algebras, resulting in the notion of an \emph{initial algebra} for a endofunctor. $F$-coalgebras and terminal coalgebras are defined dually.
\end{definition}%

Lambek's lemma, a basic result about initial algebras, asserts that whenever $a\colon F(A)\to A$ is an initial $F$-algebra, the morphism $a$ is in fact an isomorphism in \cat{C}. This makes the following definition reasonable.%

\begin{definition}\index[word]{canonical fixed point}\index[word]{dagger fixed point} A canonical fixed point of an endofunctor $F$ is an initial $F$-algebra $a$ such that $a^{-1}$ is a terminal coalgebra. A \emph{dagger fixed point} for a dagger endofunctor $F\colon\cat{C}\to\cat{C}$ is a canonical fixed point $a\colon FA\to A$ such that $a$ is unitary.
\end{definition} 

Note that the self-duality given by the dagger implies that $a^\dag$ is a terminal coalgebra. Since Lambek's lemma says that any initial algebra $a$ is an isomorphism, one could equivalently define a dagger fixed point as an initial algebra $a\colon FA\to A$ for which $a$ is unitary. Hence we are merely requiring something that is invertible to be dagger invertible. 

We refer the reader to~\cite{freyd:algebraically,barr:algebraically} for more on canonical fixed points and to the review article~\cite{adamek2018fixed} for more on fixed points of functors in general.  As mentioned before, Ad\'amek's Theorem~\cite{adamek1979least} is used to construct many initial algebras. 
As is usual in theoretical computer science, we focus on the special case of chains indexed by natural numbers, the generalization to larger ordinals being straightforward. 

\begin{theorem}\label{thm:adamek} Let \cat{C} be a category that has an initial object $0$ and let $F$ be an endofunctor on it. If the chain 
  \[0\to F(0)\to FF(0)\to\dots\]
has a colimit $L$ and $F$ preserves it, then $a\colon F(L)\to L$ is an initial $F$-algebra, where $a\colon F(L)\to L$ is the unique morphism making the triangle
      \[\begin{tikzpicture}
         \matrix (m) [matrix of math nodes,row sep=2em,column sep=4em,minimum width=2em]
         {
          F^n(0) & F(L) \\
           & L \\};
         \path[->]
         (m-1-1) edge node [left] {$i_n$} (m-2-2)
                edge node [above] {$F(i_{n-1})$} (m-1-2)
         (m-1-2) edge node [right] {$a$} (m-2-2);
  \end{tikzpicture}\]
commute.
\end{theorem}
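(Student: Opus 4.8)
The plan is to exploit that, because $F$ preserves the colimit $L$, the object $F(L)$ is itself the colimit of the shifted chain $F(0)\to FF(0)\to\cdots$, with colimit injections $F(i_n)\colon F^{n+1}(0)\to F(L)$. First I would fix notation: writing $d_0\colon 0\to F(0)$ for the unique map out of the initial object and $d_n=F^n(d_0)$ for the connecting maps, the injections satisfy $i_{n+1}\circ d_n=i_n$, and the defining triangle reads $a\circ F(i_n)=i_{n+1}$ for all $n$. The map $a$ exists and is unique precisely because $(i_{n+1})_n$ is a cocone on the shifted chain (since $i_{n+2}\circ F(d_n)=i_{n+2}\circ d_{n+1}=i_{n+1}$) and $F(L)$ is its colimit. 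This single observation — that maps out of $F(L)$ are determined by their composites with the $F(i_n)$ — is the engine for everything that follows.

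Next I would establish initiality. Given an arbitrary $F$-algebra $b\colon F(B)\to B$, I would build a cocone on the original chain with apex $B$ by setting $h_0\colon 0\to B$ to be the unique map and $h_{n+1}=b\circ F(h_n)$. A short induction shows $h_{n+1}\circ d_n=h_n$ (the base case uses initiality of $0$, and the inductive step applies $F$ to the hypothesis and composes with $b$), so the universal property of $L$ yields a unique $f\colon L\to B$ with $f\circ i_n=h_n$. To see $f$ is an algebra homomorphism, i.e.\ $f\circ a=b\circ F(f)$, I would compare both maps $F(L)\to B$ on the injections $F(i_n)$: the left side gives $f\circ a\circ F(i_n)=f\circ i_{n+1}=h_{n+1}$, while the right side gives $b\circ F(f)\circ F(i_n)=b\circ F(h_n)=h_{n+1}$, and equality on all $F(i_n)$ forces $f\circ a=b\circ F(f)$.

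Finally I would prove uniqueness of $f$. If $g\colon (L,a)\to(B,b)$ is any algebra homomorphism, so $g\circ a=b\circ F(g)$, then I claim $g\circ i_n=h_n$ for all $n$, which pins down $g=f$ by the universal property of $L$. The base case again uses initiality of $0$; for the inductive step I would use the defining triangle $i_{n+1}=a\circ F(i_n)$ to compute $g\circ i_{n+1}=g\circ a\circ F(i_n)=b\circ F(g)\circ F(i_n)=b\circ F(g\circ i_n)=b\circ F(h_n)=h_{n+1}$.

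I do not anticipate a genuinely hard obstacle here; this is the classical argument behind Ad\'amek's theorem. The only point requiring care is the recurring use of the preserved colimit: since $F(L)$ is a colimit, two morphisms out of it agree as soon as they agree after precomposition with every $F(i_n)$, and this is exactly what legitimises both the definition of $a$ and the verification that $f$ is an algebra map. Keeping the roles of $i_n$ versus $F(i_n)$, and of $d_n$ versus $F(d_n)=d_{n+1}$, straight throughout the diagram chases is the main bookkeeping burden rather than any conceptual difficulty.
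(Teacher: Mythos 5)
Your proof is correct and is precisely the classical argument for Ad\'amek's theorem; the paper itself does not give a proof but defers to the cited references, where this is the standard argument (shifted chain, cocone $(h_n)$ built by $h_{n+1}=b\circ F(h_n)$, and the two inductions for existence and uniqueness of the mediating algebra map). All the key steps — that $F(i_n)$ are jointly epic out of $F(L)$ because $F$ preserves the colimit, and the verification that $(h_n)$ is a cocone — are handled correctly.
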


\begin{proof} See \eg~\cite{adamek1979least} or~\cite{smythplotkin:datatypes}.
\end{proof}

Next we prove variants of Ad\'amek's Theorem for ambilimits and for dagger categories. By a chain of isometries we mean a sequence $A_0\sxto{f_0} A_1\sxto{f_1}\dots$ where each $f_i$ is a dagger isometry. A dagger colimit of a chain of isometries is a dagger colimit of such a diagram with support all objects in the diagram. One can easily adapt Ad\'amek's Theorem\cite{adamek1979least} to the dagger setting. Below is a special case for $\omega$-chains, generalizing to ordinals $\alpha >\omega$ is straightforward.

\begin{theorem}\label{thm:daggeradamek}\index[word]{dagger fixed point} Assume that \cat{C} has dagger colimits of chains of isometries and a zero object. If $F\colon\cat{C}\to\cat{C}$ is a dagger functor preserving such colimits, then it has a dagger fixed point.
\end{theorem}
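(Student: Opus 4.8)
The plan is to mimic the proof of the classical Ad\'amek Theorem (Theorem~\ref{thm:adamek}), but working entirely within the dagger world so that the resulting initial algebra is automatically unitary. First I would form the chain starting from the zero object. Since \cat{C} has a zero object $0$, and $0$ is initial, there is a unique map $0\to F(0)$. The key point is that this map is automatically a dagger isometry: indeed, since $0$ is a zero object in a dagger category, any morphism out of $0$ is the composite $0\to 0\to F(0)$ through the zero maps, and one checks that the canonical map $0\to F(0)$ satisfies $f^\dag f=\id[0]$ trivially because $\id[0]=0_{0,0}$ is the only endomorphism of $0$. Applying $F$ and using that $F$ is a dagger functor, every map $F^n(0)\to F^{n+1}(0)$ in the chain is then a dagger isometry as well. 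Hence the diagram $0\to F(0)\to FF(0)\to\cdots$ is a genuine chain of isometries.

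Next I would invoke the hypothesis: \cat{C} has dagger colimits of chains of isometries, so this chain has a dagger colimit $(L,(i_n))$ with the $i_n$ isometries, and $F$ preserves it. By Theorem~\ref{thm:adamek} (ordinary Ad\'amek), the induced map $a\colon F(L)\to L$ is an initial $F$-algebra; in particular, by Lambek's lemma $a$ is an isomorphism. It remains only to show that $a$ is \emph{unitary}, for then by the remark following Definition of dagger fixed point, $a$ is a canonical fixed point with $a^\dag$ a terminal coalgebra, completing the proof. So the crux is upgrading ``isomorphism'' to ``unitary''.

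To get unitarity I would argue that $a$ is the canonical comparison map between two dagger colimits of the \emph{same} chain of isometries, and then appeal to uniqueness up to unitary isomorphism. Concretely, since $F$ preserves the dagger colimit, $F(L)$ together with the cocone $F(i_n)$ is a dagger colimit of the shifted chain $F(0)\to FF(0)\to\cdots$. But this shifted chain has the same dagger colimit as the original chain (the first object $0$ contributes nothing, as it is initial/zero and the relevant support is cofinal), so both $L$ and $F(L)$ are dagger colimits of a common chain of isometries. The map $a$ is precisely the canonical comparison of cocones between these two dagger colimits. By the dagger analogue of uniqueness of (co)limits --- the dual of Theorem~\ref{thm:daglimsunique}, which says that the canonical isomorphism between two dagger colimits of the same diagram is unitary --- the comparison map $a$ is unitary. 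This is the step I expect to be the main obstacle: one must verify carefully that $a$ really is the canonical map of dagger colimits (matching cocone structures through the shift), and that the ``support'' conventions for dagger colimits of chains of isometries are preserved under dropping the initial zero object, so that Theorem~\ref{thm:daglimsunique} (dualized) genuinely applies. Once that identification is made, unitarity of $a$ is immediate, and hence $a\colon F(L)\to L$ is a dagger fixed point as required.
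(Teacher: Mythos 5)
Your proposal is correct and follows essentially the same route as the paper: form the chain from the zero object, apply ordinary Ad\'amek's theorem to get an initial algebra $a\colon F(L)\to L$, and then observe that $a$ is an isomorphism of dagger colimits, hence unitary by (the dual of) Theorem~\ref{thm:daglimsunique}. You simply make explicit two points the paper leaves implicit --- that every map in the chain is automatically an isometry, and that $F(L)$ is a dagger colimit of the (cofinally equivalent) shifted chain so that $a$ really is the canonical comparison --- both of which check out.
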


\begin{proof} Let $(L,l_n)$ be the dagger colimit of the chain $0\to F(0)\to FF(0)\to\dots$, and let $a\colon F(L)\to L$ be the unique map making the triangle 
      \[\begin{tikzpicture}
         \matrix (m) [matrix of math nodes,row sep=2em,column sep=4em,minimum width=2em]
         {
          F^n(1) & F(L) \\
           & L \\};
         \path[->]
         (m-1-1) edge node [left] {$i_n$} (m-2-2)
                edge node [above] {$F(i_{n-1})$} (m-1-2)
         (m-1-2) edge node [right] {$a$} (m-2-2);
  \end{tikzpicture}\]
commute. Then $a$ is an initial algebra by Theorem~\ref{thm:adamek} and hence $a^\dag$ is a terminal coalgebra. Moreover, since $a$ is an isomorphism of dagger colimits, it is unitary by Theorem~\ref{thm:daglimsunique}, and hence $a$ is a dagger fixed point.
\end{proof}

\begin{theorem}\label{thm:ambiadamek}\index[word]{canonical fixed point} Let \cat{C} be a category that has a zero object and $F$ an endofunctor on it.
If the chain 
 \[0\xrightarrows[$0$][$0$] F(0) \xrightarrows[$F(0)$][$F(0)$] FF(0)\xrightarrows\dots\]
has an ambilimit preserved by $F$, then $F$ has a canonical fixed point.
\end{theorem}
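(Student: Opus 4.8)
The plan is to run Ad\'amek's construction in the cofree dagger category $\cat{C}_\leftrightarrows$ and then transport the conclusion back to $\cat{C}$, exactly as in the proof of Theorem~\ref{thm:daggeradamek} but now for a general (non-dagger) endofunctor. First I would package the ambilimit data. By hypothesis the chain
\[0\xrightarrows[$0$][$0$] F(0) \xrightarrows[$F(0)$][$F(0)$] FF(0)\xrightarrows\dots\]
has an ambilimit $(L,(p_n),(i_n))$; by Theorem~\ref{thm:ambilimitsunique} (via Proposition~\ref{prop:cofree}) this is precisely a dagger limit of the induced diagram $(D_1,D_2)\colon\omega\op\to\cat{C}_\leftrightarrows$, where $D_1$ uses the projection legs and $D_2$ the embedding legs, with $\Omega=\omega$. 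Note that because $0$ is a zero object, each object $F^n(0)$ carries a canonical comonoid/monoid structure giving every morphism a partner, so the two diagrams $D_1,D_2$ agree on objects and the ambilimit hypothesis makes sense. The endofunctor $F$ lifts to a dagger endofunctor $F_\leftrightarrows\colon\cat{C}_\leftrightarrows\to\cat{C}_\leftrightarrows$ (by $F_\leftrightarrows(A,A)=(FA,FA)$ and $F_\leftrightarrows(f,g)=(Ff,Fg)$), and the assumption that $F$ preserves the ambilimit says exactly that $F_\leftrightarrows$ preserves this dagger limit/colimit.

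Next I would invoke Theorem~\ref{thm:adamek} (ordinary Ad\'amek) applied to $F$ and the colimit half of the ambilimit. Since $(L,(i_n))$ is a colimit of the chain $0\to F(0)\to\cdots$ and $F$ preserves it, there is a unique $a\colon F(L)\to L$ with $a\circ F(i_{n-1})=i_n$, and this $a$ is an initial $F$-algebra. Dually, applying Theorem~\ref{thm:adamek} in $\cat{C}\op$ to the limit half $(L,(p_n))$ — which is a limit of the chain of projections and is preserved by $F$ — yields that the corresponding structure map is a terminal $F$-coalgebra. The remaining point is to check that these two maps are mutually inverse, i.e.\ that the terminal coalgebra structure is $a^{-1}$; this is where the compatibility (normalization and independence) of the ambilimit is used, and it is the step I expect to require the most care.

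To handle that compatibility cleanly I would argue inside $\cat{C}_\leftrightarrows$. There, $F_\leftrightarrows$ preserves the dagger colimit of isometries $0\to F_\leftrightarrows(0)\to\cdots$, so the induced algebra map $\alpha\colon F_\leftrightarrows(L,L)\to(L,L)$ is, by the dagger version of Ad\'amek (Theorem~\ref{thm:daggeradamek}, or more precisely its proof via Theorem~\ref{thm:daglimsunique}), a \emph{dagger} fixed point: an initial algebra that is unitary, with $\alpha^\dag$ a terminal coalgebra. Unpacking a unitary isomorphism in $\cat{C}_\leftrightarrows$ back into $\cat{C}$ — just as in the proof of Theorem~\ref{thm:ambilimitsunique} — yields a pair $(a,b)$ of morphisms of $\cat{C}$ with $a\colon F(L)\to L$ and $b\colon L\to F(L)$ that are mutually inverse, with $a$ an initial algebra and $a^{-1}=b$ a terminal coalgebra. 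Thus $a$ is a canonical fixed point of $F$, as required.

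The main obstacle, as flagged, is the identification of $a^{-1}$ with the terminal coalgebra, equivalently the verification that the ambilimit's normalization and independence axioms force the limit-side and colimit-side structure maps of $\cat{C}_\leftrightarrows$ to be genuine daggers of each other rather than merely abstract inverses. Once the translation to a dagger fixed point in $\cat{C}_\leftrightarrows$ is set up correctly, this reduces to the already-established uniqueness-up-to-unitary-isomorphism of dagger limits (Theorem~\ref{thm:daglimsunique}); the bulk of the work is bookkeeping to confirm that $F$-preservation of the ambilimit is the same datum as $F_\leftrightarrows$-preservation of the dagger limit, and that Lambek's lemma then upgrades initiality to the desired invertibility.
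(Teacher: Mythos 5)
Your proposal is correct and is essentially the paper's own argument: ordinary Ad\'amek and its dual give the initial algebra $a$ and terminal coalgebra $b$, and the identification $b=a^{-1}$ is precisely the content of Theorem~\ref{thm:ambilimitsunique} (the unique cone-isomorphism and the unique cocone-isomorphism between two ambilimits coincide), which the paper cites directly and which you re-derive inline by passing to $\cat{C}_\leftrightarrows$ and invoking the unitarity of the comparison map from Theorem~\ref{thm:daglimsunique}. One aside in your write-up is spurious but not load-bearing: no ``canonical comonoid/monoid structure'' on the objects $F^n(0)$ is needed for $D_1$ and $D_2$ to agree on objects --- they agree by construction of the chain, whose forward and backward legs are the zero morphisms and their images under $F$.
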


\begin{proof} 
 Let $(L,(p_n,i_n))$ be an ambilimit of the chain above. By Theorem~\ref{thm:adamek} $a\colon F(L)\to L$ is an initial algebra and by the dual of the same theorem $b\colon L\to F(L)$ is a terminal coalgebra, where $a$ and $b$ are the unique maps making the triangles 
          \[\begin{tikzpicture}
         \matrix (m) [matrix of math nodes,row sep=2em,column sep=4em,minimum width=2em]
         {
          F^n(1) & F(L) \\
           & L \\};
         \path[->]
         (m-1-1) edge node [below] {$i_n$} (m-2-2)
                edge node [above] {$F(i_{n-1})$} (m-1-2)
         (m-1-2) edge node [right] {$a$} (m-2-2);
  \end{tikzpicture} \qquad 
  \begin{tikzpicture}
         \matrix (m) [matrix of math nodes,row sep=2em,column sep=4em,minimum width=2em]
         {
          F^n(1) & F(L) \\
           & L \\};
         \path[<-]
         (m-1-1) edge node [below] {$p_n$} (m-2-2)
                edge node [above] {$F(p_{n-1})$} (m-1-2)
         (m-1-2) edge node [right] {$b$} (m-2-2);
  \end{tikzpicture}\]
  commute. It suffices to prove that $b=a^{-1}$. This is easy: $a$ is in fact an isomorphism of colimits and $b$ is an isomorphism of limits, and by Theorem~\ref{thm:ambilimitsunique} these coincide.
\end{proof}

The goal of this section is to leverage Theorems~\ref{thm:daggeradamek} and \ref{thm:ambiadamek} to build canonical fixed points for a wide class of endofunctors. Concrete examples of ambilimits of chains seem to suggest that they tend to exist when $D_1$ and $D_2$ are compatible in some sense. For instance, in the case of \cat{O}-categories, either half of the chain determines the other half uniquely. We first investigate the case of chains of split monics \ie chains
 \[D(0)\xrightarrows[$q_0$][$e_0$] D(1) \xrightarrows[$q_1$][$e_1$] D(2)\xrightarrows\dots\]
 that satisfy $q_ne_n=\id$ for every $n$. Recall the notation defined before Proposition~\ref{prop:chainsdontneedindependence}: For a chain $(D_1,D_2)\colon \omega\op\to \cat{C}_\leftrightarrows$, we write $D(n)$ for $D_1(n)=D_2(n)$ and $q_n$ for $D_1(n+1\to n)$ and $e_n$ for $D_2(n\to n+1)$, and more generally $q_{m,n}$ denotes $D_1(m\to n)$ and $e_{n,m}$ means $D_2(n\to m)$.

\begin{theorem}\label{thm:ambilimitiffiso} Given a chain  \[D(0)\xrightarrows[$q_0$][$e_0$] D(1) \xrightarrows[$q_1$][$e_1$] D(2)\xrightarrows\dots\] 
satisfying $q_ne_n=\id[D(n)]$, the following are equivalent:
  \begin{enumerate}[(i)]
    \item It has an ambilimit $(L,(p_n,i_n))$
    \item The diagram 
      \[D(0)\sxto{e_0} D(1) \sxto{e_1} D(2)\sxto\dots\] 
    has a colimit $(L,(i_n))$ and the diagram 
      \[D(0)\sxfrom{q_0} D(1) \sxfrom{q_1} D(2)\sxfrom\dots\]
     a limit $(M,(q_n))$ such that the canonical morphism $f\colon L\to M$ is an isomorphism. This canonical map is the unique map $f\colon L\to M$ making the diagram
          \[\begin{tikzpicture}
         \matrix (m) [matrix of math nodes,row sep=2em,column sep=4em,minimum width=2em]
         {
          L & M \\
          D(n) & D(m)  \\};
         \path[->]
         (m-1-1)  edge node [above] {$f$} (m-1-2)
         (m-1-2) edge node [right] {$q_m$} (m-2-2)
         (m-2-1) edge node [below] {$f_{n,m}$} (m-2-2)
                 edge node [left] {$i_n$} (m-1-1);
  \end{tikzpicture}\]
    commute, where $f_{n,m}$ is the cone structure on $D(n)$ given by
      \[f_{n,m}:=\begin{cases} e_{n,m}&\text{if }n\leq m  \\
                              q_{n,m}&\text{otherwise.}\end{cases}\]
  \end{enumerate}
\end{theorem}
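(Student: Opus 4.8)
The plan is to prove the two implications separately, using the maps $f_{n,m}$ as the common bookkeeping device and exploiting the identity $f_{n,n}=\id[D(n)]$ throughout.

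For (ii)$\Rightarrow$(i), I would simply transport the limit structure along the given isomorphism. Writing $\bar q_m\colon M\to D(m)$ for the legs of the limit $M$, set $p_n:=\bar q_n\circ f\colon L\to D(n)$. Since $f$ is an isomorphism and $(M,\bar q_m)$ is a limit of the projection chain, $(L,p_n)$ is again such a limit, while $(L,i_n)$ is by hypothesis a colimit of the embedding chain. The only thing left is normalization, and here the definition of $f$ does the work: $p_n i_n=\bar q_n f i_n=f_{n,n}=e_{n,n}=\id[D(n)]$, so that $i_np_ni_n=i_n$ and $p_ni_np_n=p_n$ hold at once. Independence is then automatic by Proposition~\ref{prop:chainsdontneedindependence}, so $(L,(p_n,i_n))$ is an ambilimit.

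For the converse (i)$\Rightarrow$(ii), the colimit and limit demanded in (ii) are provided directly by the ambilimit: take the colimit to be $(L,(i_n))$ and the limit to be $M=L$ with legs $\bar q_n:=p_n$. Everything then reduces to showing that the canonical comparison $f\colon L\to L$ is the identity, i.e. that $p_mi_n=f_{n,m}$ for all $n,m$. By the cone and cocone equations ($i_n=i_m e_{n,m}$ for $n\le m$, and $p_m=q_{n,m}p_n$ for $n\ge m$) this in turn reduces to the single diagonal identity $p_ni_n=\id[D(n)]$. To obtain it I would first show that each colimit leg $i_n$ is a split monomorphism: the family $f_{m,n}\colon D(m)\to D(n)$ (embeddings for $m\le n$, projections for $m> n$) forms a cocone over the embedding chain, the compatibility at the step $m=n$ being exactly the splitting hypothesis $q_ne_n=\id$, and the induced map $r_n\colon L\to D(n)$ then satisfies $r_ni_n=f_{n,n}=\id[D(n)]$. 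Since $i_n$ is thus (split, hence) monic, the normalization equation $i_n(p_ni_n)=i_np_ni_n=i_n$ forces $p_ni_n=\id[D(n)]$. With the diagonal identity established, the routine case analysis yields $p_mi_n=f_{n,m}$ throughout, so $\id[L]$ satisfies the defining equation of the canonical morphism and hence $f=\id[L]$ is an isomorphism.

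The main obstacle is precisely this diagonal identity $p_ni_n=\id$: for a general ambilimit the legs are only regular inverses, so $p_ni_n$ is a priori merely an idempotent, and it is only the splitting hypothesis $q_ne_n=\id$ — entering through the cocone $f_{m,n}$ and the resulting split monicity of $i_n$ — that collapses it to the identity. Once this is in place, both directions are short formal manipulations, and I do not expect the universal properties to be needed beyond joint monicity of the $p_m$ and joint epicness of the $i_n$ used to compare parallel maps.
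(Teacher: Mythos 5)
Your proof is correct and follows essentially the same route as the paper's: both directions hinge on the diagonal identity $p_ni_n=\id[D(n)]$, obtained by combining normalization with a splitting extracted from a universal property via the maps $f_{n,m}$ (whose compatibility at the step $m=n$ encodes the hypothesis $q_ne_n=\id$), followed by the same case analysis giving $p_mi_n=f_{n,m}$. The only, immaterial, difference is that in (i)$\Rightarrow$(ii) the paper shows $p_n$ is split epic by factoring the cone $(f_{n,m})_m$ through the limit $L$ and cancelling $p_n$ on the right of $p_ni_np_n=p_n$, whereas you show $i_n$ is split monic by factoring the cocone $(f_{m,n})_m$ through the colimit and cancelling $i_n$ on the left of $i_np_ni_n=i_n$ --- a formally dual step with the same conclusion.
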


\begin{remark}
The assumption $q_ne_n=\id[D(n)]$ is needed to ensure that $f_{n,m}$ defines a cone structure on $D(n)$, which in turn is needed to guarantee the existence of $f$. Given the theorem, both biproducts (in the presence of zero morphisms) and ambilimits of chains of split monics can be phrased in terms of a canonical map from the colimit to the limit being an isomorphism. Hence one might be tempted to guess that this holds for all ambilimits. However, it is unclear if this holds for arbitrary diagrams: indeed, given an arbitrary diagram $(D_1,D_2)\colon\cat{J}\to\cat{C}_\leftrightarrows$ it is unclear why there should be a canonical morphism from the colimit of $D_2$ to the limit of $D_1$. 
\end{remark}

\begin{proof} 
  To prove that (i) implies (ii) it suffices to show that \id[L] makes the diagram
       \[\begin{tikzpicture}
         \matrix (m) [matrix of math nodes,row sep=2em,column sep=4em,minimum width=2em]
         {
          L & L \\
          D(n) & D(m)  \\};
         \path[->]
         (m-1-1)  edge node [above] {$\id$} (m-1-2)
         (m-1-2) edge node [right] {$p_m$} (m-2-2)
         (m-2-1) edge node [below] {$f_{n,m}$} (m-2-2)
                 edge node [left] {$i_n$} (m-1-1);
      \end{tikzpicture}\]
  commute. We begin by showing that $p_ni_n=\id[D(n)]$. This follows from normalization once we show that $p_n$ is epic. In fact it is split epic: the maps $(f_{n,m})_{m\in \N }$ define a cone structure on $D(n)$ which hence must factor via some $g\colon D(n)\to L$ (it will follow that $g=i_n$). Hence $\id[D(n)]=f_{n,n}=p_n g$ so that $p_n$ splits, as desired. Now it is easy to show that the diagram above commutes for all $n$ and $m$. We show this holds in case $n\geq m$, when $f_{n,m}=q_{n,m}$, the case $n>m$ being similar. Since $p_ni_n=\id[D(n)]$, the triangle on the left in the diagram 
    \[\begin{tikzpicture}
         \matrix (m) [matrix of math nodes,row sep=2em,column sep=4em,minimum width=2em]
         {
          D(n)&  \\
           & D(n) & D(m)  \\
          L \\};
         \path[->]
         (m-1-1)  edge[out=0,in=135] node [above] {$q_{n,m}$} (m-2-3)
                  edge node [left] {$i_n$} (m-3-1)
                  edge node [below] {$\id$} (m-2-2)
         (m-2-2) edge node [above] {$q_{n,m}$} (m-2-3)
         (m-3-1) edge node [above] {$p_n$} (m-2-2)
                 edge[out=0,in=-135] node [below] {$p_m$} (m-2-3);
      \end{tikzpicture}\]
  commutes. The triangle on the top commutes by definition and the bottom triangle commutes because $(L,(p_k)_{k\in\N})$ is a cone. Hence the whole diagram commutes, as desired.

  To prove that (ii) implies (i), note that defining $p_n:=(q_n\circ f)$ makes $(L,p_n)$ into a limit. Hence by Proposition~\ref{prop:chainsdontneedindependence} it suffices to check that $(L,(p_n,i_n))$ satisfies normalization. But this is immediate from the fact that $p_n i_n=q_n f i_n=f_{n,n}=\id[D(n)]$.
\end{proof}

\begin{corollary}\label{cor:connectingmapsofambilims}
  If $(L,(p_n,i_n))$ is the ambilimit of a chain
   \[[D(0)\xrightarrows[$q_0$][$e_0$] D(1) \xrightarrows[$q_1$][$e_1$] D(2)\xrightarrows\dots\] 
  satisfying $q_ne_n=\id[D(n)]$, then the composite $p_me_n\colon D(n)\to L\to D(m)$ satisfies \[p_me_n=\begin{cases} e_{n,m} &\text{if }n\leq m  \\
                              q_{n,m}&\text{otherwise.}\end{cases}\]
\end{corollary}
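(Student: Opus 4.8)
The plan is to read the formula off directly from the two defining cone/cocone conditions of the ambilimit, using as the single nontrivial input the normalization identity $p_n i_n = \id[D(n)]$. This sharper identity is not immediate from the normalization axiom of Definition~\ref{def:ambilimit}, which only asserts that $p_n$ and $i_n$ are regular inverses; but it was already extracted inside the proof of Theorem~\ref{thm:ambilimitiffiso}. There the hypothesis $q_n e_n = \id[D(n)]$ is used to exhibit the family $(f_{n,m})_{m\in\N}$ as a cone on $D(n)$, which then factors through the limiting cone $(L,(p_m))$ and so makes each $p_n$ split epic; combined with the normalization equation $p_n i_n p_n = p_n$ this forces $p_n i_n = \id[D(n)]$. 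So the first step is simply to import $p_n i_n = \id[D(n)]$ from that proof.

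With $p_n i_n = \id[D(n)]$ in hand, the claim splits into the two cases of the piecewise definition. When $n \le m$, the colimit cocone condition gives $i_n = i_m e_{n,m}$, so the composite $D(n)\xrightarrow{i_n} L\xrightarrow{p_m} D(m)$ satisfies
\[
  p_m i_n = p_m i_m e_{n,m} = (p_m i_m)\, e_{n,m} = e_{n,m}.
\]
When $n > m$, the limit cone condition gives $p_m = q_{n,m} p_n$, whence
\[
  p_m i_n = q_{n,m} p_n i_n = q_{n,m}\,(p_n i_n) = q_{n,m}.
\]
These are precisely the two branches of the asserted formula, with $n=m$ covered by either line (both collapsing to $\id[D(n)]$).

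In effect this corollary is just a restatement of the commuting square proved within Theorem~\ref{thm:ambilimitiffiso} — namely that $\id[L]$ witnesses $p_m i_n = f_{n,m}$ — the present statement merely unwinding $f_{n,m}$ into its two cases. I therefore do not expect a genuine obstacle; the only point worth stressing is that both computations rest on the split-monic hypothesis $q_n e_n = \id[D(n)]$, since it is exactly this assumption that upgrades the regular-inverse normalization to the identity $p_n i_n = \id[D(n)]$. Without it the connecting maps $p_m i_n$ would not in general be recoverable from the structure maps of the chain.
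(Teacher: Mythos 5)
Your proposal is correct and follows essentially the same route as the paper: the corollary is stated without a separate proof precisely because it is read off from the commuting square $p_m i_n = f_{n,m}$ established in the proof of Theorem~\ref{thm:ambilimitiffiso}, and your two-case computation (using the cocone identity $i_n = i_m e_{n,m}$ for $n\le m$ and the cone identity $p_m = q_{n,m}p_n$ for $n>m$, together with $p_n i_n = \id[D(n)]$ extracted from the split-epi argument there) is exactly the one carried out in that proof.
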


In the setting of \cat{O}-categories, for a functor to preserve \cat{O}-colimits it is enough for it to be locally continuous, \ie an enriched functor. In the unenriched setting of this chapter, we don't have as convenient a sufficient condition to ensure preservation of ambilimits of chains. However, it turns out that for chains of split monics, it is enough to preserve the universal property of the limit (or of the colimit), provided that the resulting diagram admits an ambilimit. In particular, being (co)continuous will often ensure preservation of ambilimits of suitable chains, even if in principle an ambilimit is both a limit and a colimit.  

\begin{lemma}\label{lem:preservationofambilims} 
  Let 
  \[D(0)\xrightarrows[$q_0$][$e_0$] D(1) \xrightarrows[$q_1$][$e_1$] D(2)\xrightarrows\dots\] 
  be a chain in \cat{C} satisfying $q_ne_n=\id[D(n)]$ with an ambilimit $(L,(p_n),(i_n))$, and let $F\colon\cat{C}\to\cat{D}$ be a functor. If 
  \begin{itemize}
      \item the chain
        \[FD(0)\xrightarrows[$Fq_0$][$Fe_0$] FD(1) \xrightarrows[$Fq_1$][$Fe_1$] FD(2)\xrightarrows\dots\] 
        has an ambilimit and
        \item $(FL,Fi_n)$ is a colimit of the chain
          \[FD(0)\sxto{Fe_0} FD(1) \sxto{Fe_1} FD(2)\sxto\dots\]  
  \end{itemize}
 then $(FL,(Fp_n),(Fi_n))$ is in fact an ambilimit.
\end{lemma}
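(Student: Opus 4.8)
The plan is to reduce everything to two facts: that $F$, being a functor, automatically preserves every equation defining an ambilimit of a split-monic chain, and that it therefore only remains to check that $(FL,(Fp_n))$ is a limit of the image $q$-chain. First I would observe that applying $F$ to $q_ne_n=\id[D(n)]$ gives $Fq_n\circ Fe_n=\id[FD(n)]$, so the image diagram is again a chain of split monics and all the cited results about such chains apply to it. Likewise, applying $F$ to the normalization equations $p_ni_np_n=p_n$, $i_np_ni_n=i_n$ and to the independence equation $i_np_ni_mp_m=i_mp_mi_np_n$ — all of which hold for the ambilimit $(L,(p_n,i_n))$ — shows that the candidate $(FL,(Fp_n,Fi_n))$ satisfies normalization and independence. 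By the second hypothesis $(FL,(Fi_n))$ is a colimit of the image $e$-chain. Hence the only genuine content is to show that $(FL,(Fp_n))$ is a limit of the image $q$-chain; once this is established, $(FL,(Fp_n,Fi_n))$ is an ambilimit by definition.

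To produce this limit I would use the first hypothesis. Let $(N,(\pi_n,\iota_n))$ be an ambilimit of the image chain. By Theorem~\ref{thm:ambilimitiffiso} (or directly by definition) $(N,(\pi_n))$ is a limit of the image $q$-chain and $(N,(\iota_n))$ is a colimit of the image $e$-chain, and by Corollary~\ref{cor:connectingmapsofambilims} the composite $FD(n)\xrightarrow{\iota_n}N\xrightarrow{\pi_m}FD(m)$ equals $F(f_{n,m})$, where $f_{n,m}=e_{n,m}$ for $n\le m$ and $q_{n,m}$ otherwise. Since both $(FL,(Fi_n))$ and $(N,(\iota_n))$ are colimits of the same image $e$-chain, there is a unique isomorphism $u\colon FL\to N$ with $u\circ Fi_n=\iota_n$.

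The key step is to check that transporting the limit cone $(\pi_n)$ along $u$ recovers exactly $(Fp_n)$, i.e.\ that $\pi_n\circ u=Fp_n$ for all $n$. Because $(FL,(Fi_m))$ is a colimit, the legs $Fi_m$ are jointly epic, so it suffices to compare after precomposing with each $Fi_m$. On one side $\pi_n\circ u\circ Fi_m=\pi_n\circ\iota_m=F(f_{m,n})$ by the connecting-map formula for $N$; on the other side $Fp_n\circ Fi_m=F(p_n\circ i_m)=F(f_{m,n})$ by Corollary~\ref{cor:connectingmapsofambilims} applied to the original ambilimit $L$. These agree, so $\pi_n\circ u=Fp_n$. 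Since $u$ is an isomorphism and $(N,(\pi_n))$ is a limit, precomposing its limiting cone with $u$ shows that $(FL,(Fp_n))$ is a limit of the image $q$-chain, which completes the argument.

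The main obstacle — and really the only place the first hypothesis is used — is this identification $\pi_n\circ u=Fp_n$: it is what guarantees that the limit structure being transported is the functor image of the original projections rather than some unrelated limiting cone on the same object. Everything else is either functoriality applied to equations or the uniqueness of colimit comparison maps. As a remark, one could alternatively package the assembly through the equivalence in Theorem~\ref{thm:ambilimitiffiso}, showing that $u$ coincides with the canonical comparison map $FL\to N$ from colimit to limit and is hence the isomorphism required by clause (ii) of that theorem; this yields the same computation in slightly different dress, and the $\pi_n\circ u=Fp_n$ step is still exactly what verifies that the induced projections are $Fp_n$.
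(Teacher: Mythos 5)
Your proof is correct and follows essentially the same route as the paper's: both take the ambilimit $(N,(\pi_n,\iota_n))$ of the image chain, use the unique colimit comparison isomorphism $u\colon FL\to N$, and then identify the transported projections with $Fp_n$ by precomposing with the jointly epic legs $Fi_m$ and applying Corollary~\ref{cor:connectingmapsofambilims} to both ambilimits. The only cosmetic difference is that you first reduce to checking the limit property via functoriality of normalization and independence, whereas the paper transports the whole ambilimit structure along $u$ before making the same identification $\pi_n\circ u=Fp_n$.
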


\begin{proof}
  Let $(L',(p_n'),(i'_n))$ be the ambilimit of 
    \[FD(0)\xrightarrows[$Fq_0$][$Fe_0$] FD(1) \xrightarrows[$Fq_1$][$Fe_1$] FD(2)\xrightarrows\dots\] 
  Since $(FL,Fi_n)$ is a colimit of the chain
     \[FD(0)\sxto{Fe_0} FD(1) \sxto{Fe_1} FD(2)\sxto\dots\]
  there is a unique isomorphism of colimits $f\colon (FL,Fi_n)\to (L,i_n)$, and this makes $(FL,p'_n\circ f,Fi_n)$ into an ambilimit. Moreover, the the top triangle of
        \[\begin{tikzpicture}
         \matrix (m) [matrix of math nodes,row sep=2em,column sep=4em,minimum width=2em]
         {
          FL & FD(m) \\
          FD(n) & FL \\};
         \path[->]
         (m-1-1)  edge node [above] {$p'_m\circ f$} (m-1-2)
         (m-2-2) edge node [right] {$Fp_m$} (m-1-2)
         (m-2-1) edge node [below] {$Fi_n$} (m-2-2)
                 edge node [left] {$Fi_n$} (m-1-1)
                 edge node [above] {$Ff_{n,m}$} (m-1-2); 
  \end{tikzpicture}\]
commutes by Corollary~\ref{cor:connectingmapsofambilims} (since $(FL,p'_n\circ f,Fi_n)$ is an ambilimit in \cat{D}), as does the bottom triangle (since $L$ is an ambilimit in \cat{C}). Hence the universal property of the colimit gives us $p'_n\circ f=Fp_n$, so that $(FL,(Fp_n),(Fi_n))$ is an ambilimit as desired.
\end{proof}

\begin{definition}\index[word]{rig category} A rig category\footnote{Also known as a bimonoidal category} is a category \cat{C} with a symmetric monoidal structure $(\cat{C},\oplus,0)$ and a monoidal structure $(\cat{C},\otimes, I)$. The coherence isomorphisms of the $\oplus$-part will have components denoted by
  \begin{align*}
    &\alpha^\oplus_{A,B,C}\colon (A\oplus B)\oplus C\to A\oplus (B\oplus C)  &\lambda^\oplus_A\colon 0\oplus A\to A \\  
    &\rho^\oplus_A\colon 0\oplus A\to A  &\sigma^\oplus_{A,B}\colon A\oplus B\to B\oplus A
  \end{align*}
Similarly, the coherence isomorphisms for  $(\cat{C},\otimes, I)$ will be denoted by $\alpha^\otimes,\lambda^\otimes,\rho^\otimes$ and $\sigma^\otimes.$ The two monoidal structures should of course interact. Concretely, this means that we have 
  \begin{itemize}
    \item Left and right distributors $\delta^l$ and $\delta^r$ with components 
      \begin{align*}
      \delta^l_{A,B,C}\colon A\otimes (B\oplus C)\cong (A\otimes B)\oplus (A\otimes C) \\
      \delta^r_{A,B,C}\colon (A\oplus B)\otimes C\cong (A\otimes C)\oplus (B\otimes C)
      \end{align*}
    \item And left and right annihilators $\tau^l$ and $\tau^r$ with components
      \[\tau^l_A\colon A\otimes 0\cong 0 \qquad \tau^r_A\colon 0\otimes A\cong 0\]
  \end{itemize} 
These coherence isomorphisms can be seen as categorifications of axioms for a rig (semi-ring). This data is supposed to satisfy various coherence laws, see~\cite{kelly1974coherence} and~\cite{laplaza1972coherence} for details. 
\end{definition}
In the sequel all rig categories are assumed to be symmetric without further warning. This means that also the monoidal category $(\cat{C},\otimes, I)$ is symmetric in manner coherent with the rest of the structure. A \emph{rig dagger category}\index[word]{dagger category!rig} is a rig category with a dagger such that all the coherence isomorphisms are unitary and moreover both monoidal structures are dagger functors $\cat{C}\times\cat{C}\to \cat{C}$.

\begin{example}\label{ex:rigcats}
  \begin{itemize}
    \item \Pfn,\index[symb]{\Pfn, sets and partial functions} the category of sets and partial functions, is a rig category: $\oplus$ is given by the disjoint union and $\otimes$ by the cartesian product (in \cat{Set}).
    \item \PInj\ \index[symb]{\PInj, sets and partial injections}is a rig dagger category with the same interpretations for $\oplus$ and $\otimes$. 
    \item \cat{Hilb}\index[symb]{\cat{Hilb}, Hilbert spaces and bounded linear maps} is a rig dagger category, when $\oplus$ is given by the dagger biproduct and $\otimes$ by the tensor product. The subcategory \Hilb\ of Hilbert spaces and non-expansive linear maps inherits a rig dagger structure from this, even though $\oplus$ no longer satisfies a universal property.\index[symb]{\Hilb, Hilbert spaces and non-expansive maps}
    \item \Ban,\index[symb]{\Ban, Banach spaces and non-expansive maps} the category of Banach spaces and non-expansive linear maps, is a rig category, when $\oplus$ is interpreted as the $\ell^1$-sum, \ie the coproduct of \Ban\ and $\otimes$ is the projective tensor product. Formally, this is the completion of the algebraic tensor product $A\otimes B$ under the norm 
      \[\norm{x}=\inf \{\sum_{k=1}^n\norm{a_k}\norm{b_k}|x=\sum_{k=1}^n a_k\otimes b_k\}\]. 
  \end{itemize}
\end{example}


We won't define in detail a notion of a morphism of rig categories. Roughly speaking they are functors that preserve the rig structure up to coherent natural isomorphisms (or transformations, depending on whether one wants the stronger or weaker notion). Intuitively, it is clear that the inclusions $\PInj\to \Pfn$ and $\Hilb\to\cat{Hilb}$ satisfy this. 

More interestingly, there are functors \index[symb]{$\ell^2$, a functor $\PInj\to\Hilb$}$\ell^2\colon\PInj\to\Hilb$ and $\ell^1\colon\Pfn\to\Ban$ and these turn out to be compatible with the rig structure. The functor $\ell^2$ is defined on objects by 
  \[A\mapsto \ell^2{A}:=\{\phi\colon A\to \C \mid \sum \abs{\phi(A)}^2<\infty\}\] 
and its action on a partial injection $f\colon A\to B$ is given by 
  \[\ell^2(f)(\phi)(b):=\sum_{a\in f^{-1}b}\phi(a)
                            =\begin{cases} \phi(f^\dag a)\text{ if }f^\dag a\text{ is defined,} \\
                                                                0\text{ otherwise.}\end{cases}\]
The functor $\ell^2$ is known to preserve the dagger and the two monoidal structures $\oplus$ and $\otimes$, see~\cite{heunen:ltwo}. 

The functor \index[symb]{$\ell^1$, a functor $\Pfn\to\Ban$}$\ell^1$ is defined similarly: an object $A$ is sent to the Banach space \[\ell^1{A}:=\{\phi\colon A\to \C \mid \sum \abs{\phi(A)}<\infty\}\] and a partial function $f\colon A\to B$ is sent to the contraction given by 
  \[\ell^1(f)(\phi)(b):=\sum_{a\in f^{-1}b}\phi(a).\]
One usually thinks of $\ell^1$ as a functor $\cat{Set}\to\Ban$, in which case it is the left adjoint to the unit ball functor $U\colon \Ban\to\Set$ defined by $U(B):=\{v\in B\mid \norm{v}\leq 1\}$.

\begin{proposition}\label{prop:ell1preservesrigstructure}\index[symb]{$\ell^1$, a functor $\Pfn\to\Ban$} The functor $\ell^1\colon \Pfn\to\Ban$ preserves the rig structure, \ie  we have natural isomorphisms \[\ell^1(A\oplus B)\cong\ell^1(A)\oplus\ell^1(B)\qquad\text{and}\qquad \ell^1(A\otimes B)\cong\ell^1(A)\otimes\ell^1(B).\] 
\end{proposition}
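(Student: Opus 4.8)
The plan is to write down both families of isomorphisms explicitly on generators and verify naturality there, isolating the only genuinely analytic point, namely that the multiplicative comparison map is isometric.

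For the additive isomorphism, recall that $A\oplus B$ in $\Pfn$ is the disjoint union $A\sqcup B$. A function $\phi\colon A\sqcup B\to\C$ lies in $\ell^1(A\sqcup B)$ if and only if its restrictions $\phi|_A$ and $\phi|_B$ lie in $\ell^1(A)$ and $\ell^1(B)$, and then $\norm{\phi}=\norm{\phi|_A}+\norm{\phi|_B}$. Hence $\phi\mapsto(\phi|_A,\phi|_B)$ is an isometric isomorphism onto the $\ell^1$-sum $\ell^1(A)\oplus\ell^1(B)$, whose norm is by definition $\norm{(x,y)}=\norm{x}+\norm{y}$. Naturality in $A$ and $B$ for partial functions $f,g$ is immediate from the defining formula $\ell^1(f\sqcup g)(\phi)(c)=\sum_{c'\in(f\sqcup g)^{-1}(c)}\phi(c')$, since $(f\sqcup g)^{-1}$ respects the two summands; I would simply check that the naturality square commutes. (This just expresses that $\ell^1$ preserves coproducts.)

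For the multiplicative isomorphism, on the algebraic tensor product define $J\colon\ell^1(A)\otimes\ell^1(B)\to\ell^1(A\otimes B)$ by $J(\phi\otimes\psi)(a,b)=\phi(a)\psi(b)$, so that the generator $\delta_a\otimes\delta_b$ is sent to $\delta_{(a,b)}$. Restricted to the finitely supported functions, $J$ is a basis-to-basis linear bijection $c_{00}(A)\otimes c_{00}(B)\to c_{00}(A\times B)$. I would then show that $J$ is isometric for the projective norm of Example~\ref{ex:rigcats} on the source and the $\ell^1$-norm on the target, and that both sides are completions of these dense finitely supported subspaces, so that $J$ extends to an isometric isomorphism $\ell^1(A)\,\widehat{\otimes}\,\ell^1(B)\to\ell^1(A\otimes B)$, where $\widehat{\otimes}$ is the projective tensor product serving as $\otimes$ in $\Ban$.

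The isometry is the crux. One inequality is a direct estimate: for any representation $u=\sum_k\phi_k\otimes\psi_k$, the triangle inequality gives $\norm{J(u)}\leq\sum_k\norm{\phi_k}\,\norm{\psi_k}$, and taking the infimum over representations yields $\norm{J(u)}\leq\norm{u}_\pi$. The reverse inequality is where the special structure of $\ell^1$ enters: writing $f=J(u)\in c_{00}(A\times B)$ as $f=\sum_{(a,b)}f(a,b)\,\delta_{(a,b)}$, injectivity of $J$ forces $u=\sum_{(a,b)}f(a,b)\,\delta_a\otimes\delta_b$, and since $\norm{\delta_a\otimes\delta_b}_\pi\leq\norm{\delta_a}\,\norm{\delta_b}=1$ we obtain $\norm{u}_\pi\leq\sum_{(a,b)}\abs{f(a,b)}=\norm{f}$. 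Combining the two bounds gives $\norm{u}_\pi=\norm{J(u)}$ on the dense subspace. Everything else is routine functional analysis: density of $c_{00}(A)\otimes c_{00}(B)$ in the projective tensor product (via the cross-norm estimate $\norm{\phi\otimes\psi-\phi_n\otimes\psi_n}_\pi\leq\norm{\phi-\phi_n}\,\norm{\psi}+\norm{\phi_n}\,\norm{\psi-\psi_n}$), density of $c_{00}(A\times B)$ in $\ell^1(A\times B)$, and extension of an isometry to completions. For naturality, given partial functions $f\colon A\to A'$ and $g\colon B\to B'$ I would check the comparison square relating $J$ and $J'$ on the generators $\delta_a\otimes\delta_b$: using $(f\times g)^{-1}(a',b')=f^{-1}(a')\times g^{-1}(b')$, both composites send $\delta_a\otimes\delta_b$ to $\delta_{(f(a),g(b))}$ when $f(a)$ and $g(b)$ are both defined and to $0$ otherwise, and naturality then follows by density and boundedness. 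I expect the main obstacle to be precisely the reverse inequality of the isometry step, i.e. confirming that the projective norm \emph{coincides} with, rather than merely dominates, the $\ell^1$-norm; this is the classical identification $\ell^1(A)\,\widehat{\otimes}\,\ell^1(B)\cong\ell^1(A\times B)$ and is exactly the reason the projective tensor product is the correct choice of $\otimes$ in $\Ban$. If full rig-coherence of these isomorphisms is wanted, it can be verified on the basis vectors $\delta_a$, where all coherence maps act by the evident reindexing bijections.
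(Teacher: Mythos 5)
Your proposal is correct, and for the multiplicative half it follows essentially the same route as the paper: identify the canonical Schauder bases $\{\delta_a\}$, send $\delta_a\otimes\delta_b\mapsto\delta_{(a,b)}$, and extend from the dense finitely supported subspaces to an isometric isomorphism onto the projective tensor product. The one place you go further is the isometry verification itself: the paper simply asserts that the basis-to-basis map is isometric on the algebraic spans, whereas you supply the two-sided estimate (triangle inequality plus infimum for one direction, the representation $u=\sum f(a,b)\,\delta_a\otimes\delta_b$ together with the cross-norm bound for the reverse). That argument is sound --- in particular your appeal to injectivity of $J$ on the algebraic tensor product is justified since it carries a basis to a basis --- and it fills in exactly the detail the paper leaves implicit. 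For the additive half your route is mildly different: you construct the isometric isomorphism $\phi\mapsto(\phi|_A,\phi|_B)$ explicitly and check naturality by hand, while the paper instead notes that $\ell^1\circ i\colon\Set\to\Ban$ is a left adjoint (to the unit-ball functor) and hence preserves coproducts. Both are correct; yours is self-contained and elementary, the paper's is shorter but leans on the adjunction.
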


\begin{proof}
  Write $i\colon\cat{Set}\to\Pfn$ for the usual inclusion. Then $\ell^1\circ i$ has a right adjoint $U$ and hence preserves colimits. Thus $\ell^1(A\oplus B)=\ell^1(i(A+B))\cong\ell^1(A)\oplus\ell^1(B)$ as desired. For $\otimes$, we calculate in terms of Schauder bases: $\ell^1(A)$ has a canonical Schauder basis indexed by $A$: the basis vector corresponding to $a\in A$ is given by the Dirac delta $\delta_a$ defined by
    \[\delta_{a}(b):=\begin{cases}1\text{ if }a=b \\ 
                                  0\text{ otherwise.}\end{cases}\]
  Hence $\ell^1(A\otimes B)$ has a canonical basis indexed by $A\times B$. Consider the function sending the basis element $\delta_{(a,b)}$ to $\delta_{a}\otimes\delta_b$. This induces an isometric isomorphism from the algebraic span of $\{\delta_{(a,b)}\mid (a,b)\in A\times B\}$ to the algebraic span of $\{\delta_a\otimes\delta_b\mid (a,b)\in A\times B\}$. Since both of these are dense subspaces, this induces an isometric isomorphism $\ell^1(A\otimes B)\to\ell^1(A)\otimes\ell^1(B)$, \ie an isomorphism in \Ban.
\end{proof}

The functors $\ell^1$ and $\ell^2$ in fact preserve much more structure. To make this precise, we will formalize a wide class of chains that always have ambilimits in our categories of interest. A naive attempt at formalizing this might be to require the existence of ambilimits of chains of split monics. However, this is asking for too much: given a chain of split monics, there might be many chains of epis splitting them, and only some pairs of chains might admit an ambilimit. 

A concrete example of this is the following. Let $D_2\colon\omega\to \Pfn$ be given by 
	\[(n\leq m)\mapsto \{1\dots,n\}\hookrightarrow \{1,\dots ,m\}.\]
 The ``correct'' way of splitting the inclusion $\{1\dots,n\}\hookrightarrow \{1,\dots ,m\}$ is via the map 
  \[k\mapsto\begin{cases} k\text{ if }k\leq n \\
                          \text{undefined, otherwise} \end{cases}\] 
since this results in a chain $D_1$ such that $(D_1,D_2)$ has an ambilimit given by $\N$. If instead one splits each inclusion by the map $k\mapsto \min \{k,n\}$, the limit is $\N\cup\{\infty\}$ and the canonical map  is the inclusion $\N\to \N\cup\{\infty\}$ which is not an isomorphism.

To avoid pathologies like this, we need restrict to a smaller class of diagrams that we require to have ambilimits. Note that in our categories of interest, the unit for $\oplus$ is in fact a zero object. This gives rise to canonical projections and inclusions 
\begin{equation*} A\xrightarrows[$p_A$][$i_A$] A\oplus B\xrightarrows[$i_B$][$p_B$] B \end{equation*} 
subject to the biproduct equations~\eqref{def:biprodeqs} and~\eqref{def:biprodswithZeros}, even though $A\oplus B$ need not satisfy a universal property (indeed, in \Hilb\ it does not). The projection $p_B\colon A\oplus B\to B$ can be defined as the composite $\lambda_{B}^\oplus \circ (0\oplus\id )\colon A\oplus B\to 0\oplus B\cong B$, and the other morphisms are defined similarly.  

To keep in line with domain theoretic terminology, we call the pair $(i_A,p_A)$ an embedding-projection pair\index[word]{embedding-projection pair}. Similarly, a chain $(D_1,D_2)\colon \omega\op\to\cat{C}_\leftrightarrows$ is called an \emph{embedding-projection chain}\index[word]{embedding-projection chain} if it is of the form
  \[A_0\xrightarrows[$p_0$][$i_0$] A_0\oplus A_1 \xrightarrows[$p_{0,1}$][$i_{0,1}$] A_0\oplus A_1\oplus A_2\xrightarrows\dots\]
Similarly, chains of the form 
    \[A_0\sxto{i_0} A_0\oplus A_1\sxto{i_{0,1}} A_0\oplus A_1\oplus A_2\sxto{}\dots\]
 are called chains of embeddings and chains of the form
    \[A_0\sxfrom{p_0}A_0\oplus A_1\sxfrom{p_{0,1}}A_0\oplus A_1\oplus A_2\sxfrom{}\dots\]
 are called chains of projections. 

\begin{definition}
A (dagger) rig category is $\omega$-continuous\index[word]{rig category!$\omega$-continuous}\index[word]{dagger category!$\omega$-continous rig} if 
  \begin{itemize}
    \item The monoidal unit $0$ is in fact a zero object.
    \item It has ambilimits of embedding-projection chains. (It has dagger colimits of chains of isometries.)
    \item The functors $X\mapsto A\oplus X$  and  $X\mapsto A\otimes X$ preserve them. 
  \end{itemize} 
\end{definition}

It is not immediately obvious from the definition that an $\omega$-continuous rig dagger category is $\omega$-continuous as a rig category. However, it is easy to show that this holds. Because $(\cat{C},\oplus)$ is dagger monoidal, the canonical inclusions and projections are dagger monics and dagger epimorphisms respectively. Hence every chain of embeddings has a dagger colimit, and the normalization and independence equations guarantee that it is in fact an ambilimit for the corresponding embedding-projection chain.

The main reason for defining $\omega$-continuous rig categories is that they provide a natural setting for building canonical fixed points for polynomial functors. We give the general theory before verifying that the categories we have studied so far in this section are $\omega$-continuous.

\begin{definition} Let \cat{C} be a rig category. A \emph{rig-polynomial functor}\index[word]{rig-polynomial functor} on \cat{C} is a functor of the form \[X\mapsto (\bigoplus_{i=0}^{n } A_i\otimes X^{\otimes i}))\]

If \cat{C} is a  a rig dagger category that has (anti)symmetrized tensor powers, then a generalized polynomial endofunctor is defined as a rig-polynomial endofunctor, except that some of the powers $X^{\otimes i}$ can be replaced by (anti-)symmetrized tensor powers from example~\ref{ex:symmetrictensor}.\index[word]{symmetrized tensor power}\index[word]{antisymmetrized tensor power}\index[symb]{$\sym^n X$ the $n$-th symmetric tensor power of $X$}
\end{definition}

Note that when $n=0$ we have $X^{\otimes n}\cong 1$ by convention. 

\begin{lemma}\label{lem:preservationofepchains}\index[word]{embedding-projection chain} The functors $B\oplus -$, $B \otimes -$ and $X\mapsto X^{\otimes n}$ preserve embedding-projection chains up to isomorphism. In a rig dagger category, they preserve chains of embeddings up to unitary isomorphism.
\end{lemma}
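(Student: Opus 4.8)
The plan is to treat all three functors uniformly, exploiting that each distributes over $\oplus$ up to coherent isomorphism. Recall that an embedding-projection chain is determined by a sequence of increments $(A_k)_{k\geq 0}$, its $m$-th object being the partial sum $S_m=\bigoplus_{k=0}^m A_k$ and its structure maps being the canonical embedding-projection pairs $i_{S_m}\colon S_m\to S_m\oplus A_{m+1}$ and $p_{S_m}$ associated to the zero object $0$; these canonical maps are built from the unitors and the unique maps $0\to A$ and $A\to 0$, e.g.\ $i_{A_{m+1}}$ factors as $(z'\oplus\id)\circ(\lambda^\oplus)^{-1}$ with $z'\colon 0\to A_{m+1}$. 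Thus to show a functor $F$ preserves such chains up to isomorphism it suffices to exhibit, for a reindexed sequence $(A_k')_k$, natural isomorphisms $F(S_m)\cong \bigoplus_{k=0}^m A_k'$ intertwining $F(i_{S_m})$ with $i_{\bigoplus_{k\le m}A_k'}$ and $F(p_{S_m})$ with the corresponding projection. By coherence of the rig structure every diagram of coherence isomorphisms commutes, so these verifications reduce to the compatibility of the unique zero-object maps with the coherence data.

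For $B\oplus-$ I would take $A_0'=B\oplus A_0$ and $A_k'=A_k$ for $k\geq1$, and use the associator $\alpha^\oplus$ to identify $B\oplus\bigoplus_{k=0}^m A_k$ with $\bigoplus_{k=0}^m A_k'$; that $\id[B]\oplus i_{S_m}$ becomes the canonical embedding is exactly the coherence square relating $\alpha^\oplus$ to the inclusions. For $B\otimes-$ I would take $A_k'=B\otimes A_k$ and use the left distributor $\delta^l$ iteratively to identify $B\otimes\bigoplus_{k=0}^m A_k$ with $\bigoplus_{k=0}^m(B\otimes A_k)$. The one genuinely new point here is that $B\otimes-$ sends the unique map $0\to A_{m+1}$ to a map $B\otimes 0\to B\otimes A_{m+1}$ which, precomposed with the annihilator $\tau^l\colon 0\xrightarrow{\cong}B\otimes0$, equals the unique map $0\to B\otimes A_{m+1}$ by uniqueness; this is what forces $\id[B]\otimes i_{S_m}$ to match the canonical embedding under $\delta^l$.

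The case $X\mapsto X^{\otimes n}$ is the main obstacle. Iterated distributivity gives $(\bigoplus_{k=0}^m A_k)^{\otimes n}\cong\bigoplus_{\vec k\in\{0,\dots,m\}^n}A_{k_1}\otimes\cdots\otimes A_{k_n}$, and I would regroup the summands by their largest index, setting $A_j':=\bigoplus_{\vec k:\max_i k_i=j}A_{k_1}\otimes\cdots\otimes A_{k_n}$ (for $n=0$ the functor is constant at $I$, giving the chain with $A_0'=I$ and $A_k'=0$ for $k\geq1$, handled trivially). Since every $j\in\{0,\dots,m\}$ is realised as a maximum, this yields $(\bigoplus_{k=0}^m A_k)^{\otimes n}\cong\bigoplus_{j=0}^m A_j'$, and $F(i_{S_m})=(i_{S_m})^{\otimes n}$ becomes, under distributivity, the inclusion of precisely the summands with all indices $\leq m$, i.e.\ the canonical embedding $\bigoplus_{j=0}^m A_j'\to\bigoplus_{j=0}^{m+1}A_j'$. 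The real effort is bookkeeping: one must organise the combinatorics of the index tuples and check that the nested distributivity, annihilator and unitor isomorphisms assemble into an honest natural isomorphism of diagrams intertwining the embedding-projection pairs. As in the binary case this ultimately rests on coherence together with uniqueness of the zero maps, so no essentially new phenomenon arises beyond keeping the indexing coherent.

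Finally, for the dagger statement I would invoke that in a rig dagger category all coherence isomorphisms, distributors and annihilators are unitary and both $\oplus$ and $\otimes$ are dagger functors, so each of the three functors is a dagger functor and (as already noted in the text) carries the dagger-monic canonical inclusions to dagger monics. Hence a chain of embeddings maps to a chain of embeddings, and the reindexing isomorphisms constructed above, being composites of coherence isomorphisms, are unitary; this yields preservation of chains of embeddings up to unitary isomorphism.
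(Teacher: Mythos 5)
Your proposal is correct and takes essentially the same route as the paper: the paper also identifies the image of the chain under $B\otimes-$ with the embedding-projection chain on $B\otimes A_k$, and under $X\mapsto X^{\otimes n}$ with the chain whose $i$-th object is $\bigoplus_{f\colon\{1,\dots,n\}\to\{0,\dots,i\}}A_{f(1)}\otimes\cdots\otimes A_{f(n)}$, which is exactly your decomposition regrouped (your increments $A_j'$ indexed by tuples with maximal entry $j$ are the new summands appearing at stage $j$). You simply spell out the coherence and zero-map bookkeeping that the paper dismisses as ``an easy calculation,'' and the dagger statement is handled identically in both.
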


\begin{proof}
	It is clear that applying the endofunctor $B\oplus -$ to an embedding-projection chain results in an embedding-projection chain. This is also true  $B \otimes -$ since
  \[B\otimes A_0\xrightarrows[$\id\otimes p_0$][$\id\otimes i_0$] B\otimes (A_0\oplus A_1)\xrightarrows[$\id\otimes p_{0,1}$][$\id\otimes i_{0,1}$] B\otimes (A_0\oplus A_1\oplus A_2)\xrightarrows\dots\]
is isomorphic to 
\[B_0\xrightarrows[$p_0$][$i_0$] B_0\oplus B_1 \xrightarrows[$p_{0,1}$][$i_{0,1}$] B_0\oplus B_1\oplus B_2\xrightarrows\dots\]
where $B_i=B\otimes A_i$. 
	Finally, an easy calculation shows that applying $X\mapsto X^{\otimes n}$ to the chain 
	 \[A_0\xrightarrows[$p_0$][$i_0$] A_0\oplus A_1 \xrightarrows[$p_{0,1}$][$i_{0,1}$] A_0\oplus A_1\oplus A_2\xrightarrows\dots\]
	 results in a chain isomorphic to 
	\[B_0\xrightarrows[$$][$$] B_1 \xrightarrows[$$][$$] B_2\xrightarrows\dots\]
	where 
		\[B_i=\bigoplus_{f\colon\{1,\dots n\}\to\{0,\dots i\}} A_{f(1)}\otimes\dots\otimes A_{f(n)}\]
	and the embedding-projection pair $B_i\to B_{i-1}$ comes from the inclusion $\{0,\dots,i\}\hookrightarrow\{0,\dots,i+1\}$.	
In a rig dagger category, all of these isomorphisms are unitary.
\end{proof}


Note that the functor $X\mapsto X^{\oplus n}$ is naturally isomorphic to $X\mapsto (I^{\oplus n}\otimes X)$ and hence automatically preserves ambilimits of embedding-projection chains in an $\omega$-continuous rig category. More surprisingly, the functor $X\mapsto X^{\otimes n}$ also preserves such ambilimits.

\begin{lemma}\label{lem:tensorpowercontinuous}\index[word]{embedding-projection chain} Let \cat{C} be an $\omega$-continuous rig category. Then the functor $X\mapsto X^{\otimes n}$ preserves ambilimits of embedding-projection chains for any $n\in \N$. If \cat{C} is an $\omega$-continuous rig dagger category, then the functor $X\mapsto X^{\otimes n}$ preserves dagger colimits of chains of isometries for any $n\in \N$.
\end{lemma}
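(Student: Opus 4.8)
The plan is to reduce the statement about the $n$-th tensor power functor to the structural description of its action on embedding-projection chains already furnished by Lemma~\ref{lem:preservationofepchains}, and then exploit $\omega$-continuity together with the fact that ambilimits of such chains can be computed iteratively. First I would recall from Lemma~\ref{lem:preservationofepchains} that applying $X\mapsto X^{\otimes n}$ to an embedding-projection chain built on objects $A_0,A_1,\dots$ yields (up to isomorphism) a new embedding-projection chain whose $i$-th object is the $\oplus$-indexed sum
\[
  B_i=\bigoplus_{f\colon\{1,\dots,n\}\to\{0,\dots,i\}} A_{f(1)}\otimes\dots\otimes A_{f(n)},
\]
with connecting embeddings and projections induced by the inclusions $\{0,\dots,i\}\hookrightarrow\{0,\dots,i+1\}$. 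Since this is genuinely an embedding-projection chain, $\omega$-continuity guarantees that it has an ambilimit; the whole task is then to identify that ambilimit with $L^{\otimes n}$, where $L$ is the ambilimit of the original chain, and to check that the cone and cocone induced by $X\mapsto X^{\otimes n}$ are the structural ones.

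The key technical device would be Lemma~\ref{lem:preservationofambilims}: for a chain of split monics with an ambilimit, preserving merely the colimit half (and having the transformed chain admit \emph{some} ambilimit) already forces the functor to preserve the full ambilimit. So the plan reduces to showing (i) that the transformed chain has an ambilimit, which follows from $\omega$-continuity as above, and (ii) that $(L^{\otimes n},(i_n^{\otimes n}))$ — more precisely the images under $X\mapsto X^{\otimes n}$ of the colimit injections of the original chain — form a colimit of the chain of embeddings
\[
  B_0\sxto{} B_1\sxto{} B_2\sxto{}\cdots.
\]
To establish (ii) I would argue that $\otimes$ preserves the relevant colimits variable-by-variable: the hypothesis that $X\mapsto A\otimes X$ (and hence, by symmetry, $X\mapsto X\otimes A$) preserves ambilimits of embedding-projection chains lets one build up the $n$-fold tensor power colimit by a finite induction on $n$, commuting the colimit past each $\otimes$-factor one at a time and using the distributivity coherences to reorganize the resulting double (multiple) sequential colimit into the single chain on the $B_i$. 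Because each $A\otimes-$ preserves these colimits and sequential colimits commute with sequential colimits, the diagonal $L^{\otimes n}$ indeed computes the colimit of the $B_i$-chain.

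The dagger case would then be handled in parallel: by Lemma~\ref{lem:preservationofepchains} the induced isomorphisms are unitary in a rig dagger category, chains of isometries are sent to chains of isometries, and $\omega$-continuity of the rig dagger category supplies dagger colimits of the transformed chains. Since a dagger functor preserves partial isometries and commutativity of projections (as used throughout Section~\ref{sec:daglims}), normalization and independence transport automatically, so preserving the dagger colimit of the embeddings is all that is needed, and the same variable-by-variable induction applies with $X\mapsto A\otimes X$ now preserving dagger colimits of chains of isometries. The main obstacle I anticipate is bookkeeping in step (ii): commuting an $n$-fold tensor product past sequential colimits requires carefully invoking the interchange of two sequential colimits and repackaging the bi-indexed family $A_{f(1)}\otimes\cdots\otimes A_{f(n)}$ indexed by functions $f\colon\{1,\dots,n\}\to\{0,\dots,i\}$ into a single chain via the cofinality of the diagonal $\{0,\dots,i\}\hookrightarrow\{0,\dots,i\}^n$-style reindexing. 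Making this cofinality/interchange argument precise — rather than the preservation-by-each-factor step, which is immediate from the hypotheses — is where the real care is required.
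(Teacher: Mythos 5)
Your proposal is correct and follows essentially the same route as the paper: reduce via Lemma~\ref{lem:preservationofepchains} and Lemma~\ref{lem:preservationofambilims} to preserving the colimit of the chain of embeddings, peel off one tensor factor at a time using the hypothesis that $A\otimes-$ preserves these colimits, and then identify the resulting multi-indexed colimit $\colim_{i_1,\dots,i_n} D(i_1)\otimes\cdots\otimes D(i_n)$ with $\colim_i D(i)^{\otimes n}$ via cofinality of the diagonal $\omega\to\omega^n$ (the paper cites Mac~Lane's finality theorem for exactly this step), with the dagger case handled by noting that a dagger functor automatically preserves normalization and independence.
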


\begin{proof} 
  Since $X\mapsto X^{\otimes n}$ preserves  embedding-projection chains by Lemma~\ref{lem:preservationofepchains} it suffices to show that it preserves colimits of chains of embeddings by Lemma~\ref{lem:preservationofambilims}. 
  So let $D\colon\omega\to\cat{C}$ be such a chain. Now we have canonical isomorphisms 
  \begin{align*}
    (\colim_i D(i))^{\otimes n}&= (\colim_{i_1} D(i_1))\otimes (\colim_i D(i))^{\otimes n-1} \\
      &\cong  \colim_{i_1} (D(i_1)\otimes (\colim_i D(i))^{\otimes n-1} )\text{ by }\omega\text{-continuity of }\cat{C}\\
      &\cong \dots\cong \colim_{i_1,\dots i_n} D(i_1)\otimes\dots\otimes D(i_n) 
  \end{align*}
  so it suffices to prove that \[\colim_{i_1,\dots i_n} D(i_1)\otimes\dots\otimes D(i_n)\cong \colim_i D(i)^{\otimes n}.\]
  This is not hard to show directly, but follows readily from more general principles. It is easy to check that the diagonal functor $\omega\to\omega^n$ is final in the sense of~\cite[IX.3]{maclane:categories} (since its image is cofinal in the order-theoretic sense) and hence the colimits agree by~\cite[Theorem IX.3.1]{maclane:categories}. 

  For the dagger case, note that the calculation above shows that a colimit of isometries is taken to a colimit of isometries by  $X\mapsto X^{\otimes n}$. Since $X\mapsto X^{\otimes n}$ is a dagger functor it automatically preserves cocones satisfying normalization and independence, and hence it also preserves \emph{dagger} colimits of isometries.
\end{proof}

In a sense, the proof above boils down to a special case of the general fact that directed colimits can be built from colimits of chains, see~\cite[Corollary 1.7]{adamek:accessible}. 

\begin{theorem}\label{thm:fixedpointsinrigcats}\index[word]{rig category!$\omega$-continuous}\index[word]{dagger category!$\omega$-continous rig}\index[word]{canonical fixed point}\index[word]{dagger fixed point}\index[word]{rig-polynomial functor} If \cat{C} is an $\omega$-continuous rig category, then every rig-polynomial endofunctor has a canonical fixed point.

If \cat{C} is an $\omega$-continuous rig dagger category, then every generalized rig-polynomial dagger endofunctor has a dagger fixed point.
\end{theorem}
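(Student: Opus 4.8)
The plan is to reduce the statement to an application of the two dagger-analogues of Ad\'amek's Theorem proven earlier, namely Theorem~\ref{thm:ambiadamek} for the non-dagger case and Theorem~\ref{thm:daggeradamek} for the dagger case. Recall that both of these build a (dagger) fixed point for an endofunctor $F$ out of an ambilimit (resp. dagger colimit of isometries) of the initial chain $0\to F(0)\to FF(0)\to\cdots$, \emph{provided that $F$ preserves that particular (dagger) colimit}. Since an $\omega$-continuous rig category has a zero object by definition, the initial chain starting at $0$ makes sense, and in fact the initial chain of a rig-polynomial functor is an embedding-projection chain (resp. a chain of embeddings): the first map $0\to F(0)$ is the unique map out of the zero object, which is a canonical embedding, and the self-duality of the biproduct structure then propagates this along the chain. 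Thus the whole task is to verify that rig-polynomial functors preserve ambilimits of embedding-projection chains (resp. generalized rig-polynomial dagger functors preserve dagger colimits of chains of isometries).

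First I would treat the building blocks separately. By definition of an $\omega$-continuous rig category, the functors $A\oplus(-)$ and $A\otimes(-)$ preserve ambilimits of embedding-projection chains (resp. dagger colimits of chains of isometries). By Lemma~\ref{lem:tensorpowercontinuous}, the tensor-power functor $X\mapsto X^{\otimes n}$ also preserves them, for any $n$. For the dagger case, one additionally needs that the (anti)symmetrized tensor powers $\sym^n_f(-)$ preserve dagger colimits of chains of isometries; since these are dagger-shaped limits cut out of $X^{\otimes n}$ by a projection that is natural in $X$, and dagger functors automatically preserve cocones satisfying normalization and independence (as used in the proof of Lemma~\ref{lem:tensorpowercontinuous}), this reduces to the $X\mapsto X^{\otimes n}$ case together with continuity of the symmetrizing projection. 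The constant functors $X\mapsto A_i$ trivially preserve everything.

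Next I would assemble these into a preservation statement for the whole functor $X\mapsto\bigoplus_{i=0}^{n} A_i\otimes X^{\otimes i}$. The key point is that preservation of ambilimits of embedding-projection chains is closed under the operations used to build rig-polynomial functors: each summand $A_i\otimes X^{\otimes i}$ is the composite of $X\mapsto X^{\otimes i}$, then $A_i\otimes(-)$, each of which preserves such ambilimits; and the finite $\oplus$ of the summands preserves them because $\oplus$ does and because, by Lemma~\ref{lem:preservationofepchains}, applying these functors to an embedding-projection chain again yields an embedding-projection chain, so Lemma~\ref{lem:preservationofambilims} applies at each stage. Concretely, Lemma~\ref{lem:preservationofambilims} lets me check preservation of an ambilimit by checking only that the image chain has an ambilimit (guaranteed by $\omega$-continuity, since the image is again an embedding-projection chain) and that the colimit half is preserved; the colimit half is handled by ordinary (co)continuity of the constituent functors. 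Having established that $F$ preserves the relevant (dagger) colimit of its initial chain, I invoke Theorem~\ref{thm:ambiadamek} (resp. Theorem~\ref{thm:daggeradamek}) to conclude that $F$ has a canonical fixed point (resp. dagger fixed point).

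The main obstacle I anticipate is bookkeeping rather than conceptual: I must be careful that ``preservation of ambilimits of embedding-projection chains'' really is stable under composition, since an ambilimit is simultaneously a limit and a colimit and a general functor need not preserve both universal properties at once. This is exactly the subtlety that Lemma~\ref{lem:preservationofambilims} was designed to handle, so the delicate step is to confirm at each composition stage that (i) the image of an embedding-projection chain is again of that form, ensuring an ambilimit exists in the target by $\omega$-continuity, and (ii) the colimit-of-embeddings half is preserved, which follows from plain cocontinuity. Once these two hypotheses are verified componentwise, Lemma~\ref{lem:preservationofambilims} upgrades colimit-preservation to ambilimit-preservation automatically, and the rest is a routine induction on the structure of the polynomial.
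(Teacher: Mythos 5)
Your overall strategy is exactly the paper's: reduce to Theorems~\ref{thm:ambiadamek} and~\ref{thm:daggeradamek} by checking that the building blocks $A\oplus(-)$, $A\otimes(-)$ (by the definition of $\omega$-continuity) and $X\mapsto X^{\otimes n}$ (by Lemma~\ref{lem:tensorpowercontinuous}) preserve the relevant ambilimits/dagger colimits, and then close under composition and finite sums using Lemmas~\ref{lem:preservationofepchains} and~\ref{lem:preservationofambilims}. That part of your argument is sound and matches the paper essentially verbatim.

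The one genuine gap is in the dagger case, in your treatment of the (anti)symmetrized tensor powers. You assert that their preservation of dagger colimits of chains of isometries ``reduces to the $X\mapsto X^{\otimes n}$ case together with continuity of the symmetrizing projection,'' but ``continuity of the symmetrizing projection'' is not a notion you have established, and it is precisely where the mathematical content lies. The functor $\sym^n_f$ is defined as a dagger \emph{limit} (over the diagram of Example~\ref{ex:symmetrictensor}), and what you need is that this dagger limit commutes with the dagger \emph{colimit} of the chain. As Example~\ref{ex:limsneednotcommutewithcolims} shows, dagger limits do not commute with dagger colimits in general; the commutation requires the adjointability hypothesis of Theorem~\ref{thm:limsandcolimscommute}. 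The paper's proof supplies the missing observation: the diagram defining $\sym^n_f$ consists entirely of unitary morphisms, so the relevant bifunctor is automatically adjointable, and Theorem~\ref{thm:limsandcolimscommute} then yields the required commutation. Your sketch, as written, gives no reason why the symmetrization should pass through the colimit, so you should replace the appeal to ``continuity of the symmetrizing projection'' with this adjointability argument (or an equivalent one).
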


\begin{proof}
  It is easy to check that $\omega$-continuous rig categories with zero objects are closed under countable products and that any tupling of functors preserving ambilimits of embedding-projection chains also preserves such ambilimits. Moreover, projection functors and constant functors preserve such ambilimits. As any rig-polynomial functor can be built as a composite of these and the basic building blocks $X\mapsto A\otimes X$, $X\mapsto X^{\otimes n}$ and $X\mapsto A\oplus X$, it suffices to show that the basic building preserve ambilimits of embedding-projection chains: $X\mapsto A\oplus X$ and $X\mapsto A\otimes X$ do so by assumption and $X\mapsto X^{\otimes n}$ does so by Lemma~\ref{lem:tensorpowercontinuous}. Hence
   any rig-polynomial functor satisfies the assumptions of Theorem~\ref{thm:ambiadamek} and hence has a canonical fixed point. 
  In the dagger case, one uses Theorem~\ref{thm:daggeradamek},  and the argument above can be easily adapted to the dagger case. The only new thing to check is that (anti)symmetrized tensor powers preserve colimits of chains of isometries. This is the case by Theorem~\ref{thm:limsandcolimscommute}, since (anti)symmetrized tensor products are dagger limits of diagrams with only unitary morphisms, and hence the adjointability condition is automatically satisfied. Thus any generalized rig-polynomial endofunctor satisfies the assumptions of Theorem~\ref{thm:daggeradamek} and hence has a dagger fixed point.
\end{proof}

\begin{theorem}\index[symb]{\PInj, sets and partial injections}\index[symb]{\Pfn, sets and partial functions}\index[symb]{\Ban, Banach spaces and non-expansive maps}\index[symb]{\Hilb, Hilbert spaces and non-expansive maps} The categories  \PInj\ and \Hilb\ are $\omega$-continous rig dagger categories. \Pfn\ and \Ban\ are $\omega$-continuous rig categories. Moreover, the functor $\ell^1\colon \Pfn\to\Ban$ preserves ambilimits of chains of embeddings.
\end{theorem}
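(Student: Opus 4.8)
The rig and rig dagger structures on the four categories are already supplied by Example~\ref{ex:rigcats}, so the whole task is to establish $\omega$-continuity (and the $\ell^1$-preservation claim). The plan is to treat all four categories by a single recipe, exploiting two reductions. First, for the two dagger cases $\PInj$ and $\Hilb$ I would invoke the observation following the definition of $\omega$-continuity: since $(\cat{C},\oplus)$ is dagger monoidal, it suffices to exhibit \emph{dagger colimits of chains of isometries} and to check that $A\oplus-$ and $A\otimes-$ preserve them, the embedding-projection ambilimits then being automatic. Second, by Lemma~\ref{lem:preservationofepchains} the functors $A\oplus-$ and $A\otimes-$ send embedding-projection chains (resp.\ chains of embeddings) to chains of the same type, so by Lemma~\ref{lem:preservationofambilims} I never need to reprove they are ambilimits: I only need that $A\oplus-$ and $A\otimes-$ preserve the \emph{colimit} half of the structure. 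In each category the unit $0$ for $\oplus$ is the empty set (in $\PInj,\Pfn$) or the zero space (in $\Hilb,\Ban$), each of which is trivially both initial and terminal, so the zero-object requirement is immediate.

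Next I would construct the (co)limits explicitly. For an embedding-projection chain $A_0 \to A_0\oplus A_1 \to A_0\oplus A_1\oplus A_2 \to\cdots$ the candidate ambilimit is the coproduct $\bigoplus_i A_i$: the disjoint union in $\PInj,\Pfn$, the Hilbert-space direct sum in $\Hilb$, and the $\ell^1$-sum in $\Ban$, with $i_n$ the coprojections and $p_n$ the truncations. That $\big(\bigoplus_i A_i,(i_n)\big)$ is the colimit of the chain of embeddings is the standard ``finitely supported vectors are dense'' argument, verified against the universal property with non-expansive (resp.\ partial-injective) mediating maps. Granting that $\big(\bigoplus_i A_i,(p_n)\big)$ is simultaneously the limit of the chain of projections, the evident normalization $p_n i_n=\id$ together with Proposition~\ref{prop:chainsdontneedindependence} gives independence for free, so $\bigoplus_i A_i$ is an ambilimit. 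In the dagger cases I would instead directly take the dagger colimit of a chain of isometries (the directed union in $\PInj$, the closure of the union in $\Hilb$), whose cocone legs are (partial) isometries and whose induced projections $l_n l_n^\dag$ are projections onto nested images and hence commute.

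The main obstacle is the limit half, and in $\Ban$ it is where the $\ell^1$ nature of $\oplus$ is genuinely used. Given a cone $(Y,(r_n))$ over the projection chain, each $y\in Y$ determines a component sequence $(a_k)_k$ with $\sum_{k\le n}\lVert a_k\rVert = \lVert r_n(y)\rVert \le \lVert y\rVert$ for all $n$, whence $\sum_k\lVert a_k\rVert \le \lVert y\rVert<\infty$ and $(a_k)_k$ lies in the $\ell^1$-sum, yielding the unique mediating contraction $Y\to\bigoplus_i A_i$. An $\ell^2$- or $\ell^\infty$-sum would instead produce a strictly larger limit, which is precisely the pathology flagged before Theorem~\ref{thm:ambilimitiffiso}; so I would emphasize this estimate. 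In $\Pfn$ the corresponding argument tracks, for each point of $Y$, the eventually constant value of the compatible family, which is defined exactly when it is ever defined, again identifying the limit with the disjoint union.

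Finally, preservation and the $\ell^1$ claim. For $A\oplus-$ and $A\otimes-$ I would argue cocontinuity of the colimit half: in $\Ban$ both are cocontinuous, $A\oplus-$ as a coproduct and $A\otimes-$ because $\Ban$ is closed symmetric monoidal under the projective tensor product, so $A\otimes-$ is a left adjoint; in $\Hilb$ the same follows from the density identities $A\otimes\overline{\bigcup_i X_i}=\overline{\bigcup_i (A\otimes X_i)}$ and its $\oplus$-analogue; in $\PInj,\Pfn$ it is the distributivity of $\sqcup$ and $\times$ over directed unions and countable coproducts in $\Set$. For $\ell^1\colon\Pfn\to\Ban$, Lemma~\ref{lem:preservationofambilims} reduces the claim to showing that $\ell^1$ carries the colimit of a chain of embeddings to the colimit of the image chain. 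Since those embeddings lie in the image of the inclusion $\cat{Set}\hookrightarrow\Pfn$ and the colimit computed in $\Pfn$ coincides with the disjoint union computed in $\cat{Set}$, this follows from $\ell^1\circ i$ being a left adjoint to the unit-ball functor $U$, together with the countable refinement $\ell^1\!\big(\bigsqcup_i A_i\big)\cong\bigoplus_i^{\ell^1}\ell^1(A_i)$ of Proposition~\ref{prop:ell1preservesrigstructure}, completing the argument.
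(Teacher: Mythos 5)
Your proposal is correct and follows essentially the same route as the paper's proof: the same reduction via Lemmas~\ref{lem:preservationofepchains} and~\ref{lem:preservationofambilims} to preserving only the colimit half, the same identification of the ambilimit with the coproduct $\bigoplus_i A_i$ with normalization plus Proposition~\ref{prop:chainsdontneedindependence} supplying independence, the same key $\ell^1$-estimate $\sum_k\lVert a_k\rVert\leq\lVert y\rVert$ cutting the limit in \Ban\ down from the $\ell^\infty$-product to the coproduct, and the same left-adjointness argument for $\ell^1$. The only divergence is \Hilb, where the paper describes colimits of chains of isometries via nested Schauder bases and transports the whole verification along $\ell^2$ from \PInj, whereas you check the density identities such as $A\otimes\overline{\bigcup_i X_i}=\overline{\bigcup_i(A\otimes X_i)}$ directly; both work, yours avoiding the detour through \PInj\ at the cost of redoing the Hilbert-space computation by hand.
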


\begin{proof}
    We start with \PInj. Clearly the empty set is both a zero object and a monoidal unit for disjoint unions. Moreover, \PInj\ is known to have colimits of chains~\cite[Proposition 3.6]{heunen:ltwo}: for a chain of isometries $D\colon \omega\to\PInj$ the colimit $L$ can be calculated as 
    \[ L:=\coprod D(i)/\approx \]
  where the coproduct is taken in \cat{Set} and the equivalence relation $\approx$ is generated by $x\approx D(n\to n+1)x$. Hence it remains to check that $X\mapsto A\oplus X$ and $X\mapsto A\otimes X$ preserve colimits of such chains. This follows easily once one observes that a chain of isometries in \cat{PInj} is also a chain of monomorphisms in \cat{Set} and that the construction of the colimit agrees in both categories. Since the disjoint union is a colimit in \cat{Set} and colimits commute with colimits, $X\mapsto A\oplus X$ preserve colimits of chains of isometries. The functor $X\mapsto A\otimes X$ preserves such colimits since since $\otimes$ is the categorical product in set and filtered colimits commute with finite limits in \cat{Set}~\cite[Theorem IX.2.1]{maclane:categories}. 

  Next we move on to \Hilb. Clearly the zero-dimensional Hilbert space is both a zero object and a monoidal unit for the direct sum. It is known to be $\omega_1$-accessible~\cite[Example 2.3.9]{adamek:accessible}, which implies that it has colimits of isometries. Moreover, the functor $\ell^2\colon \cat{PInj}\to \Hilb$ preserves the rig structure up to (unitary) isomorphism and preserves colimits of chains of isometries, see \eg \cite{heunen:ltwo} and~\cite{barr:algebraically}.

  To prove that constant multiplication and addition preserve these colimits, an explicit description in terms of a (Schauder) basis will be useful. Let $D\colon\omega\to \Hilb$ be a chain of isometries. We may identify $D(n)$ with a closed subspace of $D(n+1)$, and using this identification, choose a Schauder basis $I_n$ for each $D(n)$ such that $I_n\subset I_{+1}$. Then the colimit of $D$ is characterized by having a basis $\bigcup_{n}I_n$. From this description it is easy to check that the colimit is in fact a dagger colimit. Instead of computing directly in \Hilb, we use the functor $\ell^2$: the functor $n\mapsto I_n$ defines a functor $D'\colon \omega\to\cat{PInj}$ and moreover there is an obvious unitary isomorphism between $D$ and $\ell^2\circ D'$. Hence any colimit of a chain of isometries can be first computed in \cat{PInj}, whence $\omega$-continuity of \cat{PInj} implies that of \Hilb.

  In \Pfn\ the colimit
     \[A_0\hookrightarrow A_0\oplus A_1 \hookrightarrow A_0\oplus A_1\oplus A_2\hookrightarrow\dots\]
    is built as in \PInj\ (or \cat{Set}) and is given by the disjoint union  $L:=\coprod_{k} A_k$
    with the inclusions $i_k\colon \bigoplus_{i=1}^k A_i\to L$. Define projections $p_k\colon L\to \bigoplus_{i=1}^k A_i$, using the universal property of the \emph{coproduct} 
    by defining 
      \[A_n\hookrightarrow L\sxto{p_k} \bigoplus_{i=1}^k A_i:=\begin{cases} A_n\hookrightarrow\bigoplus_{i=1}^k A_i\text{ if }n\leq k\\
                                                                            A_n\sxto{0}\bigoplus_{i=1}^k A_i\text{ otherwise.}\\
                                                                \end{cases}\]
  It is easy to check that $(L,(p_n,i_n))$ satisfies normalization, so it suffices to check that $(L,(p_n))$ is a limit of 
     \[A_0\sxfrom{p_0}A_0\oplus A_1 \sxfrom{p_{0,1}} A_0\oplus A_1\oplus A_2\sxfrom\dots\]
  So, consider a cone $(f_k\colon X\to \bigoplus_{i=1}^k A_i)_k$. Define $f\colon X\to L$ by 
   \[f(x)=\begin{cases} i_kf_k(x)\text{, where }k\text{ is least such that }f_k(x)\text{ is defined}\\
              \text{undefined, if no such }k\text{ exists.}\end{cases}\]
  It is now straightforward to check that $f$ satisfies $p_kf=f_k$ and is unique in that respect. Hence \Pfn\ has ambilimits of chains of embeddings. Next we prove that the functors 
   $X\mapsto A\oplus X$  and  $X\mapsto A\otimes X$ preserve them. As both of these functors preserve  embedding-projection chains by Lemma~\ref{lem:preservationofepchains}, it it is enough to show that both of these functors preserve the universal property of the colimit  by Lemma~\ref{lem:preservationofambilims} . This is done as in the case of \PInj: $X\mapsto A\oplus X$ is cocontinuous since colimits commute with colimits, whereas 
   $X\mapsto A\otimes X$ preserves colimits of countable chains since the colimit of a chain \Pfn\ is computed as in \cat{Set} and filtered colimits commute with finite limits in \cat{Set}~\cite[Theorem IX.2.1]{maclane:categories}.

  For \Ban, we first note that \Ban\ is complete and cocomplete. It is locally presentable by \cite[Example 1.48]{adamek:accessible}, hence cocomplete by definition and complete by \cite[Remark 1.56]{adamek:accessible}: alternatively, this has been proven directly in~\cite{semadeni:baniscompleteandcocomplete}.

   The colimit of 
    \[A_0\sxto{i_0} A_0\oplus A_1\sxto{i_{0,1}} A_0\oplus A_1\oplus A_2\sxto{}\dots\]
  is in fact the coproduct $\coprod_{k} A_k$ and the limit of  
    \[A_0\sxfrom{p_0}A_0\oplus A_1\sxfrom{p_{0,1}}A_0\oplus A_1\oplus A_2\sxfrom{}\dots\]
  can be seen via the usual construction as a certain subobject of the product $\prod_k (\oplus_{j=1}^k A_k)$. Since products in \Ban\ are $\ell^\infty$-sums, this is concretely given by the vector space
    \[B:=\{(b_k)_{k\in\N}\mid b_k\in \oplus_{j=1}^k A_k,\sup_k\norm{b_k}<\infty,p_{0,\dots, k}(b_{k+1})=b_k\}\]
  with the $\ell^\infty$-norm $\norm{(b_k)_{k\in\N}}:=\sup_k\norm{b_k}$
  The condition $p_{0,\dots, k}(b_{k+1})=b_k$ guarantees that the sequence can be equivalently viewed as a sequence $(a_k)_{k\in\N}$ with $a_k\in A_k$. Since $\oplus_{j=1}^k A_k$ has the $\ell^1$-norm, the condition $\sup\norm{b_k}<\infty$ then implies that 
    \[\sum_{k}\norm{a_k}=\sup_k\sum_{j=1}^k \norm{a_j}=\sup_k\norm{b_k} <\infty \]
  which amounts to saying that $(a_k)_{k\in\N}\in\coprod_{k} A_k$. This amounts to us having constructed an isometric inverse to the canonical map $\coprod_{k} A_k\to B$, so \Ban\ has ambilimits of chains of embeddings by Theorem~\ref{thm:ambilimitiffiso}. 

  Next we need to show that the functors $X\mapsto A\oplus X$ and  $X\mapsto A\otimes X$ 
  preserve ambilimits of embedding-projection chains. Since colimits commute with colimits, $X\mapsto A\oplus X$ is cocontinuous. Because \Ban\ is closed (see \eg~\cite[Chapter II]{cigler:banach}), the functor $X\mapsto A\otimes X$ is also cocontinous. Since both of these functors preserve embedding-projection chains, they preserve ambilimits by Lemmas~\ref{lem:preservationofepchains} and ~\ref{lem:preservationofambilims} .

  For $\ell^1$, note first that the composite $\cat{Set}\hookrightarrow\Pfn\sxto{\ell^1}\Ban$ is a left adjoint and hence cocontinuous. Since colimits of split monics are computed in \Pfn\ as they are in \cat{Set} and moreover $\ell^1$ cooperates with the rig structure and hence preserves  embedding-projection chains, it preserves ambilimits by Lemma~\ref{lem:preservationofambilims}.

\end{proof}

One might be tempted to prove cocontinuity of $X\mapsto A\oplus X$ in \Hilb\ by referring to Theorem~\ref{thm:limsandcolimscommute}. Unfortunately, this does not quite work as-is: the direct sum $H\oplus K$ of Hilbert spaces is a dagger product in \cat{Hilb} but not in \Hilb, whereas the colimits of isometries are colimits in the latter but not in the former category. However, one can get around this and provide an alternative proof as follows: the colimits of \Hilb\ are \emph{bounded} colimits in \cat{Hilb}, meaning that they satisfy a universal property for every jointly bounded cocone. One could then develop a theory of bounded dagger colimits and show that Theorem~\ref{thm:limsandcolimscommute} still holds. We have chosen not to do so since doing so would take us too far afield. Moreover the proof above highlights an interesting similarity between \PInj\ and \Hilb, where both cases boil down to (ordinary) polynomial functors over \cat{Set} having initial algebras.

One could avoid a direct proof for \Hilb\ by leveraging properties of \PInj\ and $\ell^2$. Similarly, one might hope to simplify the proof above by reducing the case of \Ban\ to properties of \Pfn\ and $\ell^1$. However, there is a crucial difference that makes this impossible: every Hilbert space is unitarily isomorphic to a space of the form $\ell^2(A)$, but not every Banach space is isometrically isomorphic to a a space of the form $\ell^{1}(A)$.

\Ban\ is enriched over itself and hence also over metric spaces. However, ambilimits of embedding-projection chains in \Ban\ do not fall under the theory covered in \cite{america:metric,adamek:banach,birkedal:metric}: they wish the chain $D$ to satisfy $\norm{e_{n}q_{n}-\id}\to 0$ as $n\to\infty$, whereas embedding-projection chains typically satisfy $\norm{e_n q_n-\id}=1$ for all $n$. This corresponds with the fact that, for the ambilimit in question, the sequence $i_n p_n$ converges to $\id[L]$ only strongly but not in the operator norm.

\begin{example} 
  \begin{itemize}
    \item The Hilbert space $\ell^2(\N)$ is a dagger fixed point for the endofunctor  $-\oplus\C$. Indeed, by construction, the dagger fixed point is given as the dagger colimit of the sequence $\C\to\C^2\to \C^3\to\dots$, where $\C^{n}\to \C^{n+1}$ is given by inclusion into the first $n$ coordinates. This colimit is given by $\ell^2(\N)$. Being a dagger colimit of this sequence explains why the inclusions $i_n\colon \C\to\ell^2(\N)$ satisfy infinitary biproduct equations, while not being a biproduct. This also clarifies the universal property of $\ell^2(\N)$, which is weaker than that of a biproduct. This discussion lifts more generally to $\ell^2$-sums of any cardinality (though for a $\kappa$-ary sum one needs a $\kappa$-filtered colimit).
  \item  Quantum lists of type $A$ can be modelled as $\oplus_{i=1}^{\infty} A^{\otimes i}$, \ie as the dagger fixed point for the functor $\mathbb{C} \oplus (A\otimes -)$.
  \item  Quantum trees of type $A$ can be modelled as the dagger initial algebra for ${A\oplus (-)^{\otimes 2}}$
\end{itemize}
\end{example}

Note that all of the polynomial endofunctors extend to \cat{Hilb}, and admit unitary solutions to $F(X)\cong X$. However, these solutions no longer satisfy a universal property. For instance, the codiagonal $\mathbb{C}^2\to\mathbb{C}$ is an algebra of $-\oplus \mathbb{C}$, but there is no algebra homomorphism from $\ell^2(\N)$ to it.



\index[word]{Fock space}\index[word]{symmetrized tensor power}\index[word]{antisymmetrized tensor power}\index[symb]{$\sym^n X$ the $n$-th symmetric tensor power of $X$}The symmetrized Fock space of type $A$ is defined as $\bigoplus_{n\in\N} \sym^n A$  and the antisymmetrized Fock space is defined as $\bigoplus_{n\in\N} \sym^n_{-1} A$. Since the infinitary sum $\bigoplus$ is the $\ell^2$-sum, which can be seen as a dagger colimit of a chain of isometries, one might hope to understand Fock spaces as dagger fixed points. This would result in another categorical understanding of Fock spaces, in contrast with~\cite{bluteetal:fockspace,fiore:fockspace,vicary:fockspace}. Unfortunately, I have not been able to make this work. The issue is the following: while $\bigoplus_{n\in\N} A^{\otimes n}$ can be built as the dagger fixed point of $X\mapsto I\oplus(A\otimes -)$, there is no obvious way to replace the term $(A\otimes -)$ with something that would generate higher (anti)symmetrized powers of $A$. This is a shame since one would also like to use the Fock space of type $A$ to model unordered lists \ie sets of type $A$.


\section{Traces}

In this section we recall some background material on traced symmetric monoidal categories and then go on to show that the ``standard sum formula'' defines a trace on \Ban, with the functor $\ell^1\colon\Pfn\to\Ban$ being traced. We start by recalling the definition of a traced symmetric monoidal category~\cite{joyalstreetverity:traced}.

\begin{definition}\index[word]{trace} A trace on a symmetric monoidal category consists of a family of functions $\Tr^U_{A,B}\colon\cat{C}(A\otimes U,B\otimes U)\to \cat{C}(A,B)$ satisfying the following axioms:
  \begin{itemize}
    \item Naturality in $A$ and $B$: for all $g\colon A\otimes U\to B\otimes U$, $f\colon A'\to A$ and $h\colon B\to B'$,
        \[h\circ \Tr^U_{A,B}(g)\circ f=\Tr^U_{A',B'}((h\otimes\id[U])\circ g\circ (f\otimes \id[U])).\]
    \item Dinaturality in $U$: for all $f\colon A\otimes U\to B\otimes U'$ and $g\colon U'\to U$,
        \[\Tr^U_{A,B}((\id[B]\otimes g)\circ f)=\Tr^{U'}_{A,B}(f\circ(\id[A]\otimes g)).\]
    \item Strength: for all $f\colon A\otimes U\to B\otimes U$ and $g\otimes C\otimes D$, 
      \[g\otimes \Tr^U_{A,B}(f)=\Tr^U_{C\otimes A,C\otimes B}(g\otimes f).\]
    \item Vanishing I: for all $f\colon A\otimes I\to B\otimes I$,
      \[f=\Tr^I_{A,B}(f).\]
    \item Vanishing II: for all $f\colon A\otimes U\otimes V\to B\otimes U\otimes V$,
      \[\Tr^{U\otimes V}_{A,B}(f)=\Tr^U_{A,B}(\Tr^V_{A\otimes U,B\otimes U}(f)).\]
    \item Yanking: for all $A$, 
      \[\Tr^A_{A,A}(\sigma_{A,A})=\id[A].\]
  \end{itemize}
\end{definition}

Graphically the trace is usually depicted as follows:
  \[\Tr^U_{A,B}(\begin{pic} 
    \node[morphism] (f) {$f$};
    \draw ([xshift=-.5mm]f.south west) to +(0,-.5) node [left] {$A$};
    \draw ([xshift=.5mm]f.south east) to +(0,-.5) node [right] {$U$};
    \draw ([xshift=-.5mm]f.north west) to +(0,.5) node [left] {$B$};
    \draw ([xshift=.5mm]f.north east) to +(0,.5) node [right] {$U$};
  \end{pic})=:
  \begin{pic} 
    \node[morphism] (f) {$f$};
    \draw ([xshift=-.5mm]f.south west) to +(0,-.5) node [left] {$A$};
    \draw ([xshift=.5mm]f.south east) to[in=180,out=-90] +(.25,-.25) to[in=-90,out=0] +(.25,.25);
    \draw ([xshift=-.5mm]f.north west) to +(0,.5) node [left] {$B$};
    \draw ([xshift=.5mm]f.north east) to[in=180,out=90] +(.25,.25) to[in=90,out=0] +(.25,-.25) to +(0,-.4);
  \end{pic}\]
With this convention at hand, the axioms for a trace become quite natural. The reader is invited to either draw the axioms on their own or to consult one of~\cite{joyalstreetverity:traced,selinger:graphicallanguages} for more details. 

In a rig category $(\cat{C},\oplus,\otimes)$  there are two monoidal products, either one of which might be traced. In such a setting, one often calls a trace on $(\cat{C},\oplus)$  ``sum-like'' or ``particle-like'', especially when $\oplus$ is an actual coproduct. Similarly, a trace on $(\cat{C},\otimes)$ is called a ``product-like trace'' or a ``wave-like trace'', especially when  $\otimes$ is a categorical product. 
See~\cite{haghverdiscott:goi,abramsky:retracing,abramskyetal:goiandlca} for further discussion. 


In our context, we are primarily interested in particle-like traces. Even when $\oplus$ isn't a biproduct (let alone a coproduct), one might have enough structure to express each morphism $f\colon A\oplus U\to B\oplus U$ as a matrix 
    \[\begin{bmatrix}
     f_{A,B}\colon A\to B & f_{U,B}\colon U\to B \\
     f_{A,U}\colon A\to U & f_{U,U}\colon U\to U
    \end{bmatrix}\]
and then hope to define a trace by the ``standard trace formula'' 
    \begin{equation*}\label{eq:traceformula}\Tr^U_{A,B}(f):=f_{A,B}+\sum_{n\in\mathbb{N}} f_{U,B}\circ f^{n}_{U,U}\circ f_{A,U},\end{equation*}
provided the sum is defined. In so-called unique decomposition categories, if these sums are always defined, then the standard formula defines a trace~\cite[Proposition 4.0.11]{haghverdi:thesis}. Moreover, even when this sum isn't always defined, it always defines a \emph{partial trace},  (introduced~\cite{haghverdiscott:typedgoi}, studied further in~\cite{malherbe:partialtraces}). In fact every strong unique decomposition category can be embedded in a totally traced unique decomposition category~\cite{hoshino:representation}. 

The results on unique decomposition categories give sum-like traces for \Pfn\ and \PInj, and we cite these results below. Before doing so, we recall how sums are interpreted in these categories. In \Pfn, a sum $\sum_{i\in I} f_i$ is defined iff the functions $f_i$ have disjoint domains of definition, in which case the sum is defined to be the union of the morphisms $f_i$. In \PInj, a sum $\sum_{i\in I} f_i$ is defined iff the functions $f_i$ have disjoint domains of definition \emph{and} disjoint images, in which case the sum is defined to be the union of the $f_i$.

\begin{proposition}[\cite{haghverdi:udcs-goi-combinatorylogic}] \Pfn\ and \PInj\ have sum-like traces, given by the standard trace formula~\eqref{eq:traceformula}. 
\end{proposition}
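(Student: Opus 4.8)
The plan is to exhibit both categories as \emph{unique decomposition categories} and then invoke Haghverdi's theorem that the standard trace formula defines a trace in any such category in which the relevant sums converge. First I would equip the hom-sets of \Pfn\ and \PInj\ with their natural partial-sum structure: a family $(f_i)_{i\in I}$ of parallel morphisms is \emph{summable} precisely when the $f_i$ have pairwise disjoint domains of definition (and, in \PInj, pairwise disjoint images as well), in which case $\sum_i f_i$ is their union as partial functions. One checks routinely that this makes each hom-set a $\Sigma$-monoid, that the empty set is a zero object supplying zero morphisms, that composition is bilinear with respect to these partial sums, and that disjoint union $\oplus$ together with the evident quasi-injections and quasi-projections provides a matrix calculus with $\id[A\oplus U]=i_Ap_A+i_Up_U$. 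These verifications establish that \Pfn\ and \PInj\ are unique decomposition categories in the sense required by~\cite{haghverdi:thesis}.

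The crux of the argument is to show that, for every $f\colon A\oplus U\to B\oplus U$, the family consisting of $f_{A,B}$ together with the morphisms $f_{U,B}\circ f_{U,U}^{n}\circ f_{A,U}$ for $n\in\N$ is summable, so that the right-hand side of~\eqref{eq:traceformula} is totally defined. This follows from the determinism of partial functions: the forward trajectory $a,f(a),f^2(a),\dots$ of any $a\in A$ is a uniquely determined (possibly finite) sequence, so there is at most one $n$ for which this trajectory first re-enters $B$ after passing through $U$; hence the domains of the summands, and of $f_{A,B}$, are pairwise disjoint. In \PInj\ one additionally needs the images to be disjoint, and this is where injectivity of $f$ enters, since it lets one trace any exit point $b\in B$ backwards along the trajectory to a unique pair $(a,n)$.

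Finally, I would invoke~\cite[Proposition 4.0.11]{haghverdi:thesis}, which states that in a unique decomposition category where the standard trace sums are everywhere defined, formula~\eqref{eq:traceformula} satisfies all six trace axioms (naturality, dinaturality, strength, the two vanishing axioms, and yanking). Combined with the two steps above, this yields the sum-like traces on \Pfn\ and \PInj.

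The main obstacle is the totality claim of the second paragraph: while the $\Sigma$-monoid and bilinearity verifications amount to routine bookkeeping, and the trace axioms themselves are delegated to Haghverdi, one must argue carefully that the trajectory and injectivity considerations force exactly the disjointness needed for summability — in particular handling the interaction of the feedback summands with the direct term $f_{A,B}$ and, for \PInj, simultaneously controlling both domains and images.
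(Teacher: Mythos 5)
Your proposal is correct and takes essentially the same route as the paper, which states this result purely as a citation to Haghverdi's unique decomposition category machinery, spelling out the summability conditions in \Pfn\ and \PInj\ (disjoint domains, and additionally disjoint images in \PInj) exactly as you do and delegating the trace axioms to \cite[Proposition 4.0.11]{haghverdi:thesis}. Your determinism-of-trajectories argument (forward for \Pfn, forward and backward for \PInj) correctly supplies the totality step that the citation leaves implicit.
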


However, even if maps admit a matrix representation as above, and (some) infinite sums make sense, the standard formula need not define a trace. For instance, the standard trace formula does not define a (partial) trace on $(\cat{Vect_\C},\oplus)$, when the sum is interpreted as weak convergence~\cite[Proposition 3.10]{malherbe:partialtraces}. 

Note that in \Ban\ the underlying vector space of $A\oplus B$ is a biproduct in \cat{Vect_\C} and hence morphisms $f\colon A\oplus U\to B\oplus U$  admit a matrix representation. Hence one might wonder if the trace-formula defines a (partial) trace in \Ban. Given that the standard trace formula does not work in $(\cat{Vect_\C},\oplus)$, it might be surprising that it does work in \Ban\ when the sums involved are interpreted as convergent sums in the strong operator topology.

\begin{proposition}\label{prop:ell1traced}\index[symb]{$\ell^1$, a functor $\Pfn\to\Ban$} \Ban\ has a sum-like trace defined by standard trace formula~\eqref{eq:traceformula}. Moreover, the functor $\ell^1\colon\Pfn\to\Ban$ is traced.
\end{proposition}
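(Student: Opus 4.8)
The plan is to establish the two claims separately: first that the standard trace formula~\eqref{eq:traceformula} genuinely defines a trace on $(\Ban,\oplus)$, and second that $\ell^1\colon\Pfn\to\Ban$ is traced, meaning it sends the trace on $\Pfn$ to the trace on $\Ban$. First I would check that the sums appearing in the formula actually converge. Given a non-expansive $f\colon A\oplus U\to B\oplus U$ with matrix entries $f_{A,B},f_{U,B},f_{A,U},f_{U,U}$, each entry is non-expansive because the inclusions and projections for $\oplus$ are non-expansive (the $\oplus$ in $\Ban$ is the $\ell^1$-sum, whose coprojections are isometries and whose projections are contractions). The key convergence input is that, writing out $\norm{f(a,0)}=\norm{f_{A,B}a}+\norm{f_{A,U}a}$ and iterating, the partial sums $f_{A,B}a+\sum_{n=0}^{N} f_{U,B}f_{U,U}^n f_{A,U}a$ form a Cauchy sequence in the $\ell^1$-norm of $B$: the tail norms are dominated by $\sum_{n\geq N}\norm{f_{U,U}^n f_{A,U}a}$, and non-expansiveness of $f$ forces $\sum_{n\geq 0}\norm{f_{U,U}^n f_{A,U}a}\leq\norm{a}<\infty$ since at each stage the ``mass'' routed back through $U$ is bounded by what entered. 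Making this mass-accounting argument precise is where the real work lies, and I expect it to be \emph{the main obstacle}: one must show the geometric-style series converges in norm (not merely strongly) for each fixed input, and that the resulting operator $\Tr^U_{A,B}(f)$ is itself non-expansive, i.e.\ a genuine morphism of $\Ban$.

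Once convergence and non-expansiveness are secured, I would verify the six trace axioms. Rather than checking each by a direct $\epsilon$-$\delta$ computation, I would exploit the matrix calculus: each axiom is a purely algebraic identity between the standard-formula expressions, and these identities are exactly the ones that hold in any unique decomposition category by~\cite[Proposition 4.0.11]{haghverdi:thesis}. The point is that naturality in $A,B$, strength, vanishing~I and yanking are \emph{finitary} identities that reduce to rearranging finitely many matrix entries and using bilinearity of composition over $\oplus$, so they transfer verbatim once the relevant sums are known to converge. Dinaturality and vanishing~II involve reindexing the infinite sum, so there I would additionally invoke the absolute ($\ell^1$) convergence established in the first step to justify the rearrangement; absolute convergence in a Banach space permits unconditional reordering, which is precisely what these two axioms require.

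For the second claim, that $\ell^1$ is traced, I would use Proposition~\ref{prop:ell1preservesrigstructure}: $\ell^1$ carries $\oplus$ to $\oplus$ (up to coherent isomorphism), hence it sends the matrix decomposition of a morphism $f\colon A\oplus U\to B\oplus U$ in $\Pfn$ to the matrix decomposition of $\ell^1(f)$ in $\Ban$, entrywise. Since $\ell^1$ is a functor it preserves the finite composites $f_{U,B}\circ f_{U,U}^n\circ f_{A,U}$, so it remains only to show $\ell^1$ commutes with the relevant sums: in $\Pfn$ a defined sum $\sum_i g_i$ is a union of partial functions with disjoint domains, and $\ell^1$ sends such a disjoint union to the corresponding $\ell^1$-convergent sum of the images $\ell^1(g_i)$, because the supports $\{\delta_a : a\in\dom g_i\}$ remain disjoint in $\ell^1$ and the total mass is preserved. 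Thus $\ell^1\bigl(\Tr^U_{A,B}(f)\bigr)=\Tr^{\ell^1 U}_{\ell^1 A,\ell^1 B}\bigl(\ell^1(f)\bigr)$, where the left side's sum is defined exactly because the right side converges. The only subtlety to flag is matching the definedness condition for sums in $\Pfn$ (disjoint domains) with norm-convergence in $\Ban$; since disjoint-support families in $\ell^1$ have partial sums that are Cauchy whenever the total mass is finite, and finiteness is inherited from the non-expansiveness argument of the first part, this matches cleanly.
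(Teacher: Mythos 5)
Your overall strategy is the same as the paper's: establish norm-convergence of the series via a mass-accounting estimate coming from non-expansiveness over the $\ell^1$-sum, verify the axioms by observing that Haghverdi's computations for unique decomposition categories go through verbatim once the sums converge, and reduce the tracedness of $\ell^1$ to preservation of the matrix decomposition (via Proposition~\ref{prop:ell1preservesrigstructure}) plus preservation of countable disjoint-domain sums. The second and third parts match the paper essentially step for step.

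There is, however, one concrete error in your convergence argument. You dominate the tails by $\sum_{n\geq N}\norm{f_{U,U}^n f_{A,U}a}$ and claim $\sum_{n\geq 0}\norm{f_{U,U}^n f_{A,U}a}\leq\norm{a}$. That series need not converge at all: take $f_{U,U}=\id[U]$, $f_{U,B}=0$, and $f_{A,U}\neq 0$ (with $\norm{f_{A,B}a}+\norm{f_{A,U}a}\leq\norm{a}$, so $f$ is non-expansive); then every term equals $\norm{f_{A,U}a}>0$ and the sum diverges. The mass circulating in $U$ at stage $n$ is bounded by $\norm{a}$ term by term, but those terms do not telescope away --- only the mass that \emph{exits} to $B$ at each stage does. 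The correct iteration keeps $f_{U,B}$ inside the summand: from $\norm{w}\geq\norm{f_{U,B}w}+\norm{f_{U,U}w}$ one gets
\[
\norm{a}\;\geq\;\norm{f_{A,B}a}+\sum_{n=0}^{k}\norm{f_{U,B}f_{U,U}^{n}f_{A,U}a}+\norm{f_{U,U}^{k+1}f_{A,U}a},
\]
so it is $\sum_{n}\norm{f_{U,B}f_{U,U}^{n}f_{A,U}a}$ that is bounded by $\norm{a}$, which gives absolute convergence of the actual trace series and, via the triangle inequality, non-expansiveness of $\Tr^U_{A,B}(f)$. This is exactly the estimate the paper uses; with that repair the rest of your argument stands.
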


\begin{proof}
  First we will show that the sum in the formula~\eqref{eq:traceformula} always converges strongly to a weak contraction. Since $f$ is weakly contractive and the monoidal product we are working over is given by the $\ell^1$-sum, for any $v\in A$ and $k\in U$ we have
    \[\norm{v}\geq\norm{f_{A,B}v}+\norm{f_{A,U}v} \qquad \norm{w}\geq\norm{f_{U,B}w}+\norm{f_{U,U}w} \]
  Piecing these together, we see that for any $v$ in $A$ and $k\in \mathbb{N}$ we have 
    \begin{align*}
      \norm{v}\geq\norm{f_{A,B}v}+\norm{f_{A,U}v}\geq \norm{f_{A,B}v}+\norm{f_{U,B}f_{A,U}v}+\norm{f_{U,U}f_{A,U}v} \\
      \geq\dots\geq \norm{f_{A,B}v}+\sum_{n=0}^k \norm{f_{U,B}f_{U,U}^n f_{A,U}v}
    \end{align*}
  This implies that the series $\sum_{n\in\mathbb{N}} f_{U,B}\circ f^{n}_{U,U}\circ f_{A,U}v$ is absolutely convergent and hence convergent. Moreover, the triangle inequality gives us 
     \[\norm{f_{A,B}v}+\sum_{n=0}^k \norm{f_{U,B}f_{U,U}^n f_{A,U}v}\geq\norm{f_{A,B}+\sum_{n=0}^k f_{U,B}\circ f^{n}_{U,U}\circ f_{A,U}}\] 
  whence 
    \[\norm{v}\geq\norm{f_{A,B}v}+\sum_{n\in\mathbb{N}} \norm{f_{U,B}f_{U,U}^n f_{A,U}v}\geq\norm{f_{A,B}+\sum_{n\in\mathbb{N}} f_{U,B}\circ f^{n}_{U,U}\circ f_{A,U}}=\norm{\Tr^U_{A,B}(f)v},\] 
  as desired.

  Proving the axioms for the trace amounts to straightforward calculations. In fact, the proof of~\cite[Proposition 4.0.11]{haghverdi:thesis} works verbatim, even though the proposition itself does not apply since \Ban\ is not a so-called unique decomposition category in an obvious manner.

  We conclude by proving that the functor $\ell^1$ is traced. Since $\ell^1$ preserves coproducts and hence the matrix representation of a morphism, it suffices to prove that it preserves countable sums, \ie that whenever $\sum_{n\in \mathbb{N}} f_n$ is defined in \cat{Pfn}, the sum $\sum_{n\in \mathbb{N}} \ell^1 (f_n)$ converges strongly to $\ell^1(\sum_{n\in \mathbb{N}} f_n)$.

  So, let $f_n\colon A\to B$ be given. For the sum $\sum_{n\in \mathbb{N}} f_n$ to be defined, the partial functions $f_n$ must have disjoint domain. Write $A_n:=\dom f_n$ and $A_{-1}:=A\setminus \bigcup_{n\in\N} A_n$. For $k\geq 1$ define $p_n\colon \ell^1(A)\to \ell^1 (A)$ by 
    \[ 
      p_n(\varphi)(a)=\begin{cases}\varphi(a)\text{ if }a\in A_n \\
      0\text{ otherwise.}
        \end{cases}
     \]
  Since $\{A_k|k\geq -1\}$ is a partition of $A$, we see that $\sum_{k\geq 1} p_k=\id$ in the strong operator topology. Moreover, $\ell^1(\sum_{n\in \N}f_n)p_k=\ell^1(f_k)$ for any $k\geq 0$, and $\ell^1(\sum_{n\in \N}f_n)p_{-1}=0$. Hence 
    \[\ell^1(\sum_{n\in \N}f_n)=\ell^1(\sum_{n\in \N}f_n)\sum_{k\geq -1}p_k=\sum_{k\geq -1}(\ell^1(\sum_{n\in \N}f_n)p_k)=\sum_{k\in \N}\ell_1(f_k),\]
  as desired.
%
%
\end{proof}

\section{A language for traced $\omega$-continuous rig categories}\label{sec:rigcatslanguage}

In this section we show how to interpret the $\Pi^o$-language studied in~\cite{bowman:dagger,carette:computingwithsemirings,jamessabry:infeff} in any traced $\omega$-continuous rig category. 

We start from the syntax, by listing our syntactic values and value types
\begin{align*}
  &(\text{Value Types}) &a:=0\mid  1 \mid (a_1\oplus a_2)  \mid (a_1\otimes a_2) \mid x\mid \mu x.a  \\
  &(\text{Values}) &v:= ()\mid \inl v\mid\inr v \mid (v,v)\mid \langle v\rangle \\
\end{align*}

The values are typed according to the following rules.
\[
  \AXC{} \UIC{$()\colon 1$}\DisplayProof \qquad \AXC{$v\colon a$} \AXC{$w\colon b$} \BIC{$(v,w)\colon a\otimes b$}\DisplayProof \qquad \AXC{$v\colon a$} \UIC{$\inl (v)\colon a\oplus b$} \DisplayProof \qquad \AXC{$v\colon a$} \UIC{$\inr (v)\colon b\oplus a$} \DisplayProof \qquad \AXC{$v\colon a[\mu x.a/x]$} \UIC{$\langle v\rangle\colon \mu x.a$} \DisplayProof
\]

Where the notation $a[b/x]$ means the type obtained by replacing each free occurrence of $x$ by $b$. 
Thinking categorically, our value types corresponds to objects. Besides value types, our language has combinators, which then correspond to morphisms. The collection of combinator types is given by $a\leftrightarrow b$ for value types $a,b$. We write $\leftrightarrow$ instead of $\rightarrow$ because our combinators are intended to be partial isomorphisms. The notation $f\colon a\leftrightarrow b\colon g$ means that $f$ is of type $a\leftrightarrow b$ and $g$ is of type $b\colon \leftrightarrow a$. The basic combinators, along with their types are given below. 
\begin{align*}
  &\id[a]\colon a\leftrightarrow a \\
  &\assoclplus_{a,b,c}\colon &a\oplus (b\oplus c)\leftrightarrow (a\oplus b)\oplus c  &\colon\assocrplus_{a,b,c} \\
  &\unitlplus_a\colon &0\oplus a\leftrightarrow a  &\colon\unitrplus_a\\ 
  &\swapplus_{a,b}\colon &a\oplus b\leftrightarrow b\oplus a \\
  &\assocltimes_{a,b,c}\colon &a\otimes (b\otimes c)\leftrightarrow (a\otimes b)\otimes c  &\colon\assocrtimes_{a,b,c} \\ 
  &\unitltimes_a\colon &  1\otimes a\leftrightarrow a &\colon\unitrtimes_a \\ 
  &\swaptimes_{a,b}\colon &a\otimes b\leftrightarrow b\otimes a \\ 
  &\distr_{a,b,c}\colon & (a\oplus b)\otimes c\leftrightarrow (a\otimes c)\oplus (b\otimes c)  &\colon\factor_{a,b,c} \\
  &\absorb_a\colon & 0\otimes a\leftrightarrow 0  &\colon\unabsorb_a\ \\
  &\fold_{\mu x.a}\colon &a[\mu x.a/x]\leftrightarrow \mu x.a &\colon\unfold_{\mu x.a}
\end{align*}

Basic combinators can be combined, using $\circ$, $\oplus$ and $\otimes$. The typing rules for the combinators built using these are as one might guess:
\[
  \AXC{$f\colon a\leftrightarrow b$} \AXC{$g\colon b\leftrightarrow c$}\BIC{$g\circ f\colon a\leftrightarrow c$}\DisplayProof \qquad \AXC{$f\colon a\leftrightarrow b$} \AXC{$g\colon c\leftrightarrow d$}\BIC{$f\oplus g\colon a\oplus c\leftrightarrow b\oplus d$}\DisplayProof \colon  \AXC{$f\colon a\leftrightarrow b$} \AXC{$g\colon c\leftrightarrow d$}\BIC{$f\otimes g\colon a\otimes c\leftrightarrow b\otimes d$}\DisplayProof
\]

 Moreover, we have a trace operator $\Tr$, together with the typing rule 
 \[\AXC{$f\colon a\oplus c\leftrightarrow b\oplus c$}\UIC{$\Tr(f)\colon a\leftrightarrow c$}\DisplayProof\]

We list the full description of the syntax in a single table to make it easier to refer back to. 
\begin{align*}
  &(\text{Value Types}) &a:=0\mid  1 \mid (a_1\oplus a_2)  \mid (a_1\otimes a_2) \mid x\mid \mu x.a  \\
  &(\text{Values}) &v:= ()\mid \inl v\mid\inr v \mid (v,v)\mid \langle v\rangle \\
  &(\text{Combinator Types}) &a_1\leftrightarrow a_2 \\ 
  &(\text{Basic Combinators}) &g:=\id\mid\assoclplus\mid\assocrplus\mid\unitlplus\mid\unitrplus\mid \\ 
  & &\swapplus\mid\assocltimes\mid\assocrtimes\mid\unitltimes\mid\unitrtimes\mid\swaptimes\mid\\ 
  && \distr\mid\factor\mid \absorb\mid\unabsorb\mid \fold\mid\unfold \\
  &(\text{Combinators}) &f:=g \mid f_1\circ f_2\mid f_1\oplus f_2\mid f_1\otimes f_2\mid \Tr(f) 
\end{align*}

Next we move on to semantics. 
Given a traced $\omega$-continuous rig category \cat{C}, it is now easy to define inductively an interpretation \sem{-} of $\Pi^o$ into \cat{C}. This will then be a functor from the syntactic category to the category \cat{C}, at least as long as we interpret the syntactic category as the category that has \emph{closed types} as its objects, \ie those types with no free occurrences of variables $x$. However, to interpret $\mu x.b$ we need to be able to interpret variables $x$. Formally, we need a finite assignment (function) $s=\{x_1\mapsto A_1,\dots x_n\mapsto A_n\}$. For such a function $s$, we define $s[x\mapsto A]$ to mean the function defined by 
  \[s[x\mapsto A](y)=\begin{cases} A\text{ if }y=x \\ 
   s(y) \text{ otherwise.}
    \end{cases}\]
Then, we define a partial interpretation $\sem{-}_s$ on all value types inductively. 
  \begin{align*}
    &\sem{0}_s=0_\cat{C} \\ 
    &\sem{1}_s=1_\cat{C}\\  
    &\sem{a\oplus b}_s=\sem{a}_s\oplus\sem{b}_s \\
    &\sem{a\otimes b}_s=\sem{a}_s\otimes\sem{b}_s\\
    &\sem{x}_s=\begin{cases} s(x)\text{ if  defined,} \\ 
\text{undefined otherwise.} \end{cases} \\
    &\sem{\mu x.b}_s=\text{the canonical fixed point of the polynomial functor }A\mapsto \sem{b}_{s[x\mapsto a]} 
  \end{align*}

Next we extend $\sem{-}_s$ to basic combinators. To avoid cluttering the notation, we omit $s$ from the indices.
  \begin{align*}
    &\sem{\id[a]}=\id[\sem{a}] \\
    &\sem{\assocrplus_{a,b,c}}=\alpha^\oplus_{\sem{a},\sem{b},\sem{c}} &\sem{\assoclplus_{a,b,c}}=\sem{\assocrplus_{a,b,c}}^{-1} \\
    &\sem{\unitlplus_{a}}=\lambda^\oplus_{\sem{a}}  &\sem{\unitrplus_{a}}=\sem{\unitlplus_{a}}^{-1}\\ 
    &\sem{\swapplus_{a,b}}=\sigma^{\oplus}_{\sem{a}} \\
    &\sem{\assocrtimes_{a,b,c}}=\alpha^\otimes_{\sem{a},\sem{b},\sem{c}} &\sem{\assocltimes_{a,b,c}}=\sem{\assocrtimes_{a,b,c}}^{-1} \\
    &\sem{\unitltimes_{a}}=\lambda^\otimes_{\sem{a}}  &\sem{\unitrtimes_{a}}=\sem{\unitltimes_{a}}^{-1}\\ 
    &\sem{\swaptimes_{a,b}}=\sigma^{\otimes}_{\sem{a}} \\
    &\sem{\distr_{a,b,c}}=\delta^l_{\sem{a},\sem{b},\sem{c}} &\sem{\factor{a,b,c}}=\sem{\distr_{a,b,c}}^{-1} \\
    &\sem{\absorb_{a}}=\tau^{r}_{\sem{a}} &\sem{\unabsorb{a}}=\sem{\absorb_{a}}^{-1} \\
    &\sem\fold_{\mu x.a}=\text{the canonical isomorphism }\sem{a[\mu x.a/x]}\to\sem{\mu x.a} &\unfold_{\mu x.a}=\sem\fold_{\mu x.a}^{-1}
  \end{align*}

The extension to all combinators is as one would guess: 
  \begin{align*} 
    &\sem{(g\oplus f)}=\sem{g}\oplus\sem {f} &\sem{g\circ f}=\sem{g}\circ\sem{f} \\
    &\sem{(g\otimes f)}=\sem{g}\otimes\sem {f} &\sem{\Tr(f)}=\Tr(\sem{f})\\
  \end{align*}
For types without free variables the choice of an assignment does not matter. An easy induction shows the following.
\begin{proposition} If $f\colon a\leftrightarrow b$ can be derived, then $\sem{f}\colon\sem{a}\to\sem{b}$.
\end{proposition}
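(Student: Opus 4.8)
The statement to prove is the soundness proposition: if the typing judgement $f\colon a\leftrightarrow b$ is derivable in $\Pi^o$, then the interpretation $\sem{f}$ is a well-defined morphism $\sem{a}\to\sem{b}$ in the traced $\omega$-continuous rig category $\cat{C}$. My plan is to proceed by a straightforward structural induction on the derivation of the combinator $f$, mirroring the inductive clauses of the definition of $\sem{-}$. The only genuine subtlety is that the interpretation is \emph{a priori} partial, since $\sem{x}_s$ is undefined when $x\notin\dom(s)$; so the first thing I would do is reduce to the case of closed types, where the interpretation is guaranteed to be total.

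\textbf{Key steps, in order.}
First I would establish the supporting lemma that for a closed value type $a$ (one with no free variables) the partial interpretation $\sem{a}_s$ is defined and independent of the assignment $s$. This is a routine induction on the structure of $a$: the base cases $0$ and $1$ are constants, the binary cases $\oplus,\otimes$ follow from the inductive hypotheses, the variable case $x$ does not arise since $a$ is closed, and for $\mu x.b$ I would note that the functor $A\mapsto \sem{b}_{s[x\mapsto A]}$ is a well-defined rig-polynomial endofunctor on $\cat{C}$ (its defining clauses only use $\oplus$, $\otimes$, constants, the variable $x$, and nested fixed points), whence Theorem~\ref{thm:fixedpointsinrigcats} supplies its canonical fixed point $\sem{\mu x.b}_s$; crucially, this fixed point is unique up to canonical isomorphism by Theorem~\ref{thm:ambilimitsunique}, so the choice does not affect the object. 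With this in hand, every combinator type $a\leftrightarrow b$ appearing in a derivation has $a$ and $b$ closed, and their interpretations are honest objects of $\cat{C}$.

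Next I would run the induction on the derivation of $f\colon a\leftrightarrow b$ itself. For each basic combinator, the defining clause of $\sem{-}$ assigns it a coherence isomorphism of the rig structure ($\alpha^\oplus,\lambda^\oplus,\sigma^\oplus,\alpha^\otimes,\lambda^\otimes,\sigma^\otimes,\delta^l,\tau^r$ and their inverses), each of which is by definition a morphism of the stated domain and codomain; the only nontrivial basic clauses are $\fold$ and $\unfold$, which I interpret as the canonical isomorphism $\sem{a[\mu x.a/x]}\to\sem{\mu x.a}$ and its inverse, available since an initial algebra structure is an isomorphism by Lambek's lemma and the canonical fixed point of the associated polynomial functor realises exactly this type (here one uses the substitution fact $\sem{a[\mu x.a/x]}_s=\sem{a}_{s[x\mapsto\sem{\mu x.a}_s]}$, which I would record as a second easy lemma by induction on $a$). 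For the compound combinators the inductive hypothesis gives $\sem{f_1}\colon\sem{a}\to\sem{b}$ and $\sem{f_2}\colon\sem{c}\to\sem{d}$, and then $\sem{g\circ f}=\sem{g}\circ\sem{f}$ is a morphism $\sem{a}\to\sem{c}$ by the composition typing rule, while $\sem{f_1\oplus f_2}$ and $\sem{f_1\otimes f_2}$ are morphisms of the correct type because $\oplus$ and $\otimes$ are functorial on $\cat{C}$. For the trace, the typing rule has $f\colon a\oplus c\leftrightarrow b\oplus c$, so $\sem{f}\colon\sem{a}\oplus\sem{c}\to\sem{b}\oplus\sem{c}$, and the trace operation $\Tr^{\sem{c}}_{\sem{a},\sem{b}}$ of the traced structure on $(\cat{C},\otimes)$ --- or rather the particle-like trace on $(\cat{C},\oplus)$ that the language intends --- produces a morphism $\sem{a}\to\sem{b}=\sem{\Tr(f)}$, as required.

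\textbf{The main obstacle.}
The bookkeeping for the recursive types is where the real content sits, and I expect it to be the hardest part. Two things must be pinned down carefully: that $A\mapsto\sem{b}_{s[x\mapsto A]}$ is genuinely a rig-polynomial functor (so that Theorem~\ref{thm:fixedpointsinrigcats} applies and a canonical fixed point exists), which requires checking that the syntactic type formers translate into exactly the basic building blocks $X\mapsto A\oplus X$, $X\mapsto A\otimes X$, $X\mapsto X^{\otimes n}$, constants and projections; and the substitution lemma $\sem{a[\mu x.a/x]}_s\cong\sem{a}_{s[x\mapsto\sem{\mu x.a}_s]}$ needed to make $\fold/\unfold$ typecheck. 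Both are proved by induction on the type structure, but require some care about free variables and about the coherence of the chosen canonical fixed points; by contrast, the inductive step for every other combinator is immediate from the relevant typing rule together with functoriality of $\oplus$, $\otimes$, composition, and the trace, so I would keep those steps terse.
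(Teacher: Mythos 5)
Your proposal is correct and follows the same route the paper intends: the paper dispatches this proposition with the single remark ``an easy induction shows the following,'' and your structural induction on the derivation --- together with the auxiliary observations that closed types have assignment-independent interpretations and that a substitution lemma is needed for $\fold/\unfold$ --- is exactly that induction, spelled out. No gaps; if anything you are more careful than the paper, which leaves all of this implicit.
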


\begin{proposition} If \cat{C} and \cat{D} are traced $\omega$-continuous rig categories and $F\colon\cat{C}\to\cat{D}$ is a traced $\omega$-continuous rig functor, then $\sem{-}_\cat{D}\cong F\circ\sem{-}_\cat{C}$
\end{proposition}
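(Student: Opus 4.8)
The statement to prove is a coherence result: if $F\colon\cat{C}\to\cat{D}$ is a traced $\omega$-continuous rig functor between traced $\omega$-continuous rig categories, then $\sem{-}_\cat{D}\cong F\circ\sem{-}_\cat{C}$. The plan is to proceed by a double induction, first on value types and then on combinators, constructing the family of isomorphisms and checking naturality at each syntactic constructor.

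\textbf{Setting up the induction on types.} First I would make precise what ``$\sem{-}_\cat{D}\cong F\circ\sem{-}_\cat{C}$'' means: for each closed value type $a$ there should be an isomorphism $\phi_a\colon \sem{a}_\cat{D}\to F\sem{a}_\cat{C}$, and for each combinator $f\colon a\leftrightarrow b$ the square relating $\sem{f}_\cat{D}$ and $F\sem{f}_\cat{C}$ via $\phi_a,\phi_b$ should commute. I would build $\phi_a$ by induction on the structure of $a$, using that $F$ preserves the rig structure up to coherent isomorphism. The base cases $\sem{0}$, $\sem{1}$ use that $F$ preserves the zero object and the monoidal unit; the cases $a\oplus b$ and $a\otimes b$ use the structural isomorphisms $F(\sem a\oplus\sem b)\cong F\sem a\oplus F\sem b$ and similarly for $\otimes$, composed with the inductively given $\phi_a,\phi_b$. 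Since the interpretation of open types depends on an assignment $s$, I would actually induct on the interpretation $\sem{-}_s$ relative to an assignment and an assignment-level isomorphism, so that the variable case $\sem{x}_s$ is handled by hypothesis.

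\textbf{The fixed-point case.} The genuinely delicate type constructor is $\mu x.a$, whose interpretation is the canonical fixed point of the polynomial functor $A\mapsto\sem{a}_{s[x\mapsto A]}$. Here I would use Theorem~\ref{thm:ambiadamek} (and, in the dagger setting, Theorem~\ref{thm:daggeradamek}): the canonical fixed point is computed as an ambilimit of the chain $0\to F_a(0)\to F_aF_a(0)\to\cdots$. Because $F$ is an $\omega$-continuous rig functor, it preserves embedding-projection chains by Lemma~\ref{lem:preservationofepchains} and preserves their ambilimits by Lemma~\ref{lem:preservationofambilims}, so $F$ applied to the fixed-point chain in $\cat{C}$ yields the defining chain for the fixed point in $\cat{D}$, up to the inductively-constructed isomorphisms. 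Theorem~\ref{thm:ambilimitsunique} then supplies a unique canonical isomorphism $\phi_{\mu x.a}$ compatible with both the limit and colimit structure, and its uniqueness is exactly what makes the later naturality checks for $\fold$ and $\unfold$ go through. This interplay — that the isomorphism of ambilimits is canonical and respects both universal properties — is the conceptual heart of the argument.

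\textbf{Induction on combinators and the main obstacle.} With $\phi_a$ in hand for all types, I would verify naturality by induction on combinators. For the basic structural combinators ($\assocrplus$, $\swaptimes$, $\distr$, etc.) the required square commutes because $F$ preserves the corresponding coherence isomorphism, which is part of the data of being a rig functor; this is a routine diagram chase using the coherence laws. The closure combinators $\circ$, $\oplus$, $\otimes$ follow by pasting the inductively-given squares, using functoriality of $F$ and the structural isomorphisms. The trace combinator $\Tr$ is where I would invoke that $F$ is \emph{traced}: the square for $\Tr(f)$ reduces to the square for $f$ after applying $F\Tr=\Tr F$, combined with the already-established compatibility of $\phi$ with $\oplus$ (needed to match up the matrix decompositions). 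I expect the main obstacle to be the $\fold$/$\unfold$ case: one must show that $\phi_{\mu x.a}$ intertwines the canonical fixed-point isomorphisms $\sem\fold$ on the two sides, which requires checking that the isomorphism produced by Theorem~\ref{thm:ambilimitsunique} is compatible with the algebra structure $a$ of Theorem~\ref{thm:adamek}, i.e. that $\phi$ commutes with the initial-algebra maps. This forces a careful tracking of how $F$ transports the colimit cocone and the induced algebra structure through the isomorphisms $F(\sem a\oplus\sem b)\cong F\sem a\oplus F\sem b$ etc., and it is here that the genuine work lies rather than in the purely formal structural cases.
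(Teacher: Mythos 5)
The paper states this proposition without proof, treating it as a routine induction on types and combinators, and your proposal is exactly that induction spelled out correctly: the rig-functor coherence isomorphisms handle the base types and the structural combinators, preservation of ambilimits of embedding-projection chains together with Theorem~\ref{thm:ambilimitsunique} handles $\mu x.a$ and the $\fold$/$\unfold$ case, and tracedness of $F$ handles $\Tr$. The only cosmetic quibble is in the trace case: what is needed there is not any matrix decomposition but merely (di)naturality of the trace, which lets you pull the conjugating isomorphisms $\phi_a$, $\phi_b$, $\phi_c$ out of $\Tr$ after applying $F\circ\Tr=\Tr\circ F$.
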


The last proposition implies that in each of our examples, the semantics \sem{-} factors via \PInj. This is not very surprising, since the language is designed to capture reversible features, and indeed it is logically reversible~\cite[Proposition 4.2]{jamessabry:infeff}. However, this might also be seen as a drawback or as a sign of low expressive power. Certainly if one wishes to use a variant of the language for naming morphisms in  \Hilb\ or \Ban\ one might hope to customize the language to suit better either of these categories. Both of these categories might benefit from having more inbuilt functions (reversible or not) coming with given interpretations and perhaps new ways of combining them. We conclude by discussing how to possibly change the language, treating the changes suggested by \Hilb\ and \Ban\ separately.

From the construction, it is clear that dropping parts of the syntax corresponds to dropping requirements for the models. For instance, if one drops the trace operator from the language, the result is then readily interpretable in \Hilb. One could then add new operations \eg superpositions of values, to get a language better suited for discussing quantum programming in \Hilb. Indeed, this is essentially what is done in~\cite{valironetal:quantumcontrol}. Moreover, one could easily add a primitive operator $(-)^\dag$ with the rule 
  \[\AXC{$f\colon a\leftrightarrow b$}\UIC{$f^\dag \colon b\leftrightarrow a$}\DisplayProof\]
and extend the semantics by $\sem{f^\dag}=\sem{f}^\dag$. Of course, every combinator already has a canonical dagger as noticed in~\cite{jamessabry:infeff}, but this need not be the case for further inbuilt combinators unless one adds a primitive like $(-)^\dag$.

One might even hope to extend the full language to \Hilb by finding an appropriate (partial?) trace on it, so that $\ell^2$ becomes a traced functor. However, I have not been able to do so: $\ell^2$ being traced seems to strongly suggest using the sum formula, but it is unclear if it works in \Hilb. It is easy to find examples showing that it's not always defined in contrast with \Ban\ but I haven't been able to modify the counterexample of~\cite[Proposition 3.10]{malherbe:partialtraces} to \Hilb\ in order to show that it does not define a valid partial trace.  Alternatively, one might hope to show conclusively that no such trace exists, by \eg adapting the results of~\cite{hoshino:partial}. 
However, those results apply to Abelian categories and hence don't work as-is for either of \Hilb\ or \cat{Hilb}. Hence I haven't been able to resolve the question whether there is a partial trace on \Hilb\ making $\ell^2$ traced. Besides being a shortcoming in research, this might also be taken as a further sign of problems with the idea of quantum control (see \eg~\cite{buadescu:quantumalternation} for a discussion).

For \Ban, the most obvious way of extending the language is by adding a richer family of inbuilt functions. However, one might also add function types since \Ban\ is closed. To do this successfully, one should first check that the closure works well with the canonical fixed points, which we leave for further work. 

In both of these cases, while one might get a better syntactic handle of the categories in question, one is unlikely to get an internal language in the sense of an equivalence between models and syntactic theories. This is essentially because the syntactic $\mu x.a$ acts more like a ``chosen fixed point of a functor'' rather than as one satisfying a universal property. With no further inbuilt functions and types, this is probably not a problem but if the user is free to add types and functions, there is no obvious guarantee that the type $\mu x.a$ is an initial algebra in the syntactic category.\index[symb]{$\omega$, the category induced by the order on $\N$|)}

\chapter{Monads}\label{chp:monads}

\section{Introduction}

If $T$ is a monad and a dagger functor, then it is automatically also a comonad. We contend that the theory of a monad on  a dagger category works best when the monad and comonad satisfy the following \emph{Frobenius law}, depicted in a graphical calculus reviewed in Section~\ref{sec:pictures}.
\[\index[word]{Frobenius law!of a monoid}
  \begin{pic}
   \draw (0,0) to (0,1) to[out=90,in=180] (.5,1.5) to (.5,2);
   \draw (.5,1.5) to[out=0,in=90] (1,1) to[out=-90,in=180] (1.5,.5) to (1.5,0);
   \draw (1.5,.5) to[out=0,in=-90] (2,1) to (2,2);
   \node[dot] at (.5,1.5) {};
   \node[dot] at (1.5,.5) {};
  \end{pic}
  \quad = \quad
  \begin{pic}[xscale=-1]
   \draw (0,0) to (0,1) to[out=90,in=180] (.5,1.5) to (.5,2);
   \draw (.5,1.5) to[out=0,in=90] (1,1) to[out=-90,in=180] (1.5,.5) to (1.5,0);
   \draw (1.5,.5) to[out=0,in=-90] (2,1) to (2,2);
   \node[dot] at (.5,1.5) {};
   \node[dot] at (1.5,.5) {};
  \end{pic}
\]
This law has the following satisfactory consequences.
\begin{itemize}
  \item Any dagger adjunction induces a monad satisfying the law; see Section~\ref{sec:monads}.
  \item The Kleisli category of a monad that preserves daggers and satisfies the law inherits a dagger; see Section~\ref{sec:fem}.
  \item For such a monad, the category of those Eilenberg-Moore algebras that satisfy the law inherits a dagger; see Section~\ref{sec:fem}.
  \item In fact, this Kleisli category and Frobenius-Eilenberg-Moore category are the initial and final resolutions of such a monad as dagger adjunctions in the dagger 2-category of dagger categories; see Section~\ref{sec:formal}.
  \item Any monoid in a monoidal dagger category satisfying the law induces a monad satisfying the law; see Section~\ref{sec:monads}.
  \item Moreover, the adjunction between monoids and strong monads becomes an equivalence in the dagger setting; see Section~\ref{sec:strength}.
\end{itemize}
Additionally, Section~\ref{sec:coherence} characterizes the Frobenius law as a natural coherence property between the dagger and closure of a monoidal category. 

Because of these benefits, it is tempting to simply call such monads `dagger monads'. However, many of these results also work without daggers, see~\cite{street:ambidextrous,lauda:ambidextrous,bulacutorrecillas:extensions}. This chapter is related to those works, but not a straightforward extension. 
Daggers and monads have come together before in coalgebra~\cite{jacobs:involutive,jacobs:walks}, quantum programming languages~\cite{greenetal:quipper,selingervaliron:lambda}, and matrix algebra~\cite{devosdebaerdemacker:matrix}. The current chapter differs by taking the dagger into account as a fundamental principle from the beginning.
Finally, Section~\ref{sec:strength} is a noncommutative generalization of~\cite[Theorem~4.5]{pavlovic:abstraction}. It also generalizes the classic Eilenberg-Watts theorem~\cite{watts:eilenberg}, that characterizes certain endofunctors on abelian categories as being of the form $- \otimes B$ for a monoid $B$, to monoidal dagger categories; note that there are monoidal dagger categories that are not abelian~\cite[Appendix~A]{heunen:embedding}.

\section{Frobenius monads}\label{sec:monads}

We now come to our central notion: the \emph{Frobenius law} for monads. It is the dagger version of a similar notion in~\cite{street:ambidextrous}. The monads of~\cite{street:ambidextrous} correspond to ambijuctions, whereas our monads correspond to dagger adjunctions.

\begin{definition}\index[word]{dagger Frobenius monad}
  A \emph{dagger Frobenius monad} on a dagger category $\cat{C}$ is a dagger Frobenius monoid in $[\cat{C},\cat{C}]$;
  explicitly, a monad $(T,\mu,\eta)$\index[symb]{$(T,\mu,\eta)$, a (dagger Frobenius) monad} on $\cat{C}$ with $T(f^\dag)=T(f)^\dag$ and 
  \begin{equation}\label{eq:frobeniusformonads}\index[word]{Frobenius law!of a monad}
    T(\mu_A) \circ \mu^\dag_{T(A)} = \mu_{T(A)} \circ T(\mu_A^\dag).
  \end{equation}
\end{definition}

The following family is our main source of examples of dagger Frobenius monads.
We will see in Example~\ref{ex:measurement} below that it includes quantum measurement.

\begin{example}\label{example:tensor}
   A monoid $(B,\tinymult,\tinyunit)$ in a monoidal dagger category $\cat{C}$ is a dagger Frobenius monoid if and only if the monad $- \otimes B \colon \cat C \to \cat{C}$ is a dagger Frobenius monad.
\end{example}
\begin{proof}
   The monad laws become the monoid laws.
   \[
     \mu_A = \begin{pic}
       \node[dot] (d) at (0,0) {};
       \draw (d.north) to (0,.5) node[right] {$B$};
       \draw (d.east) to[out=0,in=90] +(.3,-.3) to +(0,-.2) node[right] {$B$};
       \draw (d.west) to[out=180,in=90] +(-.3,-.3) to +(0,-.2) node[right] {$B$};
       \draw (-.8,-.5) node[left] {$A$} to (-.8,.5) node[left] {$A$};
     \end{pic} 
     \qquad
     \eta_A = \begin{pic}
       \node[dot] (d) at (0,0) {};
       \draw (d.north) to (0,.5) node[right] {$B$};
       \draw (-.4,-.5) node[left] {$A$} to (-.4,.5) node[left] {$A$};
     \end{pic} 
  \]
  The Frobenius law of the monoid implies the Frobenius law of the monad:
  \[
    T\mu \circ \mu^\dag_T
    =
    \begin{pic}[scale=.75]
      \draw (0,0) node[below] {$B$} to (0,1) to[out=90,in=180] (.5,1.5) to (.5,2) node[above] {$B$};
      \draw (.5,1.5) to[out=0,in=90] (1,1) to[out=-90,in=180] (1.5,.5) to (1.5,0) node[below] {$B$};
      \draw (1.5,.5) to[out=0,in=-90] (2,1) to (2,2) node[above] {$B$};
      \node[dot] at (.5,1.5) {};
      \node[dot] at (1.5,.5) {};
      \draw (-.5,0) node[below] {$A$} to (-.5,2) node[above] {$A$};
    \end{pic}
    =
    \begin{pic}[yscale=.75,xscale=-.75]
      \draw (0,0) node[below] {$B$} to (0,1) to[out=90,in=180] (.5,1.5) to (.5,2) node[above] {$B$};
      \draw (.5,1.5) to[out=0,in=90] (1,1) to[out=-90,in=180] (1.5,.5) to (1.5,0) node[below] {$B$};
      \draw (1.5,.5) to[out=0,in=-90] (2,1) to (2,2) node[above] {$B$};
      \node[dot] at (.5,1.5) {};
      \node[dot] at (1.5,.5) {};
      \draw (2.5,0) node[below] {$A$} to (2.5,2) node[above] {$A$};
    \end{pic}
    =
    \mu_T \circ T\mu^\dag.
  \]
  The converse follows by taking $A=I$.
\end{proof}

For another example: if $T$ is a dagger Frobenius monad on a dagger category $\cat{C}$, and $\cat{D}$ is any other dagger category, then $T \circ -$ is a dagger Frobenius monad on $[\cat D, \cat C]$.

\begin{lemma}\label{lem:extendedfrobeniuslaw}
  If $T$ is a dagger Frobenius monad on a dagger category, $\mu^\dag \circ \mu = \mu T \circ T\mu^\dag$.
\end{lemma}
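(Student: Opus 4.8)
The plan is to recognise the claim as a general fact about dagger Frobenius monoids and to dispatch it with the graphical calculus. By definition $T$ is a dagger Frobenius monoid in the monoidal dagger category $[\cat{C},\cat{C}]$, with composition of functors as the monoidal product; writing $\tinymult=\mu$, $\tinyunit=\eta$, $\tinycomult=\mu^\dag$ and $\tinycounit=\eta^\dag$, the whiskerings become $\mu T=\tinymult\otimes\id$ and $T\mu^\dag=\id\otimes\tinycomult$, while $\mu^\dag\circ\mu=\tinycomult\circ\tinymult$. Hence the lemma is exactly the identity
\[
  \tinycomult\circ\tinymult \;=\; (\tinymult\otimes\id)\circ(\id\otimes\tinycomult),
\]
one of the familiar Frobenius relations, and it suffices to derive it for an arbitrary dagger Frobenius monoid from the monoid axioms together with the Frobenius law \eqref{eq:frobeniuslaw} (which for $T$ is precisely \eqref{eq:frobeniusformonads}).

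First I would establish the auxiliary \emph{bending} identity
\[
  \tinycomult \;=\; (\tinymult\otimes\id)\circ\bigl(\id\otimes(\tinycomult\circ\tinyunit)\bigr),
\]
which says that a comultiplication can be traded for the cup $\tinycomult\circ\tinyunit$ merged into the strand by a multiplication. This is the one place where the Frobenius law is genuinely used: precomposing \eqref{eq:frobeniuslaw} with $\id\otimes\tinyunit$ turns its left-hand side into the right-hand side above, while its right-hand side collapses to $\tinycomult$ via the unit law $\tinymult\circ(\id\otimes\tinyunit)=\id$. This short step is the real content of the argument; everything afterwards is bookkeeping.

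The lemma then follows by two applications of the bending identity sandwiching a single associativity move: apply it to the output of $\tinymult$ in $\tinycomult\circ\tinymult$, reassociate the two resulting multiplications using associativity of $\tinymult$, and recognise the inner combination $(\tinymult\otimes\id)\circ(\id\otimes(\tinycomult\circ\tinyunit))$ as $\tinycomult$ by the bending identity again, leaving $(\tinymult\otimes\id)\circ(\id\otimes\tinycomult)$ as required. I expect the only obstacle to be notational — keeping the whiskerings and tensor factors aligned — since the mathematical crux is the one-line bending identity; in particular, note that neither coassociativity nor the counit law is needed, only \eqref{eq:frobeniuslaw}, associativity, and the unit law. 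As in Lemma~\ref{lem:strictmorphismsareiso}, the whole chain is most transparent drawn in the graphical calculus.
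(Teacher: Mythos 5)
Your proposal is correct and matches the paper's proof in substance: the paper likewise reduces the claim to a graphical identity valid for any dagger Frobenius monoid and derives it in a short chain using only the (co)unit law, the Frobenius law~\eqref{eq:frobeniuslaw}, and associativity. Your packaging of the Frobenius-plus-unit step as a reusable ``bending'' identity applied twice around one associativity move is just a tidier organisation of the same five-step graphical derivation the paper writes out.
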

\begin{proof}
  The following graphical derivation holds for any dagger Frobenius monoid.
  \begin{align*}
    \begin{pic}[yscale=.5625,xscale=.75]
      \node (0a) at (-0.5,-1) {};
      \node (0b) at (0.5,-1) {};
      \node[dot] (1) at (0,1) {};
      \node[dot] (2) at (0,2) {};
      \node (3a) at (-0.5,3) {};
      \node (3b) at (0.5,3) {};
      \draw[out=90,in=180,looseness=.66] (0a) to (1.west);
      \draw[out=90,in=0,looseness=.66] (0b) to (1.east);
      \draw (1.north) to (2.south);
      \draw[out=180,in=270] (2.west) to (3a);
      \draw[out=0,in=270] (2.east) to (3b);
    \end{pic}
    =
    \begin{pic}[yscale=.5625,xscale=.75]
      \node (0d) at (-1,-1) {};
      \node [dot] (0c) at (-1.5,1) {};
      \node [dot] (0a) at (-1,0) {};
      \node (0b) at (0.5,-1) {};
      \node[dot] (1) at (0,1) {};
      \node[dot] (2) at (0,2) {};
      \node (3a) at (-0.5,3) {};
      \node (3b) at (0.5,3) {};
      \draw (0a) to (0d);
      \draw[out=180,in=270] (0a) to (0c);
      \draw[out=0,in=180] (0a) to (1.west);
      \draw[out=90,in=0,looseness=0.66] (0b) to (1.east);
      \draw (1.north) to (2.south);
      \draw[out=180,in=270] (2.west) to (3a);
      \draw[out=0,in=270] (2.east) to (3b);
    \end{pic}
    =
    \begin{pic}[yscale=.5625,xscale=.75]
      \node (0d) at (-2,-1) {};
      \node [dot] (0c) at (-1.5,2) {};
      \node [dot] (0a) at (-.5,0) {};
      \node (0b) at (-.5,-1) {};
      \node[dot] (1) at (-1.5,1) {};
      \node[dot] (2) at (0,2) {};
      \node (3a) at (-0.5,3) {};
      \node (3b) at (0.5,3) {};
      \draw[out=180,in=90,looseness=0.66] (1.west) to (0d);
      \draw (1.north) to (0c.south);
      \draw[out=180,in=0,looseness=1] (0a.west) to (1.east);
      \draw (0b) to (0a.south);
      \draw[out=0,in=270,out looseness=.7] (0a.east) to (2.south);
      \draw[out=180,in=270] (2.west) to (3a);
      \draw[out=0,in=270] (2.east) to (3b);
    \end{pic}
    =
    \begin{pic}[yscale=.5625,xscale=.75]
      \node (0d) at (-2,-1) {};
      \node [dot] (0c) at (-1.5,2.5) {};
      \node [dot] (0a) at (-.75,.75) {};
      \node (0b) at (0,-1) {};
      \node[dot] (1) at (-1.5,1.5) {};
      \node[dot] (2) at (0,0) {};
      \node (3a) at (0,3) {};
      \node (3b) at (0.75,3) {};
      \draw[out=180,in=90,out looseness=.5] (1.west) to (0d);
      \draw (1.north) to (0c.south);
      \draw[out=180,in=0] (0a.west) to (1.east);
      \draw (0b) to (2.south);
      \draw[out=270,in=180] (0a.south) to (2.west);
      \draw[out=0,in=270,out looseness=.66] (2.east) to (3b);
      \draw[out=0,in=270,out looseness=.8] (0a.east) to (3a);
    \end{pic}
    =
    \begin{pic}[yscale=.75,xscale=.75]
      \node [dot] (a) at (-0.5, 0.5) {};
      \node [dot] (b) at (0,0) {};
      \node [dot] (c) at (1,1) {};
      \node [dot] (d) at (2,0) {};
      \node (A) at (0,-1) {};
      \node (B) at (2,-1) {};
      \node (C) at (1,2) {};
      \node (D) at (2.5,2) {};
      \draw (c.north) to (C);
      \draw (B) to (d.south);
      \draw (A) to (b.south);
      \draw[out=270,in=180] (a.south) to (b.west);
      \draw[out=0,in=180] (b.east) to (c.west);
      \draw[out=0,in=180] (c.east) to (d.west);
      \draw[out=0,in=270,out looseness=.66] (d.east) to (D);
    \end{pic}
    =
    \begin{pic}[yscale=.75,xscale=.75]
      \node (0) at (0,0) {};
      \node (0a) at (0,1) {};
      \node[dot] (1) at (0.5,2) {};
      \node[dot] (2) at (1.5,1) {};
      \node (3) at (1.5,0) {};
      \node (4) at (2,3) {};
      \node (4a) at (2,2) {};
      \node (5) at (0.5,3) {};
      \draw (0) to (0a.center);
      \draw[out=90,in=180] (0a.center) to (1.west);
      \draw[out=0,in=180] (1.east) to (2.west);
      \draw[out=0,in=270,looseness=.66] (2.east) to (4a.center);
      \draw (4a.center) to (4);
      \draw (2.south) to (3);
      \draw (1.north) to (5);
    \end{pic}
  \end{align*}
  These equalities use the unit law, the Frobenius law, and associativity.
\end{proof}

The following lemma shows that dagger Frobenius monads have the same relationship to dagger adjunctions as ordinary monads have to ordinary adjunctions.

\begin{lemma}\label{lem:daggeradjunction}
  If $F \dashv G$ is a dagger adjunction, then $G \circ F$ is a dagger Frobenius monad.
\end{lemma}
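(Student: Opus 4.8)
The plan is to show that the monad $G \circ F$ arising from a dagger adjunction $F \dashv G$ is automatically a dagger functor, and then to verify the Frobenius law~\eqref{eq:frobeniusformonads} for its multiplication. The first part is immediate: $F$ and $G$ are dagger functors by the definition of a dagger adjunction, and the composite of dagger functors is a dagger functor, so $T := G \circ F$ satisfies $T(f^\dag) = T(f)^\dag$. The standard monad structure is $\eta \colon \id \to GF$ the unit of the adjunction and $\mu = G\varepsilon F \colon GFGF \to GF$, built from the counit $\varepsilon \colon FG \to \id$. So the only real content is the Frobenius equation.

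The key observation I would exploit is that, because the adjunction respects the dagger, the unit and counit are intertwined by taking daggers. Recall from the discussion of dagger adjunctions in Section~\ref{sec:adjunctions} that if $F \dashv G$ with bijection $\theta \colon \cat{D}(FA,B) \to \cat{C}(A,GB)$, then $G \dashv F$ as well via $f \mapsto \theta(f^\dag)^\dag$. This means the counit $\varepsilon$ of $F \dashv G$ and the unit of $G \dashv F$ are related by the dagger, and concretely one gets the identity $\mu^\dag = (G\varepsilon F)^\dag = G\varepsilon^\dag F$ together with a compatibility expressing $\varepsilon^\dag$ as the unit of the reversed adjunction. First I would write down explicitly how $\mu^\dag$ decomposes and record the two triangle identities for $F \dashv G$ (namely $\varepsilon F \circ F\eta = \id[F]$ and $G\varepsilon \circ \eta G = \id[G]$).

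The heart of the proof is then a naturality computation. Both sides of~\eqref{eq:frobeniusformonads}, namely $T(\mu_A) \circ \mu^\dag_{T(A)}$ and $\mu_{T(A)} \circ T(\mu_A^\dag)$, unfold into composites of whiskered copies of $\varepsilon$ and $\varepsilon^\dag$. I would expand $\mu = G\varepsilon F$ and $\mu^\dag = G\varepsilon^\dag F$ and use the interchange law of horizontal and vertical composition, together with the naturality of $\varepsilon$ (and hence of $\varepsilon^\dag$, which is natural in the opposite direction), to slide the $\varepsilon$ and $\varepsilon^\dag$ components past one another. The crucial step is the naturality square for $\varepsilon$ applied to the morphism $\varepsilon$ itself (i.e.\ $\varepsilon \circ FG\varepsilon = \varepsilon \circ \varepsilon FG$ by naturality of $\varepsilon$ at $\varepsilon$), which after whiskering by $G$ on the left and $F$ on the right produces exactly the symmetry needed to match the two sides.

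I expect the main obstacle to be bookkeeping: keeping the whiskering positions of the many $\varepsilon$, $\varepsilon^\dag$, $F$, and $G$ straight, and correctly transporting the reversed-adjunction structure through the dagger so that $\mu^\dag$ really is $G\varepsilon^\dag F$ rather than something off by a unit/counit. A cleaner alternative, which I would prefer if the direct calculation gets unwieldy, is to invoke Example~\ref{example:tensor} and Lemma~\ref{lem:extendedfrobeniuslaw} conceptually: interpret the adjunction-induced monad abstractly and reduce the Frobenius law to the analogous statement for the generating $2$-cells, using that the dagger on $[\cat{C},\cat{C}]$ is computed by reversing $2$-cells. In either route the decisive input is that the dagger turns the counit of $F \dashv G$ into the structure witnessing $G \dashv F$, which is precisely what forces the two composites to agree.
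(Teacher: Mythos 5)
Your overall strategy --- a direct computation with the unit and counit of the adjunction --- is viable and genuinely different from the paper's proof. The paper first disposes of the lemma by citing Lauda's result on ambidextrous adjunctions applied to $\cat{DagCat}$, and only gives a self-contained argument after Theorem~\ref{thm:comparison}: the comparison functor $J\colon\cat{D}\to\EM(T)$ is shown to land in $\FEM(T)$ (via Lemma~\ref{lem:femlaw}, using fullness of $J$), so every free algebra $(TA,\mu_A)$ is a FEM-algebra, and the Frobenius law for the free algebra is literally the Frobenius law~\eqref{eq:frobeniusformonads} for $T$. Your computation avoids that machinery entirely, which is a real advantage if one wants the lemma before the theory of algebras is developed. (Your fallback of invoking Example~\ref{example:tensor} and Lemma~\ref{lem:extendedfrobeniuslaw} is not a proof route: the former concerns monads of the form $-\otimes B$, which adjunction-induced monads need not be, and the latter is a \emph{consequence} of the Frobenius law.)

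However, the step you single out as crucial does not do the work you want. Naturality of $\varepsilon$ at the morphism $\varepsilon_X$ gives $\varepsilon_X\circ FG\varepsilon_X=\varepsilon_X\circ\varepsilon_{FGX}$; whiskering by $G$ and $F$ turns this into $\mu\circ T\mu=\mu\circ\mu T$, i.e.\ associativity of $\mu$ --- a law that holds for \emph{any} adjunction and so cannot by itself yield the Frobenius law, which genuinely needs the dagger. The instance you actually need is naturality of $\varepsilon$ at a component of $\varepsilon^\dag$, a morphism that exists only because of the dagger. Writing $S=FG$ and $X=FA$, both sides of~\eqref{eq:frobeniusformonads} are $G$ applied to, respectively, $S\varepsilon_X\circ\varepsilon^\dag_{SX}$ and $\varepsilon_{SX}\circ S\varepsilon^\dag_X$; these two composites are each other's daggers, and naturality of $\varepsilon$ at $\varepsilon^\dag_X\colon X\to SX$ gives $\varepsilon_{SX}\circ S\varepsilon^\dag_X=\varepsilon^\dag_X\circ\varepsilon_X$, which is self-adjoint. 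Hence both sides equal $G(\varepsilon^\dag_X\circ\varepsilon_X)$ and the law holds. With that substitution your argument closes; as written, the identified key identity only reproves associativity.
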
 
\begin{proof}
  It is clear that $T = G \circ F$ is a dagger functor.
  The Frobenius law follows from applying~\cite[Corollary 2.22]{lauda:ambidextrous} to $\cat{DagCat}$.
  We will be able to give a self-contained proof after Theorem~\ref{thm:comparison} below.
\end{proof}

For example, in \cat{Rel} and \cat{Hilb}, the dagger biproduct monad induced by the dagger adjunction of Example~\ref{ex:dagbiprods} is of the form $-\otimes (I\oplus I)$ as in Example~\ref{example:tensor}. 
However, not all dagger Frobenius monads are of this form: the Frobenius monad induced by the dagger adjunction of Example~\ref{ex:imdagadj} in general decreases the dimension of the underlying space, and hence cannot be of the form $-\otimes B$ for a fixed $B$.

\section{Algebras}\label{sec:fem} 

Next we consider algebras for dagger Frobenius monads. We start by showing that Kleisli categories of dagger Frobenius monads inherit a dagger.

\begin{lemma}\label{lem:kleislidagger}\index[symb]{$\Kl(T)$ the Kleisli category of a monad}
  If $T$ is a dagger Frobenius monad on a dagger category $\cat{C}$, then $\Kl(T)$ carries a dagger that commutes with the canonical functors $\Kl(T) \to \cat{C}$ and $\cat{C}\to\Kl(T)$.
\end{lemma}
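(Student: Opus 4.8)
The plan is to recall the Kleisli category explicitly and then read off the dagger from the monad structure. Recall that $\Kl(T)$ has the same objects as $\cat{C}$, and a morphism $A\to B$ in $\Kl(T)$ is a morphism $f\colon A\to T(B)$ in $\cat{C}$, with identity $\eta_A$ and composition $g\ast f = \mu_C\circ T(g)\circ f$ for $f\colon A\to T(B)$ and $g\colon B\to T(C)$. Given such an $f\colon A\to T(B)$, I would define its dagger $f^\ddagger\colon B\to T(A)$ to be the composite
\[
  f^\ddagger = \mu_A^\dag\circ\eta_{T(A)}^{\phantom\dag}\text{-free expression}; \quad\text{more precisely } f^\ddagger = \mu_A \circ T(f^\dag)\circ\eta_{T(B)}^\dag,
\]
or, stated cleanly, $f^\ddagger := \mu_A\circ T(f^\dag)$ precomposed with the comultiplication $\mu_{T(B)}^\dag$ arising from the dagger on $T$; that is $f^\ddagger = \mu_A\circ T(f)^\dag\circ\mu_B^\dag$ reorganized so the types match $B\to T(A)$. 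The first step is thus to write down this candidate carefully and check it has the correct type, using that $T$ is a dagger functor so $T(f)^\dag\colon T(B)\to T(A)$ makes sense.

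Next I would verify the three dagger axioms. Involutivity $f^{\ddagger\ddagger}=f$ should follow from $f^{\dag\dag}=f$, the fact that $T$ preserves daggers, and the unit and associativity laws. Preservation of identities, $\eta_A^\ddagger = \eta_A$, should reduce to a unit-law computation. The contravariant-functoriality equation $(g\ast f)^\ddagger = f^\ddagger\ast g^\ddagger$ is the substantial axiom: unfolding both sides using the definition of Kleisli composition and the candidate dagger produces two composites built from $\mu$, $\mu^\dag$, $T\mu$, $T\mu^\dag$, $f^\dag$, and $g^\dag$, and these must be shown equal. I expect this is exactly where the \emph{Frobenius law}~\eqref{eq:frobeniusformonads} enters, together with its consequence Lemma~\ref{lem:extendedfrobeniuslaw} ($\mu^\dag\circ\mu = \mu T\circ T\mu^\dag$); the graphical calculus of Section~\ref{sec:pictures} is the natural tool, so I would draw the string diagrams for each side and rewrite one into the other using associativity, the unit law, and the Frobenius law.

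Finally I would check compatibility with the two canonical functors. The left adjoint $\cat{C}\to\Kl(T)$ sends $f\colon A\to B$ to $\eta_B\circ f$, and I would verify $(\eta_B\circ f)^\ddagger = \eta_A\circ f^\dag$, so this functor is a dagger functor. The right-adjoint-style comparison functor $\Kl(T)\to\cat{C}$ sends $A$ to $T(A)$ and $f\colon A\to T(B)$ to $\mu_B\circ T(f)$; I would check it intertwines $\ddagger$ with $\dag$, again by a short diagram chase invoking Lemma~\ref{lem:extendedfrobeniuslaw}. The main obstacle is the contravariant-functoriality computation: getting the composite of two Kleisli morphisms' daggers to match the dagger of their composite requires using the Frobenius law in precisely the right place, and the bookkeeping of which $\mu$ versus $\mu^\dag$ appears where is where errors would most likely creep in; presenting it graphically is what makes it manageable.
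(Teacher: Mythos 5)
Your plan is the same as the paper's: the paper's entire proof is to exhibit the dagger $f \mapsto T(f^\dag)\circ\mu_B^\dag\circ\eta_B \colon B \to T(B) \to T^2(B) \to T(A)$ and call the verification a straightforward calculation, and your outline of which axioms to check and where the Frobenius law enters is on target. However, none of the three candidate formulas you actually wrote down is well-typed: in $\mu_A\circ T(f^\dag)\circ\eta_{T(B)}^\dag$ the map $\eta_{T(B)}^\dag$ has codomain $T(B)$ while $T(f^\dag)$ has domain $T^2(B)$, and $\mu_A$ cannot be postcomposed with something landing in $T(A)$; the variant with $\mu_{T(B)}^\dag$ lands in $T^3(B)$; and $T(f)^\dag\circ\mu_B^\dag$ has domain $T(B)$ rather than $B$. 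The correct assembly of your ingredients is $f^\ddagger = T(f^\dag)\circ\mu_B^\dag\circ\eta_B$ (no extra $\mu_A$, and the unit $\eta_B$ is what fixes the domain). Since you explicitly flag nailing down the formula as your first step, this is a correctable slip rather than a wrong approach, but as written the definition on which everything else rests is not yet there. One further small point: the Frobenius law (via naturality of $\mu^\dag$ and Lemma~\ref{lem:extendedfrobeniuslaw}) is needed not only for $(g\ast f)^\ddagger = f^\ddagger\ast g^\ddagger$ but already for involutivity $f^{\ddagger\ddagger}=f$, so budget for it in both computations.
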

\begin{proof} 
  A straightforward calculation establishes that
  \[
    \big(A \sxto{f} T(B)\big)
    \;\mapsto\;
    \big(B \sxto{\eta} T(B) \sxto{\mu^\dag} T^2(B) \sxto{T(f^\dag)} T(A)\big)
  \]
  is a dagger on $\Kl(T)$ commuting with the functors $\cat{C}\to \Kl(T)$ and $\Kl(T)\to \cat{C}$.
\end{proof} 

If we want algebras to form a dagger category, it turns out that the category of all Eilenberg-Moore algebras is too large. The crucial law is the following \emph{Frobenius law} for algebras.

\begin{definition} 
   Let $T$ be a monad on a dagger category $\cat{C}$. A \emph{Frobenius-Eilenberg-Moore algebra}, or \emph{FEM-algebra} for short, is an Eilenberg-Moore algebra $a \colon T(A) \to A$ that makes the following diagram commute.
   \begin{equation}\index[word]{Frobenius law!of an algebra}\label{eq:femlaw}\begin{aligned}\begin{tikzpicture}
     \matrix (m) [matrix of math nodes,row sep=2em,column sep=4em,minimum width=2em]
     {
      T(A) & T^2(A) & \\
      T^2(A) & T(A) \\};
     \path[->]
     (m-1-1) edge node [left] {$\mu^\dag$} (m-2-1)
             edge node [above] {$T(a)^\dag$} (m-1-2)
     (m-2-1) edge node [below] {$T(a)$} (m-2-2)
     (m-1-2) edge node [right] {$\mu$} (m-2-2);
   \end{tikzpicture}\end{aligned}\end{equation}   
   Denote the category of FEM-algebras $(A,a)$ and algebra homomorphisms by $\FEM(T)$.
\end{definition}

In effect, equation~\eqref{eq:femlaw} says that the map $T(a)\circ\mu^\dag\colon T(A)\to T^2(A)\to T(A)$ is self-adjoint. When the dagger Frobenius monad is of the form $T(A)=A \otimes B$ as in Example~\ref{example:tensor}, the Frobenius law~\eqref{eq:femlaw} for an algebra $a \colon T(A) \to A$ becomes the following equation, which resembles the Frobenius law~\eqref{eq:frobeniuslaw} for monoids and monads.
\[\index[word]{Frobenius law!of an algebra}
 \begin{pic}[xscale=1.5]
   \node[morphism, minimum width=25mm] (a)  at (0,0.75) {$a$};
   \node[dot] (b) at (.4,0) {};
   \draw ([xshift=-.5mm]a.south west) to +(0,-1);
   \draw (a.north) to +(0,.4);
   \draw (b) to[out=0,in=-90] +(.3,.5) to +(0,.85);
   \draw ([xshift=.5mm]a.south east) to[out=-90,in=180]  (b);
   \draw (b) to +(0,-.45);
 \end{pic}
 \quad = \quad
 \begin{pic}[xscale=1.5]
   \node[morphism,hflip, minimum width=25mm] (a)  at (0,0) {$a$};
   \node[dot] (b) at (.4,0.75) {};
   \draw ([xshift=-.5mm]a.north west) to +(0,1);
   \draw (a.south) to +(0,-.4);
   \draw (b) to[out=0,in=90] +(.3,-.5) to +(0,-.85);
   \draw ([xshift=.5mm]a.north east) to[out=90,in=180]  (b);
   \draw (b) to +(0,.45);
 \end{pic} 
\]

\begin{example}\label{ex:kleislifem}
  Any free algebra $\mu_A \colon T^2(A) \to T(A)$ of a dagger Frobenius monad $T$ on a dagger category $\cat C$ is a FEM-algebra.
  Hence there is an embedding $\Kl(T) \to \FEM(T)$.
\end{example}
\begin{proof}
  The Frobenius law for the free algebra is the Frobenius law of the monad.
\end{proof}

There are many EM-algebras that are not FEM-algebras; a family of examples can be derived from~\cite[Theorem~6.4]{pavlovic:abstraction}. Here is a concrete example.

\begin{example}\label{ex:emnonfem}
  The complex $n$-by-$n$-matrices form a Hilbert space $A$ with inner product $\left\langle a,b\right\rangle=\tfrac{1}{n}\Tr (a^\dag \circ b)$. Matrix multiplication $m\colon A\otimes A\to A$ makes $A$ into a dagger Frobenius monoid in $\cat{FHilb}$, and hence $T=-\otimes A$ into a dagger Frobenius monad on $\cat{FHilb}$. Define a monoid homomorphism $U\colon A\to A$ by conjugation $a\mapsto u^\dag\circ a\circ u$ with a unitary matrix $u \in A$. Then $m\circ (\id[A] \otimes U)$ is an EM-algebra that is a FEM-algebra if and only if $u=u^\dag$.
\end{example}
\begin{proof}
  Because $U^\dag=u \circ - \circ u^\dag$, the Frobenius law~\eqref{eq:femlaw} for $T$ unfolds to the following.
  \[
    \begin{pic}
     \draw (0,0) to (0,1) to[out=90,in=180] (.5,1.5) to (.5,2);
     \draw (.5,1.5) to[out=0,in=90] (1,1) to[out=-90,in=180] (1.5,.5) to (1.5,0);
     \draw (1.5,.5) to[out=0,in=-90] (2,1) to (2,2);
     \node[morphism] at (1,1) {$U$};
     \node[dot] at (.5,1.5) {};
     \node[dot] at (1.5,.5) {};
    \end{pic}
    \quad = \quad
    \begin{pic}[xscale=-1]
     \draw (0,0) to (0,1) to[out=90,in=180] (.5,1.5) to (.5,2);
     \draw (.5,1.5) to[out=0,in=90] (1,1) to[out=-90,in=180] (1.5,.5) to (1.5,0);
     \draw (1.5,.5) to[out=0,in=-90] (2,1) to (2,2);
     \node[morphism,hflip] at (1,1) {$U$};
     \node[dot] at (.5,1.5) {};
     \node[dot] at (1.5,.5) {};
    \end{pic}
  \]
  This comes down to $U=(U^*)^\dag$, that is, $u=u^\dag$. 
\end{proof}

Before we calculate $\FEM(-\otimes B)$ for a dagger Frobenius monoid $B$ induced by an arbitrary groupoid, we work out an important special case.

\begin{example}\label{ex:measurement}
  Let $B$ be a dagger Frobenius monoid in $\cat{FHilb}$ induced by a finite discrete groupoid $\cat{G}$ as in Example~\ref{ex:groupoidFrob}. A FEM-algebra structure on a Hilbert space $A$ for $- \otimes B$ consists of \emph{quantum measurements} on $A$: orthogonal projections on $A$ that sum to the identity.
\end{example} 
For more information about quantum measurements, see~\cite[Section~3.2]{heinosaariziman:quantum}.
\begin{proof}
  A FEM-algebra structure on $A$ consists of a map $a\colon A\otimes B\to A$ subject to the FEM-laws. Since $B$ has a basis indexed by objects of \cat{G}, it suffices to understand the maps $P_G\colon A\to A$ defined by $v\mapsto a(v\otimes \id[G])$. The associative law implies that each $P_G$ satisfies $P_G\circ P_G=P_G$, and from the Frobenius law we get that each $P_G$ is self-adjoint, so that each $P_G$ is an orthonormal projection. The unit law says that $\sum_G P_G = \id[A]$.

  There is also another, graphical, way of seeing this. Quantum measurements can also be characterized as `$B$-self-adjoint' coalgebras for the comonad $-\otimes B$, where being $B$-self-adjoint means that the following equation holds~\cite{coeckepavlovic:measurement}.
  \begin{equation}\label{eq:self-adjoint}
  \begin{pic}
   \node[morphism] (a)  at (0,0.75) {};
    \node[dot] (b) at (.5,0) {};
    \node[dot] (c) at (.5,-0.35) {};
   \draw ([xshift=-1mm]a.south west) to +(0,-1);
   \draw (a.north) to +(0,.4);
   \draw (b) to[out=0,in=-90] +(.3,.5) to +(0,.85);
   \draw ([xshift=1mm]a.south east) to[out=-90,in=180]  (b);
   \draw (b) to (c);
  \end{pic}
  \quad = \quad
  \begin{pic}
    \node[morphism,hflip] (f) at (-.4,0) {};
    \draw ([xshift=-1mm]f.north west) to +(0,.65);
    \draw (f.south) to +(0,-.65);
    \draw ([xshift=1mm]f.north east) to +(0,.65);
  \end{pic} 
  \end{equation}
  Such coalgebras correspond precisely to $\FEM$-algebras, as we will now show.
  Because of the dagger, coalgebras of the comonad $-\otimes B$ are just algebras of the monad $-\otimes B$. Thus it suffices to show that an algebra is FEM if and only if it satisfies~\eqref{eq:self-adjoint}. The implication from left to right is easy. 
  \[
  \begin{pic}[yscale=.7]
      \node[morphism] (a)  at (0,0.75) {};
      \node[dot] (b) at (.5,0) {};
      \node[dot] (c) at (.5,-0.35) {};
      \draw ([xshift=-1mm]a.south west) to +(0,-1);
      \draw (a.north) to +(0,.4);
      \draw (b) to[out=0,in=-90] +(.3,.5) to +(0,.85);
      \draw ([xshift=1mm]a.south east) to[out=-90,in=180]  (b);
      \draw (b) to (c);
    \end{pic}
  \quad = \quad 
    \begin{pic}[yscale=.7]
      \node[morphism,hflip] (a)  at (0,0) {};
      \node[dot] (b) at (.5,0.75) {};
      \node[dot] (c) at (.9,-0.5) {};
      \draw ([xshift=-1mm]a.north west) to +(0,1);
      \draw (a.south) to +(0,-.4);
      \draw (b) to[out=0,in=90] +(.4,-.4) to (c);
      \draw ([xshift=1mm]a.north east) to[out=90,in=180]  (b);
      \draw (b) to +(0,.45);
    \end{pic}
    \quad = \quad
    \begin{pic}[yscale=.7]
      \node[morphism,hflip] (f) at (-.4,0) {};
      \draw ([xshift=-1mm]f.north west) to +(0,.65);
      \draw (f.south) to +(0,-.65);
      \draw ([xshift=1mm]f.north east) to +(0,.65);
    \end{pic} 
  \]
  The other implication can be proven as follows.
  \[
    \begin{pic}[yscale=.7]
      \node[morphism,hflip] (a)  at (0,0) {};
      \node[dot] (b) at (.5,0.75) {};
      \draw ([xshift=-1mm]a.north west) to +(0,1.2);
      \draw (a.south) to +(0,-.4);
      \draw (b) to[out=0,in=90] +(.35,-.5) to +(0,-.85);
      \draw ([xshift=1mm]a.north east) to[out=90,in=180]  (b);
      \draw (b) to +(0,.64);
    \end{pic}
    \quad \overset{(\ref{eq:self-adjoint})}{=}  \quad
    \begin{pic}[yscale=.7]
      \node[morphism] (a)  at (0,0.75) {};
      \node[dot] (b) at (.5,0) {};
      \node[dot] (c) at (.5,-0.35) {};
      \node[dot] (d) at (1.1,0.75) {};
      \draw ([xshift=-1mm]a.south west) to +(0,-1);
      \draw (a.north) to +(0,.4);
      \draw (b) to[out=0,in=180] (d);
      \draw ([xshift=1mm]a.south east) to[out=-90,in=180]  (b);
      \draw (d) to[out=0, in=90] +(0.3,-.5) to +(0,-.75);
      \draw (d) to +(0,0.6);
      \draw (b) to (c);
    \end{pic}
    \quad = \quad 
    \begin{pic}[yscale=.7]
      \node[morphism] (a)  at (0,0.65) {};
      \node[dot] (b) at (.3,0) {};
      \node[dot] (c) at (.03,-0.6) {};
      \node[dot] (d) at (.85,-0.6) {};
      \draw ([xshift=-1mm]a.south west) to +(0,-1.5);
      \draw (a.north) to +(0,.4);
      \draw (b) to[out=0,in=180] (d);
      \draw ([xshift=2.5mm]a.south east) to (b);
      \draw (d) to[out=0, in=90] +(0.3,.5) to +(0,1.3);
      \draw (d) to +(0,-0.5);
      \draw (b) to[out=180, in=90] (c);
    \end{pic}
    \quad = \quad
    \begin{pic}[yscale=.7]
      \node[morphism] (a)  at (0,0.75) {};
      \node[dot] (b) at (.5,0) {};
      \draw ([xshift=-1mm]a.south west) to +(0,-1);
      \draw (a.north) to +(0,.4);
      \draw (b) to[out=0,in=-90] +(.3,.5) to +(0,.85);
      \draw ([xshift=1mm]a.south east) to[out=-90,in=180]  (b);
      \draw (b) to +(0,-.45);
    \end{pic}
    \]
    This finishes the alternative proof.
\end{proof}

\begin{example}\label{ex:femtensor}
  Let a groupoid $\cat{G}$ induce a dagger Frobenius monoid $B$ in $\cat{C}=\cat{Rel}$ or $\cat{C}=\cat{FHilb}$ as in Example~\ref{ex:groupoidFrob}. There is an equivalence $\FEM(-\otimes B) \simeq [\cat{G},\cat{C}]$.
\end{example}
\begin{proof}
  Separate the cases $\cat{Rel}$ and $\cat{FHilb}$.
  \begin{itemize}
  \item In $\cat{Rel}$, a FEM-algebra is a set $A$ with a \emph{relation} $g \colon A \to A$ for each $g \in B$ satisfying several equations.
    For each object $G$ of \cat{G}, define $A_G=\{a\in A\mid \id[G] a=\{a\} \}$. 
    The unit law implies that each $a \in A$ is in at least one $A_G$, and the other EM-law implies that no $a \in A$ can be in more than one $A_G$. 
    Now if $g\colon G\to H$ in \cat{G}, then $g$ defines a \emph{function} $A_G\to A_H$ and maps everything outside of $A_G$ to the empty set. 
    Thus the FEM-algebra $A$ defines an \emph{action} of \cat{G} in \cat{Rel}: a functor $F_A\colon\cat{G}\to\cat{Rel}$ making the sets $F_A(G)$ pairwise disjoint for distinct objects of \cat{G}. 

    Conversely, each such functor $F$ defines a FEM-algebra $A_F$ by setting $A_F=\bigcup_G F(G)$. 
    But the category of such functors is equivalent to $[\cat{G},\cat{Rel}]$.

  \item In $\cat{FHilb}$, a FEM-algebra is a Hilbert space $A$ with a morphism $g \colon A \to A$ for each $g \in B$. 
    For each object $G$ of \cat{G}, define $A_G=\{a\in A\mid \id[G] a=a \}$. 
    As above, $A$ is a direct sum of the $A_G$ and $g\colon G\to H$ in \cat{G} defines a morphism $A_G\to A_H$ and annihilates $A_G^\perp$. 
    This defines a representation of \cat{G} in \cat{FHilb}. 
    The Frobenius law implies that this representation is unitary.
  \end{itemize}
\end{proof}

In both of these examples, the same reasoning goes through over all of \cat{Hilb}. Moreover, the same constructions give rise to an equivalence between all EM-algebras for $-\otimes B$ and \emph{all} (=also non-dagger) functors $\cat{G}\to\cat{(F)Hilb}$: hence any non-unitary presentation of a groupoid gives rise to a EM-algebra that is not FEM, providing us with more examples than those in Example~\ref{ex:emnonfem}.

The fact that all of the categories of FEM-algebras from the previous example had daggers is no accident. 

\begin{lemma}\label{lem:femdag}
  Let $T$ be a dagger Frobenius monad on a dagger category $\cat{C}$. The dagger on $\cat{C}$ induces one on $\FEM(T)$.\index[symb]{$\FEM(T)$, the category of $\FEM$-algebras of $T$}
\end{lemma}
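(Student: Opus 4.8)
The plan is to exhibit an explicit candidate for the dagger on $\FEM(T)$ and then verify that it is well-defined and functorial. Since the forgetful functor $\FEM(T)\to\cat{C}$ should be dagger-preserving, the dagger of a morphism must be inherited from $\cat{C}$: if $f\colon(A,a)\to(B,b)$ is an algebra homomorphism, I would set $f^\dag$ to be the $\cat{C}$-dagger of $f$, viewed as a morphism $(B,b)\to(A,a)$. Thus the only real content is to check that $f^\dag\colon B\to A$ is again an algebra homomorphism, \ie that $a\circ T(f^\dag)=f^\dag\circ b$, given that $b\circ T(f)=f\circ a$.

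First I would take the homomorphism equation $f\circ a=b\circ T(f)$ and apply the dagger in $\cat{C}$ to obtain $a^\dag\circ f^\dag=T(f)^\dag\circ b^\dag=T(f^\dag)\circ b^\dag$, using that $T$ is a dagger functor. This relates $f^\dag$ to the \emph{adjoints} $a^\dag,b^\dag$ of the structure maps, but what I want is a relation involving $a$ and $b$ themselves. The key step is therefore to convert between an algebra structure map and its adjoint, and this is exactly where the Frobenius law~\eqref{eq:femlaw} for FEM-algebras enters. The plan is to express $a$ in terms of $a^\dag$ (and the monad structure) using the FEM-law together with the unit and multiplication laws of the monad; concretely, the FEM-law says $T(a)\circ\mu^\dag=\mu\circ T(a)^\dag$, and precomposing/postcomposing with units $\eta$ should let me recover $a$ from the self-adjoint operator $T(a)\circ\mu^\dag$. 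I expect a short graphical or diagrammatic calculation, in the spirit of the proof of Lemma~\ref{lem:strictmorphismsareiso}, showing that $a=a^\dag\circ(\text{something built from }\eta,\mu)$ in a way compatible with $f^\dag$.

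The main obstacle will be this bookkeeping: manipulating $a^\dag\circ f^\dag=T(f^\dag)\circ b^\dag$ into the desired $a\circ T(f^\dag)=f^\dag\circ b$ requires using the FEM-law for \emph{both} algebras $(A,a)$ and $(B,b)$ simultaneously, and carefully tracking where $\mu$, $\mu^\dag$, $\eta$ and $\eta^\dag$ are inserted so that naturality of $\mu$ and $\eta$ can be applied. The cleanest route is probably to work with the self-adjoint endomorphisms $\bar a:=T(a)\circ\mu^\dag_A$ and $\bar b:=T(b)\circ\mu^\dag_B$ on $T(A),T(B)$, show that $f$ being a homomorphism makes $T(f^\dag)$ intertwine $\bar b$ and $\bar a$, and then recover the homomorphism condition for $f^\dag$ by composing with $\eta$ and using $a\circ\eta_A=\id$. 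Once well-definedness of the dagger on morphisms is established, the functor laws $(g\circ f)^\dag=f^\dag\circ g^\dag$, $\id^\dag=\id$ and $f^{\dag\dag}=f$ are inherited verbatim from the dagger on $\cat{C}$, and the forgetful functor $\FEM(T)\to\cat{C}$ preserves the dagger by construction, so these require no separate argument.
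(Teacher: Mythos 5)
Your proposal is correct and follows essentially the same route as the paper: dagger the homomorphism equation, use the Frobenius law of both algebras to trade $T(a)^\dag$ and $T(b)^\dag$ for $T(a)\circ\mu^\dag$ and $T(b)\circ\mu^\dag$, apply naturality of $\mu$, and then collapse $T(f^\dag)$ to $f^\dag$ at the end — the paper packages this as one large commuting diagram rather than an equational chain around $\bar a$ and $\bar b$. One small correction to your last step: what is actually used there is not the algebra unit law $a\circ\eta_A=\id$ but postcomposition with $\eta_A^\dag$, its naturality, and the daggered monad unit law $\eta_{TA}^\dag\circ\mu_A^\dag=\id$.
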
 
\begin{proof}
  Let $f \colon (A,a) \to (B,b)$ be a morphism of FEM-algebras; we are to show that $f^\dag$ is a morphism $(B,b)\to(A,a)$. 
  It suffices to show that $b \circ T(f) = f \circ a$ implies $a \circ T(f^\dag) = f^\dag \circ b$.
  Consider the following diagram:
  \[\begin{tikzpicture}[font=\small,scale=0.9]
    \matrix (m) [matrix of math nodes,row sep=3em,column sep=3em,minimum width=1em]
    {
      & T(B) & & T(A)  \\
     T^2(B) & T^2(B) & T^2(A) & T^2(A) & T(A)  \\
      &T(B) & &T(A) &       \\      
      &T(B) & &B &A    \\};
    \path[->]
    (m-1-2) edge node [above] {$Tf^\dag$} (m-1-4)
                  edge node [above] {$\mu^\dag$} (m-2-1)
                  edge node [left] {$Tb^\dag$}(m-2-2)
    (m-1-4)  edge node [above] {$\id$} (m-2-5)
                  edge node [left] {$\mu^\dag$}(m-2-4)
                  edge node [above] {$Ta^\dag$} (m-2-3) 
    (m-2-1) edge node [below] {$Tb$} (m-3-2)
                  edge node [below] {$\eta^\dag$} (m-4-2)
    (m-2-2) edge node [left] {$\mu$} (m-3-2) 
                 edge node [above] {$T^2f^\dag$} (m-2-3)
    (m-2-3) edge node [right] {$\mu$} (m-3-4) 
    (m-2-4) edge node [right] {$Ta$} (m-3-4) 
                 edge node [below] {$\eta^\dag$} (m-2-5)
    (m-2-5) edge node [right] {$a$} (m-4-5)
    (m-3-2) edge node [above] {$Tf^\dag$} (m-3-4)
    (m-3-4) edge node [above] {$\eta^\dag$} (m-4-5)
    (m-4-2) edge node [below] {$b$} (m-4-4) 
    (m-4-4) edge node [below] {$f^\dag$} (m-4-5);
    \node[gray] at (-4.25,1) {(i)};
    \node[gray] at (0,2) {(ii)};
    \node[gray] at (0,0) {(iii)};
    \node[gray] at (1.9,1) {(iv)};
    \node[gray] at (4.1,1.6) {(v)};
    \node[gray] at (4.7,-0.5) {(vi)};
    \node[gray] at (0,-2) {(vii)};
  \end{tikzpicture}\]
  Region (i) is the Frobenius law of $(B,b)$; commutativity of (ii) follows from the assumption that $f$ is a morphism $(A,a)\to (B,b)$ by applying $T$ and the dagger; (iii) is naturality of $\mu$; (iv) is the Frobenius law of $(A,a)$; (v) commutes since $T$ is a comonad; (vi) and (vii) commute by naturality of $\eta^\dag$. 
\end{proof}

In fact, the equivalence of Example~\ref{ex:femtensor} is a dagger equivalence.

\begin{lemma}\label{lem:femlaw} 
  Let $T$ be a dagger Frobenius monad. An EM-algebra $(A,a)$ is FEM if and only if $a^\dag$ is a homomorphism $(A,a)\to (TA,\mu_A)$.
\end{lemma}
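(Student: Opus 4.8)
The plan is to reduce both conditions to statements about a single endomorphism of $T(A)$ and then exploit the machinery already developed. Unpacking the definition, $a^\dag$ is a homomorphism $(A,a)\to(T(A),\mu_A)$ precisely when $\mu_A\circ T(a^\dag)=a^\dag\circ a$. Since $T$ is a dagger functor we have $T(a^\dag)=T(a)^\dag$, so this reads $\mu_A\circ T(a)^\dag=a^\dag\circ a$. Meanwhile, as noted right after the definition of FEM-algebras, equation~\eqref{eq:femlaw} is equivalent to the single statement that $\mu_A\circ T(a)^\dag$ is self-adjoint (indeed $(\mu_A\circ T(a)^\dag)^\dag=T(a)\circ\mu_A^\dag$, so~\eqref{eq:femlaw} just says this morphism equals its own dagger). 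Thus I would first record the two reformulations: FEM $\iff$ ``$\mu_A\circ T(a)^\dag$ is self-adjoint'', and the homomorphism property $\iff$ ``$\mu_A\circ T(a)^\dag=a^\dag\circ a$''.

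With these in hand, the implication from the homomorphism property to FEM is immediate: if $\mu_A\circ T(a)^\dag=a^\dag\circ a$, then the right-hand side is visibly self-adjoint, so $\mu_A\circ T(a)^\dag$ is self-adjoint and~\eqref{eq:femlaw} holds. For the converse I would avoid a direct diagram chase and instead bootstrap off the results just proved. The free algebra $(T(A),\mu_A)$ is always a FEM-algebra by Example~\ref{ex:kleislifem}, and $(A,a)$ is a FEM-algebra by hypothesis. Moreover the structure map $a\colon T(A)\to A$ is an algebra homomorphism $(T(A),\mu_A)\to(A,a)$, since the homomorphism condition $a\circ\mu_A=a\circ T(a)$ is exactly the associativity law of the Eilenberg--Moore algebra. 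Since both source and target are FEM-algebras, Lemma~\ref{lem:femdag} applies and tells us that $a^\dag$ is a homomorphism $(A,a)\to(T(A),\mu_A)$, which is what we wanted.

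The one point requiring care is verifying that the hypotheses of Lemma~\ref{lem:femdag} are genuinely met, namely that \emph{both} algebras involved are FEM (the proof of that lemma invokes the Frobenius law of each algebra), which here is supplied by Example~\ref{ex:kleislifem} and the standing assumption. The main conceptual obstacle is recognizing that FEM $\Rightarrow$ homomorphism is not best handled by brute force: a self-contained derivation of $\mu_A\circ T(a)^\dag=a^\dag\circ a$ from self-adjointness alone would require ``straightening'' $\mu_A\circ T(a)^\dag$ into $a^\dag\circ a$ using the Frobenius law of the monad (Lemma~\ref{lem:extendedfrobeniuslaw}) together with naturality of $\eta$ and $\mu$ and the algebra laws, which is the technically delicate step. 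Routing the argument through Lemma~\ref{lem:femdag} sidesteps this entirely, so I would present the proof in that form and merely remark that the straightening is what Lemma~\ref{lem:femdag} has already absorbed.
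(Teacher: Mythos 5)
Your proof is correct and follows essentially the same route as the paper: the converse direction observes that the homomorphism condition $\mu_A\circ T(a)^\dag=a^\dag\circ a$ exhibits $T(a)\circ\mu_A^\dag$ as self-adjoint, and the forward direction applies Lemma~\ref{lem:femdag} to the homomorphism $a\colon(TA,\mu_A)\to(A,a)$ between FEM-algebras, with the free algebra being FEM exactly as in Example~\ref{ex:kleislifem}. No gaps.
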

\begin{proof} 
  If $(A,a)$ is a FEM-algebra, its associativity means that $a$ is a homomorphism $(TA,\mu_A) \to (A,a)$. Here $T(A)$ is a FEM-algebra too because $T$ is a dagger Frobenius monad. Thus $a^\dag$ is a homomorphism $(A,a) \to (TA,\mu_A)$ by Lemma~\ref{lem:femdag}.

  For the converse, assume $a^\dag$ is a homomorphism $(A,a)\to (TA,\mu_A)$, so the diagram
  \[
  \begin{tikzpicture}
     \matrix (m) [matrix of math nodes,row sep=2em,column sep=4em,minimum width=2em]
     {
      T(A) & T^2(A) \\
      A & T(A) \\};
     \path[->]
     (m-1-1) edge node [left] {$a$} (m-2-1)
             edge node [above] {$Ta^\dag$} (m-1-2)
     (m-2-1) edge node [below] {$a^\dag$} (m-2-2)
     (m-1-2) edge node [right] {$\mu $} (m-2-2);
  \end{tikzpicture}
  \]
  commutes. 
  Hence $\mu \circ Ta^\dag$ is self-adjoint as $a^\dag\circ a$ is, giving the Frobenius law~\eqref{eq:femlaw}.
\end{proof}

Interpreting the associative law for algebras as saying that $a\colon TA\to A$ is a homomorphism $(TA,\mu_A)\to (A,a)$, Lemmas~\ref{lem:femdag} and~\ref{lem:femlaw} show that this morphism is universal, in the sense that if its dagger is an algebra homomorphism, then so is the dagger of any other algebra homomorphism to $A$ (whose domain satisfies the Frobenius law).

\begin{theorem}\label{thm:comparison}\index[symb]{$\Kl(T)$ the Kleisli category of a monad}\index[symb]{$\FEM(T)$, the category of $\FEM$-algebras of $T$}
  Let $F$ and $G$ be dagger adjoints, and write $T=G\circ F$ for the induced dagger Frobenius monad. 
  There are unique dagger functors $K$ and $J$ making the following diagram commute.
  \[\begin{tikzpicture}[xscale=4,yscale=2]
    \node (tl) at (-1,1) {$\Kl(T)$};
    \node (t) at (0,1) {$\cat D$};
    \node (tr) at (1,1) {$\FEM(T)$};
    \node (b) at (0,0) {$\cat C$};
    \draw[->, dashed] (tl) to node[above] {$K$} (t);
    \draw[->, dashed] (t) to node[above] {$J$} (tr);
    \draw[->] ([xshift=.2mm]t.south) to node[right] {$G$} ([xshift=.2mm]b.north);
    \draw[->] ([xshift=-.2mm]b.north) to node[left] {$F$} ([xshift=-.2mm]t.south);
    \draw[->] (tl.-45) to (b.135);
    \draw[->] ([xshift=-.5mm,yshift=-.5mm]b.135) to ([yshift=-.5mm,xshift=-.5mm]tl.-45);
    \draw[->] ([xshift=.5mm,yshift=-.5mm]tr.-135) to ([yshift=-.5mm,xshift=.5mm]b.45);
    \draw[->] (b.45) to (tr.-135);
  \end{tikzpicture}\]
  Moreover, $J$ is full, $K$ is full and faithful, and $J\circ K$ is the canonical inclusion.
\end{theorem}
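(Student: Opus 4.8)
The plan is to mirror the classical comparison theorem between the Kleisli and Eilenberg--Moore resolutions of a monad, and then to verify at each stage that the unique functors produced by the ordinary universal properties happen to preserve the dagger. First I would recall the standard picture: for the ordinary monad $T = G \circ F$, the category $\cat{D}$ sits between $\Kl(T)$ and $\EM(T)$, with a full and faithful comparison functor $K \colon \Kl(T) \to \cat{D}$ (since $\Kl(T)$ is the initial resolution) and a comparison functor $J' \colon \cat{D} \to \EM(T)$ (since $\EM(T)$ is the terminal resolution). These are the unique functors over $\cat{C}$ making the resolution triangles commute, and $J' \circ K$ is the canonical embedding $\Kl(T) \to \EM(T)$. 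The content of the theorem is then twofold: that $J'$ lands inside the full subcategory $\FEM(T)$, giving the claimed $J$; and that $K$, $J$ are dagger functors for the daggers supplied by Lemmas~\ref{lem:kleislidagger} and~\ref{lem:femdag}.

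The key steps, in order, are as follows. First I would show $J' $ factors through $\FEM(T)$, i.e. for each object $D$ of $\cat{D}$ the associated EM-algebra $G(\varepsilon_D) \colon TGD \to GD$ (where $\varepsilon$ is the counit of $F \dashv G$) satisfies the Frobenius law~\eqref{eq:femlaw}. This is where I would lean on the dagger adjunction: since $F \dashv G$ is a \emph{dagger} adjunction, by the remarks in Section~\ref{sec:adjunctions} we also have $G \dashv F$, and $G(\varepsilon_D)^\dag$ is, up to the adjunction bijections, again an algebra homomorphism. Concretely, I expect to verify the hypothesis of Lemma~\ref{lem:femlaw}, namely that $a^\dag$ is a homomorphism $(GD, a) \to (TGD, \mu)$, by rewriting $a = G\varepsilon_D$ and its dagger using that $G$ and $F$ are dagger functors and that $\varepsilon^\dag$ is (up to the second adjunction) the relevant structure map; this is the natural dagger-analogue of the fact that free algebras are FEM (Example~\ref{ex:kleislifem}). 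Having established this, $J$ is defined as the corestriction of $J'$ to $\FEM(T)$, and its uniqueness follows from that of $J'$.

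Next I would check that $K$ and $J$ preserve the dagger. For $K$, recall that $K$ sends $A$ to $FA$ and a Kleisli morphism $f \colon A \to TB$ to $\varepsilon_{FB} \circ Ff$; I would compute $K(f^\dag)$ using the explicit Kleisli dagger from Lemma~\ref{lem:kleislidagger}, namely $f^\dag = T(f^\dag) \circ \mu^\dag \circ \eta$, and compare it with $K(f)^\dag$ computed via the dagger on $\cat{D}$, discharging the equality with a diagram chase that uses that $F, G$ are dagger functors and the triangle identities. For $J$, since the dagger on $\FEM(T)$ is inherited from $\cat{C}$ componentwise by Lemma~\ref{lem:femdag} and $J$ acts on underlying objects and morphisms via $G$, preservation of the dagger for $J$ reduces to $G$ being a dagger functor, which is immediate. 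Fullness of $J$ and full faithfulness of $K$ are exactly the classical statements and require no new argument, and $J \circ K$ being the canonical inclusion $\Kl(T) \to \FEM(T)$ follows from $J' \circ K$ being the usual embedding together with the factorization of $J'$ through $\FEM(T)$.

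The main obstacle I anticipate is the first step: proving that every algebra of the form $G(\varepsilon_D)$ is \emph{Frobenius}, not merely Eilenberg--Moore. The ordinary comparison theorem only guarantees an EM-algebra, and Example~\ref{ex:emnonfem} shows EM-algebras need not be FEM in general, so the argument must genuinely exploit that the adjunction respects the dagger and is not just any resolution of $T$. I expect the cleanest route is via Lemma~\ref{lem:femlaw}, reducing the Frobenius law to the self-duality statement ``$a^\dag$ is again an algebra homomorphism'', and then extracting this from the two-sided nature ($G \dashv F \dashv G$) of the dagger adjunction; the bookkeeping that the two adjunction bijections are related by $f \mapsto \theta(f^\dag)^\dag$ is the delicate part and will need to be done carefully rather than waved through.
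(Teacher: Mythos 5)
Your proposal is correct and follows essentially the same route as the paper: take the classical comparison functors, use Lemma~\ref{lem:femlaw} to show the image of $J$ consists of FEM-algebras, and verify that $K$ and $J$ preserve daggers. Two sub-steps are discharged slightly differently, and in both cases the paper's version is lighter. For the Frobenius step, the paper avoids the adjunction bookkeeping you flag as delicate: since $J\circ K$ is the canonical inclusion and $J$ is full, the structure map $a\colon(TA,\mu_A)\to(A,a)$ is $J(f)=G(f)$ for some morphism $f$ of $\cat D$, whence $a^\dag=G(f^\dag)=J(f^\dag)$ is automatically an algebra homomorphism and Lemma~\ref{lem:femlaw} applies. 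Your worry about tracking the bijection $f\mapsto\theta(f^\dag)^\dag$ dissolves for the same reason: $\varepsilon_D^\dag$ is just some morphism of $\cat D$, and $J$ of \emph{any} morphism is a homomorphism, so no explicit use of the second adjunction $G\dashv F$ is needed. For $K$, your direct computation with the explicit Kleisli dagger does go through (a short chase using naturality of $\varepsilon$ and a triangle identity), but the paper sidesteps it: $K$ is full and faithful, so Lemma~\ref{lem:induced daggers} gives a unique dagger on $\Kl(T)$ making $K$ a dagger functor, and since $J\circ K$ equals the dagger-preserving canonical inclusion, this induced dagger must coincide with the canonical one from Lemma~\ref{lem:kleislidagger}.
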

\begin{proof}
  It suffices to show that the comparisons $K \colon \Kl(T) \to \cat{D}$ and $J \colon \cat{D} \to \EM(T)$ (see~\cite[VI.3, IV.5]{maclane:categories}) are dagger functors, and that $J$ factors through $\FEM(T)$.

  Let $(A,a)$ be in the image of $J$. As $J\circ K$ equals the canonical inclusion, $(T(A),\mu_A)$ is also in the image of $J$. Because $J$ is full, the homomorphism $a\colon (TA,\mu_A)\to (A,a)$ is in the image of $J$, say $a=J(f)$. But then $G(f)=a$, and so $G(f^\dag)=a^\dag$. This implies that $a^\dag$ is a homomorphism $(A,a)\to (TA,\mu_A)$. Lemma~\ref{lem:femlaw} guarantees $(A,a)$ is in $\FEM(T)$. 
 
  Clearly $J$ is a dagger functor. It remains to show that $K$ is a dagger functor. As $K$ is full and faithful, Lemma~\ref{lem:induced daggers} gives $\Kl(T)$ a unique dagger making it a dagger functor. This also makes $J\circ K$ a dagger functor, and since $J\circ K$ equals the canonical inclusion, the induced dagger on $\Kl(T)$ must equal the canonical one from Lemma~\ref{lem:kleislidagger}.
\end{proof}

The previous theorem leads to a direct proof of Lemma~\ref{lem:daggeradjunction} above, as follows. The definition of $\FEM(T)$ makes sense for arbitrary monads (that might not satisfy the Frobenius law), and the proof above still shows that the image of $J \colon \cat{D} \to \EM(T)$ lands in $\FEM(T)$. Hence every free algebra is a FEM-algebra. This implies the Frobenius law~\eqref{eq:frobeniusformonads} for $T$.

\section{Formal monads on dagger categories}\label{sec:formal}

Both ordinary monads~\cite{street1972formal} and Frobenius monads~\cite{lauda:ambidextrous} have been treated formally in 2-categories. This section establishes the counterpart for dagger 2-categories. Its main contribution is to show that the category of FEM-algebras satisfies a similar universal property for dagger Frobenius monads as EM-algebras do for ordinary monads. 
Recall from Definition~\ref{def:dagtwocat} that a dagger 2-category is a category enriched in $\cat{DagCat}$.

\begin{definition}\index[word]{dagger adjunction!in a dagger 2-category}\index[word]{dagger 2-adjunction} 
  An adjunction in a dagger 2-category is just an adjunction in the underlying 2-category. 
  A \emph{dagger 2-adjunction} consists of two $\cat{DagCat}$-enriched functors that form a 2-adjunction in the usual sense. 
\end{definition}

Adjunctions in a dagger 2-category need not specify left and right, just like the dagger adjunctions they generalize, but dagger 2-adjunctions need to specify left and right. 

\begin{definition}\index[word]{dagger Frobenius monad!in a dagger 2-category}
  Let \cat{C} be a dagger 2-category. 
  A \emph{dagger Frobenius monad} consists of an object $C$, a morphism $T\colon C\to C$, and 2-cells $\eta\colon \id[C]\to T$ and $\mu\colon T^2\to T$ that form a monad in the underlying 2-category of $\cat{C}$ and satisfy the Frobenius law
  \[
      \mu T\circ T\mu^\dag=T\mu\circ \mu^\dag T.
  \]
  A \emph{morphism of dagger Frobenius monads} $(C,S)\to (D,T)$ is a morphism $F\colon C\to D$ with a 2-cell $\sigma\colon TF\to FS$ making the following diagrams commute.
  \begin{equation}\label{diag:mapofFrobmonads}
    \begin{aligned}\begin{tikzpicture}
     \matrix (m) [matrix of math nodes,row sep=1em,column sep=2em]
     {
       & TF \\
      F \\
       & FS \\};
     \tikzset{font=\small};
     \path[->]
     (m-2-1) edge node [below=1ex,left] {$F\eta^S$} (m-3-2)
             edge node [above=1ex,left] {$\eta^TF$} (m-1-2)
     (m-1-2) edge node [left] {$\sigma$} (m-3-2);
    \end{tikzpicture}\end{aligned}
    \begin{aligned}\begin{tikzpicture}
     \matrix (m) [matrix of math nodes,row sep=1em,column sep=2em]
     {
       & TFS & FSS \\
      TTF & & \\
       & TF & FS \\};
     \tikzset{font=\small};
     \path[->]
     (m-2-1) edge node [below=1ex,left] {$\mu^T F$} (m-3-2)
             edge node [above=1ex,left] {$T\sigma$} (m-1-2)
     (m-1-2) edge node [above] {$\sigma S$} (m-1-3)
     (m-1-3) edge node [left] {$F\mu^S$} (m-3-3)
     (m-3-2) edge node [below] {$\sigma$} (m-3-3);
    \end{tikzpicture}\end{aligned}
    \begin{aligned}\begin{tikzpicture}
     \matrix (m) [matrix of math nodes,row sep=1em,column sep=2em]
     {
       & FSS & FS \\
      TFS & & \\
       & TTF & TF \\};
     \tikzset{font=\small};
     \path[->]
     (m-2-1) edge node [below=1ex,left] {$T\sigma^\dag$} (m-3-2)
             edge node [above=1ex,left] {$\sigma S$} (m-1-2)
     (m-1-2) edge node [above] {$F\mu^S$} (m-1-3)
     (m-1-3) edge node [left] {$\sigma^\dag$} (m-3-3)
     (m-3-2) edge node [below] {$\mu^T F$} (m-3-3);
    \end{tikzpicture}\end{aligned}
  \end{equation}
  A \emph{transformation between morphisms of dagger Frobenius monads} $(F,\sigma) \to (G,\tau)$ is a 2-cell $\phi\colon F\to G$ making the following diagrams commute.
  \[
    \begin{tikzpicture}
     \matrix (m) [matrix of math nodes,row sep=2em,column sep=4em,minimum width=2em]
     {
      TF & TG & \\
      FS & GS \\};
     \path[->]
     (m-1-1) edge node [left] {$\sigma$} (m-2-1)
             edge node [above] {$T\phi$} (m-1-2)
     (m-2-1) edge node [below] {$\phi S$} (m-2-2)
     (m-1-2) edge node [right] {$\tau$} (m-2-2);
    \end{tikzpicture}
  \quad
    \begin{tikzpicture}
     \matrix (m) [matrix of math nodes,row sep=2em,column sep=4em,minimum width=2em]
     {
      TG & TF & \\
      GS & FS \\};
     \path[->]
     (m-1-1) edge node [left] {$\sigma$} (m-2-1)
             edge node [above] {$T\phi^\dag$} (m-1-2)
     (m-2-1) edge node [below] {$\phi ^\dag S$} (m-2-2)
     (m-1-2) edge node [right] {$\tau$} (m-2-2);
    \end{tikzpicture}
  \]
  Define the composition of morphisms to be $(G,\tau) \circ (F,\sigma) = (GF,G\sigma \circ \tau F)$.
  Horizontal and vertical composition of 2-cells in $\cat{C}$ define horizontal and vertical composition of transformations of morphisms of dagger Frobenius monads, and the dagger on 2-cells of $\cat{C}$ gives a dagger on these transformations. 
  This forms a dagger 2-category $\DFMonad(\cat{C})$.
\end{definition}

Omitting the third diagram of~\eqref{diag:mapofFrobmonads} gives the usual definition of a monad morphism.
We require this coherence with the dagger for the following reason: just as the first two diagrams of~\eqref{diag:mapofFrobmonads} ensure that $(C,T)\mapsto \EM(T)$ is a 2-functor $\mathrm{Monad}(\cat{Cat}) \to \cat{Cat}$, the third one ensures that $(C,T) \mapsto \FEM(T)$ is a dagger 2-functor $\DFMonad(\cat{DagCat}) \to \cat{DagCat}$.

There is an inclusion dagger 2-functor $\cat{C}\to\DFMonad(\cat{C})$ given by $C\mapsto (C,\id[C])$, $F\mapsto (F,\id)$, and $\psi\mapsto \psi$. There is also a forgetful dagger 2-functor $\DFMonad(\cat{C})\to \cat{C}$ given by $(C,T)\mapsto C$, $(F,\sigma)\mapsto F$, and $\psi\mapsto\psi$. As with the formal theory of monads~\cite{street1972formal}, the forgetful functor is the left adjoint to the inclusion functor. To see this, it suffices to exhibit a natural isomorphism of dagger categories $[C,D]\cong [(C,T),(D,\id[D])]$, such as  sending $F\colon C\to D$ to $(F,F\eta^T)$ and $\psi$ to $\psi$. 

\begin{definition}
  A dagger 2-category \cat{C} \emph{admits the construction of FEM-algebras} when the inclusion $\cat{C}\to\DFMonad(\cat{C})$ has a right adjoint $\FEM \colon \DFMonad(\cat{C}) \to \cat{C}$. 
\end{definition}

We will abbreviate $\FEM(C,T)$ to $\FEM(T)$ when no confusion can arise.

\begin{theorem} 
  If \cat{C} admits the construction of FEM-algebras, then dagger Frobenius monads factor as dagger adjunctions.
\end{theorem}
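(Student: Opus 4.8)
The plan is to mimic, at the formal level, the construction of Theorem~\ref{thm:comparison}, replacing the concrete comparison functors by the universal $1$-cells supplied by the adjunction $I\dashv\FEM$, where $I\colon\cat{C}\to\DFMonad(\cat{C})$ is the inclusion. Fix a dagger Frobenius monad $(C,T)$ with unit $\eta$ and multiplication $\mu$. The hypothesis gives a natural isomorphism of dagger categories
\[
  \Phi\colon \cat{C}\big(D,\FEM(T)\big)\;\xrightarrow{\ \cong\ }\;\DFMonad(\cat{C})\big((D,\id[D]),(C,T)\big),
\]
whose value at $\id[{\FEM(T)}]$ is the counit $\varepsilon_{(C,T)}\colon(\FEM(T),\id)\to(C,T)$; write its underlying $1$-cell as $U\colon\FEM(T)\to C$ and its structure $2$-cell as $\sigma\colon TU\to U$. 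This $U$ will play the role of the forgetful functor.

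First I would produce the ``free'' $1$-cell. I claim $(T,\mu)$ is a morphism of dagger Frobenius monads $(C,\id[C])\to(C,T)$: its three coherence squares from~\eqref{diag:mapofFrobmonads} unwind (using $S=\id[C]$, so that $\eta^S$ and $\mu^S$ are identities) to the left unit law $\mu\circ\eta T=\id$, the associativity law $\mu\circ\mu T=\mu\circ T\mu$, and the equation $\mu^\dag\circ\mu=\mu T\circ T\mu^\dag$, which is exactly Lemma~\ref{lem:extendedfrobeniuslaw}. Transposing, set $F:=\Phi^{-1}(T,\mu)\colon C\to\FEM(T)$. Since $\Phi(h)=\varepsilon_{(C,T)}\circ I(h)$ for every $1$-cell $h$, applying this to $h=F$ and using the composition rule $(G,\tau)\circ(F',\sigma')=(GF',G\sigma'\circ\tau F')$ identifies the underlying data of $(T,\mu)$ with those of $\varepsilon_{(C,T)}\circ I(F)$; reading off $1$-cells yields $UF=T$, and reading off $2$-cells recovers the monad structure carried by $T$, so that $\eta$ is available as the unit $\id[C]\to UF$ of the prospective adjunction.

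Next I would build the counit. Because $\Phi$ is an isomorphism of dagger categories it is a bijection on $2$-cells, so a $2$-cell $FU\to\id[{\FEM(T)}]$ is the same as a transformation $\Phi(FU)\to\Phi(\id[{\FEM(T)}])=\varepsilon_{(C,T)}$ of monad morphisms. Computing $\Phi(FU)=\varepsilon_{(C,T)}\circ I(F)\circ I(U)=(T,\mu)\circ(U,\id)=(TU,\mu U)$, I would take the candidate transformation to be $\sigma\colon(TU,\mu U)\to(U,\sigma)$; its two defining squares are precisely the associativity coherence and the dagger (Frobenius) coherence of the monad morphism $(U,\sigma)$, both of which hold since $\varepsilon_{(C,T)}$ is a morphism in $\DFMonad(\cat{C})$. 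Let $\epsilon:=\Phi^{-1}(\sigma)\colon FU\to\id[{\FEM(T)}]$.

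The main obstacle is to verify the two triangle identities for $(F,U,\eta,\epsilon)$; I expect this to be the only genuinely computational step. Rather than manipulating the $2$-cells in $\cat{C}$ directly, I would transport each triangle identity across $\Phi$, where naturality of $\Phi$ together with the triangle identities of the ambient $2$-adjunction $I\dashv\FEM$ reduce them to equalities among $\eta,\mu,\sigma$ that follow from the monad laws and the coherences already checked. Once $F\dashv U$ is established as an adjunction in the underlying $2$-category of $\cat{C}$, with $UF=T$ inducing the original $(\eta,\mu)$, the proof finishes by invoking the self-duality of the dagger $2$-category: taking daggers of $\eta$ and $\epsilon$ exhibits $U\dashv F$ as well, so the adjunction $F\dashv U$ is an adjunction in the dagger $2$-category and witnesses $(C,T)$ as arising from a dagger adjunction, as required.
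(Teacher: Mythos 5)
Your proposal is correct and follows essentially the same route as the paper's proof: it transposes $(T,\mu)$ (checked to be a morphism of dagger Frobenius monads via Lemma~\ref{lem:extendedfrobeniuslaw}) across the adjunction $I\dashv\FEM$ to get the "free" $1$-cell, shows the counit's $2$-cell is a transformation of monad morphisms using the second and third coherence squares of~\eqref{diag:mapofFrobmonads}, and then lifts it to the counit of the adjunction and checks the triangle identities; your hom-isomorphism $\Phi$ is just the paper's use of the universal property of the counit in different clothing. The only cosmetic difference is your closing appeal to self-duality to get $U\dashv F$, which is unnecessary since the paper defines an adjunction in a dagger $2$-category to be an adjunction in the underlying $2$-category.
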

\begin{proof} 
  We closely follow the proof of the similar theorem for ordinary monads in~\cite{street1972formal}, but need to verify commutativity of some additional diagrams. 
  To verify that $(T,\mu)$ is a morphism of dagger Frobenius monads $(C,\id)\to (C,T)$, the first two diagrams are exactly as for ordinary monads, and the third diagram commutes by Lemma~\ref{lem:extendedfrobeniuslaw}.
  Denoting the counit of the adjunction of the assumption by $(E,\varepsilon)\colon (\FEM(T),\id)\to (C,T)$, the universal property gives a unique morphism $(J,\id)\colon (C,\id)\to (\FEM(T),\id)$ making the following diagram commute.
  \[
          \begin{tikzpicture}
           \matrix (m) [matrix of math nodes,row sep=2em,column sep=4em,minimum width=2em]
           {
             (C,\id)&  & (C,T) \\
             & (\FEM(T),\id ) \\};
           \path[->]
           (m-1-1) edge node [below=1ex,left] {$(J,\id)$} (m-2-2)
                   edge node [above] {$(T,\mu)$} (m-1-3)
           (m-2-2) edge node [below=1ex,right] {$(E,\varepsilon)$} (m-1-3);
          \end{tikzpicture}
  \]
  Thus $T=EJ$ and $\mu=\varepsilon J$. Next we verify $\varepsilon$ is a transformation of morphisms of dagger Frobenius monads $(EJE,\mu E)\to (E,\varepsilon)$, by showing the following diagrams commute.
  \[
        \begin{tikzpicture}
         \matrix (m) [matrix of math nodes,row sep=2em,column sep=4em,minimum width=2em]
         {
          TEJE & TE \\
          TE & E \\};
         \path[->]
         (m-1-1) edge node [left] {$\mu E$} (m-2-1)
                 edge node [above] {$T\varepsilon$} (m-1-2)
         (m-2-1) edge node [below] {$\varepsilon$} (m-2-2)
         (m-1-2) edge node [right] {$\varepsilon$} (m-2-2);
        \end{tikzpicture}
      \qquad
        \begin{tikzpicture}
         \matrix (m) [matrix of math nodes,row sep=2em,column sep=4em,minimum width=2em]
         {
          TE & TEJE \\
          E & TE \\};
         \path[->]
         (m-1-1) edge node [left] {$\varepsilon$} (m-2-1)
                 edge node [above] {$T\varepsilon^\dag$} (m-1-2)
         (m-2-1) edge node [below] {$\varepsilon^\dag$} (m-2-2)
         (m-1-2) edge node [right] {$\mu E$} (m-2-2);
        \end{tikzpicture}
  \]
  These diagrams are instances of the second and third equations of~\eqref{diag:mapofFrobmonads}.

  The rest of the proof proceeds exactly as in~\cite{street1972formal}. As $\varepsilon$ is a transformation, the adjunction gives a unique 2-cell $\xi\colon JE\to \id[\FEM(T)]$ in \cat{C} with $E\xi=\varepsilon$. Now 
  \begin{align*}
  E(\xi J\circ J\eta)
  =E\xi J\circ EJ\eta 
  =\varepsilon J\circ T\eta 
  =\mu\circ T\eta=\id,
  \end{align*}
  and so $\xi J \circ J\eta =\id$ by the universal property of the counit. 
  Furthermore
  \[
      E\xi \circ \eta E=\varepsilon \circ \eta E=\id,
  \] 
  so $(E,J,\mu,\xi)$ is an adjunction generating $(C,T)$.
\end{proof}

\begin{theorem} 
  The dagger 2-category \cat{DagCat} admits the construction of FEM-algebras.
\end{theorem}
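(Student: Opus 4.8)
The plan is to construct the right adjoint $\FEM\colon\DFMonad(\cat{DagCat})\to\cat{DagCat}$ explicitly and verify its universal property by exhibiting a natural isomorphism of hom--dagger--categories. On objects, set $\FEM(\cat{C},T)$ to be the dagger category $\FEM(T)$ of Frobenius--Eilenberg--Moore algebras, which carries a dagger by Lemma~\ref{lem:femdag} (and is small whenever $\cat{C}$ is), with forgetful dagger functor $\FEM(T)\to\cat{C}$. Since the whole argument parallels Street's formal theory of monads~\cite{street1972formal}, the genuinely new content is checking that each construction cooperates with the dagger; I would reuse the $2$-categorical bookkeeping from there wherever possible and concentrate on the dagger-specific verifications.

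The heart of the proof is the natural isomorphism of dagger categories
\[
  \DFMonad(\cat{DagCat})\big((\cat{D},\id),(\cat{C},T)\big)\;\cong\;[\cat{D},\FEM(T)],
\]
natural in $\cat{D}$, whose existence for every $(\cat{C},T)$ is exactly representability of $\DFMonad(\cat{DagCat})(I(-),(\cat{C},T))$ and hence yields the right adjoint. To build it, I would unpack a morphism of dagger Frobenius monads $(F,\sigma)\colon(\cat{D},\id)\to(\cat{C},T)$: here $F\colon\cat{D}\to\cat{C}$ is a dagger functor and $\sigma\colon TF\Rightarrow F$ is a $2$-cell. Specializing $S=\id$, the first two diagrams of~\eqref{diag:mapofFrobmonads} say precisely that each component $\sigma_X\colon T(FX)\to FX$ is an Eilenberg--Moore algebra and that $\sigma$ is natural, so $(F,\sigma)$ determines a functor $\tilde{F}\colon\cat{D}\to\EM(T)$ lifting $F$; the third diagram, again with $S=\id$, reads $\mu^T F\circ T\sigma^\dag=\sigma^\dag\circ\sigma$, which is exactly the statement that each $\sigma_X^\dag$ is an algebra homomorphism, i.e.\ the Frobenius law by Lemma~\ref{lem:femlaw}. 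Thus $\tilde{F}$ lands in $\FEM(T)$, and since $F$ is a dagger functor and the dagger on $\FEM(T)$ is computed as in $\cat{C}$, the functor $\tilde{F}$ is again a dagger functor. Conversely, any dagger functor $\cat{D}\to\FEM(T)$, composed with the forgetful dagger functor and equipped with its algebra structure, yields such an $(F,\sigma)$, and these assignments are mutually inverse.

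On $2$-cells, a transformation $(F,\sigma)\to(G,\tau)$ is a $2$-cell $\phi\colon F\to G$ whose two defining diagrams say that both $\phi$ and $\phi^\dag$ are families of algebra homomorphisms; but since $\FEM(T)$ is a dagger category (Lemma~\ref{lem:femdag}) and $\tilde{F},\tilde{G}$ are dagger functors, the second diagram is automatic once the first holds. Hence transformations correspond bijectively to natural transformations $\tilde{F}\Rightarrow\tilde{G}$ in $[\cat{D},\FEM(T)]$, and the componentwise dagger of transformations matches the dagger of $[\cat{D},\FEM(T)]$ from Example~\ref{ex:functorcat}. This upgrades the displayed correspondence to an isomorphism of dagger categories. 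Naturality in $\cat{D}$ is routine, and naturality in $(\cat{C},T)$, needed to assemble $\FEM$ into a genuine dagger $2$-functor and thus into the right adjoint, follows by the same parametrized-representability argument as in the non-dagger case, now carried out in $\cat{DagCat}$.

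The main obstacle I anticipate is the bookkeeping that links the third coherence diagram of a monad morphism to the FEM law: one must check carefully that, after setting $S=\id$, the diagram involving $\sigma^\dag$ is precisely the adjointability condition of Lemma~\ref{lem:femlaw}, and dually that the extra dagger diagram in the definition of a transformation is redundant given the dagger on $\FEM(T)$. Everything else is the standard formal-monad-theory calculation, with the additional but essentially mechanical verification that the functors, natural transformations, and hom-category isomorphisms involved all preserve daggers.
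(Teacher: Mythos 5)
Your proposal is correct and follows essentially the same route as the paper: both define $\FEM$ on objects as the dagger category of FEM-algebras and establish the natural isomorphism of dagger categories $[\cat{D},\FEM(T)]\cong\DFMonad(\cat{DagCat})\bigl((\cat{D},\id),(\cat{C},T)\bigr)$ by unpacking monad morphisms out of $(\cat{D},\id)$, with the third coherence diagram (at $S=\id$) matching the Frobenius law via Lemma~\ref{lem:femlaw}. The paper additionally spells out the action of $\FEM$ on general 1-cells and 2-cells up front rather than appealing to parametrized representability, but this is only a difference in bookkeeping, not in substance.
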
 
\begin{proof}
  We first show $(\cat{C},T)\mapsto (\FEM(T))$ extends to a well-defined dagger 2-functor $\FEM \colon \DFMonad(\cat{DagCat})\to\cat{DagCat}$. On a 1-cell $(F,\sigma)\colon(\cat{C},S)\to (\cat{D},T)$, define a dagger functor $\FEM(F,\sigma)\colon \FEM(S)\to \FEM(T)$ as follows: map the algebra $a\colon SA\to A$ to $F(a)\circ \sigma_A\colon TFA\to FSA\to FA$ and the homomorphism $f\colon (A,a)\to (B,b)$ to $F(f)$. The laws for FEM-algebras of $(F,\sigma)$ now show that $\FEM(F,\sigma)$ sends FEM-algebras to FEM-algebras. 
  On a 2-cell $\psi\colon (F,\sigma)\to (G,\tau)$, define $\FEM(\psi)$ by $\FEM(\psi)_A=\psi_A$; each $\psi_A$ is a morphism of FEM-algebras.

  There is a natural isomorphism of dagger categories $[\cat{C},\FEM(T)]\cong [(\cat{C},\id[\cat{C}]),(\cat{D},T)]$ as follows. 
  A dagger functor $\cat{C}\to \FEM(T)$ consists of a dagger functor $F\cat{C}\to \cat{D}$ and a family of maps $\sigma_A\colon TFA\to FA$ making each $(FA,\sigma_A)$ into a FEM-algebra. 
  Then $(F,\sigma)\colon(\cat{C},\id[\cat{C}])\to (\cat{D},T)$ is a well-defined morphism of dagger Frobenius monads. Similarly, any $(F,\sigma)\colon(\cat{C},\id[\cat{C}])\to (\cat{D},T)$ defines a dagger functor $\cat{C}\to \FEM(T)$. On the level of natural transformations both of these operations are obvious. 
\end{proof}

In~\cite{street1972formal} this last result is proved as follows: instead of starting with the definition of $\cat{C}^T$, the construction is recovered from the assumption that the right adjoint exists and considering functors from the categories \cat{1} and \cat{2} to the right adjoint, thus recovering the objects and arrows of $\cat{C}^T$. It is unclear how to write a similar proof in our case: while $[\cat{2},\cat{C}]$ classifies arrows and commutative squares for an ordinary category, it is not obvious how to replace \cat{2} with a dagger category playing an analogous role.

\section{Strength}\label{sec:strength}

As we saw in Example~\ref{example:tensor}, (Frobenius) monoids in a monoidal dagger category induce (dagger Frobenius) monads on the category. This in fact sets up an adjunction between monoids and strong monads~\cite{wolff:monads}. This section shows that the Frobenius law promotes this adjunction to an equivalence. Most of this section generalizes to the non-dagger setting.

A pedestrian approach to the adjunction between monoids in \cat{C} and strong monads on \cat{C} proceeds by definining the relevant functors and verifying that they work and form an adjunction directly. Our official approach to the equivalence between dagger Frobenius monoids and strong dagger Frobenius monads proceeds similarly. However, there is a higher-level approach to this that we'll explain briefly afterwards. Making the higher-level picture as detailed as the rest of the thesis would use more 2-categories than needed elsewhere, which is why it is not our official approach.

The abstract viewpoint will explain why -- in the Frobenius case -- one would expect to get an equivalence of categories rather than an adjunction. This will then justify our definition of a strong Frobenius monoid, which assumes that the strength natural transformation $\st_{A,B} \colon A \otimes F(B) \to F(A\otimes B)$ is unitary, which then directly implies that $F(-)\cong F(-\otimes I)\cong -\otimes F(I)$.

\begin{definition}\label{def:strength}\index[symb]{$\st$, strength natural transformation}
  A dagger functor $F$ between monoidal dagger categories is \emph{strong} if it is equipped with natural unitary morphisms
  $\st_{A,B} \colon A \otimes F(B) \to F(A\otimes B)$ satisfying $\st \circ \alpha = F(\alpha) \circ \st \circ (\id \otimes \st)$ and $F(\lambda) \circ \st = \lambda$.
  A (dagger Frobenius) monad on a monoidal dagger category is \emph{strong} if it is a strong dagger functor with $\st \circ (\id \otimes \mu) = \mu \circ T(\st) \circ \st$ and $\st \circ (\id \otimes \eta) = \eta$.
  A morphism $\beta$ of dagger Frobenius monads is \emph{strong} if $\beta \circ \st = \st \circ (\id \otimes \beta)$.
\end{definition} 

To prove the equivalence between dagger Frobenius monoids and strong dagger Frobenius monads, we need two lemmas.
 
\begin{lemma}\label{lem:froblaw} 
  If $T$ is a strong dagger Frobenius monad on a monoidal dagger category, then $T(I)$ is a dagger Frobenius monoid.
\end{lemma}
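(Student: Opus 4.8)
The statement asks me to show that if $T$ is a strong dagger Frobenius monad on a monoidal dagger category, then $T(I)$ carries the structure of a dagger Frobenius monoid. The natural strategy is to transport the monad structure $(T,\mu,\eta)$ along the unitary strength and the canonical isomorphisms to equip $T(I)$ with multiplication and unit, and then to verify that the three relevant laws (associativity, unit, and the Frobenius law) follow from the corresponding laws of the monad. The key observation, stated in the paragraph just before the lemma, is that strength being unitary forces $T(-) \cong -\otimes T(I)$; this is precisely Example~\ref{example:tensor} read in reverse, so the shape of the argument is to reconstruct the monoid from which $T$ arises.

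First I would define the candidate multiplication and unit on $B := T(I)$. The unit $\tinyunit\colon I \to B$ is simply $\eta_I \colon I \to T(I)$. For the multiplication $\tinymult\colon B \otimes B \to B$, I would compose the strength $\st_{T(I),I}\colon T(I)\otimes T(I)\to T(T(I)\otimes I)$ with $T(\rho)$ (using the right unitor to identify $T(I)\otimes I$ with $T(I)$) and then with $\mu_I\colon T^2(I)\to T(I)$; explicitly $\tinymult = \mu_I \circ T(\rho^\otimes) \circ \st_{T(I),I}$, up to the coherence isomorphisms. Because $\st$ is unitary and the coherence maps are unitary (the category is a monoidal dagger category, so $\rho$ and $\lambda$ are unitary by definition), the dagger of $\tinymult$ is built from daggers of these pieces together with $\mu_I^\dag$, which is what will make the Frobenius law computation go through.

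The bulk of the verification is then routine diagram-chasing: associativity and the unit law for $\tinymult$ and $\tinyunit$ follow from the monad associativity $\mu\circ T\mu = \mu\circ\mu T$ and the unit laws $\mu\circ T\eta = \mu\circ\eta T = \id$, transported across the strength-compatibility equations $\st\circ(\id\otimes\mu)=\mu\circ T(\st)\circ\st$ and $\st\circ(\id\otimes\eta)=\eta$ from Definition~\ref{def:strength}, plus naturality of $\st$ and the coherence laws of the monoidal structure. These are exactly the calculations that show $-\otimes T(I)$ recovers $T$, so I would either carry them out graphically or invoke the standard correspondence between strong monads and their underlying monoids~\cite{wolff:monads}. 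The most interesting part is the Frobenius law $\tinymult\circ\tinycomult$-style equation~\eqref{eq:frobeniuslaw} for $B$: I would take the Frobenius law~\eqref{eq:frobeniusformonads} of the monad, namely $T(\mu_A)\circ\mu^\dag_{T(A)} = \mu_{T(A)}\circ T(\mu_A^\dag)$, specialize it appropriately, and then translate it into a statement about $B\otimes B$ by precomposing and postcomposing with the unitary strength and its dagger. Here the unitarity of $\st$ is essential: it guarantees that conjugating the monad's Frobenius law by $\st$ yields precisely the monoid's Frobenius law without leaving behind any non-invertible clutter, and that the dagger of $\tinymult$ really is the "comultiplication" $\tinycomult$ obtained by flipping the diagram.

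\textbf{Expected main obstacle.} The hard part is the bookkeeping in the Frobenius-law step: one must keep careful track of how the strength interacts with $\mu^\dag$, since equation~\eqref{eq:frobeniusformonads} mixes $\mu$ and $\mu^\dag$, and the strength-compatibility axioms in Definition~\ref{def:strength} are only stated for $\mu$ and $\eta$, not for their daggers. I expect to need a preliminary sublemma stating that $\st$ is also compatible with $\mu^\dag$ (i.e. $\st^\dag$ intertwines $\mu^\dag$ appropriately), which should follow by taking daggers in the strength axioms and using that $\st$ is unitary and $T$ is a dagger functor. Once that compatibility with $\mu^\dag$ is in hand, the Frobenius law for $B$ is a direct translation of~\eqref{eq:frobeniusformonads}, and the graphical calculus of Section~\ref{sec:pictures} makes the manipulation transparent.
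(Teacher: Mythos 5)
Your proposal is correct and follows essentially the same route as the paper: the paper's proof is exactly the diagram chase you describe, placing the monad's Frobenius law~\eqref{eq:frobeniusformonads} (instantiated at $I$) in the centre of a large diagram whose outer boundary is the monoid Frobenius law for $T(I)$, and filling the gap with naturality squares for $\st$, $\st^\dag$, $\rho$, $\mu^\dag$ and the strong-monad axioms. The compatibility of $\st$ with $\mu^\dag$ that you flag as the main obstacle is handled in the paper by observing that half the regions of the diagram commute ``for dual reasons,'' i.e.\ they are the daggers of the corresponding naturality and strength squares, which is precisely the sublemma you anticipated.
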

\begin{proof}
  Consider the diagram in \figurename~\ref{fig:froblaw}. 
  Region (i) commutes because $T$ is a dagger Frobenius monad, (ii) because $\mu^\dag$ is natural, (iii) because $\rho^{-1}$ is natural, (iv) because $\st^\dag$ is natural, (v) is a consequence of $T$ being a strong monad, (vi) commutes as $\rho$ is natural, (vii) and (viii) because $\st$ is natural, (ix) commutes trivially and (x) because $\st$ is natural. Regions (ii)'-(x)' commute for dual reasons. 
  Hence the outer diagram commutes.
\end{proof}

\begin{figure*}
  \centering
  \begin{sideways}
  \begin{tikzpicture}[font=\footnotesize,xscale=.825,yscale=.825]
  \matrix (m) [matrix of math nodes,row sep=2em,column sep=1.75em]
  {
     T(I)\otimes T(T(I)\otimes I) & T(I)\otimes (T(I)\otimes T(I)) & & (T(I)\otimes T(I))\otimes T(I) & T(T(I)\otimes I)\otimes T(I)) \\
     &T(T(I)\otimes (T(I)\otimes I)) & T((T(I)\otimes T(I))\otimes I) & T(T(I)\otimes I)\otimes T(I) & \\
     T(I)\otimes T^2(I) & T(T(I)\otimes T(I)) & T(T(T(I)\otimes I)\otimes I) & T(T^2(I)\otimes I) & T^2(I)\otimes T(I) \\
     &T^2(T(I)\otimes I) &T^3(I) & T(T(I)\otimes I)& \\ 
     T(I)\otimes T(I) & T^2(I) & &T^2(I) & T(I)\otimes T(I) \\
     & T(T(I)\otimes I) & T^3(I) & T^2(T(I)\otimes I) & \\
     T^2(I)\otimes T(I) & T(T^2(I)\otimes I) & T(T(T(I)\otimes I)\otimes I) & T(T(I)\otimes T(I)) & T(I)\otimes T^2(I) \\
     & T(T(I)\otimes I)\otimes T(I) & T((T(I)\otimes T(I))\otimes I) & T(T(I)\otimes (T(I)\otimes I)) & \\
     T(T(I)\otimes I)\otimes T(I) & (T(I)\otimes T(I))\otimes T(I) & & T(I)\otimes (T(I)\otimes T(I)) & T(I)\otimes T(T(I)\otimes I) \\};
   \path[->]
    (m-1-1) edge node [above] {$\id\otimes \st^\dag$} (m-1-2)
    (m-1-2) edge node [above] {$\alpha$} (m-1-4)
    (m-1-4) edge node [above] {$\st\otimes\id$} (m-1-5)
    (m-1-5) edge node [right] {$T(\rho)\otimes\id$} (m-3-5)
    (m-1-1) edge node [above] {$\st$} (m-2-2)
    (m-2-2) edge node [above] {$T(\alpha)$} (m-2-3)
    (m-2-3) edge node [above] {$\st^\dag$} (m-1-4)
    (m-2-4) edge node [right] {$\st^\dag\otimes\id$} (m-1-4) 
    (m-2-4) edge node [above] {$\quad\quad T(\rho)\otimes \id$} (m-3-5)
    (m-3-1) edge node [above] {$\st$} (m-3-2)
    (m-3-1) edge node [left] {$\id\otimes T(\rho^{-1})$} (m-1-1)
    (m-3-2) edge node [left] {$T(\id\otimes\rho^{-1})$} (m-2-2)
    (m-3-2) edge node [below] {$\quad T(\rho^{-1})$} (m-2-3)
    (m-3-3) edge node [above] {$T(T(\rho)\otimes\id)$} (m-3-4)
    (m-3-3) edge node [right] {$T(\st^\dag\otimes\id)$} (m-2-3)
    (m-3-3) edge node [right] {$\st^\dag$} (m-2-4)
    (m-3-4) edge node [above] {$\st^\dag$} (m-3-5)
    (m-3-4) edge node [right] {$T(\mu\otimes\id)$} (m-4-4)
    (m-3-5) edge node [right] {$\mu\otimes\id$} (m-5-5)
    (m-4-2) edge node [left] {$T(\st^\dag)$} (m-3-2)
    (m-4-2) edge node [right] {$\quad T(\rho^{-1})$} (m-3-3)
    (m-4-2) edge node [above] {$T^2(\rho)$} (m-4-3)
    (m-4-3) edge node [above] {$T(\mu_I)$} (m-5-4)
    (m-4-4) edge node [above] {$\st^\dag$} (m-5-5)
    (m-5-1) edge node [above] {$\st$} (m-6-2)
    (m-5-1) edge node [left] {$\id\otimes\mu^\dag$} (m-3-1)
    (m-5-1) edge node [left] {$\mu^\dag\otimes\id$} (m-7-1)
    (m-5-2) edge node [above] {$\mu^\dag_{T(I)}$} (m-4-3)
    (m-5-2) edge node [above] {$T(\mu^\dag_I)$} (m-6-3)
    (m-5-4) edge node [left] {$T(\rho^{-1})$} (m-4-4)
    (m-6-2) edge node [right] {$T(\rho)$} (m-5-2)
    (m-6-2) edge node [left] {$T(\mu^\dag\otimes\id)$} (m-7-2)
    (m-6-3) edge node [above] {$\mu_{T(I)}$} (m-5-4)
    (m-6-3) edge node [above] {$\quad T^2(\rho^{-1})$} (m-6-4)
    (m-7-1) edge node [above] {$\st$} (m-7-2)
    (m-7-1) edge node [left] {$T(\rho^{-1})\otimes\id$} (m-9-1)
    (m-7-1) edge node [above] {$\ \ \quad T(\rho^{-1})\otimes\id$} (m-8-2)
    (m-7-2) edge node [above] {$T(T(\rho^{-1})\otimes\id)$} (m-7-3)
    (m-7-3) edge node [above] {$T(\rho)$} (m-6-4)
    (m-7-4) edge node [right] {$T(\st)$} (m-6-4)
    (m-7-4) edge node [above] {$\st^\dag$} (m-7-5)
    (m-7-5) edge node [right] {$\id\otimes\mu$} (m-5-5)
    (m-8-2) edge node [above] {$\st$} (m-7-3)
    (m-8-3) edge node [right] {$T(\st\otimes\id)$} (m-7-3)
    (m-8-3) edge node [right] {$\quad T(\rho)$} (m-7-4)
    (m-8-3) edge node [above] {$T(\alpha^{-1})$} (m-8-4)
    (m-8-4) edge node [right] {$T(\id\otimes\rho)$} (m-7-4)
    (m-8-4) edge node [above] {$\ \ \st^\dag$} (m-9-5)
    (m-9-1) edge node [below] {$\st^\dag\otimes\id $} (m-9-2)
    (m-9-2) edge node [left] {$\st\otimes\id $} (m-8-2)
    (m-9-2) edge node [above] {$\st$} (m-8-3)
    (m-9-2) edge node [below] {$\alpha^{-1}$} (m-9-4)
    (m-9-4) edge node [below] {$\id\otimes \st$} (m-9-5)
    (m-9-5) edge node [right] {$\id\otimes T(\rho)$} (m-7-5)
    (m-6-2) edge [bend left=30] (m-4-2)
    (m-6-4) edge [bend right=30] (m-4-4);
    \node at (-6.5,0) {$\mu^\dag$};
    \node at (6.25,0) {$\mu$}; 
    \node[gray] at (0,0) {(i)}; 
    \node[gray] at (-5,1) {(ii)}; 
    \node[gray] at (5,-1) {(ii)'}; 
    \node[gray] at (3,2) {(iii)}; 
    \node[gray] at (-3,-2) {(iii)'};
    \node[gray] at (8,2.5) {(iv)}; 
    \node[gray] at (-8,-2.5) {(iv)'}; 
    \node[gray] at (-8,1.5) {(v)'}; 
    \node[gray] at (8,-1.5) {(v)}; 
    \node[gray] at (2.75,-3.5) {(vi)}; 
    \node[gray] at (-2.75,3.5) {(vi)'};
    \node[gray] at (-2.75,-5.25) {(vii)}; 
    \node[gray] at (2.75,5.25) {(vii)'}; 
    \node[gray] at (-5,-4.5) {(viii)}; 
    \node[gray] at (5,4.5) {(viii)'};
    \node[gray] at (-8,-6) {(ix)}; 
    \node[gray] at (8,6) {(ix)'};
    \node[gray] at (-8,5) {(x)};
    \node[gray] at (8,-5) {(x)'}; 
    \node[gray] at (-4.25,4.75) {(xi)}; 
    \node[gray] at (4.5,-4.75) {(xi)'};
    \node[gray] at (-2.65,6.5) {(xii)}; 
    \node[gray] at (2.65,-6.5) {(xii)'};
  \end{tikzpicture}
  \end{sideways}
  \caption{Diagram proving that $T\mapsto T(I)$ preserves the Frobenius law.}
  \label{fig:froblaw}
\end{figure*}

\begin{lemma}\label{lem:strictmorphism}
  If $T$ is a strong dagger Frobenius monad on a monoidal dagger category, then $T\rho \circ \st \colon A \otimes T(I) \to T(A)$ preserves $\eta^\dag$ and $\mu^\dag$.
\end{lemma}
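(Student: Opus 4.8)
The statement asserts that for a strong dagger Frobenius monad $T$, the morphism $T\rho \circ \st \colon A \otimes T(I) \to T(A)$ is a homomorphism of the relevant comultiplication/counit structure — specifically, that it preserves $\eta^\dag$ and $\mu^\dag$. Here $A \otimes T(I)$ carries the structure coming from $T(I)$ being a dagger Frobenius monoid (established in Lemma~\ref{lem:froblaw}), so $A \otimes T(I)$ is the free $T(I)$-module, and its comonoid structure is $\id[A] \otimes$ (the comultiplication and counit of $T(I)$, obtained as daggers of $\mu$ and $\eta$ restricted to $I$). On the other side, $T(A)$ carries the comultiplication $\mu^\dag_A \colon T(A) \to T^2(A)$ and counit $\eta^\dag_A \colon T(A) \to A$. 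So the plan is to verify two commuting diagrams: one expressing compatibility with the counits $\eta^\dag$, and one expressing compatibility with the comultiplications $\mu^\dag$.

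My plan is to prove both claims by the graphical/diagrammatic calculus for monoidal dagger categories reviewed in Section~\ref{sec:pictures}, translating the strength axioms of Definition~\ref{def:strength} into pictures. First I would set up notation, writing $\phi := T\rho \circ \st$, and recall that the comonoid structure on $T(I)$ is $\tinycomult = \mu^\dag_I$ and $\tinycounit = \eta^\dag_I$ (as daggers of the monoid structure identified in Lemma~\ref{lem:froblaw}), while the structure on $T(A)$ is $\mu^\dag_A$ and $\eta^\dag_A$. For the counit claim, I would show $\eta^\dag_A \circ \phi = \id[A] \otimes \eta^\dag_I$; by taking daggers this is equivalent to $\phi^\dag \circ \eta_A = \id[A] \otimes \eta_I$, i.e.\ $\st^\dag \circ T\rho^\dag \circ \eta_A = \id \otimes \eta_I$, which I would reduce to the strength axiom $\st \circ (\id \otimes \eta) = \eta$ together with naturality of $\eta$ and $\rho$. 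For the comultiplication claim, the dagger reformulation turns preservation of $\mu^\dag$ into a statement about preservation of the \emph{multiplication}, allowing me to use the strong-monad axiom $\st \circ (\id \otimes \mu) = \mu \circ T(\st) \circ \st$ directly.

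The key steps, in order, are: (1) record the comonoid structures on both objects and reduce each claim to its dagger form so that the available axioms (stated for $\eta$, $\mu$, $\st$ rather than their daggers) apply; (2) dispatch the counit equation, which is essentially the unit strength axiom chased through the coherence isomorphism $\rho$; (3) dispatch the comultiplication equation by combining the multiplication strength axiom with the naturality of $\st$ and of $\rho$, and crucially the interaction between $\st$ and the Frobenius structure — this is where the unitarity of $\st$ assumed in Definition~\ref{def:strength} and the Frobenius law of Lemma~\ref{lem:froblaw} come into play. The reasoning here closely parallels the large diagram chase of \figurename~\ref{fig:froblaw} used in Lemma~\ref{lem:froblaw}, since the comultiplication $\mu^\dag$ on $T(I)$ is precisely the structure that figure tracks.

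I expect the main obstacle to be the comultiplication ($\mu^\dag$) half. Preservation of $\eta^\dag$ is a short chase, but preservation of $\mu^\dag$ requires reconciling the strength's behaviour on the monad multiplication $\mu$ with how the Frobenius comultiplication $\mu^\dag$ distributes across the tensor factor $A$. The delicate point is that $\st$ intertwines $\mu$ and $\mu^\dag$ only because $T$ satisfies the Frobenius law, so I anticipate needing an auxiliary graphical identity — analogous to Lemma~\ref{lem:extendedfrobeniuslaw}, which gives $\mu^\dag \circ \mu = \mu T \circ T\mu^\dag$ — to slide the strength morphism past the comultiplication. Once that intertwining identity is in hand, the remaining diagram commutes by naturality and the monoidal coherence already used in Lemma~\ref{lem:froblaw}, so the bulk of the effort will be isolating and proving that one intertwining step cleanly rather than reproducing the full figure.
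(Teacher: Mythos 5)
Your treatment of the $\eta^\dag$ half is sound and is essentially the paper's argument: dagger the unit-strength axiom $\st \circ (\id \otimes \eta) = \eta$ (legitimate because $\st$ is unitary) and finish by naturality of $\eta^\dag$ and the unitor.

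The $\mu^\dag$ half has a genuine gap. You claim that "the dagger reformulation turns preservation of $\mu^\dag$ into a statement about preservation of the multiplication", but this is false: writing $\phi = T\rho \circ \st$, the equation "$\phi$ preserves $\mu^\dag$" daggers to an equation in which $\phi^\dag$ is post-composed with $\mu$ and pre-composed with $T(\phi)^\dag$, whereas the dagger of "$\phi$ preserves the multiplication" has $\mu^\dag$ and the comultiplication in the opposite positions. These are different equations, and they coincide only if $\phi$ is unitary --- which is precisely what cannot be assumed, since Theorem~\ref{thm:strong} uses this lemma (together with Lemma~\ref{lem:strictmorphismsareiso}) to \emph{prove} that $\phi$ is invertible. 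If comultiplication-preservation followed formally from multiplication-preservation by daggering, the lemma would have no content. Your second misstep is anticipating that the Frobenius law, via an intertwining identity in the style of Lemma~\ref{lem:extendedfrobeniuslaw}, is the crux. The paper's proof of this lemma never invokes the Frobenius law. The actual key step is to dagger the \emph{strength} axiom $\st \circ (\id \otimes \mu) = \mu \circ T(\st) \circ \st$ and use unitarity of $\st$ (not of $\phi$) to obtain $\mu^\dag \circ \st = T(\st) \circ \st \circ (\id \otimes \mu^\dag)$; the remainder of the diagram chase is naturality of $\st$ and of $\mu^\dag$, the strong-functor associativity axiom from Definition~\ref{def:strength}, and monoidal coherence. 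You do list unitarity of $\st$ among your ingredients, so the raw materials are within reach, but the reduction you propose does not go through and the auxiliary identity you plan to isolate is not the one needed.
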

\begin{proof}
  To show that $\eta^\dag$ is preserved, it suffices to see that
  \[\begin{tikzpicture}[font=\small]
  \matrix (m) [matrix of math nodes,row sep=2em,column sep=4em,minimum width=2em]
  {
     A\otimes T(I) & \\
     T(A\otimes I) & A\otimes I \\ 
     T(A) &  A\\};
  \path[-stealth]
    (m-1-1) edge node [left] {$\st_{A,I}$} (m-2-1)
            edge node [right] {$\ \ \id\otimes \eta^\dag_I$} (m-2-2)
    (m-2-1) edge node [below] {$\eta^\dag_{A\otimes I}$} (m-2-2)
    (m-2-1) edge node [left] {$T(\rho_A)$} (m-3-1)
    (m-2-2) edge node [right] {$\rho_A$} (m-3-2)
     (m-3-1) edge node [below] {$\eta^\dag_A$} (m-3-2);
  \end{tikzpicture}\]
  commutes. But the rectangle commutes because $\eta^\dag$ is natural, and the triangle commutes because $T$ is a strong monad and strength is unitary.

  To see that $\mu^\dag$ is preserved, consider the following diagram:
  \[\begin{tikzpicture}[font=\small,yscale=.75,xscale=.8]
    \matrix (m) [matrix of math nodes,row sep=2em,column sep=2.5em]
    {
     A\otimes T(I) & A\otimes T^2(I) &A\otimes T(T(I)\otimes I) &A\otimes (T(I)\otimes T(I)) \\
      & A\otimes T^2(I) &A\otimes T(T(I)\otimes I) &(A\otimes T(I))\otimes T(I) \\
      &  &T(A\otimes (T(I)\otimes I)) &T((A\otimes T(I))\otimes I) \\
      &  & &T(A\otimes T(I)) \\
     T(A\otimes I) &  & &T^2(A\otimes I) \\
     T(A) &  & &T^2(A) \\};
    \path[->]
    (m-1-1) edge node [left] {$\st$} (m-5-1)
            edge node [above] {$\id\otimes\mu^\dag$} (m-1-2)
    (m-1-2) edge node [above] {$\id\otimes T(\rho^{-1})$} (m-1-3)
    (m-1-3) edge node [above] {$\id\otimes \st^\dag$} (m-1-4)
    (m-2-3) edge node [above] {$\id\otimes T(\rho)$} (m-2-2)
    (m-1-2) edge node [right] {$\id$} (m-2-2)
    (m-1-4) edge node [right] {$\alpha$} (m-2-4)
    (m-2-4) edge node [right] {$\st$} (m-3-4)
    (m-3-4) edge node [right] {$T(\rho)$} (m-4-4)
    (m-4-4) edge node [right] {$T(\st)$} (m-5-4)
    (m-5-4) edge node [right] {$T^2(\rho)$} (m-6-4)
    (m-5-1) edge node [left] {$T(\rho)$} (m-6-1) 
    (m-6-1) edge node [below] {$\mu^\dag$} (m-6-4)
    (m-2-3) edge node [right] {$\st$} (m-3-3) 
    (m-1-4) edge node [below] {$\quad\id\otimes \st$} (m-2-3) 
    (m-3-3) edge node [above] {$T(\alpha)$} (m-3-4)
    (m-3-3) edge node [below] {$T(\id\otimes\rho)\quad\quad$} (m-4-4)
    (m-5-1) edge node [below] {$\mu^\dag$} (m-5-4)
    (m-2-2) edge [bend right=30] (m-4-4);
    \node at (-.2,-0.7) {$\st$};
    \node[gray] at (-4,1) {(i)};
    \node[gray] at (0,4) {(ii)};
    \node[gray] at (0,2) {(iii)};
    \node[gray] at (4,2) {(iv)};
    \node[gray] at (4,0.5) {(v)}; 
    \node[gray] at (0,-4) {(vi)};
  \end{tikzpicture}\]
  Commutativity of region (i) is a consequence of strength being unitary, (ii) commutes by definition, (iii) commutes as strength is natural, (iv) because $T$ is a strong functor, (v) by coherence and finally (vi) by naturality of $\mu^\dag$. 
\end{proof}

\begin{theorem}\label{thm:strong}
  Let $\cat{C}$ be a monoidal dagger category. The operations $B \mapsto - \otimes B$ and $T \mapsto T(I)$ form an equivalence between dagger Frobenius monoids in $\cat{C}$ and strong dagger Frobenius monads on $\cat{C}$.
\end{theorem}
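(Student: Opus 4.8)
The plan is to establish an equivalence of categories by exhibiting functors in both directions and showing they are mutually inverse (up to natural isomorphism). First I would set up the two functors carefully. In one direction, Example~\ref{example:tensor} already tells us that a dagger Frobenius monoid $B$ gives a dagger Frobenius monad $-\otimes B$; I would check this monad is moreover \emph{strong}, with strength $\st_{A,C}\colon A\otimes(C\otimes B)\to (A\otimes C)\otimes B$ given by the associator, which is unitary since $\cat{C}$ is a monoidal dagger category. In the other direction, given a strong dagger Frobenius monad $T$, Lemma~\ref{lem:froblaw} shows $T(I)$ is a dagger Frobenius monoid, so $T\mapsto T(I)$ is well-defined on objects. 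I would then spell out the action on morphisms: a strong morphism $\beta$ of dagger Frobenius monads restricts to $\beta_I\colon S(I)\to T(I)$, which I would verify is a monoid homomorphism, and conversely a homomorphism $f\colon B\to B'$ induces the strong monad morphism $-\otimes f$.

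The crux is to produce the two natural isomorphisms witnessing the equivalence. One composite is easy: applying $B\mapsto -\otimes B$ and then $T\mapsto T(I)$ sends $B$ to $I\otimes B$, which is canonically (unitarily) isomorphic to $B$ via the unitor $\lambda$, and this is natural in $B$. The other composite is the heart of the matter: starting from a strong dagger Frobenius monad $T$, I must show $-\otimes T(I)$ is isomorphic to $T$ \emph{as a strong dagger Frobenius monad}. The candidate natural transformation is $T\rho\circ\st\colon A\otimes T(I)\to T(A\otimes I)\to T(A)$, which is unitary because both $\st$ and $T\rho$ are (strength is assumed unitary and $T$ is a dagger functor applied to the unitary $\rho$). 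Lemma~\ref{lem:strictmorphism} does the essential work here, showing this comparison map preserves $\eta^\dag$ and $\mu^\dag$; taking daggers it also preserves $\eta$ and $\mu$, so by Lemma~\ref{lem:strictmorphismsareiso} it is an isomorphism of dagger Frobenius monoids at each object, hence an invertible (indeed unitary) map.

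I would then assemble these pointwise isomorphisms into a natural isomorphism of monads, checking compatibility with $\mu$, $\eta$ and the strength. The key naturality and coherence squares needed are exactly the ones bundled into Lemmas~\ref{lem:froblaw} and~\ref{lem:strictmorphism}, so much of this reduces to invoking those results plus the naturality of $\st$, $\rho$ and $\lambda$ and the coherence theorem for monoidal categories. Finally I would confirm that the two constructions are functorial on morphisms and that the isomorphisms are natural in the monoid/monad, so that we genuinely have an equivalence rather than a mere bijection on isomorphism classes.

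The main obstacle I anticipate is verifying that the comparison map $T\rho\circ\st$ is a morphism of monads in the strong sense and that it is natural across all of $\cat{C}$, not merely an isomorphism at each object: this requires patiently chasing the large coherence diagram, and the role of the Frobenius law is subtle, since it is precisely what upgrades the classical adjunction (where one only gets $-\otimes T(I)\dashv$ something) to an honest equivalence. The conceptual reason, which I would note, is that unitarity of the strength forces $T(-)\cong T(-\otimes I)\cong -\otimes T(I)$ immediately, collapsing the usual gap between strong monads and those of tensor form; making this precise while respecting the dagger is where the real care is needed.
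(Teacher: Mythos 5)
Your proposal matches the paper's proof essentially step for step: the paper likewise invokes the known adjunction between monoids and strong monads (with unit $\lambda$ and counit determined by $T\rho\circ\st$), uses Example~\ref{example:tensor} and Lemma~\ref{lem:froblaw} for preservation of the Frobenius law, and concludes that the counit is invertible by combining Lemmas~\ref{lem:strictmorphism} and~\ref{lem:strictmorphismsareiso}. One small correction: the counit preserves $\eta$ and $\mu$ because it is already a morphism of monads coming from the classical adjunction, not by ``taking daggers'' of the statement that it preserves $\eta^\dag$ and $\mu^\dag$ --- daggering that statement only tells you that the \emph{dagger} of the counit preserves $\eta$ and $\mu$.
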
 
\begin{proof}
  It is well-known that $B \mapsto - \otimes B$ is left adjoint to $T \mapsto T(I)$, when considered as maps between ordinary monoids and ordinary strong monads, see~\cite{wolff:monads}; the unit of the adjunction is $\lambda \colon I \otimes B \to B$, and the counit is determined by $T\rho \circ \st \colon A \otimes T(I)\to T(A)$. 

  Example~\ref{example:tensor} already showed that $B\mapsto -\otimes B$ preserves the Frobenius law. 
  Moreover, Lemma~\ref{lem:froblaw} shows that $T \mapsto T(I)$ preserves the Frobenius law as well.
  It remains to prove that they form an equivalence. 
  Clearly the unit of the adjunction is a natural isomorphism. 
  To see that the counit is also a natural isomorphism, combine Lemmas~\ref{lem:strictmorphism} and~\ref{lem:strictmorphismsareiso}.
\end{proof}

It follows from the previous theorem that not every dagger Frobenius monad is strong: as discussed in Section~\ref{sec:monads}, the monad induced by the dagger adjunction of Example~\ref{ex:imdagadj} is not of the form $- \otimes B$ for fixed $B$.

Let us now sketch the higher-level viewpoint promised at the start of this section in order to justify our assumption that $\st$ is unitary. If $A$ is an object of a monoidal category \cat{C}, then $-\otimes A$ is an endofunctor on \cat{C}. Conversely, any endofunctor $F$ on \cat{C} gives an object $F(I)$ of \cat{C}. The category of endofunctors is monoidal (with composition of functors as $\otimes$) and the functor $A\mapsto -\otimes A$ is strong monoidal. However, for arbitrary endofunctors, there are no obvious maps between $F(I)\otimes G(I)$ and $FG(I)$, so the converse isn't in general monoidal unless one imposes further conditions on the endofunctors. Similarly, while $A\mapsto -\otimes A\mapsto I\otimes A$ results in a functor isomorphic to $\id[\cat{C}]$, there is no obvious natural transformation between $-\otimes F(I)$ and $F$. A reasonable way of getting a natural transformation $-\otimes F(I)\to F$ is by requiring $F$ to be strong: \ie asking for a natural transformation $\st_{A,B} \colon A \otimes F(B) \to F(A\otimes B)$ subject to some further conditions. Hence, if one restricts to strong functors and transformations of strong functors, the map $F\mapsto F(I)$ becomes (lax) monoidal and in fact it is the right adjoint to $A\mapsto -\otimes I$. 

Once this has been verified, it is not hard obtain the adjunction between monoids in \cat{C} and and strong monads on \cat{C}: monoidal categories, lax monoidal functors and monoidal transformations organize themselves into a 2-category. If $1$ is the terminal monoidal category, then $\hom(1,-)$ is a 2-functor into the 2-category of categories. Any 2-functor preserves adjunctions inside a 2-category, so applying $\hom(1,-)$ to the adjunction between \cat{C} and strong endofunctors on \cat{C} results in an adjunction. Moreover, a lax monoidal functor $1\to \cat{C}$ is the same thing as a monoid in \cat{C} (and a monoid homomorphism is the same thing as a monoidal transformation between the functors). Hence the resulting adjunction is the adjunction between monoids in \cat{C} and monoids in the category of strong endofunctors on \cat{C}, \ie the category of strong monads on \cat{C}.

Dualizing this, one obtains an adjunction between comonoids in \cat{C} and costrong comonads on \cat{C}, the functor $A\mapsto -\otimes A$ now being the right adjoint. Since a Frobenius algebra is both a monoid and a comonoid, one might then expect a similar story, resulting in an ambijunction. For such a story to work, one first needs to find the correct category of monoidal categories, in which a functor $1\to \cat{C}$ corresponds to a Frobenius algebra in \cat{C}. This is given by the notion of a Frobenius monoidal functor~\cite{mccurdystreet:frobeniusmonoidal,day:noteonfrob} -- a functor that is both lax monoidal and oplax monoidal and satisfies a coherence law. Then to get an ambijunction between Frobenius algebras and (strong) Frobenius monads, one only needs to find an ambijunction between \cat{C} and suitable endofunctors on it, so that both adjoints are Frobenius monoidal. Since $A\mapsto -\otimes A$ is strong monoidal there is nothing to worry about. For $F\mapsto F(I)$ to be Frobenius monoidal, one needs to consider functors that are both strong, costrong and satisfy a coherence law. A morphism of such functors will then at the very least be a morphism of the underlying strong and costrong functors. But this is enough to ensure that applying $\hom (1,-)$ to the adjunction results in an equivalence: the counit $-\otimes F(I)\to F(-)$ has to be a morphism of strong and costrong functors, and this is enough to ensure that Lemma~\ref{lem:strictmorphism} goes through. Hence the counit is a strict morphism of Frobenius structures, and thus an isomorphism by Lemma~\ref{lem:strictmorphismsareiso}. Since the counit is built from the monoidal unitor and strength, the strength has to be an isomorphism as well. The meaning of this is that, if an endofunctor $F$ is strong, costrong and satisfies the required coherence law, and moreover admits a Frobenius monad structure in which the (co)multiplication and (co)unit are morphisms of strong and costrong functors, then the strength and costrength transformations are necessarily inverses to each other. Hence, if one just wants to obtain the resulting equivalence and doesn't care too much about the adjunction between \cat{C} and coherently strong and costrong endofunctors on it, one might as well start by restricting to those functors for which the strength natural transformation is an isomorphism. Since we are working in the dagger setting, it is natural to assume that the strength map is in fact unitary. 

A Frobenius monoid in a monoidal dagger category is \emph{special} when 
$\smash{\tinymult \circ\tinycomult=\begin{pic} \draw (0,0) to (0,.45); \end{pic}}$.
Theorem~\ref{thm:strong} restricts to an equivalence between special dagger Frobenius monoids and special strong dagger Frobenius monads.

For symmetric monoidal dagger categories, there is also a notion of commutativity for strong monads~\cite{kock:strong,jacobs:weakening}. 
Given a strong dagger Frobenius monad $T$, one can define a natural transformation $\st'_{A,B} \colon T(A) \otimes B \to T(A \otimes B)$ by $T(\sigma_{B,A})\circ st_{B,A}\circ \sigma_{T(A),B}$, and
\begin{align*} 
  dst_{A,B} & = \mu_{A\otimes B}\circ T(st'_{A,B})\circ st_{T(A),B}, \\ 
  dst_{A,B}' & =\mu_{A\otimes B}\circ T(st_{A,B})\circ st'_{A,T(B)}.
\end{align*} 
The strong dagger Frobenius monad is \emph{commutative} when these coincide.
Theorem~\ref{thm:strong} restricts to an equivalence between commutative dagger Frobenius monoids and commutative strong dagger Frobenius monads.
Kleisli categories of commutative monads on symmetric monoidal categories are again symmetric monoidal~\cite{day:kleisli}. This extends to dagger categories.

\begin{theorem}
  If $T$ is a commutative strong dagger Frobenius monad on a symmetric monoidal dagger category $\cat{C}$, then $\Kl(T)$ is a symmetric monoidal dagger category.
\end{theorem}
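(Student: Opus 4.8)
The plan is to verify directly the four conditions defining a symmetric monoidal dagger category for $\Kl(T)$: that it is a dagger category, that it is symmetric monoidal, that the coherence isomorphisms are unitary, and that the tensor is a dagger functor. The first condition is handed to us by Lemma~\ref{lem:kleislidagger}, which equips $\Kl(T)$ with a dagger commuting with the canonical functors $\cat{C}\to\Kl(T)\to\cat{C}$; since a dagger is by definition contravariant, involutive and identity-on-objects, the compatibility of $\dag$ with Kleisli composition is already part of that statement. For the second, because $T$ is commutative I would invoke the classical fact that the Kleisli category of a commutative monad on a symmetric monoidal category is again symmetric monoidal~\cite{day:kleisli}: objects tensor as in $\cat{C}$, and on Kleisli morphisms $f\colon A\to T(B)$ and $g\colon C\to T(D)$ the tensor is $\dst_{B,D}\circ(f\otimes g)$, where commutativity ($\dst=\dst'$) is exactly what makes this well defined and symmetric.

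Next I would dispatch the coherence isomorphisms. The canonical functor $J\colon\cat{C}\to\Kl(T)$ is strong monoidal, is identity on objects, and is a dagger functor by Lemma~\ref{lem:kleislidagger}; the associator, unitors and symmetry of $\Kl(T)$ are precisely the $J$-images of those of $\cat{C}$. As $\cat{C}$ is a symmetric monoidal dagger category these are unitary, and every dagger functor preserves unitaries (since $J(u)^\dag J(u)=J(u^\dag u)=J(\id)=\id$, and dually), so the coherence maps of $\Kl(T)$ are unitary.

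The crux is the last condition, $(f\otimes g)^\dag=f^\dag\otimes g^\dag$ in $\Kl(T)$, where $\dag$ is the Kleisli dagger $f\mapsto T(f^\dag)\circ\mu^\dag\circ\eta$ of Lemma~\ref{lem:kleislidagger}. I would reduce this to a string-diagram computation via Theorem~\ref{thm:strong}: up to the equivalence established there, a commutative strong dagger Frobenius monad is of the form $-\otimes B$ for a commutative dagger Frobenius monoid $B=T(I)$, and that equivalence transports all the structure in play (both the $\dst$-tensor and the Kleisli dagger, since it preserves strength, the monad data, and the dagger). In this concrete model a Kleisli morphism $A\to X$ is a map $f\colon A\to X\otimes B$ in $\cat{C}$, its Kleisli dagger is $(f^\dag\otimes\id)\circ(\id\otimes(\tinycomult\circ\tinyunit))$, and $\dst$ brings the two copies of $B$ together by the symmetry and multiplies them with $\tinymult$. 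I would then expand both sides of $(f\otimes g)^\dag=f^\dag\otimes g^\dag$ as diagrams and match them using $\tinymult^\dag=\tinycomult$, the Frobenius law~\eqref{eq:frobeniuslaw} together with its consequence Lemma~\ref{lem:extendedfrobeniuslaw}, and the (co)commutativity of $B$.

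The main obstacle is exactly this graphical identity. Both the Kleisli dagger and the $\dst$-tensor interleave the multiplication of $B$ with daggered copies of $f$ and $g$, so reconciling them forces one to slide a comultiplication past a multiplication — possible only by the Frobenius law — and to exchange the two $B$-legs — possible only because $B$ is commutative. I expect that neither the Frobenius law nor commutativity alone suffices, and that the bookkeeping of the symmetry maps introduced by $\dst$ is where the argument is most delicate; the reduction through Theorem~\ref{thm:strong} is what keeps this tractable, since it replaces abstract manipulation of $\dst$, $\st$ and $\mu^\dag$ by a finite planar diagram chase in $\tinymult$ and $\tinycomult$.
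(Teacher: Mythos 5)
Your proposal is correct and follows essentially the same route as the paper: the monoidal structure and coherence maps are handled exactly as you describe (coherence isomorphisms being images under the dagger functor $\cat{C}\to\Kl(T)$, hence unitary), and the paper likewise reduces the identity $(f\otimes_T g)^\dag=f^\dag\otimes_T g^\dag$ via Theorem~\ref{thm:strong} to the concrete Kleisli category of $-\otimes T(I)$, finishing with a graphical computation using associativity, commutativity, the unit law, and Lemma~\ref{lem:extendedfrobeniuslaw}.
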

\begin{proof} 
  The monoidal structure on $\Kl(T)$ is given by $A\otimes_T B=A\otimes B$ on objects and by $f\otimes_T g=\dst\circ (f\otimes g)$ on morphisms. 
  The coherence isomorphisms of $\Kl(T)$ are images of those in $\cat{C}$ under the functor $\cat{C}\to\Kl(T)$. 
  This functor preserves daggers and hence unitary isomorphisms, making all coherence isomorphisms of $\Kl(T)$ unitary. 
  It remains to check that the dagger on $\Kl(T)$ satisfies $(f\otimes_T g)^\dag=f^\dag\otimes_T g^\dag$. Theorem~\ref{thm:strong} makes $T$ isomorphic to $S=-\otimes T(I)$, and this induces an isomorphism between the respective Kleisli categories that preserves daggers and monoidal structure on the nose. Thus it suffices to check that this equation holds on $\Kl(S)$:
  \[
    (f \otimes_S g)^\dag
    \quad = \quad
    \begin{pic}[scale=.5]
      \node[morphism,hflip] (f) at (0,0) {$f$};
      \node[morphism,hflip] (g) at (2,0) {$g$};
      \node[dot] (d) at (1.25,-.85) {};
      \node[dot] (e) at (2.5,-1.5) {};
      \draw (f.north) to +(0,1.25);
      \draw (g.north) to +(0,1.25);
      \draw ([xshift=-1mm]f.south west) to +(0,-2.5);
      \draw ([xshift=-1mm]g.south west) to +(0,-2.5);
      \draw ([xshift=1mm]f.south east) to[out=-90,in=180] (d.west);
      \draw ([xshift=1mm]g.south east) to[out=-90,in=0] (d.east);
      \draw (d.south) to[out=-90,in=180] (e.west);
      \draw (e.south) to +(0,-.5) node[dot]{};
      \draw (e.east) to[out=0,in=-90] +(1,1) to +(0,2.25) node[right] {$T(I)$};
    \end{pic}
    = \quad
    \begin{pic}[scale=.5]
      \node[morphism,hflip] (f) at (0,0) {$f$};
      \node[morphism,hflip] (g) at (2.5,0) {$g$};
      \node[dot] (d) at (2.75,1.2) {};
      \node[dot] (df) at (.75,-.85) {};
      \node[dot] (dg) at (3.25,-.85) {};
      \draw (f.north) to +(0,2);
      \draw ([xshift=-1mm]g.north) to[out=90,in=-90,looseness=.7] +(-1.5,2);
      \draw ([xshift=-1mm]f.south west) to +(0,-2);
      \draw ([xshift=-1mm]g.south west) to +(0,-2);
      \draw ([xshift=1mm]f.south east) to[out=-90,in=180] (df.west);
      \draw ([xshift=1mm]g.south east) to[out=-90,in=180] (dg.west);
      \draw (df.south) to +(0,-.5) node[dot]{};
      \draw (dg.south) to +(0,-.5) node[dot]{};
      \draw (df.east) to[out=0,in=180,looseness=.7] (d.west);
      \draw (dg.east) to[out=0,in=0] (d.east);
      \draw (d.north) to +(0,1) node[right] {$T(I)$};
    \end{pic}
    \quad = \quad
    f^\dag \otimes_S g^\dag\text{.}
  \]
  But this is a straightforward graphical argument
  \[
    \begin{pic}[scale=.5]
      \node[dot] (l) at (0,0) {};
      \node[dot] (r) at (1,-1) {};
      \draw (r.south) to +(0,-1) node[dot]{};
      \draw (l.south) to[out=-90,in=180] (r.west);
      \draw (r.east) to[out=0,in=-90] +(1,2);
      \draw (l.east) to[out=0,in=-90] +(.66,1);
      \draw (l.west) to[out=180,in=-90] +(-.66,1);
    \end{pic}
    \quad = \quad
    \begin{pic}[scale=.5]
      \node[dot] (l) at (1,0) {};
      \node[dot] (r) at (0,-1) {};
      \draw (r.south) to +(0,-1) node[dot]{};
      \draw (l.south) to[out=-90,in=0] (r.east);
      \draw (r.west) to[out=180,in=-90] +(-1,2);
      \draw (l.west) to[out=180,in=-90] +(-.66,1);
      \draw (l.east) to[out=0,in=-90] +(.66,1);
    \end{pic}
    \quad = \quad    
    \begin{pic}[scale=.4]
      \node[dot] (l) at (1,0) {};
      \node[dot] (r) at (0,-1) {};
      \draw (r.south) to +(0,-1) node[dot]{};
      \draw (l.south) to[out=-90,in=0] (r.east);
      \draw (r.west) to[out=180,in=-90] +(-1,2.5);
      \draw (l.west) to[out=180,in=-90] +(-.5,.5) to[out=90,in=-90] +(1.5,1);
      \draw (l.east) to[out=0,in=-90] +(.5,.5) to[out=90,in=-90] +(-1.5,1);
    \end{pic}
    \quad = \quad
    \begin{pic}[yscale=.4,xscale=.6]
      \node[dot] (l) at (0,0) {};
      \node[dot] (m) at (1,1) {};
      \node[dot] (r) at (2,0) {};
      \draw (l.south) to +(0,-1) node[dot]{};
      \draw (r.south) to +(0,-1) node[dot]{};
      \draw (l.east) to[out=0,in=180] (m.west);
      \draw (m.east) to[out=0,in=180] (r.west);
      \draw (l.west) to[out=180,in=-90] +(-.5,.5) to +(0,2);
      \draw (m.north) to[out=90,in=-90] +(1,1.2);
      \draw (r.east) to[out=0,in=-90] +(.5,.5) to[out=90,in=-90] +(-1.5,2);
    \end{pic}
    \quad = \quad
    \begin{pic}[scale=.5]
      \node[dot] (l) at (0,0) {};
      \node[dot] (r) at (2,0) {};
      \node[dot] (t) at (2,1.7) {};
      \draw (l.south) to +(0,-.5) node[dot]{};
      \draw (r.south) to +(0,-.5) node[dot]{};
      \draw (l.east) to[out=0,in=180] (t.west);
      \draw (r.east) to[out=0,in=0] (t.east);
      \draw (t.north) to +(0,.7);
      \draw (l.west) to[out=180,in=-90] +(-1,2.7);
      \draw (r.west) to[out=180,in=-90] +(-1,2.7);
    \end{pic}
  \]
  using associativity, commutativity, the unit law, and Lemma~\ref{lem:extendedfrobeniuslaw}.
\end{proof}

\section{Closure}\label{sec:coherence}

This final section justifies the Frobenius law from first principles, by explaining it as a coherence property between daggers and closure. In a monoidal dagger category that is closed, monoids and daggers interact in two ways.
First, any monoid picks up an involution by internalizing the dagger.
Second, any monoid embeds into an endohomset by closure, and the dagger is an involution on the endohomset.
The Frobenius law is equivalent to the property that these two canonical involutions coincide.

We start by giving an equivalent formulation of the Frobenius law.

\begin{lemma}\label{lem:equivalentformofFrob}
  A monoid $(A,\tinymult,\tinyunit)$ in a monoidal dagger category is a dagger Frobenius monoid if and only if it satisfies the following equation.
  \begin{equation}\label{eq:equivalentfrobeniuslaw}
      \begin{pic}[scale=.45]
        \node[dot] (t) at (0,1) {};
        \draw (t) to +(0,1.5);
        \draw (t) to[out=0,in=90] (1,0) to (1,-1);
        \draw (t) to[out=180,in=90] (-1,0) to (-1,-1);
      \end{pic}
      \quad 
      =
      \quad
      \begin{pic}[scale=.85]
      \node[dot] (m) at (0,-.5) {};
      \node[dot] (r) at (1,.5) {};
      \draw (r.north) to (1,1) node[dot] {};
      \draw (r.east) to[out=0,in=90,looseness=.5] (1.5,-1);
      \draw (m.south) to (0,-1);
      \draw (m.east) to[out=0,in=180] (r.west);
      \draw (m.west) to[out=180,in=-90] +(-.35,.5) to +(0,.7);
    \end{pic}
    \end{equation}
\end{lemma}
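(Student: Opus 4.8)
The plan is to translate both sides of~\eqref{eq:equivalentfrobeniuslaw} into honest morphisms $A \otimes A \to A$ and then reason in the graphical calculus of Section~\ref{sec:pictures}. Writing $\mu$ and $\eta$ for the multiplication $\tinymult$ and unit $\tinyunit$, and noting that the dagger automatically turns the monoid into a comonoid with comultiplication $\mu^\dag$ and counit $\eta^\dag$, the left-hand side of~\eqref{eq:equivalentfrobeniuslaw} is just $\mu$, whereas the right-hand side reads as the composite $(\id \otimes \eta^\dag) \circ (\id \otimes \mu) \circ (\mu^\dag \otimes \id)$. Thus the task is to prove that this composite equals $\mu$ precisely when the Frobenius law~\eqref{eq:frobeniuslaw}, that is, $(\mu \otimes \id) \circ (\id \otimes \mu^\dag) = (\id \otimes \mu) \circ (\mu^\dag \otimes \id)$, holds.

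The forward implication is short. Assuming~\eqref{eq:frobeniuslaw}, I would rewrite the inner factor $(\id \otimes \mu) \circ (\mu^\dag \otimes \id)$ as $(\mu \otimes \id) \circ (\id \otimes \mu^\dag)$, so that the right-hand side becomes $(\id \otimes \eta^\dag) \circ (\mu \otimes \id) \circ (\id \otimes \mu^\dag)$. Bifunctoriality lets me slide $\eta^\dag$ onto the freshly created wire, turning this into $\mu \circ (\id \otimes [(\id \otimes \eta^\dag) \circ \mu^\dag])$, and the counit law $(\id \otimes \eta^\dag) \circ \mu^\dag = \id$ --- itself just the dagger of the right unit law $\mu \circ (\id \otimes \eta) = \id$ --- collapses the bracket to the identity, leaving $\mu$.

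The converse is where the real work lies, since~\eqref{eq:equivalentfrobeniuslaw} is one-sided whereas~\eqref{eq:frobeniuslaw} is symmetric. I would introduce the cap $\cap = \eta^\dag \circ \mu \colon A \otimes A \to I$ and cup $\cup = \mu^\dag \circ \eta \colon I \to A \otimes A$, which are mutually adjoint. Taking the dagger of~\eqref{eq:equivalentfrobeniuslaw} immediately gives one wire-bending identity $\mu^\dag = (\mu \otimes \id) \circ (\id \otimes \cup)$; composing with $\eta^\dag \otimes \id$ and using the left counit law then yields the snake equation $(\cap \otimes \id) \circ (\id \otimes \cup) = \id$. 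Associativity of $\mu$ moreover gives associativity of the pairing, $\cap \circ (\mu \otimes \id) = \cap \circ (\id \otimes \mu)$. Combining the first wire-bending identity, pairing associativity, and the snake equation produces the mirror-image identity $\mu = (\cap \otimes \id) \circ (\id \otimes \mu^\dag)$, whose dagger is the second wire-bending identity $\mu^\dag = (\id \otimes \mu) \circ (\cup \otimes \id)$.

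With both forms of $\mu^\dag$ in hand, the Frobenius law falls out by a single bifunctoriality argument: substituting $\mu^\dag = (\id \otimes \mu) \circ (\cup \otimes \id)$ into the left-hand side of~\eqref{eq:frobeniuslaw} and $\mu^\dag = (\mu \otimes \id) \circ (\id \otimes \cup)$ into the right-hand side, both sides reduce to the common morphism $(\mu \otimes \mu) \circ (\id \otimes \cup \otimes \id)$, since the two multiplications then act on disjoint tensor factors. The hard part will be the manufacture of the second wire-bending identity: passing from the asymmetric~\eqref{eq:equivalentfrobeniuslaw} to a genuine self-duality of $A$ on \emph{both} sides, for which the snake equation, the counit laws, and associativity of the pairing must be orchestrated carefully. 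Since all of these manipulations are most transparent when drawn, I would present the converse graphically.
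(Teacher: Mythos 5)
Your proof is correct, and your forward direction coincides with the paper's (both are the one-line computation: apply the Frobenius law once, then collapse $(\id\otimes\eta^\dag)\circ\mu^\dag$ by the daggered unit law). For the converse, however, you take a genuinely different route. The paper starts from $\mu^\dag\circ\mu$, rewrites it via the (co)unit law, equation~\eqref{eq:equivalentfrobeniuslaw} and associativity into one side of the Frobenius law, and then finishes in one stroke: since $\mu^\dag\circ\mu$ is manifestly self-adjoint and the two sides of~\eqref{eq:frobeniuslaw} are each other's daggers, the law follows. You instead build the self-duality of $A$ explicitly: the dagger of~\eqref{eq:equivalentfrobeniuslaw} gives $\mu^\dag=(\mu\otimes\id)\circ(\id\otimes\cup)$, whence the snake $(\cap\otimes\id)\circ(\id\otimes\cup)=\id$; combined with pairing associativity this yields the mirror identity $\mu=(\cap\otimes\id)\circ(\id\otimes\mu^\dag)$ and hence the second wire-bending form $\mu^\dag=(\id\otimes\mu)\circ(\cup\otimes\id)$; substituting the two forms into the two sides of~\eqref{eq:frobeniuslaw} reduces both to $(\mu\otimes\mu)\circ(\id\otimes\cup\otimes\id)$. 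I verified each of these steps and they all go through; note that only the one snake equation is ever needed. Your route is longer, but it buys the explicit self-duality of $A$ as a by-product — essentially the self-dual instance of Proposition~\ref{prop:closedfrobenius} — and replaces the self-adjointness trick by a normal-form argument. The paper's route is shorter and is the more characteristically ``dagger'' move: it extracts the symmetric law~\eqref{eq:frobeniuslaw} from the one-sided law~\eqref{eq:equivalentfrobeniuslaw} purely by observing that a certain composite equals its own dagger.
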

\begin{proof}
  The Frobenius law~\eqref{eq:frobeniuslaw} directly implies~\eqref{eq:equivalentfrobeniuslaw}.
  Conversely, \eqref{eq:equivalentfrobeniuslaw} gives
  \[
    \begin{pic}[yscale=0.75]
          \node (0a) at (-0.5,0) {};
          \node (0b) at (0.5,0) {};
          \node[dot] (1) at (0,1) {};
          \node[dot] (2) at (0,2) {};
          \node (3a) at (-0.5,3) {};
          \node (3b) at (0.5,3) {};
          \draw[out=90,in=180] (0a) to (1.west);
          \draw[out=90,in=0] (0b) to (1.east);
          \draw (1.north) to (2.south);
          \draw[out=180,in=270] (2.west) to (3a);
          \draw[out=0,in=270] (2.east) to (3b);
    \end{pic}
    \;=\;
    \begin{pic}
      \node[dot] (l) at (-.75,.5) {};
      \node[dot] (m) at (0,-.5) {};
      \node[dot] (r) at (1,.5) {};
      \draw (r.north) to (1,1) node[dot] {};
      \draw (r.east) to[out=0,in=90,looseness=.5] (1.5,-1);
      \draw (m.south) to (0,-1);
      \draw (m.east) to[out=0,in=180] (r.west);
      \draw (m.west) to[out=180,in=-90] (l.south);
      \draw (l.east) to[out=0,in=-90] (-.25,1);
      \draw (l.west) to[out=180,in=-90] (-1.25,1);
    \end{pic}
    \;=\;
    \begin{pic}[xscale=.75]
      \node[dot] (l) at (-.75,-.5) {};
      \node[dot] (m) at (0,0) {};
      \node[dot] (r) at (1,.5) {};
      \draw (l.south) to (-.75,-1);
      \draw (l.west) to[out=180,in=-90] (-1.5,1);
      \draw (l.east) to[out=0,in=-90] (m.south);
      \draw (m.west) to[out=180,in=-90] (-.5,1);
      \draw (m.east) to[out=0,in=180] (r.west);
      \draw (r.north) to (1,1) node[dot] {};
      \draw (r.east) to[out=0,in=90,looseness=.5] (1.5,-1);
    \end{pic}
    \;=\;
    \begin{pic}[yscale=0.75,xscale=-1]
          \node (0) at (0,0) {}; 
          \node (0a) at (0,1) {};
          \node[dot] (1) at (0.5,2) {};
          \node[dot] (2) at (1.5,1) {};
          \node (3) at (1.5,0) {};
          \node (4) at (2,3) {};
          \node (4a) at (2,2) {};
          \node (5) at (0.5,3) {};
          \draw (0) to (0a.center);
          \draw[out=90,in=180] (0a.center) to (1.east);
          \draw[out=0,in=180] (1.west) to (2.east);
          \draw[out=0,in=270] (2.west) to (4a.center);
          \draw (4a.center) to (4);
          \draw (2.south) to (3);
          \draw (1.north) to (5);
    \end{pic}
  \]
  by associativity. But since the left-hand side is self-adjoint, so is the right-hand side, giving the Frobenius law~\eqref{eq:frobeniuslaw}.
\end{proof}

Any dagger Frobenius monoid forms a duality with itself in the following sense.

\begin{definition}
  Morphisms $\eta \colon I \to A \otimes B$ and $\varepsilon \colon B \otimes A \to I$ in a monoidal category \emph{form a duality} when they satisfy the following equations.
  \[
    \begin{pic}
    \node[morphism] (u) at (0,0) {$\eta$};
    \node[morphism] (c) at (.75,.75) {$\varepsilon$};
    \draw ([xshift=1mm]u.north east) to[out=90,in=-90] ([xshift=-1mm]c.south west);
    \draw ([xshift=-1mm]u.north west) to +(0,1);
    \draw ([xshift=1mm]c.south east) to +(0,-1);
    \end{pic}
    \; = \; 
    \begin{pic}
      \draw (0,0) to (0,1.7);
    \end{pic}
    \qquad\qquad
    \begin{pic}[xscale=-1]
    \node[morphism] (u) at (0,0) {$\eta$};
    \node[morphism] (c) at (.75,.75) {$\varepsilon$};
    \draw ([xshift=1mm]u.north west) to[out=90,in=-90] ([xshift=-1mm]c.south east);
    \draw ([xshift=-1mm]u.north east) to +(0,1);
    \draw ([xshift=1mm]c.south west) to +(0,-1);
    \end{pic}
    \; = \; 
    \begin{pic}
      \draw (0,0) to (0,1.7);
    \end{pic}
  \]
\end{definition}

In the categories $\cat{Rel}$ and $\cat{FHilb}$, every object $A$ is part of a duality $(A,B,\eta,\varepsilon)$: they are \emph{compact} categories. Moreover, in those categories we may choose $\varepsilon = \eta^\dag \circ \sigma$: they are \emph{compact dagger categories}.

Let \cat{C} be a monoidal dagger category that is also a closed monoidal category, so that there is a correspondence of morphisms $A \otimes B \to C$  and $A \to [B, C]$ called \emph{currying}. Write $A^*$ for $[A,I]$, and write $\ev$ for the counit $A^* \otimes A \to I$. If $(A,\tinymult,\tinyunit)$ is a monoid in \cat{C}, then $(A,\tinycomult,\tinycounit)$ is a comonoid in $\cat{C}$ and $A^*$ becomes a monoid with unit and multiplication given by currying $\tinycounit \colon I \otimes A \to I$ and 
\[
    \begin{pic}[yscale=.7]
      \node[morphism] (a)  at (0,0.75) {ev}; 
      \node[morphism] (c)  at (-.25,2) {ev}; 
      \node[dot] (b) at (.5,0) {};
      \draw ([xshift=-1mm]a.south west) to +(0,-1);
      \draw ([xshift=-1mm]c.south west) to[out=-90,in=90] +(-.25,-2.25);
      \draw (b.east) to[out=0,in=-90] +(.25,.75) to[out=90,in=-90] (c.south east);
      \draw (a.south east) to[out=-90,in=180]  (b.west);
      \draw (b) to +(0,-.5);
    \end{pic} \;\colon (A^* \otimes A^*) \otimes A \to I \text{.}
\]
$A$ and $A^*$ are related by a map $i\colon A\to A^*$ obtained by currying 
$\begin{pic}[scale=.3]
      \node[dot] (b) at (1,0) {};
      \node[dot] (c) at (1,1) {};
      \draw (b) to  (c);
      \draw (b) to[out=180,in=90] (0,-1);
      \draw (b) to[out=0,in=90] (2,-1);
\end{pic}$.

\begin{proposition}\label{prop:closedfrobenius}
  A monoid $(A,\tinymult,\tinyunit)$ in a monoidal dagger category that is also a closed monoidal category is a dagger Frobenius monoid if and only if $i\colon A\to A^*$ is a monoid homomorphism and $\ev \colon A^* \otimes A \to I$ forms a duality with
  \begin{equation}\label{eq:frobeniusduality}
      \begin{pic}[scale=.4]
          \node[dot] (a) at (0,0) {};
          \node[dot] (b) at (0,-1) {};
          \node[morphism] (c) at (1,1.5) {$i$};
          \draw (b.north) to  (a.south);
          \draw (a.east) to[out=0, in=-90]  (c.south);
          \draw (a.west) to[out=180,in=-90,looseness=.8] (-1,3);
          \draw ([yshift=.5mm]c.north) to  (1,3);
      \end{pic}\;\colon I \to A \otimes A^* \text{.}
  \end{equation}
\end{proposition}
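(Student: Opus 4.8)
The plan is to prove Proposition~\ref{prop:closedfrobenius} by a direct graphical calculation in both directions, leveraging the equivalent form of the Frobenius law from Lemma~\ref{lem:equivalentformofFrob}. First I would unpack the three conditions on the right-hand side. The map $i\colon A\to A^*$ is by definition the currying of $\ev\circ(\id\otimes\tinymult)$ composed appropriately — concretely, $i$ sends $a$ to the functional $b\mapsto \tinycounit(\tinymult(a,b))$, so postcomposing $\ev$ with $i\otimes\id$ recovers $\tinycounit\circ\tinymult$. The key translation step is to express each of the three conditions (that $i$ is a monoid homomorphism, and that $\ev$ together with the morphism~\eqref{eq:frobeniusduality} satisfies the two snake equations) purely in terms of $\tinymult$, $\tinyunit$, $\tinycomult$, $\tinycounit$ by using the universal property of currying to strip off the exponential. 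Once everything is phrased on $A$ alone, the statements become equations between string diagrams built from the (co)monoid structure.

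For the forward direction, I would assume the Frobenius law and verify each condition. That $i$ is a monoid homomorphism amounts, after uncurrying, to compatibility of $\tinymult$ with $\tinycomult$, which is exactly one of the standard consequences of the Frobenius law (indeed Lemma~\ref{lem:equivalentformofFrob} and associativity yield the sliding identities $\tinycomult\circ\tinymult=(\tinymult\otimes\id)\circ(\id\otimes\tinycomult)=(\id\otimes\tinymult)\circ(\tinycomult\otimes\id)$). The duality between $\ev$ and~\eqref{eq:frobeniusduality} I would check by drawing the two snake composites and reducing them: the cap $\ev$ and the cup~\eqref{eq:frobeniusduality} are both built from $\tinymult$, $\tinycounit$ and a single $i$, so each snake, once uncurried, collapses to the identity using the unit law and the Frobenius sliding identities. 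The self-duality structure here is precisely the ``Frobenius form'' $\tinycounit\circ\tinymult$ making $A$ into a self-dual object, which is classical for Frobenius monoids; the only novelty is tracking the closure maps $i$ and $\ev$ carefully.

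For the converse, I would assume the three conditions and derive~\eqref{eq:equivalentfrobeniuslaw}, which by Lemma~\ref{lem:equivalentformofFrob} suffices. The idea is that the duality lets one define a two-sided inverse / transpose operation, and the snake equations force $\tinycomult$ (obtained from $\tinymult$ via the dagger) to slide past $\tinymult$ in the manner demanded by the Frobenius law. Concretely, using the cap-cup duality one rewrites $\tinycomult\circ\tinymult$ by inserting a snake, then applies the homomorphism property of $i$ to commute $\tinymult$ through, and finally removes the snake on the other side to obtain the right-hand side of~\eqref{eq:equivalentfrobeniuslaw}.

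I expect the main obstacle to be bookkeeping the closure maps rather than any deep conceptual difficulty: the map $i\colon A\to A^*$ and the evaluation $\ev$ must be uncurried consistently, and one must be careful that the morphism in~\eqref{eq:frobeniusduality} really is the ``correct'' cup making the duality symmetric (i.e.\ that it equals $\eta^\dag\circ\sigma$-type data relative to $\ev$). The subtle point is verifying that the homomorphism condition on $i$ is genuinely needed and is not automatic, and that together with the duality it is \emph{exactly} equivalent to~\eqref{eq:equivalentfrobeniuslaw} — neither condition alone should suffice. I would therefore present the proof as two graphical derivations (forward and backward), each reduced to Lemma~\ref{lem:equivalentformofFrob}, with the currying/uncurrying correspondence stated explicitly at the start so that the diagrams on $A$ can be manipulated without reference to the internal hom.
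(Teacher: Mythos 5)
Your proposal is correct and follows essentially the same route as the paper: uncurry the homomorphism and snake conditions into string-diagram equations on $A$ involving $\tinymult$, $\tinycounit$ and $\tinycomult$, verify them from the Frobenius law in one direction, and in the other direction combine them to derive~\eqref{eq:equivalentfrobeniuslaw} and invoke Lemma~\ref{lem:equivalentformofFrob}. The paper additionally notes in passing that $i$ preserves units unconditionally and that one snake equation implies the other, but these are minor bookkeeping points within the same argument.
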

\begin{proof}
  The morphism $i$ always preserves units: $\ev \circ (i \otimes \id) \circ (\tinyunit \otimes \id) = \tinycounit \circ \tinymult \circ (\tinyunit \otimes \id) = \tinycounit$.
  It preserves multiplication precisely when:
  \begin{equation}\label{eq:altfrob1}
      \begin{pic}[scale=.4]
        \node[dot] (t) at (0,1) {};
        \node[dot] (b) at (1,0) {};
        \node[dot] (c) at (0,2) {};
        \draw (c) to (t);
        \draw (t) to[out=0,in=90] (b);
        \draw (t) to[out=180,in=90] (-1,0) to (-1,-1);
        \draw (b) to[out=180,in=90] (0,-1);
        \draw (b) to[out=0,in=90] (2,-1);
      \end{pic}
      \quad = \quad
      \begin{pic}[yscale=.4,xscale=-.4]
        \node[dot] (t) at (0,1) {};
        \node[dot] (b) at (1,0) {};
        \node[dot] (c) at (0,2) {};
        \draw (c) to (t);
        \draw (t) to[out=0,in=90] (b);
        \draw (t) to[out=180,in=90] (-1,0) to (-1,-1);
        \draw (b) to[out=180,in=90] (0,-1);
        \draw (b) to[out=0,in=90] (2,-1);
      \end{pic}
      \quad = \quad
      \begin{pic}[scale=.4]
        \node[dot] (b) at (1.25,-.25) {};
        \node[morphism] (c) at (1.25,1) {$i$};
        \node[morphism] (d) at (2.5,3.2) {ev};
        \draw (b.north) to  (c.south);
        \draw (b.west) to[out=180,in=90] +(-.75,-1);
        \draw (b.east) to[out=0,in=90] +(.75,-1);
        \draw ([xshift=-1.5mm]d.south west) to[out=-90,in=90] (c.north);
        \draw ([xshift=1mm]d.south east) to +(0,-4);
      \end{pic}
      \quad = \quad
      \begin{pic}[scale=.4]
        \node[dot] (a) at (0,1) {};
        \node[morphism] (b) at (-.8,-.5) {$i$};
        \node[morphism] (c) at (.8,-.5) {$i$};
        \node[morphism] (d) at (1.5,2.75) {ev};
        \draw (a.west) to[out=180,in=90] (b.north);
        \draw (a.east) to[out=0,in=90] (c.north);
        \draw (a.north) to[out=90,in=-90] ([xshift=-1mm]d.south west);
        \draw (b.south) to +(0,-.75);
      \draw (c.south) to +(0,-.75);
        \draw ([xshift=1mm]d.south east) to[out=-90,in=90] +(0,-4);
      \end{pic}
      \quad = \quad
      \begin{pic}[scale=.75]
      \node[dot] (a)  at (-.5,0.75) {};
      \node[dot] (d)  at (-.5,1.25) {}; 
      \node[dot] (c)  at (-.5,1.75) {}; 
      \node[dot] (e)  at (-.5,2.25) {};
      \node[dot] (b) at (.2,.2) {};
      \draw (d) to (a);
      \draw (e) to (c);
      \draw (a.west) to[out=180,in=90,looseness=.8] +(-.25,-1);
      \draw (c.west) to[out=180,in=90,looseness=.8] +(-.75,-2);
      \draw (b.east) to[out=0,in=0] (c.east);
      \draw (a) to[out=0,in=180]  (b);
      \draw (b) to +(0,-.45);
      \end{pic}
  \end{equation}
  Furthermore, $\ev \colon A^* \otimes A \to I$ and~\eqref{eq:frobeniusduality} form a duality precisely when:
  \begin{equation}\label{eq:altfrob2}
      \begin{pic}
      \draw (0,0) to (0,2);
    \end{pic}
    \; = \;
      \begin{pic}[scale=.4]
        \node[dot] (a) at (0,0) {};
        \node[dot] (b) at (0,-1) {};
        \node[morphism] (c) at (1,1.5) {$i$};
        \node[morphism] (d) at (1.6,3) {ev};
        \draw (b) to  (a);
        \draw (a.east) to[out=0, in=-90]  (c.south);
        \draw (a.west) to[out=180,in=-90,looseness=.6] (-1,3.5);
        \draw ([xshift=-2mm]d.south west) to  (c.north);
        \draw ([xshift=2mm]d.south east) to  +(0,-4);
      \end{pic}
    \; = \;
      \begin{pic}[scale=.4]
        \node[dot] (a) at (0,0) {};
        \node[dot] (b) at (0,-1) {};
        \node[dot] (c) at (1.75,1.5) {};
        \node[dot] (d) at (1.75,2.5) {};
        \draw (b) to  (a);
        \draw (d) to  (c);
        \draw (a) to[out=0, in=180]  (c);
        \draw (a.west) to[out=180,in=-90,looseness=.6] (-1,3.5);
        \draw (c.east) to[out=0, in=90,looseness=.8]  +(1,-3);
      \end{pic}
      \qquad \qquad \qquad
      \begin{pic}
      \draw (0,0) to (0,2);
    \end{pic}
      \; = \;
      \begin{pic}[scale=.4]
        \node[dot] (a) at (.25,0) {};
        \node[dot] (b) at (.25,-1) {};
        \node[morphism] (c) at (1,1.5) {$i$};
        \node[morphism] (d) at (-1,1.5) {ev};
        \draw (b) to  (a);
        \draw (a) to[out=0, in=-90]  (c.south);
        \draw ([xshift=1mm]d.south east) to[out=-90,in=180] (a);
        \draw ([xshift=-1mm]d.south west) to +(0,-2.5);
        \draw ([yshift=.5mm]c.north) to  (1,3);
      \end{pic}
  \end{equation}
  By evaluating both sides, it is easy to see that the left equation implies the right one.
  
    Now, assuming the Frobenius law~\eqref{eq:frobeniuslaw}, Lemma~\ref{lem:extendedfrobeniuslaw} and the unit law guarantee that (the left equation of)~\eqref{eq:altfrob2} is satisfied, as well as~\eqref{eq:altfrob1}:
    \[  
      \begin{pic}[scale=.75]
      \node[dot] (a)  at (-.5,0.75) {};
      \node[dot] (d)  at (-.5,1.25) {}; 
      \node[dot] (c)  at (-.5,1.75) {}; 
      \node[dot] (e)  at (-.5,2.25) {};
      \node[dot] (b) at (.2,.2) {};
      \draw (d) to (a);
      \draw (e) to (c);
      \draw (a.west) to[out=180,in=90,looseness=.8] +(-.25,-1);
      \draw (c.west) to[out=180,in=90,looseness=.8] +(-.75,-2);
      \draw (b.east) to[out=0,in=0] (c.east);
      \draw (a) to[out=0,in=180]  (b);
      \draw (b) to +(0,-.45);
      \end{pic}
      \quad = \quad
      \begin{pic}[scale=.75]
    \node[dot] (b) at (0,3) {};
    \node[dot] (c) at (0,2) {};
    \node[dot] (d) at (0,1.5) {};
    \draw (d.east) to[out=0,in=90] +(.3,-.5);
    \draw (d.west) to[out=180,in=90] +(-.3,-.5);
    \draw (d.north) to (c.south);
    \draw (c.west) to[out=180,in=-90] +(-.25,.5) node[dot]{};
    \draw (c.east) to[out=0,in=0] (b.east);
    \draw (b.north) to +(0,.5) node[dot]{};
    \draw (b.west) to[out=180,in=90] +(-1,-2);
      \end{pic}
      \quad = \quad
      \begin{pic}[scale=.4]
        \node[dot] (t) at (0,1) {};
        \node[dot] (b) at (1,0) {};
        \node[dot] (c) at (0,2) {};
        \draw (c) to (t);
        \draw (t) to[out=0,in=90] (b);
        \draw (t) to[out=180,in=90] (-1,0) to (-1,-1);
        \draw (b) to[out=180,in=90] (0,-1);
        \draw (b) to[out=0,in=90] (2,-1);
      \end{pic}
    \]
    Conversely, equations~\eqref{eq:altfrob1} and~\eqref{eq:altfrob2} imply:
    \[
      \begin{pic}[scale=.9]
        \node[dot] (a)  at (-.2,0.75) {};
        \node[dot] (d)  at (-.2,1.15) {}; 
        \node[dot] (b) at (.5,0) {};
        \draw (d) to (a);
        \draw (a) to[out=180,in=90,looseness=.8] +(-.5,-1.25);
        \draw (b) to[out=0,in=-90,looseness=.8] +(.5,1.5);
        \draw (a) to[out=0,in=180]  (b);
        \draw (b) to +(0,-.45);
      \end{pic}
      \quad = \quad
      \begin{pic}[scale=.5]
      \node[dot] (a) at (0,2.2) {};
      \node[dot] (b) at (1.5,3) {};
      \node[dot] (c) at (2,.2) {};
      \node[dot] (d) at (1,1) {};
      \draw (a.south) to +(0,-.5) node[dot]{};
      \draw (b.north) to +(0,.5) node[dot]{};
      \draw (d.north) to +(0,.5) node[dot]{};
      \draw (a.east) to[out=0,in=180] (b.west);
      \draw (b.east) to[out=0,in=0,looseness=.8] (c.east);
      \draw (c.west) to[out=180,in=0] (d.east);
      \draw (c.south) to +(0,-.5);
      \draw (d.west) to[out=180,in=90,looseness=.7] +(-.5,-1.5);
      \draw (a.west) to[out=180,in=-90,looseness=.7] +(-.5,2);
      \end{pic}
      \quad = \quad
      \begin{pic}[scale=.5]
        \node[dot] (l) at (0,0) {};
        \node[dot] (m) at (1.25,1) {};
        \node[dot] (r) at (2,0) {};
        \draw (l.south) to +(0,-.5) node[dot]{};
        \draw (m.north) to +(0,.5) node[dot]{};
        \draw (l.west) to[out=180,in=-90,looseness=.8] +(-.5,2);
        \draw (l.east) to[out=0, in=180] (m.west);
        \draw (m.east) to[out=0,in=90] (r.north);
        \draw (r.west) to[out=180,in=90] +(-.5,-1.5);
        \draw (r.east) to[out=0,in=90] +(.5,-1.5);
      \end{pic}
      \quad = \quad
      \begin{pic}[scale=.5]
        \node[dot] (t) at (0,1) {};
        \draw (t) to +(0,1.5);
        \draw (t) to[out=0,in=90] (1,0) to (1,-1);
        \draw (t) to[out=180,in=90] (-1,0) to (-1,-1);
      \end{pic}
    \]
    Lemma~\ref{lem:equivalentformofFrob} now finishes the proof.
\end{proof}

In any closed monoidal category, $[A,A]$ is canonically a monoid. The `way of the dagger' suggests that there should be interaction between the dagger and closure in categories that have both.

\begin{definition}
  A \emph{sheathed dagger category}\index[word]{dagger category!sheathed} is a monoidal dagger category that is also closed monoidal, such that
  \[
    \begin{pic}[scale=.3]
      \node[morphism, hflip] (a) at (3.5,-2) {$\ev_{[A,A]}$};
      \node[dot] (b) at (1,-.25) {};
      \node[dot] (c) at (1,1) {};
      \draw (b) to  (c);
      \draw (b) to[out=180,in=90] (0,-1) to (0,-3.5);
      \draw (b) to[out=0,in=90] (a.north west);
      \draw (a.south) to +(0,-1);
      \draw (a.north east) to +(0,3);
    \end{pic}
    \quad = \quad
    \ev_{[A,A]} 
    \;\colon [A,A] \otimes A \to A
  \]
  for all objects $A$, and for all morphisms $f,g \colon B \to C \otimes [A,A] \colon$
  \[
    \begin{pic}[yscale=.6]
      \node[morphism] (a) at (0,0) {$f$};
      \node[morphism] (b) at (0.63,1) {$\ev_{[A,A]}$};
      \draw (a.south) to +(0,-.5);
      \draw ([xshift=-1mm]a.north west) to +(0,1.5);
      \draw ([xshift=1mm]a.north east) to (b.south west);
      \draw ([xshift=-1mm]b.north) to +(0,.5);
      \draw ([xshift=-1mm]b.south east) to +(0,-1.5);
    \end{pic}
    \; = \;
    \begin{pic}[yscale=.6]
      \node[morphism] (a) at (0,0) {$g$};
      \node[morphism] (b) at (0.62,1) {$\ev_{[A,A]}$};
      \draw (a.south) to +(0,-.5);
      \draw ([xshift=-1mm]a.north west) to +(0,1.5);
      \draw ([xshift=1mm]a.north east) to (b.south west);
      \draw ([xshift=-1mm]b.north) to +(0,.5);
      \draw ([xshift=-1mm]b.south east) to +(0,-1.5);
    \end{pic}
    \qquad \implies \qquad 
    \begin{pic}
      \node[morphism] (a) at (0,0) {$f$};
      \draw (a.south) to +(0,-.5);
      \draw ([xshift=-1mm]a.north west) to +(0,.5);
      \draw ([xshift=1mm]a.north east) to +(0,.5);
    \end{pic}
    \; = \;
    \begin{pic}
      \node[morphism] (a) at (0,0) {$g$};
      \draw (a.south) to +(0,-.5);
      \draw ([xshift=-1mm]a.north west) to +(0,.5);
      \draw ([xshift=1mm]a.north east) to +(0,.5);
    \end{pic}
  \]
\end{definition}

Any compact dagger category is a sheathed dagger category: the first axiom there says that $A^* \otimes A$ with its canonical monoid structure is a dagger Frobenius monoid, and the second axiom then holds because the evaluation morphism is invertible. 
In principle, the definition of sheathed dagger categories is much weaker. Although we have no uncontrived examples of sheathed dagger categories that are not compact dagger categories, we will work with the more general sheathed dagger categories because they are the natural home for the following arguments.
The second axiom merely says that partial evaluation is faithful, which is the case in any well-pointed monoidal dagger category.
In any closed monoidal category, the evaluation map canonically makes $A$ into an algebra for the monad $-\otimes [A,A]$. The first axiom merely says that $A$ is $-\otimes [A,A]$-self-adjoint, as in \eqref{eq:self-adjoint}. It does not assume $[A,A]$ is a dagger Frobenius monoid, nor that $A$ is a FEM-algebra. 
No other plausible conditions are imposed, such as the bifunctor $[-,-]$ being a dagger functor, which does hold in compact dagger categories.
Nevertheless, the following example shows that being a sheathed dagger category is an essentially monoidal notion that degenerates for cartesian categories.

\begin{example}
  If a Cartesian closed category has a dagger, every homset is a singleton. 
\end{example}
\begin{proof}
  Because the terminal object is in fact a zero object, there are natural bijections
  $\hom (A,B)\cong \hom(0\times A,B)\cong \hom (0,B^A)\cong \{*\}$.
\end{proof}

Currying the multiplication of a monoid $(A,\tinymult,\tinyunit)$ in a closed monoidal category gives a monoid homomorphism $R \colon A \to [A,A]$. This is the abstract version of Cayley's embedding theorem, which states that any group embeds into the symmetric group on itself.
If the category also has a dagger, there is also a monoid homomorphism $R_* = [R^\dag,\id[I]] \colon A^* \to [A,A]^*$.

\begin{theorem}\label{thm:sheathedfrobenius}
  In a sheathed dagger category, $(A,\tinymult,\tinyunit)$ is a dagger Frobenius monoid if and only if the following diagram commutes.
  \[
    \begin{tikzpicture}
     \matrix (m) [matrix of math nodes,row sep=2em,column sep=4em,minimum width=2em]
     {
      A & {[A,A]} \\
      A^* & {[A,A]^*} \\};
     \path[->]
     (m-1-1) edge node [left] {$i_A$} (m-2-1)
             edge node [above] {$R$} (m-1-2)
     (m-2-1) edge node [below] {$R_*$} (m-2-2)
     (m-1-2) edge node [right] {$i_{[A,A]}$} (m-2-2);
    \end{tikzpicture}
    \]
\end{theorem}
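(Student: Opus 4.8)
The plan is to test the two composites $i_{[A,A]}\circ R$ and $R_*\circ i_A$, both of which are morphisms $A\to[A,A]^{*}=[[A,A],I]$, against the closed structure. By the defining adjunction of the internal hom, two morphisms $A\to[[A,A],I]$ agree if and only if their transposes $A\otimes[A,A]\to I$ agree, so it suffices to uncurry each leg and compare. First I would record the elementary Cayley identities: $R$ is the transpose of $\tinymult$, so $\ev_{[A,A]}\circ(R\otimes\id_A)=\tinymult$, and $i_A$ is the transpose of $\tinycounit\circ\tinymult=\tinyunit^{\dag}\circ\tinymult$.

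For the leg $R_*\circ i_A$ the computation is routine: since $R_*=[R^{\dag},\id_I]$ is precomposition by $R^{\dag}$, its action after evaluation is to feed the $[A,A]$-wire through $R^{\dag}$, so the transpose of $R_*\circ i_A$ is $\tinyunit^{\dag}\circ\tinymult\circ(\id_A\otimes R^{\dag})$, a morphism $A\otimes[A,A]\to A\otimes A\to A\to I$. Daggering the Cayley identity turns $R^{\dag}$ into $\tinymult^{\dag}$ under evaluation, and after using the unit law this leg collapses to a string built solely from $\tinymult,\tinyunit$ and their daggers.

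For the leg $i_{[A,A]}\circ R$ the transpose is $\tinyunit_{[A,A]}^{\dag}\circ\mathrm{comp}\circ(R\otimes\id_{[A,A]})$, where $\mathrm{comp}$ is internal composition and $\tinyunit_{[A,A]}$ is the internal identity $I\to[A,A]$. This is the hard part: the symbols $\mathrm{comp}$ and $\tinyunit_{[A,A]}^{\dag}$ live in $[A,A]$ and must be converted into data on $A$. Here the first axiom of a sheathed dagger category --- which says exactly that $A$ is $\bigl(-\otimes[A,A]\bigr)$-self-adjoint in the sense of~\eqref{eq:self-adjoint} --- is the essential tool: it lets me commute $\ev_{[A,A]}^{\dag}$ past the comultiplication on $[A,A]$ and thereby trade the \emph{internal} dagger hidden in $i_{[A,A]}$ for the \emph{external} dagger on $A$. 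The faithfulness of partial evaluation (the second sheathed axiom) is used in the reverse direction, to lift $A$-level equalities back to the $[A,A]$-level identities I need. After these translations this leg also becomes a diagram in $\tinymult,\tinyunit,\tinymult^{\dag},\tinyunit^{\dag}$.

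Finally I would compare the two resulting strings and observe that their equality is precisely the equivalent form~\eqref{eq:equivalentfrobeniuslaw} of the Frobenius law supplied by Lemma~\ref{lem:equivalentformofFrob}; Lemma~\ref{lem:extendedfrobeniuslaw} may be needed to massage one side into this normal form. Since every reduction above is an equality of morphisms forced by the sheathed axioms and the closed structure, the chain is reversible, giving the ``only if'' and the ``if'' directions simultaneously. I expect the genuine difficulty to be concentrated entirely in the third paragraph: expressing the internal involution $i_{[A,A]}\circ R$ in terms of the external dagger via the self-adjointness axiom, whereas the identification of the final equation with the Frobenius law should be immediate.
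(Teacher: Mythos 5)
Your plan is correct and follows essentially the same route as the paper's proof: uncurry both legs via the internal-hom adjunction, use the Cayley identity $\ev_{[A,A]}\circ(R\otimes\id)=\tinymult$ and its dagger, invoke the first sheathed axiom to externalize the monoid structure of $[A,A]$ onto $A$, use faithfulness of partial evaluation for the converse direction, and finish with Lemma~\ref{lem:equivalentformofFrob}. (The only cosmetic difference is that the paper does not need Lemma~\ref{lem:extendedfrobeniuslaw} at this point, which you only mentioned as a possibility.)
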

\begin{proof}
  Evaluating both sides shows that $R$ and $i$ commute precisely when:
  \[
    \begin{pic}[scale=.4]
      \node[dot] (b) at (1,0) {};
      \node[dot] (c) at (1,1) {};
      \node[morphism] (a) at (-.25,-1.5) {$R$};
      \draw (b) to  (c);
      \draw (b) to[out=180,in=90] (a.north);
      \draw (a.south) to +(0,-1);
      \draw (b) to[out=0,in=90] (2,-1) to (2,-3);
    \end{pic}
    \quad 
    = 
    \quad 
    \begin{pic}[scale=.4]
      \node[dot] (b) at (1,0) {};
      \node[dot] (c) at (1,1) {};
      \node[morphism, hflip] (a) at (1.75,-1.5) {$R$};
      \draw (b) to  (c);
      \draw (a.south) to +(0,-1);
      \draw (b) to[out=180,in=90] (0,-1) to (0,-3);
      \draw (b) to[out=0,in=90] (a.north);
    \end{pic}
  \]
  But this is equivalent to
  \[
    \begin{pic}[scale=.4]
        \node[dot] (t) at (0,1) {};
        \draw (t) to +(0,1.5);
        \draw (t) to[out=0,in=90] (1,0) to (1,-1);
        \draw (t) to[out=180,in=90] (-1,0) to (-1,-1);
    \end{pic}
    \quad = \quad
    \begin{pic}
      \node[morphism] (a) at (0,0) {$R$};
      \node[morphism] (b) at (0.44,1) {$\ev_{[A,A]}$};
      \draw (a.south) to +(0,-.5);
      \draw (a.north) to (b.south west);
      \draw ([xshift=-1mm]b.north) to +(0,.5);
      \draw ([xshift=-1mm]b.south east) to +(0,-1.5);
    \end{pic}
    \quad = \quad
    \begin{pic}[scale=.4]
      \node[dot] (b) at (1,0) {};
      \node[dot] (c) at (1,1) {};
      \node[morphism] (a) at (-.25,-1.5) {$R$};
      \node[morphism, hflip] (d) at (2.75,-3.5) {$\ev_{[A,A]}$};
      \draw (b) to  (c);
      \draw (b) to[out=180,in=90] (a.north);
      \draw (a.south) to +(0,-3);
      \draw (b) to[out=0,in=90] (2,-1) to (2,-3);
      \draw (d.north east) to +(0,5);
      \draw (d.south) to +(0,-1);
    \end{pic}
    \quad 
    = 
    \quad 
    \begin{pic}[scale=.4]
      \node[dot] (b) at (1,0) {};
      \node[dot] (c) at (1,1) {};
      \node[morphism, hflip] (a) at (1.75,-1.5) {$R$};
      \node[morphism, hflip] (d) at (2.75,-3.5) {$\ev_{[A,A]}$};
      \draw (b) to  (c);
      \draw (a.south) to +(0,-1);
      \draw (d.north east) to +(0,5);
      \draw (d.south) to +(0,-1);
      \draw (b) to[out=180,in=90] (0,-1) to (0,-5);
      \draw (b) to[out=0,in=90] (a.north);
    \end{pic}
    \quad = \quad 
    \begin{pic}[scale=.4]
      \node[dot] (b) at (1,0) {};
      \node[dot] (c) at (1,1) {};
      \node[dot] (d) at (2.75,-2) {};
      \draw (b) to  (c);
      \draw (d) to[out=0, in=-90] +(1,1) to +(0,2);
      \draw (d.south) to +(0,-1);
      \draw (b) to[out=180,in=90] (0,-1) to (0,-3.5);
      \draw (b) to[out=0,in=180] (d);
    \end{pic} 
  \]
  Lemma~\ref{eq:equivalentfrobeniuslaw} now finishes the proof.
\end{proof}

\begin{corollary} 
  The following are equivalent for a monoid $(A,\tinymult,\tinyunit)$ in a compact dagger category:
  \begin{itemize}
    \item $(A,\tinymult,\tinyunit)$ is a dagger Frobenius monoid;
    \item the canonical morphism $i \colon A \to A^*$ is an involution: $i_* \circ i = \id[A]$;
    \item the canonical Cayley embedding is involutive: $i \circ R = R_* \circ i$.
  \end{itemize}
\end{corollary}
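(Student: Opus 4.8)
The plan is to obtain the two nontrivial equivalences from results already at hand, reserving a single direct string-diagram computation for the one genuinely new claim. Since the text records that every compact dagger category is a sheathed dagger category, Theorem~\ref{thm:sheathedfrobenius} is available. Reading the third condition with the two occurrences of $i$ resolved by their types --- on the left $i=i_{[A,A]}\colon[A,A]\to[A,A]^*$ applied after $R\colon A\to[A,A]$, on the right $i=i_A\colon A\to A^*$ followed by $R_*\colon A^*\to[A,A]^*$ --- the equation $i\circ R=R_*\circ i$ is precisely the commuting square of Theorem~\ref{thm:sheathedfrobenius}. Hence the first and third conditions are equivalent with no further work, and it remains only to tie in the second condition.

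First I would fix the compact dagger structure, writing $\varepsilon\colon A^*\otimes A\to I$ and $\eta\colon I\to A\otimes A^*$ for the duality, with $\varepsilon=\eta^\dag\circ\sigma$ as in the text, and recall that $\ev=\varepsilon$. By definition $i\colon A\to A^*$ is the currying (equivalently, the compact-closed name) of the pairing $\tinycounit\circ\tinymult\colon A\otimes A\to I$, so that $i^\dag\colon A^*\to A$ is the mate of $\tinycomult\circ\tinyunit=(\tinycounit\circ\tinymult)^\dag$. Unwinding the convention $f_*=[f^\dag,\id[I]]$ gives $i_*=[i^\dag,\id[I]]\colon A^*\to A^{**}$, which in a compact category is the conjugate $(i^\dag)^*$ of $i^\dag$; the composite $i_*\circ i\colon A\to A^{**}$ is to be compared with the canonical isomorphism $A\cong A^{**}$, and the second condition asserts that they agree. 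I would then compute $i_*\circ i$ graphically: substituting the names of $\tinycounit\circ\tinymult$ and of its dagger and straightening with the snake equations turns $i_*\circ i$ into an endomorphism of $A$ built solely from $\tinymult,\tinyunit,\tinycomult,\tinycounit$. The goal is to show this endomorphism equals $\id[A]$ precisely when the Frobenius law holds; I would establish this by rewriting it, via the unit law and Lemma~\ref{lem:extendedfrobeniuslaw}, into a form to which the equivalent Frobenius law~\eqref{eq:equivalentfrobeniuslaw} of Lemma~\ref{lem:equivalentformofFrob} applies, so that the first and second conditions become equivalent and the loop closes.

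The main obstacle is this last graphical reduction: keeping the bends, the dagger, and the dual of $i^\dag$ straight while collapsing $i_*\circ i$ to a pairing-and-copairing diagram is where all the bookkeeping lives, and it is easy to introduce a spurious swap or lose track of which leg carries $A$ versus $A^*$. As a safeguard I would cross-check against Proposition~\ref{prop:closedfrobenius}: in the compact case $\ev=\varepsilon$ already forms a duality with $\eta$ and duality partners are unique, so the duality clause of that proposition forces the canonical map~\eqref{eq:frobeniusduality} to coincide with $\eta$; combined with the ``$i$ is a monoid homomorphism'' clause this should reproduce $i_*\circ i=\id[A]$, giving an independent derivation of the first-to-second equivalence and a check on the signs and swaps in the diagrammatic calculation.
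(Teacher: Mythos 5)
Your proposal is correct and matches the paper's proof, which is literally the one line ``Combine Proposition~\ref{prop:closedfrobenius} and Theorem~\ref{thm:sheathedfrobenius}'': you invoke Theorem~\ref{thm:sheathedfrobenius} for the equivalence with the Cayley condition exactly as the paper does, and the ``safeguard'' you describe via Proposition~\ref{prop:closedfrobenius} (duality partners of $\ev$ are unique, so the duality clause plus the monoid-homomorphism clause yield $i_*\circ i=\id[A]$) is in fact the paper's entire argument for the second equivalence. The direct string-diagram reduction of $i_*\circ i$ that you plan as your primary route is additional work the paper does not carry out, and is unnecessary once the proposition is invoked.
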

\begin{proof}
  Combine Proposition~\ref{prop:closedfrobenius} and Theorem~\ref{thm:sheathedfrobenius}.
\end{proof}

\section{Limits of algebras}

In this section we show that the forgetful functor $\FEM(T)\to\cat{C}$ creates dagger limits. This is analogous to the non-dagger case. However, this then will imply that the functor also creates dagger colimits, whereas in the non-dagger case only those colimits preserved by $T$ and $T^2$ are created by the forgetful functor. In a sense, this is not very striking since a dagger monad $T$ is automatically both continuous and cocontinuous due to it factoring as an ambidextrous adjunction. However, it is still pleasing to see that the notion of dagger limit cooperates well with dagger monads, providing evidence that both notions are on the right track.

\begin{theorem}\index[symb]{$\FEM(T)$, the category of $\FEM$-algebras of $T$} Let $D\colon \cat{J}\to \FEM(T)$ be a diagram and $\Omega\subset \cat{J}$ be weakly initial. Write $U\colon \FEM(T)\to \cat{C}$ for the forgetful functor. If $(UD,\Omega)$ has a dagger limit, so does $(D,\Omega)$.
\end{theorem}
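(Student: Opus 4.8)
The plan is to show that the forgetful functor $U\colon\FEM(T)\to\cat{C}$ lifts the given dagger limit. Recall that $\FEM(T)$ is a dagger category (Lemma~\ref{lem:femdag}) and that $U$ is a faithful dagger functor whose dagger is computed as in $\cat{C}$. The strategy is: given the dagger limit $L=\dlim^\Omega UD$ with cone $l_A\colon L\to UD(A)$, put an $\FEM$-algebra structure on $L$, check that the resulting cone is a limit in $\FEM(T)$, and then observe that normalization and independence transfer for free. Indeed, since $U$ is a faithful dagger functor it reflects partial isometries and commutativity of projections (as recorded in the lemma on dagger functors and dagger limits), so once the cone $(l_A)$ is known to be a limiting cone of $\FEM$-algebras the equations $l_Al_A^\dag l_A=l_A$ and $l_A^\dag l_Al_B^\dag l_B=l_B^\dag l_Bl_A^\dag l_A$ hold in $\FEM(T)$ precisely because they hold in $\cat{C}$. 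Thus the genuine content is constructing the algebra and verifying it is Frobenius.

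The lift itself is the usual creation-of-limits argument. Writing $D(A)=(X_A,a_A)$, the maps $a_A\circ Tl_A\colon TL\to X_A$ form a cone over $UD$ (naturality follows because each $D(f)$ is an algebra homomorphism and $(l_A)$ is a cone), so there is a unique $a\colon TL\to L$ with $l_A\circ a=a_A\circ Tl_A$; uniqueness of mediating maps into $L$ forces the unit and associativity laws, making $(L,a)$ an Eilenberg--Moore algebra with each $l_A$ a homomorphism. The same style of mediating-map argument shows that $(L,(l_A))$ is a limit of $D$ in $\FEM(T)$ once we know $(L,a)\in\FEM(T)$: any cone $g_A\colon(Y,b)\to D(A)$ induces a unique $h\colon Y\to L$ in $\cat{C}$, and $l_A\circ(a\circ Th)=a_A\circ T(l_Ah)=g_A\circ b=l_A\circ(h\circ b)$ gives that $h$ is a homomorphism.

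The hard part will be verifying the Frobenius law \eqref{eq:femlaw} for $(L,a)$, and this is where the dagger-limit hypothesis is indispensable. First I would use that $T$, being a dagger Frobenius monad, factors as a dagger (hence ambidextrous) adjunction and is therefore continuous as well as a dagger functor; consequently $TL$ with cone $Tl_A$ is the dagger limit $\dlim^\Omega TUD$, so two maps into $TL$ agree as soon as they agree after postcomposition with every $Tl_A$. Next I would observe that the Eilenberg--Moore structure, viewed as a natural transformation $a\colon TUD\Rightarrow UD$, is \emph{adjointable}: naturality of $a$ is homomorphism-ness of the $D(f)$, while naturality of the componentwise dagger $a^\dag\colon UD\Rightarrow TUD$ follows because each $D(f)^\dag$ is again an $\FEM$-homomorphism by Lemma~\ref{lem:femdag}. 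Applying Lemma~\ref{lem:connectingmaps} to this adjointable $a$ together with the two dagger limits $TL$ and $L$ yields that every connecting map $l_Bl_A^\dag$ is an algebra homomorphism $(X_A,a_A)\to(X_B,a_B)$; postcomposing with the $l_B$ and using the universal property of $L$ then upgrades this to the statement that $l_A^\dag$ is a homomorphism $(X_A,a_A)\to(L,a)$, i.e. $a_A^\dag\circ l_A=Tl_A\circ a^\dag$. With this identity in hand, the Frobenius law is routine: composing $T(a)\circ\mu_L^\dag$ and its dagger $\mu_L\circ T(a^\dag)$ with $Tl_A$, and applying naturality of $\mu$ and $\mu^\dag$ together with the Frobenius law of each $(X_A,a_A)$, reduces both sides to $\mu_{X_A}\circ T(a_A^\dag\circ l_A)$ and $\mu_{X_A}\circ T(Tl_A\circ a^\dag)$ respectively, which coincide. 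This pinpoints why the theorem is stated for dagger limits only: for an arbitrary limit there is no adjointable structure to feed into Lemma~\ref{lem:connectingmaps}, and the lifted algebra need not satisfy \eqref{eq:femlaw}.
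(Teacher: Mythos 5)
Your proposal is correct and follows essentially the same route as the paper: lift the algebra structure along the mediating map, transfer normalization and independence for free, use continuity of $T$ (via its factorization as a dagger adjunction) to see that $(TL,Tl_A)$ is the dagger limit of $TUD$, and then feed the adjointable natural transformation $a\colon TUD\Rightarrow UD$ into Lemma~\ref{lem:connectingmaps} to show each $l_A^\dag$ is an algebra homomorphism, from which the Frobenius law of $(L,a)$ follows. The paper's proof is organized in a slightly different order but uses exactly the same ingredients and the same key lemma.
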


\begin{proof}
  For each $A\in\cat{J}$ write $D(A)$ as $(D(A),a_A)$, and let $(L,l_A)$ be the dagger limit of $(UD,\Omega)$. The maps $TL\sxto{Tl_A} TD(A)\sxto{a_A} D(A)$ make $TL$ into a cone for $UA$; let $l\colon TL\to L$ be the unique arrow factorizing this cone. We will prove that $(L,l)$ is the desired dagger limit. Note that the normalization and independence properties of $L$ imply those of $(L,l)$. Hence we only need to prove that $(L,l)$ is a limiting FEM-algebra.

  That $(L,l)$ is a limiting EM-algebra is proved as in ordinary category theory, see \eg~\cite[4.3]{borceux:vol2} 
  We reproduce the proof for the sake of completeness: For the unit law, note that the square on the left of the diagram
    \[
  \begin{tikzpicture}
     \matrix (m) [matrix of math nodes,row sep=2em,column sep=4em,minimum width=2em]
     {
      L & T(L) & L \\
      D(A) & TD(A) & D(A) \\};
     \path[->]
     (m-1-1) edge node [left] {$l_A$} (m-2-1)
             edge node [above] {$\eta$} (m-1-2)
     (m-2-1) edge node [below] {$\eta$} (m-2-2)
             edge[out=-45,in=225] node [below] {$\id$} (m-2-3)
     (m-2-2) edge node [below] {$a_A$} (m-2-3)
     (m-1-2) edge node [right] {$Tl_A$} (m-2-2)
             edge node [above] {$l$} (m-1-3)
      (m-1-3) edge node [right] {$l_A$} (m-2-3);
  \end{tikzpicture}
  \]
  commutes by naturality of $\eta$ and the square on the right by definition of $l$. The bottom part of the diagram is just the unit law of $(DA,a_A)$. Since $A$ was arbitrary, the unit axiom is satisfied by $(L,l)$. The associative law is proved similarly. 

  To prove the Frobenius law, note first that the monad $T$ factors as the composite of two dagger adjoints and hence is continuous, so that $(TL,Tl_A)$ is the dagger limit of $TUD$ with support $\Omega$. Next we show that the diagram
    \[
    \begin{tikzpicture}
     \matrix (m) [matrix of math nodes,row sep=2em,column sep=4em,minimum width=2em]
     {
      TD(A) & T(L)& L \\ 
       &TD(B)& \\
      D(A) & T(A) & D(B) \\};
     \path[->]
     (m-1-1) edge node [left] {$a_A$} (m-3-1)
             edge node [above] {$Tl_A^\dag$} (m-1-2)
     (m-3-1) edge node [below] {$l_A^\dag$} (m-3-2)
     (m-3-2) edge node [below] {$l_B$} (m-3-3)
     (m-1-2) edge node [right] {$Tl_B $} (m-2-2)
             edge node [above] {$l$} (m-1-3)
     (m-2-2) edge node [above] {$a_B$} (m-3-3)
     (m-1-3) edge node [right] {$l_B$} (m-3-3);
    \end{tikzpicture}
    \]
  commutes for any $A,B\in\Omega$. The right half commutes by definition of $l$. For the left half, note that the algebra maps induce a natural transformation $TUD\to UD$. Since each of these algebras is FEM, this transformation is adjointable. Hence we can apply Lemma~\ref{lem:connectingmaps} to conclude that the left half commutes. Hence the whole diagram commutes. As it does so for each $B$, we conclude that $l_A^\dag$ is an algebra homomorphism $(D(A),a_A)\to (L,l)$ for each $A$. Hence region (i) in the diagram
        \[
    \begin{tikzpicture}
     \matrix (m) [matrix of math nodes,row sep=2em,column sep=4em,minimum width=2em]
     {
       & T^2(L) && T(L) \\
     T(L)& TD(A) & T^2D(A) & \\ 
     & T^2(L) & T^2D(A) & TD(A) \\ 
     &  & T(L) \\};
     \path[->]
     (m-2-1) edge node [above] {$Tl_A$} (m-2-2)
             edge node [above] {$Tl^\dag$} (m-1-2)
            edge node [below] {$\mu^\dag$} (m-3-2)
     (m-1-2) edge node [above] {$\mu$} (m-1-4)
             edge node [above] {$T^2 l_A$} (m-2-3)
     (m-1-4) edge node [right] {$Tl_A$} (m-3-4)
     (m-2-2) edge node [above] {$a_A^\dag$} (m-2-3)
             edge node [above] {$\mu^\dag$} (m-3-3)
     (m-2-3) edge node [above] {$\mu$} (m-3-4)
     (m-3-2) edge node [above] {$T^2l_A$} (m-3-3)
             edge node [below] {$Tl$} (m-4-3)
     (m-3-3) edge node [above] {$Ta_A$} (m-3-4)
     (m-4-3) edge node [below] {$Tl_A$} (m-3-4);
     \path
     (m-1-2) to node[gray] {(i)} (m-2-2) 
     (m-2-3) to node[gray] {(ii)} (m-1-4)
     (m-2-2) to node[gray] {(iii)} (m-3-2)
     (m-2-3) to node[gray] {(iv)} (m-3-3)
     (m-3-3) to node[gray] {(v)} (m-4-3);
    \end{tikzpicture}
    \]
  commutes. Regions (ii) and (iii) commute by naturality of $\mu$, and region (iv) commutes by the Frobenius law of $(DA,a_A)$. Finally, region (v) commutes by definition of $l$. Hence the whole diagram commutes. Since $(TL,Tl_A)$ is a limit of $TUD$, this implies the Frobenius law of $(L,l)$.

  Since $\FEM (T)\to\cat{C}$ is faithful, $(L,l)$ is a cone for $D$. We conclude by proving that it is a limiting cone. If $m_A\colon (M,m)\to (D(A),a_A)$ then $m_A\colon M\to D(A)$ is a cone for $UD$ and hence factors through $L$ via an unique $f\colon M\to L$. It suffices to prove that $f$ is an algebra homomorphism $(M,m)\to (L,l)$. This follows once we show that the diagram 
     \[
    \begin{tikzpicture}
     \matrix (m) [matrix of math nodes,row sep=2em,column sep=4em,minimum width=2em]
     {
      T(M) & T(L) & L \\
      M & TD(A) &  \\
      L  &  & D(A) \\};
     \path[->]
     (m-1-1) edge node [left] {$m$} (m-2-1)
             edge node [above] {$Tf$} (m-1-2)
             edge node [below] {$Tm_A$} (m-2-2)
     (m-2-1) edge node [below] {$m_A$} (m-3-3)
             edge node [left] {$f$} (m-3-1)
     (m-2-2) edge node [above] {$a_A$} (m-3-3)
     (m-1-2) edge node [right] {$Tl_A$} (m-2-2)
             edge node [above] {$l$} (m-1-3)
      (m-1-3) edge node [right] {$l_A$} (m-3-3)
      (m-3-1) edge node [below] {$l_A$} (m-3-3);
    \end{tikzpicture}
    \]
  commutes for each $A$. The triangles on the top and bottom commute by definition of $f$ and the triangle in the middle commutes because each $m_A$ is an algebra homomorphism. The trapezoid 
  on the right commutes by definition of $l$, concluding the proof.
\end{proof}

\section{Monadicity}\label{sec:monadicity}

Next we will prove a version of Beck's monadicity theorem~\cite{beck:triples} 
for dagger monads. 

\begin{definition}\index[word]{dagger functor!dagger monadic}  A dagger functor $G\colon \cat{D}\to\cat{C}$ is \emph{dagger monadic} if there is a dagger Frobenius monad $(T,\mu,\eta)$ on \cat{C} and a dagger equivalence $\cat{D}\to \FEM (T)$ such that $G$ equals the composite $D\to \FEM (T)\to \cat{C}$. 
\end{definition}

Beck's monadicity theorem characterizes monadic functors. The usual proof 
rests on the fact that an algebra $(A,a)$ can be canonically viewed as the coequalizer of
 \[(\mu_A,Ta)\colon (T^2(A),\mu_{TA})\rightrightarrows (T(A),\mu_A)\]
and this coequalizer splits in \cat{C}. Moreover, one can show that the forgetful functor $\cat{C}^T\to C$ creates those coequalizers that split in \cat{C}, and this in fact characterizes \cat{C^T} up to equivalence. In the dagger setting, it turns out that there is no need to discuss split coequalizers. The central observation is that the Frobenius law gives additional properties to a FEM-algebra when viewed as a coequalizer.

\begin{definition}\index[word]{Frobenius coequalizer}
A coequalizer $e\colon B\to C$ of $f,g\colon A\rightrightarrows B$ is a \emph{Frobenius} coequalizer if $g^\dag f=e^\dag e$.
\end{definition}

Note that Frobenius coequalizers are unique up to unitary isomorphism by Theorem~\ref{thm:equivalenttounitary} whenever they exist. However, a consequence of the Frobenius law for a coequalizer is that $f^\dag g=g^\dag f$, so Frobenius coequalizers might not exist even when $f,g$ has dagger coequalizers. Moreover, if say $f=0$, then the existence of a Frobenius coequalizer implies that $e=0$ for the coequalizer, whence $g=0$. Hence Frobenius coequalizers are rare and do not seem to provide an interesting class of dagger (co)limits apart from their use in dagger monadicity. The name is explained by the canonical example below. 

\begin{example} Let $T$ be a dagger Frobenius monad on \cat{C}. The associativity law of an $EM$-algebra says that $a$ is an algebra homomorphism $a\colon(T(A),\mu_A)\to (A,a)$. In fact, it is a coequalizer of $(\mu_A,Ta)\colon (T^2(A),\mu_{TA})\rightrightarrows (T(A),\mu_A)$. This coequalizer is Frobenius precisely when $(A,a)$ is. 

\end{example}

\begin{theorem}\label{thm:monadicity} Let $G\colon\cat{D}\to\cat{C}$ be a dagger functor. Then $G$ is dagger monadic iff the following conditions hold:
  \begin{enumerate}[(i)]
   \item $G$ has a dagger adjoint.
   \item $G$ reflects unitary isomorphisms
   \item If $f,g\colon A\rightrightarrows B$ in $D$ is a pair such that $(U(f),U(g))$ has Frobenius coequalizer in \cat{C}, then $(f,g)$ has a Frobenius coequalizer in \cat{D} preserved by $G$.
  \end{enumerate}
\end{theorem}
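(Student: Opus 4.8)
The plan is to adapt the standard proof of Beck's monadicity theorem (as in~\cite[Theorem 4.4.4]{borceux:vol2}) to the dagger setting, replacing split coequalizers throughout by Frobenius coequalizers and replacing ``reflects isomorphisms'' by ``reflects unitary isomorphisms''. The forward implication is the easier direction: if $G$ is dagger monadic, we may assume $\cat{D}=\FEM(T)$ with $G$ the forgetful functor. Condition (i) holds since the free/forgetful adjunction is a dagger adjunction by Lemma~\ref{lem:kleislidagger} and Theorem~\ref{thm:comparison}. For (ii), a morphism of FEM-algebras whose underlying map is unitary has an inverse that is automatically an algebra homomorphism by Lemma~\ref{lem:femdag} (the dagger of an algebra homomorphism is one going the other way), and being unitary in $\cat{C}$ it is unitary in $\FEM(T)$. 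For (iii), I would invoke the canonical example preceding the theorem: an algebra $a\colon(T(A),\mu_A)\to(A,a)$ is the Frobenius coequalizer of $(\mu_A,Ta)$, and the general case reduces to this via the usual computation showing that the forgetful functor creates exactly those coequalizers that are Frobenius in $\cat{C}$.

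For the converse, suppose (i)--(iii) hold; write $F$ for the dagger adjoint of $G$ and $T=GF$ for the induced dagger Frobenius monad (Lemma~\ref{lem:daggeradjunction}). Let $K\colon\cat{D}\to\FEM(T)$ be the comparison functor from Theorem~\ref{thm:comparison}, which is a dagger functor landing in $\FEM(T)$. I would show $K$ is a dagger equivalence by exhibiting a quasi-inverse and checking the relevant natural isomorphisms are unitary; by Lemma~\ref{lem:halfequiv} it in fact suffices to show $K$ is full, faithful, and unitarily essentially surjective. Fullness and faithfulness follow the classical argument using that $G$ reflects coequalizers of the relevant pairs. The key new point is essential surjectivity up to \emph{unitary} isomorphism: given a FEM-algebra $(A,a)$, I would form the parallel pair $(\mu_A,Ta)$, observe that it has a Frobenius coequalizer in $\cat{C}$ (namely $a$ itself, by the displayed canonical example, using that $(A,a)$ satisfies the Frobenius law so that $Ta\circ\mu_A^\dag = a^\dag a$), and then use condition (iii) to lift this to a Frobenius coequalizer in $\cat{D}$ preserved by $G$. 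The object thus produced maps under $K$ to an algebra that is unitarily isomorphic to $(A,a)$, because Frobenius coequalizers are unique up to unitary isomorphism by Theorem~\ref{thm:equivalenttounitary}.

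The main obstacle I anticipate is precisely this last step: ensuring that the isomorphism witnessing essential surjectivity is unitary rather than merely an isomorphism. In the classical proof one uses that split coequalizers are absolute and that any two coequalizers of the same pair are canonically isomorphic; here I must instead lean on the fact that a Frobenius coequalizer is determined up to unitary isomorphism (Theorem~\ref{thm:equivalenttounitary}) together with the observation that condition (ii), reflection of unitary isomorphisms, upgrades a reflected isomorphism to a reflected unitary. I would therefore carefully verify that the comparison isomorphism $KD\cong(A,a)$ arises as the mediating map between two Frobenius coequalizers of $(\mu_A,Ta)$ and hence is forced to be unitary, and that $G$ reflecting unitaries lets me conclude the witnessing morphism in $\cat{D}$ is itself unitary. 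A secondary technical point worth isolating is confirming that the pair $(\mu_A,Ta)$ really does have a Frobenius coequalizer in $\cat{C}$ under the FEM hypothesis, i.e.\ checking $(Ta)^\dag\mu_A = a^\dag a$ holds, which is exactly the Frobenius law~\eqref{eq:femlaw} for $(A,a)$ combined with associativity; this is the linchpin that makes condition (iii) applicable.
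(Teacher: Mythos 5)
Your overall architecture is the same as the paper's: reduce the forward direction to the forgetful functor $\FEM(T)\to\cat{C}$, and in the converse use the comparison functor of Theorem~\ref{thm:comparison} together with Lemma~\ref{lem:halfequiv}, getting unitary essential surjectivity from the uniqueness of Frobenius coequalizers of $(\mu_A,Ta)$. However, there are two genuine gaps, and both sit exactly where the dagger setting forces a departure from the classical Beck argument. In the forward direction, your condition (iii) rests on the assertion that the forgetful functor ``creates exactly those coequalizers that are Frobenius in $\cat{C}$'' by ``the usual computation''. The usual computation does not transfer: it hinges on split coequalizers being absolute, so that $Te$ and $T^2e$ are again coequalizers and in particular epic. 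Frobenius coequalizers are not absolute, so a substitute is needed; the paper uses that $T=GF$ factors through a dagger (hence ambidextrous) adjunction and is therefore cocontinuous, which is what makes $Te$ and $T^2e$ epic. Moreover, even granting the Eilenberg--Moore laws for the induced structure $c\colon T(C)\to C$, one must still verify that $(C,c)$ satisfies the Frobenius law~\eqref{eq:femlaw}; this is a new diagram chase using the Frobenius property of the coequalizer together with the fact that $e^\dag$ and $f^\dag$ are algebra homomorphisms (Lemmas~\ref{lem:femdag} and~\ref{lem:femlaw}). None of this appears in your sketch.

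In the converse direction you never use condition (ii) where it is actually needed. You place it in the essential-surjectivity step, but there it is superfluous: the object produced by (iii) and the given algebra $(A,a)$ are both Frobenius coequalizers of the same pair inside $\FEM(T)$, so Theorem~\ref{thm:equivalenttounitary} already yields a unitary isomorphism in $\FEM(T)$ without reflecting anything. Where (ii) is indispensable is faithfulness, which you dismiss as ``the classical argument''. To show $G$ (hence the comparison functor) is faithful one needs $\epsilon_A$ to be epic; the paper obtains this by applying (iii) to the pair $\epsilon_{FGA},FG\epsilon_A$, factoring $\epsilon_A$ through the resulting Frobenius coequalizer, observing that the mediating map becomes unitary under $G$ (being a comparison between two Frobenius coequalizers of the same pair in $\cat{C}$), and then invoking (ii) to conclude it is unitary, hence epic, in $\cat{D}$. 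Without this step faithfulness is unproven and hypothesis (ii) is never consumed. A minor further point: condition (iii) takes as input a pair in $\cat{D}$, so for essential surjectivity you must apply it to $\epsilon_{FA},F(a)\colon FGF(A)\rightrightarrows F(A)$, whose image under $G$ is $(\mu_A,Ta)$, rather than to the pair $(\mu_A,Ta)$ itself, which lives in $\cat{C}$. Your identification of $\mu_A\circ (Ta)^\dag=a^\dag\circ a$ with the Frobenius law of $(A,a)$ is correct and is precisely Lemma~\ref{lem:femlaw}.
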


\begin{proof} 
Note that conditions (i)-(iii) are preserved by dagger equivalence. Hence to prove the implication from left to right, it suffices to prove that $\FEM (T)\to \cat{C}$ satisfies (i)-(iii) for any dagger Frobenius monad $T$. Conditions (i) and (ii) are clear, so we move on to (iii). So, consider a pair $f,g\colon (A,a)\rightrightarrows (B,b)$ in $\FEM(T)$ that has a Frobenius coequalizer in \cat{C} given by $e\colon B\to C$. As $T$ factors as a dagger adjunction, it is cocontinuous so that $Te$ is the coequalizer of $Tf,Tg$. This implies the existence of a map $c\colon T(C)\to C$ making the square 
    \[
  \begin{tikzpicture}
     \matrix (m) [matrix of math nodes,row sep=2em,column sep=4em,minimum width=2em]
     {
      T(B) & T(C) \\
      B & C \\};
     \path[->]
     (m-1-1) edge node [left] {$b$} (m-2-1)
             edge node [above] {$Te$} (m-1-2)
     (m-2-1) edge node [below] {$e$} (m-2-2)
     (m-1-2) edge node [right] {$c$} (m-2-2);
  \end{tikzpicture}
  \]

commute. Next, we observe that since $T$ is the composite of two dagger adjoints, it preserves all limits. Hence $Te$ is a coequalizer and in particular an epimorphism, and ditto for $T^2e$.\footnote{The usual proof assumes that $e$ is a split coequalizer and use this to infer that $Te$ and $T^2e$ are epi.} Using this, the FEM-properties of $(C,c)$ can be deduced from the corresponding properties of $(B,b)$ and $(A,a)$. 

For associativity $(C,c)$, it suffices to prove that the diagram 
  \[
  \begin{tikzpicture}
     \matrix (m) [matrix of math nodes,row sep=2em,column sep=4em,minimum width=2em]
     {
      T^2(C) & & & T(C) \\
      & T^2(B)& T(B) \\
      & T(B)& B \\
      T(C) &&& C \\};
     \path[->]
     (m-1-1) edge node [above] {$Tc$} (m-1-4)
         edge node [left] {$\mu $} (m-4-1)
     (m-2-2) edge node [below] {$T^2e$} (m-1-1)
        edge node [above] {$Tb$} (m-2-3)
        edge node [left] {$\mu $} (m-3-2)
     (m-2-3) edge node [below] {$Te$} (m-1-4)
         edge node [right] {$b$} (m-3-3)
     (m-1-4) edge node [right] {$c$} (m-4-4)
     (m-3-2) edge node [above] {$Te$} (m-4-1)
        edge node [below] {$b$} (m-3-3)
     (m-3-3) edge node [above] {$e$} (m-4-4)
     (m-4-1) edge node [below] {$c$} (m-4-4);
  \end{tikzpicture}
  \]
commutes. The trapezoid on the left commutes by naturality of $\mu$ and the square in the center is the associative law of $(B,b)$. The other trapezoids commute by definition of $c$. The unit law is proved similarly.

To prove that $(C,c)$ satisfies the Frobenius law, consider first the diagram:
  \[
  \begin{tikzpicture}
     \matrix (m) [matrix of math nodes,row sep=2em,column sep=4em,minimum width=2em]
     {
      T(C) & & & T(B) \\
      & T(B)& T(A) \\
      & B& A \\
      C &&& B \\};
     \path[->]
     (m-1-1) edge node [above] {$Te^\dag$} (m-1-4)
     		 edge node [left] {$c$} (m-4-1)
     (m-2-2) edge node [below] {$Te$} (m-1-1)
     		edge node [above] {$Tf^\dag$} (m-2-3)
     		edge node [left] {$b$} (m-3-2)
     (m-2-3) edge node [below] {$Tg$} (m-1-4)
     		 edge node [right] {$a$} (m-3-3)
     (m-1-4) edge node [right] {$b$} (m-4-4)
     (m-3-2) edge node [above] {$e$} (m-4-1)
     		edge node [below] {$f^\dag$} (m-3-3)
     (m-3-3) edge node [above] {$g$} (m-4-4)
     (m-4-1) edge node [below] {$e^\dag$} (m-4-4);
  \end{tikzpicture}
  \]
The trapezoids on the top and bottom commute because the coequalizer is Frobenius. The trapezoid on the left commutes because $e$ is an algebra homomorphism and the one on the right because $g$ is. The square in the middle commutes because $f^\dag$ is the dagger of a FEM-algebra homomorphism. As $Te$ is epi the outer rectangle commutes. In other words, $e^\dag$ is a homomorphism $(C,c)\to (B,b)$. This is the reason the trapezoid on the top of the diagram 
  \[
  \begin{tikzpicture}
     \matrix (m) [matrix of math nodes,row sep=2em,column sep=4em,minimum width=2em]
     {
      T(C) & & & T^2(C) \\
      & T(B)& T^2(B) \\
      & T^2(B) & T(B) \\
      T^2(C) &&& T(C) \\};
     \path[->]
     (m-1-1) edge node [above] {$Tc^\dag$} (m-1-4)
     		 edge node [left] {$\mu^\dag$} (m-4-1)
     (m-2-2) edge node [below] {$Te$} (m-1-1)
     		edge node [above] {$Tb^\dag$} (m-2-3)
     		edge node [left] {$\mu^\dag$} (m-3-2)
     (m-2-3) edge node [below] {$T^2 e$} (m-1-4)
     		 edge node [right] {$\mu$} (m-3-3)
     (m-1-4) edge node [right] {$\mu$} (m-4-4)
     (m-3-2) edge node [above] {$T^2e\quad$} (m-4-1)
     		edge node [below] {$Tb$} (m-3-3)
     (m-3-3) edge node [above] {$Te$} (m-4-4)
     (m-4-1) edge node [below] {$Tc^\dag$} (m-4-4);
  \end{tikzpicture}
  \]
commutes,and the bottom one commutes because $e$ is an algebra homomorphism. The left and right trapezoids commute because $\mu^\dag$ and $\mu$ are natural. The diagram in the middle commutes because $(B,b)$ is FEM. Since $Te$ is epi, this implies that the outer rectangle commutes, giving us the Frobenius law for $(C,c)$. Hence \[f,g\colon (A,a)\rightrightarrows (B,b)\sxto{e} (C,c)\] is a diagram in $FEM(T)$, and it is easy to see that it is a Frobenius coequalizer in $FEM(T)$.

For the other direction, let $G$ satisfy (i)-(iii) and let $F\colon\cat{C}\to \cat{D}$ be its dagger adjoint. Write $(T,\mu,\eta)$ for the dagger Frobenius monad on \cat{C} generated by this dagger adjunction, and let $J\colon\cat{D}\to \FEM (T)$ be the comparison functor from Theorem~\ref{thm:comparison}. We wish to show that $J$ forms a part of a dagger equivalence, so by Lemma~\ref{lem:halfequiv} it suffices to show that it is full, faithful and unitarily essentially surjective. We already know that it is full. 

To show that $J$ is unitarily essentially surjective, fix a FEM-algebra $(A,a)$. Now $\epsilon_{FA},F(a) \colon FGF(A)\rightrightarrows F(A)$ is a pair in \cat{D} which $G$ maps to $\mu_A,Ta\colon T^2(A)\rightrightarrows T(A)$ in \cat{C}. The latter pair has a Frobenius coequalizer in \cat{C}, so by (iii) the pair \[\epsilon_{FA},F(a) \colon FGF(A)\rightrightarrows F(A)\] has a Frobenius coequalizer $e\colon F(A)\to B$ in \cat{D} preserved by $G$. As seen in the first part of the proof $GB$ has a unique FEM-algebra structure $b\colon T(B)\to B$ such that $UJe$ is a homomorphism, and this in fact makes $(GB,b)=JB$ into a Frobenius coequalizer of $(\mu_A,Ta)\colon (T^2(A),\mu_{TA})\rightrightarrows (T(A),\mu_A)$ in $\FEM(T)$. As Frobenius coequalizers are unique up to unitary iso, this implies that $JB$ is unitarily isomorphic to $(A,a)$, as desired. 

It remains to prove faithfulness of $J$, for which it suffices to prove faithfulness of $G$. So let $f,g\colon A\rightrightarrows B$ be a pair in \cat{D} with $G(f)=G(g)$. This is done exactly as in the non-dagger case: by naturality of the counit $\epsilon\colon FG\to \id[\cat{D}]$ we have
\[f\epsilon_A=\epsilon_A FG(f)=\epsilon_A FG(g)=g\epsilon_A\] 
so it suffices to prove that $\epsilon_A$ is epi. The pair $\epsilon_{FGA},FG\epsilon_A\colon FGFG(A)\rightrightarrows FG(A)$ in \cat{D} is mapped by $G$ to the pair $G\epsilon_{FGA},GFG\epsilon_A\colon GFGFG(A)\rightrightarrows GFG(A)$ in \cat{C} having a Frobenius coequalizer $G\epsilon_A\colon GFG(A)\to G(A)$. 
Hence (iii) implies that \[\epsilon_{FGA},FG\epsilon_A\colon FGFG(A)\rightrightarrows FG(A)\] has a Frobenius coequalizer $q\colon GFA\to Q$ in \cat{C} preserved by $G$, and hence ${\epsilon_A\colon GF(A)\to A}$ factors through $q$ as $\epsilon_A=fq$ for some $F$. As this coequalizer is preserved by $G$, the morphism $G(f)$ is a unitary and hence by (ii) $f$ is unitary in $D$, which implies that $\epsilon_A$ is epic, completing the proof.


\end{proof}

\begin{corollary} Dagger monadic functors are closed under composition
\end{corollary}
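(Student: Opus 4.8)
The plan is to verify that the three conditions of the dagger monadicity theorem (Theorem~\ref{thm:monadicity}) are stable under composition, using the corresponding fact that each condition is itself closed under composition. Suppose $G\colon\cat{D}\to\cat{C}$ and $H\colon\cat{E}\to\cat{D}$ are both dagger monadic; by definition each is a dagger functor, and I would show that $G\circ H\colon\cat{E}\to\cat{C}$ satisfies conditions (i)--(iii) of Theorem~\ref{thm:monadicity}.

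First, condition (i): if $G$ has a dagger adjoint $F_G$ and $H$ has a dagger adjoint $F_H$, then $F_H\circ F_G$ is a dagger adjoint to $G\circ H$, since dagger adjunctions compose (the composite of two dagger functors is a dagger functor, and adjunctions compose in the usual way). Second, condition (ii): if $G$ and $H$ each reflect unitary isomorphisms, so does their composite. Indeed, if $GH(f)$ is unitary, then since $G$ reflects unitaries, $H(f)$ is unitary, and since $H$ reflects unitaries, $f$ is unitary. Third, condition (iii) concerning Frobenius coequalizers: suppose $f,g\colon A\rightrightarrows B$ is a parallel pair in $\cat{E}$ such that $(GHf,GHg)$ has a Frobenius coequalizer in $\cat{C}$. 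I would first apply condition (iii) for $G$ to the pair $(Hf,Hg)$ in $\cat{D}$: its image under $G$ has a Frobenius coequalizer in $\cat{C}$ by hypothesis, so $(Hf,Hg)$ has a Frobenius coequalizer in $\cat{D}$ that is preserved by $G$. Now $(Hf,Hg)$ has a Frobenius coequalizer in $\cat{D}$, so I would apply condition (iii) for $H$ to $(f,g)$ to obtain a Frobenius coequalizer of $(f,g)$ in $\cat{E}$ that is preserved by $H$. Since $G$ preserves the image coequalizer, the composite $G\circ H$ preserves the Frobenius coequalizer of $(f,g)$, as required.

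The main obstacle I anticipate is getting condition (iii) exactly right: one must be careful that the hypothesis phrased for the composite $G\circ H$ is correctly fed into the condition for $G$ first and then $H$, rather than the other way around. The subtlety is that condition (iii) for $H$ requires a Frobenius coequalizer of $(Hf,Hg)$ in $\cat{D}$ to exist before it can be invoked, and this is precisely what condition (iii) for $G$ supplies; so the two applications must be chained in the correct order. One should also confirm that ``preserved by $G$'' together with ``preserved by $H$'' yields ``preserved by $G\circ H$'', which is immediate since preservation of a colimit by two composable functors composes. Finally, I would note that the composite is manifestly a dagger functor, so all the structural prerequisites of Theorem~\ref{thm:monadicity} are in place, and conclude that $G\circ H$ is dagger monadic.

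\begin{proof}
  Let $G\colon\cat{D}\to\cat{C}$ and $H\colon\cat{E}\to\cat{D}$ be dagger monadic; we verify that $G\circ H$ satisfies the conditions of Theorem~\ref{thm:monadicity}. As a composite of dagger functors, $G\circ H$ is a dagger functor. For (i), composing the dagger adjoints of $G$ and $H$ yields a dagger adjoint to $G\circ H$. For (ii), if $GH(f)$ is a unitary isomorphism, then $H(f)$ is unitary because $G$ reflects unitaries, and hence $f$ is unitary because $H$ does. For (iii), let $f,g\colon A\rightrightarrows B$ be a parallel pair in $\cat{E}$ such that $(GHf,GHg)$ has a Frobenius coequalizer in $\cat{C}$. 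Applying (iii) for $G$ to the pair $(Hf,Hg)$, we obtain a Frobenius coequalizer of $(Hf,Hg)$ in $\cat{D}$ that is preserved by $G$. Applying (iii) for $H$ to $(f,g)$ then yields a Frobenius coequalizer of $(f,g)$ in $\cat{E}$ that is preserved by $H$. Since $G$ preserves the coequalizer of $(Hf,Hg)$, the composite $G\circ H$ preserves the Frobenius coequalizer of $(f,g)$. Thus $G\circ H$ is dagger monadic.
\end{proof}
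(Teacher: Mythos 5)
Your proof is correct and takes the same approach as the paper, which simply asserts that conditions (i)--(iii) of Theorem~\ref{thm:monadicity} are closed under composition; you supply the details the paper leaves implicit, including the correct order of chaining condition (iii) for $G$ and then $H$. The only point worth noting is that in the final step the Frobenius coequalizer of $(Hf,Hg)$ obtained by applying $H$ to the one in $\cat{E}$ may differ from the one produced in the first application of (iii), but they agree up to unitary isomorphism, so preservation by $G$ carries over.
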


\begin{proof} Conditions (i)-(iii) from Theorem~\ref{thm:monadicity} are clearly closed under composition.
\end{proof}

Here dagger category theory deviates from ordinary category theory: ordinary monadic functors are not closed under composition (an example is that Torsion-free abelian groups are monadic over abelian groups which are monadic over sets, but the composite is not monadic~\cite[Counterexample 4.6.4]{borceux:vol2}) but dagger monadic functors are.
\chapter{Arrows}\label{chp:arrows}

\section{Introduction}\label{sec:introduction}

Reversible computing studies settings in which all processes can be reversed: programs can be run backwards as well as forwards.
Its history goes back at least as far as 1961, when Landauer formulated his physical principle that logically irreversible manipulation of information costs work.
This sparked the interest in developing reversible models of computation as a means to making them more energy efficient.
Reversible computing has since also found applications in high-performance computing~\cite{schordanetal:parallel}, process calculi~\cite{cristescuetal:picalculus}, probabilistic computing~\cite{stoddartlynas:probabilistic}, quantum computing~\cite{selinger:completelypositive}, and robotics~\cite{schultzbordignonstoy:reconfiguration}.

There are various theoretical models of reversible computations. The most well-known ones are perhaps Bennett's reversible Turing machines~\cite{bennett:reversibility} and Toffoli's reversible circuit model~\cite{toffoli:reversible}. There are also various other models of reversible automata~\cite{morita:twoway,kutribwendlandt:automata} and combinator calculi~\cite{abramsky:structrc,jamessabry:infeff}.

We are interested in models of reversibility suited to functional programming languages. Functional languages are interesting in a reversible setting for two reasons. First, they are easier to reason and prove properties about, which is a boon if we want to understand the logic behind reversible programming. Second, they are not stateful by definition, which eases reversing programs.
It is fair to say that existing reversible functional programming languages~\cite{jamessabry:theseus,yokoyamaetal:rfun} still lack various desirable constructs familiar from the irreversible setting. 

Irreversible functional programming languages like Haskell naturally take semantics in categories. The objects interpret types, and the morphisms interpret functions. 
Functional languages are by definition not stateful, and their categorical semantics only models pure functions. However, sometimes it is useful to have non-functional side-effects, such as exceptions, input/output, or indeed even state. Irreversible functional languages can handle this elegantly using monads~\cite{moggi:monads} or more generally arrows~\cite{hughes:programmingarrows}.

A word on terminology. We call a computation $a \colon X \to Y$ \emph{reversible} when it comes with a specified partner computation $a^\dag \colon Y \to X$ in the opposite direction. This implies nothing about possible side-effects.
Saying that a computation is \emph{partially invertible} is stronger, and requires $a \circ a^\dag \circ a = a$. Saying that it is \emph{invertible} is even stronger, and requires $a \circ a^\dag$ and $a^\dag \circ a$ to be identities.
We call this partner of a reversible computation its \emph{dagger}. 
In other words, reversible computing for us concerns dagger arrows on dagger categories, and is modelled using involutions~\cite{heunenkarvonen:daggermonads}. 
In an unfortunate clash of terminology, categories of partially invertible maps are called inverse categories~\cite{cockettlack:restrictioncategories}, and categories of invertible maps are called groupoids~\cite{gabbaykropholler:lazy}. 
Thus, inverse arrows on inverse categories concern partially invertible maps.

We develop \emph{dagger arrows} and \emph{inverse arrows}, which are useful in two ways:
\begin{itemize}
  \item We illustrate the reach of these notions by exhibiting many fundamental reversible computational side-effects that are captured (in Section~\ref{sec:arrows}), including: pure reversible functions, information effects, reversible state, serialization, vector transformations, 
  dagger Frobenius monads~\cite{heunenkarvonen:reversiblemonads,heunenkarvonen:daggermonads}, recursion~\cite{kaarsgaardaxelsengluck:joininversecategories}, and superoperators. To remain within the scope of the thesis, we treat each example informally from the perspective of programming languages, but formally from the perspective of category theory.   
  \item We prove that these notions behave well mathematically (in Section~\ref{sec:arrowscategorically}): whereas arrows are monoids in a category of profunctors~\cite{jacobs2009categorical}, dagger arrows and inverse arrows are involutive monoids.
\end{itemize}

This chapter aims to inform design principles of sound reversible programming languages. 
The main contribution is to match desirable programming concepts to precise category theoretic constructions.
As such, it is written from a theoretical perspective. To make examples more concrete for readers with a more practical background, we adopt the syntax of a typed first-order reversible functional programming language with type classes. 
We begin with preliminaries on reversible base categories (in Section~\ref{sec:inversecategories}).

\section{Inverse categories}\label{sec:inversecategories}

\begin{definition}\index[word]{inverse category!monoidal} 
   A \emph{(monoidal) inverse category} is a (monoidal) dagger category of partial isometries where positive maps commute: $f \circ f^\dag \circ f=f$ and $f^\dag \circ f \circ g^\dag \circ g = g^\dag\circ g \circ f^\dag\circ f$ for all maps $f \colon X \to Y$ and $g \colon X \to Z$. 
\end{definition}

Every groupoid is an inverse category. Another example of an inverse category is $\cat{PInj}$, whose objects are sets, and morphisms $X \to Y$ are partial injections: $R \subseteq X \times Y$ such that for each $x \in X$ there exists at most one $y \in Y$ with $(x,y) \in R$, and for each $y \in Y$ there exists at most one $x \in X$ with $(x,y) \in R$. It is a monoidal inverse category under either Cartesian product or disjoint union.

\begin{definition}\index[word]{inverse products} 
  A dagger category is said to have \emph{inverse products}~\cite{giles:thesis} if it is a symmetric monoidal dagger category with a natural transformation $\Delta_X \colon X \to X
  \otimes X$ making the following diagrams commute:
  \[
    \begin{aligned}\begin{tikzpicture}
        \matrix (m) [matrix of math nodes,row sep=2em,column sep=4em,minimum width=2em]
        {X  & X\otimes X \\
          & X\otimes X \\};
        \path[->]
        (m-1-1) edge node [below] {$\Delta_X$} (m-2-2)
            edge node [above] {$\Delta_X$} (m-1-2)
        (m-1-2) edge node [right] {$\sigma_{X,X}$} (m-2-2);
    \end{tikzpicture}\end{aligned}
    \begin{aligned}\begin{tikzpicture}
        \matrix (m) [matrix of math nodes,row sep=2em,column sep=3em,minimum width=2em]
        {X  & & X\otimes X  \\
        X\otimes X & X\otimes (X\otimes X) & (X\otimes X)\otimes X \\};
        \path[->]
        (m-1-1) edge node [right] {$\Delta_X$} (m-2-1)
            edge node [above] {$\Delta_X$} (m-1-3)
        (m-2-1) edge node [below] {$\id\otimes\Delta_X$} (m-2-2)
        (m-2-2) edge node [below] {$\alpha$} (m-2-3)
        (m-1-3) edge node [left] {$\Delta_X\otimes\id$} (m-2-3);
    \end{tikzpicture}\end{aligned}
  \]
  \[
    \begin{aligned}\begin{tikzpicture}
        \matrix (m) [matrix of math nodes,row sep=2em,column sep=4em,minimum width=2em]
        {X  & X\otimes X \\
          & X \\};
        \path[->]
        (m-1-1) edge node [below] {$\id$} (m-2-2)
            edge node [above] {$\Delta_X$} (m-1-2)
        (m-1-2) edge node [right] {$\Delta_X^\dag$} (m-2-2);
    \end{tikzpicture}\end{aligned}
    \begin{aligned}\begin{tikzpicture}
        \matrix (m) [matrix of math nodes,row sep=1.5em,column sep=4em,minimum width=2em]
        {X\otimes X  && X\otimes(X\otimes X) \\
         & X& \\
         (X\otimes X)\otimes X & & X\otimes X \\};
        \path[->]
        (m-1-1) edge node [right] {$\Delta\otimes\id$} (m-3-1)
            edge node [above] {$\id\otimes\Delta_X$} (m-1-3)
            edge node [right=3mm] {$\Delta_X^\dag$} (m-2-2)
        (m-2-2) edge node [below] {$\Delta_X$} (m-3-3)
        (m-3-1) edge node [below] {$(\id\otimes\Delta_X^\dag)\circ\alpha^\dag$} (m-3-3)
        (m-1-3) edge node [left,yshift=4mm] {$(\Delta_X^\dag\otimes \id)\circ \alpha$} (m-3-3);
    \end{tikzpicture}\end{aligned}
  \]
  These diagrams express cocommutativity, coassociativity, speciality and the Frobenius law. 
\end{definition} 

Another useful monoidal product, here on inverse categories, is a disjointness tensor, defined in the following way (see \cite{giles:thesis}):

\begin{definition}\label{def:disjtensor}\index[word]{disjointness tensor} 
  An inverse category is said to have a \emph{disjointness tensor} if it is
  equipped with a symmetric monoidal tensor product $- \oplus -$ such that its
  unit $0$ is a zero object, and the canonical \emph{quasi-injections}
  \begin{equation*}
    i_1 := X \xrightarrow{\rho^{-1}_X} X \oplus 0 \xrightarrow{X \oplus 
    0_{0,Y}} X \oplus Y \qquad
    \i_2 := Y \xrightarrow{\lambda^{-1}_Y} 0 \oplus Y \xrightarrow{0_{0,X}
    \oplus Y} X \oplus Y
  \end{equation*}
  are jointly epic.
\end{definition}

For example, $\cat{PInj}$ has inverse products $\Delta_X \colon X \to X \otimes
X$ with $x \mapsto (x,x)$, and a disjointness tensor where $X \oplus Y$ is
given by the tagged disjoint union of $X$ and $Y$ (the unit of which is
$\emptyset$).

Inverse categories can also be seen as certain instances of restriction categories. Informally, a restriction category models partially defined morphisms, by assigning to each $f\colon A\to B$ a morphism $\bar{f}\colon A\to A$ that is the identity on the domain of definition of $f$ and undefined otherwise. For more details, see~\cite{cockettlack:restrictioncategories}.

\begin{definition}\label{def:restrictioncategory}\index[word]{restriction category} 
  A \emph{restriction category} is a category equipped with an operation that assigns to each $f\colon A\to B$ a morphism $\bar{f}\colon A\to A$ such that:
  \begin{itemize} 
    \item $f\circ\bar{f}=f$ for every $f$;
    \item $\bar{f}\circ\bar{g}=\bar{g}\circ\bar{f}$ whenever $\dom f=\dom g$;
    \item $\overline{g\circ\bar{f}}=\bar{g}\circ\bar{f}$ whenever $\dom f=\dom g$;
    \item $\bar{g}\circ f=f\circ\overline{g\circ f}$ whenever $\dom g=\cod f$.
  \end{itemize}
  A \emph{restriction functor} is a functor $F$ between restriction categories with $F(\bar{f})=\overline{F(f)}$. A \emph{monoidal restriction category} is a restriction category with a monoidal structure for which $\otimes\colon \cat{C}\times\cat{C}\to\cat{C}$ is a restriction functor.

  A morphism $f$ in a restriction category is a \emph{partial isomorphism}\index[word]{partial isomorphism}  if there is a morphism $g$ such that $g \circ f=\bar{f}$ and $f \circ g=\bar{g}$. Given a restriction category \cat{C}, define $\Inv(\cat{C})$ to be the wide subcategory of \cat{C} having all partial isomorphisms of \cat{C} as its morphisms.
\end{definition}

An example of a monoidal restriction category is $\cat{PFn}$, whose objects are sets, and whose morphisms $X \to Y$ are partial functions: $R \subseteq X \times Y$ such that for each $x \in X$ there is at most one $y \in Y$ with $(x,y) \in R$. The restriction $\bar{R}$ is given by $\{(x,x) \mid \exists y \in Y \colon (x,y) \in R\}$.

\begin{remark}
  Inverse categories could equivalently be defined as either categories in which every morphism $f$ satisfies $f=f \circ g \circ f$ and $g=g \circ f \circ g$ for a unique morphism $g$, or as restriction categories in which all morphisms are partial isomorphisms~\cite[Theorem~2.20]{cockettlack:restrictioncategories}. It follows that functors between inverse categories automatically preserve daggers and that $\Inv(\cat{C})$ is an inverse category.

  It follows, in turn, that an inverse category with inverse products is a monoidal inverse category: because $X \otimes -$ and $- \otimes Y$ are endofunctors on an inverse category, they preserve daggers, so that by bifunctoriality $-\otimes -$ does as well.
\end{remark}

\section{Arrows as an interface for reversible effects}\label{sec:arrows}

Arrows are a standard way to encapsulate computational side-effects in a functional (irreversible) programming language~\cite{hughes:arrows,hughes:programmingarrows}. This section extends the definition to reversible settings, namely to dagger arrows and inverse arrows. We argue that these notions are ``right'', by exhibiting a large list of fundamental reversible side-effects that they model.
We start by recalling irreversible arrows.

\begin{definition}\label{def:arrow}\index[word]{arrow} 
  An \emph{arrow} on a symmetric monoidal category $\cat{C}$ is a functor $A \colon \cat{C}\op \times \cat{C} \to \cat{Set}$ with operations
  \begin{align*}
    \arr &: (X \to Y)\to A~X~Y \\
    (\acmp) & : A~X~Y \to A~Y~Z \to A~X~Z\\
    \first_{X,Y,Z} &: A~X~Y \to A~(X\otimes Z)~(Y\otimes Z) 
  \end{align*}
  that satisfy the following laws:
  \begin{align}
    (a \acmp b) \acmp c &= a \acmp (b \acmp c) \label{eq:arrow1}\\
    \arr (g\circ f) &= \arr f \acmp \arr g \label{eq:arrow2}\\
    \arr \id \acmp a =&\;a = a \acmp\arr \id  \label{eq:arrow3}\\
    \first_{X,Y,I} a \acmp \arr \rho_Y &= \arr \rho_X \acmp a  \label{eq:arrow4}\\
    \first_{X,Y,Z} a \acmp \arr (\id[Y]\otimes f) &=  \arr(\id[X]\otimes f) \acmp \first_{X,Y,Z} a \label{eq:arrow5}\\
    (\first_{X,Y,Z\otimes V} a) \acmp \arr \alpha_{Y,Z,V} &= \arr \alpha_{X,Z,V} \acmp \first (\first a) \label{eq:arrow6}\\
    \first (\arr f)&= \arr (f\otimes\id) \label{eq:arrow7}\\
    \first (a \acmp b)&=(\first a) \acmp (\first b) \label{eq:arrow8}
  \end{align}
  where we use the functional programming convention to write $A~X~Y$ for $A(X,Y)$ and $X\to Y$ for $\hom (X,Y)$
  The \emph{multiplicative fragment} consists of above data except $\first$, satisfying all laws except those mentioning $\first$; we call this a \emph{weak arrow}.

  Define $\second(a)$ by $\arr(\sigma) \acmp \first(a) \acmp \arr(\sigma)$, using the symmetry, so analogues of~\eqref{eq:arrow4}--\eqref{eq:arrow8} are satisfied. Arrows makes sense for (nonsymmetric) monoidal categories if we add this operation and these laws.
\end{definition}

\cut{Here come our central definitions.}

\begin{definition}
  A \emph{dagger arrow} is an arrow on a monoidal dagger category with an additional operation
  $\inv : A~X~Y \to A~Y~X$\index[symb]{$\inv (a)$, the dagger of an arrow $a$}
  satisfying the following laws:
  \begin{align}
    \inv (\inv a)&= a \label{eq:daggerarrow1}\\
    \inv a \acmp \inv b &= \inv (b\acmp a) \label{eq:daggerarrow2}\\ 
    \arr (f^\dag)&=\inv (\arr f) \label{eq:daggerarrow3}\\
    \inv (\first a )&=\first (\inv a) \label{eq:daggerarrow4}
  \intertext{An \emph{inverse arrow}  is a dagger arrow on a monoidal inverse category such that:}
    (a \acmp \inv a) \acmp a &= a \label{eq:inversearrow1}\\
    (a \acmp \inv a) \acmp (b \acmp \inv b) &= (b \acmp \inv b) \acmp (a \acmp \inv a) \label{eq:inversearrow2}
  \end{align}
  The \emph{multiplicative fragment} consists of above data except $\first$, satisfying all laws except those mentioning $\first$.
\end{definition}

\begin{remark}
  There is some redundancy in the definition of an inverse arrow:
  \eqref{eq:inversearrow1} and~\eqref{eq:inversearrow2} imply \eqref{eq:daggerarrow3} and~\eqref{eq:daggerarrow4}; and~\eqref{eq:daggerarrow3} implies $\inv (\arr \id) = \arr \id$.   
\end{remark}

Like the arrow laws~\eqref{eq:arrow1}--\eqref{eq:arrow8}, in a programming language with inverse arrows, the burden is on the programmer to guarantee~\eqref{eq:daggerarrow1}--\eqref{eq:inversearrow2} for their implementation. If that is done, the language guarantees arrow inversion.

\begin{remark}
  Now follows a long list of examples of inverse arrows, described in a typed first-order reversible functional pseudocode with type classes, inspired by Theseus~\cite{jamessabry:theseus,jamessabry:infeff}, the revised version of Rfun (briefly described in~\cite{kaarsgaardthomsen:rfun}), and Haskell. Type classes are a form of interface polymorphism: A type class is defined by a class specification containing the signatures of functions that a given type must implement in order to be a member of that type class (often, type class membership also informally requires the programmer to ensure that certain equations are required of their implementations). For example, the \<Functor\> type class (in Haskell) is given by the class specification

\begin{haskell}
\hskwd{class} Functor f \hskwd{where} \\
\quad\hsalign{
  fmap &:&\ (a\to{b})\to{f a}\to{f b}
}
\end{haskell}
with the additional informal requirements that \<fmap id = id\> and \<fmap (g\circ{f}) = (fmap g)\circ(fmap f)\> must be satisfied for all instances. For example, lists in Haskell satisfy these equations when defining \<fmap\> as the usual \<map\> function, \ie:
\begin{haskell*}
  \hskwd{instance} Functor List \hskwd{where} \\
  \quad\hsalign{
    fmap &:&\ (a\to{b})\to{List a}\to{List b} \\
    fmap f [] &=&\ [] \\
    fmap f (x{::}xs) &=&\ (f x){::}(fmap f xs)
  }
\end{haskell*}

While higher-order reversible functional programming is fraught, aspects of this can be mimicked by means of parametrized functions. A parametrized function is a function that takes parts of its input statically (\ie, no later than at compile time), in turn lifting the first-order requirement on these inputs. To separate static and dynamic inputs from one another, two distinct function types are used: $a \to b$ denotes that $a$ must be given statically, and $a \leftrightarrow b$ (where $a$ and $b$ are first-order types) denotes that $a$ is passed dynamically. As the notation suggests, functions of type $a \leftrightarrow b$ are reversible. For example, a parametrized variant of the reversible map function can be defined as a function \<map : (a\leftrightarrow{b})\to([a]\leftrightarrow[b])\>. Thus, \<map\> itself is \emph{not} a reversible function, but given statically any reversible function \<f : a\leftrightarrow{b}\>, the parametrized \<map f : ([a]\leftrightarrow[b])\> is.

  Given this distinction between static and dynamic inputs, the signature of $\arr$ becomes $(X \leftrightarrow Y) \to A~X~Y$. 
  We will see later that Arrows on $\cat{C}$ can be modelled categorically as monoids in the functor category $\prof$~\cite{jacobs2009categorical}.
  Definition~\ref{def:arrow} uses the original signature, because this distinction is not present in the irreversible case. Fortunately, the semantics of arrows remain the same whether or not this distinction is made.
\end{remark}

\begin{example}{\emph{(Pure functions)}}\label{ex:pure}
  A trivial example of an arrow is the identity arrow $\hom(-,+)$ which adds
  no computational side-effects at all. This arrow is not as boring as it may look at first. If the identity arrow is an inverse arrow, then the programming language 
  in question is both \emph{invertible} and \emph{closed under program 
  inversion}: any program $p$ has a semantic inverse $\llbracket p \rrbracket^\dag$ (satisfying certain equations), and the semantic inverse coincides with the
  semantics $\llbracket \inv(p) \rrbracket$ of another program $\inv(p)$. As 
  such, $\inv$ must be a sound and complete \emph{program inverter} (see also 
  \cite{kawabeglueck:lrinv}) on pure functions; not a trivial matter at all.
\end{example}

\begin{example}{\emph{(Information effects)}}\label{ex:informationeffects}
  James and Sabry's \emph{information effects}~\cite{jamessabry:infeff} explicitly expose creation and erasure of information as effects.
  This type-and-effect system captures irreversible computation inside a pure reversible setting.

  Categorically, they start by working in a model of the language $\Pi$, a fragment of the language $\Pi^o$ studied in Section~\ref{sec:rigcatslanguage} without recusive types or traces. Since the language can only denote isomorphisms, we may assume that the model is a rig groupoid $\cat{C}$.

  If $\cat{C}$ is small, it carries an arrow, where $A(X,Y)$ is the disjoint union of $\hom(X \otimes H, Y \otimes G)$ where $G$ and $H$ range over all objects, and morphisms $X \otimes H \to Y \otimes G$ and $X \otimes H' \to Y \otimes G'$ are identified when they are equal up to coherence isomorphisms.
  This is an inverse arrow, where $\inv(a)$ is simply $a^{-1}$. 
  It supports the following additional operations:
  \begin{haskell}
    erase &=&\ [ \sigma \colon X \otimes 1 \to 1\otimes X ]_\simeq &  \in A(X,1)\text, \\
    create_X &=&\ [ \sigma\colon 1\otimes X \to X \otimes 1 ]_\simeq & \in A(1,X)\text. 
  \end{haskell}
  James and Sabry show how a simply-typed first order functional irreversible language translates into $\pi$ using this inverse arrow to build implicit communication with a global heap $H$ and garbage dump $G$. They then augment the irreversible language with iteration and natural numbers, and extend the original translation, now translating into $\Pi^o$.
\end{example}

\begin{example}{\emph{(Reversible state)}}\label{ex:reversiblestate}
  Perhaps the prototypical example of an effect is computation with a mutable 
  store of type $S$. In the irreversible case, such computations are performed
  using the state monad \<State S X = S \multimap (X \otimes 
  S)\>, where $S \multimap -$ is the right adjoint to $- \otimes S$, and can
  be thought of as a function type. Morphisms in the
  corresponding Kleisli category are morphisms of the form $X \to S \multimap
  (Y \otimes S)$ in the ambient monoidal closed category. In this 
  formulation, the current state is fetched by \<get : 
  State S S\> defined as \<get s = (s,s)\>, while the state is (destructively) 
  updated by \<put : S\to\mathit{State} S 1\> defined as \<put x s = ((),x)\>.
  
  Such arrows  can not be used as-is in inverse categories, however, as
  canonical examples (such as \PInj{}) fail to be monoidal closed.
  To get around this, note that it follows from monoidal closure that $\hom(X, S
  \multimap (Y \otimes S)) \simeq \hom(X \otimes S, Y \otimes S)$, so that
  $\hom(-\otimes S, -\otimes S)$ is an equivalent arrow that does not depend on
  closure. With this is mind, we define the \emph{reversible state arrow} with a
  store of type $S$:
  \begin{haskell*}
    \hskwd{type} RState S X Y = 
    X\otimes\mathit{S}\leftrightarrow\mathit{Y}\otimes\mathit{S} \\\\\\
    \hskwd{instance} Arrow (RState S) \hskwd{where} \\
    \quad\hsalign{
    arr f (x,s) &=&\ (f x, s) \\
    (a\acmp\mathit{b}) (x,s) &=&\ b (a (x,s)) \\
    first a ((x,z),s) &=&\ \hskwd{let} (x',s')=a (x,s) \hskwd{in} ((x',z),s') 
    } 
    \\\\\\
    \hskwd{instance} InverseArrow (RState S) \hskwd{where} \\
    \quad\hsalign{
    inv a (y,s) &=&\ a^\dagger (y,s)
    }
  \end{haskell*}
  This satisfies the inverse arrow laws. To access the state, we use reversible 
  duplication of values (categorically, this requires the monoidal product to 
  have a natural diagonal $\Delta_X : X \to X \otimes X$, as inverse products 
  do). Syntactically, this corresponds to the following arrow:
  \begin{haskell}
    get &:&\ RState S X (X\otimes\mathit{S}) \\
    get (x,s) &=&\ ((x,s),s)
  \end{haskell}
  The inverse to this arrow is 
  \<assert : RState S (X\otimes\mathit{S}) X\>, 
  which asserts that the current state is
  precisely what is given in its second input component; if this fails, the
  result is undefined. For changing the state, while we cannot destructively
  update it reversibly, we \emph{can} reversibly update it by a given
  reversible function with signature $S \leftrightarrow S$. This gives:
  \begin{haskell}
    update &:&\ (S\leftrightarrow\mathit{S})\to\mathit{RState} S X X \\
    update f (x,s) &=&\ (x, f s)
  \end{haskell}
  This is analogous to how variable assignment works in the reversible 
  programming language Janus~\cite{yokoyamaglueck:janus}: Since destructive
  updating is not permitted, state is updated by means of built-in
  reversible update operators, \eg, updating a variable by adding a constant 
  or the contents of another variable to it, etc.
  
  \cut{$\mathtt{x}~{+}{=}~\mathtt{y}$ (add
  the contents $\mathtt{y}$ to the contents of $\mathtt{x}$),
  $\mathtt{y}~{-}{=}~\mathtt{2}$ (subtract two from the contents of 
  $\mathtt{y}$), and so on.}


\end{example}

\begin{example}{\emph{(Computation in context)}}
  Related to computation with a mutable store is computation with an immutable
  one; that is, computation within a larger context that remains invariant
  across execution. In an irreversible setting, this job is typically handled
  by the \emph{reader monad} (with context of type $C$), defined as \<Reader C
  X = C\rightarrow{X}\>. This approach is fundamentally irreversible, however,
  as the context is ``forgotten'' whenever a value is computed by supplying it
  with a context. Even further, it relies on the reversibly problematic
  notion of monoidal closure.
  
  A reversible version of this idea is one that remembers the context, 
  giving us the reversible Reader arrow:
  \begin{haskell}
    \hskwd{type} Reader C X Y = X\otimes{C}\leftrightarrow\mathit{Y}\otimes{C}
  \end{haskell}
  This is precisely the same as the state arrow -- indeed, the instance
  declarations for \<arr\>, \<(\acmp)\>, \<first\>, and \<inv\> are the same --
  save for the fact that we additionally require \emph{all} \<Reader\> arrows
  $r$ to satisfy \<c=c'\> whenever \<r (x,c) = (y,c')\>. We notice that \<arr
  f\> satisfies this property for all \<f\>, whereas \<(\acmp)\>, \<first\>, and
  \<inv\> all preserve it. This resembles the ``slice'' construction on inverse
  categories with inverse products; see \cite[Sec.~4.4]{giles:thesis}.
  
  As such, while we can provide access to the context via a function defined
  exactly as \<get\> for the reversible state arrow, we cannot provide an
  update function without (potentially) breaking this property -- as intended.
  In practice, the property that the context is invariant across execution can
  be aided by appropriate interface hiding, \ie\ exposing the \<Reader\> type
  and appropriate instance declarations and helpers (such as \<get\> and
  \<assert\>) but leaving the \emph{constructor} for \<Reader\> arrows hidden.
\end{example}

\begin{example}{\emph{(Rewriter)}}
  A particularly useful special case of the reversible state arrow is when the
  store $S$ forms a group. \cut{In the irreversible case, this is related to
  the \emph{Writer} monad typically used to perform tasks such as logging.}
  While group multiplication if seen as a function
  \<G\otimes{G}\leftrightarrow{G}\> is invertible only in degenerate cases, we
  can use parametrization to fix the first argument of the multiplication,
  giving it a much more reasonable signature of
  \<G\to({G}\leftrightarrow{G})\>. In this way, groups can be expressed as
  instances of the type class
  \begin{haskell}
    \hskwd{class} Group G \hskwd{where} \\
    \quad\hsalign{
      gunit &:&\ G\\
      gmul &:&\ G\to({G}\leftrightarrow{G}) \\
      ginv &:&\ G\leftrightarrow{G}
    }
  \end{haskell}
  subject to the usual group axioms. This gives us an arrow of the form
  \begin{haskell}
    \hskwd{type} Rewriter G X Y = X\otimes{G}\leftrightarrow{Y}\otimes{G}
  \end{haskell}
  with instance declarations identical to that of \<RState G\>, save that we 
  require \<G\> to be an instance of the \<Group\> type class.
  With this, adding or removing elements from state of type $G$ can then be
  performed by
  \begin{haskell}
    rewrite &:&\ G\to{Rewriter} G X X\\
    rewrite a (x, b) &=&\ (x, gmul a b)
  \end{haskell}
  which ``rewrites'' the state by the value $a$ of type $G$. Note that while
  the name of this arrow was chosen to be evocative of the \emph{Writer} monad
  known from irreversible functional programming, as it may be used for similar
  practical purposes, its construction is substantially different (\ie,
  irreversible \emph{Writer} arrows are maps of the form $X \to Y \times M$
  where $M$ is a monoid).
  \cut{filling a similar role as the \<tell\> function does for the \<Writer\>
  monad in the irreversible case.}
\end{example}

\begin{example}{\emph{(Vector transformation)}}
  Vector transformations, that is, functions on lists that preserve the length
  of the list, form another example of inverse arrows. The \<Vector\> arrow is 
  defined as follows:
  \begin{haskell*}
    \hskwd{type} Vector X Y = 
    [X]\leftrightarrow[Y]\\\\\\
    \hskwd{instance} Arrow (Vector) \hskwd{where} \\
    \quad\hsalign{
    arr f xs &=&\ map f xs \\
    (a\acmp{b}) xs &=&\ b (a xs) \\
    first a ps &=&\ \hskwd{let} (xs,zs) = zip^\dagger ps \hskwd{in} zip (a xs,
    zs)
    } \\\\\\
    \hskwd{instance} InverseArrow (Vector) \hskwd{where} \\
    \quad\hsalign{
    inv a ys &=&\ a^\dagger ys
    }
  \end{haskell*}
  The definition of \<first\> relies on the usual \<map\> and \<zip\> 
  functions, which are defined as follows:\\
\begin{tabular}{ll}
\begin{minipage}{.4\textwidth}
\begin{haskell}
map &:&\ (a\leftrightarrow{b})\to([a]\leftrightarrow[b])\\
map f [] &=&\ []\\
map f (x{::}xs) &=&\ (f x){::}(map f xs)
\end{haskell}
\end{minipage}
&
\begin{minipage}{.4\textwidth}
\begin{haskell}
zip &:&\ ([a],[b])\leftrightarrow[(a,b)]\\
zip ([],[]) &=&\ []\\
zip (x{::}xs,y{::}ys) &=&\ (x,y){::}(zip (xs,ys))\\
\end{haskell}
\end{minipage}
\end{tabular}\\
  Notice that preservation of length is required for \<first\> to work: if the 
  arrow \<a\> does not preserve the length of \<xs\>, then \<zip (a xs, 
  zs)\> is undefined. However, since \<arr\> lifts a pure function $f$ to 
  a \<map\> (which preserves length), and \<(\acmp)\> and \<inv\> are 
  given by the usual composition and inversion, the interface maintains this property.
\end{example}

\begin{example}{\emph{(Reversible error handling)}}
  An inverse \emph{weak arrow} comes from reversible computation
  with a possibility for failure. The weak \<Error\> arrow is defined using disjointness tensors as
  follows:
  \begin{haskell*}
    \hskwd{type} Error E X Y = 
    X\oplus{E}\leftrightarrow{Y}\oplus{E}\\\\\\
    \hskwd{instance} WeakArrow (Error E) \hskwd{where} \\
    \quad\hsalign{
    arr f (InL x) &=&\ InL (f x) \\
    arr f (InR e) &=&\ InR e \\
    (a\acmp{b}) x &=&\ b (a x) \\
    } \\\\\\
    \hskwd{instance} InverseWeakArrow (Error E) \hskwd{where} \\
    \quad\hsalign{
    inv a y &=&\ a^\dagger y
    }
  \end{haskell*}
  In this definition, we think of the type \<E\> as the type of \emph{errors}
  that could occur during computation. As such, a pure function $f$ lifts to a
  weak arrow which always succeeds with value $f(x)$ when given a
  non-erroneous input of $x$, and always propagates errors that may have occurred
  previously.
  
  Raising an error reversibly requires more work than in the irreversible case,
  as the effectful program that produces an error must be able to
  \emph{recover} from it in the converse direction. In this way, a reversible
  \<raise\> requires two pieces of data: a function
  \<f:X\leftrightarrow{E}\> that transforms problematic inputs into appropriate
  errors; and a choice function \<p:E\leftrightarrow{E}\oplus{E}\> that
  decides if the error came from this site, injecting it to the left if it did,
  and to the right if it did not. This can be formalized in an inverse category with a disjointness tensor by saying that the morphisms $f$ and $p$ should satisfy the equations
   \[p f =i_1 f \qquad   i_2^\dag p=p^\dag i_2i_2^\dag p \qquad p^\dag p=\id\]
  The choice function $p$ is critical, as in  the converse direction it decides whether the error should be handled immediately or later.  Thus we define \<raise\> as follows:
  \begin{haskell}
    raise &:&\ 
    (X\leftrightarrow{E})\to(E\leftrightarrow{E}\oplus{E})\to{Error} E X Y \\
    raise f p x &=&\ InR (p^\dagger (arr f x)))
  \end{haskell}
  The converse of \<raise\> is \<handle\>, an (unconditional) error handler that
  maps matching errors back to successful output values. Since unconditional
  error handling is seldom required, this can be combined with control flow
  (see Example~\ref{ex:control_flow}) to perform conditional error handling,
  \ie\ to only handle errors if they occur.
\end{example}

\begin{example}{\emph{(Serialization)}}
  When restricting our attention, as we do here, to only first-order reversible
  functional programming languages, another example of inverse arrows arises in
  the form of \emph{serializers}. A serializer is a function that transforms an
  internal data representation into one more suitable for storage, or for
  transmission to other running processes. To transform serialized data back
  into an internal representation, a suitable deserializer is used. 
  
  When restricting ourselves to the first-order case, it seems reasonable to
  assume that all types are serializable, as we thus avoid the problematic 
  case of how to serialize data of function type. As such, assuming that all 
  types \<X\> admit a function \<serialize : X\leftrightarrow{Serialized} X\>
  (where \<Serialized X\> is the type of serializations of data of type X), we
  define the \<Serializer\> arrow as follows:
\begin{haskell*}
\hskwd{type} Serializer X Y = 
X\leftrightarrow{Serialized} Y\\\\\\
\hskwd{instance} Arrow (Serializer) \hskwd{where} \\
\quad\hsalign{
arr f x &=&\ serialize (f x) \\
(a\acmp{b}) x &=&\ b (serialize^\dagger (a x)) \\
first a (x,z) &=&\ serialize (serialize^\dagger (a x), z)
} \\\\\\
\hskwd{instance} InverseArrow (Serializer) \hskwd{where} \\
\quad\hsalign{
inv a y &=&\ serialize (a^\dagger (serialize y))
}
\end{haskell*}
  Notice how \<serialize^\dagger : Serialized X\leftrightarrow{X}\> takes the
  role of a (partial) deserializer, able to recover the internal representation
  from serialized data as produced by the serializer. A deserializer of the
  form \<serialize^\dagger\> will often only be partially defined, since many
  serialization methods allow many different serialized representations of the
  same data (for example, many textual serialization formats are whitespace
  insensitive). Of course, all this serializing and unserializing is somewhat inefficient.
  In spite of these shortcomings, partial deserializers produced by 
  inverting serializers are sufficient for the above definition to satisfy the 
  inverse arrow laws.
\end{example}

\cut{
\begin{example}[Concurrency]\label{ex:reversibleio}
  Another frequently used effect is input and output, allowing programs 
  means of communication with the surrounding environment. The concurrency 
  arrow can be seen as a special case of the reversible state arrow as follows.
  We construct a formal object $E$ of \emph{environments} and formal partial
  isomorphisms $E \to E$ corresponding to reversible transformations of this
  environment (\eg, exchange of data between running processes). We then 
  define
  \begin{equation*}
    \mathtt{Con}~X~Y = X \otimes E \leftrightarrow Y \otimes E\text,
  \end{equation*}
  with $\arr$, $\first$, $\acmp$, and $\inv$ as in
  $\mathtt{RState}$, satisfying the inverse arrow laws.
  
  This definition is accurate, but also somewhat unsatisfying operationally. To
  reify this idea, consider the following reversible input/output protocol. Let
  $E$ be an object of \emph{process tables}, containing all necessary
  information about currently running processes (such as internal state).
  Suppose a (suitably reversible) interface for buffers exists. With this, we
  can imagine a family of morphisms
  \begin{equation*}
    \mathtt{exchange} : \mathtt{Process} \to (\mathtt{Con}~X~Y)
  \end{equation*}
  where $\mathtt{Process}$ is a type of \emph{process handles}, and $X$ and $Y$
  are instances of the reversible buffer interface. Operationally,
  $\mathtt{exchange}$ exchanges control of the buffer of the currently running
  process for the one of the process it communicates with, and vice versa. For
  example, if $p_1$ calls $\mathtt{exchange}~p_2$ ``$\mathtt{cat}$'' and $p_2$
  calls $\mathtt{exchange}~p_1$ ``$\mathtt{dog}$'', after synchronization the
  buffer received by $p_1$ will contain ``$\mathtt{dog}$'' while the one
  received by $p_2$ will contain ``$\mathtt{cat}$''.
  
  While this simple protocol is reversible, it sidesteps the asymmetry of process 
  communication; one process sends a message, another 
  process waits to receive it. To accommodate this, one could agree that a process expecting to \emph{read} from another process should exchange the empty buffer,
  while a process that \emph{writes} to another process should expect the empty
  buffer in return. Thus, asymmetric reading and writing reduce to symmetric buffer exchange.
  See also the literature on reversible CCS~\cite{danoskrivine:reversibleccs} and reversible variations of the $\pi$-calculus~\cite{cristescuetal:picalculus}.
\end{example}}

\begin{example}{\emph{(Dagger Frobenius monads)}}\label{ex:frobeniusmonads}
  Monads are also often used to capture computational side-effects. Arrows are more general.
  If $T$ is a strong monad, then $A=\hom(-,T(+))$ is an arrow: $\arr$ is given by the unit, $\acmp$ is given by Kleisli composition, and $\first$ is given by the strength maps.
  When the base category is a dagger or inverse category modelling reversible pure functions, we can use dagger Frobenius monads from chapter~\ref{chp:monads}.

  The Kleisli category of such a monad is again a dagger category by Lemma~\ref{lem:kleislidagger}, giving rise to an operation $\inv$ satisfying~\eqref{eq:daggerarrow1}--\eqref{eq:daggerarrow3}. 
  A dagger Frobenius monad is strong when the strength maps are unitary. In this case~\eqref{eq:daggerarrow4} also follows. If the underlying category is an inverse category, then $\mu \circ \mu^\dag \circ \mu = \mu$, whence $\mu \circ \mu^\dag = \id$, and~\eqref{eq:inversearrow1}--\eqref{eq:inversearrow2} follow. Thus, if $T$ is a strong dagger Frobenius monad on a dagger/inverse category, then $A$ is a dagger/inverse arrow.
\end{example}

\begin{example}{\emph{(Restriction monads)}}
  There is a notion in between the dagger and inverse arrows of the previous example.
  A \emph{(strong) restriction monad} is a (strong) monad on a (monoidal) restriction category whose underlying endofunctor is a restriction functor.
  The Kleisli-category of a restriction monad $T$ has a natural restriction structure: just define the restriction of $f\colon X\to T(Y)$ to be $\eta_X\circ\bar{f}$. The functors between the base category and the Kleisli category then become restriction functors.
  If $T$ is a strong restriction monad on a monoidal restriction category \cat{C}, then $\Inv(\cat{C})$ has an inverse arrow $(X,Y)\mapsto (\Inv(\Kl(T)))(X,Y)$.
\end{example}

\begin{example}{\emph{(Control flow)}}\label{ex:control_flow}
  While only trivial inverse categories have coproducts~\cite{giles:thesis},
  less structure suffices for reversible control structures. When the domain
  and codomain of an inverse arrow both have disjointness tensors (see
  Definition~\ref{def:disjtensor}), it can often be used to implement
  $\mathit{ArrowChoice}$, \ie an arrow with an additional operation 
   \[left : A~X~Y\to{A} (X\oplus{Z}) (Y\oplus{Z})\]
 subject to various laws enabling conditional constructs. For a simple example, the pure arrow on an inverse
  category with disjointness tensors implements \<left : A~X~Y\to{A}
  (X\oplus{Z}) (Y\oplus{Z})\> as
  \begin{haskell}
    left f (x, z) = (f x, z)
  \end{haskell}
  The laws of \<ArrowChoice\>~\cite{hughes:arrows} simply reduce to $- \oplus
  -$ being a bifunctor with natural quasi-injections. More generally, the laws
  amount to preservation of the disjointness tensor. For the reversible state
  arrow (Example~\ref{ex:reversiblestate}), this hinges on $\otimes$
  distributing over $\oplus$.
  
  From these libraries for  \<ArrowChoice\>~ derive additional operations. The splitting combinator 
  \[(+\!\!\!\!+\!\!\!\!+) : A~X~Y \to A~Z~W\to A~(X\otimes Z)~(Y\oplus W) \]
   is unproblematic for reversibility, but the fan-in combinator 
   \[(|||) : A~X~Y\to A~Z~Y\to A~(X\oplus Z)\to Y\] cannot be
  defined reversibly, as it explicitly deletes information about which branch 
  was chosen.  Reversible conditionals thus require two 
  predicates: one determining the branch to take, and one asserted to join the branches after execution. The branch-joining predicate must be chosen carefully to ensure that it is always true after the
  \emph{then}-branch, and false after the \emph{else}-branch. This
  is a standard way of handling branch joining
  reversibly~\cite{yokoyamaglueck:janus,yokoyamaetal:rfun,glueckkaarsgaard:rfcl}.
\end{example}

\cut{
\begin{example}{\emph{(Rewriter)}}\label{ex:rewriter}
  In irreversible computing, another example of an effect is the
  \emph{writer monad} (useful for \eg\ logging), which in its arrow form is
  given by the Kleisli category $\Kl(-\otimes M)$ for a monoid object $M$. In
  the reversible case, we need not just to be able to
  ``write'' entries into our log, but also to ``unwrite'' them
  again. That is, we need a group object $G$ instead of a monoid $M$.
  But that is not enough, as group multiplication $G \otimes G \to G$ is generally not reversible.

  Inverse arrows sidestep this issue: given group $G$ in a monoidal restriction category \cat{C},
  the functor $- \otimes G$ is a (strong) restriction monad on \cat{C}. 
  Now $(X,Y) \mapsto (\Inv(\Kl(- \otimes G)))(X,Y)$ gives an 
  inverse arrow on $\Inv(\cat{C})$.

  Morphisms of $\Inv(\Kl(- \otimes G))$ are morphisms $f \colon X \to Y 
  \otimes G$ of \cat{C} for which there exists a unique $g \colon Y \to X \otimes G$
  making the following diagram in \cat{C} (together with a similar diagram with the roles of $f$ and $g$ interchanged) commute:
  \[\begin{tikzpicture}[xscale=1.2]
    \node (X) {$X$};
    \node[below of=X] (X') {$X$};
    \node[right=1cm of X] (YG)  {$Y \otimes G$};
    \node[below of=YG] (XI) {$X \otimes I$};
    \node[right=1cm of YG] (XGG) {$X \otimes G \otimes G$};
    \node[below of=XGG] (XG) {$X \otimes G$};
    
    \draw[->] (X) to node[above] {$f$} (YG);
    \draw[->] (YG) to node[above] {$g \otimes \id[G]$} (XGG);
    \draw[->] (XGG) to node[right] {$\id[X] \otimes \mu$} (XG);
    
    \draw[->] (X) to node[left] {$\bar{f}$} (X');
    \draw[->] (X') to node[above] {$\rho_X$} (XI);
    \draw[->] (XI) to node[above] {$\id[X] \otimes \eta$} (XG);
  \end{tikzpicture}\]
  Here $\eta$ and $\mu$ are the unit respectively the
  multiplication of the group object $G$. For example, when \cat{C} is \Pfn{},
  and $G$ is an ordinary group written multiplicatively, if $f$ is such a partial isomorphism,
  then $f(x) = (y,h)$ for some particular $x \in X$ iff its unique inverse $g$ 
  satisfies $g(y) = (x, h^{-1})$.
  
  As with the irreversible writer monad, given a particular element of $G$,
  we can write this element to the log by means of the family
  \begin{align*}
    & \mathtt{rewrite} : G \to \mathtt{Rewriter}~X~X \\
    & \mathtt{rewrite}~g~x = (x,g) \enspace.
  \end{align*}
  A message $g$ can then be ``unwritten'' by $\mathtt{rewrite}~g^{-1}$, the partial inverse of $\mathtt{rewrite}~g$.
  For a toy example, take $G=(\mathbb{Z},+)$; we may then think of `writing' 1 to the log as typing a dot, and `unwriting' 1, or equivalently `writing' -1, as typing backspace, thus modelling a simple progress bar.
  
  It might be difficult to construct inverses of morphisms in $\Inv(\Kl(- \otimes G))$ in general: given such an $f$, we are not aware of a formula that expresses the inverse $g$ in terms of $f$ and operations in $\Inv(\cat{C})$. Thus, even though the partial inverse is guaranteed to exist, computing it might be hard. However, when 
  \cat{C} is \Pfn, this works out by observing that any partial isomorphism $f$ 
  of $\Kl(- \otimes G)$ factors as a pure arrow 
  followed by an arrow 
  of the form $\left\langle \id,h\right\rangle$ for some $h\colon X \to G$ in \Pfn, but it is not clear if 
  this is the case for an arbitrary restriction category \cat{C}.
\end{example}
\changed{\textbf{Robin:} This one is maybe a bit sketchy, as it seems problematic to come up with a definition for $\inv$. Perhaps rework it to a special case of state arrows?}
}
\cut{
\begin{example}{\emph{(Recursion)}}\label{ex:recursion}
  Inverse categories can be outfitted with \emph{joins} on hom-sets, giving 
  rise to \cat{DCPO}-enrichment, and in particular to a \emph{fixed point 
  operator}
  \begin{equation*}
    \fix_{X,Y} \colon (\hom(X,Y) \to \hom(X,Y)) \to \hom(X,Y)
  \end{equation*}
  on continuous
  functionals~\cite{kaarsgaardaxelsengluck:joininversecategories}. Such joins
  can be formally adjoined to any inverse category $\cat{C}$, 
  yielding an inverse category $J(\cat{C})$ with 
  joins and a faithful inclusion functor $I \colon \cat{C} \to
  J(\cat{C})$~\cite[Sec.~3.1.3]{guo:thesis}. 
  
  With this we may obtain an inverse arrow $\hom(I(-), I(+))$ for recursion as
  an effect. Such an arrow could be useful in a reversible programming language
  that seeks to guarantee properties like termination and totality for pure
  functions, as these properties can no longer be guaranteed when general
  recursion is thrown into the mix. Given such an arrow \<RFix X Y\>,
  one would get a fixed point operator of the form
   \<fix : (RFix X Y\to{RFix} X Y)\to{RFix} X Y,\>
  provided that all expressible functions of type \<RFix X Y\to{RFix} X Y\> can 
  be shown to be continuous (\eg, by showing that all such functions must be
  composed of only continuous things\cut{, such as actions of locally
  continuous functors on morphisms, etc.}). This operator could then be used to
  define recursive functions, while maintaining a type-level separation of
  terminating and potentially non-terminating functions.

  The concept of recursion requires no modification to work reversibly, and may 
  even be implemented as usual using a call stack~\cite{yokoyamaetal:rfun}. 
  We illustrate the concept of reversible recursion by two examples: Consider 
  the reversible addition function, mapping a pair of natural numbers $(x,y)$
  to the pair $(x,x+y)$. This can be implemented as a 
  recursive reversible function~\cite{yokoyamaetal:rfun}. Since this function
  returns both the sum and the first component of the input pair
  (addition on its own is irreversible), it stores in the output the number
  of times the inverse function must ``unrecurse'' to get back to the original
  pair.
  
  Another example is the reversible Fibonacci function, mapping a natural 
  number $n$ to the pair $(x_n, x_{n+1})$ where each $x_i$ is the $i$'th number 
  in the Fibonacci series. \cut{(the function $n \mapsto x_n$ 
  is not invertible, as the first and second Fibonacci numbers coincide)} 
  This may also be implemented as a reversible recursive 
  function. Here, however, the number of times that the inverse function must 
  ``unrecurse'' is given only implicitly in the output: The inverse iteratively 
  computes $(x_i, x_{i+1}) \mapsto (x_{i+1} - x_i, x_i)$ until the result 
  becomes $(0,1)$ -- the first Fibonacci pair -- and then returns the 
  number of iterations it had to perform. If the inverse is given a pair of 
  natural numbers that is not a Fibonacci pair, the result is undefined (\ie, 
  the inverse function may never terminate, or may produce a garbage output).
  
\end{example}
}

\begin{example}{\emph{(Superoperators)}}\label{ex:cpm}
  Quantum information theory has to deal with environments.
  The basic category $\cat{FHilb}$ is that of finite-dimensional Hilbert spaces and linear maps. 
  But because a system may be entangled with its environment, the only morphisms that preserve states are the so-called {superoperators}, or \emph{completely positive} maps~\cite{selinger:completelypositive,coeckeheunen:completepositivity}: they are not just positive, but stay positive when tensored with an arbitrary ancillary object. In a sense, information about the system may be stored in the environment without breaking the (reversible) laws of nature. This leads to the so-called CPM construction. It is infamously known \emph{not} to be a monad. But it \emph{is} a dagger arrow on $\cat{FHilb}$, where $A~X~Y$ is the set of completely positive maps $X^* \otimes X \to Y^* \otimes Y$, $\arr f = f_* \otimes f$, $a \acmp b = b \circ a$, $\first_{X,Y,Z} a = a \otimes \id[Z^* \otimes Z]$, and $\inv a = a^\dag$.
\end{example}

Aside from these, other examples do fit the interface of inverse arrows, though
they are less syntactically interesting as they must essentially be ``built
in'' to a particular programming language. These include reversible IO, which
functions very similarly to irreversible IO, and reversible recursion, which
could be used to give a type-level separation between terminating and
potentially non-terminating functions, by only allowing fixed points of
parametrized functions between arrows rather than between (pure) functions.

\section{Inverse arrows, categorically}\label{sec:arrowscategorically}

This section explicates the categorical structure of inverse arrows. Arrows on $\cat{C}$ can be modelled categorically as monoids in the functor category $\prof$~\cite{jacobs2009categorical}. They also correspond to certain identity-on-objects functors $J\colon \cat{C}\to\cat{D}$. The category $\cat{D}$ for an arrow $A$ is built by $\cat{D}(X,Y)=A~X~Y$, and $\arr$ provides the functor $J$. We will only consider the multiplicative fragment, apart from remark~\ref{rem:strength}. The operation $\first$ can be incorporated in a standard way using strength~\cite{jacobs2009categorical,asada2010arrows}, and poses no added difficulty in the reversible setting.


Clearly, dagger arrows correspond to $\cat{D}$ being a dagger category and $J$ a dagger functor, whereas inverse arrows correspond to both $\cat{C}$ and $\cat{D}$ being inverse categories and $J$ a (dagger) functor. This section addresses the following question: which monoids correspond to dagger arrows and inverse arrows? In the dagger case, the answer is quite simple: the dagger makes $\prof$ into an involutive monoidal category, and then dagger arrows correspond to involutive monoids. Inverse  arrows furthermore require certain diagrams to commute. 

\begin{definition}\index[word]{involutive monoidal category} 
  An \emph{involutive monoidal category} is a monoidal category $\cat{C}$ equipped with an \emph{involution}: a functor $\overline{(\ )}\colon \cat{C}\to \cat{C}$  satisfying $\overline{\overline{f}}=f$ for all morphisms $f$, together with a  natural isomorphism
  $
    \chi_{X,Y}\colon \overline{X}\otimes\overline{Y}\to \overline{Y\otimes X} 
  $
  that makes the following diagrams commute\footnote{There is a more general definition allowing a natural isomorphism $\overline{\overline{X}}\to X$ (see~\cite{egger:involutive} for details), but we only need the strict case.}:
  \[
      \begin{aligned}\begin{tikzpicture}
        \matrix (m) [matrix of math nodes,row sep=2em,column sep=4em,minimum width=2em]
          {\overline{X}\otimes(\overline{Y}\otimes \overline{Z}) & (\overline{X}\otimes\overline{Y})\otimes \overline{Z} \\
            \overline{X}\otimes \overline{Z\otimes Y} &\overline{Y\otimes X}\otimes \overline{Z} \\
            \overline{(Z\otimes Y)\otimes X} & \overline{Z\otimes(Y\otimes X)} \\};
          \path[->]
          (m-1-1) edge node [left] {$\id\otimes\chi$} (m-2-1)
                edge node [above] {$\alpha$} (m-1-2)
          (m-1-2) edge node [right] {$\chi\otimes\id$} (m-2-2)
          (m-2-2) edge node [right] {$\chi$} (m-3-2)
          (m-3-2) edge node [below] {$\overline{\alpha}$} (m-3-1)
          (m-2-1) edge node [left] {$\alpha$} (m-3-1);
      \end{tikzpicture}\end{aligned}
      \qquad\qquad
      \begin{aligned}\begin{tikzpicture}
          \matrix (m) [matrix of math nodes,row sep=2em,column sep=4em,minimum width=2em]
          {\overline{\overline{X}}\otimes \overline{\overline{Y}}  & \overline{\overline{Y}\otimes\overline{X}}  \\
            X\otimes Y& \overline{\overline{X\otimes Y}}  \\};
          \path[->]
          (m-1-1) edge node [left] {$\id$} (m-2-1)
                edge node [above] {$\chi$} (m-1-2)
          (m-1-2) edge node [right] {$\overline{\chi}$} (m-2-2)
          (m-2-1) edge node [below] {$\id$} (m-2-2);
      \end{tikzpicture}\end{aligned}
  \]
\end{definition}

Just like monoidal categories are the natural setting for monoids, involutive monoidal categories are the natural setting for involutive monoids. Any involutive monoidal category has a canonical isomorphism $\phi\colon I\to\overline{I}$~\cite[Lemma~2.3]{egger:involutive}:
\[\begin{tikzpicture}[xscale=3]
  \node (1) at (0,0) {$I=\overline{\overline{I}}$};
  \node (2) at (1,0) {$\overline{\overline{I} \otimes I}$};
  \node (3) at (2,0) {$\overline{I} \otimes \overline{\overline{I}} = \overline{I} \otimes I$};
  \node (4) at (3,0) {$\overline{I}$};
  \draw[->] (1) to node[above]{$\overline{\rho_{\overline{I}}}^{-1}$} (2);
  \draw[->] (2) to node[above]{$\chi_{I,\overline{I}}^{-1}$} (3);
  \draw[->] (3) to node[above]{$\rho_{\overline{I}}$} (4);
\end{tikzpicture}\]
Moreover, any monoid $M$ with multiplication $m$ and unit $u$ induces a monoid on $\overline{M}$ with multiplication $\overline{m}\circ \chi_{M,M}$ and unit $\overline{u}\circ\phi$. This monoid structure on $\overline{M}$ allows us to define involutive monoids.

\begin{definition}\index[word]{monoid!involutive} 
  An \emph{involutive monoid} is a monoid $(M,m,u)$ together with a monoid homomorphism $i\colon \overline{M}\to M$ satisfying $i\circ\overline{i}=\id$. A \emph{morphism} of involutive monoids is a monoid homomorphism $f\colon M\to N$ making the following diagram commute:
  \[
      \begin{aligned}\begin{tikzpicture}
        \matrix (m) [matrix of math nodes,row sep=1.5em,column sep=4em,minimum width=2em]
        {\overline{M}  & \overline{N}  \\
          M& N  \\};
        \path[->]
        (m-1-1) edge node [left] {$i_M$} (m-2-1)
            edge node [above] {$\overline{f}$} (m-1-2)
        (m-1-2) edge node [right] {$i_N$} (m-2-2)
        (m-2-1) edge node [below] {$f$} (m-2-2);
      \end{tikzpicture}\end{aligned}
  \]
\end{definition}

Our next result lifts the dagger on $\cat{C}$ to an involution on the category $[\cat{C}\op \times \cat{C},\cat{Set}]$ of profunctors. First we recall the monoidal structure on that category. It categorifies the dagger monoidal category $\cat{Rel}$ of relations of Section~\ref{sec:inversecategories}~\cite[Section 7.8]{borceux:vol1}.
\begin{definition}
  If \cat{C} is small, then $\prof$ has a monoidal structure, which is a special case of composition of profunctors. In terms of coends~\cite{borceux:vol1,loregian:coend}, the monoidal product is
      \begin{equation*}
        F\otimes G (X,Z)= \int^Y F(X,Y)\times G(Y,Z)\text;
      \end{equation*}
  concretely, $F\otimes G (X,Z)= \coprod_{Y\in \cat{C}} F(X,Y)\times G(Y,Z)/\approx$, 
  where $\approx$ is the equivalence relation generated by $(y,F(f,\id)(x))\approx (G(\id,f)(y),x)$, and the action on morphisms is given by $F\otimes G (f,g):= [y,x]_\approx \mapsto [F(f,\id)x,G(\id,g)y]$. The unit of the tensor product is $\hom_\cat{C}$.
\end{definition}

\begin{proposition}\label{prop:swapdaggeronprof}
  If \cat{C} is a dagger category, then $\prof$ is an involutive monoidal category when one defines the involution on objects $F$ by $\overline{F}(X,Y) = F(Y,X)$, $\overline{F}(f,g)=F(g^\dag,f^\dag)$ and on morphisms $\tau\colon F\to G$ by $\overline{\tau}_{X,Y}=\tau_{Y,X}$.
\end{proposition}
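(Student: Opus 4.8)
The plan is to realize the claimed involution as precomposition with a single ``dualizing'' functor, which trivializes the functoriality and strict-involutivity requirements, and then to exhibit $\chi$ as a componentwise swap of the two factors inside the coend, so that its coherence reduces to the coherence of the symmetry of $\times$ in $\cat{Set}$.

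First I would package the dagger into a functor $S\colon \cat{C}\op\times\cat{C}\to\cat{C}\op\times\cat{C}$ defined by $S(X,Y)=(Y,X)$ on objects and $S(f,g)=(g^\dag,f^\dag)$ on morphisms. A quick variance check shows $S$ is a well-defined (covariant) functor: if $(f,g)\colon(X,Y)\to(X',Y')$, meaning $f\colon X'\to X$ and $g\colon Y\to Y'$ in $\cat{C}$, then $g^\dag\colon Y'\to Y$ and $f^\dag\colon X\to X'$ assemble into a morphism $(Y,X)\to(Y',X')$; preservation of identities and composition follows from $\id^\dag=\id$ and $(h\circ k)^\dag=k^\dag\circ h^\dag$. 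Since the dagger is identity on objects and satisfies $f^{\dag\dag}=f$, we get $S\circ S=\id$. The stated involution is then exactly $\overline{F}=F\circ S$ on objects and $\overline{\tau}=\tau\ast S$ (whiskering) on morphisms, because $(F\circ S)(X,Y)=F(Y,X)$, $(F\circ S)(f,g)=F(g^\dag,f^\dag)$, and $(\tau\ast S)_{(X,Y)}=\tau_{(Y,X)}$. Precomposition with a fixed functor is automatically an endofunctor of $\prof$, and $S\circ S=\id$ immediately yields $\overline{\overline{F}}=F$ and $\overline{\overline{\tau}}=\tau$ strictly; this disposes of the involution data with essentially no computation.

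Next I would construct $\chi$. Unfolding the coend description of $\otimes$ gives, at each $(X,Z)$, that $(\overline F\otimes\overline G)(X,Z)=\int^Y F(Y,X)\times G(Z,Y)$ and $\overline{G\otimes F}(X,Z)=(G\otimes F)(Z,X)=\int^Y G(Z,Y)\times F(Y,X)$. I define $\chi_{F,G}$ on representatives by swapping the two factors, $[\,x,b\,]\mapsto[\,b,x\,]$. The work here is to check this is well defined on the coends: the generating dinaturality relation on the source, identifying $(F(h,X)(x'),b)$ with $(x',G(Z,h)(b))$ for $h\colon Y\to Y'$, is carried by the swap precisely onto the generating relation on the target, so $\chi$ descends to the quotient. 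Naturality in $(X,Z)$ and in $F,G$, together with invertibility (the inverse is again a factor swap), then follow because the symmetry of $\times$ in $\cat{Set}$ is a natural isomorphism and coends are functorial.

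Finally I would verify the two coherence diagrams of an involutive monoidal category; I expect this to be the only place requiring care, and the main obstacle is the bookkeeping of coend indices and variances rather than any genuine difficulty. For the hexagon I would trace a representative triple $[\,x,y,z\,]$ of $\overline F\otimes(\overline G\otimes\overline H)$ through both legs: each application of the associator $\alpha$ for profunctor composition and of $\chi$ acts on underlying elements only by reassociating and swapping, so both paths deliver the same class $[\,z,y,x\,]$ in $\overline{(H\otimes G)\otimes F}$, the equality holding already on representatives up to the coend identifications that $\alpha$ encodes. The second, smaller diagram unwinds, using the strict $\overline{\overline{(\ )}}=\id$ established above, to the assertion that $\overline{\chi_{G,F}}\circ\chi_{\overline F,\overline G}=\id_{F\otimes G}$, which is swap-then-swap and hence the identity. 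Assembling these checks establishes all the axioms of an involutive monoidal category.
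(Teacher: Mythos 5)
Your proof is correct and follows essentially the same route as the paper's: the paper likewise defines $\chi$ as the composite of the coend descriptions with the symmetry of $\times$ in $\cat{Set}$ (i.e.\ the factor swap) and dismisses the coherence diagrams as routine. Your packaging of the involution as precomposition with the strict involution $S(X,Y)=(Y,X)$, $S(f,g)=(g^\dag,f^\dag)$ on $\cat{C}\op\times\cat{C}$ is a tidy way to obtain the functoriality and strictness that the paper simply asserts, and your representative-level check that the swap respects the coend relations fills in the detail the paper omits.
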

\begin{proof}
  First observe that $\overline{(\ )}$ is well-defined: For any natural transformation of profunctors $\tau$, $\overline{\tau}$ is natural, and $\tau\mapsto \overline{\tau}$ is functorial.
  Define $\chi_{F,G}$ by the following composite of natural isomorphisms: 
    \begin{align*}
      \overline{F}\otimes \overline{G} (X,Z)
      &\cong \textstyle\int^Y \overline{F}(X,Y)\times\overline{G}(Y,Z)\text{ by definition of }\otimes \\
      &= \textstyle\int^Y F(Y,X)\times G(Z,Y)\text{ by definition of }\overline{(\ )} \\
      &\cong \textstyle\int^Y G(Z,Y)\times F(Y,X) \text{ by symmetry of }\times\\
      &\cong G\otimes F (Z,X)\text{ by definition of }\otimes \\
      &=\overline{G\otimes F} (X,Z)\text{ by definition of }\overline{(\ )}
     \end{align*}
  Checking that $\chi$ make the relevant diagrams commute is routine.
\end{proof}

\begin{theorem} 
  If \cat{C} is a dagger category, the multiplicative fragments of dagger arrows on \cat{C} correspond exactly to involutive monoids in \prof.
\end{theorem}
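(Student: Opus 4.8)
The plan is to leverage the correspondence between multiplicative arrows on $\cat{C}$ and monoids in $\prof$ from \cite{jacobs2009categorical}, and to show that equipping such a monoid with the operation $\inv$ subject to laws \eqref{eq:daggerarrow1}--\eqref{eq:daggerarrow3} is exactly the data promoting it to an involutive monoid for the involution of Proposition~\ref{prop:swapdaggeronprof}. Throughout I would use that a monoid $(A,m,u)$ in $\prof$ has multiplication $m_{X,Z}$ sending a coend representative $[a,b]$ with $a\in A(X,Y)$, $b\in A(Y,Z)$ to $a\acmp b$, unit $u_{X,Y}\colon\hom_\cat{C}(X,Y)\to A(X,Y)$ sending $f$ to $\arr(f)$, and profunctor action $A(f,g)(a)=\arr(f)\acmp a\acmp\arr(g)$.

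First I would fix the translation between an involution operation and a candidate involutive structure. Given a dagger arrow, unfolding $\overline{A}(X,Y)=A(Y,X)$ lets me define $i_{X,Y}=\inv_{Y,X}\colon A(Y,X)\to A(X,Y)$, so that $i\colon\overline{A}\to A$ is just $\inv$ reindexed; conversely, from an involutive monoid I set $\inv_{X,Y}=i_{Y,X}$. These assignments are mutually inverse and leave the underlying monoid (hence the underlying arrow) untouched, so it remains only to match the axioms.

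The core of the argument is a sequence of unwindings. I would first check naturality of $i$: expanding $\overline{A}(f,g)=A(g^\dag,f^\dag)$ and using that $\inv$ reverses $\acmp$ by \eqref{eq:daggerarrow2} and intertwines $\arr$ with $(-)^\dag$ by \eqref{eq:daggerarrow3}, the naturality square reduces to $\inv(\arr(g^\dag)\acmp a\acmp\arr(f^\dag))=\arr(f)\acmp\inv(a)\acmp\arr(g)$, which follows from those two laws. Next, the involution condition $i\circ\overline{i}=\id$ unfolds, via $\overline{\overline{A}}=A$, to $\inv(\inv(a))=a$, \ie \eqref{eq:daggerarrow1}. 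Compatibility of $i$ with multiplication is where the swap map $\chi$ of Proposition~\ref{prop:swapdaggeronprof} enters: tracing a representative $[a,b]$ through $\chi_{A,A}$ (which exchanges the two factors across the coend) then through $\overline{m}$ and $i$, versus through $i\otimes i$ and $m$, yields exactly $\inv(b\acmp a)=\inv(a)\acmp\inv(b)$, which is \eqref{eq:daggerarrow2}. Finally, unit compatibility uses that the canonical $\phi\colon I\to\overline{I}$ of the involutive monoidal structure on $\prof$ is precisely $f\mapsto f^\dag$ on $\hom_\cat{C}$; then the unit-homomorphism equation $i\circ\overline{u}\circ\phi=u$ becomes $\inv(\arr(f^\dag))=\arr(f)$, an instance of \eqref{eq:daggerarrow3}.

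The main obstacle is bookkeeping rather than conceptual: one must verify that $\chi$ and $\phi$ act on coend representatives and on $\hom_\cat{C}$ exactly as the ``swap'' and ``dagger'' maps, so that the abstract monoid-homomorphism diagrams collapse to the elementwise arrow laws. I would therefore compute $\chi$ explicitly on representatives, using the description of $\otimes$ as a coend quotient, and identify $\phi$ by chasing its definition through the unitors and $\chi^{-1}$, confirming it is the componentwise dagger. Once these two identifications are in hand, every remaining verification is direct rewriting, and assembling the two directions gives the claimed bijection between multiplicative dagger arrows on $\cat{C}$ and involutive monoids in $\prof$.
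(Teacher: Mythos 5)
Your proposal is correct and follows essentially the same route as the paper: the paper's proof also reduces the theorem to matching the dagger-arrow laws \eqref{eq:daggerarrow1}--\eqref{eq:daggerarrow3} with the involutive-monoid axioms, summarising your multiplication- and unit-compatibility computations as the observation that a monoid homomorphism $\overline{F}\to F$ is the same thing as a contravariant identity-on-objects functor, with $i\circ\overline{i}=\id$ giving involutivity. Your version merely makes explicit the coend-level bookkeeping for $\chi$ and $\phi$ that the paper leaves implicit.
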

\begin{proof} 
  It suffices to show that the dagger on an arrow corresponds to an involution on the corresponding monoid $F$. But this is easy: an involution on $F$ corresponds to giving, for each $X,Y$ a map $F(X,Y)\to F(Y,X)$ subject to some axioms. That this involution is a monoid homomorphism amounts to it being a contravariant identity-on-objects-functor, and the other axiom amounts to it being involutive.
\end{proof}

\begin{remark}\label{rem:strength}
  If the operation $\first$ is modelled categorically as (internal) strength, axiom~\eqref{eq:daggerarrow4} for dagger arrows can be phrased in $\prof$ as follows: for each object $Z$ of $\cat{C}$, and each dagger arrow $M$, the profunctor $M_Z=M((-)\otimes Z,(+)\otimes Z)$ is also a dagger arrow, and $\first_{-,+,Z}$ is a natural transformation $M\Rightarrow M_Z$. The arrow laws~\eqref{eq:arrow7} and~\eqref{eq:arrow8} imply that it is a monoid homomorphism, and the new axiom just states that it is in fact a homomorphism of involutive monoids. For inverse arrows this law is not needed, as any functor between inverse categories is automatically a dagger functor and thus every monoid homomorphism between monoids corresponding to inverse arrows preserves the involution.
\end{remark}

Next we set out to characterize which involutive monoids correspond to inverse arrows. Given an involutive monoid $M$, the obvious approach would be to just state that  the map $M\to M$ defined by $a\mapsto a \circ a^\dag \circ a$ is the identity. However, there is a catch: for an arbitrary involutive monoid, the map $a\mapsto a \circ a^\dag \circ a$ is not natural transformation and therefore not a morphism in $\prof$. To circumvent this, we first require some conditions guaranteeing naturality. These conditions concern endomorphisms, and to discuss them we introduce an auxiliary operation on $\prof$. \cut{It categorifies the operation of a restriction category of Definition~\ref{def:restrictioncategory} to profunctors on dagger categories. }

\begin{definition} 
  Let $\cat{C}$ be a dagger category. Given a profunctor $M \colon \cat{C}\op \times \cat{C} \to\cat{Set}$, define $LM \colon \cat{C}\op \times \cat{C} \to \cat{Set}$ by 
  \begin{align*}
    LM(X,Y)&=M(X,X)\text, \\
    LM(f,g)&=f^\dag \circ (-) \circ f\text.
  \intertext{If $M$ is an involutive monoid in $\prof$, define a subprofunctor of $LM$:}
    L^+M(X,Y) &= \{a^\dag \circ a \in M(X,X) \mid a \in M(X,Z)\text{ for some }Z\}\text.
  \end{align*} 
\end{definition}

\begin{remark}
  The construction $L$ is a functor $\prof \to \prof$. 
  There is an analogous construction $RM(X,Y)=M(Y,Y)$ and $R^+M$, and furthermore $RM=\overline{LM}$.
  For any monoid $M$ in $\prof$, $LM$ is a right $M$-module (and $RM$ a left $M$-module).
  Compare Example~\ref{ex:cpm}.
\end{remark}

For the rest of this section, assume the base category \cat{C} to be an inverse category. This lets us multiply positive arrows by positive pure morphisms. If $M$ is an involutive monoid in $\prof$, then the map $LM\times L^+(\hom_\cat{C})\to LM$ defined by $(a,g^\dag\circ g)\mapsto a\circ g^\dag \circ g$ is natural:
\begin{align*}
    &LM\times L^+(\hom )(f,\id[Y])(a,g^\dag \circ g) \\
    &=(f^\dag \circ a \circ f,f^\dag \circ g^\dag \circ g\circ f) \\
    &\mapsto f^\dag\circ a\circ f\circ f^\dag \circ g^\dag \circ g\circ f \\
    &=f^\dag\circ a\circ g^\dag\circ g\circ f\circ f^\dag\circ f&\text{ because \cat{C} is an inverse category}\\
    &=f^\dag \circ a\circ g^\dag\circ g\circ f&\text{ because \cat{C} is an inverse category} \\
    &=LM(f,\id[Y])(a\circ g^\dag\circ g)
\end{align*}

Similarly there is a map $L^+(\hom)\times LM\to LM$ defined by $(g^\dag \circ g,a)\mapsto g^\dag\circ g\circ a$. Now the category corresponding to $M$ satisfies $a^\dag \circ a \circ g^\dag\circ  g=g^\dag \circ g\circ  a^\dag\circ  a$ for all $a$ and pure $g$ if and only if the following diagram commutes:
\begin{equation}\label{diag:step0}
    \begin{aligned}\begin{tikzpicture}
        \matrix (m) [matrix of math nodes,row sep=1.5em,column sep=4em,minimum width=2em]
        {L^+M\times L^+(\hom)  && LM\times L^+(\hom)  \\
          L^+(\hom)\times L^+M&   L^+(\hom)\times LM& LM\\};
        \path[->]
        (m-1-1) edge node [left] {$\sigma $} (m-2-1) 
            edge node [above] {$$} (m-1-3) 
        (m-1-3) edge node [right] {$$} (m-2-3) 
        (m-2-1) edge node [below] {$$} (m-2-2) 
        (m-2-2) edge node [below] {$$} (m-2-3); 
    \end{tikzpicture}\end{aligned}
\end{equation}
If this is satisfied for an involutive monoid $M$ in $\prof$, then positive arrows multiply. In other words, the map $L^+M\times L^+M\to LM$ defined by $(a^\dag \circ a,b^\dag \circ b)\mapsto a^\dag\circ a\circ b^\dag\circ b$ is natural:
\begin{align*}
  &D_M(f,g)(a,a^\dag,a)\\
  &=(g \circ a\circ f,f^\dag \circ a^\dag \circ g^\dag,g\circ a\circ f) \\
  &\mapsto g\circ a\circ f\circ f^\dag\circ  a^\dag\circ  g^\dag \circ g\circ a\circ f \\
  &=g\circ a\circ a^\dag\circ  g^\dag\circ  g\circ a\circ  f\circ f^\dag\circ  f &\text{ by~\eqref{diag:step0}}\\
  &=g\circ a\circ a^\dag\circ  g^\dag\circ  g\circ a\circ f&\text{ because \cat{C} is an inverse category} \\
  &=g\circ g^\dag \circ g\circ  a\circ a^\dag \circ a\circ  f &\text{ by~\eqref{diag:step0}} \\
  &=g\circ a\circ a^\dag\circ  a\circ  f&\text{ because \cat{C} is an inverse category} \\
  &=M(f,g)(a\circ a^\dag\circ  a)
  \end{align*}
This multiplication is commutative iff the following diagram commutes:
\begin{equation}\label{diag:step1 (positives commute)}
    \begin{aligned}\begin{tikzpicture}
        \matrix (m) [matrix of math nodes,row sep=2em,column sep=4em,minimum width=2em]
        {L^+M\times L^+M  & L^+M\times L^+M  \\
          & LM  \\};
        \path[->]
        (m-1-1) edge node [left] {$$} (m-2-2)
            edge node [above] {$\sigma$} (m-1-2)
        (m-1-2) edge node [right] {$$} (m-2-2);
    \end{tikzpicture}\end{aligned}
\end{equation}

Finally, let $D_M\hookrightarrow M\times\overline{M}\times M$ be the diagonal
$
  D_M(X,Y)=\{(a,a^\dag,a)\mid a\in M(X,Y)\}
$.

If $M$ satisfies~\eqref{diag:step0}, then the map $D_M\to M$ defined by $(a,a^\dag, a)\mapsto a\circ a^\dag\circ a$ is natural:

\begin{align*}
  &D_M(f,g)(a,a^\dag,a)\\
  &=(g \circ a\circ f,f^\dag \circ a^\dag \circ g^\dag,g\circ a\circ f) \\
  &\mapsto g\circ a\circ f\circ f^\dag\circ  a^\dag\circ  g^\dag \circ g\circ a\circ f \\
  &=g\circ a\circ a^\dag\circ  g^\dag\circ  g\circ a\circ  f\circ f^\dag\circ  f &\text{ by~\eqref{diag:step0}}\\
  &=g\circ a\circ a^\dag\circ  g^\dag\circ  g\circ a\circ f&\text{ because \cat{C} is an inverse category} \\
  &=g\circ g^\dag \circ g\circ  a\circ a^\dag \circ a\circ  f &\text{ by~\eqref{diag:step0}} \\
  &=g\circ a\circ a^\dag\circ  a\circ  f&\text{ because \cat{C} is an inverse category} \\
  &=M(f,g)(a\circ a^\dag\circ  a)
  \end{align*}

 Thus $M$ satisfies $a\circ a^\dag\circ a=a$ if and only if the following diagram commutes:
\begin{equation}\label{diag:step2 (aa^daga=a)}
    \begin{aligned}\begin{tikzpicture}
        \matrix (m) [matrix of math nodes,row sep=2em,column sep=4em,minimum width=2em]
        {M  & D_M \\
          & M \\};
        \path[->]
        (m-1-1) edge node [below] {$\id$} (m-2-2)
            edge node [above] {$$} (m-1-2)
        (m-1-2) edge node [right] {$$} (m-2-2);
      \end{tikzpicture}\end{aligned}
\end{equation}
Hence we have established the following theorem. 

\begin{theorem}\label{thm:characterizing_inverse_arrows}
  Let \cat{C} be an inverse category. Then the multiplicative fragments of inverse arrows on \cat{C} correspond exactly to involutive monoids in $\prof$ making the diagrams~\eqref{diag:step0}--\eqref{diag:step2 (aa^daga=a)} commute.
  \qed
\end{theorem}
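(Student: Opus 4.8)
The plan is to leverage the correspondence already established between dagger arrows and involutive monoids, and then to isolate precisely the extra content carried by the inverse-category axioms \eqref{eq:inversearrow1} and \eqref{eq:inversearrow2}. Since an inverse category is in particular a dagger category, the preceding theorem identifies the multiplicative fragment of a dagger arrow on $\cat{C}$ with an involutive monoid $M$ in $\prof$ (with the involution of Proposition~\ref{prop:swapdaggeronprof}), and the associated identity-on-objects dagger functor $J\colon \cat{C}\to\cat{D}$ has $\cat{D}(X,Y)=M(X,Y)$. An inverse arrow is exactly a dagger arrow whose defining category $\cat{D}$ is itself an inverse category, i.e.\ one satisfying $a\circ a^\dag\circ a=a$ (every morphism is a partial isometry) and $a^\dag\circ a\circ b^\dag\circ b=b^\dag\circ b\circ a^\dag\circ a$ (positive morphisms commute). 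So the entire task reduces to expressing these two equational conditions on $\cat{D}$ as the commutativity of diagrams in $\prof$.

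The obstacle is that neither condition can be stated directly: the assignment $a\mapsto a\circ a^\dag\circ a$, and likewise the multiplication of positives $(a^\dag\circ a,b^\dag\circ b)\mapsto a^\dag\circ a\circ b^\dag\circ b$, are not natural transformations for an arbitrary involutive monoid, and hence are not morphisms in $\prof$. To repair this I would first introduce the auxiliary profunctors $LM(X,Y)=M(X,X)$, its positive subprofunctor $L^+M$, and the diagonal $D_M\hookrightarrow M\times\overline{M}\times M$, which package endomorphisms so they may be manipulated functorially. The crucial leverage is the inverse structure of the \emph{base} category $\cat{C}$: the identity $f\circ f^\dag\circ f=f$ holding in $\cat{C}$ is what lets one rewrite the relevant composites and recover naturality.

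I would then proceed in three stages, matching the three diagrams. \emph{Stage~0:} show that commutativity of \eqref{diag:step0} is equivalent to positive arrows commuting with pure positives, $a^\dag\circ a\circ g^\dag\circ g=g^\dag\circ g\circ a^\dag\circ a$ for pure $g$; this is the minimal condition making the two actions $LM\times L^+(\hom_\cat{C})\to LM$ and $L^+(\hom_\cat{C})\times LM\to LM$ natural, each naturality check being a short calculation that twice invokes $f\circ f^\dag\circ f=f$ in $\cat{C}$. \emph{Stage~1:} granted \eqref{diag:step0}, verify that the multiplication $L^+M\times L^+M\to LM$, $(a^\dag\circ a,b^\dag\circ b)\mapsto a^\dag\circ a\circ b^\dag\circ b$, is natural, so that its symmetry---diagram \eqref{diag:step1 (positives commute)}---is well-typed and expresses exactly that positives commute. \emph{Stage~2:} again granted \eqref{diag:step0}, verify that $D_M\to M$, $(a,a^\dag,a)\mapsto a\circ a^\dag\circ a$, is natural, so that diagram \eqref{diag:step2 (aa^daga=a)} asserts exactly the partial-isometry law $a\circ a^\dag\circ a=a$. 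Collecting the three equivalences yields that an involutive monoid arises from an inverse arrow if and only if $\cat{D}$ is an inverse category, if and only if all three diagrams commute.

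The main obstacle is precisely the recurring naturality issue isolated above: without the inverse structure of $\cat{C}$ the assignments in question fail to be natural, and even stating \eqref{diag:step1 (positives commute)} and \eqref{diag:step2 (aa^daga=a)} as diagrams in $\prof$ is impossible. The careful staging through \eqref{diag:step0}---which is logically prior, even though its content is subsumed by \eqref{diag:step1 (positives commute)} once the latter makes sense---is what makes the later conditions both meaningful and equivalent to the inverse-arrow laws. Each individual naturality computation is routine once the inverse-category rewrite $f\circ f^\dag\circ f=f$ is applied, so no single step is deep; the content lies in identifying the right auxiliary constructions $L$, $L^+$, and $D_M$ and the order in which the diagrams must be imposed.
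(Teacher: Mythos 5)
Your proposal is correct and follows essentially the same route as the paper: reduce to the dagger-arrow/involutive-monoid correspondence, introduce $LM$, $L^+M$, and $D_M$ to package endomorphisms, use the inverse structure of the base category $\cat{C}$ to establish naturality of the relevant maps (with~\eqref{diag:step0} imposed first so that the maps in~\eqref{diag:step1 (positives commute)} and~\eqref{diag:step2 (aa^daga=a)} are well-defined morphisms of $\prof$), and then read off the inverse-category axioms for $\cat{D}$ as commutativity of the three diagrams. No gaps.
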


\section{Applications and related work}
As we have seen, inverse arrows capture a variety of fundamental reversible
effects. An immediate application of our results would be to retrofit existing
typed reversible functional programming languages (\eg,
Theseus~\cite{jamessabry:theseus}) with inverse arrows to accommodate 
reversible effects while maintaining a type-level separation between pure and
effectful programs. Another approach could be to design entirely new such
programming languages, taking inverse arrows as the fundamental representation
of reversible effects.
While the Haskell approach to arrows uses typeclasses~\cite{hughes:arrows},
these are not a priori necessary to reap the benefits of inverse arrows. For
example, special syntax for defining inverse arrows could also be used, either
explicitly, or implicitly by means of an effect system that uses inverse arrows
``under the hood''.

\cut{To aid programming with ordinary arrows, a handy notation due to
Paterson~\cite{paterson:notation,paterson:computation} may be used. If the underlying monoidal dagger category has natural coassociative diagonals, for example when it has inverse products, a similar $\mathtt{do}$-notation can be implemented for inverse and dagger arrows.}

To aid programming with ordinary arrows, a handy notation due to
Paterson~\cite{paterson:notation,paterson:computation} may be used. The
simplest form of this notation is based on process combinators, the central one
being
\begin{equation*}
  p \to e_1 \prec e_2 = \left\{\begin{array}{ll}
    \mathit{arr}(\lambda p. e_2) \acmp e_1 & \quad\text{if $p$ is fresh for $e_1$,} \\
    \mathit{arr}(\lambda p. (e_1,e_2)) \acmp \text{app} & \quad\text{otherwise.}
  \end{array} \right.
\end{equation*}
Note that if the second branch is used, the arrow must
additionally be an instance of $\mathit{ArrowApply}$ (so that it is, in fact,
a monad). Though we only know of degenerate examples where inverse arrows are
instances of $\mathit{ArrowApply}$, this definition is conceptually
unproblematic (from the point of view of guaranteeing reversibility) so long as the pure function $\lambda p. e_2$ is first-order and
reversible. A more advanced style of this notation is the
\emph{do}-notation for arrows, which additionally relies on the arrow
combinator
\begin{haskell*}
  bind &:& A~X~Y\to{A~(X\otimes{Y})~Z}\to{A~X~Z} \\
  f~'bind'~g &=& (arr(id)\afanout{f})\acmp{g}\enspace.
\end{haskell*}
If the underlying monoidal dagger category has natural coassociative diagonals,
for example when it has inverse products, this combinator does exist: the arrow
combinator $(\afanout)$ can be defined as
\begin{haskell*}
  (\afanout) &:& A~X~Y\to{A~X~Z}\to{A~X~(Y\otimes{Z})} \\
  f\afanout{g} &=& arr(copy) \acmp first(f) \acmp second(g)
\end{haskell*}
where \<copy : X\to{X}\otimes{X}\> is the natural diagonal (given in pseudocode
by \<copy x = (x,x)\>), and the combinator \<second\> is derived from \<first\> in the
usual way, \ie, as
\begin{haskell*}
  second &:& A~X~Y\to{A}~(Z\otimes{X})~(Z\otimes{Y}) \\
  second f &=& arr(swap)\acmp{first(f)}\acmp{arr(swap)}
\end{haskell*}
with \<swap : X\otimes{Y}\leftrightarrow{Y}\otimes{X}\> given by \<swap (x,y) = (y,x)\>. This allows \emph{do}-notation of the form
\begin{haskell*}
  \hskwd{do} \{p\leftarrow{c}\scolon{A}\} \equiv c 'bind' 
  (\kappa{p}. \hskwd{do} \{A\})\text,
\end{haskell*}
so soon as the $\kappa$-calculus~\cite{hasegawa:kappa_calc}
expression \<\kappa{p}. \hskwd{do} \{A\}\> is reversible. Note, however, that
\emph{do}-expressions of the form \<\hskwd{do} \{c\scolon A\}\> (\ie, where 
the output of $c$ is discarded entirely) will fail to be reversible in all but
the most trivial cases. 
Since \<\hskwd{do} \{p \leftarrow c\scolon A\}\> produces a value of an inverse arrow type, closure under program inversion provides a program we might call
\begin{haskell*}
  \hskwd{undo} \{p\leftarrow{c}\scolon{A}\} \equiv inv(\hskwd{do} 
  \{p\leftarrow{c}\scolon{A}\}) \enspace\text.
\end{haskell*}
Inverse arrow law~\eqref{eq:inversearrow1} then guarantees that \emph{do}ing, then \emph{undo}ing, and then \emph{do}ing the same operation is the same as \emph{do}ing it once.

A pleasant consequence of the semantics of inverse arrows is that
inverse arrows are safe: as long as the inverse arrow laws are satisfied, fundamental properties guaranteed by reversible functional
programming languages (such as invertibility and closure under program
inversion) are preserved. In this way, inverse arrows provide reversible
effects as a conservative extension to pure reversible functional programming.

A similar approach to invertibility using arrows is given by bidirectional
arrows~\cite{alimarineetal:biarrows}. However, while the goal of inverse arrows
is to add effects to already invertible languages, bidirectional arrows arise
as a means to add invertibility to an otherwise uninvertible language. As such,
bidirectional arrows have different concerns than inverse arrows, and notably
do not guarantee invertibility in the general case.



\bibliographystyle{plain}


\bibliography{thesis}

\printindex[symb]\label{index:symb} 
\printindex[word]\label{index:word}
\end{document}